\let\oldmarginpar\marginpar
\renewcommand\marginpar[1]{\-\oldmarginpar[\raggedleft\footnotesize #1\\]%
{\raggedright\footnotesize #1}}
\newcommand{\esp}{~~~~~}
\newcommand{\A}{{\bf A}}
\newcommand{\C}{{\bf C}}
\newcommand{\D}{{\bf D}}
\newcommand{\HH}{{\bf H}}
\newcommand{\GL}{{\bf GL}}
\newcommand{\K}{{\bf K}}
\newcommand{\M}{{\bf M}}
\newcommand{\N}{{\bf N}}
\newcommand{\PP}{{\bf P}}
\newcommand{\PSL}{{\bf PSL}}
\newcommand{\R}{{\bf R}}
\newcommand{\SSS}{{\bf S}}
\newcommand{\T}{{\bf T}}
\newcommand{\Z}{{\bf Z}}
\newcommand{\al}{\alpha} \newcommand{\be}{\beta} 
\newcommand{\ga}{\gamma} \newcommand{\Ga}{\Gamma} 
\newcommand{\de}{\delta} \newcommand{\De}{\Delta}
 \newcommand{\eps}{\epsilon} 
\newcommand{\la}{\lambda}  
\newcommand{\ro}{\rho}   
\newcommand{\sig}{\sigma} \newcommand{\Sig}{\Sigma} 
 \newcommand{\ph}{\varphi} 
\newcommand{\om}{\omega}
\newcommand{\Om}{\Omega} 
\newcommand{\mcA}{{\mathcal A}}
\newcommand{\mcC}{{\mathcal C}}
\newcommand{\mcD}{{\mathcal D}}
\newcommand{\mcE}{{\mathcal E}}
\newcommand{\mcG}{{\mathcal G}}
\newcommand{\mcK}{{\mathcal K}}
\newcommand{\mcL}{{\mathcal L}}
\newcommand{\mcP}{{\mathcal P}}
\newcommand{\mcQ}{{\mathcal Q}}
\newcommand{\mcR}{{\mathcal R}}
\newcommand{\mcX}{{\mathcal X}}
\newcommand{\mfE}{\mathfrak{E}}
\newcommand{\mfK}{\mathfrak{K}}
\newcommand{\mfX}{\mathfrak{X}}
\newcommand{\msfk}{{\mathsf k}}
\newcommand{\msfl}{{\mathsf l}}
\newcommand{\msfm}{{\mathsf m}}
\newcommand{\msfR}{{\mathsf R}}
\def\makeop#1{
\expandafter\def\csname #1\endcsname{\mathop{\mathrm{#1}}\nolimits}}
\def\Isom{\mathop{\rm Is}\nolimits}
\def\Isomgen{\mathop{\rm Is_{gen}}\nolimits}
\def\Isomsel{\mathop{\rm Is_{sel}}\nolimits}
\def\pizeroIsom{\mathop{\pi_0\rm Is}\nolimits}
\def\gen{{\rm gen}}
\def\carac{\ro}
\def\aut{\mathop{\rm Aut}\nolimits}
\def\ker{\mathop{\rm Ker}\nolimits}
\def\id{\mathop{\rm Id}\nolimits}
\def\card{\mathop{\rm Card}\nolimits}
\theoremstyle{plain}
\newtheorem{theo}{\indent\sc{Théorème}}[section]
\newtheorem*{theo*}{\indent\sc{Théorème}}
\newtheorem{prop}[theo]{\indent\sc{Proposition}}
\newtheorem{coro}[theo]{\indent\sc{Corollaire}}
\newtheorem{lemm}[theo]{\indent\sc{Lemme}}
\newtheorem*{affi*}{\indent\sc{Affirmation}}
\theoremstyle{definition}
\newtheorem{defi}[theo]{\indent\sc{Définition}}
\newtheorem*{defi*}{\indent\sc{Définition}}
\theoremstyle{remark}
\newtheorem{rema}[theo]{\indent Remarque}
\newtheorem{remas}[theo]{\indent Remarques}
\newtheorem{exem}[theo]{\indent Exemple}
\newtheorem{exems}[theo]{\indent Exemples}
\newtheorem*{exem*}{\indent Exemple}
\newtheorem*{exems*}{\indent Exemples}
\newcounter{rmq}
\begin{document}

\title{Extensions maximales et classification des tores lorentziens
munis d'un champ de Killing}
\author{Ch Bavard, P Mounoud}
\date{15 janvier 2016}
\maketitle
\selectlanguage{english}

\begin{abstract}
We study the simply connected inextendable  Lorentzian surfaces 
admitting a Killing
vector field. We construct a natural family of 
such surfaces, that we call ``universal extensions''. They are
characterized by a condition of symmetry, the ``reflexivity'', and a by a
rather weak completeness assumption, the absence of "saddles at infinity".
Considering these surfaces as model spaces, we study their minimal
quotients, divisible open sets and conjugate points.  We show
uniformisation results (by an open subset of one of these universal
extensions, which is uniquely determined) in the following cases: compact
surfaces and analytical surfaces.  It allows us to give a classification of
Lorentzian tori and Klein bottles with a Killing vector field.
\end{abstract}

\selectlanguage{french}
\renewcommand{\proofname}{\indent Preuve.}
\begin{abstract}
Nous étudions les surfaces lorentziennes simplement connexes, 
inextensibles et possédant un champ
de Killing. 
Nous introduisons une famille naturelle de telles surfaces,  désignées par \og
extensions universelles\fg.  Elles sont 
 caractérisées par une condition de
symétrie, la \og réflexivité\fg, et par une condition de complétude assez
faible, l'absence de \og selles à l'infini\fg. Ces surfaces jouent le rôle
d'espaces modèles : nous en étudions les quotients, les ouverts divisibles
et les points conjugués.  Nous établissons des résultats d'uniformisation
(par un ouvert de l'une de nos extensions universelles, uniquement
déterminée) dans les deux cas suivants : surfaces compactes, surfaces
analytiques.  Cela nous permet notamment d'obtenir une classification des
tores lorentziens et des bouteilles de Klein possédant un champ de Killing.
\end{abstract}

\section{Introduction}
L'étude des variétés possédant beaucoup d'isométries est un thème classique
en géométrie riemannienne et pseudo-riemannienne. Nous nous intéressons ici
à la géométrie globale des surfaces lorentziennes admettant un champ de
Killing complet, c'est-à-dire dont le groupe d'isométrie n'est pas
discret. La question de la classification de ces surfaces est abordée en
privilégiant le point de vue des structures géométriques, cadre dans lequel
les objets sont localement modelés par un pseudo-groupe de transformations 
d'un espace modèle.
 Parmi toutes les surfaces dont la
géométrie locale est donnée, en un sens qui sera précisé plus bas, nous
exhibons un \og objet universel \fg, noté pour l'instant $E^u$,
jouant le rôle de modèle et 
généralisant les modèles classiques bien connus en courbure constante. Il
s'agit d'une surface lorentzienne simplement connexe et maximale
c'est-à-dire inextensible, que l'on peut caractériser par des propriétés de
symétrie et de complétude (voir théorème \ref{theo:les_Eu_intro}).  Sa
construction met en évidence un phénomène qui contraste avec le cas
riemannien : l'existence d'une grande variété (géométrique et
topologique) d'exemples de surfaces munies d'un champ de Killing, y
compris parmi les quotients du modèle lui-même.  Dès lors se pose la
question de trouver de \og bonnes\fg\ classes de surfaces {\em
uniformisées} par le modèle, c'est-à-dire dont le revêtement universel est
isométrique à un ouvert du modèle {\it via} une application
développante. Notre surface universelle $E^u$ conduit à des résultats
d'uniformisation dans plusieurs contextes significatifs : surfaces
compactes, surfaces analytiques. Dans le cas compact (tores et bouteilles
de Klein), nous en déduisons une classification assez précise des objets.

Parmi les résultats sur la géométrie des variétés  ayant un \og gros\fg\ groupe
d'isométrie, on peut citer comme prototype le théorème de Ferrand \cite{Ferrand}
en géométrie conforme riemannienne. Le terme \og gros\fg\ est bien sûr à
préciser mais il signifie a minima \og{\em qui n'agit pas proprement sur la
variété}\fg. Notre travail se rapproche notamment de \cite{Monclair}, dans
lequel l'auteur étudie les surfaces lorentziennes globalement hyperboliques, et
de \cite{Piccione-Zeghib} où les auteurs étudient les variétés lorentziennes
compactes munies d'un champ de Killing -- ayant une orbite de type temps -- 
dont le groupe d'isométrie a une infinité de composantes connexes. 
Notre cadre de  travail est un mélange des précédents : 
nous étudions les surfaces munies d'un champ de Killing sans nous limiter
aux surfaces compactes ou globalement hyperboliques. 
Précisons qu'en dehors de  cas élémentaires, l'action du 
flot d'isométries de notre modèle  $E^u$ n'est pas propre.

Les surfaces modèles lorentziennes  de courbure constante ont évidemment
un gros groupe d'isométrie, mais on peut vérifier qu'elles ne possèdent que peu de quotients. À
l'inverse, pour une géométrie locale suffisamment générale, notre surface modèle
$E^u$ admet de nombreux quotients et leur topologie est très variée. Nous
verrons en effet que $E^u$ possède, en plus de son flot d'isométries, une
combinatoire discrète de type hyperbolique, c'est-à-dire admet des sous-groupes
discrets d'isométries dont la dynamique est conjuguée à celle d'un groupe
fuchsien sur le plan.

Notre modèle est obtenu par extension de la géométrie locale.
La notion d'extensibilité joue un rôle fondamental en géométrie 
lorentzienne. Dans un travail précédent \cite{BM},
nous avons utilisé de façon essentielle la possibilité d'étendre le revêtement
universel du tore de Clifton-Pohl en une surface lorentzienne dont toutes les
géodésiques de type lumière sont complètes (on dira L-complète).  On notera
aussi que la construction de l'extension de Kruskal de la métrique de
Schwarzschild se ramène à un problème d'extension d'une surface munie d'un champ
de Killing, \cite{Oneill}.  Nous allons voir que ces deux extensions peuvent
être obtenues par le même procédé général.

Décrivons plus en détails le procédé évoqué ci-dessus, qui repose  sur deux
observations clés \footnote{Ce mode
de construction de surfaces munies d'un champ de Killing a été repris dans
\cite{MS}, il permet de donner des familles de surfaces dont toutes les
géodésiques de type espace sont fermées.}. Premièrement, la présence d'un champ
de Killing entraîne généralement l'existence d'isométries locales
supplémentaires : les réflexions par rapport aux géodésiques de type temps ou
espace perpendiculaires au champ de Killing, que l'on appellera \emph{réflexions
génériques}. Ces réflexions permutent les feuilletages de lumière.  
Dès qu'elles ne sont pas
globalement définies, elle  permettent 
 d'étendre la surface en collant deux
copies de celle-ci. Ensuite, nous cherchons des extensions aussi complètes que
possible ; en particulier toute extension  L-complète sera maximale.
Les orbites de lumière du champ de Killing sont généralement incomplètes
(géodésiquement) et le procédé d'extension par réflexion ne corrige pas ce
défaut. La deuxième observation clé est que l'on peut, dans certaines
conditions, adjoindre des points selles, c'est-à-dire des zéros du champ de
Killing, afin de prolonger ses orbites de lumière incomplètes en géodésiques 
complètes.

Soit $X$ une surface lorentzienne munie d'un champ de Killing $K$, que nous
supposerons toujours non trivial et {\em complet}. Localement, l'un des
feuilletages de lumière est défini par un champ $L$ tel que $\langle K, L
\rangle = 1$. La métrique se met alors sous la forme
\setcounter{equation}{-1}
\begin{equation}
\label{equa:intro}
2dxdy+f(x)dy^2,
\end{equation}
 avec $L=\partial_x$ et $K=\partial_y$ (voir
 lemme~\ref{lemm:carte_adaptee}). La coordonnée locale $x$, bien définie
 modulo translation et changement de signe, sera appelée {\em coordonnée
   transverse}. Ainsi, la fonction~$f$ exprime la norme du champ de Killing
 dans la coordonnée transverse, laquelle norme étant évidemment un
 invariant de la géométrie locale du couple $(X,K)$. Pour le tore de
 Clifton-Pohl, on aura $f(x)=\sin(2x)$ et pour la métrique de Schwarzschild
  $f(x)=1-2M x^{-1}$
(avec $M>0$, coordonnées dites de Eddington-Finkelstein, voir \cite{Penrose}).
La courbure vaut $f''(x)/2$. Dans toute la suite de l'introduction, {\em nous
écartons le cas %
de la courbure constante.} Le champ de Killing est
alors unique à homothétie près (voir la preuve du lemme~\ref{lemm:isom_isomK}).
Les changements de la coordonnée transverse et du champ de Killing se traduisent
par une action -- à droite -- du groupe affine de la droite qui consiste à
remplacer $f(x)$ par $a^{-2}f(ax+b)$ pour $(a,b)\in \R^*\times \R$. La classe de
$f$ modulo cette action sera notée $[[f]]$, celle de l'action du sous-groupe
donné par $a^2=1$ sera notée $[f]$. Les classes $[f]$  et $[[f]]$ sont des
invariants de la géométrie locale de $(X,K)$ et de $X$ respectivement.

Afin de définir une notion plus précise de \og géométrie locale\fg, nous
supposons dans un premier temps que $X$ est simplement connexe. L'espace
$\mcE_X$ des orbites non triviales du champ $K$ est alors une variété connexe de
dimension~1, généralement non séparée ; de plus, toute coordonnée transverse
locale se globalise en une fonction lisse $x\in \mcC^\infty(X,\R)$ qui induit un
difféomorphisme local de $\mcE_X$ sur un intervalle $I$ de $\R$,
proposition~\ref{prop:struct_transv_killing}-(2). Soit $f\in
\mcC^\infty(I,\R)$. Nous
dirons que {\em $(X,K)$ est de classe $[f]$}, ou que {\em $X$ est de classe
$[[f]]$} quand on veut oublier le champ de Killing, si $I=x(X)$ et 
$\langle K, K \rangle = f\circ x$. 
Plus généralement, si~$X$ n'est pas simplement connexe, $(X,K)$ est
dite de classe $[f]$ si son revêtement universel~$\smash{\widetilde{X}}$ 
l'est. Cette condition signifie
que la géométrie locale de la surface est partout déterminée, {\it via}  la
norme du champ,  par une fonction  définie sur un intervalle -- optimal --
de la droite
numérique ; on dira que {\em la géométrie locale est uniforme}. La condition est
satisfaite dans deux cas importants : les surfaces compactes
($\mcE_{\widetilde{X}}$ est
séparé) et les surfaces analytiques. Il faut noter qu'en général la norme du
champ, comme fonction sur l'espace des orbites, ne se factorise pas de
telle sorte.

On fixe  maintenant une géométrie locale (au sens précédent) en se donnant une
fonction $f\in \mcC^\infty(I,\R)$. La surface $R_f= I\times \R$ munie de la
métrique~\eqref{equa:intro} est la surface la plus simple de classe $[f]$. Un
des résultats principaux de cet article est l'existence d'une extension
privilégiée de $R_f$, notée $E^u_f$. Cette surface est munie d'un champ de
Killing, simplement connexe, maximale si $f$ est inextensible (en particulier
L-complète si $f$ est définie sur $\R$) et de classe $[f]$.  Dans le cas
particulier où $f$ ne s'annule pas (on dira que {\em $f$ est élémentaire}), on
prend $E^u_f=R_f$, qui est effectivement
 maximale lorsque~$f$ est inextensible.  Quand $f$ s'annule, les
réflexions génériques ne sont pas globalement définies. La surface $E^u_f$
s'obtient alors à partir de $R_f$ grâce aux deux opérations évoquées plus haut :
{\em extension par réflexion} et {\em adjonction de selles}.

Un  moyen de produire d'autres  extensions de $R_f$ consisterait  à  perturber
$E^u_f$ en dehors de $R_f$, tout en conservant un champ de Killing. Une telle
surface ne serait évidemment plus de classe $[f]$. Cependant, en utilisant des
réflexions non génériques, on peut trouver des extensions de $R_f$ qui
ressemblent beaucoup plus à $E^u_f$.
Ainsi les surfaces de l'exemple~\ref{exem:sf_unif_non_refl} sont maximales, 
recouvertes (à l'exception des
éventuels points selles) par des ouverts isométriques à $R_f$,
mais ne sont pas de classe~$[f]$. Pour une surface
simplement connexe maximale, \^etre à géométrie locale uniforme
équivaut à  une propriété de symétrie : {\em la réflexivité}.
Soit~$X$ simplement connexe munie d'un champ de Killing et soit $Y$ un ouvert de
$X$ isométrique à $R_g$ pour une certaine fonction~$g$. La réflexivité
(définition~\ref{defi:sf_refl_unif}) stipule que les réflexions génériques
de~$Y$ s'étendent en des  isométries locales définies au moins sur $Y$
(ce qui fait défaut aux exemples~\ref{exem:sf_unif_non_refl}). 
On verra que 
les réflexions génériques de $E^u_f$ sont même globalement définies,
lemme~\ref{lemm:Euf_reflexive}.

Supposons que $f$ est inextensible et qu'elle s'annule, de sorte que $E^u_f$ est
une extension de classe $[f]$, simplement connexe, maximale et propre de $R_f$.
On s'aperçoit à nouveau qu'une telle extension n'est pas forcément unique. En
effet, le revêtement universel de $E^u_f$ privé d'un point selle (s'il en
existe) est une surface simplement connexe maximale de classe $[f]$ contenant
des copies de $R_f$ et non isométrique à $E^u_f$.  Pour une fonction $f$
suffisamment générique, on sait même construire, en répétant cette seule
transformation, une infinité non dénombrable d'extensions maximales de $R_f$
deux à deux non isométriques et toutes de classe $[f]$, voir la fin du
\S\ref{subs:uniformisation}. Contrairement à $E^u_f$, ces surfaces possèdent par
construction des géodésiques de lumière incomplètes portées par des trajectoires
du champ de Killing :  on dira abusivement qu'elles ont {\em des selles à
l'infini}. Cette terminologie se justifie par l'incomplétude géodésique des
trajectoires du champ qui tendent vers un point selle. Par exemple, dès que $f$
admet des zéros simples, le bord de $R_f$ dans $E^u_f$ contient des points
selles. L'absence de selles à l'infini (définition~\ref{defi:selle_a_l_infini})
apparaît ainsi comme une version affaiblie de la  L-complétude.  Nous pouvons
maintenant énoncer le résultat d'existence et d'unicité de nos surfaces modèles.

\begin{theo}[proposition~\ref{prop:sf_Euf} et 
théorème~\ref{theo:uniformisation}]
\label{theo:les_Eu_intro}
Pour toute fonction $f\in \mcC^\infty(I,\R)$ inextensible, il existe une unique
(à isométrie près) surface lorentzienne lisse $E^u_f$ munie d'un champ de
Killing complet $K^u$ et caractérisée par les propriétés suivantes :
$(E^u_f,K^u)$ est simplement connexe, maximale  (L-complète si $I=\R$), 
de classe~$[f]$  et sans selles à l'infini.
\end{theo}

Ce résultat est valable y compris en courbure constante ($f''=0$). De plus,
si l'on écarte ce cas, {\em  deux
surfaces $E^u_f$ et $E^u_g$  ($f,g\in
\mcC^\infty(I,\R)$) sont isométriques si et seulement si $[[f]]=[[g]]$}. 

\begin{coro}
Toute surface lorentzienne analytique $(X,K)$ (avec $K$ non 
trivial et complet), simplement connexe et L-complète  est
isométrique à une surface $E^u(f)$ pour une certaine fonction analytique 
$f$ définie sur $\R$.
\end{coro}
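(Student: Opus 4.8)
The plan is to verify that $X$ satisfies exactly the four defining properties of the model surface in Theorem~\ref{theo:les_Eu_intro}, so that the uniqueness part of that theorem forces $X$ to be isometric to the corresponding $E^u(f)$. Since $X$ is simply connected, the transverse coordinate globalises into a smooth (here analytic) function $x\colon X\to\R$, and we set $I=x(X)$. It then remains to check three things: that $(X,K)$ is of class $[f]$ for an analytic $f$, that in fact $I=\R$, and finally that $X$ is maximal and has no saddle at infinity.

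First I would produce the function $f$. The delicate point is that the norm $\langle K,K\rangle$, viewed on the (generally non-Hausdorff) orbit space $\mcE_X$, need not factor through $x$; this is precisely where analyticity is used. Two branches of $\mcE_X$ that are sent by $x$ onto overlapping subintervals of $I$ are non-separated along a common open sub-branch, on which both the analytic function $x$ and the analytic function $\langle K,K\rangle$ agree. By the identity theorem the two local expressions of the norm in the variable $x$, being analytic and coinciding on an open subinterval, coincide on their whole common interval; analytic continuation along the connected one-dimensional $\mcE_X$ then yields a single analytic $f$ on $I$ with $\langle K,K\rangle=f\circ x$. Thus $(X,K)$ is of class $[f]$ (this is the ``uniform local geometry'' granted to analytic surfaces in the discussion preceding the theorem).

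Next I would show $I=\R$ using L-completeness. In an adapted chart the orbits of $L=\partial_x$ are null geodesics on which $x$ is an affine parameter: indeed $\nabla_L L=0$, and along such a geodesic $\gamma$ the quantity $\langle K,\dot\gamma\rangle$ is conserved (as $K$ is Killing) and equal to $\langle K,L\rangle=1$, so that $\tfrac{d}{dt}\,x(\gamma(t))=1$. Since $X$ is L-complete these null geodesics are complete, hence $x$ takes every real value along a single one of them; therefore $I=x(X)=\R$ and $f$ is analytic on all of $\R$ (in particular inextensible). Finally, L-completeness gives the last two properties at once: an L-complete surface is maximal (as already noted in the introduction), and it can have no saddle at infinity, since such a saddle would carry an incomplete light-like orbit of $K$, contradicting L-completeness. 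Having checked that $(X,K)$ is simply connected, maximal, of class $[f]$ with $I=\R$, and free of saddles at infinity, the uniqueness statement of Theorem~\ref{theo:les_Eu_intro} identifies $X$ with $E^u(f)$. The main obstacle is the factorisation step of the second paragraph: outside the analytic category the norm generally does not descend to a function of $x$ on the branched orbit space, and the whole argument hinges on replacing this failure by analytic continuation.
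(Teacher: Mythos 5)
Your argument is correct and follows essentially the route the paper intends for this corollary (which it leaves unproved as an assembly of earlier results): Proposition~\ref{prop:analy_refl} gives uniform local geometry in the analytic case by exactly the analytic-continuation-across-adjacent-rubans argument you describe, L-completeness yields maximality, absence of saddles at infinity and $I=\R$ (your affine-parameter computation is the one used at the end of the proof of Proposition~\ref{prop:sf_Euf}), and the uniqueness part of Theorem~\ref{theo:uniformisation} concludes. One phrasing caveat: the gluing of the local norm functions propagates across \emph{adjacent} rubans --- those sharing an open component of $X\smallsetminus\overline{X}_0$, hence genuinely non-separated branches of $\mcE_X$ --- and then along chains of such rubans by uniqueness of maximal analytic continuation; two branches whose $x$-images merely overlap need not be non-separated, but your subsequent appeal to continuation along the connected $\mcE_X$ shows you are using the correct (adjacency) version.
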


La surface $E^u_f$ sera considérée comme espace modèle
pour la géométrie locale de classe~$[f]$. 
Nous appellerons également $E^u_f$  {\em extension universelle 
associée à $f$},  car elle a vocation à se substituer au revêtement
universel dans les
questions de classification,  ou encore (comme dans {\cite{BM}) pour analyser la
géométrie globale. À ce stade, l'étude du groupe d'isométrie du modèle $E^u_f$
s'impose.

\begin{theo}[voir théorème~\ref{theo:groupe_isom_Eu} et
proposition~\ref{prop:isom_Eu_elem}]
Le groupe d'isométrie de $E^u_f$ (supposée à courbure non constante) est le
produit semi-direct du sous-groupe distingué engendré par le flot du champ
et d'un sous-groupe discret $G$, non unique.
\end{theo}

Le groupe $G$ se décompose lui-même en produit semi-direct d'un sous-groupe
distingué \og générique\fg\ $G_\gen$, engendré par des réflexions génériques,
avec un sous-groupe (non unique) isomorphe au groupe de symétrie de $f$. La
structure algébrique de $G$ -- en particulier sa taille~-- dépend évidemment de
$f$. Plus précisément, si $Z_f$ désigne l'ensemble des zéros de $f$, le groupe
$G_\gen$ est un groupe de Coxeter dont les générateurs correspondent
bijectivement aux composantes connexes de $I\smallsetminus Z_f$,
proposition~\ref{prop:G_gen_Coxeter}. En particulier, si $f$ n'est pas
périodique, le groupe $G$ est virtuellement de Coxeter. Concernant les aspects
dynamiques, nous établissons, proposition~\ref{prop:action_Gprime}, que $G$ agit
proprement sur $E^u_f$ si et seulement si les composantes de $I\smallsetminus
Z_f$ ne s'accumulent pas dans $I$ (on dira que {\em $f$ est de type fini}). Cela
nous permet de construire,  remarque~\ref{rema:cas_bonus},  une surface dont le
groupe d'isométrie n'agit pas proprement bien qu'il soit de $2$-torsion. À titre
de comparaison, indiquons qu'il est montré dans \cite{Monclair} qu'un groupe
d'isométrie d'une surface lorentzienne globalement hyperbolique Cauchy compacte
n'agissant pas proprement  est toujours isomorphe à un sous-groupe d'un
revêtement fini de $\PSL_2(\R)$. Dans le cas  analytique (à courbure variable), nous montrons  (corollaire \ref{coro:certain}) que   le groupe d'isométrie de toute 
surface simplement connexe $(X,K)$ est le produit semi-direct de sa composante neutre par un sous-groupe agissant proprement (comparer avec \cite{Piccione-Zeghib}).

Lorsque $f$ est de type fini, l'action de $G$ sur $E^u_f$ est
différentiablement conjuguée à celle d'un groupe (virtuellement) fuchsien sur le
demi-plan de Poincaré ; nous décrivons précisément (en fonction de $f$) ce
groupe dont le sous-groupe générique est un groupe de réflexions hyperboliques,
proposition~\ref{prop:refl_hyp}. On peut donc considérer que  la surface $E^u_f$
est alors un objet dont la combinatoire est de nature hyperbolique. Nous
montrons que $G$ possède toujours des sous-groupes sans
torsion d'indice au plus $4$,
 correspondant aux plus petits quotients lisses de $E^u_f$. Nous
explicitons le nombre et la topologie de ces quotients, 
proposition~\ref{prop:top_quo_Eu}, ainsi que l'espace 
des déformations de ceux-ci,
proposition~\ref{prop:composantes_deformations}. La quantité, tant d'un point de
vue topologique que géométrique, de surfaces obtenues n'a rien à voir avec la
situation analogue en géométrie riemannienne : toutes les surfaces non compactes
de type fini apparaissent et les espaces de  déformation sont de dimension
arbitrairement grande (table~\ref{tabl:petits_quotients} et 
proposition~\ref{prop:composantes_deformations}).
Pour une géométrie locale donnée, suffisamment générale, on peut même réaliser
tous les types topologiques finis et orientables, 
proposition~\ref{prop:top_classe_fixee}.

Le rôle d'espace modèle, ou d'extension universelle, joué par la surface
$E^u_f$ est précisé par le résultat suivant (voir les 
théorèmes~\ref{theo:uniformisation}
et~\ref{theo:uniformisation_cas_periodique}).

\begin{theo}[uniformisation]
\label{theo:unif_intro}
Soit $X$ une surface lorentzienne connexe munie d'un champ de Killing~$K$ 
non trivial et complet.
\begin{enumerate}
 \item Soit $f\in \mcC^\infty(I,\R)$. Si $(X,K)$ est de classe $[f]$, alors 
$X$ est modelée sur $E^u_f$, en particulier son revêtement 
 universel est étalé au-dessus de $E^u_f$.
\item La surface $X$ est uniformisée par un ouvert d'une extension universelle (associée 
à une certaine fonction $f$)  dans chacun des cas suivants : 
\begin{enumerate}
\item %
$(X,K)$ est à géométrie locale uniforme et sans selles à l'infini,
\item %
le flot de $K$ est périodique.
\end{enumerate}
\end{enumerate}
En particulier, les surfaces analytiques sans selles à l'infini et les surfaces
compactes sont uniformisées par un ouvert d'une surface $E^u_f$.
\end{theo}

Comme nous l'avons déjà observé, les surfaces analytiques et les surfaces
compactes (tores et bouteilles de Klein) admettent une géométrie locale
uniforme, de type fini dans le cas analytique et périodique dans le cas compact.
Sur les surfaces compactes, les champs de Killing sont toujours périodiques (à
l'exception de certains champs dans le cas plat). Sans être incompatibles, les
conditions (2a) et (2b) sont assez opposées ; ainsi pour
un tore, dès que $f$ possède un zéro simple, le flot du champ est périodique
mais la surface admet des selles à l'infini. Les deux résultats
d'uniformisation ci-dessus sont donc complémentaires.
Quand~$f$ est de type fini, par exemple analytique, l'étude des surfaces
maximales uniformisées par $E^u_f$ se ramène à celle des ouverts maximaux de
discontinuité, décrits par la proposition~\ref{prop:ouv_disc}. L'exemple du
revêtement universel de $E^u_f$ privée de points selles  montre que l'on ne
peut espérer un théorème d'uniformisation général, même dans le cas analytique.

Revenons  pour finir au cas compact. L'étude des espaces $E^u_f$ permet
de mieux comprendre les tores possédant un champ de Killing. En
particulier, on déduit des résultats précédents que le revêtement universel
d'un tel tore se plonge toujours dans une surface simplement connexe et
L-complète. Le phénomène d'extension constaté dans \cite{BM} sur le tore de
Clifton-Pohl n'a donc rien d'exceptionnel. Bien qu'il découle directement
des théorèmes~\ref{theo:les_Eu_intro} et~\ref{theo:unif_intro}, 
le résultat suivant mérite un énoncé.

\begin{theo}[extension universelle d'un tore]
\label{theo:extension_univ_tore}
 Soit $T$ un tore lorentzien lisse  (resp. analytique) de dimension~2 muni d'un
champ de Killing non trivial. Alors le revêtement universel de~$T$ possède une
extension lisse (resp. analytique) munie d'un champ de Killing, 
simplement connexe, L-complète  et réflexive. Une telle extension est 
unique à isométrie près. 
\end{theo}

Dans le cas analytique, on a un résultat plus
fort (corollaire~\ref{coro:ext_unvi_analytique}) : {\em le revêtement universel
d'un tore lorentzien analytique possédant un champ de Killing non trivial admet,
à isométrie près, une unique extension analytique, simplement connexe et
L-complète}.

Quand $T$ n'est pas L-complet, l'extension du revêtement universel doit
évidemment être propre. Le revêtement universel est au contraire inextensible si
$T$ est L-complet, c'est-à-dire (voir Sanchez \cite{Sanchez}) si le champ de
Killing ne change pas de type, $T$ est alors complet. Bien que cette condition
sur la géométrie locale $[f]$ (qui ne change pas de signe) soit ouverte, ce  cas
est assez particulier. Le théorème~\ref{theo:extension_univ_tore}
établit donc que
la complétude est la seule obstruction à l'extensibilité du revêtement
universel, pour les tores admettant un champ de Killing. On pourrait se demander
plus généralement s'il est possible de caractériser les tores lorentziens dont
le revêtement universel est extensible.

Le théorème~\ref{theo:unif_intro} nous conduit à une classification des
tores lorentziens admettant un champ de Killing. Contrairement à son
analogue riemannien, le revêtement universel d'un tel tore n'est pas
déterminé par la norme du champ (sauf si celle-ci ne change pas de signe,
auquel cas le tore est complet), seule son extension universelle l'est. Ce
problème de classification est lié à la description des ouverts divisibles
des surfaces $E^u_f$.  Il apparaît que ces ouverts sont les relevés de
certaines géodésiques de l'espace des feuilles du champ de Killing de
$E^u_f$ et qu'ils peuvent être codés simplement par une donnée finie, mais
arbitrairement grande. Nous pouvons alors donner la classification des
tores et des bouteilles de Klein possédant un champ de Killing, voir
corollaires
\ref{coro:class_surf}, \ref{coro:class_bout1} et \ref{coro:class_bout2}
(comparer avec Sanchez~\cite{Sanchez},  voir aussi
 Matveev~\cite[théorème 4]{Matveev}).
Cette étude nous permet également de
déterminer, pour chaque géométrie locale périodique $[f]$, les composantes
connexes de l'espace des métriques lorentziennes
contenant un tore ou une bouteille de
Klein localement modelé sur l'espace $E^u_f$, voir
section~\ref{sub:compos}.

Finalement, nous retournons  rapidement à l'étude des points conjugués. Nous
montrons que les surfaces $E^u_f$ associées à un tore (c'est-à-dire avec $f$
périodique) vérifient un théorème \og à la Hopf\fg : elles
contiennent toujours des points conjugués à moins qu'elles ne soient
plates, 
proposition~\ref{prop:pts_conjugues_Eu}. 
Autrement dit, {\em si l'extension universelle associée à $(T,K)$ n'a pas
de points conjugués, alors $T$ est plat}.
Nous établissons aussi que les tores
non plats ayant un champ de Killing et aucun point conjugué doivent ressembler
en un sens assez précis à un tore de Clifton-Pohl (qui rappelons-le n'en possède
pas, \cite{BM}), théorème~\ref{theo:comme_CP}.  En particulier, un tel tore
n'est jamais homotope à une métrique plate, ce qui étend un résultat de
Gutierrez, Palomo et Romero \cite{GPR} concernant les métriques globalement
conformément plates.

\renewcommand\contentsname{Contenu de l'article}
\tableofcontents
\section{Géométrie associée à un champ de Killing}
\label{sect:geom_killing}
\subsection{Bandes. Extension par réflexion}
\label{subs:ext_refl}
Tous les objets (surfaces, métriques lorentziennes, champs de vecteurs)
sont supposés de classe $\mcC^\infty$.  Dans la suite, on considère des
couples $(X,K)$ où $X$ est une surface lorentzienne, {\em éventuellement à
  bord}, munie d'un champ de Killing {\em non trivial} $K$.  Une isométrie
entre deux couples $(X,K)$ et $(X',K')$ est une isométrie lorentzienne
$\Phi : X\to X'$ telle que $\Phi_*(K)=K'$.  On dira que $(X,K)$ est {\em
  saturée} si $K$ est complet ; les composantes du bord (s'il est non vide)
sont alors des orbites de $K$. On notera toujours $\mcK$ le feuilletage de
$K$ et $\mcK^\perp$ le feuilletage orthogonal à $K$, feuilletages définis
en dehors des zéros de $K$.

On rappelle qu'une surface simplement connexe, complète et à courbure
constante est isométrique au plan de Minkowski $\text{Mink}_2$,
c'est-à-dire $\R^2$ muni de la métrique $2dxdy$, ou à un multiple du
revêtement universel de $\text {dS}_2$, la surface de de Sitter,
c'est-à-dire le projectifié de $\{(x,y,z)\in \R^3\,| x^2+y^2-z^2=1\}$ muni
de la métrique induite par $dx^2+dy^2-dz^2$. On appelle \og demi-plan de
Minkowski\fg \ toute surface isométrique à un demi-plan ouvert de
$\text{Mink}_2$ dont le bord est une droite de type lumière, c'est-à-dire
toute surface isométrique à $\R^2$ muni de la métrique $2dxdy+xdy^2$. On
appelle \og domaine de de Sitter\fg\ toute surface isométrique au
complémentaire d'une géodésique de lumière dans $\text{dS}_2$.

\begin{lemm}
\label{lemm:isom_isomK}
Soit $X$ une surface lorentzienne connexe munie d'un champ de Killing $K$
non trivial et complet. Le groupe $\Isom^\pm(X,K)$ des isométries de $X$
qui préservent $K$ au signe près coïncide avec $\Isom(X)$, sauf si le
revêtement universel de $X$ est à courbure constante et contient un
demi-plan de Minkowski ou (quitte à multiplier la métrique par un scalaire)
un domaine de de Sitter.
\end{lemm}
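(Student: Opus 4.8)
\emph{Esquisse de la démarche.} Soit $\Phi\in\Isom(X)$. Puisque $\Phi$ est une isométrie, $\Phi_*K$ est encore un champ de Killing de $X$, et il s'agit de voir que $\Phi_*K$ reste colinéaire à $K$, hormis les situations exceptionnelles annoncées. Je me ramènerais d'abord au revêtement universel $\widetilde X$ : toute isométrie de $X$ se relève en une isométrie de $\widetilde X$, les automorphismes de revêtement préservent le relevé $\widetilde K$ de $K$, et $\widetilde X$ est à courbure constante si et seulement si $X$ l'est. On peut donc supposer $X$ simplement connexe, l'énoncé général s'en déduisant puisque l'exception porte sur $\widetilde X$ et qu'une isométrie déplace la direction de $K$ si et seulement si ses relevés déplacent celle de $\widetilde K$. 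La courbure $\ka$ étant un invariant isométrique, invariant par le flot de tout champ de Killing $Y$ (d'où $Y\cdot\ka=0$), toute la discussion se fera selon que $\ka$ est constante ou non.

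\emph{Courbure non constante.} Je montrerais que l'algèbre de Killing de $X$ se réduit aux multiples constants de $K$ — c'est l'unicité \og à homothétie près\fg\ annoncée dans l'introduction. Sur l'ouvert non vide $U=\{d\ka\neq 0\}\cap\{K\neq 0\}$, tout champ de Killing $Y$ vérifie $Y\cdot\ka=0$, donc est tangent aux lignes de niveau de $\ka$, qui y forment un feuilletage de dimension $1$ ; comme $K$ est tangent et non nul sur $U$, on peut écrire $Y=gK$. La condition de Killing pour $gK$ se lit, puisque $\mcL_K\langle\,\cdot\,,\,\cdot\,\rangle=0$,
\[
\mcL_{gK}\langle\,\cdot\,,\,\cdot\,\rangle=dg\otimes K^\flat+K^\flat\otimes dg=0,
\]
où $K^\flat$ est la $1$-forme duale de $K$. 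En évaluant ce tenseur symétrique sur la diagonale on obtient $dg(v)\,K^\flat(v)=0$ pour tout $v$, ce qui, en dimension $2$, force $dg=0$ là où $K^\flat\neq0$ : la fonction $g$ est donc localement constante sur $U$. Par rigidité des champs de Killing (déterminés par leur $1$-jet en un point de la variété connexe $X$), $Y$ coïncide alors avec un multiple constant de $K$ sur $X$ entier. Ainsi $\Phi_*K=\la K$ avec $\la\in\R^*$ : $\Phi$ préserve la direction de $K$, d'où $\Isom(X)=\Isom^\pm(X,K)$. En particulier, toute exception impose $\dim\geq 2$, donc, par contraposée de ce qui précède, la constance de $\ka$.

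\emph{Courbure constante.} Ici l'algèbre de Killing peut être de dimension $2$ ou $3$, et une isométrie peut envoyer $K$ sur un champ indépendant. Via l'application développante, je plongerais isométriquement $X$ dans l'un des modèles simplement connexes — le plan de Minkowski $\text{Mink}_2$, ou un multiple (de signe quelconque, couvrant ainsi les courbures non nulles des deux signes) du revêtement universel de $\text{dS}_2$ — dans lequel $K$, complet, se prolonge par analyticité en un champ de Killing global du modèle. La stratégie serait alors de classer à isométrie près les champs de Killing complets de chaque modèle (selon le type de leur norme : translations d'espace, de temps ou de lumière, ou \og boosts\fg\ possédant un zéro), puis de déterminer si la droite $\R K$ est globalement préservée par $\Isom(X)$. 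Le seul fait que $\dim\geq2$ ne suffit pas : si $\R K$ est un idéal de l'algèbre de Killing, le flot d'un second champ ne fait que dilater $K$ et en préserve la direction. Je m'attends à ce que les seules configurations produisant une isométrie qui déplace la direction de $K$ soient exactement celles où $X$ contient (comme surface lorentzienne) un demi-plan de Minkowski — alors $K$ est un \og boost\fg\ et le flot d'une translation le conjugue en $K+\text{translation}$, non colinéaire à $K$ — ou, à homothétie près de la métrique, un domaine de de Sitter, par une configuration analogue. Dans ces cas on exhibe explicitement l'isométrie non colinéaire, ce qui justifie le \og sauf si\fg.

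La difficulté principale est précisément ce second cas. Au-delà de la classification des champs de Killing complets des modèles, le point délicat est d'établir l'équivalence \emph{précise} entre l'existence d'une isométrie changeant la direction de $K$ et la condition géométrique \og contenir un demi-plan de Minkowski ou un domaine de de Sitter\fg\ : il faut montrer que pour les domaines ne contenant aucune de ces pièces modèles la droite $\R K$ reste invariante sous $\Isom(X)$ tout entier (y compris ses composantes non neutres), et relier ce contrôle au type du champ $K$. Le cas de la courbure non constante, lui, est essentiellement l'argument de rigidité ci-dessus, la seule subtilité étant le passage du local au global via l'unicité des champs de Killing sur $X$ connexe.
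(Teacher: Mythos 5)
Votre architecture générale coïncide avec celle du papier (dichotomie selon la constance de la courbure, équivalente à celle du papier sur $\dim\mfK_X$), et votre preuve de ce que $\mfK_X=\R K$ en courbure non constante (écriture $Y=gK$ sur $\{d\ka\neq 0\}\cap\{K\neq 0\}$, équation de Killing forçant $dg=0$, puis rigidité) est correcte et constitue une variante valable de l'argument du papier, qui établit la contraposée (deux champs indépendants en un point, ou un champ s'annulant en un point, annulent $d\ka$). En revanche, votre conclusion \og $\Phi_*K=\la K$, donc $\Phi$ préserve la direction de $K$, donc $\Isom(X)=\Isom^\pm(X,K)$\fg\ est un vrai saut : $\Isom^\pm(X,K)$ est défini par $\Phi_*K=\pm K$ et non par la préservation de la droite $\R K$, et il reste à exclure $\la^2\neq 1$. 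Ce point n'est pas une formalité. Sur $(0,+\infty)\times\R$ muni de $2dxdy+f(x)dy^2$ avec $f(x)=x^2\bigl(1+\eps\sin(2\pi\log_2 x)\bigr)$, on a $f(x)=4f(x/2)$, de sorte que $(x,y)\mapsto (x/2,2y)$ est une isométrie envoyant $\partial_y$ sur $2\partial_y$, alors que la courbure $f''(x)/2$ n'est pas constante pour $\eps\neq 0$ petit : dès que la fonction norme admet une auto-similarité affine non triviale ($f=a^{-2}f(a\cdot+b)$ avec $a^2\neq 1$), la direction de $K$ est préservée sans que $K$ le soit au signe près. Le papier affirme ici \og si $\langle K,K\rangle$ n'est pas identiquement nulle, on a $\la^2=1$\fg\ sans plus de détails ; votre rédaction, qui ne mentionne même pas la contrainte, laisse donc un trou exactement là où se joue la conclusion.

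Le second problème, plus structurel, est que le cas de la courbure constante --- où réside l'essentiel du contenu de l'énoncé, à savoir l'identification des configurations exceptionnelles --- n'est qu'un programme, que vous laissez explicitement en suspens (\og je m'attends à ce que...\fg). De plus son point de départ est incorrect : la développante d'une surface simplement connexe à courbure constante non complète n'est qu'une isométrie locale, en général non injective, et ne fournit donc pas de plongement de $X$ dans le modèle. Le papier procède sans plonger $X$ : si $\dim\mfK_X\geq 2$, ou bien $\Isom(X)$ contient un sous-groupe abélien de dimension~2 et $X$ est le plan de Minkowski tout entier, ou bien il contient un sous-groupe localement isomorphe à $\text{Aff}$ ; les orbites de dimension~1 d'un tel groupe sur $\text{Mink}_2$ ou $\text{dS}_2$ étant des droites de lumière isolées, $X$ contient une orbite ouverte simplement connexe isométrique à $\text{Aff}$ muni d'une métrique invariante à gauche, donc à un demi-plan de Minkowski ou à un domaine de de Sitter. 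Votre découpage omet aussi le sous-cas $\dim\mfK_X=1$ avec $K$ de type lumière (surface plate, $\la$ a priori arbitraire), où le papier obtient le demi-plan de Minkowski par l'analyse du bord de $\widetilde X$ dans $\text{Mink}_2$. Notez enfin que seule l'implication \og $\Isom(X)\neq\Isom^\pm(X,K)$ entraîne l'exception\fg\ est à démontrer : l'\og équivalence précise\fg\ que vous présentez comme la difficulté principale est plus que ce que l'énoncé exige.
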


\begin{proof}
Supposons que l'algèbre de Lie $\mfK_X$ des champs de Killing sur $X$ soit
de dimension~$1$. Il existe alors $\la\in \R$ tel que $\Phi_*(K)=\la K$. Si
$\langle K,K\rangle$ n'est pas identiquement nulle, on a $\la^2=1$. Sinon
$K$ est de type lumière et donc le revêtement universel de $X$ est
isométrique à un ouvert de $\text{Mink}_2$ dont le bord est constitué de
géodésiques de lumière invariantes par $\Phi$ et $K$. Il en contient donc
au plus une et $X$ contient un demi-plan de Minkowski.

Supposons maintenant que $\mfK_X$ est de dimension au moins 2 (en fait
égale \`a 2, le cas où $\dim \mfK_X>2$ est bien connu,
\cite[p. 372]{wolf}).  Soient $K_1$ et $K_2$ deux champs de Killing et soit
$p\in X$.  Si $K_1(p)$ et $K_2(p)$ sont non colinéaires, alors la courbure
est constante près de $p$.  Sinon, il existe $K_3\in \mfK_X$ non trivial
s'annulant en~$p$. Ce point est nécessairement une selle de $K_3$ (voir la
preuve de la proposition~\ref{prop:carte_exponentielle}): à nouveau la
différentielle de la courbure est nulle en~$p$.  Quitte à multiplier la
métrique par un scalaire, la courbure de $X$ est égale à $0$ ou~$1$ et
donc~$X$ est localement modelé sur $\text{Mink}_2$ ou $\text{dS}_2$. En
comparant les algèbres de Lie des champs de Killing locaux de $X$ et du
modèle, on voit que si $\Isom(X)$ contient un sous-groupe abélien de
dimension $2$, alors $X$ est plate et les champs $K_i$ sont géodésiques et
partout linéairement indépendants.  Par conséquent $X$ est isométrique au
plan de Minkowski (et finalement $\dim \mfK_X = 3$).  Sinon, il contient un
sous-groupe localement isomorphe au groupe affine de la droite, noté
$\text{Aff}$.  Les orbites de dimension $1$ de l'action d'un tel groupe sur
$\text{Mink}_2$ ou $\text{dS}_2$ sont des droites de lumière isolées. Par
conséquent, $X$ contient une orbite ouverte $U$ simplement connexe car son
bord (si $U\neq X$) est constitué de géodésiques de lumière proprement
plongées.  L'ouvert $U$ est donc isométrique au groupe $\text{Aff}$ muni
d'une métrique lorentzienne invariante à gauche.  Une telle surface est
isométrique à un demi-plan de Minkowski ou à un domaine de de Sitter.
\end{proof}

\begin{defi}[rubans, bandes, carrés  et dominos]
\label{defi:rubans_bandes}
Soit $X$ une surface lorentzienne munie d'un champ de Killing $K$ complet
{\em ne s'annulant pas}.  On dit que $(X,K)$ est
\begin{enumerate}
\item \label{defi:ruban}
un {\em ruban} si $X$ est simplement connexe et si l'un des
  feuilletages de lumière de $X$ est partout transverse à $K$ (y compris au
  bord s'il est non vide),
\item  \label{defi:bande_carre}
une {\em bande} (resp. un {\em carré}) si $X$ est homéomorphe à
  $[0,1]\times \R$ (resp.  à \mbox{$[0,1]^2\smallsetminus \{0,1\}^2$}) 
avec $\langle K,K\rangle$ nul au bord  et non nul à l'intérieur de $X$, 
\item  \label{defi:domino}
un {\em domino} si $X$ est simplement connexe 
{\em sans bord} et si son champ $K$ admet une unique orbite de lumière. 
\end{enumerate}
\end{defi}

Un ruban $(X,K)$ dans lequel on a choisi une orbite $c_0$ de $K$ (par exemple
une orbite de lumière s'il en existe) sera appelé {\em ruban marqué} et noté
$(X,K,c_0)$. Les dominos sont automatiquement des rubans, que l'on marquera
toujours par l'unique orbite de lumière de leur champ de Killing.

\begin{lemm}[coordonnées adaptées]
\label{lemm:carte_adaptee}
Soit $(X,K,c_0)$ un ruban lorentzien marqué. Il existe  un
intervalle~$I$ de $\R$ comprenant $0$ et des coordonnées globales $(x,y)\in
I\times \R$ sur $X$ dans lesquelles le champ et la métrique s'écrivent
respectivement~$\partial_y$ et
\begin{equation}
\label{equa:carte_adaptee}
2dxdy + f(x)dy^2 \esp (x,y)\in I\times \R,
\end{equation} 
la feuille $c_0$ étant donnée par $x=0$.  Si le champ $K$ admet une feuille
de lumière, alors ces coordonnées sont uniques à translation près de la
variable $y$ (en particulier~$I$ et $f$ sont uniques).
\end{lemm}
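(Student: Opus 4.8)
The plan is to build the coordinates from the two natural vector fields attached to the marked ribbon, and then to globalise them using the completeness of $K$ together with the simple connectedness of $X$.

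First I would produce a canonical companion to $K$. On a ribbon exactly one of the two null line fields is everywhere transverse to $K$; since $X$ is simply connected this line field is orientable, so it has a smooth global section, and transversality forces $\langle K,L\rangle\neq 0$ for every such section $L$ (in dimension $2$ a vector is orthogonal to a null vector iff it is collinear with it). Hence there is a unique smooth null field $L$ spanning this foliation with $\langle K,L\rangle=1$. The key infinitesimal fact is $[K,L]=0$: as $K$ is Killing one has $\mathcal L_K g=0$, so differentiating $\langle L,L\rangle=0$ gives $\langle [K,L],L\rangle=0$, whence $[K,L]$ is collinear with $L$, say $[K,L]=\phi L$; and differentiating $\langle K,L\rangle=1$ gives $0=\langle K,[K,L]\rangle=\phi$.

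Next I would integrate. Put $\beta=\langle L,\cdot\rangle$ and $\alpha=\langle K,\cdot\rangle-\langle K,K\rangle\,\beta$; these form the coframe dual to $(L,K)$, and computing $d\beta(K,L)$ and $d\alpha(K,L)$ with $[K,L]=0$ shows both are closed, hence exact on the simply connected $X$: $\beta=dy$, $\alpha=dx$. I normalise $x$ to vanish on $c_0$ (legitimate because $\alpha(K)=0$ makes $x$ constant on the orbits of $K$) and fix $y$ by a base point. Then $L=\partial_x$, $K=\partial_y$, and reading off $\langle\partial_x,\partial_x\rangle=0$, $\langle\partial_x,\partial_y\rangle=1$, $\langle\partial_y,\partial_y\rangle=\langle K,K\rangle$ yields the metric $2dxdy+\langle K,K\rangle dy^2$; finally $\langle K,K\rangle$ is invariant under the isometric flow of $K$, hence constant on its orbits, so it is a function $f$ of $x$ alone. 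The main obstacle is to upgrade the resulting local diffeomorphism $(x,y)\colon X\to\R^2$ into a diffeomorphism onto a product $I\times\R$. Here I would use that the flow of $K$ is complete, free, and has no periodic orbit (a periodic orbit of a nonsingular field on the planar surface $X$ would bound a disc and force a zero of $K$), so $y$ restricts to a diffeomorphism of each orbit onto a full vertical line. Flowing the transverse leaf through the base point first by $L$ (for $x$ in its maximal interval $I\ni 0$) and then by $K$ gives, since the two flows commute, a smooth map $G\colon I\times\R\to X$ with $(x,y)\circ G=\mathrm{id}$; thus $G$ is an open embedding whose image $U$ is carried diffeomorphically onto $I\times\R$. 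In particular $(x,y)$ is already surjective, and the one remaining point, which I expect to be the real work, is that $U=X$, i.e.\ that each fibre of $x$ is a single orbit of $K$. I would obtain this by checking that $(x,y)$ is a covering of $I\times\R$: it intertwines the complete $K$-flow with vertical translation, so lifting of paths in the $\R$-direction is automatic, and transversality of the light foliation supplies product charts in the $x$-direction; simple connectedness of $I\times\R$ then forces the covering to be trivial, so $(x,y)$ is a global diffeomorphism onto $I\times\R$.

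For the uniqueness clause I would compare two adapted charts through the transition $(x,y)\mapsto(x',y')$. Preserving $K=\partial_y=\partial_{y'}$ forces $x'=X(x)$ and $y'=y+h(x)$, and equating $2dx'dy'+\tilde f(x')dy'^2$ with $2dxdy+f(x)dy^2$ gives, after elimination, the relations $h'(X'+1)=0$, $\;X'+\tilde f(X)h'=1$ and $\tilde f(X)=f$. The branch $h'=0$ yields $X'=1$, hence (using $c_0=\{x=0\}=\{x'=0\}$) $x'=x$, $\tilde f=f$ and $y'=y+\mathrm{const}$, which is exactly the asserted freedom and shows $I$ and $f$ are unique. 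The branch $X'=-1$ forces $h'=2/f$, which is not smooth at any zero of $f$; so as soon as $K$ admits a light leaf, i.e.\ $f$ vanishes somewhere on $I$, this branch is excluded, leaving uniqueness up to translation of $y$.
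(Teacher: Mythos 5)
Your construction is essentially the paper's: you produce the null field $L$ with $\langle L,L\rangle=0$, $\langle K,L\rangle=1$ and $[K,L]=0$ (you supply the Lie-derivative computation that the paper leaves implicit), you build the chart from the two commuting flows, and your uniqueness analysis via the transition map $(x,y)\mapsto(X(x),y+h(x))$ reaches the same dichotomy as the paper's ``$L$ is unique when $K$ has a light leaf''. The closed coframe $\alpha,\beta$ is a nice touch: the identity $(x,y)\circ G=\mathrm{id}$ gives injectivity of $G$ directly, where the paper invokes Poincaré--Bendixson. (One small inaccuracy: it is not true that \emph{exactly} one null line field is transverse to $K$ --- when $\langle K,K\rangle$ never vanishes both are, which is precisely why the uniqueness clause needs a light leaf; this does not affect your argument.)

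The gap is in the last step, $U=X$, which you correctly single out as the real work but then dispatch with a covering-space argument that is not justified as written. To show $(x,y):X\to I\times\R$ is a covering you must lift horizontal paths, i.e.\ show that the integral curve of $L$ through an \emph{arbitrary} point of $X$ extends for as long as its $x$-coordinate stays in $I$; ``transversality supplies product charts'' only restates that $(x,y)$ is a local diffeomorphism, which never suffices for a local diffeomorphism to be a covering. Worse, the target $I\times\R$ is not yet well defined: a priori another maximal $L$-curve could have a larger or smaller $x$-range than $\gamma_0$, so you do not even know $x(X)\subseteq I$. Both points are exactly what the paper's saturation argument settles: since $\Phi_K^t$ is an isometry preserving $L$, it carries maximal integral curves of $L$ to maximal integral curves of $L$; hence the saturations $U_\gamma$ of maximal $L$-curves by the (complete) flow of $K$ are open, and pairwise disjoint or equal, so by connectedness $X=U_{\gamma_0}=U$. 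Equivalently, in your setup, one shows $U$ is closed: a boundary point lies in a flow box, is reached from a point $\Phi_K^t(\gamma_0(s))\in U$ by a short $L$-flow followed by a $K$-flow, and conjugating the $L$-flow back by $\Phi_K^{-t}$ extends $\gamma_0$, which by maximality keeps you inside $U$. With that substitution your proof is complete.
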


\begin{proof} 
Il existe un champ $L$ sur $X$ (qui est orientable) vérifiant $\langle L,L
\rangle = 0$, $\langle K,L \rangle = 1$ et invariant par $K$, c'est-à-dire
$[K,L]=0$.  Pour toute géodésique maximale $\ga$ tangente à $L$ 
$U_\ga$ le saturé de $\ga$ par le flot $\Phi^t$ de $K$. Comme $\ga$ est
maximale, chaque $U_\ga$ est ouvert. Les ouverts $U_\ga$ sont disjoints ou
confondus, $X$ est connexe, par conséquent $X=U_\ga$.  Puisque~$X$ est
simplement connexe, toute géodésique $\ga$ coupe chaque orbite de $K$ au plus
une fois d'après le théorème de Poincaré-Bendixson.  On prend comme coordonnée
$x$ le temps du flot de~$L$ normalisé par $x=0$ sur $c_0$ (ce paramétrage est
géodésique sur chaque orbite de~$L$) et pour $y$ le temps du flot de $K$
normalisé par $y=0$ sur une géodésique $\ga_0$. Si $K$ admet une feuille de
lumière, alors le champ $L$ est unique.  Le seul changement possible de
coordonnées est une translation, d'où l'unicité.
\end{proof}

\begin{rema}
\label{rema:demi_espaces_elem} 
Si $(X,K,c_0)$ est un domino, l'orbite de lumière $c_0$ partage $X$ en deux
demi-espaces ouverts, que l'on notera $X^\pm$, donnés en coordonnées adaptées
par $\pm x>0$.
\end{rema}

Localement, au voisinage d'une orbite non triviale $c_0$ de $K$, il existe
toujours au moins un champ $L$ comme dans la preuve ci-dessus; en
particulier $c_0$ est contenue dans un ruban ouvert.  Un tel champ $L$ est
unique au voisinage des orbites de lumière de $K$, par exemple au voisinage
du bord dans le cas d'une bande.

\begin{prop}[réflexions locales génériques]
\label{prop:refl_loc_generique}
Soit $(U,K)$ une surface lorentzienne saturée homéomorphe au plan et 
telle que $\langle K,K \rangle$ ne s'annule pas. Pour toute feuille
$\gamma$ du feuilletage orthogonal à $K$, il existe une 
isométrie indirecte $\sig_\gamma :U\to U$ fixant $\gamma$ point par point.
Cette isométrie $\sig_\gamma$ sera appelée {\em réflexion locale générique 
d'axe $\gamma$}.
\end{prop}

\begin{proof} 
D'après le lemme~\ref{lemm:carte_adaptee}, la surface $(U,K)$ admet
une carte adaptée de la forme~\eqref{equa:carte_adaptee}.  Si $G$ est
une primitive de $-1/f$ sur l'intervalle (ouvert) $I$, les feuilles du
feuilletage orthogonal à $K$ s'écrivent $y=G(x)+\beta$ ($\beta \in \R$). La
réflexion définie par
\begin{equation}
\label{equa:refl_loc_generique}
x'=x,~~~ y'=2(G(x)+\beta)-y \esp ((x,y)\in I\times \R)
\end{equation}
est clairement une isométrie de $U$, qui inverse le champ $K$.
\end{proof}

\par 
Les rubans jouent un rôle central dans notre étude. Nous donnons
maintenant une classification de ces objets et de leurs isométries.
Certaines d'entre elles proviennent de symétries additionnelles de la
fonction norme du champ $K$. Soit $(X,K)$ un ruban et soit $f\in
\mcC^\infty(I,\R)$ comme dans \eqref{equa:carte_adaptee}. On pose
\begin{equation}
\label{equa:isom_f}
\Isom(f) = \{\ph\in \Isom(I,dx^2) ; f\circ \ph = f\},
\end{equation}
 où $\Isom(I,dx^2)$ désigne le groupe des isométries euclidiennes de
l'intervalle $I$. Le groupe $\Isom(f)$ est trivial sauf si $f$ admet une
symétrie, une période ou les deux ; quand $f$ est non constante, il est
alors isomorphe respectivement à $\Z/2\Z$, $\Z$ ou $D_\infty$ (groupe
diédral infini). Par ailleurs, on note $\Isom_{\rm gen}(X)$ le groupe formé
par le flot de $K$ et, si $\langle K,K\rangle$ ne s'annule pas, par les
réflexions génériques (voir proposition~\ref{prop:refl_loc_generique}).
Les éléments de $\Isom_{\rm gen}(X)$ seront appelés {\em isométries
  génériques} du ruban $(X,K)$.

\begin{prop}[classification et isométries des rubans]
\label{prop:class_isom_rubans}
\hfill
\begin{enumerate}
\item
Les classes d'isométrie de rubans $(X,K)$ tels que $\langle K,K\rangle$
s'annule (resp. ne s'annule pas) correspondent bijectivement aux fonctions
lisses modulo translation (resp. translation et changement de signe) de la
variable.
\item Si $(X,K)$ est un ruban à courbure non constante, alors le groupe
d'isométrie  $\Isom(X)$ est isomorphe à un produit semi-direct 
$\Isom_{\rm  gen}(X) \rtimes \Isom(f)$ du groupe des isométries génériques 
par le groupe de symétrie d'une fonction  $f$ associée à $(X,K)$; de plus
toute géodésique de lumière  interne (maximale) définit une section
$\Isom(f)\to \Isom(X)$.
\end{enumerate}
\end{prop}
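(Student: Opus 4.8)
Le plan est de ramener toute structure de ruban à sa fonction norme $f$ \emph{via} les coordonnées adaptées du lemme~\ref{lemm:carte_adaptee}, puis d'analyser l'action des isométries sur ces coordonnées.

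Pour le point~(1), à tout ruban marqué $(X,K,c_0)$ le lemme~\ref{lemm:carte_adaptee} associe une fonction $f\in\mcC^\infty(I,\R)$ telle que $(X,K)$ soit isométrique à $I\times\R$ muni de~\eqref{equa:carte_adaptee}; réciproquement cette surface est un ruban (simplement connexe, $K=\partial_y$ complet, le feuilletage $\{y=\cte\}$ tangent à $\partial_x$ étant de lumière et transverse à $K$). Il reste à identifier l'ambiguïté sur $f$. Je vérifie d'abord que changer l'orbite marquée $c_0$ revient à translater la coordonnée transverse $x$, donc $f$ modulo translation. J'examine ensuite le choix du champ de lumière $L$ : lorsque $\langle K,K\rangle$ s'annule, $K$ possède une orbite de lumière et $L$ est \emph{unique} (lemme~\ref{lemm:carte_adaptee}), de sorte qu'aucune ambiguïté de signe n'apparaît; lorsque $\langle K,K\rangle$ ne s'annule pas, il existe un second champ de lumière normalisé $L'=-\partial_x+(2/f)\partial_y$, dont le temps de flot vaut $-x$ à translation près, ce qui remplace $f(x)$ par $f(-x)$. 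On obtient ainsi $f$ modulo translation (resp. translation et changement de signe). La bijectivité en résulte : une isométrie de rubans transporte les cartes adaptées, et l'unicité du lemme~\ref{lemm:carte_adaptee} montre que deux rubans sont isométriques si et seulement si leurs fonctions coïncident à l'équivalence voulue près, la réciproque étant fournie par l'isométrie explicite $(x,y)\mapsto(\pm x+b,\pm y)$.

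Pour le point~(2), j'observe d'abord que, la courbure étant non constante, le revêtement universel (ici $X$ lui-même) n'est pas à courbure constante, de sorte que le lemme~\ref{lemm:isom_isomK} donne $\Isom(X)=\Isom^\pm(X,K)$ : toute isométrie $\Phi$ vérifie $\Phi_*K=\ep K$ avec $\ep=\pm1$. Une telle $\Phi$ envoie orbites de $K$ sur orbites de $K$ et induit donc une bijection $\bar\Phi$ de l'espace des orbites $I$, avec $x\circ\Phi=\bar\Phi\circ x$; le caractère géodésique du paramètre transverse joint à la normalisation $\langle K,L\rangle=1$ force $\Phi_*L=\pm L$ (ou $\pm L'$), donc $\bar\Phi$ à être une isométrie euclidienne de $I$, et la conservation de $\langle K,K\rangle=f\circ x$ donne $f\circ\bar\Phi=f$, soit $\bar\Phi\in\Isom(f)$. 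L'application $\Phi\mapsto\bar\Phi$ est un morphisme de groupes. Un calcul en coordonnées montre ensuite que son noyau est exactement $\Isom_{\rm gen}(X)$ : une isométrie fixant chaque orbite s'écrit $(x,y)\mapsto(x,\ep y+c(x))$, la condition d'isométrie imposant $c$ constante si $\ep=1$ (flot de $K$) et $c'=-2/f$ si $\ep=-1$ (réflexion générique, possible seulement là où $f$ ne s'annule pas, en accord avec la définition de $\Isom_{\rm gen}$).

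Il reste à scinder la suite exacte $1\to\Isom_{\rm gen}(X)\to\Isom(X)\to\Isom(f)\to1$. Étant donnée une géodésique de lumière interne, que je prends sous la forme $\ga_0=\{y=0\}$, j'associe à $\ph\in\Isom(f)$, disons $\ph(x)=\ep x+b$, l'application $\Phi_\ph(x,y)=(\ph(x),\ep y)$. Un calcul direct utilisant $f\circ\ph=f$ montre que $\Phi_\ph$ est une isométrie de $X$ préservant $\ga_0$, que $\overline{\Phi_\ph}=\ph$, et que $\ph\mapsto\Phi_\ph$ est un morphisme (donc une section), les signes se composant multiplicativement. On en déduit l'isomorphisme $\Isom(X)\cong\Isom_{\rm gen}(X)\rtimes\Isom(f)$, chaque géodésique de lumière interne fournissant une section distincte. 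Le point délicat sera de justifier rigoureusement que $\bar\Phi$ est une \emph{isométrie} euclidienne de $I$ et non un simple difféomorphisme préservant $f$ : il faut exploiter le caractère canonique de la coordonnée transverse (définie à translation et signe près) et traiter le cas où $\Phi$ échange les deux feuilletages de lumière, ce qui ne peut se produire que si $\langle K,K\rangle$ ne s'annule pas, les deux feuilletages étant alors transverses à $K$.
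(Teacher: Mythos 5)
Votre démonstration est correcte et suit pour l'essentiel la même voie que celle du papier : classification via les coordonnées adaptées du lemme~\ref{lemm:carte_adaptee} et l'ambiguïté entre les deux champs de lumière normalisés $L$ et $L'$, puis, pour le point~(2), réduction à l'action de $\Phi$ sur le couple $(K,L)$ grâce au lemme~\ref{lemm:isom_isomK}, identification du noyau de $\Phi\mapsto\bar\Phi$ avec $\Isom_{\rm gen}(X)$ (flot si $f$ s'annule, flot et réflexions génériques $c'=-2/f$ sinon) et section explicite $\Phi_\ph(x,y)=(\ph(x),\ep y)$ stabilisant une géodésique de lumière interne. Le \og point délicat\fg\ que vous signalez (le fait que $\bar\Phi$ soit une isométrie euclidienne de $I$) se règle bien comme vous l'esquissez, puisque $\Phi$ permute les champs $\pm L,\pm L'$ et que $x$ est un temps de flot de $L$, ce qui donne $x\circ\Phi=\pm x+b$ ; c'est exactement l'argument implicite du papier.
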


\begin{proof} 
La métrique et le champ d'un ruban $(X,K)$ sont déterminés par une carte
adaptée de la forme \eqref{equa:carte_adaptee}.  Dans cette carte, la
fonction $f$ représente la norme $\langle K,K\rangle$ exprimée dans la
coordonnée transverse $x$. Si $\langle K,K\rangle$ s'annule, le champ $L$
est unique et $f$ est bien définie modulo translation de~$x$.  Sinon, il
existe deux champs transverses $L$ et $L'$ et deux cartes adaptées de la
forme $2dx dy + f(x)dy^2$ et $2dx' dy' + f(-x'){dy'}^2$. Le changement de
carte s'obtient en composant $x'=-x, y'=-y$ (qui inverse le champ de
Killing) avec une réflexion de l'une des cartes,
voir~\eqref{equa:refl_loc_generique}.
\par

Toute isométrie $\Phi$ d'un ruban $(X,K)$ qui n'est pas à courbure constante
envoie  $K$ sur $\pm K$ (lemme~\ref{lemm:isom_isomK}).  Posons $(u,v) =
\Phi(x,y)$ où $(x,y) \in I\times\R$ sont des coordonnées adaptées. Si $f$
s'annule, alors par unicité du champ $L$ le couple de champs $(K,L)$ est envoyé
par $\Phi$ sur $(K,L)$ (cas~(a)) ou sur $(-K,-L)$ (cas~(b)).  Dans le cas~(a) on
a $u=x+ a,v=y+t$ ; de plus $a=0$ sauf si $f$ est périodique (et $I=\R$).  Le
cas~(b) n'est possible que si $f(b-x)=f(x)$ pour tout $x\in I$, ce qui impose
une symétrie à la fonction $f$.  Si $f$ ne s'annule pas, il existe deux champs
$L=\partial_x$ et $L'=-\partial_x+ 2/f(x) \partial_y$ vérifiant $\langle
K,L\rangle = \langle K,L'\rangle = 1$.  Aux possibilités (a) et (b) s'ajoutent
les suivantes : $(K,L')$ (cas~(a')) et $(-K,-L')$ (cas~(b')).  D'après la
proposition~\ref{prop:refl_loc_generique}, il existe une isométrie globale
$\sig$ (réflexion générique) vérifiant (b') ; si $\Phi$ satisfait (a') ou (b'),
alors $\sig\circ \Phi$ satisfait (b) et (a) respectivement. On est ramené aux
cas précédents.  Ensuite, le groupe $\Isom_{\rm gen}(X)$ est par définition
distingué et l'assertion (2) se déduit facilement de la description des
isométries. Noter que le stabilisateur d'une géodésique de  lumière interne
(maximale) est isomorphe à $\Isom(f)$ et ne contient aucune isométrie générique
non triviale. 
\end{proof}

\begin{prop}[feuilletage de lumière transverse]
\label{prop:lumiere_transv}
Soit $(X,K)$ une surface lorentzienne connexe, sans bord et 
munie d'un champ complet
 $K$ ne s'annulant pas. On suppose de plus que~$X$ admet un feuilletage
de lumière $\mcL$ partout transverse à $K$. Dans ces conditions 
\begin{enumerate}
\item  le flot de $K$ est transitif sur les feuilles de $\mcL$,
\item  $X$ est homéomorphe au plan, au cylindre ou au tore ;
métriquement, $(X,K)$ est un ruban ou le quotient d'un ruban.  
\end{enumerate}
\end{prop}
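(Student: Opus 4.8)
\emph{The plan is to reduce everything to the universal cover, where Lemma~\ref{lemm:carte_adaptee} supplies explicit coordinates.} First I would record a canonical vector field along $\mcL$. At each point $p$ the null line $\mcL_p$ is transverse to $K$, so $\langle K,\cdot\rangle$ is nonzero on $\mcL_p$ (a null line is its own orthogonal, and $\mcL_p$ is not the direction of $K$); hence there is a unique $L_p\in\mcL_p$ with $\langle K,L_p\rangle=1$. This defines a global field $L$ spanning $\mcL$ with $\langle L,L\rangle=0$. Moreover the flow $\Phi^t$ of the Killing field $K$ preserves the metric, preserves $K$, and preserves $\mcL$ (being connected to the identity it cannot exchange the two light foliations), so $\Phi^t_*L=L$ and $[K,L]=0$. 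The point is that $L$ is determined by $(K,\mcL,g)$ alone, hence is preserved by every isometry preserving $K$ and $\mcL$.

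Next I would pass to the universal cover $\pi\colon\widetilde X\to X$ and lift $K$, $\mcL$, $L$. The lift $\widetilde K$ is Killing, nowhere zero, and complete (its flow is the lift of the complete flow $\Phi^t$, which exists since $\R$ is simply connected), and the lifted light foliation is everywhere transverse to $\widetilde K$. As $\widetilde X$ is simply connected and without boundary, $(\widetilde X,\widetilde K)$ is a ribbon in the sense of Definition~\ref{defi:rubans_bandes}. Choosing any orbit as marking, Lemma~\ref{lemm:carte_adaptee} then yields global coordinates $(x,y)\in I\times\R$ in which $\widetilde K=\partial_y$, the metric is $2\,dx\,dy+f(x)\,dy^2$, and the leaves of the lifted foliation are the lines $\{y=\mathrm{const}\}$, i.e. the orbits of $L=\partial_x$. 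In particular $\widetilde X\cong\R^2$.

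The heart of the argument is the description of the deck group $\Gamma=\pi_1(X)$. Each $\gamma\in\Gamma$ covers the identity, so $\gamma_*\widetilde K=\widetilde K$ by uniqueness of the lifted flow, and $\gamma$ preserves the lifted foliation, being the lift of $\mcL$; since $\gamma$ is an isometry it therefore fixes the canonical field $L$, that is $\gamma_*\partial_x=\partial_x$ and $\gamma_*\partial_y=\partial_y$. A diffeomorphism of $I\times\R$ commuting with both coordinate flows is a translation $(x,y)\mapsto(x+a,y+b)$ preserving $I$ and satisfying $f(\cdot+a)=f$. Thus $\Gamma$ embeds in the translation group $\R^2$; acting freely and properly discontinuously, it is discrete, hence isomorphic to $\{0\}$, $\Z$ or $\Z^2$. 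Consequently $X=\widetilde X/\Gamma$ is homeomorphic to the plane, the cylinder or the torus, and $(X,K)$ is a ribbon (when $\Gamma$ is trivial) or the quotient of the ribbon $\widetilde X$, which proves~(2).

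Assertion~(1) is then immediate: upstairs the flow of $\widetilde K=\partial_y$ carries the leaf $\{y=c\}$ to $\{y=c+t\}$, so it is transitive on the leaves of the lifted foliation; projecting by the flow-equivariant surjection $\pi$ shows that the flow of $K$ is transitive on the leaves of $\mcL$. The step I expect to demand the most care is justifying that $(\widetilde X,\widetilde K)$ is genuinely a ribbon — chiefly the lifting of completeness of $K$ and the fact that the deck group preserves $\widetilde K$ exactly, not merely $\pm\widetilde K$ — since this is precisely what forces the deck transformations to be honest translations and thereby pins down the topology.
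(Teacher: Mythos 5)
Your proof is correct and follows essentially the same route as the paper: pass to the universal cover, observe it is a ribbon, use the adapted coordinates of Lemma~\ref{lemm:carte_adaptee}, and check that deck transformations preserve both $\widetilde K$ and the canonical null field $L$, hence are translations, so the deck group is a discrete subgroup of $\R^2$. The only (immaterial) difference is that you deduce transitivity~(1) from the coordinates upstairs and project down, whereas the paper obtains it directly on $X$ by the saturation argument from the proof of Lemma~\ref{lemm:carte_adaptee}.
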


\begin{proof} La propriété~(1) de transitivité s'obtient
comme dans la preuve du lemme~\ref{lemm:carte_adaptee}. 
Notons $(\widetilde{X},\widetilde{K})$ le revêtement universel 
de $(X,K)$, qui est
un ruban sans bord.  Tout élément $\ph \in \aut_X \widetilde{X}$ 
doit être
isométrique et respecter les champs $\widetilde{K}$ et $\widetilde{L}$ 
(en particulier $\ph$ préserve les feuilletages de lumière).
  En coordonnées adaptées, on trouve que $\ph$ est un élément
du flot composé éventuellement avec une translation de la variable $x$ (ce
qui impose $I=\R$ et $f$ périodique dans \eqref{equa:carte_adaptee}). Le
groupe $\aut_X\widetilde{X}$ est donc abélien et composé d'isométries
directes, d'où l'assertion~(2).
\end{proof}

\begin{lemm}[types de bandes]
\label{lemm:types_bandes}
Toute bande lorentzienne $(B,K)$ est de l'un des trois types suivants.
\begin{enumerate} 
\item[\textbullet] Type I (ou {\em bande standard}) : $(B,K)$ est un
  ruban. Les feuilletages $\mcK$ et $\mcK^\perp$ sont des  suspensions.
Le champ $L$ est global, rentrant sur un bord et sortant sur l'autre.
\item[\textbullet] Type II : $(B,K)$ n'est pas un ruban, le feuilletage
  $\mcK$ est une suspension et $\mcK^\perp$ est une composante de Reeb.  Le
  champ (local) $L$ est rentrant ou sortant au bord.
\item[\textbullet] Type III (ou type {\em Reeb}) : $(B,K)$ n'est pas un
  ruban, le feuilletage $\mcK$ est une composante de Reeb et $\mcK^\perp$
  est une suspension.  Le champ $L$ (local) est rentrant sur un bord et
  sortant sur l'autre.
\end{enumerate} 
\end{lemm}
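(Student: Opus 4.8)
The plan is to read off the type of a band $(B,K)$ from two pieces of boundary data, and then to deduce the global topology of the foliations $\mcK$ and $\mcK^\perp$ from them. First I would record the elementary structure. Since $B$ is homeomorphic to $[0,1]\times\R$ it is simply connected and planar, and as $K$ is complete and \emph{non-vanishing}, the Poincaré--Bendixson argument already used in the proof of Lemma~\ref{lemm:carte_adaptee} forbids a closed orbit bounding a disc; hence every orbit of $K$ is a line and each boundary component is a single null orbit. Because $B$ is simply connected and orientable, the two light foliations are globally defined, say $\mcL_1$ and $\mcL_2$. On the open interior $\langle K,K\rangle$ has constant sign, so $K$ is non-null and both light foliations are transverse to it; by Proposition~\ref{prop:lumiere_transv} the interior, being simply connected and without boundary, is a ruban $R_g$. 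Finally, along each boundary orbit $K$ is null and therefore tangent to exactly one of $\mcL_1,\mcL_2$, and the remark following Lemma~\ref{lemm:carte_adaptee} supplies, near each boundary, a unique transverse null field $L$ normalised by $\langle K,L\rangle=1$.

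The primary invariant is whether $K$ is tangent to the \emph{same} light foliation at both boundary orbits or to two \emph{different} ones. In the first case the other light foliation is transverse to $K$ at both boundaries and throughout the interior, hence everywhere, so by Definition~\ref{defi:rubans_bandes} the band is a ruban. Writing it in the adapted coordinates \eqref{equa:carte_adaptee} with $f(0)=f(1)=0$ and $f\neq 0$ on $(0,1)$, the field $L=\partial_x$ is global, the leaves $\{x=\mathrm{const}\}$ of $\mcK$ and the graphs $y=G(x)+\beta$ of $\mcK^\perp$ are plainly suspensions, and $L$ is incoming at one boundary and outgoing at the other. This is Type~I.

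When the two boundaries carry different tangent foliations, no light foliation is transverse to $K$ along the entire boundary, so $(B,K)$ is not a ruban. Here the remaining invariant is the \emph{relative sense} (inward versus outward) of the canonical transverse null field $L$ at the two boundaries; this pattern is well defined since replacing $K$ by $-K$ reverses both senses at once, so that ``matching senses'' and ``opposite senses'' are intrinsic. I would show that opposite senses give Type~III and matching senses Type~II, the guiding model for Type~III being the closed first quadrant of $\text{Mink}_2$ minus the origin, for the boost $K=x\partial_x-y\partial_y$: there the orbits are the hyperbolas $xy=\mathrm{const}$, which accumulate on the two axes and form a Reeb component, while $\mcK^\perp$ is the radial (angular) suspension, and $L$ is inward on one axis and outward on the other. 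Observe that in the same-tangency case the global field $L=\partial_x$ automatically has opposite senses, which explains why the fourth a priori configuration (same tangency, matching senses) never occurs and why only three types remain.

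The real work, and the main obstacle, is to prove the suspension-versus-Reeb alternative for $\mcK$ and $\mcK^\perp$ and to pair it correctly with the two boundary invariants. I would analyse, near each boundary orbit, the limiting behaviour of the interior leaves: using the adapted coordinates of the interior ruban $R_g$ together with the asymptotics of a primitive $G$ of $-1/g$ as $x$ approaches an endpoint, and the Killing flow $\Phi^t$ (which permutes the leaves of both foliations and fixes each boundary orbit), one decides whether the two ends of a generic leaf escape to the ends of the strip (suspension) or accumulate on the boundary orbits (Reeb), and one computes the holonomy of the boundary leaves. The delicate points are that the transverse coordinate of $R_g$ need not extend continuously across the ``twisted'' boundary, and that the doubly-Reeb configuration must be excluded; controlling the completion of $R_g$ across its null boundary — via the unique transverse null field and the fact that it reaches the boundary at finite affine parameter — is the crux on which the whole trichotomy rests.
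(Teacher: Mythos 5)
There is a genuine gap. Your setup is sound: the dichotomy \og $K$ tangent to the same light foliation at both boundary orbits / to different ones\fg\ does characterise the ruban case (Type~I), the secondary invariant (matching versus opposite senses of the canonical transverse null field $L$ at the two boundaries) is well defined, and your model for Type~III (the closed first quadrant of $\text{Mink}_2$ minus the origin with the boost field) is correct. But the actual content of the lemma --- that in the non-ruban case \emph{exactly one} of $\mcK$, $\mcK^\perp$ is a Reeb component and the other a suspension, with the pairing determined by the sense pattern of $L$ --- is never established: you reduce it to an asymptotic analysis of the interior leaves near the boundary (\og I would analyse\ldots one decides whether\ldots\fg) and you yourself flag the two delicate points (the interior adapted coordinates do not extend across the twisted boundary; the doubly-Reeb configuration must be excluded) as \og the crux on which the whole trichotomy rests\fg. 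A classification lemma whose crux is deferred is not proved; in particular nothing in the proposal rules out the configurations \og both $\mcK$ and $\mcK^\perp$ Reeb\fg\ or \og both suspensions\fg\ for a non-ruban band.

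The paper closes exactly this gap by a short global argument that avoids asymptotics altogether. First, since $K$ has no interior light orbit, each of $\mcK$ and $\mcK^\perp$ (which coincide on the boundary and are transverse in the interior) is either a suspension or reduces to a single Reeb component. Then one looks at the bundle of tangent directions over an arc $\tau$ joining the two boundary components: a cylinder $[0,1]\times \SSS^1$ that the two light direction fields cut into two open regions (positive and negative directions of the metric over $\tau$). If $L$ is global, both foliations admit $\tau$ as a global transversal and the band is of Type~I. If not, one of $\mcK$, $\mcK^\perp$ fails to be a suspension, hence is a Reeb component; choosing $\tau$ transverse to \emph{both} light foliations makes it automatically transverse to the remaining foliation, which is therefore a suspension. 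This simultaneously excludes the doubly-Reeb case and produces the trichotomy. If you want to keep your route, you must actually carry out the boundary analysis you sketch --- for instance by controlling the holonomy of the boundary leaves of $\mcK$ and $\mcK^\perp$ via Lemme~\ref{lemm:semi_complet} and the sign of $L\cdot\langle K,K\rangle$ at the boundary --- rather than asserting that it can be done.
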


\begin{proof} Les feuilletages $\mcK$ et $\mcK^\perp$ coïncident au bord et sont
transverses à l'intérieur. Le champ local $L$ (près du bord) est partout
transverse à $\mcK$ et $\mcK^\perp$. Par définition il n'existe aucune feuille
de lumière de $K$ à l'intérieur de $B$. En conséquence, si l'un des feuilletages
$\mcK$ ou $\mcK^\perp$ n'est pas une suspension (n'admet pas de transversale
globale), alors il se réduit à une composante de Reeb. Considérons le fibré des
directions au-dessus d'un arc $\tau$ qui joint les deux composantes du bord. Il
s'agit d'un cylindre $[0,1]\times \SSS^1$ séparé par les champs de lumière en
deux ouverts (directions positives ou négatives de la métrique au-dessus de
$\tau$). Deux cas se présentent. Si le champ $L$ est global, on voit que
nécessairement les feuilletages $\mcK$ et $\mcK^\perp$ sont des suspensions
(type~I). Sinon, l'un des feuilletages $\mcK$ ou $\mcK^\perp$ est une composante
de Reeb. On peut alors choisir $\tau$ transverse aux deux feuilletages de
lumière et donc à l'autre feuilletage qui est forcément une suspension : la
bande est de type II ou III. Les trois types de bandes sont illustrés plus bas
(voir la figure~\ref{figu:carre_standard} et les commentaires attenants). 
\end{proof}

\begin{defi}
\label{defi:extension}
Soit $(X,K)$ une surface lorentzienne connexe, séparée et  
saturée par un champ de Killing~$K$.
\begin{enumerate}
\item 
Une surface saturée $(\widehat{X},\widehat{K})$ est une {\em extension} de
$(X,K)$ si $\widehat{X}$ est  {\em connexe et séparée} et s'il existe un
plongement isométrique $\varphi : X \to \widehat{X}$ tel que $\widehat{K}$
coïncide avec $\varphi_*(K)$ sur $\varphi(X)$.
\item 
$(X,K)$ est {\em maximale} (ou {\em inextensible}) si elle n'admet pas
d'extension non  triviale. \end{enumerate}
\end{defi}

\begin{exem}
\label{exem:ext_sf_a_bord}
Toute surface saturée $(X,K)$ {\em à bord} est extensible. En effet, soit $c$
une composante de $\partial X$. Quitte à remplacer $X$ par un revêtement
cyclique, on peut supposer que~$c$ est une orbite plongée de $K$. L'existence de
coordonnées adaptées \eqref{equa:carte_adaptee} dans un voisinage saturé de $c$
montre alors que $(X,K)$ s'étend au voisinage de $c$ (analytiquement si $(X,K)$
est analytique).
\end{exem} 

\begin{exem}
\label{exem:L-complete}
Toute surface $(X,K)$ dont les géodésiques de lumière sont complètes (on dira
L-complète) est inextensible, voir la fin de la preuve de la
proposition~\ref{prop:sf_Euf}.
\end{exem}

L'hypothèse de séparation entraîne que les surfaces compactes sont inextensibles
au sens précédent. Il existe néanmoins des plongements isométriques de surfaces
compactes ou complètes dans des surfaces connexes non séparées. Dès qu'une
surface  connexe $(X,K)$ admet un ouvert saturé $U$ non vide et propre (c'est le
cas par exemple d'un tore plat muni d'un champ périodique), le recollement de
deux copies de $(X,K)$ le long de $U$ est une \og extension\fg\ connexe non
séparée de $(X,K)$. On trouvera des exemples plus naturels dans la discussion
qui suit la proposition~\ref{prop:tores_loc_model}.

\begin{rema}
\label{rema:max_champ=max_usuel}
Nous montrerons plus loin (proposition~\ref{prop:max_champ=max_usuel}) que la
notion de maximalité introduite à la définition~\ref{defi:extension} (surfaces
munies d'un champ de Killing) équivaut à la maximalité au sens usuel.
\end{rema}

Nous décrivons maintenant un ingrédient essentiel du processus d'extension,
basé sur l'existence de réflexions locales
(voir proposition~\ref{prop:refl_loc_generique}). 

\begin{lemm}[lemme d'extension]
\label{lemm:extension}
Soit $(X_1,K_1)$ et $(X_2,K_2)$ deux surfaces lorentziennes connexes, séparées
et saturées (avec ou sans bord).  Soit $U_1$ (resp. $U_2$) un ouvert saturé
strict de $X_1$ (resp. de $X_2$) et soit $\sigma : (U_1,K_1)\to (U_2,K_2)$ une
isométrie. On suppose que pour tout $p\in \partial U_1$ la fonction $\sigma(q)$
n'admet aucune valeur d'adhérence quand $q$ tend vers~$p$ ($x\in U_1$).  Alors
les surfaces $(X_1,K_1)$ et $(X_2,K_2)$ sont extensibles.
\end{lemm}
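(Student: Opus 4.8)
Le plan est de recoller $X_1$ et $X_2$ le long de $\sigma$ pour obtenir une extension commune, l'hypothèse de non-adhérence servant précisément à garantir que le recollement reste séparé. Je poserais $\widehat{X}=(X_1\sqcup X_2)/\!\sim$, où l'on identifie chaque $q\in U_1$ à $\sigma(q)\in U_2$, muni de la topologie quotient, et je noterais $\varphi_i:X_i\to\widehat{X}$ les applications canoniques. Comme $\sigma$ est un difféomorphisme entre les ouverts $U_1$ et $U_2$ (préservant intérieur et bord puisque c'est une isométrie), les $\varphi_i$ sont des plongements ouverts qui recouvrent $\widehat{X}$, laquelle est donc localement euclidienne, éventuellement à bord. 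Les métriques se recollent en une métrique lorentzienne lisse puisque $\sigma^*g_2=g_1$ sur $U_1$, et les champs en un champ $\widehat{K}$ puisque $\sigma_*(K_1)=K_2$ ; ce champ est de Killing car il l'est sur chacun des ouverts $\varphi_i(X_i)$. Enfin $\widehat{X}$ est connexe, réunion des connexes $\varphi_1(X_1)$ et $\varphi_2(X_2)$ qui se rencontrent le long de l'ouvert non vide $\varphi_1(U_1)$.

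La difficulté principale sera de vérifier la séparation de $\widehat{X}$, et c'est l'unique endroit où intervient l'hypothèse. Pour deux points distincts $\bar a,\bar b\in\widehat{X}$, s'ils appartiennent tous deux à $\varphi_1(X_1)$ (ou tous deux à $\varphi_2(X_2)$) on les sépare dans ce plongement, $X_1$ (resp. $X_2$) étant séparée. Reste le cas $\bar a=\varphi_1(a)$ avec $a\in X_1\smallsetminus U_1$ et $\bar b=\varphi_2(b)$ avec $b\in X_2\smallsetminus U_2$. Si $\bar a$ et $\bar b$ n'étaient pas séparables, tout couple de voisinages $V\ni a$ dans $X_1$ et $W\ni b$ dans $X_2$ vérifierait $\varphi_1(V)\cap\varphi_2(W)\neq\varnothing$, c'est-à-dire contiendrait un $q\in V\cap U_1$ avec $\sigma(q)\in W$ ; en faisant décroître les voisinages on obtiendrait une suite $q_n\in U_1$ telle que $q_n\to a$ et $\sigma(q_n)\to b$. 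Mais alors $a\in\overline{U_1}\smallsetminus U_1=\partial U_1$ et $b$ serait une valeur d'adhérence de $\sigma(q)$ lorsque $q\to a$, ce que l'hypothèse interdit. Donc $\widehat{X}$ est séparée. On notera que cette condition, portant a priori sur le seul bord $\partial U_1$, est en fait symétrique : une valeur d'adhérence de $\sigma^{-1}$ en un point de $\partial U_2$ fournirait de la même façon une valeur d'adhérence de $\sigma$ en un point de $\partial U_1$ (elle équivaut à la fermeture du graphe de $\sigma$ dans $X_1\times X_2$).

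Je vérifierais ensuite que $(\widehat{X},\widehat{K})$ est saturée, c'est-à-dire que $\widehat{K}$ est complet. Soit $p\in\widehat{X}$, disons $p=\varphi_1(p')$ avec $p'\in X_1$. Comme $X_1$ est saturée, l'orbite de $K_1$ issue de $p'$ est définie sur $\R$ tout entier et reste dans $X_1$ ; son image par $\varphi_1$ est donc une courbe intégrale de $\widehat{K}$ définie sur $\R$ et contenue dans $\varphi_1(X_1)$. Par unicité des courbes intégrales maximales, c'est l'orbite de $\widehat{K}$ issue de $p$, qui est ainsi complète.

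Ainsi $\widehat{X}$ est une surface lorentzienne connexe, séparée et saturée, et $\varphi_1$ est un plongement isométrique de $(X_1,K_1)$ envoyant $K_1$ sur $\widehat{K}$ : c'est une extension de $(X_1,K_1)$ au sens de la définition~\ref{defi:extension}. Elle est non triviale car $U_2$ est strict, donc $X_2\smallsetminus U_2\neq\varnothing$ et $\widehat{X}\smallsetminus\varphi_1(X_1)\neq\varnothing$ ; par conséquent $(X_1,K_1)$ est extensible. Les rôles de $X_1$ et $X_2$ étant symétriques ($U_1$ étant lui aussi strict), $(X_2,K_2)$ l'est également.
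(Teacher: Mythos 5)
Votre démonstration est correcte et suit exactement la même stratégie que celle du texte : recollement de $X_1$ et $X_2$ le long de $\sigma$, puis vérification que les seuls points de branchement possibles seraient des couples $(p_1,p_2)\in \partial U_1\times \partial U_2$ où $p_2$ serait une valeur d'adhérence de $\sigma(q)$ quand $q\to p_1$, ce que l'hypothèse exclut. Les vérifications supplémentaires que vous détaillez (recollement des métriques et des champs, complétude de $\widehat{K}$, non-trivialité) sont laissées implicites dans le texte mais ne changent rien à l'argument.
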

 
\begin{proof}
Soit $X$ la surface définie en recollant $(X_1,K_1)$ et $(X_2,K_2)$ {\it via}
l'isométrie $\sig$.  Plus précisément $X$ est le quotient de $X_1\coprod
X_2$ par l'identification de $q\in U_1\subset X_1$ avec $\sigma(q)\in
U_2\subset X_2$.  Il s'agit d'une surface lorentzienne connexe, munie d'un
champ de Killing $K$ et possédant un ouvert strict isométrique à
$(U_1,K_1)$. Il reste à vérifier que $X$ est séparée. Les éventuels points
de branchement se situent nécessairement au bord de $U_1$. Si $ p_1 \in
\partial U_1\subset X_1$, les points $p_2\in \partial U_2\subset X_2$ non
séparés de~$p_1$ sont par construction les valeurs d'adhérences de $\sig(q)$
quand $q$ tend vers~$p_1$, d'où le résultat.
\end{proof}

\begin{coro}[extension par réflexion]
\label{coro:extension_reflexion}
Toute bande lorentzienne $(B,K)$ de type~$I$ ou $II$ (le champ $K$ est une
suspension) est extensible, de façon unique à isométrie près, par un carré
lorentzien $(\widehat{B},\widehat{K})$
(définition~\ref{defi:rubans_bandes}-(\ref{defi:bande_carre})) avec $B$ dense
dans $\widehat{B}$.
\end{coro}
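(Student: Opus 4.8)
The plan is to build $\widehat{B}$ by gluing two copies of the band along their common interior by means of a generic reflection, invoking the extension lemma~\ref{lemm:extension}, and then to recognise the resulting surface as a square. The first observation is that in both admissible types the interior $U$ of $B$ (the open set where $\langle K,K\rangle\neq 0$) is a ruban: it is simply connected, homeomorphic to the plane, saturated by the complete field $K$, and since $K$ is nowhere null on $U$ both light foliations are transverse to it. By the adapted-chart lemma~\ref{lemm:carte_adaptee} I fix coordinates $(x,y)\in I\times\R$ on $U$ in which the metric is $2dxdy+f(x)dy^2$, with $f$ vanishing only where $U$ limits onto $\partial B$. Proposition~\ref{prop:refl_loc_generique} then furnishes the generic reflection
\[
\sigma(x,y)=(x,\,2G(x)-y),\qquad G'=-1/f,
\]
a globally defined indirect isometry of $U$ which reverses $K$ and exchanges the two light foliations.

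Next I apply lemma~\ref{lemm:extension} with $X_1=(B,K)$, $X_2=(B,-K)$, $U_1=U_2=U$ and gluing map $\sigma$ (note $\sigma_*K=-K$, so $\sigma$ is an isometry of the required type). The single point to verify is the escaping hypothesis: as $q\to p\in\partial B$ the image $\sigma(q)$ must leave every compact set. This is exactly where smoothness of $f$ enters. Approaching a boundary light orbit, the transverse coordinate tends to a zero of $f$; since $f$ is smooth and tends to $0$ there while staying nonzero nearby, the integral $\int dx/f$ diverges and $G\to\pm\infty$, so $2G(x)-y\to\pm\infty$ and $\sigma(q)$ runs off to the end of a boundary orbit, with no adherence value in $B$. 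For a type~I band this is immediate in the global chart; for a type~II band I would run the same computation in the local adapted chart valid near each boundary orbit (the transverse field $L$ being unique there), the mechanism being unchanged. The lemma then produces a connected separated Lorentzian surface $\widehat{B}=X_1\cup_\sigma X_2$ carrying a Killing field $\widehat{K}$ and an isometric dense-to-be copy of $B$.

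I then identify $\widehat{B}$ as a square. Its interior is $U$, with $\langle\widehat{K},\widehat{K}\rangle\neq 0$; attached to it are the two boundary light orbits of $X_1$ together with the two boundary light orbits of $X_2$. Because $\sigma$ exchanges the light foliations, the two edges supplied by $X_2$ are leaves of the light foliation complementary to the one carrying the edges of $X_1$, so in null coordinates the four edges occupy the four ideal sides of the strip $U$, the four corners being absent. As a union of two contractible pieces along the contractible set $U$, the surface $\widehat{B}$ is simply connected, hence a disc bounded by these four light orbits, that is a square in the sense of definition~\ref{defi:rubans_bandes}, with $\langle\widehat{K},\widehat{K}\rangle$ null exactly on the boundary. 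Since $\widehat{B}\smallsetminus B$ reduces to the two new edges, which are nowhere dense, $B$ is dense in $\widehat{B}$.

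For uniqueness, any square extension in which $B$ is dense can differ from $B$ only along boundary light orbits; its interior coincides with that of $B$, and it has precisely two further edges capping the two ends of the band, so the germ of the extension along each new edge is a one-sided ruban collar. By the uniqueness clause of lemma~\ref{lemm:carte_adaptee} this germ is determined by $f$, whence the extension is unique up to an isometry fixing $B$. I expect the main obstacle to be the escaping hypothesis verified uniformly over the two types, together with the careful check that the four attached orbits really assemble into a smooth manifold with boundary homeomorphic to $[0,1]^2\smallsetminus\{0,1\}^2$ and not, say, a cylinder; it is precisely the hypothesis that $K$ is a suspension (excluding type~III) that secures the disc topology here.
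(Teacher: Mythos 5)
Your proof is essentially the paper's: glue $(B,K)$ to $(B,-K)$ along the interior $U$ by a generic reflection and apply the extension lemma~\ref{lemm:extension}; the only divergence is that you verify the no-adherence hypothesis quantitatively, via the blow-up of the primitive $G$ of $-1/f$ at the zeros of the smooth function $f$ (which is correct, provided one also notes that for types I and II the closed collar $\{x\le\varepsilon\}$ of a boundary orbit is properly embedded in $B$ because $\mcK$ is a suspension, so that escape in the collar chart really is escape in $B$ --- this, rather than only the final disc topology, is the precise point where type III breaks down), whereas the paper argues qualitatively that $\sigma$ sends the $L_1$-orbits, which accumulate at the boundary, to $-L_2$-orbits, which for types I and II do not (lemme~\ref{lemm:types_bandes}). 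The identification of the result as a square (for type II the two edges of $B$ already lie in different light foliations, so the four edges split two-and-two rather than as you state, but the conclusion is unaffected) and the uniqueness via the classification of carrés and bandes are as in the paper.
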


\begin{proof}
Soit $\sigma$ une réflexion générique définie sur l'intérieur $U$ de $B$
(proposition~\ref{prop:refl_loc_generique}). Notons $L_1,L_2$ les deux champs de
lumière sur $U$ normalisés par $\langle L_i,K\rangle = 1$ ($i=1,2$). La
réflexion $\sigma$ échange $L_1$ et $-L_2$. En un point $p\in \partial B$, l'un
des champs $L_i$ est défini, disons $L_1$, supposé sortant, dont les orbites
locales (au voisinage de~$p$) s'accumulent au bord.  Pour les types I et II, les
orbites de $-L_2$ ne s'accumulent pas au bord (voir
lemme~\ref{lemm:types_bandes}). Par suite $\sigma(q)$ ne peut avoir de valeurs
d'adhérences quand $q$ tend vers~$p$ (mais ce serait possible pour le type III).
L'extension \smash{$(\widehat{B},\widehat{K})$} construite ci-dessus est
clairement homéomorphe $[0,1]^2 \smallsetminus \{0,1\}^2$. De plus $B\simeq
[0,1]\times ]0,1[$ est dense dans \smash{$\widehat{B}$}. L'unicité d'une telle
extension résulte de la classification des carrés et des bandes
(propositions~\ref{prop:class_carres} et~\ref{prop:class_bandes}).
\end{proof}

L'espace des feuilles du champ $K$ d'un carré est une variété riemannienne,
proposition~\ref{prop:struct_transv_killing}. Si $J\subset \R$ est un
intervalle, nous noterons $J^\wedge$ la variété obtenue en identifiant ({\it
via} l'identité) les intérieurs de deux exemplaires de $J$ (intervalle avec bord
double).

\begin{prop}[description et classification des carrés]
\label{prop:class_carres} Si $(X,K)$ est un carré lorentzien, alors
\begin{enumerate}
\item $(X,K)$ est recouvert par deux bandes de type I, échangées par des
isométries indirectes de $X$ (réflexions) et 
dont l'intersection coïncide avec l'intérieur de $X$, 
\item il existe un réel $\msfm>0$ (appelé \og {\em largeur}\fg\ du carré)
et une fonction $f\in \mcC^\infty([0,\msfm],\R)$ telle que
$f^{-1}(0)=\{0,\msfm\}$  qui caractérisent le couple $(X,K)$ à isométrie près.
\end{enumerate}
De plus, l'espace des feuilles de $K$ est isométrique au segment à bord
double $[0,\msfm]^\wedge$.
\end{prop}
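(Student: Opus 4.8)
The plan is to analyze the interior of the square, show it is a ruban, and then recover the whole square as the unique reflective extension of a type I band; both the classification and the leaf-space description will follow from that.

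First I would examine the interior $\mathring X$. Since $\langle K,K\rangle$ is nonzero there and cannot change sign ($\mathring X$ being connected), $K$ is everywhere spacelike or everywhere timelike, so \emph{both} light foliations $\mcL_1,\mcL_2$ are transverse to $K$ on $\mathring X$; hence $\mathring X$ is a ruban. By Lemma~\ref{lemm:carte_adaptee} it carries adapted coordinates $(x,y)\in I\times\R$ with metric $2dxdy+f(x)dy^2$, where $f=\langle K,K\rangle$ expressed in the transverse coordinate is nonzero of constant sign on the open interval $I$. Near each boundary orbit of $\partial X$ — a light orbit of $K$, since $\langle K,K\rangle$ vanishes on $\partial X$ while $K$ never vanishes — the observation following Lemma~\ref{lemm:carte_adaptee} provides a ruban neighborhood with a unique transverse null field, identifying the orbit as a null leaf along which $f\to0$. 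The principal obstacle is the topological matching of the abstract model $[0,1]^2\smallsetminus\{0,1\}^2$ with this picture: one must show that $I$ is a bounded interval, written $(0,\msfm)$, that $f$ extends smoothly to $[0,\msfm]$ with $f^{-1}(0)=\{0,\msfm\}$, and that the four boundary orbits split into two pairs, one consisting of leaves of $\mcL_1$ and the other of leaves of $\mcL_2$, arranged as the alternating edges of a square. The real $\msfm>0$ is then the width.

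For assertion~(1), I would set $B_1=\mathring X\cup c_0\cup c_1$, where $c_0,c_1$ are the two boundary orbits reached as $x\to0$ and $x\to\msfm$ (the leaves of one of the two light foliations). On $B_1$ the field $L=\partial_x$ is global and transverse to $K$ up to and including $c_0,c_1$, so $B_1$ is a ruban, i.e. a band of type~I (Lemma~\ref{lemm:types_bandes}), homeomorphic to $[0,1]\times\R$; moreover $B_1\supset\mathring X$ is dense in $X$. By Corollary~\ref{coro:extension_reflexion}, $B_1$ admits a reflective square extension, unique up to isometry with $B_1$ dense; since $X$ is itself a square in which $B_1$ is dense, this uniqueness forces $X$ to be that extension, and in particular the extending reflection $\sigma$ is an indirect isometry of $X$. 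As $\sigma$ exchanges the two light foliations, $B_2:=\sigma(B_1)$ is the complementary type~I band, namely $\mathring X$ together with the remaining two boundary orbits; thus $X=B_1\cup B_2$, the two bands are exchanged by $\sigma$, and $B_1\cap B_2=\mathring X$.

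For assertion~(2), the pair $(\msfm,f)$ is read off from the adapted coordinates of $\mathring X$ extended to $[0,\msfm]$. Conversely, by Proposition~\ref{prop:class_isom_rubans}-(1) the function $f$ (up to translation and change of sign of the variable, i.e. here up to $x\mapsto\msfm-x$) determines the type~I band $([0,\msfm]\times\R,\,2dxdy+f\,dy^2)$ up to isometry, and by Corollary~\ref{coro:extension_reflexion} its reflective square extension is unique; hence $(\msfm,f)$ characterizes $(X,K)$ up to isometry. Finally, for the leaf space: the foliation $\mcK$ of a type~I band is a suspension whose leaf space is the closed interval $[0,\msfm]$, so the leaf spaces of $B_1$ and $B_2$ are two copies of $[0,\msfm]$; since $X=B_1\cup B_2$ with $B_1\cap B_2=\mathring X$ and the common interior leaf space is the open interval $(0,\msfm)$, gluing the two copies along their interiors yields exactly the double-boundary segment $[0,\msfm]^\wedge$.
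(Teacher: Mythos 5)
Your overall picture (interior ruban, two type~I bands exchanged by a reflection, leaf space $[0,\msfm]^\wedge$) agrees with the paper's, but the way you close the argument is circular. The uniqueness clause of Corollary~\ref{coro:extension_reflexion} --- that a type~I band admits a \emph{unique} square extension in which it is dense --- is precisely what the paper deduces \emph{from} Propositions~\ref{prop:class_carres} and~\ref{prop:class_bandes} (see the last sentence of the Corollary's proof). You invoke that uniqueness twice: in assertion~(1), to identify $X$ with the reflective extension of $B_1$ and thereby obtain both the reflection and the second band $B_2$; and in assertion~(2), to conclude that $(\msfm,f)$ determines $(X,K)$. Neither use is available here. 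Assertion~(2) can be repaired as in the paper: once the two-band decomposition is known, an isometry between two squares with the same invariants is built directly by matching adapted coordinates on the bands. Assertion~(1), however, cannot be obtained this way without first establishing that decomposition independently.

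That decomposition is exactly what you call the \og principal obstacle\fg\ and then leave unresolved: one must show that the four boundary orbits pair up so that $\mathring X$ together with one pair is a type~I band and together with the other pair is a second type~I band, the two pairs belonging to the two light foliations. The local ruban neighbourhoods of the boundary orbits do give boundedness of $I$ and smooth extension of $f$, but they do not control which sides attach at $x=0$ and which at $x=\msfm$, nor that each interior light leaf runs from one side to the \emph{opposite} one. The paper supplies the missing ingredient: by Proposition~\ref{prop:lumiere_transv}, the $K$-saturation of a light leaf entering transversally from the boundary is all of the interior $U$, so every interior point is joined to each boundary component by a unique light geodesic; simple connectivity then forces two leaves of distinct light foliations to meet at most once, hence each leaf joins two opposite sides and $X=B\cup B'$ with $B\cap B'=U$. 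With that in hand, the reflection $x'=\msfm-x$, $y'=\beta-y$ written in adapted coordinates extends to the boundary and exchanges the two bands, and the rest of your argument (including the leaf-space identification) goes through.
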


\begin{proof}
Dans l'intérieur $U$ de $X$, les deux feuilletages de lumière $\mcL$ et $\mcL'$
sont transverses à $K$. Soit $p\in \partial X$ et soit $\ell$ la feuille de
lumière maximale issue de~$p$ transversalement à $\partial X$. D'après la
proposition~\ref{prop:lumiere_transv}, le saturé de $\ell\cap U$ par le flot de
$K$ coïncide avec $U$.  Tout point intérieur $q\in U$ peut donc être joint à
chacune des composantes du bord par une (unique) géodésique de lumière. Comme
$X$ est simplement connexe, les feuilles de $\mcL$ et $\mcL'$ se coupent au plus
une fois. Cela entraîne que chaque feuille passant par $q \in U$ doit joindre
deux côtés opposés de $X$. Ainsi $X$ est réunion de deux bandes $B$ et $B'$ de
types I telles que $B\cap B'=U$.  L'espace des feuilles de $K$ est donc
isométrique à $[0,\msfm]^\wedge$ pour un certain réel $\msfm>0$.  Notons $L$ et
$L'$ deux champs comme dans la preuve du lemme~\ref{lemm:carte_adaptee},
définissant des coordonnées adaptées $(x,y), (x',y') \in [0,\msfm]\times\R$ sur
$B$ et $B'$ respectivement.  Soit $\sig :U\to U$ une réflexion par rapport à une
géodésique orthogonale à $K$. On doit avoir $\sig_*(K)=-K$ et $\sig_*(L)=-L'$.
En coordonnées adaptées, $\sig$ est donc donnée par $x'=\msfm -x$, $y'=\beta -y$
($\beta \in \R$).  Cette transformation s'étend aux bords des bandes ($x,x'\in
\{0,a\}$).  Par suite $\sig$ s'étend en une isométrie de~$X$ qui échange $B$ et
$B'$.

\par Étant donné un carré $(X,K)$, on note $\msfm>0$ la masse des feuilles
de~$K$. La norme de~$K$ (vue comme fonction sur $[0,\msfm]^\wedge$) induit une
fonction $f=f_{X,K}\in \mcC^\infty([0,\msfm],\R)$ comme dans l'énoncé,
invariante sur la classe d'isométrie de $(X,K)$. Enfin, si
$f_{X',K'}=f_{X,K}$, on construit (en utilisant comme ci-dessus des
coordonnées adaptées) une isométrie entre les carrés $(X,K)$ et $(X,K')$.
\end{proof}

\begin{rema}[feuille du milieu d'un carré]\label{rema:feuille_milieu}
Si $(X,K)$ est un carré de largeur $\msfm$,  le point
$\msfm/2 \in [0,\msfm]^\wedge$ sera appelé {\em feuille du milieu}. 
Il s'agit d'une feuille privilégiée du champ~$K$, invariante par isométrie.
\end{rema}

\begin{prop}[classification des bandes]
\label{prop:class_bandes} Les classes d'isométries de bandes lorentziennes 
$(X,K)$ sont caractérisées par 
\begin{enumerate}
\item le type (I, II ou~III),
\item un élément $\msfm \in ]0,+\infty] $  (largeur de la bande),
$\msfm < +\infty$ pour les types I et II,
\item une fonction $f\in \mcC^\infty(J,\R)$ telle que $f^{-1}(0)=\partial
  J$, avec $J=[0,\msfm]$ pour le type I, $J=[0,\msfm]$ ou
 $[-\msfm,0]$ pour le type II   et $J=[0,\msfm[$ pour le type~III.
\end{enumerate}
L'espace des feuilles de $K$ est isométrique à $J$  pour les
types I et II et à $J^\wedge$ pour le type~III.
\end{prop}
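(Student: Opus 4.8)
The statement asks to classify bands up to isometry by three invariants: the type (I, II, III), the width $\msfm$, and the defining function $f$ on the appropriate interval $J$, together with the identification of the leaf space. The natural strategy is to separate the three types using the structure established in Lemma~\ref{lemm:types_bandes}, then within each type reduce to adapted coordinates and read off $f$ and $\msfm$. The key structural facts I would lean on are: Lemma~\ref{lemm:types_bandes} (which already tells us the behaviour of the foliations $\mcK$, $\mcK^\perp$ and the local field $L$ in each type), Lemma~\ref{lemm:carte_adaptee} (adapted coordinates for rubans), and Proposition~\ref{prop:struct_transv_killing} (the leaf space is a $1$-manifold, possibly with double boundary).

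First I would treat Type~I. By Lemma~\ref{lemm:types_bandes} a Type~I band is a ruban, so Lemma~\ref{lemm:carte_adaptee} applies directly: there is a global coordinate system $(x,y)\in J\times\R$ with $J=[0,\msfm]$, metric $2dx\,dy+f(x)dy^2$, and $f$ vanishing exactly at the two boundary leaves, i.e. $f^{-1}(0)=\partial J=\{0,\msfm\}$. The leaf space is then $J$ itself (both foliations are suspensions, so the boundary leaves are embedded and the quotient is a genuine segment). The pair $(\msfm,f)$ determines $(X,K)$ up to isometry exactly as in the proof of Proposition~\ref{prop:class_carres}: the only freedom in adapted coordinates is a $y$-translation and a possible global sign change of $x$ (reflecting the non-uniqueness of $L$ away from light leaves), which for Type~I is pinned down once we orient $J=[0,\msfm]$. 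This accounts for the normalization $f\in\mcC^\infty([0,\msfm],\R)$.

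Next, Types~II and~III. Here $(B,K)$ is \emph{not} a ruban, so $L$ exists only locally near the boundary, and exactly one of $\mcK$, $\mcK^\perp$ is a Reeb component. For Type~III the field $\mcK$ is a Reeb component: the two boundary leaves are the \emph{same} $K$-orbit approached from both sides, so the transverse coordinate only reaches a half-open interval and we get $J=[0,\msfm[$ with $f$ vanishing only at $0$ (that is $f^{-1}(0)=\partial J=\{0\}$); moreover $\msfm$ may be $+\infty$. The leaf space is the doubled segment $J^\wedge$, matching the doubled-boundary phenomenon recorded before Proposition~\ref{prop:class_carres}, because the single Reeb leaf appears as two distinct non-separated points in $\mcE$. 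For Type~II it is $\mcK^\perp$ that is Reeb and $\mcK$ is a suspension, so the leaf space is the honest segment $J$; the local field $L$ points inward (or outward) at \emph{both} boundary components, which is why the transverse coordinate can be normalized to either $[0,\msfm]$ or $[-\msfm,0]$ (the two inequivalent orientations of $L$ relative to the boundary), with $f^{-1}(0)=\partial J$ again the two endpoints. In each case the reconstruction of $(X,K)$ from $(\text{type},\msfm,f)$ follows by gluing the adapted charts along the interior exactly as in Proposition~\ref{prop:class_carres}.

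The main obstacle I anticipate is the \textbf{Type~III leaf-space and orientation bookkeeping}. One must argue carefully that the single light orbit of a Reeb $\mcK$ genuinely produces a non-Hausdorff double point in $\mcE$ (hence $J^\wedge$, not $J$), and that no global field $L$ can exist so that $x$ really only exponentiates to a half-open $[0,\msfm[$; this is where one invokes the absence of a global transversal for a Reeb foliation together with Proposition~\ref{prop:lumiere_transv}. The parallel subtlety for Type~II is explaining why $f$ is defined up to a reflection $x\mapsto -x$ \emph{but the sign of the interval endpoint is an invariant distinguishing $[0,\msfm]$ from $[-\msfm,0]$}: this comes from whether the (local) $L$ is inward or outward at the Reeb-side boundary, which the sign-change ambiguity in Lemma~\ref{lemm:carte_adaptee} does not kill because the two boundary leaves are no longer interchangeable by an isometry. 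Everything else — smoothness of $f$, the vanishing locus, and the reconstruction — is routine once these orientation issues are settled.
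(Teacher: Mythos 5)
Your overall strategy --- cover the band by adapted charts (one chart for type~I, two charts glued along the interior for types~II and~III), read off $\msfm$ and $f$, and reconstruct as in the case of squares --- is exactly the paper's proof (which is itself a two-line reduction to Proposition~\ref{prop:class_carres}), and your treatment of types~I and~II is correct; in particular you correctly identify that what distinguishes $J=[0,\msfm]$ from $J=[-\msfm,0]$ for type~II is whether the local field $L$ is inward or outward at the boundary, which is precisely the point the paper makes.

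However, your description of type~III contains a genuine error. You assert that \emph{the two boundary leaves are the same $K$-orbit approached from both sides} and that \emph{the single light orbit of a Reeb $\mcK$ produces a non-Hausdorff double point in $\mcE$}. A band is by definition homeomorphic to $[0,1]\times\R$ (définition~\ref{defi:rubans_bandes}), so it has two distinct boundary components, hence two \emph{distinct} light orbits of $K$; moreover these two orbits belong to different light foliations of the surface (as the paper observes just before proposition~\ref{prop:struct_proj_orth}). A single orbit is a single point of the orbit space and cannot ``appear as two points''; taken literally, your premise would force the leaf space to be $[0,\msfm[$ rather than $[0,\msfm[^\wedge$, contradicting the very statement you are proving. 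The correct picture is the dual one: there are two distinct boundary orbits which are non-separated in the leaf space and both carry transverse coordinate $0$ (they bound the same Reeb component of $\mcK$, meeting at a ``saddle at infinity''), while no leaf closes off the far end of the transverse interval --- whence $J=[0,\msfm[$ with possibly $\msfm=+\infty$, and leaf space $J^\wedge$ obtained by gluing the two adapted charts along their common interior $]0,\msfm[$. Once this is corrected, the rest of your argument (including the finiteness of $\msfm$ for types~I and~II, which follows from the compactness of the leaf space or from corollaire~\ref{coro:extension_reflexion}) goes through.
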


\begin{proof} 
On procède comme pour les carrés, en recouvrant la bande par des cartes adaptées
(une carte pour le type I, deux cartes pour les types II et~III). La largeur
$\msfm$ est finie pour les types I et II d'après le
lemme~\ref{coro:extension_reflexion}. Pour les bandes de type II, $J=[0,\msfm]$
(resp. $J=[-\msfm,0]$) si le champ $L$ est rentrant (resp. sortant).
\end{proof}

\begin{rema}
\label{rema:bande_Reeb_sym}
Toute bande de type~III (Reeb) admet des isométries indirectes qui inversent le
champ et  échangent les deux composantes de bord (on étend au bord les
réflexions de l'intérieur comme dans la preuve de la
proposition~\ref{prop:class_carres}).
\end{rema}

L'interaction entre les bandes et les carrés est illustrée à la
figure~\ref{figu:carre_standard}.  Les réflexions par par rapport aux feuilles
du feuilletage orthogonal $\mcK^\perp$ (axes en pointillés sur la
figure~\ref{figu:carre_standard}) se prolongent au carré. Du point de vue du
champ, depuis l'intérieur du carré, les sommets (l'infini du carré) apparaissent
de deux types : \og selle\fg\ ou source/puits.  Un carré de largeur~$\msfm$
contient~2 bandes de type I (côtés opposés) et deux bandes de type II (côtés
adjacents à une source/puits) toutes de largeur $\msfm$, ainsi que deux familles
de bandes de type~III (côtés adjacents à une selle) de largeur $\msfm'\leq
\msfm$.  Les 2 bandes de types I (resp. de type II) sont échangées par les
réflexions, tandis que les bandes de type~III sont fixes d'après la
remarque~\ref{rema:bande_Reeb_sym}.  À l'intérieur du carré, le 4-tissu formé
par $\mcK$, $\mcK^\perp$ et les deux feuilletages de lumière est invariant par
les réflexions.

\begin{figure}[h!]
\labellist
\small\hair 2pt
\pinlabel $L$ at 280 130
\pinlabel $K$ at -5 165
\endlabellist
\begin{center}
\includegraphics[scale=0.4]{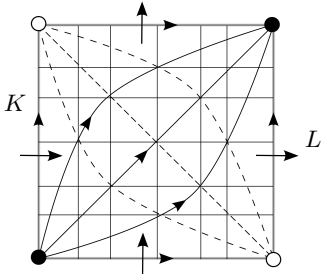}
\caption{Carré lorentzien}
\label{figu:carre_standard}
\end{center}
\end{figure}

\begin{rema}
\label{rema:ouv_sat_carre}
Soit $C$ un carré de largeur $\msfm$ et soit $V$ un ouvert connexe saturé de $C$
qui rencontre le bord $\partial C$. En fonction du nombre $n$ de composantes de
$V\cap \partial C$, l'ouvert $V$ est de la forme suivante : {\em demi-bande} de
largeur $\msfm'\in ]0,\msfm]$ ($n=1$), bande ($n=2$, types I et II de largeur
$\msfm$, ou~III de largeur $\msfm'\in ]0,\msfm]$), {\em quasi-carré} ($n=3$) ou
carré ($n=4$).
\end{rema}

\begin{prop}
\label{prop:class_composantes}
Soit $(X,K)$ une surface lorentzienne lisse homéomorphe au plan $\R^2$ et munie
d'un champ de Killing $K$ non trivial et complet. On note $X_0$ la réunion des
orbites de lumière du champ~$K$ et $\Sigma$ l'ensemble de ses zéros. Soit $U$
une composante connexe de $X\smallsetminus (X_0\cup \Sigma)$.
\begin{enumerate}
\item L'adhérence $U^\vee$ de $U$ dans $X \smallsetminus \Sigma$ est de
  l'un des types suivants (voir remarque~\ref{rema:ouv_sat_carre}) :
  carré, quasi-carré, bande (type I, II ou III), demi-bande, $\emptyset$ ou
  $X$.
\item Si de plus $(X,K)$ est maximale (par exemple L-complète), alors
  $U^\vee$ est de l'un des types suivants : \subitem ~~(i) $U^\vee$ est un
  carré lorentzien, \subitem ~(ii) $U^\vee$ est une bande de type Reeb (de
  largeur infinie si $X$ est L-complète), \subitem (iii) $U^\vee =
  \emptyset$ ou $X$.
\end{enumerate}
\end{prop}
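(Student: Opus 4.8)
\emph{Plan de preuve.} Pour (1), je commencerais par observer que $U$, étant une composante connexe de $X\smallsetminus(X_0\cup\Sigma)$, est un ouvert \emph{saturé} de $X$ sur lequel $\langle K,K\rangle$ ne s'annule pas. Les deux feuilletages de lumière y sont donc partout transverses à $K$, et la proposition~\ref{prop:lumiere_transv} assure que $(U,K)$ est un ruban ou un quotient de ruban, donc homéomorphe au plan ou au cylindre. J'exclurais le cas du cylindre ainsi : il imposerait $f$ périodique, $I=\R$ et $f$ sans zéro sur $U$, de sorte que $U$ serait à la fois ouvert et fermé dans $X$ ; par connexité on aurait $U=X$, ce qui contredit $X\cong\R^2$. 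Ainsi $U$ est un ruban, muni de coordonnées adaptées sur $I\times\R$ avec $I$ intervalle ouvert et $f$ ne s'annulant pas sur $I$.

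Je formerais ensuite $U^\vee$ en adjoignant à $U$ ses orbites de bord. Comme $\partial U^\vee=U^\vee\smallsetminus U\subset X_0$ (les selles de $\Sigma$ étant écartées), chacune de ces orbites $c$ est une orbite de lumière de $K$, et au voisinage de $c$ la réunion de $c$ et d'un collier de $U$ est le germe d'une bande, de type I, II ou~III (lemme~\ref{lemm:types_bandes}). En suivant les deux bouts de l'intervalle $I$, un bout se referme soit en une unique orbite de lumière (bord simple, type I ou~II), soit en deux orbites (dédoublement autour d'une selle, comme pour le bord $[0,\msfm]^\wedge$ d'un carré), soit s'échappe à l'infini (aucune orbite adjointe). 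Le nombre de bords accessibles étant au plus $4$, leur combinatoire est exactement celle décrite à la remarque~\ref{rema:ouv_sat_carre} : on obtient une demi-bande ($1$ orbite), une bande de type I, II ou~III ($2$ orbites), un quasi-carré ($3$ orbites) ou un carré ($4$ orbites), les cas dégénérés étant $U=\emptyset$ et $U^\vee=X$ (quand aucune orbite de lumière n'est adjointe et que $U$ est dense). Ceci établirait~(1).

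Pour (2), je supposerais $(X,K)$ maximale (par exemple L-complète, exemple~\ref{exem:L-complete}) et j'exclurais les types I, II, le quasi-carré et la demi-bande, c'est-à-dire ceux possédant un bout \og libre\fg\ de type suspension. Soit $c$ une orbite de bord le long de laquelle la bande est de type I ou~II, et soit $\sig$ la réflexion générique par rapport à une feuille de $\mcK^\perp$ de $U$ (proposition~\ref{prop:refl_loc_generique}). Comme dans la preuve du corollaire~\ref{coro:extension_reflexion}, pour les types I et~II les orbites de $-L_2$ ne s'accumulent pas au bord, donc $\sig(q)$ n'a aucune valeur d'adhérence lorsque $q\to c$. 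En appliquant le lemme~\ref{lemm:extension} au recollement de deux copies de $X$ le long de $U$ via $\sig$, j'obtiendrais une extension connexe, séparée et non triviale de $X$ (les points de $X\smallsetminus U\supset c$ de la seconde copie étant nouveaux), contredisant la maximalité. Le quasi-carré et la demi-bande, qui comportent un tel bout, seraient exclus de la même façon.

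Resteraient les cas autorisés : le carré, dont les réflexions sont des symétries globales n'imposant aucune extension, et la bande de type Reeb (type~III), pour laquelle l'argument précédent échoue \emph{précisément} parce que $\sig(q)$ peut posséder des valeurs d'adhérence au bord (cf. corollaire~\ref{coro:extension_reflexion}), si bien qu'aucune extension n'est forcée. Enfin, si $X$ est L-complète et $U^\vee$ une bande de Reeb de largeur $\msfm<+\infty$, le bout ouvert de son espace des feuilles $[0,\msfm[^\wedge$ serait atteint par une géodésique de lumière en temps fini, contredisant la L-complétude ; d'où $\msfm=+\infty$. Avec les cas dégénérés $U^\vee=\emptyset$ ou $X$, ceci donnerait~(2). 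L'obstacle principal me paraît double : en~(1), l'analyse fine des bords de $U$ et le passage à l'énumération de la remarque~\ref{rema:ouv_sat_carre} ; en~(2), l'emploi du lemme~\ref{lemm:extension}, où il faut garantir que le recollement produit une extension séparée et strictement plus grande — ce qui repose exactement sur la dichotomie des valeurs d'adhérence (types I/II contre type~III) issue du corollaire~\ref{coro:extension_reflexion}.
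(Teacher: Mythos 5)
Votre stratégie est globalement celle du papier : pour (1), transversalité des feuilletages de lumière sur $U$, cartes adaptées, majoration du nombre de composantes de $\partial U$ par $4$ et énumération via la remarque~\ref{rema:ouv_sat_carre} ; pour (2), exclusion des types \og à bord libre\fg\ par le lemme d'extension~\ref{lemm:extension} et la dichotomie d'accumulation du corollaire~\ref{coro:extension_reflexion}. Deux points de (1) restent fragiles. D'abord, l'exclusion du cylindre par \og $U$ serait ouvert et fermé\fg\ n'est pas justifiée : la norme de $K$ ne se factorise pas globalement par la coordonnée transverse, donc $f$ sans zéro sur $x(U)=\R$ n'empêche pas $\partial U\neq\emptyset$. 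L'argument correct est celui du lemme~\ref{lemm:inj_dev}-(2) (une transversale fermée à $K$ rendrait la coordonnée transverse $x\in\mcC^\infty(X,\R)$ périodique et strictement monotone), le cas d'un revêtement par un élément pur du flot étant exclu parce que les zéros de $K$ sont des selles d'indice $-1$ (fait que le papier établit d'abord, avec le plongement propre des feuilles de lumière et la discrétude de $\Sigma$, et que vous utilisez implicitement). Ensuite, la borne \og au plus $4$ bords\fg\ est affirmée plutôt que démontrée ; le papier l'obtient en montrant que chaque composante de $\partial U$ est reliée à tout point de $U$ par une unique géodésique de lumière, puis en comptant les $4$ rayons de lumière issus d'un point.

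Le vrai problème est dans (2), pour le quasi-carré. Votre construction — recoller deux copies de $X$ le long de $U$ via une réflexion générique $\sigma$ — exige l'absence de valeurs d'adhérence de $\sigma(q)$ en \emph{tout} point de $\partial U$. Or $\sigma$ échange les côtés deux à deux ($a\leftrightarrow b$ et $c\leftrightarrow d$ si $ab$ et $cd$ sont les coins \og selle\fg) ; dans un quasi-carré où seul $d$ manque, on a bien $\sigma(q)\to d\notin X$ quand $q\to c$, mais $\sigma(q)$ converge vers un point de $b$ quand $q\to a$ : l'hypothèse du lemme~\ref{lemm:extension} est violée et le recollement des deux copies de $X$ n'est pas séparé (les points $a^{(1)}$ et $\sigma(a)^{(2)}\in b^{(2)}$ sont non séparés). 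Votre argument traite donc correctement les demi-bandes et les bandes de type I ou II (pour lesquelles le côté image par $\sigma$ est toujours absent, et $\partial U$ ne contient pas de selle), mais pas le quasi-carré, qui demande une autre construction : par exemple, suivre une feuille de lumière transverse à $K$ de $U$ vers le côté manquant $d$ ; elle est géodésiquement incomplète et sans point limite dans $X$, et la fonction $\langle K,K\rangle$ le long de son paramètre s'étend au-delà de la largeur $\msfm$ puisque le côté présent $c$, situé au même niveau transverse que $d$, fournit une carte adaptée de part et d'autre de ce niveau ; on recolle alors $R_g$ à $X$ le long de $U$ comme dans la preuve de la proposition~\ref{prop:max_champ=max_usuel}. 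C'est précisément l'absence d'un tel côté témoin au niveau du bout ouvert qui permet aux bandes de Reeb de survivre dans une surface maximale.
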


\begin{proof} 
Comme $X$ est homéomorphe au plan, le cône isotrope définit deux feuilletages de
lumière. Toute feuille de lumière est une droite proprement plongée (voir par
exemple \cite{HR1957}) ; par suite les orbites de lumière de $K$ sont des
droites plongées.  Soit $p \in \Sigma$ un zéro de $K$ et soit $\Phi_K^t$ le flot
de $K$.  La différentielle $D_p\Phi_K^t$ définit un groupe à un paramètre
d'isométries du plan tangent en~$p$, groupe conjugué à $\Phi_K^t$ près de~$p$
(par l'exponentielle en~$p$). Par suite, ou bien $K$ est nul près de~$p$, ou
bien~$p$ est un point selle de $K$ adhérent à 4 orbites de lumière de~$K$. Par
connexité de $X$ (vu que $K\neq 0$), l'ensemble des zéros $\Sigma$ est
nécessairement discret ; de plus $X_0 \neq \emptyset$ dès que $\Sigma \neq
\emptyset$.  

\par On peut écarter les cas évidents $X_0 = \emptyset$ ($U^\vee = X$) et $X_0 =
X$ ($U^\vee = \emptyset$).  Soit $\ell$ une feuille de lumière maximale qui
coupe transversalement le bord $\partial U$ de $U$ dans $X\smallsetminus
\Sigma$.  On~a $U = \cup_{t\in \R} \Phi_K^t(\ell \cap U)$ d'après la
proposition~\ref{prop:lumiere_transv}. De plus le  champ $K$ n'a pas de feuilles
fermées et $\ell$ est une droite proprement plongée, donc $U$ est homéomorphe au
plan, et pour toute composante $c$ de $\partial U$, il existe une carte adaptée
\eqref{equa:carte_adaptee} qui recouvre $U\cup c$ ; en particulier tout point
$q\in U$ peut être relié à $c$ par une unique géodésique de lumière $\gamma :
[0,\alpha] \to U^\vee$ telle que $\gamma(0)=q$ et $\gamma(\alpha) \in c$. Le
nombre $n\geq 1 $ de composantes de $\partial U$ ne peut donc excéder le nombre
de rayons de lumière issus d'un point de $U$, c'est-à-dire~$4$.  En examinant
les valeurs possibles de $n$, on conclut grâce aux cartes adaptées que $U^\vee$
est du type souhaité (voir remarque~\ref{rema:ouv_sat_carre}). La maximalité de
$(X,K)$ exclut les composantes vérifiant l'hypothèse du lemme
d'extension~\ref{lemm:extension}, c'est-à-dire les quasi-carrés, les bandes de
type I ou II et les demi-bandes.
\end{proof}

\subsection{Structures géométriques transverses}
\label{subs:struc_transverses}
Nous introduisons ici des structures géométriques transverses pour les
feuilletages précédents : feuilletage $\mcK$ associé au champ de Killing $K$ et
son feuilletage orthogonal $\mcK^\perp$ définis  sur l'ouvert $\{K\neq
0\}$, feuilletages de lumière. Ces structures, adaptées à diverses situations
géométriques sur la surface, ont pour but de mesurer d'une part la \og masse
\fg\ des feuilles de~$\mcK$ et d'autre part le décalage du flot de $K$.
Notamment, la structure projective transverse à $\mcK^\perp$ (proposition
\ref{prop:struct_proj_orth}) permet d'estimer le décalage du flot quand on
tourne autour d'un point selle.

\begin{prop}[structure riemannienne transverse au champ de Killing]
\label{prop:struct_transv_killing}
Soit $(X,K)$ une surface lorentzienne (avec ou sans bord) saturée et connexe,
avec $K$ non trivial.
\begin{enumerate}
\item Le feuilletage $\mcK$ associé au champ de Killing admet une métrique
riemannienne transverse invariante par l'action de $\Isom^\pm(X,K)$.
\item \label{prop:esp_feuilles_etale}
Si $X$ est simplement connexe, alors l'espace $\mcE_{(X,K)}$ des feuilles de
$\mcK$ est une variété riemannienne connexe de dimension~1, avec ou sans bord,
généralement non séparée. De plus, il existe une application lisse $x\in
\mcC^\infty(X,\R)$ qui induit une isométrie locale de $\mcE_{(X,K)}$ dans $\R$ ;
en particulier $\mcE_{(X,K)}$ est étalée au-dessus d'un intervalle de $\R$.
\end{enumerate}
\end{prop}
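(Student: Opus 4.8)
Le plan est de construire explicitement la métrique transverse à partir de la forme volume de la métrique lorentzienne, puis d'obtenir la structure de l'espace des feuilles par intégration d'une $1$-forme fermée.

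\textbf{Assertion (1).} Je pose $\alpha = \iota_K\,\mathrm{vol}_g$, c'est-à-dire $\alpha(V)=\mathrm{vol}_g(K,V)$ (définie au signe près en l'absence d'orientation, mais $\alpha\otimes\alpha$ est, elle, canonique puisqu'elle ne dépend que de la densité $\mathrm{vol}_g^{\otimes 2}=|\det g|$). Comme $\alpha(K)=\mathrm{vol}_g(K,K)=0$, la forme $\alpha$ annule $T\mcK$ et définit donc une section du fibré conormal $(TX/T\mcK)^*$, nulle exactement aux zéros de $K$. Sur $\{K\neq 0\}$ elle ne s'annule pas, et je pose $g_T=\alpha\otimes\alpha$ : c'est une forme quadratique définie positive sur le fibré normal de dimension~$1$, donc une métrique riemannienne transverse à $\mcK$. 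L'invariance par le flot de $K$ résulte de $\mathcal{L}_K\alpha=\iota_K\mathcal{L}_K\mathrm{vol}_g=0$ (un champ de Killing préserve le volume), et l'invariance par $\Isom^\pm(X,K)$ du calcul $\Phi^*\alpha=\pm\alpha$, le signe combinant $\Phi_*K=\pm K$ et l'éventuel renversement d'orientation, d'où $\Phi^*(\alpha\otimes\alpha)=\alpha\otimes\alpha$. Enfin, dans une carte adaptée~\eqref{equa:carte_adaptee} on a $\mathrm{vol}_g=\pm\,dx\wedge dy$, donc $\alpha=\mp\,dx$ et $g_T=dx^2$ : la métrique transverse coïncide avec la métrique de la coordonnée transverse.

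\textbf{Assertion (2).} Je suppose $X$ simplement connexe, donc orientable, et je fixe une orientation pour disposer de $\alpha$ globalement. Le point clé est que $\alpha$ est \emph{fermée} : sur $\{K\neq 0\}$ on a $d\alpha=d\iota_K\mathrm{vol}_g=\mathcal{L}_K\mathrm{vol}_g-\iota_K\,d\,\mathrm{vol}_g=0$, et cette identité s'étend par continuité aux zéros de $K$, qui sont isolés (voir la preuve de la proposition~\ref{prop:class_composantes}). La simple connexité de $X$ fournit alors une primitive $x\in\mcC^\infty(X,\R)$ avec $dx=\alpha$, unique à constante près. Cette fonction est constante sur les feuilles de $\mcK$ puisque $dx(K)=\alpha(K)=0$ ; elle induit donc une application $\bar x:\mcE_{(X,K)}\to\R$. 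Pour la structure de l'espace des feuilles, le théorème de redressement du flot (appliqué au champ complet $K$) montre que $\mcK$ est localement un produit, de sorte que $\mcE_{(X,K)}$ est localement un intervalle : c'est une variété connexe de dimension~$1$ (avec bord aux orbites de bord de $X$), a priori non séparée aux points où des orbites distinctes sont limites d'une même suite. Comme $dx=\alpha$ ne s'annule pas sur $\{K\neq 0\}$ tout en annulant $K$, l'application $\bar x$ est une immersion, donc un difféomorphisme local sur un intervalle de $\R$, et l'égalité $g_T=dx^2$ en fait une isométrie locale : $\mcE_{(X,K)}$ est bien étalée au-dessus d'un intervalle de $\R$.

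Le point le plus délicat sera de vérifier que l'espace des feuilles est effectivement une variété (non séparée) et non un quotient pathologique ; mais la construction de $\bar x$ et sa propriété d'immersion rendent cette étape transparente, l'essentiel du travail résidant dans la globalisation de la coordonnée transverse, que la fermeture de $\alpha$ jointe à la simple connexité rend immédiate.
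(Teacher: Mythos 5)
Votre preuve est correcte et suit essentiellement la même démarche que celle de l'article : la forme $i_K\nu$, fermée par la formule de Cartan, fournit la métrique transverse $(i_K\nu)^2=dx^2$ invariante par $\Isom^\pm(X,K)$, puis la simple connexité donne une primitive globale $x$ qui induit l'isométrie locale de l'espace des feuilles dans $\R$. Les seuls ajouts (vérification explicite de $\mathcal{L}_K(i_K\nu)=0$, caractère canonique de $\alpha\otimes\alpha$ sans orientation) sont des précisions bienvenues mais ne changent pas l'argument.
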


\begin{proof}
On choisit localement une forme volume lorentzienne $\nu$. Alors la forme
$i_K\nu$ est fermée (formule de Cartan) et définit localement le feuilletage
$\mcK$. L'intégration de $i_K \nu$ sur les transversales à $\mcK$ définit une
structure transverse modelée sur $(\R,\Isom(\R,dx^2))$ ; si $X$ est orientable,
il s'agit d'une structure de translation puisque $\nu$ est globale. La
structure riemannienne transverse à $\mcK$ est invariante par $\Isom^\pm(X,K)$
qui préserve $i_K \nu$ au signe près.
\par

Si $X$ est simplement connexe, la forme $i_K\nu$ est définie sur $X$ et exacte.
Soit $x\in \mcC^\infty(X,\R)$ une primitive de $i_K\nu$ et soit $\Sig$
l'ensemble des zéros de $K$ ; rappelons que $\Sig$ est discret, preuve de la
proposition~\ref{prop:class_composantes}. L'application
$x|_{X\smallsetminus\Sigma}$ est alors une submersion et ses sections locales
définissent une structure de variété sur l'espace des feuilles $\mcE_{(X,K)}$,
et même une structure de translation. L'application de $\mcE_{(X,K)}$ dans $\R$
induite par $x$ est évidemment une isométrie locale puisque la métrique
riemannienne de $\mcE_{(X,K)}$ est donnée par $(i_K\nu)^2=dx^2$.
\end{proof}

\begin{rema}
 \label{rema:esp_feuilles_etale}
 L'assertion~\eqref{prop:esp_feuilles_etale} est valable plus généralement si
$X$ est orientable -- et connexe -- avec $i_K \nu$ exacte. L'application $x\in
\mcC^\infty(X,\R)$ est alors bien définie modulo translation (choix de la
primitive) et changement de signe (choix de $\nu$) ; nous l’appellerons \og
coordonnée transverse\fg\ au champ. À translation et signe près, elle coïncide
avec la coordonnée~$x$ de toute carte adaptée~\eqref{equa:carte_adaptee}.
\end{rema}

\begin{prop}[structures riemanniennes transverses aux feuilletages de lumière]
\label{prop:prop:struct_transv_lumiere}
 Soit $(X,K)$ une surface lorentzienne saturée (avec ou sans bord) dont le champ
ne s'annule pas. Alors tout feuilletage de lumière $\mcL$ transverse à $K$ (s'il
en existe) admet une métrique riemannienne transverse invariante par le groupe
$\Isom^\pm(X,K)$. Si $\Phi^t$ désigne le flot de $K$, cette métrique transverse
est donnée par $dt^2$.
\end{prop}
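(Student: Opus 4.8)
Le plan est de construire cette métrique transverse à partir du temps du flot de $K$, en suivant le schéma de la proposition~\ref{prop:struct_transv_killing}. D'abord, je remarque que le flot $\Phi^t$ de $K$ préserve le feuilletage $\mcL$ : étant isométrique, il préserve le champ de cônes isotropes et permute donc les deux feuilletages de lumière ; comme $\Phi^0=\id$ et que $t\mapsto \Phi^t$ est continue, chaque feuilletage est préservé individuellement. Puisque $K$ est transverse à $\mcL$, on dispose en chaque point $p$ d'une décomposition $T_pX = \R\,K(p)\oplus T_p\mcL$, ce qui permet de définir une $1$-forme $\theta$ sur $X$ par $\theta(K)=1$ et $\theta|_{T\mcL}=0$. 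La transversalité assure que $\theta$ est lisse et globale.

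Le point clé sera de montrer que $\theta$ est fermée. Pour cela, je choisis localement, comme dans la preuve du lemme~\ref{lemm:carte_adaptee}, un champ $L$ engendrant $\mcL$ et vérifiant $[K,L]=0$ ; alors $\{K,L\}$ est un repère et $d\theta(K,L)=K(\theta(L))-L(\theta(K))-\theta([K,L])=0$, d'où $d\theta=0$. En coordonnées adaptées~\eqref{equa:carte_adaptee} on a simplement $\theta=dy$, c'est-à-dire la différentielle du temps $t=y$ du flot : ainsi $\theta$ est la forme du temps du flot de $K$ et $dt^2:=\theta^2$. Ce $2$-tenseur symétrique positif a pour radical exactement $\ker\theta = T\mcL$, donc induit une métrique sur le fibré normal $TX/T\mcL$. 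L'invariance d'holonomie résulte de ce que $\theta$ est fermée : pour tout champ $Y$ tangent à $\mcL$, $\mathcal{L}_Y\theta = i_Y\,d\theta + d(i_Y\theta)=0$, donc $\mathcal{L}_Y(\theta^2)=0$ et $dt^2$ est bien une métrique riemannienne transverse. De façon équivalente, la décomposition ci-dessus trivialise $TX/T\mcL$ par le temps du flot (l'espace des feuilles étant une seule orbite d'après la proposition~\ref{prop:lumiere_transv}-(1)), et $dt^2$ en est la métrique standard.

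Pour l'invariance par $\Isom^\pm(X,K)$, je pars de ce que tout $\Phi$ vérifie $\Phi_*K=\pm K$ (lemme~\ref{lemm:isom_isomK}) et préserve le champ de cônes, agissant donc sur le temps du flot par $t\mapsto \pm t + c$. Si $\Phi$ préserve $\mcL$, on obtient $\Phi^*\theta=\pm\theta$, d'où $\Phi^*(dt^2)=dt^2$. Le point le plus délicat sera précisément le cas où $\langle K,K\rangle$ ne s'annule pas : les deux feuilletages de lumière sont alors tous deux transverses à $K$ et peuvent être échangés par une isométrie, qui n'envoie pas $\theta$ sur $\pm\theta$. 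J'attends de lever cette difficulté en observant que la métrique $dt^2$, exprimée dans la trivialisation de $TX/T\mcL$ par le temps du flot, ne dépend que du champ $K$ et non du choix de $\mcL$ ; une telle isométrie transporte donc la trivialisation d'un feuilletage vers l'autre en agissant par $\pm 1$, et préserve dans tous les cas la métrique transverse $dt^2$.
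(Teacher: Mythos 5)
Votre preuve est correcte et suit pour l'essentiel la même démarche que celle du texte : la $1$-forme $\theta$ que vous définissez par $\theta(K)=1$ et $\theta|_{T\mcL}=0$ coïncide au signe près avec la forme $i_L\nu$ de l'article (où $\nu$ est la forme volume normalisée par $\nu(K,L)=1$), et dans les deux cas on la montre fermée à l'aide du repère commutant $(K,L)$, on l'identifie à $\pm dy=\pm dt$ en coordonnées adaptées, puis on déduit l'invariance de l'action $t\mapsto \pm t + c$ sur le temps du flot. Le point délicat que vous relevez (isométries échangeant les deux feuilletages de lumière lorsque $\langle K,K\rangle$ ne s'annule pas) est traité dans l'article avec la même concision, via l'assertion que $L$ et $\nu$ sont préservés au signe près, et votre observation que $dt^2$ ne dépend que de $K$ est exactement ce qui la justifie.
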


\begin{proof} 
Localement, il existe un champ~$L$ tel que $\langle L,L \rangle = 0$ et $\langle
L,K \rangle = 1$, et une forme volume lorentzienne $\nu$, normalisée par
$\nu(K,L) = 1$. On voit que $L$ est invariant par~$K$, c'est-à-dire vérifie
$[K,L]=0$, puis que la forme $i_L\nu$ est fermée.   L'intégration de $i_L \nu$
sur les transversales à $\mcL$ définit une structure riemannienne transverse.
Tout élément de $\Isom^\pm(X,K)$ préserve $L$ et $\nu$ au signe près, donc
préserve la structure riemannienne transverse à $\mcL$. Dans une carte adaptée
de la forme~\eqref{equa:carte_adaptee}, on a $\nu= -dx\wedge dy$ et $i_L\nu = -
dy$, d'où la dernière assertion.
\end{proof}

\begin{prop}[structure affine transverse sur le feuilletage orthogonal]
\label{prop:struct_affine_orth}
Soit $(U,K)$ un domino lorentzien (voir définition~\ref{defi:rubans_bandes}). On
suppose que l'unique feuille de lumière de~$K$ est géodésiquement incomplète.
Alors le feuilletage orthogonal à~$K$ admet une structure affine  transverse
invariante par l'action du flot de~$K$. Cette structure est unique.
\end{prop}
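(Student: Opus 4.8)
Le plan est de travailler dans une carte adaptée~\eqref{equa:carte_adaptee} et de ramener la construction d'une structure affine transverse à $\mcK^\perp$, invariante par le flot de $K$, à la résolution d'une équation différentielle au voisinage de l'unique orbite de lumière $c_0=\{x=0\}$. D'abord, sur chaque demi-espace $X^\pm$ (remarque~\ref{rema:demi_espaces_elem}) le feuilletage $\mcK^\perp$ est dirigé par $-f\partial_x+\partial_y$ et ses feuilles s'écrivent $y=G(x)+\be$, où $G'=-1/f$ ; le long de $c_0$ ce champ vaut $\partial_y=K$, de sorte que $c_0$ est elle-même une feuille de $\mcK^\perp$ et que le feuilletage est lisse en travers de $c_0$. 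Le flot $\Phi^t$ de $K$ préserve $\mcK^\perp$ et agit sur l'espace local des feuilles (une variété de dimension~$1$ d'après la proposition~\ref{prop:struct_transv_killing}) ; en repérant les feuilles proches de $c_0$ par leur abscisse $x$ sur la transversale $\{y=0\}$, ce flot est engendré par un champ de la forme $f(x)\partial_x$. Une structure affine transverse invariante équivaut donc à la donnée d'un paramètre transverse $w=\psi(x)$, lisse et régulier en travers de $c_0$ et défini à transformation affine près, dans lequel ce champ est affine, c'est-à-dire tel que $f\psi'=\al\psi+\be$ pour des constantes $\al,\be$.

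Ensuite j'exploite l'hypothèse d'incomplétude. Un calcul des symboles de Christoffel dans~\eqref{equa:carte_adaptee} donne $\nabla_KK=\tfrac12 ff'\partial_x-\tfrac12 f'\partial_y$, donc le long de $c_0$ (où $f(0)=0$) on a $\nabla_KK=-\tfrac12 f'(0)\,K$ : l'orbite $c_0$ est une géodésique de lumière, complète si et seulement si $f'(0)=0$. L'incomplétude se traduit ainsi par $\kappa:=f'(0)\neq0$, et c'est là le point décisif. En effet, dans l'équation $f\psi'=\al\psi+\be$, l'évaluation en $x=0$ donne $\al\psi(0)+\be=0$, et sa dérivée donne $f'(0)\psi'(0)=\al\psi'(0)$ ; comme on veut $\psi'(0)\neq0$, on est forcé de prendre $\al=f'(0)=\kappa\neq0$ (au signe près suivant l'orientation du flot). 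Après normalisation $\psi(0)=0$, d'où $\be=0$, l'équation $f\psi'=\kappa\psi$ s'intègre : puisque $f(x)=\kappa x\,(1+O(x))$, on a $\kappa/f=1/x+(\text{lisse})$, donc $\psi(x)=\pm x\,e^{h(x)}$ avec $h$ lisse, qui fournit bien un paramètre transverse lisse et régulier ($\psi'(0)\neq0$) en travers de $c_0$. Dans ce paramètre le générateur du flot vaut $\kappa\,w\,\partial_w$, qui est linéaire, de sorte que la structure affine obtenue est invariante par $K$.

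Enfin je globalise et j'obtiens l'unicité. Sur $X^+$, l'espace des feuilles est paramétré par $\be=y-G(x)$, sur lequel $K$ agit par translation ; les structures affines invariantes par translation sont exactement celles des coordonnées $w=Ae^{a\be}+B$ (cas $a\neq0$) ou affines en $\be$ (cas $a=0$), la relation $dx/f=d\be$ le long d'une transversale montrant que la solution $\psi$ précédente coïncide avec l'exponentielle $e^{\kappa\be}$ à transformation affine près. Seul le cas exponentiel, avec le taux $a=\kappa$, admet une valeur limite finie lorsque la feuille tend vers $c_0$ (car alors $\be\to\mp\infty$) : c'est donc l'unique structure qui se prolonge en travers de $c_0$, et elle s'y recolle avec le paramètre $\psi(x)$ construit plus haut. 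On procède de même sur $X^-$, et comme le domino est simplement connexe il n'y a aucune obstruction d'holonomie au recollement. Ceci définit une structure affine transverse globale sur $\mcK^\perp$, invariante par $K$. L'unicité résulte de ce que toute telle structure vérifie $f\psi'=\al\psi+\be$ près de $c_0$, ce qui impose $\al=\kappa$ et détermine $\psi$ à transformation affine près -- précisément la liberté tolérée par une structure affine. Le point le plus délicat est l'analyse locale en travers de $c_0$ : c'est là que l'incomplétude ($\kappa\neq0$) intervient de façon essentielle, en fixant le taux exponentiel et en garantissant la régularité du paramètre transverse ; dans le cas complet $\kappa=0$ l'équation dégénère et aucune structure de ce type ne se prolonge.
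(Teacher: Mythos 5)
Votre preuve est correcte et repose sur le même mécanisme que celle du papier : l'incomplétude de $c_0$ donne $\lambda=f'(0)\neq 0$ (c'est le lemme~\ref{lemm:semi_complet}, que vous retrouvez via le calcul $\nabla_KK=-\tfrac12 f'(0)\,K$ sur $c_0$), et le paramètre affine invariant est la solution de $f\psi'=\lambda\psi$, régulière en travers de $x=0$ précisément parce que $\lambda/f-1/x$ est lisse. L'emballage diffère toutefois : le papier travaille avec une submersion développante globale $\xi$ vérifiant $d\xi=F\omega$ avec $\omega=\langle K,\cdot\rangle$, tire la même équation différentielle de la condition $d(F\omega)=0$ évaluée sur $(K,L)$ et aboutit à la formule fermée $\xi=\frac{F}{\lambda}\langle K,K\rangle$ avec $F=\varphi(x)\exp(\lambda y)$ ; vous réduisez plutôt le problème à la dynamique unidimensionnelle $f(x)\partial_x$ induite sur l'espace local des feuilles (la transversale $\{y=0\}$), ce qui rend très transparent le fait que la régularité $\psi'(0)\neq 0$ force le multiplicateur $\alpha=f'(0)$. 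Chaque approche a son intérêt : la vôtre est plus élémentaire et isole bien le point décisif, celle du papier fournit directement le paramètre explicite $\xi$ réutilisé ensuite (remarque~\ref{rema:transv_affine} et preuve de la proposition~\ref{prop:struct_proj_orth}). Une phrase est imprécise : \og seul le cas exponentiel, avec le taux $a=\kappa$, admet une valeur limite finie\fg\ --- tout taux $a$ du bon signe donne une limite finie quand $\be\to\mp\infty$ ; ce qui sélectionne $a=\kappa$ n'est pas la finitude de la limite mais l'immersivité du paramètre prolongé en $x=0$, que votre analyse de l'équation différentielle établit par ailleurs, de sorte que l'argument global tient.
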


\begin{proof} Le point important est l'invariance. Soit $L$ l'unique champ de
vecteurs sur $U$ vérifiant $\langle K,L \rangle = 1$ et $\langle L,L
\rangle = 0$ (dans une carte adaptée $L=\partial_x$) et soit $\omega$ la
1-forme sur $U$ donnée par $\omega=\langle K,\cdot \rangle$.  L'unique
feuille de lumière de $K$, que l'on notera $c_0$, n'étant pas
géodésiquement complète, on a $\lambda = L\cdot \langle K,K \rangle \neq
0$ sur $c_0$. Cela provient de l'existence de coordonnées adaptées et
du lemme suivant, dont la preuve  immédiate est laissée au lecteur.
\begin{lemm} \label{lemm:semi_complet} 
 Soit $I$ un intervalle ouvert comprenant $0$ et soit $f:I\rightarrow \R$
 une fonction lisse s'annulant en $0$. Considérons la métrique $2dxdy
 + f(x)dy^2$ définie pour $(x,y)\in I\times \R$. La droite $\{x=0\}$ est
 géodésiquement complète si et seulement si $f'(0)=0$. Si $f'(0)>
 0$ alors cette droite porte une géodésique semi-complète, le bout complet
 étant du côté $y<0$.
\end{lemm}

Comme $U$ est simplement connexe, toute structure affine transverse sur le
feuilletage orthogonal $\mcK^\perp$ est donnée par une submersion $\xi : U\to
\R$ (que  l'on prend comme paramètre transverse) vérifiant $d\xi = F \omega$
pour une certaine fonction lisse $F : U\to \R$ ne s'annulant pas. On peut
toujours supposer que $\xi=0$ correspond à $c_0$. L'invariance par le flot de
$K$ implique alors l'existence d'un réel $\mu$ tel que $K\cdot \xi = \mu \xi$.
Par suite, vu que $\mcL_K \omega = 0$, on a $K\cdot F = \mu F$ et la condition
$d(F \omega)=0$ s'écrit, en évaluant sur le couple $(K,L)$ :
$$
\langle K,K \rangle L\cdot F  + (L\cdot \langle K,K \rangle -\mu) F =0.
$$ 
Puisque $F$ ne s'annule pas, on a nécessairement $\mu=\la$. En coordonnées
$(x,y)\in I\times \R$ adaptées, sachant que $K=\partial_y$ et $L=\partial_x$, on
voit que la fonction $F$ doit être de la forme 
\begin{equation}
\label{equa:struct_affine_adaptee}
F = \ph(x) \exp(\la y) \esp (x,y)\in I\times \R,
\end{equation}
 où $\ph$ est une solution non nulle sur $I$ de l'équation différentielle
\begin{equation}
\label{equa:ED_struct_affine}
\ph'(x) + \frac{1}{f(x)}(f'(x)-\la)\ph(x)  = 0 \esp (x\in I).
\end{equation}
La linéarité de cette équation assure l'existence de $F$ et la
submersion $\xi$, normalisée par $\xi=0$ sur $c_0$, est finalement donnée
par
\begin{equation}
\label{equa:parametre_affine}
\xi=\frac{F}{\la}  \langle K,K \rangle. 
\end{equation}
Elle est  unique à homothétie  près. 
\end{proof}

\begin{remas}
\label{rema:transv_affine}
(1) Dans chaque demi-espace $U^\pm$ (voir
remarque~\ref{rema:demi_espaces_elem}), toute orbite de~$K$ est une transversale
globale à $\mcK^\perp$ sur laquelle le paramètre affine $\xi$ est proportionnel
à $\exp(\la t)$, où $t$ désigne le temps du flot de $K$ (voir les équations
\eqref{equa:struct_affine_adaptee} et \eqref{equa:parametre_affine}).\\ 
\indent (2) Il en résulte que si l'on prend $F>0$, alors $\xi$ décrit
$]0,\infty[$ sur $U^+$ et $]-\infty,0$[ sur $U^-$.
\end{remas}

\par Soit $(X,K)$ une surface lorentzienne saturée par un champ $K$ sans zéros. 
On dira que les feuilles de lumière de $K$ {\em découpent des bandes} si les
composantes connexes de leur complémentaire dans $X$ sont toutes simplement
connexes et si  l'adhérence dans $X$ d'une telle composante connexe est une {\em
bande} au sens de la définition~\ref{defi:rubans_bandes}. Dans une bande de type
Reeb (type III du lemme~\ref{lemm:types_bandes}), les deux feuilles du bord
n'appartiennent pas au même feuilletage de lumière.  {\em Il n'y a donc pas
d'accumulation de bandes de type Reeb dans la surface}.

\begin{prop}[structure projective transverse sur le feuilletage orthogonal]
\label{prop:struct_proj_orth}
Soit $X$ une surface lorentzienne munie d'un champ de Killing $K$ complet et
sans zéros. On suppose que les feuilles de lumière de $K$ sont géodésiquement
incomplètes et qu'elles découpent $X$ en bandes de type Reeb. Alors le
feuilletage orthogonal au champ $K$ admet une structure projective  transverse
invariante par l'action du flot de $K$. Cette structure est unique.
\end{prop}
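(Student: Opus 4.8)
The goal is to produce, on the surface $X$ cut into Reeb bands by the (incomplete) light leaves of $K$, a transverse projective structure on $\mcK^\perp$ invariant under the flow of $K$, and to prove it unique. The natural strategy is to build it band by band and glue. On a single Reeb band $B$, the complement of the boundary light leaves is covered by the interiors of two dominos (the two half-spaces obtained by crossing the central Reeb leaf), so I would first analyze the structure on a domino. By the hypothesis that the light leaves are geodesically incomplete, proposition~\ref{prop:struct_affine_orth} applies on each such domino and yields an \emph{affine} transverse structure on $\mcK^\perp$, invariant by the flow, unique up to homothety, with transverse parameter $\xi$ satisfying $d\xi=F\omega$ and $K\cdot\xi=\la\xi$. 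An affine structure is a fortiori a projective one, so each domino already carries a flow-invariant projective transverse structure.

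\par The real work is the \emph{gluing across the Reeb leaf at the centre of the band}. The two dominos meeting along a Reeb leaf each provide an affine parameter, $\xi_+$ on $U^+$ and $\xi_-$ on $U^-$, each describing a half-line ($]0,\infty[$ and $]-\infty,0[$ respectively, by remark~\ref{rema:transv_affine}(2)). These two affine charts are \emph{not} affinely compatible across the Reeb leaf in general; the point is that they \emph{are} projectively compatible. The plan is to exhibit an explicit projective coordinate on a transversal crossing the Reeb leaf — a transversal meeting $\mcK^\perp$ can be taken to be an orbit of $K$, along which $\xi_\pm$ is proportional to $\exp(\la t)$ by remark~\ref{rema:transv_affine}(1). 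I would therefore set, on such a transversal, a candidate projective coordinate of the form $\zeta=\exp(\la t)$ on one side and its continuation $-\exp(-\la t)$ (or $1/\xi$-type inversion) on the other, and check that the transition between the two affine charts is a Möbius (homographic) map $t\mapsto at/(ct+d)$ fixing the Reeb leaf. Concretely, an affine parameter behaves like $e^{\la t}$, and the two exponential laws on the two sides glue through the value at the Reeb leaf by an inversion, which is exactly the kind of transition that is projective but not affine; this is the heart of why the structure drops from affine to merely projective at the Reeb leaves.

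\par The main obstacle will be verifying that this homographic gluing is \emph{smooth and flow-invariant across the light leaf}, since on the Reeb leaf itself $\langle K,K\rangle$ vanishes and the affine parameters $\xi_\pm$ degenerate (they reach the endpoints $0,\pm\infty$ of their respective half-lines). I would handle this by working with the projective chart $\zeta$ rather than the affine one: because $\mcK^\perp$ in a Reeb band is a suspension (lemma~\ref{lemm:types_bandes}, type III), its leaf space is a genuine separated transversal, and I can track the projective coordinate continuously across the Reeb leaf, using the differential equation~\eqref{equa:ED_struct_affine} for $\ph$ and the explicit form~\eqref{equa:parametre_affine} of $\xi$ to control the behaviour as one approaches the boundary light leaves. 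Flow-invariance follows because each piece is flow-invariant and the flow acts on the projective parameter by a one-parameter subgroup of Möbius maps ($\zeta\mapsto e^{\la t}\zeta$), which commutes with the fixed gluing homography.

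\par For the \emph{global} structure and \emph{uniqueness}, I would invoke the observation following remark~\ref{rema:transv_affine} that Reeb bands do not accumulate in $X$, so the light leaves form a locally finite family and $X$ is obtained by gluing the Reeb bands along their boundary light leaves in a locally finite pattern. The local projective structures, being canonical (the affine parameter per domino is unique up to homothety, and homotheties are projective), are compatible on overlaps, hence patch to a global transverse projective structure on $\mcK^\perp$; invariance under the flow is local and already established. Uniqueness reduces to uniqueness on each domino: any flow-invariant transverse projective structure restricts on a domino to a flow-invariant projective structure, and I would argue that the invariance forces the projective parameter to be, up to a Möbius transformation, the canonical $\zeta$ built above — essentially because the flow acts with a nontrivial one-parameter group fixing the Reeb leaf, and a projective structure invariant under such a flow is rigid. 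This mirrors the uniqueness argument of proposition~\ref{prop:struct_affine_orth}, now carried out in the projective rather than the affine setting.
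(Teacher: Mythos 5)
Your overall shape is right---affine structures on dominos coming from proposition~\ref{prop:struct_affine_orth}, assembled into a projective structure because the transitions are inversions---and your uniqueness sketch is essentially the paper's (the flow fixes only the light leaf in the local leaf space, hence acts as a homothety in a suitable projective chart, forcing the restriction of any invariant projective structure to each domino to contain the unique invariant affine structure). But there is a genuine gap in the gluing step, and you have located the difficulty in the wrong place. Proposition~\ref{prop:struct_affine_orth} already produces a \emph{single} affine parameter $\xi=(F/\la)\langle K,K\rangle$, defined and smooth on the \emph{whole} domino including its light leaf, where $\xi$ simply vanishes; nothing degenerates there, and no gluing ``across the light leaf'' between $U^+$ and $U^-$ is needed (the two half-lines $]0,\infty[$ and $]-\infty,0[$ of remark~\ref{rema:transv_affine} are the images of the two halves under \emph{one} parameter of \emph{one} domino). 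The actual matching problem is between two \emph{adjacent} dominos $U_c$ and $U_d$, whose overlap is the interior of the Reeb band they share---a region where $\langle K,K\rangle\neq 0$ and both parameters are nondegenerate.

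On that overlap, along an orbit of $K$ taken as transversal, one has $\xi_c\propto e^{\la_c t}$ and $\xi_d\propto e^{\la_d t}$ with $\la_c=L\cdot\langle K,K\rangle$ on $c$. The transition $\xi_d=\mathrm{const}\cdot\xi_c^{\la_d/\la_c}$ is a M\"obius map only when $\la_d/\la_c=\pm 1$; for any other ratio the two affine structures are \emph{not} projectively compatible (flow-invariant projective structures on a band interior are far from unique: the parameters $e^{\mu\beta}$ for different $|\mu|$ are mutually inequivalent). So the whole construction hinges on the identity $\la_c+\la_d=0$, which the paper extracts from the explicit form~\eqref{equa:chg_carte_reeb} of the change of adapted coordinates between $U_c$ and $U_d$, itself forced by the Reeb type (the two normalized lightlike fields $L$ induced on the overlap are distinct and must be exchanged by the change of chart). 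Your proposal never states or proves this identity; the phrase ``the two exponential laws glue through the value at the Reeb leaf by an inversion'' cannot substitute for it, since that value is $0$ and carries no information about the exponents, and ``the local structures, being canonical, are compatible on overlaps'' is a non sequitur---uniqueness per domino says nothing about two different dominos inducing projectively equivalent structures on their common band. Fix the gluing locus, prove $\la_c+\la_d=0$, and the rest of your argument goes through.
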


\begin{proof} Comme les feuilles de lumière de $K$ sont  géodésiquement
incomplètes, il découle du   lemme~\ref{lemm:semi_complet} qu'elles sont toutes
isolées. Soit $\mcK_0$ l'ensemble de ces feuilles et soit $X_0$ leur réunion.
Pour chaque feuille de lumière $c\in \mcK_0$, la réunion de $c$ et des deux
composantes de $X\smallsetminus X_0$ adhérentes à $c$ définit un voisinage
ouvert $U_c$ de $c$ qui est un domino (au sens de la
définition~\ref{defi:rubans_bandes}) maximal. On note $\la_c$ la valeur de
$L\cdot \langle K,K \rangle$ sur $c$, réel non nul bien défini. Considérons deux
ouverts $U_c$ et $U_d$ ($c,d \in \mcK_0$) contigus, munis de cartes adaptées. 
Avec les notations précédentes, on a $U_c\cap U_d = U_c^\eps = U_d^{-\eps}$
($\eps= 1$ ou $-1$) grâce au type Reeb et à la maximalité. De plus, les champs
$L$ induits sur $U_c\cap U_d$ par $U_c$ et $U_d$ sont distincts : $\partial_x$
et $-\partial_x+ (2/f(x))\partial_y$ (dans une carte adaptée).  Le changement
$\theta : I_c^\eps \times \R \to I_d^{-\eps}\times \R$ de coordonnées adaptées
doit échanger les champ verticaux (qui correspondent à $K$) et les deux champs
$L$ précédents. On trouve ainsi que $-\theta$ correspond nécessairement à l'une
des réflexions de $U_c^\eps$, c'est-à-dire
\begin{equation}
\label{equa:chg_carte_reeb}
-\theta(x,y)= (x, \frac{2}{\la} \ln \frac{\la\xi_0}{f(x)\ph(x)} -y)
\esp (x,y)\in I_c^\eps\times\R,
\end{equation}
où $\xi_0$ est le paramètre de la géodésique fixe par la réflexion (voir
\eqref{equa:parametre_affine}).  En particulier on doit avoir $\la_c+\la_d
= 0$ et $I_d^{-\eps} =I_c^\eps$.
 
\par Pour comparer les structures affines transverses induites par $U_c$ et
$U_d$ sur le feuilletage orthogonal $\mcK^\perp$, on prend une feuille
arbitraire de $K$ dans $U_c \cap U_d$, qui fournit une transversale globale au
feuilletage $\mcK^\perp$ sur $U_c \cap U_d$. Sur une telle transversale, les
paramètres affines $\xi_c$ et $\xi_d$ induits par $U_c$ et $U_d$ sont
proportionnels à $\exp(\la_c t)$ et $\exp(\la_d t)$ respectivement,~$t$ étant le
temps du flot de $K$ (voir remarque~\ref{rema:transv_affine}).  Le produit
$\xi_c \xi_d$ est constant puisque $\la_c+\la_d = 0$. Les différentes structures
affines s'assemblent  donc  bien en une structure projective.

\par 
Considérons $\zeta$ une autre structure projective transverse à $\mcK^\perp$ et
invariante par le flot $\Phi^t$ de $K$. Soit $U$ un ouvert (maximal) de carte
adaptée et soit $\tau$ une transversale globale à $\mcK^\perp$ dans $U$. On a
une carte projective $\tau$ à valeurs dans un ouvert connexe $J$ de la droite
projective. Pour $t\neq 0$, l'action de $\Phi^t$ induit une bijection projective
directe de $J$ dans $J$ ayant un unique point fixe (la feuille de lumière de $K$
dans $U$). Quitte à composer la carte projective, on peut ainsi supposer que
$J=\R$ et que $\Phi^t$ induit une homothétie de rapport $\neq 1$.  La structure
induite $\zeta|U$ contient donc la structure affine (unique) de la
proposition~\ref{prop:struct_affine_orth}. Mais la structure projective
construite ci-dessus est engendrée par l'atlas des structures affines sur les
ouverts de cartes adaptées, d'où l'unicité.
\end{proof}

\begin{exem}[cylindres]
\label{exem:cylindres}
Soit $(U,K,c_0)$ un domino lorentzien avec $c_0$ géodésiquement incomplète et
soit $\sigma^\pm$ une réflexion de chaque demi-espace ouvert $U^\pm$.  On
considère deux copies de $U$ avec des champs opposés : $(U_i,K_i)$ avec $U_i=U$
et $K_i=(-1)^i K$ ($i=0,1$). Cela étant, soit $C$ la surface obtenue en
identifiant $U_0^+$ et $U_1^+$ (resp. $U_0^-$ et $U_1^-$) avec $\sigma^+$ (resp.
 avec $\sigma^-$). Cette surface étend $(U,K)$ et elle est clairement
homéomorphe au cylindre $\SSS^1\times \R$, les fibres $\{x\}\times \R$ ($x\in
\SSS^1$) correspondant au feuilletage orthogonal $\mcK^\perp$ (dont deux
feuilles sont de lumière). Ainsi l'espace des feuilles de $\mcK^\perp$ sur $C$
est un cercle recouvert par deux cartes affines.  Fixons un paramètre affine
$\xi$ pour $\mcK^\perp$ sur $U$, et notons $\xi_0^\pm$ le paramètre de l'axe de
la réflexion $\sig ^\pm$.  Dans $U^\pm$, en prenant une orbite de $K$ comme
transversale, on voit que l'action de $\sig^\pm$ sur $\mcK^\perp$ est donnée par
$\xi\xi'=(\xi_0^\pm)^2$ (remarque~\ref{rema:transv_affine}). Par suite
l'holonomie de la structure projective transverse à $\mcK^\perp$ est
l'homothétie de rapport $(\xi_0^+/\xi_0^-)^2$ sur la droite projective -- les
feuilles de lumière de $K$ correspondent à $0$ et $\infty$. Ce réel
caractérise~$C$ à isométrie près.
\end{exem}

\subsection{Adjonction de  selles}
\label{subs:compl_selle}
Nous nous intéressons maintenant à la L-complétude, et en premier lieu à la
complétude géodésique des feuilles de lumière d'un champ de Killling $K$
(complet).  Une telle feuille $c_0$ est géodésiquement incomplète si et seulement si $\nabla_K
K \neq 0$ sur $c_0$ (voir lemme~\ref{lemm:semi_complet}). Les feuilles de
lumière des champs de Killing sont donc génériquement incomplètes.  Pour lever
cette obstruction, nous allons montrer que nos extensions peuvent être
complétées \og par adjonction de selles\fg\ (voir proposition
\ref{prop:compl_quasi_selle}).

\begin{prop}[extension par une selle symétrique]
\label{prop:extension_selle}
Soit $(U,K)$ un domino lorentzien dont l'unique feuille de lumière de $K$ est
géodésiquement incomplète. La surface $(U,K)$ admet alors une extension
simplement connexe saturée $(\widehat{U}_s,\widehat{K})$ dont le
champ~$\widehat{K}$ possède un unique zéro $p\in \widehat{U}_s$ et dont la
métrique est symétrique par rapport à~$p$. Cette extension est partagée en 4
quadrants ouverts séparés par deux géodésiques de lumière complètes passant
par~$p$ ; chaque demi-plan ouvert est isométrique à $U$.
\end{prop}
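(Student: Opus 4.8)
The plan is to realise $\widehat U_s$ as an explicit Kruskal-type model in which $\widehat K$ becomes a linear boost, the saddle $p$ being the common fixed point of its flow. By Lemme~\ref{lemm:carte_adaptee} I fix adapted coordinates $(x,y)\in I\times\R$ on the marked domino, so that $K=\partial_y$ and the metric is $2\,dx\,dy+f(x)\,dy^2$ with light orbit $c_0=\{x=0\}$; thus $f(0)=0$, and since $c_0$ is incomplete, $\la:=f'(0)\neq0$ by Lemme~\ref{lemm:semi_complet} (the sign of $\la$ will be immaterial). Recall that $c_0$ splits the domino into $U^\pm=\{\pm x>0\}$ (Remarque~\ref{rema:demi_espaces_elem}). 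I then introduce null coordinates
\[
u=a(x)\,e^{\la y/2},\qquad v=-\,e^{-\la y/2},
\]
for a function $a$ to be determined. As $\partial_y u=\tfrac\la2 u$ and $\partial_y v=-\tfrac\la2 v$, one finds $K=\partial_y=\tfrac\la2\bigl(u\partial_u-v\partial_v\bigr)$: in the new chart $K$ is exactly the boost whose unique zero $p=(0,0)$ will be the sought saddle.

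It remains to choose $a$ so that the metric takes Kruskal form. Expanding $g$ in $(u,v)$ and matching the coefficients of $dx\,dy$ and $dy^2$ forces $a'/a=\la/f$, i.e.\ $a(x)=Cx\exp\!\bigl(\int_0^x h\bigr)$ with $h=\la/f-1/x$ and a constant $C\neq0$. The crucial observation is that $f(x)=\la x+O(x^2)$ makes $h$ extend smoothly across $x=0$; hence $a$ is smooth on $I$, has the sign of $x$, and satisfies $a(0)=0$, $a'(0)=C\neq0$, and $a'=a\,\la/f\neq0$ for $x\neq0$, so $a'$ is nowhere zero. Thus $a\colon I\to a(I)$ is a diffeomorphism onto an open interval, and the metric becomes $g=2H(uv)\,du\,dv$ with $H=2/(\la a')$. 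Since $uv=-a(x)$, one may write $x=a^{-1}(-uv)$, whence $H=2/\bigl(\la\,a'(a^{-1}(-uv))\bigr)$ is a smooth, nowhere-vanishing function of the single variable $uv$, in particular smooth at $uv=0$. I expect this smooth crossing of the saddle to be the heart of the matter: it rests entirely on the regularity of $a$ at $x=0$, which is exactly where the incompleteness hypothesis ($\la\neq0$) enters.

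Finally I put $\widehat U_s=\{(u,v):uv\in I'\}$, $I'=-a(I)$, a domain in $\R^2$ with metric $2H(uv)\,du\,dv$ and $\widehat K=\tfrac\la2(u\partial_u-v\partial_v)$, and check the stated properties. This region is star-shaped about the origin (if $uv\in I'$ then $t^2uv\in I'$ for $t\in[0,1]$), hence connected and simply connected; $\widehat K$ is complete, its flow $(u,v)\mapsto(e^{\la t/2}u,e^{-\la t/2}v)$ preserving $uv$, and vanishes only at $p$. The central symmetry $(u,v)\mapsto(-u,-v)$ fixes $p$ and preserves both $uv$ and $du\,dv$, hence $g$ and $\widehat K$, so the metric is symmetric about $p$. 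The null axes $\{u=0\}$ and $\{v=0\}$ are light geodesics through $p$; since $\partial_u H=H'v$ vanishes on $\{v=0\}$ (and symmetrically), a short Christoffel computation shows that $u$, resp.\ $v$, is an affine parameter there ranging over all of $\R$, so both geodesics are complete, and they cut $\widehat U_s$ into four open quadrants. Lastly, as $(x,y)$ runs over $I\times\R$ the value $v=-e^{-\la y/2}$ runs over $]-\infty,0[$ and $uv=-a(x)$ over $I'$, so the chart identifies $(U,K)$ isometrically with the open half-plane $\{v<0\}$; by the central symmetry $\{v>0\}$ is likewise isometric to it. This realises $(\widehat U_s,\widehat K)$ as the desired extension, glued from two copies of the domino across the saddle.
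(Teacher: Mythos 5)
Your proof is correct and follows essentially the same route as the paper: an explicit Kruskal-type change of null coordinates turning $K$ into the linear boost $\tfrac{\la}{2}(u\partial_u-v\partial_v)$, with the extension across $\{v=0\}$ obtained by central symmetry and the smoothness at the saddle resting exactly on $f'(0)=\la\neq 0$. The only (cosmetic) difference is the normalisation of the null chart: you take $v$ depending on $y$ alone, which yields the conformal normal form $2H(uv)\,du\,dv$ with $uv=-a(x)$ and lets you check regularity at the origin directly via the diffeomorphism $a$, whereas the paper splits the factor $[\ph(x)g(x)]^{\pm1/2}$ between $u$ and $v$ so as to keep $uv=x$, obtaining the form \eqref{equa:demi-selle} with $du^2$ and $dv^2$ terms.
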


\begin{proof} En coordonnées adaptées $(x,y)\in I\times \R$, 
 la métrique est donnée par 
\eqref{equa:carte_adaptee} avec $\la=f'(0)\neq 0$.
Soit $\ph$ une solution non nulle et positive (sur $I$)
de l'équation différentielle linéaire~\eqref{equa:ED_struct_affine}.
La fonction $f$ s'écrit $f(x)=\la x g(x)$
($x\in I$) avec $g\in \mcC^\infty(I,\R)$ vérifiant $g>0$ et
$g(0)=1$. Considérons les changements de coordonnées réciproques
\begin{equation}
\label{equa:xy_uv}
\left\{
\begin{array}{l}
 u = x [\ph(x) g(x)]^{1/2} \exp(\frac{\lambda y}{2})\\
 v = [\ph(x) g(x)]^{-1/2} \exp(-\frac{\lambda y}{2})
\end{array}
\right.
\esp \mathrm{et}\esp
\left\{
\begin{array}{l}
x = uv \\
y = - \frac{1}{\la} \ln[v^2 \ph(uv)g(uv)]
\end{array}
\right.
\end{equation}
avec $(x,y) \in I \times \R$ et $(uv,v)\in I\times ]0,\infty[$. Dans les 
coordonnées $(u,v)$, la métrique de $U$ prend la forme
\begin{equation}
\label{equa:demi-selle}
\frac{1}{\la} [v^2 h(uv)du^2 - 2 (g(uv)+\frac{1}{g(uv)})du dv
+ u^2 h(uv)dv^2] \esp (uv\in I ,v>0),
\end{equation}
avec $h(x)=\frac{1}{x}(g(x)-\frac{1}{g(x)})$ ($x\in I$) et le champ $K$
correspond à $\frac{2}{\la}(u \partial_u - v\partial_v)$. La métrique et le
champ sont clairement définis pour tout $v\in \R$ et sont invariants 
par la symétrie centrée à l'origine, d'où le prolongement cherché.  
\end{proof}

Nous appellerons {\em selle} (voir définition~\ref{defi:selles})
l'extension $(\widehat{U}_s,\widehat{K})$ que nous venons de
cons\-trui\-re. La proposition~\ref{prop:extension_selle} signifie que la
surface $(U,K)$ peut toujours être réalisée comme \og moitié
ouverte\fg\ d'une selle, que l'on peut choisir symétrique. Une extension
par selle symétrique dont une moitié ouverte est isométrique à $(U,K)$ 
sera dite {\em   minimale}.  Nous examinerons l'unicité d'une telle 
extension au \S\ref{subs:unicite_selle}.%

\par
Observons maintenant que le revêtement universel $\widehat{U}_u$ de
$\widehat{U}_s \smallsetminus \{p\}$ est isométrique au revêtement universel des
cylindres $C$ de l'exemple~\ref{exem:cylindres} -- lequel admet un domaine
fondamental ouvert isométrique à $U$. Par conséquent, l'un des quotients de
$\widehat{U}_u$ se complète en la selle $\widehat{U}_s$ par l'adjonction d'un
zéro du champ. Dans ce qui suit, nous établissons un critère de complétion qui
permet notamment de déterminer le cylindre $C$ dont le revêtement double se
complète en une selle $\widehat{U}_s$ (exemple~\ref{exem:cylindre_selle}).

\begin{defi} 
\label{defi:selles}
Soit $X$ une surface lorentzienne munie d'un champ de Killing $K$ complet.
 On dit que $(X,K)$ est 
\begin{enumerate}
\item une {\em selle} si $X$ est simplement connexe et si $K$ admet un 
unique zéro, appelé  {\em point selle},
\item une {\em pseudo-selle} si $X$ est homéomorphe au cylindre, et si $K$
  ne s'annule pas et possède exactement 4 feuilles de lumière
découpant $X$ en 4 bandes de type Reeb,
\item une {\em quasi-selle} si $X$ est une pseudo-selle et si les~4
feuilles de lumière du champ~$K$ sont {\em   géodésiquement incomplètes}.
\end{enumerate}
\end{defi}

\begin{rema} Comme les réflexions d'une bande de type Reeb s'étendent au
  bord (remarque~\ref{rema:bande_Reeb_sym}), les 4 feuilles de lumière du
  champ d'une pseudo-selle sont de même nature, géodésiquement complètes ou
  non.  Elles peuvent être  géodésiquement complètes.  Par
  exemple, la métrique $2 du dv (u^2+v^2)^{-3/2}$ sur
  $\R^2\smallsetminus\{0\}$, invariante par le champ $u^3\partial_u -
  v^3\partial_v$, est une pseudo-selle mais pas une quasi-selle.
\end{rema}

D'après la proposition~\ref{prop:struct_proj_orth}, le feuilletage
orthogonal au champ d'une quasi-selle admet une structure projective
transverse invariante.

\begin{prop}[complétion d'une quasi-selle]
\label{prop:compl_quasi_selle}
Une quasi-selle se complète en une selle (par l'adjonction d'un zéro du
champ) si et seulement si l'holonomie de la structure projective transverse du
feuilletage orthogonal au champ vaut l'identité.
\end{prop}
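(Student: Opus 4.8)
L'idée directrice est de ramener la complétion d'une quasi-selle à la situation modèle de la proposition~\ref{prop:extension_selle}, et de voir que l'holonomie projective transverse encode exactement l'obstruction au recollement. Soit $(X,K)$ une quasi-selle. Ses 4~feuilles de lumière, géodésiquement incomplètes et isolées (lemme~\ref{lemm:semi_complet}), découpent $X$ en 4~bandes de type Reeb. Une éventuelle selle complétant $(X,K)$ ajoute un point selle $p$ et, d'après la proposition~\ref{prop:class_composantes}-(2), le voisinage de $p$ privé de $p$ est exactement réuni des 4~quadrants, donc une selle complétant $X$ existe si et seulement si les quatre dominos $U_c$ ($c\in\mcK_0$) entourant les feuilles de lumière s'assemblent autour d'un unique zéro.

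Premièrement, je fixerais la structure projective transverse $\zeta$ à $\mcK^\perp$ sur $X$, fournie par la proposition~\ref{prop:struct_proj_orth}, ainsi qu'un paramètre affine local $\xi$ sur chaque domino (proposition~\ref{prop:struct_affine_orth}). L'holonomie $\mathrm{hol}$ est l'application projective obtenue en faisant le tour du cylindre $X$, composition des changements de cartes~\eqref{equa:chg_carte_reeb} entre dominos contigus ; c'est une homothétie projective de rapport $\rho = \prod (\xi_0^{(j)})^{\pm 2}$ (comparer l'exemple~\ref{exem:cylindres}). Deuxièmement, je comparerais cette structure avec celle de la selle modèle $\widehat{U}_s$ de la proposition~\ref{prop:extension_selle} : dans les coordonnées $(u,v)$ de~\eqref{equa:demi-selle}, le feuilletage $\mcK^\perp$ près du point selle est porté par les deux axes de lumière $\{u=0\}$ et $\{v=0\}$, et le paramètre transverse~$\xi$ s'identifie à une coordonnée projective sur $\PP^1$ sur laquelle le tour du point selle agit trivialement. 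Ainsi la structure projective transverse d'une selle a une holonomie triviale autour du zéro.

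Le point-clé sera donc le sens \emph{suffisant} : supposant $\mathrm{hol}=\id$, je veux reconstruire un point selle. L'idée est que $\mathrm{hol}=\id$ signifie que le paramètre projectif global $\xi$ sur l'espace des feuilles $\mcE_{\mcK^\perp}$ (un cercle recouvert par cartes affines, comme dans l'exemple~\ref{exem:cylindres}) se relève en une fonction monovaluée à valeurs dans $\PP^1$, identifiant la configuration des 4~feuilles de lumière aux 4~demi-axes $\{u=0,v>0\}$, $\{v=0,u>0\}$, etc., du modèle. Je recollerais alors successivement les 4~dominos $U_c$ sur les 4~quadrants de $\widehat{U}_s$ via les isométries fournies par la proposition~\ref{prop:extension_selle} (chaque demi-plan ouvert de la selle étant isométrique à un domino), la trivialité de l'holonomie garantissant que les recollements successifs sont compatibles et que l'on referme bien le tour sans décalage. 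Réciproquement, si une telle selle existe, le raisonnement du deuxième paragraphe montre que l'holonomie est triviale.

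\textbf{Principal obstacle.} La difficulté centrale est la réciproque (condition suffisante) : il faut transformer l'égalité numérique $\mathrm{hol}=\id$ en un \emph{recollement géométrique effectif} produisant un zéro lisse du champ. Concrètement, il faut vérifier que les 4~changements de cartes de type~\eqref{equa:chg_carte_reeb} entre dominos contigus, une fois leur composition égale à l'identité projective, se relèvent en isométries compatibles avec les coordonnées $(u,v)$ du modèle~\eqref{equa:xy_uv}, de sorte que les métriques~\eqref{equa:demi-selle} se raccordent de façon lisse à travers le point $p$. Autrement dit, l'holonomie contrôle le paramètre \emph{projectif} (donc la position relative des feuilles de lumière), mais il reste à s'assurer que l'ambiguïté affine résiduelle (l'échelle de $\xi_0$ sur chaque domino) se résorbe grâce à l'invariance par le flot de $K$ et à la liberté de choix des réflexions $\sigma^\pm$, exactement comme l'unicité établie en fin de preuve de la proposition~\ref{prop:struct_proj_orth}. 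C'est là que la structure projective — et non seulement affine — joue un rôle essentiel : seule l'holonomie projective est un invariant intrinsèque de la quasi-selle, indépendant des choix de paramètres affines locaux.
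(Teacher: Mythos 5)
Votre plan suit la même ligne directrice que la preuve du texte (trivialité de l'holonomie d'une selle via la carte exponentielle, puis reconstruction d'un zéro du champ à partir de l'hypothèse d'holonomie triviale), mais il comporte un vrai trou : l'étape décisive du sens suffisant, que vous signalez vous-même comme «~principal obstacle~», n'est pas effectuée. Le texte la résout ainsi : la formule \eqref{equa:holonomie} montre que l'holonomie ne dépend pas des choix de paramètres affines et que, lorsqu'elle vaut $1$, on peut renormaliser ceux-ci pour avoir $\xi_{i+1}\xi_i=1$ pour tout $i$ modulo~$4$ (c'est exactement là que sert l'hypothèse) ; en posant alors $u_i=x_i\sqrt{(-1)^i\xi_i/x_i}$ et $v_i=\sqrt{(-1)^ix_i/\xi_i}$ comme dans \eqref{equa:xy_uv}, les changements de cartes deviennent $(u_{i+1},v_{i+1})=(v_i,-u_i)$, de sorte que les quatre cartes «~demi-selle~» \eqref{equa:demi-selle} s'assemblent en une carte globale sur un voisinage épointé de l'origine, invariante par $u\partial_u-v\partial_v$ ; c'est la proposition~\ref{prop:met_inv_st} qui fournit alors le prolongement lisse de la métrique en $0$. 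Sans ce calcul explicite et sans l'appel à la proposition~\ref{prop:met_inv_st}, l'implication «~holonomie triviale $\Rightarrow$ complétion~» n'est pas démontrée : l'égalité projective ne dit a priori rien sur l'existence d'un raccord lisse des métriques au point ajouté.

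Deux points supplémentaires. D'une part, votre idée de recoller les quatre dominos sur les quadrants de la selle symétrique $\widehat{U}_s$ de la proposition~\ref{prop:extension_selle} ne peut pas fonctionner telle quelle : les quatre dominos d'une quasi-selle ne sont pas isométriques entre eux en général (les $\la_c$ alternent de signe et varient), alors que les quatre demi-plans de $\widehat{U}_s$ sont tous isométriques à un même domino ; la complétion cherchée est une selle généralement non symétrique, dont le germe au point selle n'est pas déterminé par un seul domino dans le cas lisse (remarque~\ref{rema:germes_de_selles}), et qu'il faut donc construire directement — c'est ce que fait la carte globale $(u,v)$ ci-dessus. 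D'autre part, dans le sens nécessaire, le feuilletage $\mcK^\perp$ près du point selle n'est pas porté par les axes de lumière $\{u=0\}$ et $\{v=0\}$ (qui sont des orbites de $K$) mais par les droites issues de l'origine ; c'est cette description radiale qui identifie la structure projective transverse à la structure standard du cercle, revêtement double de la droite projective, et donne la trivialité de l'holonomie.
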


\begin{proof} 
Soit $X$ une quasi-selle et soit $\mcK^\perp$ le feuilletage orthogonal au champ
de Killing~$K$. On peut recouvrir $X$ par 4 ouverts maximaux $U_i$ ($i$ entier
modulo $4$) munis de cartes adaptées. On suppose que $U_i\cap U_{i+1} \neq
\emptyset$ ($i$ modulo 4). L'holonomie de la structure projective transverse à
$\mcK^\perp$ provient de l'ambiguïté de la coordonnée verticale $y$ des cartes
adaptées : quand on \og développe\fg\ celles-ci autour de la singularité, les
feuilles de $K$ restent fixes mais il se produit un décalage de la coordonnée
$y$, c'est-à-dire du temps du flot de $K$. Soit $\xi_i$ un paramètre affine
définissant $\mcK^\perp$ dans $U_i$ ($i=1,\ldots 4$).  Sur $U_i\cap U_{i+1}$,
les changements de paramètres affines sont donnés par $\xi_{i+1} \xi_i =
\xi_{i+1}(\ga_i) \xi_i (\ga_i)$ sur $U_i\cap U_{i+1}$, $\ga_i$ étant une feuille
(arbitraire) de $\mcK^\perp|_{U_i\cap U_{i+1}}$ ($i$ modulo~4).  Le
développement des cartes adaptées induit sur l'ouvert $U_5=U_1$ d'un paramètre
affine $\xi_5$ et l'holonomie s'écrit
\begin{equation}
\label{equa:holonomie}
\xi_5= \eta \xi_1 \esp
\mathrm{avec} \esp
\eta= \frac{\xi_1(\ga_1)\xi_2(\ga_1)\xi_3(\ga_3)\xi_4(\ga_3)}
{\xi_1(\ga_4)\xi_2(\ga_2)\xi_3(\ga_2)\xi_4(\ga_4)}.
\end{equation}
De plus, le décalage de la coordonnée verticale est donné par 
$y'= y + \frac{1}{\la}\ln \eta$ (voir \eqref{equa:struct_affine_adaptee}).  
La formule \eqref{equa:holonomie} laisse clairement apparaître
l'indépendance de l'holonomie par rapport aux choix des paramètres affines.
On peut ainsi modifier $\xi_2$, $\xi_3$ et $\xi_4$ pour avoir 
$\xi_{i+1} \xi_i = 1$ ($i=1,2,3$) et $\eta^{-1}=\xi_1(\ga_4)\xi_4(\ga_4)
=\xi_1\xi_4$.

\par Soit $X$ une selle. Dans la carte exponentielle \eqref{equa:carte_exp}
de coordonnées $(u,v)$ centrée au point selle, la métrique est invariante
par le champ $u\partial_u -v\partial_v$ et définie sur un voisinage saturé
de l'origine. Le feuilletage $\mcK^\perp$ correspond aux  droites issues
de l'origine, voir \S\ref{subs:unicite_selle}. La structure projective
transverse est donc la structure usuelle sur le cercle, vu comme revêtement
double de la droite projective.  Son holonomie est évidemment triviale.

\par Inversement, considérons une quasi-selle $X$ dont l'holonomie est triviale.
Avec les notations ci-dessus, chaque ouvert $U_i$ admet une carte de la forme
\eqref{equa:demi-selle} dont l'image est une \og demi-selle\fg. D'après ce qui
précède, on peut choisir des paramètres affines $\xi_i$ tels que $\xi_{i+1}
\xi_i = 1$ pour tout $i$ modulo~4. Cela permet d'assembler les 4 cartes en une
carte globale de $X$ à valeurs dans le plan $(u,v)$. Plus précisément, chaque
$U_i$ admet des coordonnées adaptée $(x_i,y_i)\in I_i\times\R$, où la feuille de
lumière de $K$ correspond à  $x_i=0$. On choisit $\xi_1$ tel que $\xi_1/x_1 <0$
sur $U_1$ ; comme $\xi_{i+1} \xi_i >0$, on a  $(-1)^i \xi_i/x_i >0$ pour tout
$i$ modulo 4. Si l'on pose comme dans~\eqref{equa:xy_uv} $$u_i=x_i \sqrt{(-1)^i
\xi_i / x_i}, \esp \mathrm{et} \esp v_i= \sqrt{(-1)^i x_i / \xi_i}, $$ la
condition $\xi_{i+1} \xi_i = 1$ entraîne $(u_{i+1},v_{i+1})=(v_i,-u_i)$ sur
$U_i\cap U_{i+1}$ (sur cet ouvert, on a $x_{i+1} = -x_i > 0$). À partir de la
relation $(\xi_i)'_{y_i}=(-1)^i \lambda \xi_i$, on voit également que le
champ~$K$ s'écrit $(-1)^i \frac{\lambda}{2} (u_i\partial_{u_i} -
v_i\partial_{v_i})$ en coordonnées $(u_i,v_i)$. On obtient ainsi une carte
globale de~$X$ à valeurs dans le plan $(u,v)=(u_1,v_1)$. La métrique est définie
sur un voisinage de $0$ de la forme $a_0< uv <b_0$, excepté en $0$, et elle est
invariante par le champ $u\partial_u -v\partial_v$. Une telle métrique se
prolonge automatiquement à l'origine, proposition~\ref{prop:met_inv_st}. En
développant de cette façon une quasi-selle d'holonomie quelconque $\eta$, on
obtient  sur $U_5=U_1$ des coordonnées $(u_5,v_5)$ et $(u_1,v_1)$  telles que
$(u_5,v_5)=(u_1/\sqrt{\eta},\sqrt{\eta}v_1)$, et on retrouve le décalage du flot
(le champ $K$ s'écrit $-\frac{\lambda}{2} (u_1\partial_{u_1} -
v_1\partial_{v_1})$ dans $U_1$).
\end{proof}

\begin{exem}[cylindres, suite]
\label{exem:cylindre_selle}
Le cylindre $C$ de l'exemple~\ref{exem:cylindres}, défini par deux réflexions
d'axes $\xi_0^\pm$, admet un revêtement double qui est une quasi-selle. Celle-ci
se complète en une selle (symétrique) si et seulement si $|\xi_0^+| = |\xi_0^-|$.
\end{exem}

\subsection{Unicité des selles}
\label{subs:unicite_selle}

\begin{prop}
\label{prop:met_inv_st}
Soit $\Omega^*=\{(u,v)\in \R^2\smallsetminus\{0\}; uv \in I \} $, où $I\subset
\R$ est un intervalle ouvert comprenant $0$.  Toute métrique lorentzienne $g$
lisse (resp. analytique) sur $\Omega^*$ et invariante par le champ
$K=u\partial_u-v\partial_v$ est de la forme
\begin{equation}
\label{equa:met_inv_st}
g=v^2\alpha du^2+2\beta dudv+u^2 \gamma dv^2,
\end{equation}
où $\alpha,\beta$ et $\gamma$ sont  des fonctions lisses (resp. analytiques)
sur $\Omega^*$ et constantes le long du flot de $K$.  De plus $g$ se prolonge à
l'origine en une métrique lisse (resp. analytique). Dans le cas analytique,
les fonctions $\alpha,\beta$ et $\gamma$ ne dépendent que du produit $uv
\in I$ et la métrique est symétrique par rapport à l'origine.
\end{prop}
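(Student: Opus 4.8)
Le plan est de travailler dans le corepère global $(du,dv)$ et d'exploiter le fait que, si l'espace des orbites de $K$ sur $\Omega^*$ n'est pas séparé, celui de chaque demi-plan ouvert l'est. On écrit d'abord $g = A\,du^2 + 2B\,du\,dv + C\,dv^2$ avec $A,B,C\in\mcC^\infty(\Omega^*)$. Le flot de $K$ est $\phi^t(u,v)=(e^tu,e^{-t}v)$, d'où $\phi^{t*}du=e^t\,du$ et $\phi^{t*}dv=e^{-t}\,dv$. L'invariance $\phi^{t*}g=g$ équivaut alors aux trois relations d'homogénéité $A\circ\phi^t=e^{-2t}A$, $B\circ\phi^t=B$ et $C\circ\phi^t=e^{2t}C$ (soit $K\cdot A=-2A$, $K\cdot B=0$, $K\cdot C=2C$).

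L'observation décisive est que le demi-plan $\Omega^+=\Omega^*\cap\{v>0\}$ est saturé par $K$ et que $(u,v)\mapsto(uv,v)$ l'identifie à $I\times\,]0,\infty[$ en transformant $K$ en $-v\partial_v$ ; une fonction lisse $K$-invariante sur $\Omega^+$ est donc exactement une fonction lisse de $s=uv\in I$. Comme $A/v^2$ est lisse et invariante sur $\Omega^\pm=\Omega^*\cap\{\pm v>0\}$, on obtient $A=v^2\,\bar\alpha_\pm(uv)$ sur $\Omega^\pm$ avec $\bar\alpha_\pm\in\mcC^\infty(I)$. Chacune des fonctions $v^2\,\bar\alpha_\pm(uv)$ est lisse sur toute la bande $\{uv\in I\}$, origine comprise, et s'annule à l'ordre $2$ le long de $\{v=0\}$. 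Puisque $A$ est lisse sur $\Omega^*$ et coïncide avec $v^2\,\bar\alpha_+(uv)$ (resp. $v^2\,\bar\alpha_-(uv)$) pour $v>0$ (resp. $v<0$), les deux jets de part et d'autre de l'axe épointé $\{v=0,\ u\neq0\}$ coïncident ; un développement de Taylor en $v$ montre alors que $\bar\alpha_+-\bar\alpha_-$ est plate en $s=0$, donc que $v^2\,\bar\alpha_+(uv)$ et $v^2\,\bar\alpha_-(uv)$ coïncident à l'ordre infini le long de $\{v=0\}$ tout entier. La fonction $\alpha$ valant $\bar\alpha_\pm(uv)$ sur $\{\pm v\geq0\}$ est ainsi lisse, $K$-invariante, définie sur $\Omega^*$ et prolongeable à l'origine, avec $A=v^2\alpha$. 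L'argument symétrique ($u\leftrightarrow v$, demi-plans $\{\pm u>0\}$) donne $C=u^2\gamma$, et $\beta:=B$ est directement de ce type, son prolongement se traitant de la même manière. On obtient la forme annoncée ainsi que le prolongement lisse à l'origine ; en $0$ on a $A=C=0$ et $g_0=2\beta(0)\,du\,dv$, lorentzienne car $\beta(0)\neq0$ : le long de l'axe $\{u=0\}$ on a $C=0$, donc $\det=-B^2<0$ force $B\neq0$, et $B$ y vaut la constante $\beta(0)$.

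Dans le cas analytique, $\bar\alpha_\pm,\bar\beta_\pm,\bar\gamma_\pm$ sont analytiques sur $I$ et la platitude en $s=0$ obtenue ci-dessus devient une égalité $\bar\alpha_+=\bar\alpha_-$ sur $I$ tout entier, deux fonctions analytiques sur un intervalle qui coïncident à l'ordre infini en un point intérieur étant égales ; de même pour $\beta$ et $\gamma$. Ainsi $\alpha,\beta,\gamma$ ne dépendent que de $uv\in I$ et $g=v^2\alpha(uv)\,du^2+2\beta(uv)\,du\,dv+u^2\gamma(uv)\,dv^2$. Enfin la symétrie centrale $\iota(u,v)=(-u,-v)$ vérifie $\iota^*du=-du$, $\iota^*dv=-dv$ et laisse fixes $uv$, $u^2$ et $v^2$, d'où $\iota^*g=g$ : la métrique est symétrique par rapport à l'origine.

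Le point délicat est le prolongement de $A$ et $C$ à travers les axes de lumière $\{v=0\}$, $\{u=0\}$ puis jusqu'à l'origine épointée. C'est là qu'intervient de façon essentielle la factorisation par l'invariant $s=uv$ sur chaque demi-plan : elle transforme l'homogénéité en une annulation explicite à l'ordre $2$, et il ne reste qu'à recoller les deux factorisations unilatérales, ce qui se ramène à la platitude de $\bar\alpha_+-\bar\alpha_-$ en $s=0$, conséquence de la régularité globale de $A$ sur le voisinage épointé. Dans le cadre analytique cette platitude se renforce en une véritable égalité, ce qui fournit simultanément la dépendance en le seul produit $uv$ et la symétrie centrale.
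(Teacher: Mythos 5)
Votre preuve est correcte et suit essentiellement la même démarche que celle de l'article : mêmes relations d'homogénéité issues de $\mcL_K g=0$, même factorisation des coefficients par l'invariant $uv$ sur chaque demi-plan, même recollement via la coïncidence des jets infinis de $\bar\alpha_+$ et $\bar\alpha_-$ en $0$ (obtenue par le développement de Taylor en $v$), et même conclusion dans le cas analytique. Votre vérification explicite de la non-dégénérescence à l'origine via $\det=-B^2$ sur l'axe $\{u=0\}$ est un peu plus détaillée que la remarque de l'article sur la constance de $ac-b^2$ le long de $uv=0$, mais c'est le même argument.
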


\begin{proof}
Posons $g=a\, du^2+2 b\, dudv+ c\, dv^2$. La relation $\mcL_K g = 0$ se
traduit par $K\cdot b = 0$, $K\cdot a= -2a$  et $K\cdot c = 2c$.  Par suite
$b$, $v^ {-2}a $ ($v\neq 0$) et $u^{-2}c $ ($u\neq 0)$ sont invariantes par
$K$.  Sur chaque demi-plan ouvert $\pm u >0$, $\pm v >0$, toute fonction
invariante par $K$ est fonction lisse (resp. analytique) du produit $uv$
(par exemple sur $v>0$, on pose $x=uv$, $y=-\ln v$ et le champ correspond à
$\partial_x$). Il existe donc deux fonctions $\ph^\pm \in \mcC^\infty(\R,\R)$
telles que 
$$a(u,v)=v^2 \ph^+(uv) ~~~(v>0) \esp \mathrm{et}\esp a(u,v)=v^2 \ph^-(uv)
~~~(v<0).$$
L'examen du développement de Taylor de $a(u_0,v)$ en $v=0$ ($u_0\neq 0$ fixé)
montre que $\ph^+$ et $\ph^-$ ont le même jet d'ordre infini en $0$.  En posant
$\alpha(u,v)=\ph^\pm(uv)$ pour $\pm v>0$ et $\alpha(u,0)=\ph(0)$, on définit
donc par recollement une fonction $\alpha$ invariante et lisse (resp.
analytique) sur $\Om^*\cup \{0\}$, telle que $a = v^2\alpha$ sur $\Om^*$. Dans
le cas analytique on a $\ph^+=\ph^-$ ; la fonction $\alpha$ ne dépend que du
produit $uv$ et elle est invariante par la symétrie centrée à l'origine. On
vérifierait de même que les coefficients~$b$ et~$c$ sont de la forme voulue et
se prolongent à l'origine. De plus $ac-b^2$ est constant sur $uv=0$, donc non
nul à l'origine.
\end{proof}

\begin{prop}[carte exponentielle]
\label{prop:carte_exponentielle}
Soit $X$ une surface lorentzienne lisse (resp. analytique) possédant un champ de
Killing non trivial et complet $K$ s'annulant en un point~$p$.  Il existe
$\rho>0$ et une carte exponentielle centrée en~$p$, définie sur
$\Omega_\rho=\{|uv|< \rho\}$, dans laquelle la métrique s'écrit
\begin{equation}
\label{equa:carte_exp}
g = 2 du dv + \alpha (udv-vdu)^2  \esp (|uv|<\rho),
\end{equation}
où $\alpha : \Omega_\rho \to \R$ est une fonction lisse (resp. analytique)
constante le long du flot de $u\partial_u-v\partial_v$. La métrique est
déterminée au voisinage du point~$p$ par la fonction $\langle K,K\rangle$.
\end{prop}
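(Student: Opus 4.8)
The plan is to realize the chart as genuine exponential (normal) coordinates at $p$ and to read off the stated normal form from two inputs: the fact that a Killing field is \emph{linear} in normal coordinates, and the Gauss lemma; completeness of $K$ will then be used to extend the chart to the unbounded saturated region $\Omega_\rho$.

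First I would linearize the flow at the fixed point. Each $\Phi_K^t$ is an isometry fixing $p$, so the differentials $D_p\Phi_K^t$ form a one–parameter subgroup of the isometry group of the Lorentzian plane $(T_pX,g_p)$ fixing the origin; its generator is $L=D_pK=\nabla_pK$, an element of the one–dimensional algebra $\mathfrak{so}(1,1)$. Since $K$ is non trivial and satisfies the usual second–order Killing equation, the data $(K(p),\nabla_pK)=(0,L)$ cannot both vanish, so $L\neq 0$; being a non-zero element of $\mathfrak{so}(1,1)$ it is a boost, with two real null eigendirections. This reproves that $p$ is a saddle (as used in the proof of Lemma~\ref{lemm:isom_isomK}). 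Choosing a null basis of $T_pX$ that diagonalizes $L$ and normalizes $g_p=2\,du\,dv$, we obtain $L=\lambda(u\partial_u-v\partial_v)$ with $\lambda\neq 0$.

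Next I pass to the chart $(u,v)=\exp_p^{-1}$. Because $\Phi_K^t$ is an isometry fixing $p$ it commutes with $\exp_p$, that is $\Phi_K^t\circ\exp_p=\exp_p\circ D_p\Phi_K^t$; hence in normal coordinates the flow \emph{is} the linear flow $(u,v)\mapsto(e^{\lambda t}u,e^{-\lambda t}v)$ and $K=\lambda(u\partial_u-v\partial_v)$ exactly, so the $K$–invariant functions are the functions of $uv$. The Gauss lemma then pins down the shape of $g$: in normal coordinates the radial field $R=u\partial_u+v\partial_v$ satisfies $g(R,\cdot)=g_p(R,\cdot)$. Writing $g=g_p+h$, this says $h(R,\cdot)=0$; in dimension $2$ the annihilator of $R$ is spanned by $u\,dv-v\,du$, so $h=\alpha\,(u\,dv-v\,du)^2$ and $g=2\,du\,dv+\alpha\,(u\,dv-v\,du)^2$. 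Both $2\,du\,dv$ and $(u\,dv-v\,du)^2$ are boost–invariant, so $\mcL_Kg=0$ forces $\alpha$ to be $K$–invariant, i.e. constant along the orbits $uv=\mathrm{cst}$; smoothness (resp. analyticity) of $\alpha$, including across the null axes and at $p$, follows from Proposition~\ref{prop:met_inv_st}. Finally $\langle K,K\rangle=g(K,K)=-2\lambda^2\,uv\,(1-2\alpha uv)$, a strictly monotone function of $s=uv$ near $0$; its linear term recovers $\lambda^2$ and one then solves for $\alpha$, so $\langle K,K\rangle$ determines $\alpha$, hence the whole metric, near $p$.

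The main obstacle is the domain statement: for small $\rho$ the chart must be a genuine embedding on the \emph{unbounded} region $\Omega_\rho=\{|uv|<\rho\}$, not merely on a ball. Here completeness of $K$ is essential. The intertwining $\exp_p\circ e^{tL}=\Phi_K^t\circ\exp_p$ shows that the domain of $\exp_p$ is invariant under the boost flow, and a boost–invariant star–shaped neighborhood of $0$ containing a ball $B_\epsilon$ automatically contains $\Omega_\rho$ with $\rho=\epsilon^2/2$, since every branch $uv=c$ with $|c|<\rho$ meets $B_\epsilon$. Injectivity on all of $\Omega_\rho$ is the delicate step: I would flow each off–axis point onto the fixed transversal $\{|u|=|v|\}\subset B_\epsilon$, use injectivity of $\exp_p$ on $B_\epsilon$ together with the monotonicity of $\langle K,K\rangle$ in $s$ to separate distinct orbits, and invoke that the two null geodesics through $p$ are complete, properly embedded lines (Proposition~\ref{prop:class_composantes}) to rule out any return of an orbit; shrinking $\rho$ if necessary then yields a genuine chart.
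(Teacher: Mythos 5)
Your derivation of the normal form coincides with the paper's: both proofs rest on the intertwining $\Phi_K^t\circ\exp_p=\exp_p\circ D_p\Phi_K^t$, which makes $K$ the exact linear saddle field $C(u\partial_u-v\partial_v)$ in normal coordinates, and on the Gauss lemma, which forces $g-g_0$ to annihilate the radial field and hence to be proportional to $(u\,dv-v\,du)^2$; your final identity $\langle K,K\rangle=-2\lambda^2uv(1-2\alpha uv)$ is exactly the paper's $4C^2u^2v^2\alpha=N+2C^2uv$. The genuine gap is in the step you yourself flag as delicate: injectivity of $\exp_p$ on the unbounded region $\Omega_\rho$. Three points fail there. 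First, $\alpha$ is only constant along the orbits of $u\partial_u-v\partial_v$, i.e.\ on each branch of each hyperbola separately (in the smooth case it need not be a function of $uv$ alone, cf.\ proposition~\ref{prop:met_inv_st}), so $\langle K,K\rangle\circ\exp_p$ is not a function of $s=uv$ and ``monotonicity in $s$'' does not separate points lying over the same value of $uv$ on different orbits. Second, even granting that reduction, you are left with two points $q_1',q_2'$ of the transversal $\{|u|=|v|\}$ such that $\exp_p(q_1')$ and $\exp_p(q_2')$ lie on the same $K$-orbit; you must still exclude both the antipodal case (opposite branches of the hyperbola) and the case $q_2'=e^{sL}q_1'$ with $\Phi_K^s$ fixing $\exp_p(q_1')$, i.e.\ a periodic orbit of $K$ winding around the saddle --- precisely the phenomenon behind the quasi-selles of \S\ref{subs:compl_selle}, which is what the proposition has to rule out near $p$. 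Third, proposition~\ref{prop:class_composantes} is stated for $X$ homeomorphic to the plane, an assumption absent here, so you cannot invoke it to assert that the null geodesics through $p$ are properly embedded lines.

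The paper closes all of this in one stroke with Clairaut--Noether, and you should substitute that argument for your third step. Along any radial geodesic $\gamma$ issued from $p$, the Clairaut integral $\langle K,\gamma'\rangle$ is constant, hence identically zero since $K(p)=0$. Moreover $K(\exp_p(w))=d_w\exp_p\bigl(K_0(w)\bigr)$ and $d_w\exp_p$ is invertible on $\Omega_\rho$ (invertibility propagates from a ball by the flow), so $K$ vanishes nowhere on $\exp_p(\Omega_\rho\smallsetminus\{0\})$. If two radial geodesics met at a point $q\neq p$, their velocities there would both lie in $K(q)^\perp$, which is a line in a Lorentzian plane whenever $K(q)\neq 0$ (whether $K(q)$ is null or not); the two geodesics would then coincide as unparametrized curves, a contradiction. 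This simultaneously disposes of crossings of distinct rays, of returns of a single ray, and of periodic $K$-orbits near $p$.
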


 \begin{proof}
L'application exponentielle en~$p$ est une semi-conjugaison entre le flot de $K$
et celui de sa partie linéaire $K_0$. Cela entraîne que~$p$ est un point selle
du champ. On peut choisir des coordonnées $(u,v)$ du plan tangent en~$p$ de
sorte que la métrique à l’origine s'écrive $g_0 = 2 du dv$. On a alors
$K_0=C(u\partial_u-v\partial_v)$ ($C\neq0$ constante). L'exponentielle est un
difféomorphisme local près de l'origine, donc sur un voisinage saturé par $K_0$,
de la forme $\Omega_\rho$ ; l'injectivité sur $\Omega_\rho$ se déduit du lemme
de Clairaut et Noether : les rayons géodésiques issus de~$p$ étant orthogonaux
à~$K$, il ne peuvent se recouper qu'en un zéro de~$K$.

\vspace{-\parskip}
Comparons maintenant l'expression $g$ de la métrique en coordonnées $(u,v)$
avec $g_0$. Comme la carte est exponentielle, le champ radial est
orthogonal à $K_0$ pour $g$ et $g_0$, et de même norme pour ces deux
métriques. De plus ce champ engendre le noyau de la forme $\omega_0=
udv-vdu$. Par conséquent, en dehors des axes $uv=0$, le tenseur $g-g_0$ est
de rang au plus 1 et proportionnel à $\omega_0^2$ (en tant que forme
quadratique). Puisque cette condition est fermée, il existe par continuité
une fonction $\al$ définie sur $\Omega_\rho$ et vérifiant
\eqref{equa:carte_exp}. En évaluant localement sur un champ de vecteurs qui
n'annule pas $\omega_0$, on voit que $\al$ est lisse (resp. analytique). De
plus $\al$ est invariante par $K_0$ car toutes les autres quantités de
\eqref{equa:carte_exp} le sont. Enfin, si $N$ désigne l'expression de
$\langle K,K \rangle $ dans notre carte, la fonction $\alpha$ est
déterminée par la relation
$$4 C^2 u^2v^2 \alpha(u,v) = N(u,v)  + 2 C^2 uv
\esp  (|uv|< \rho).$$ 
\end{proof}

\begin{rema} 
\label{rema:germes_de_selles}
Dans chaque composante du complément du cône isotrope en~$p$ on choisit
arbitrairement un rayon géodésique $r_i$ issu de~$p$
($i=1,\ldots,4$). L'étude des fonctions invariantes par $u\partial_u
-v\partial_v$ (preuve de la proposition~\ref{prop:met_inv_st}) montre que
la donnée de $\langle K,K\rangle$ comme fonction de la longueur d'arc sur
$r_i$ ($i=1,\ldots,4$) suffit à déterminer la métrique. Les germes de
selles lisses sont donc paramétrés par des quadruplets de germes lisses
$f_i:[0,\delta[ \to \R$ ($i=1,\ldots,4$) ayant le même jet infini en
$0$. Dans le cas analytique, les germes de selles sont paramétrés par
des germes de fonctions analytiques en $0\in\R$.
\end{rema}

\begin{prop}[unicité de l'extension par selle symétrique]
\label{prop:unicite_selle_sym}
Soit $(U,K)$ un domino lorentzien  dont l'unique feuille de lumière de
$K$ est géodésiquement incomplète. L'extension de $(U,K)$ par une selle
symétrique minimale (proposition~\ref{prop:extension_selle}) est unique à
isométrie près.
\end{prop}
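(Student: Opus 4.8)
The plan is to show that any symmetric saddle extension $(\widehat U_s,\widehat K)$ of $(U,K)$ having an open half isometric to $(U,K)$ carries exactly the metric produced in proposition~\ref{prop:extension_selle}; uniqueness then follows by assembling the pieces into an explicit isometry.

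First I would fix the exponential chart at the saddle point. By proposition~\ref{prop:carte_exponentielle}, around the unique zero $p$ of $\widehat K$ there are coordinates $(u,v)$ on some $\Omega_\rho=\{|uv|<\rho\}$ in which $\widehat K$ is proportional to $u\partial_u-v\partial_v$, the two light geodesics through $p$ are the axes $\{u=0\}$ and $\{v=0\}$, and the metric is determined by $\langle\widehat K,\widehat K\rangle$. Since $\widehat U_s$ is symmetric with respect to $p$, the geodesic symmetry $s:(u,v)\mapsto(-u,-v)$ is an isometry fixing $p$ and exchanging the two open halves; as it fixes $u\partial_u-v\partial_v$, it also preserves $\widehat K$. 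One of these halves, say $\{v>0\}$, is isometric to the domino $U$; its interior light orbit $c_0$ is then the ray $\{u=0,\,v>0\}$, while the \emph{other} axis $\{v=0\}$ is the newly completed light geodesic. Thus $\{v>0\}$ splits into the two quadrants $Q_1=\{u>0,v>0\}$ and $Q_2=\{u<0,v>0\}$, which are exactly the two half-spaces $U^\pm$ of the domino (remark~\ref{rema:demi_espaces_elem}), and $s$ sends them to the opposite quadrants $Q_3=\{u<0,v<0\}$ and $Q_4=\{u>0,v<0\}$.

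Next I would show that $U$ determines the germ of the saddle at $p$. On $\{v>0\}\cong U$ the function $\langle\widehat K,\widehat K\rangle$ equals $f(uv)$ (this is the content of the change of coordinates \eqref{equa:xy_uv}), so $U$ fixes the two germs $f_1,f_2$ of $\langle\widehat K,\widehat K\rangle$ along geodesic rays in $Q_1,Q_2$. Because $s$ is an isometry preserving $\widehat K$, it preserves $\langle\widehat K,\widehat K\rangle$; choosing the rays $r_3=s(r_1)$, $r_4=s(r_2)$ therefore gives $f_3=f_1$ and $f_4=f_2$. By the parametrization of smooth saddle germs in remark~\ref{rema:germes_de_selles}, the quadruple $(f_1,f_2,f_1,f_2)$ determines the germ of $(\widehat U_s,\widehat K)$ at $p$ uniquely; equivalently, $\langle\widehat K,\widehat K\rangle$, hence the metric, is determined on all of $\Omega_\rho$. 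This step is exactly where the symmetry hypothesis is indispensable, and is the main obstacle: in the smooth category the four quadrant germs of a saddle are only constrained to share the same infinite jet at $0$ (remark~\ref{rema:germes_de_selles}), so without symmetry the quadrants $Q_3,Q_4$ could carry an infinitely flat perturbation; the isometry $s$ is precisely what copies the determined data of $Q_1,Q_2\subset U$ onto $Q_3,Q_4$ and removes this ambiguity.

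Finally I would globalize. The two open halves $\{v>0\}$ and $\{v<0\}$ are isometric to $U$ and $s(U)$ respectively, the axis $\{v=0\}$ together with $p$ lies inside $\Omega_\rho$ since $uv=0<\rho$ there, and therefore $\widehat U_s=\{v>0\}\cup\{v<0\}\cup\Omega_\rho$. Given a second such saddle $(\widehat U_s',\widehat K')$, I identify the positive halves through their common model $U$, transport this identification to the negative halves by $s'\circ(\,\cdot\,)\circ s^{-1}$, and identify the exponential charts via the coordinates $(u,v)$ (legitimate because $\langle\widehat K,\widehat K\rangle$ agrees on both, by the previous paragraph). These three maps agree on overlaps—on $\{v>0\}\cap\Omega_\rho$ by construction, on $\{v<0\}\cap\Omega_\rho$ by the symmetry relation—so they glue into an isometry $(\widehat U_s,\widehat K)\to(\widehat U_s',\widehat K')$. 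Equivalently, proposition~\ref{prop:met_inv_st} applied to the $\widehat K$-invariant metric shows its coefficients are the functions of $uv$ read off from \eqref{equa:demi-selle} on $\{v>0\}$, and $s$-invariance forces the same functions on $\{v<0\}$, so the metric is globally \eqref{equa:demi-selle}. The remaining point to check by hand is the smooth matching across the light ray $\{v=0\}$ away from $p$; there the claim follows because the invariant coordinates of \eqref{equa:xy_uv} extend across $v=0$ and the coefficients depend only on $uv$.
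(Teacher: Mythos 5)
Your proof is correct and follows the same route as the paper, whose own argument is precisely that uniqueness is an immediate consequence of Proposition~\ref{prop:carte_exponentielle} together with the symmetry (cf.\ Remark~\ref{rema:germes_de_selles}): the half isometric to $U$ fixes two of the four quadrant germs of $\langle K,K\rangle$ at the saddle, the central symmetry copies them to the opposite quadrants, and the exponential chart then determines the metric near $p$, hence everywhere. You have simply made explicit the details the authors leave implicit, including the gluing of the identifications of the two halves with the exponential chart.
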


\begin{proof} C'est une conséquence immédiate de la 
proposition~\ref{prop:carte_exponentielle} car l'extension est symétrique 
(voir également la remarque~\ref{rema:germes_de_selles}).
\end{proof}

On remarquera que dans le cas lisse, la surface $(U,K)$ admet une infinité
d'extensions par des selles non symétriques, dont les germes au point selle sont
paramétrés par un couple de germes de fonctions $[0,\delta[\to \R$. Dans le cas
analytique, le germe au point selle de l'extension est unique à isométrie près.
La surface $(U,K)$ admet également une infinité d'extensions par des
quasi-selles symétriques, exemple~\ref{exem:cylindre_selle}. Elles s'obtiennent
à partir de la selle symétrique $(\widehat{U}_s,\widehat{K})$ par découpage et
recollement avec un décalage non nul le long d'une feuille de lumière de
$\widehat{K}$. On sait qu'une telle métrique n'est pas isométrique à
$\widehat{U}_s\smallsetminus\{p\}$ et ne se prolonge jamais en~$p$,
voir \S\ref{subs:compl_selle}. Ce phénomène est cohérent avec l'unicité de la
proposition~\ref{prop:carte_exponentielle}.

\begin{prop}
\label{prop:ext_isom_demi_selle}
Soient $(X_1,K_1)$ et $(X_2,K_2)$ deux selles symétriques de points selles $p_1$
et $p_2$.  Toute isométrie d'une demi-selle (ouverte) de $X_1$ dans une
demi-selle de $X_2$ s'étend de fa\c con unique en une isométrie entre $X_1$ et
$X_2$. Toute isométrie de $X_1\smallsetminus \{p_1\}$ dans $X_2\smallsetminus
\{p_2\}$ s'étend de fa\c con unique en une isométrie entre $X_1$ et $X_2$.
\end{prop}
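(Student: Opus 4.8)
The plan is to reduce both statements to the uniqueness of the symmetric saddle extension (proposition~\ref{prop:unicite_selle_sym}), read in an embedding-respecting form. First I would fix the local picture. By definition~\ref{defi:selles} each $X_i$ is simply connected with $K_i$ vanishing only at $p_i$, and by proposition~\ref{prop:extension_selle} the two complete light geodesics through $p_i$ cut $X_i\smallsetminus\{p_i\}$ into four open quadrants, each open half-plane being a domino (definition~\ref{defi:rubans_bandes}) whose unique light orbit is geodesically incomplete at its $p_i$-end. An open half-saddle $H_i$ is exactly such an open half-plane, hence a domino satisfying the hypothesis of proposition~\ref{prop:extension_selle}; and since $X_i$ is a symmetric saddle containing $H_i$ as an open half-plane, proposition~\ref{prop:unicite_selle_sym} identifies $X_i$ with the minimal symmetric saddle extension of $H_i$.

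For the first assertion I would take an isometry $\phi : H_1 \to H_2$ of open half-saddles. Outside the constant-curvature exceptions of lemma~\ref{lemm:isom_isomK} (which for a saddle force a flat or de Sitter model and can be checked directly), $\phi$ sends $K_1$ to $\pm K_2$ and so preserves $\langle K,K\rangle$. Viewing $\phi$ as an isometric embedding of the domino $H_1$ onto an open half-plane of $X_2$ exhibits $X_2$ as a second symmetric saddle extension of $H_1$, the first being $X_1$ via the inclusion. The embedding-respecting uniqueness in proposition~\ref{prop:unicite_selle_sym} then yields a unique isometry $\Psi : X_1 \to X_2$ with $\Psi|_{H_1}=\phi$. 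The mechanism is that, by proposition~\ref{prop:carte_exponentielle} and remark~\ref{rema:germes_de_selles}, the germ of a symmetric saddle at its singular point is determined by $\langle K,K\rangle$ along the radial geodesics, hence by the half $H_1$; concretely $\phi$ carries the light orbit $c_0$ of $H_1$ to that of $H_2$, and in exponential charts $2du\,dv+\alpha(u\,dv-v\,du)^2$ at $p_1,p_2$ (with $\alpha$ fixed by $\langle K,K\rangle$) it is an explicit coordinate change that extends across the axis $\{u=0\}$ and through the origin to a local isometry sending $p_1$ to $p_2$. Uniqueness of $\Psi$ is then automatic: two isometries coinciding on the non-empty open set $H_1$ of the connected $X_1$ coincide everywhere, an isometry being determined by its $1$-jet at a point.

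For the second assertion I would start from an isometry $\Phi : X_1\smallsetminus\{p_1\}\to X_2\smallsetminus\{p_2\}$. It again carries $K_1$ to $\pm K_2$ and preserves $\langle K,K\rangle$, hence maps light orbits of $K_1$ to light orbits of $K_2$. In a saddle these orbits are precisely the four rays obtained by deleting the saddle point from the two complete light geodesics, so $\Phi$ permutes the four rays, maps quadrants to quadrants, and sends a chosen open half-saddle $H_1$ onto an open half-saddle $H_2$ of $X_2$. Applying the first assertion to $\Phi|_{H_1}$ produces an isometry $\widehat{\Phi}:X_1\to X_2$; it agrees with $\Phi$ on $H_1$, hence on the connected set $X_1\smallsetminus\{p_1\}$ by the same rigidity, which gives the extension and its uniqueness.

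The step I expect to be the main obstacle is the bookkeeping behind the embedding-respecting form of proposition~\ref{prop:unicite_selle_sym}: one must upgrade the bare uniqueness of the symmetric saddle extension to an isometry inducing a \emph{prescribed} identification of the two chosen halves. This is exactly where the explicit exponential-chart computation (extending $\phi$ across the light geodesic and through the singular point) together with the central symmetry of the model must be combined, rather than merely invoked.
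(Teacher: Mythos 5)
Your proposal is correct and follows essentially the same route as the paper's proof: both arguments use the lemme~\ref{lemm:isom_isomK} to dispose of the constant-curvature exceptions, the rigidity of the symmetric-saddle germ coming from the carte exponentielle (proposition~\ref{prop:carte_exponentielle}, which is also what underlies la proposition~\ref{prop:unicite_selle_sym}) to push the isometry through the singular point, and the reflections of the saddle to reduce the second assertion to the first. The one caution is that the \og embedding-respecting form\fg\ of la proposition~\ref{prop:unicite_selle_sym} that you invoke is precisely the statement being proved, so the explicit mechanism you sketch afterwards --- the isometry of half-saddles is, in adapted coordinates, a flow translation (case~(b) of la proposition~\ref{prop:class_isom_rubans} being excluded because $f'(0)\neq 0$ forbids any symmetry of $f$), hence a linear map $(u,v)\mapsto(cu,c^{-1}v)$ in the exponential chart which extends through the origin --- must carry the full weight of the argument; this is exactly the content of the paper's step showing that every self-isometry of a half-saddle is an element of the flow.
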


\begin{proof} 
Dans les deux cas, les selles $X_1$ et $X_2$ sont isométriques d'après la
proposition~\ref{prop:unicite_selle_sym}. On peut donc supposer que $X_1=X_2$.
De plus, il est bien connu que l'extension cherchée, si elle existe, est unique.
 Soit $(U,K)$ une demi-selle de $X$. Si $U$ est à courbure constante non nulle
alors elle contient des géodésiques de lumière maximale qui ne sont
semi-complètes ni dans un sens ni dans l'autre, ce qui entraîne qu'elle ne
contient pas de domaine de de Sitter. Si $U$ contient un demi-plan de Minkowski
alors $X$ est égal au plan de Minkowski et l'énoncé est vérifié. Ainsi, grâce au
lemme~\ref{lemm:isom_isomK}, on peut supposer que toute isométrie $\Phi$ de $U$
envoie $K$ sur $\pm K$.  Alors $\Phi$ préserve l'unique feuille de lumière de
$K$ sur~$U$.  Quitte à composer par le flot de $K$ on peut donc supposer que
$\Phi$ fixe un point sur cette feuille. Sa différentielle en ce point ne peut
être que $\pm \Id$. Si $d\Phi=-\Id$ alors $\Phi$ échange $U^+$ et $U^-$, ce qui
est impossible vu que $\langle K,K\rangle$ change de signe. Par conséquent
$d\Phi=\Id$ et donc $\Phi=\Id$.
\par

Les réflexions génériques des bandes d'une selle symétrique s'étendent à toute
la selle. En coordonnées normales, il s'agit des applications $\sigma(u,v)=\pm 
(v,u)$ et de leurs conjuguées par le flot de $K$. Si $\Psi$ est une isométrie de
$X\smallsetminus\{p\}$, on peut donc supposer que $\Psi$ préserve une demi-selle
et refaire le raisonnement ci-dessus.
\end{proof}

\section{Extensions maximales}
\label{sect:ext_max}

\subsection{Construction de surfaces maximales}
\label{subs:const_sf_max}
Soit $I$ un intervalle ouvert non vide de $\R$ et soit $f\in
\mcC^\infty(I,\R)$ (resp. $f \in \mcC^\omega(I,\R)$).  On appelle {\em
  ruban associé à $f$}, noté $R_f=(\msfR,\msfk)$, la surface 
$\msfR=I\times \R$ munie de la métrique et du champ définis par
\begin{equation}
\label{equa:rub_f}
2dxdy + f(x)dy^2 ~~~\mathrm{et} ~~~ \msfk = \partial_y \esp (x,y)\in 
\msfR = I \times \R.
\end{equation} 
On dira que {\em $f$ est inextensible} si elle ne se prolonge pas (avec la
même régularité, lisse ou analytique) à un intervalle ouvert contenant
strictement $I$.  Enfin, on note $f^\vee$ la fonction \og miroir \fg\ de
$f$, définie pour $-x\in I$ par $f^\vee(x)=f(-x)$.

\par La notion de maximalité introduite à la définition~\ref{defi:extension}
est spécifique aux surfaces munies d'un champ de Killing. Il convient de la
comparer à la notion usuelle. Les surfaces à bord n'étant jamais maximales
au sens usuel ni au sens de la définition~\ref{defi:extension}
(exemple~\ref{exem:ext_sf_a_bord}), nous supposerons dans cette discussion
que les surfaces sont sans bord.

\begin{prop}
\label{prop:max_champ=max_usuel}
Une surface lorentzienne saturée $(X,K)$ est maximale (parmi les surfaces
munies d'un champ de Killing complet) si et seulement si la surface $X$ est maximale au
sens usuel.
\end{prop}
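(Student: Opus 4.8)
Le plan est de traiter séparément les deux implications. Le sens direct est immédiat : si $X$ est maximale au sens usuel, elle n'admet aucune extension lorentzienne connexe et séparée propre, \emph{a fortiori} aucune extension munie d'un champ de Killing prolongeant $K$ ; donc $(X,K)$ est maximale au sens de la définition~\ref{defi:extension}. Je démontrerais la réciproque par contraposée : supposant $X$ non maximale au sens usuel, je construirais une extension saturée non triviale de $(X,K)$. Je partirais d'un plongement isométrique propre $\varphi : X \hookrightarrow \widehat X$ dans une surface lorentzienne connexe et séparée, en identifiant $X$ à $\varphi(X)$ ; l'extension étant propre et $\widehat X$ connexe, le bord $\partial X$ de $X$ dans $\widehat X$ serait non vide, et j'y fixerais un point $p$.

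Le c\oe ur de la preuve sera le prolongement de $K$ à travers $\partial X$. J'utiliserais le caractère de type fini de l'équation de Killing : pour tout champ de Killing $W$, le couple $(W,\nabla W)$ (avec $\nabla W$ antisymétrique) est une section parallèle de la \og connexion de Killing\fg\ $D$ sur $TX\oplus\mathfrak{so}(TX)$, donnée par $D_\xi(W,A)=(\nabla_\xi W - A\xi,\ \nabla_\xi A - R(\xi,W))$ ; réciproquement, toute section $D$-parallèle $(W,A)$ vérifie $\nabla W=A$, donc $W$ est de Killing. Près de $p$, je choisirais un voisinage $V$, une hypersurface $\Sigma\subset X\cap V$ issue de la composante dont l'adhérence contient $p$, et un pinceau de géodésiques transverses à $\Sigma$ balayant $V$. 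Le long de chaque géodésique $\gamma$, le système $\nabla_{\dot\gamma}W=A\dot\gamma$, $\nabla_{\dot\gamma}A=R(\dot\gamma,W)$ est une équation différentielle linéaire du premier ordre ; en l'intégrant à partir des valeurs $(K,\nabla K)$ sur $\Sigma$, je prolongerais $(K,\nabla K)$ en une section $(\widehat K,\widehat A)$ définie sur $V$, au-delà du bord. Je vérifierais que l'antisymétrie de $\widehat A$ et l'identité $\nabla\widehat K=\widehat A$ se propagent depuis $\Sigma$, de sorte que $\widehat K$ soit de Killing sur $V$ ; par unicité d'un champ de Killing à $1$-jet prescrit, $\widehat K$ coïnciderait avec $K$ sur la composante de $X\cap V$ touchant $p$. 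Quitte à rétrécir $V$, je supposerais $X\cap V$ connexe : au voisinage d'une orbite de bord régulière, les coordonnées adaptées du lemme~\ref{lemm:carte_adaptee} montrent que $X$ y est un demi-voisinage, et au voisinage d'un zéro de $\widehat K$ je me ramènerais à la carte exponentielle de la proposition~\ref{prop:carte_exponentielle}.

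J'obtiendrais ainsi une surface lorentzienne connexe et séparée $X'=X\cup V$, strictement plus grande que $X$, munie du champ de Killing $\widehat K$ prolongeant $K$. Il resterait à établir que $(X',\widehat K)$ est \emph{saturée}. Si $p$ est régulier pour $\widehat K$, les coordonnées adaptées~\eqref{equa:carte_adaptee} sur un voisinage saturé de l'orbite de $p$ identifient le flot de $\widehat K$ à la translation $(x,y)\mapsto(x,y+s)$ sur un produit $J\times\R$ avec $J\supsetneq I$, flot qui est complet (comparer avec l'exemple~\ref{exem:ext_sf_a_bord}) ; si $p$ est un zéro de $\widehat K$, il est fixé par le flot. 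Dans les deux cas les orbites ajoutées seraient complètes et $X$ resterait saturée, donc $\widehat K$ serait complet sur $X'$. Alors $(X',\widehat K)$ serait une extension saturée non triviale de $(X,K)$, contredisant sa maximalité.

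La difficulté principale sera le deuxième paragraphe, à savoir le prolongement effectif de $K$ à travers le bord : il faut garantir que la section prolongée reste de type Killing (propagation de l'antisymétrie et de $\nabla\widehat K=\widehat A$ le long du pinceau géodésique, et non seulement sur les géodésiques elles-mêmes) et que les prolongements locaux se recollent sans ambiguïté. Je contournerais ce dernier écueil en ne travaillant qu'au voisinage d'un unique point de bord et en exploitant la connexité locale de $X$ fournie par les cartes adaptées, ce qui suffit à produire l'extension propre recherchée.
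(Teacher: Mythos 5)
Votre sens direct est correct et votre structure logique (contraposée pour la réciproque) est la même que celle du papier. Mais le c\oe ur de votre argument — prolonger $K$ en un champ de Killing sur un voisinage $V\subset\widehat X$ d'un point de bord $p$ — comporte une lacune rédhibitoire : un champ de Killing d'un ouvert $X$ d'une surface lisse $\widehat X$ ne s'étend en général \emph{pas} à travers $\partial X$ dans $\widehat X$. La section $(\widehat K,\widehat A)$ que vous obtenez en intégrant le système linéaire le long du pinceau de géodésiques est $D$-parallèle dans la direction radiale seulement ; pour qu'elle soit parallèle (donc que $\widehat K$ soit de Killing) sur $V$, il faut que la courbure de la connexion de Killing l'annule, ce qui en dimension~2 se réduit à la condition $\widehat K\cdot\kappa=0$, où $\kappa$ est la courbure de $\widehat X$. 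Or $\kappa$ sur $V\smallsetminus X$ est une donnée arbitraire non contrôlée par $X$ : prenez $X=\{x<0\}$ muni de $2dxdy+f(x)dy^2$ plongé dans $\widehat X=\R^2$ muni de $2dxdy+g(x,y)dy^2$ avec $g(x,y)=f(x)$ pour $x\leq 0$ et $g$ dépendant de $y$ pour $x>0$ ; alors $\partial_y$ ne s'étend en champ de Killing sur aucun voisinage d'un point de bord. C'est précisément pour cela que la « propagation » de l'identité $\nabla\widehat K=\widehat A$ hors de $X$, que vous signalez vous-même comme la difficulté principale, ne peut pas aboutir : l'obstruction vit dans les dérivées de la courbure de $\widehat X$ au-delà du bord (ce type d'extension ne vaut que dans le cadre analytique, cf. Nomizu).

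Le papier contourne l'obstacle en n'exigeant pas que l'extension soit contenue dans $X'=\widehat X$. Il choisit une feuille de lumière $\ell'$ de $X'$ rencontrant $X$ et son complémentaire ; si une composante $\ell$ de $\ell'\cap X$ est transverse à $K$, elle est contenue dans une carte adaptée $U\simeq R_f$ et la courbure le long de $\ell'$, qui vaut $f''(x)/2$ et s'étend au-delà de $X$ puisque $\ell'$ vit dans $X'$, fournit un prolongement $g$ de $f$ ; on recolle alors le ruban \emph{abstrait} $R_g$ à $X$ le long de $U$, sans jamais prolonger $K$ dans $X'$. Ce n'est que dans le cas résiduel — toutes les feuilles de lumière traversantes coupent $X$ en orbites de $K$ — que le point de bord est isolé et que $K$ s'y prolonge effectivement, par un argument spécial : le flot préserve les deux feuilletages de lumière de $X'$ près de $p$, donc s'écrit $(\phi^t(u),\psi^t(v))$ et provient d'un champ $\al(u)\partial_u+\be(v)\partial_v$ défini en $(0,0)$. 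Pour réparer votre preuve, il faudrait donc renoncer à construire l'extension à l'intérieur de $\widehat X$ et n'extraire de $\widehat X$ que la donnée unidimensionnelle (le prolongement de la fonction norme/courbure le long d'une géodésique de lumière transverse) qui détermine l'extension abstraite.
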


\begin{proof}
 Soit $X'$ une surface lorentzienne (sans bord) et soit $X$ un ouvert 
propre de~$X'$ muni d'un champ de Killing complet $K$.  Il faut établir que
$(X,K)$ admet une extension propre saturée $(X'',K'')$.  Comme deux points
arbitraires de $X'$ sont toujours reliés par une géodésique de lumière
brisée, il existe une feuille de lumière $\ell'$ de~$X'$ qui rencontre~$X$
et $X'\smallsetminus X$.  Supposons dans un premier temps que $\ell' \cap
X$ possède une composante $\ell$ transverse à~$K$.  Grâce à cette hypothèse
de transversalité, la feuille $\ell'$ admet un paramétrage géodésique
$\gamma : J \to X'$, avec $J$ intervalle ouvert de $\R$, tel que $\langle
K,\ga'(x)\rangle =1$ sur $\ell$ et cette composante~$\ell$ est contenue
dans une carte adaptée $U$ de la forme \eqref{equa:rub_f}. La fonction
courbure au point $\gamma(x)$, $x\in J$,  vaut $f''(x)/2$ sur $\ell$. Par
suite la fonction~$f$ s'étend en une fonction~$g$ définie sur~$J$. On
définit une extension propre $(X'',K'')$ de $(X,K)$ en lui recollant
$R_g$ le long de~$U$.

\par On peut désormais supposer que pour toute feuille de lumière
$\ell'$ de $X'$ 
rencontrant~$X$ et $X'\smallsetminus X$, les composantes de $\ell'\cap X$ sont
des orbites de lumière du champ $K$, nécessairement semi-complètes
géodésiquement et isolées (lemme~\ref{lemm:semi_complet}); par suite
$\ell'\cap X$ admet au plus deux composantes et toute feuille de lumière de
$X'$ voisine de $\ell'$ est entièrement contenue dans~$X$. Fixons une telle
feuille de lumière $\ell'_0$. L'intersection $F=\ell'_0\cap
(X'\smallsetminus X)$ est un intervalle fermé non vide de
$\ell'_0$. L'existence d'un point $q$ intérieur à $F$ dans $\ell'_0$ est
exclue par ce qui précède (considérer la deuxième feuille de lumière de
$X'$ passant par $q$).  Finalement $F$ se réduit à un point $p\in X'$ isolé
dans $\partial X$.  Il reste à montrer que le
champ $K$ se prolonge au point $p$ en un champ de Killing.

\par Il existe une carte centrée en $p$ de coordonnées $(u,v)\in\R^2$
dans laquelle les feuilletages de lumière de $X'$ au voisinage de $p$ sont
donnés par $u=\cte$ et $v=\cte$.  Le flot local de~$K$ dans cette carte
préserve les feuilletages de lumière.  Son expression est donc de la forme
$(\phi^t(u),\psi^t(v))$, où $\phi^t$ et $\psi^t$ sont les flots
locaux de champs de vecteurs lisses $\al(u)\partial_u$ et
$\be(v)\partial_v$ définis sur $\R$. En particulier, le champ
$\al(u)\partial_u + \be(v)\partial_v$ est défini pour $(u,v)=(0,0)$ et
permet de prolonger $K$ au point $p$. Par continuité, ce prolongement est
un champ de Killing sur $X\cup\{p\}$.
\end{proof}

\begin{defi}
\label{defi:selle_a_l_infini}
Soit $(X,K)$ une surface lorentzienne saturée. On dit que $(X,K)$ {\em n'a
  pas de selles à l'infini} si (dans le revêtement universel) toute orbite
de lumière du champ est incluse dans une géodésique de lumière {\em
  complète}.
\end{defi}

\begin{prop}
\label{prop:sf_Euf}
Pour toute fonction lisse (resp. analytique) $f :I \to \R$ il existe une
surface lorentzienne $(E^u_f,K^u)$ lisse (resp. analytique), saturée,
homéomorphe au plan~$\R^2$ et telle que
\begin{enumerate}
\item[(i)] tout ruban maximal contenu dans $(E^u_f,K^u)$ est isométrique 
à $R_f$ ou à  $R_{f^\vee}$,
\item[(ii)] $(E^u_f,K^u)$ n'a pas de selles à l'infini. 
\end{enumerate}
De  plus, $E^u_f$ est maximale si et seulement si $f$ est inextensible, et  
$E^u_f$ est  L-complète si et seulement si $f$ est définie sur $\R$.
\end{prop}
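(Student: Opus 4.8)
The plan is to build $E^u_f$ explicitly and then verify each property in turn. I would first settle the \emph{elementary case}, where $f$ has no zero: then $K^u$ has no null orbit, so I set $E^u_f=R_f$; condition~(ii) is vacuous, and condition~(i) holds because the only maximal rubans are $R_f$ and its mirror copy $R_{f^\vee}$ (they arise from the two light foliations, both transverse to $K$ since $f\neq 0$). For this base case I check the last two assertions directly on the chart~\eqref{equa:carte_adaptee}: the orbits $\{y=\mathrm{cte}\}$ are affinely complete exactly when $I=\R$, and the global generic reflections of proposition~\ref{prop:refl_loc_generique} interchange the two light foliations, so L-completeness is equivalent to $I=\R$; meanwhile a transverse extension would prolong the curvature $f''/2$, hence $f$ itself, so $R_f$ is maximal iff $f$ is inextensible.

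For the general case I would assemble $E^u_f$ by gluing copies of $R_f$ and $R_{f^\vee}$ along the null orbits of $K$, namely the lines $\{x=z\}$ with $f(z)=0$. Where $f'(z)=0$ the orbit is geodesically complete (lemma~\ref{lemm:semi_complet}) and interior to its chamber, so nothing is adjoined. At a \emph{simple} zero the orbit is only semi-complete, and there I cap the incomplete end by adjoining a saddle point through proposition~\ref{prop:extension_selle}: this replaces the orbit by the two complete null geodesics of a saddle and opens up two further quadrants. Extending each such quadrant to a full chamber --- isometric to $R_f$ or $R_{f^\vee}$, the generic reflection~\eqref{equa:refl_loc_generique} (which inverts $K$) carrying one type to the other --- and iterating the adjunction at every simple zero of every chamber produces the whole surface. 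Distinct chambers then meet only along the complete null orbits issuing from the saddle points, and their adjacency pattern is dictated by the components of $I\smallsetminus f^{-1}(0)$.

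I would then check that the assembled object is a smooth (resp. analytic) Lorentzian surface with a complete Killing field $K^u$, taking as atlas the adapted charts~\eqref{equa:carte_adaptee} and the saddle charts~\eqref{equa:demi-selle}, the overlaps being the generic-reflection changes of coordinates. Condition~(i) is then immediate: each chamber is a maximal ruban isometric to $R_f$ or $R_{f^\vee}$, and conversely any ruban avoids the zero set $\Sigma$ of $K^u$ and hence lies in a single chamber. Condition~(ii) holds by construction, since every null orbit of $K^u$ is either a complete interior orbit or terminates at a saddle point, where it extends to the complete null geodesic of that saddle. Finally I recover the equivalences as in the base case: the $K^u$-orbits being complete by construction, L-completeness reduces to completeness of the transverse null geodesics, which holds iff $I=\R$; and, using proposition~\ref{prop:max_champ=max_usuel} together with the fact (established en route) that an L-complete surface is inextensible, $E^u_f$ is maximal iff no chamber admits a transverse prolongation, that is iff $f$ is inextensible.

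The step I expect to be the main obstacle is making the global gluing rigorous, namely proving that the assembled space is \emph{Hausdorff} and \emph{homeomorphic to $\R^2$}, especially when $f^{-1}(0)$ is infinite or accumulates in $I$, so that the saddle points pile up. Separation at the new points is precisely the situation controlled by the non-accumulation hypothesis of the extension lemma~\ref{lemm:extension} combined with the explicit saddle model: together they show that the branches created by successive reflections never collide and that accumulation points of saddles fall outside $E^u_f$. Simple connectedness follows from the loopless chamber adjacency, and with non-compactness it forces $E^u_f\cong\R^2$ by the classification of simply connected surfaces; proposition~\ref{prop:class_composantes} then confirms that the resulting pieces are exactly squares and Reeb bands, as required of a maximal planar surface.
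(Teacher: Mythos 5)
Your skeleton is the right one --- elementary case, extension by reflection, adjunction of saddles at the simple zeros, and the same endgame for maximality (the curvature $f''/2$ along a transverse null geodesic) and for L-completeness --- but the global assembly, which is where the paper's proof actually lives, is described in a way that cannot work as stated. Two distinct maximal rubans of $E^u_f$ do \emph{not} meet along null orbits of $K^u$: they overlap along an open square $C_\alpha$, a two-dimensional band, which is precisely the content of the extension by reflection (corollaire~\ref{coro:extension_reflexion}) and of the description des carrés (proposition~\ref{prop:class_carres}); each null ray issued from a saddle lies in exactly one ruban. ``Gluing copies of $R_f$ along the lines $\{x=z\}$'' is not an operation that produces a surface, and the claim that distinct chambers meet only along null orbits shows that the overlap combinatorics --- the heart of the construction --- has not been pinned down. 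Concretely: a square $C_\beta$ bounded by two simple zeros $z,z'$ has two opposite saddle corners; your procedure adjoins a saddle at $z$ (spawning a vertical ruban containing all of $C_\beta$) and, independently, a saddle at $z'$ (spawning another vertical ruban containing all of $C_\beta$); since a square lies in exactly two maximal rubans, these two must be identified coherently with both saddle structures and with everything they spawn in turn. Nothing in your proposal addresses this. In the paper this is exactly the delicate point: the grid of rubans $H_i$, $V_j$ (indexed by a $2$-torsion group) is built first with gluing reflections $\sigma_\alpha$, the $4$-cycles around contiguous squares are quasi-selles which complete into saddles \emph{only if} the holonomy of the transverse projective structure is trivial (proposition~\ref{prop:compl_quasi_selle}), and the $\sigma_\alpha$ must be modified by induction along each component of the graphe de contiguïté to make them compatible. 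Your saddle-first ordering (via proposition~\ref{prop:extension_selle}) trades that holonomy bookkeeping for the coherence problem above; the trade is legitimate, but the work has moved, not disappeared.

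Two further assertions would not survive scrutiny. ``Simple connectedness follows from the loopless chamber adjacency'' is false: around every saddle point the four maximal rubans form a $4$-cycle (lemme~\ref{lemm:cycl_rubans}); what must be shown is that these are the \emph{only} cycles and that each bounds a disc containing its saddle point --- this is why the paper ends by passing to the universal cover of $Y^s$. And ``accumulation points of saddles fall outside $E^u_f$'' is not the right statement: when simple zeros of $f$ accumulate at a non-simple zero $x_0\in I$, the complete orbit over $x_0$ does belong to $E^u_f$, and one has to prove that the saddle points added at the $z_n$ remain discrete and that the increasing union of your partial surfaces stays Hausdorff; the extension lemma~\ref{lemm:extension} controls each individual gluing but says nothing about the limit. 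These are the points your proof would have to supply.
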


\begin{proof} 
Il convient d'observer que dès que $f$ s'annule sans être constante,
$R_f$ ne satisfait pas la condition~(i) : en effet $R_f$ contient
alors une bande ou une demi-bande, donc un ruban maximal $R_{g^\vee}$ où
$g$ est une restriction non triviale de $f$. Si $f=0$ ou si $f$ ne s'annule
pas, on pose $E^u_f=R_f$, dont les rubans maximaux sont $R_f$ (unique
si $f=0$) ou $R_f$ et $R_{f^\vee}$, et qui satisfait évidemment la
condition~(ii).  Dorénavant nous supposons que la fonction $f$ s'annule et
n'est pas constante.

\par Soit $Z_0\subset I $ le lieu des zéros de $f$ (par hypothèse non vide
et distinct de $I$), correspondant aux orbites de lumière du champ
$\msfk$. D'après le lemme~\ref{lemm:semi_complet}, celles qui sont
géodésiquement complètes sont paramétrées par $Z_{00}=Z_0\cap
\{f'=0\}$. Les points de $Z_0\smallsetminus Z_{00}$ seront appelés {\em
  zéros simples de $f$} ; s'il en existe, ils sont associés aux orbites de
lumière semi-complètes et sont isolés dans $Z_0$.  La réunion des orbites
de lumière $Z_0\times \R$ découpe sur $\msfR=I\times \R$ des bandes
standards de type I (adhérences dans $\msfR$ des composantes de
$(I \smallsetminus Z_0)\times \R$), en nombre au plus dénombrable.
 
\par Nous construisons une première extension (non simplement connexe) en
recollant des copies de $(\msfR,\msfk) = R_f $ et de $(\msfR,-\msfk)
\simeq R_{f^\vee}$.  Soit $A$ un groupe abélien dénombrable,
suffisamment gros pour que l'on puisse indexer les composantes connexes de
$\msfR\smallsetminus \msfR_0$ par une partie $S\subset A$, pour l'instant
arbitraire.  Ces composantes seront appelées \og carrés ouverts\fg, même si
potentiellement deux d'entre elles -- aux extrémités du ruban -- pourraient
être des bandes de type III ouvertes.  Notons $(C_\alpha)_{\alpha\in S}$
(resp. $(C_\alpha^\vee)_{\alpha\in S}$) les carrés ouverts de
$(\msfR,\msfk)$ (resp.  de $(\msfR,-\msfk)$) et, pour chaque $\alpha\in S$,
fixons une réflexion $\sigma_\alpha$ qui échange $C_\alpha$ et
$C_\alpha^\vee$, champs de Killing compris.  Considérons maintenant deux
familles de rubans indexées par le groupe $A$, définies par
$H_i = (\msfR,\msfk)$ ($i\in A$) et $V_j =  (\msfR,-\msfk)$ ($j\in
A$). 
Les copies respectives dans $H_i$ et $V_j$ des $C_\alpha$ et $C_\alpha^\vee$
($\al\in S$) sont notées $C^H_{i,\alpha}$ et $C^V_{j,\alpha}$.  Soit enfin
$(Y,\msfk_Y)$ la surface lorentzienne (sans bord, munie d'un champ de
Killing $\msfk_Y$) définie comme quotient de la surface $(\coprod_{i\in A}
H_i) \amalg ((\coprod_{j\in A} V_j)$
par la relation  d'équivalence $\mcR$ engendrée par
\begin{equation}
\label{equa:sf_Y}
 p\mcR q ~~ \mathrm{si}~ p=q ~~\mathrm{ou}~~ (p,q)\in
C^H_{i,\alpha}\times C^V_{j,\alpha}, ~ \alpha=i+j \in S ~\mathrm{et} ~
q=\sigma_\alpha(p). 
\end{equation} 
Autrement dit, les surfaces lorentziennes $X^H=\coprod_{i\in A} H_i$ et
$X^V= \coprod_{j\in A} V_j$ sont recollées au moyen de l'isométrie
partielle (involutive) $\sigma$ définie par~\eqref{equa:sf_Y} grâce aux
$\sig_\al$. En particulier, pour tout $(i,j)\in A^2$, le ruban \og
horizontal\fg\ $H_i$ est recollé au ruban \og vertical \fg\ $V_j$ le long
d'au plus un carré $C_\alpha$ (si $\alpha=i+j\in S$) {\it via} la réflexion
$\sigma_\alpha$.
On vérifie que $\mcR$ est une relation
ouverte. La surface $(Y,\msfk_Y)$ contient des images isométriques des deux
surfaces $X^H$ et $X^V$ dont les rubans maximaux, encore 
notées $H_i$ ($i\in A$) et $V_j$ ($j\in A$), forment un atlas dénombrable
de $Y$ ; noter que $Y$ admet également un atlas à deux cartes de domaines
$X^H$ et $X^V$. 

\par
L'isométrie de recollement $\sigma$ est définie sur $U^H = \amalg_{i\in A}
\cup_{\al\in S} C^H_{i,\al}$.  Soit $(p_n)$ une suite de points de $U^H$ qui
converge vers $p\in \partial U^H$. Il existe $i\in A$ et une suite $(\al_n) \in
S^\N$ tels que $p_n\in C^H_{i,\al_n}$ et $p\in H_i$. La suite $\sig(p_n) \in
C^V_{\al_n-i}$ ne peut avoir de valeurs d'adhérence que si $(\al_n)$ est
stationnaire ; dans ce cas, il existe $\al\in S$ tel que $p_n \in  C^H_{i,\al}$
($n$ assez grand) et $\sig(p_n) = \sig_\al(p_n)$ n'a pas de valeurs d'adhérence
(voir la preuve du corollaire~\ref{coro:extension_reflexion}). D'après le
lemme~\ref{lemm:extension}, la surface $Y$ est séparée. Pour étudier la
connexité de $Y$, observons que deux rubans horizontaux (resp. verticaux) sont
reliés par un ruban vertical (resp. horizontal) si et seulement si la
différence de leurs indices appartient à $S-S$. On suppose de plus que {\em
$S-S$ engendre le groupe $A$} : la surface $Y$ est donc connexe. Par
construction, elle vérifie la condition~(i).

\par Les géodésiques de lumière de $Y$ se répartissent en deux types : internes
aux rubans, \mbox{c.-à-d.} transverses au champ $\msfk_Y$ (et complètes quand
$I=\R$), ou associées aux zéros de~$f$. Ces dernières sont complètes ou
semi-complètes selon qu'elles correspondent à $Z_{00}$ ou à $Z_0\smallsetminus
Z_{00}$. Observons maintenant que la surface $Y$ se rétracte par déformation sur
la réunion $\Delta$ des âmes des rubans $(H_i)_{i\in A}$ et $(V_j)_{j\in A}$. Ce
rétract $\Delta$ forme une \og grille \fg\ dont les sommets correspondent aux
carrés $(C^H_{i,\alpha})_{(i,\alpha)\in A\times S}$. On s'intéresse au cycle
d'ordre~4 de~$\Delta$ (ou de~$Y$) défini par deux indices horizontaux $i\neq i'$
et deux indices verticaux $j\neq j'$ tels que $i+j,j+i',i'+j'$ et $j'+i$
appartiennent à $S$: ses sommets $C^H_{i,\alpha}, C^H_{i,\beta},
C^H_{i',\alpha'}$ et $C^H_{i',\beta'}$ sont déterminés par $i,i',j,j'$ ($\alpha
= i+j, \beta = i+j'$, etc). On suppose désormais que {\em le groupe~$A$ est de
2-torsion (par exemple $A\subset (\Z/2\Z)^{(\N)}$) et que la partie $S$ comprend
$0$ et engendre~$A$}. Cette hypothèse additionnelle assure la connexité de $Y$
(voir plus haut) et va permettre, quitte à modifier les réflexions
$(\sig_\alpha)_{\alpha\in S}$, de compléter $Y$ par adjonction de selles.

\begin{figure}[t]
\labellist
\small\hair 2pt
\pinlabel $i$ at 50 50
\pinlabel $i'$ at 50 145
\pinlabel $j$ at 80 180
\pinlabel $j'$ at 175 180
\pinlabel $\alpha$ at 75 55
\pinlabel $\beta$ at 87 150
\pinlabel $\alpha$ at 170 150
\pinlabel $\beta$ at 182 55
\endlabellist
\begin{center}
\includegraphics[scale=0.8]{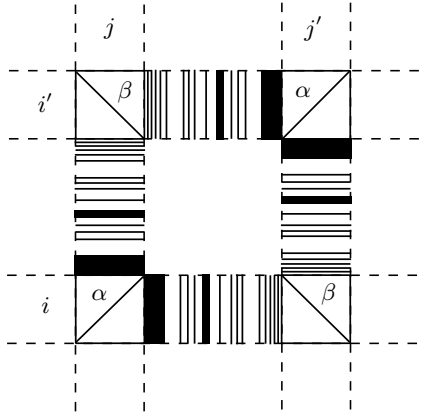}
\caption{Surface $Y$ (groupe $A$ de 2-torsion)}
\label{figu:sf_Y}
\end{center}
\end{figure}

\par Nous remplissons maintenant la condition~(ii), sur $Y$.  Étant donnés
$i\in A$ et $(\alpha,\beta)\in S^2$ avec $\alpha\neq \beta$, en posant $i'=
i + \alpha -\beta = i + \beta -\alpha $, $j=\alpha-i$ et $j'=\beta -i$, on
définit un cycle d'ordre~4 dont les sommets sont $C^H_{i,\alpha},
C^H_{i,\beta}, C^H_{i',\alpha}$ et $C^H_{i',\beta}$ (c'est-à-dire
$\alpha'=\alpha$ et $\beta'=\beta$, voir figure~\ref{figu:sf_Y}).  Toute
orbite de lumière semi-complète du champ doit border deux carrés contigus,
c.-à-d. séparés par un zéro {\em simple} de $f$,
et l'argument précédent 
montre que ces carrés font partie d'une quasi-selle
symétrique (figure~\ref{figu:sf_Y} avec $\al$ et $\be$ contigus, les
diagonales étant les axes des réflexions de $\sig_\al$ et de $\sig_\be$).
Nous définissons un {\em graphe de contiguïté} $\mcG_f=(S,\mfE)$,
\label{graphe_contiguite} 
associé à~$f$, ayant pour sommets les indices $\al\in S$ et pour arêtes les
paires $\{\al,\be\}$ d'indices contigus.  Toutes les quasi-selles de $Y$
associées à une telle paire sont isométriques.  Le graphe $\mcG_f$ est
connexe (resp. de dimension~$0$) quand tous les zéros de $f$ sont simples
(resp. non simples). Si $\al$ et $\be$ sont contigus, on peut toujours
modifier l'une des réflexions $\sigma_\al$ ou $\sigma_\be$ de sorte que les
quasi-selles associées se complètent en selles ; on dira alors que
$\sigma_\al$ et $\sigma_\be$ sont compatibles. Pour toute composante
connexe $\mcC$ de $\mcG_f$, on peut choisir un sommet $\al \in \mcC$ et par
récurrence (sur la distance combinatoire à $\al$) modifier les réflexions
$\sig_\be$ ($\be \in \mcC \smallsetminus\{\al\}$) afin de rendre
compatibles toutes les paires de réflexions associées aux arêtes de $\mcC$.
Après cette opération, la surface~$Y$ se complète par adjonction de selles
(si $\mfE \neq \emptyset$) 
en une surface notée $(Y^s,\msfk_Y^s)$, connexe, paracompacte (en
particulier séparée) et  vérifiant la condition~(ii). Si  $\mfE =
\emptyset$, on pose simplement $(Y^s,\msfk_Y^s)=(Y,\msfk_Y)$.
Soit
enfin $(E^u_f,K^u)$ le revêtement universel de $(Y^s,\msfk_Y^s)$. Il
s'agit évidemment d'une surface homéomorphe au plan~$\R^2$ et 
satisfaisant les
conditions~(i) et~(ii), préservées par passage à un revêtement.

\par Si $E^u_f$ n'est pas maximale, la surface $(E^u_f,K^u)$ admet une
extension non triviale saturée $(E,K)$ (proposition
\ref{prop:max_champ=max_usuel}). Il existe une feuille de lumière maximale
$\ell$ de $E$ qui rencontre $E^u_f$ et $E\smallsetminus E^u_f$.  Par la
condition (ii), $\ell\cap E$ est transverse à $K^u$. Par suite $\ell$ est
transverse à $K$ et admet un paramétrage géodésique $\ga:J\to E$ ($J$
intervalle contenant $I$) tel que $\langle K, \ga'(x) \rangle = 1$. La
fonction norme de $K$ en $\ga(x)$ étend $f$.  Inversement, si $f$ admet une
extension $g$, on obtient une extension de $E^u_f$ en étendant $R_f$,
proprement plongé dans $E^u_f$, par $R_g$ (voir aussi la 
remarque~\ref{rema:Eug_Euf}).  Enfin l'assertion sur la
L-complétude est claire.
\end{proof}

\begin{rema}
\label{rema:variante_Y}
Voici une variante adaptée au cas où les carrés $(C_\al)_{\al \in S}$ ne
s'accumulent pas dans $R$ (par exemple si $f$ est analytique). On prend
$\smash{X'}^H= (\msfR,\msfk)$ et $\smash{X'}^V= (\msfR,-\msfk)$, que l'on
recolle avec
les réflexions $\sig_\al : C_\al \to C_\al^\vee$ $(\al\in S)$ en une
surface $X'$ connexe et séparée. Si $f$ n'a pas de zéros simples, 
on pose ${Y'}^s = X'$. Sinon, on considère un revêtement double non trivial
au voisinage des bouts de $X'$ associés à $\mfE$, que l'on peut compléter
comme plus haut (par adjonction de selles) en une surface ${Y'}^s$.
Cette surface  ${Y'}^s$   est en un certain sens
(voir~\S\ref{subs:quotients_mini}) la plus petite extension de $R_{f}$ par
une surface sans bord et sans selles à l'infini. Aux orbites de lumière du
champ près, ${Y'}^s$  a la même taille que $R_f$ si $f$ n'a pas
de zéros simples, deux fois la taille de $R_f$ sinon. La topologie de
$Y^s$ dépend de $f$ et, si ${Y'}^s \neq  X'$, du choix du revêtement double
de $X'$, voir~\S\ref{subs:quotients_mini}. Dans le cas  où les 
$(C_\al)_{\al \in S}$ s'accumulent, cette construction fonctionne encore, 
mais elle produit des surfaces $X'$ et ${Y'}^s$ non séparées.
\end{rema}

\begin{exems}
Si $f$ est une fonction affine non constante alors la surface $E^u_f$
ci-dessus est le plan de Minkowski muni d'un champ de Killing ayant un
zéro. De m\^eme, si $f$ est une fonction polynomiale de degré $2$ définie
sur $\R$, alors $E^u_f$ est isométrique à un
facteur près au rev\^etement universel de l'espace de de Sitter. Le champ
de Killing associé à la construction est elliptique, parabolique ou
hyperbolique selon que $f$ ait $0$, $1$ ou $2$ racines. Les termes
elliptique, parabolique ou hyperbolique font référence aux sous-groupes de
$\PSL_2(\R)$ qui leurs sont associés (voir \cite{MS} pour plus de détails).
\end{exems}

\subsection{Géométrie locale uniforme}
\label{subs:sf_reflexives}

Soit $I$ un intervalle ouvert de $\R$ et soit $f\in \mcC^\infty(I,\R)$.  On
rappelle (voir l'introduction) que l'orbite de $f$ d\'efinie sous l'action
à droite du groupe affine (resp. affine euclidien) est not\'ee $[[f]]$
(resp. $[f]$).

\begin{defi}
\label{defi:classe_[[f]]}
Soit $X$ une surface lorentzienne  simplement connexe et soit 
\mbox{$f\in \mcC^\infty(I,\R)$.}
\begin{enumerate}
\item Soit $K$ un champ de Killing non trivial et complet sur $E$. On dit que
$(X,K)$ est {\em de classe  $[f]$}  s'il existe une coordonn\'ee transverse
$x\in\mcC^\infty(X,\R)$ (voir proposition~\ref{prop:struct_transv_killing}-(2))
telle que $I=x(X)$ et $\langle K,K\rangle=f\circ x$.
\item  On dit que $X$ est de classe $[[f]]$ si elle possède un champ de Killing
$K'$ tel que $(X,K')$ est de classe $[f]$.
 \end{enumerate}
Par extension, une surface $(X,K)$ (resp. $X$) non simplement connexe est de
classe $[f]$ (resp. de classe $[[f]]$) si son revêtement universel l'est.
\end{defi}

Une surface $(X,K)$ (resp. $X$) sera dite {\em à géométrie locale uniforme}
si elle est de classe $[f]$ (resp. $[[f]]$) pour une certaine fonction
$f$. Si on change de coordonn\'ee transverse, la classe $[f]$ est
inchangée. Si de plus $K$ est remplacé par $\lambda K$ ($\lambda \in
\R^*$), la classe $[[f]]$ est inchangée. Cette classe est unique quand~$X$
est à courbure non constante. Par contre, en courbure
constante, une surface  peut admettre plusieurs classes.  Ainsi le
plan de Minkowski est de classe $[[0]]$, $[[-1]]$, $[[1]]$ et $[[x]]$.

\par Considérons une surface lorentzienne saturée $(E,K)$, simplement
connexe.  Toute feuille de lumière~$\ell$ de~$E$ ne portant pas d'orbite de
$K$ (donc pas de zéro de $K$) doit être partout transverse à $K$.  Par
suite~$\ell$ est contenu dans un {\em ruban maximal} où la métrique
s'exprime sous la forme~\eqref{equa:carte_adaptee}.  Les rubans maximaux
recouvrent~$E$ à l'exception des zéros du champ~$K$.  Notons $E_0$ la
réunion des orbites de lumière de $K$ et supposons que $K$ n'est pas de
lumière. Toute composante connexe~$C$ de $E\smallsetminus\overline{E}_0$
(voir proposition~\ref{prop:class_composantes}) est incluse dans deux
rubans maximaux, dont la réunion sera appelée {\em croix} associée
à~$C$. De plus, on rappelle que~$C$ admet des réflexions locales {\em
  génériques} (proposition~\ref{prop:refl_loc_generique}).  Certains
carrés particuliers, dit {\em symétriques}, peuvent également admettre une
réflexion exceptionnelle (ou non générique) par rapport à une feuille du champ
$K$.
Les parties à courbure constante de~$E$ possèdent également des 
réflexions locales, non génériques.

\begin{defi} 
\label{defi:sf_refl_unif} Soit $(E,K)$ une surface lorentzienne saturée et
simplement connexe. On dit que $(E,K)$ est 
\begin{enumerate}
\item {\em réflexive} si toute réflexion locale générique de $(E,K)$ s'étend
isométriquement à la croix correspondante,
\item {\em R-homogène} si le groupe $\Isom(E)$ agit transitivement sur
  l'ensemble des rubans maximaux de~$E$.
\end{enumerate}
\end{defi}

Ces deux propriétés sont invariantes par isométrie usuelle.  Dans le cas très
particulier où~$K$ est de lumière, $(E,K)$ est réduite à un ruban plat :
la surface $(E,K)$ est alors réflexive car 
il n'y a aucune condition à satisfaire ;
elle est évidemment R-homogène.  Les surfaces $(E^u_f,K^u)$ du
\S\ref{subs:const_sf_max} sont par construction de classe $[f]$ et réflexives
(voir lemme~\ref{lemm:Euf_reflexive}).  Celles de
l'exemple~\ref{exem:sf_unif_non_refl} ne  le sont pas. Nous verrons que le fait
d'être à géométrie locale uniforme induit un comportement analytique pour les
surfaces lisses. Cette propri\'et\'e et d'ailleurs automatique dans le cas
analytique.
\begin{prop}
\label{prop:analy_refl}
Soit $(X,K)$ une surface lorentzienne saturée et connexe.
\begin{enumerate}
\item Dans chacun des cas suivants, la surface $(X,K)$ est à géométrie locale
uniforme :
\begin{enumerate}
\item le flot de $K$ est périodique,
\item $X$ est analytique.
\end{enumerate}
\item On suppose que $X$ est simplement connexe et maximale. Alors la surface 
 $(X,K)$ est à géométrie locale uniforme si et
seulement si elle est réflexive.
\end{enumerate}
\end{prop}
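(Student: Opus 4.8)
The plan is to produce, in each case, the transverse coordinate $x\in\mcC^\infty(\,\cdot\,,\R)$ of Proposition~\ref{prop:struct_transv_killing} and to show that $\langle K,K\rangle$, which is constant along the orbits of $K$ and hence descends to a function $N$ on the leaf space $\mcE$, factors as $f\circ x$. Since $x$ makes $\mcE$ into a connected Riemannian $1$-manifold developing isometrically onto $\R$, the sole obstruction to such a factorization is the global non-injectivity of $x$ on $\mcE$: two leaves $c_1\neq c_2$ with $x(c_1)=x(c_2)$ but $N(c_1)\neq N(c_2)$. A first useful remark is that if $c_1,c_2$ are \emph{non-separated} in $\mcE$ then necessarily $N(c_1)=N(c_2)$: both are the common limit of $N$ along a single sequence of leaves, and $N$ is continuous on the surface itself. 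Thus the genuine difficulty is the occurrence of \emph{separated} sheets of $\mcE$ lying over a common subinterval of $\R$.

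For part~(1a), a periodic flow has no zeros, since near a zero it is conjugate, by Proposition~\ref{prop:carte_exponentielle}, to the flow of $u\partial_u-v\partial_v$, which is not periodic. Hence $K$ generates a fixed-point-free action of the compact group $\R/T\Z$, and $\mcE_{(X,K)}$ is the quotient of this proper action: a connected Hausdorff $1$-manifold onto which the transverse coordinate descends as a local isometry into $\R$. A local isometry to $\R$ has locally constant sign of $dx$, hence is strictly monotone, so $x$ is injective on $\mcE_{(X,K)}$ and $\langle K,K\rangle=f\circ x$ on $X$ for a function $f$ on $I=x(X)$; pulling $x$ back to $\widetilde X$ exhibits $(X,K)$ as being of class $[f]$. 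For part~(1b) I would argue by analytic continuation. Both $x$ and $N$ are analytic, and locally $N=f_{\mathrm{loc}}\circ x$ with $f_{\mathrm{loc}}$ an analytic germ on $\R$; moving along $\mcE$ continues this germ along the image path in $\R$. The continuation passes through every branch point of $\mcE$: across a light leaf one uses the adapted chart~\eqref{equa:carte_adaptee}, in which $N=f$ is analytic on a two-sided neighborhood of the corresponding zero, and at a genuine saddle one uses the exponential chart~\eqref{equa:carte_exp}, where $\langle K,K\rangle$ and $x$ are analytic functions of the invariant product $uv$, so all four quadrants carry the same germ. Given two leaves over a common $x_0$, a path in the connected $\mcE$ joining them projects to a loop $x_0\to x_0$ in $\R$; as real-analytic continuation along $\R$ has no monodromy, the two germs of $f$ coincide and $N(c_1)=N(c_2)$, so $N$ factors through $x$.

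For part~(2), assume $X$ simply connected (hence homeomorphic to $\R^2$, as $S^2$ carries no Lorentzian metric) and maximal. The implication uniform~$\Rightarrow$~reflexive is the explicit direction: if $N=f\circ x$ globally, the two maximal rubans forming any cross both carry the metric $2\,dx\,dy+f(x)\,dy^2$ for the same $f$, and the generic reflection, which swaps the two light foliations and therefore exchanges these two rubans, extends over the cross as an isometry; this is reflexivity, the degenerate cases ($K$ of light type, or constant curvature) being covered directly by Definition~\ref{defi:sf_refl_unif}. The substantial direction is reflexive~$\Rightarrow$~uniform. Here I would cover $X$ by its maximal rubans, on each of which $N=f_R\circ x$; on overlaps the functions $f_R$ agree, since both compute $N$, so consistency can fail only at the junctions organized by Proposition~\ref{prop:class_composantes}. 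Around a genuine saddle the exponential chart again forces the adjacent rubans to share a single $f$, with no extra hypothesis. At a saddle at infinity two maximal rubans meet along a semi-complete light leaf; reflexivity says precisely that the generic reflection extends across this cross, and that extension conjugates one ruban to the other and so identifies their norm functions. Propagating these identifications over the connected, simply connected $X$ then assembles them into a single $f$ on $I=x(X)$ with $N=f\circ x$.

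The main obstacle is this last patching step of the converse in~(2): in the smooth category there is no analytic continuation to fall back on, so one must check that reflexivity supplies exactly the gluing needed to extend $f$ consistently through the saddle-at-infinity junctions, and that simple connectivity of $X$ removes any global monodromy obstruction to assembling the local pieces into one globally defined $f$. The non-reflexive surfaces of Example~\ref{exem:sf_unif_non_refl}, which are maximal and locally modeled on $R_f$ yet not of class $[f]$, show both that reflexivity cannot be dropped and exactly where the argument must invoke it.
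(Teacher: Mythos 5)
Your overall architecture is right, and your treatment of (1b) by analytic continuation of the germ of $f$ along paths of $\mcE$, using the absence of monodromy over $\R$, is a correct (and slightly slicker) repackaging of the paper's step-by-step argument through overlapping rubans. Your opening remark that non-separated leaves carry the same norm is also correct and pertinent. But there are genuine problems elsewhere.

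In (1a), you assert that the transverse coordinate descends to $\mcE_{(X,K)}$ as a strictly monotone local isometry into $\R$. For a torus --- the case this statement exists to cover --- $\mcE_{(X,K)}$ is a circle: it admits no local isometry into $\R$, and $i_K\nu$ is not exact on $X$ (its period over a closed transversal is the mass $\msfm>0$), so $x$ does not descend to $X$ at all. The correct object is $\mcE_{\widetilde X}$; your Hausdorffness observation must be transported there (the saturated cylinder or Möbius-band neighborhoods of the circular orbits lift to disjoint saturated neighborhoods of their lifts, which is exactly the paper's argument), after which $\mcE_{\widetilde X}$ is a Hausdorff simply connected $1$-manifold, hence an interval on which $x$ injects, and the norm factors through a periodic $f$. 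This is repairable in a line, but the step as written is false. The more serious gap is in (2): your proof of the implication from uniform to reflexive never uses maximality and simply asserts that the two maximal rubans of a cross carry the metric $2dxdy+f(x)dy^2$ for the same $f$. Both norm functions are indeed restrictions of the global $f$, but what must be proved is that their transverse images coincide, $x(R)=x(R')$; otherwise the two rubans are not isometric and the generic reflection cannot extend to the cross. This is precisely where maximality enters (in the paper, maximality forces $g=f|_I$ and $g'=f|_{I'}$ to be inextensible, whence $I=I'$). Without it the implication is false: $R_f$ itself, for an inextensible $f$ with a zero, is of class $[f]$ yet not reflexive, and your argument would prove it reflexive. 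Finally, in the converse direction, your claim that a genuine saddle identifies the norm functions of the adjacent rubans with no extra hypothesis is wrong in the smooth category: by the remarque~\ref{rema:germes_de_selles} a smooth saddle is determined by four germs sharing only an infinite jet at the saddle point, so opposite quadrants may carry distinct norm functions; reflexivity must be invoked at these crosses too (the hypothesis is available, so this last point is a wrong justification rather than a fatal gap).
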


\begin{proof} Supposons d'abord que le flot de $K$ est p\'eriodique. En relevant
les cylindres ou les rubans de Möbius feuilletés par $K$ au revêtement universel
$\widetilde{X}$, on voit que l'espace des orbites $\mathcal E_{\widetilde{X}}$
est s\'epar\'e. Cet espace est alors un intervalle réel et la surface $(X,K)$
est clairement \`a g\'eom\'etrie locale uniforme, d'où (1-a).

Pour la suite de la preuve, on peut supposer que $X$ est simplement connexe. On
se donne alors une coordonn\'ee transverse $x$ sur $X$. Soit $\sig$ une
réflexion d'une composante $C$ de $E\smallsetminus\overline{E}_0$. Notons $R$ et
$R'$ les deux rubans maximaux d'intersection $C$. Soient
$g\in\mcC^\infty(I,\R)$ et $g'\in\mcC^\infty(I',\R)$ les fonctions telles que
$g\circ x$ et $g'\circ x$ sont les restrictions de $\langle K,K\rangle$ \`a $R$
et $R'$ respectivement. La réflexion $\sig$ induit la relation $g'(u)=g(u)$ pour
tout $u\in x(C)$. Si $(X,K)$ est suppos\'ee analytique alors~$g$ et~$g'$ le sont
aussi et cette relation est vraie pour tout $u\in I\cap I'$. Les fonctions~$g$
et~$g'$  sont donc des restrictions d'une m\^eme fonction analytique
inextensible $f$. En raisonnant de proche en proche, on voit qu'il en est de
m\^eme pour chaque ruban. Quitte à restreindre $f$, on conclut que $(X,K)$
est de classe $[f]$, d'où (1-b).

Si $(X,K)$ est suppos\'ee maximale et de classe $[f]$, alors $g=f|_I$ et
$g'=f|_{I'}$ sont inextensibles. Par suite $I=I'$ et $\sigma$ s'étend à la croix
associée à $C$. Enfin, comme les r\'eflexions g\'en\'eriques pr\'eservent
clairement les coordonn\'ees transverses sur une croix (voir remarque 
\ref{rema:preservation_indices}), toute surface réflexive
est à géométrie locale uniforme. 
\end{proof}

Nous montrerons plus bas que toute surface simplement connexe $(E,K)$ réflexive
et sans selles à l'infini est R-homogène (voir
théorème~\ref{theo:uniformisation}). Mais il existe des surfaces réflexives non
R-homogènes (par exemple le revêtement universel d'une surface $E^u_f$ privée
d'un point selle, avec $f$ suffisamment générale) et inversement des surfaces
R-homogènes non réflexives.

\begin{exem}[surfaces R-homogènes non réflexives]
\label{exem:sf_unif_non_refl}
Soit $R_f= (\msfR,\msfk)$ le ruban (maximal) associé à une fonction $f \in
\mcC^\infty(\R,\R)$ qui ne s'annule qu'en deux points, disons $-1$ et $1$,
et qui ne change pas de signe. Les deux feuilles de lumière du champ de
Killing délimitent~3 composantes notées simplement~1,~2 et~3, dans l'ordre
défini par le champ de lumière 
$\msfl$ tel que $\langle \msfk,\msfl \rangle = 1$.  Les
composantes de $(\msfR,-\msfk) \simeq R_{f^\vee} $ sont dans l'ordre 3,~2,~1.
On suppose de plus que les composantes externes~1 et~3 ne sont pas
isométriques et que $f$ est symétrique entre les zéros, c'est-à-dire
$f(x)=f(-x)$ pour $x\in [-1,1]$, de sorte que la composante centrale~2
possède une réflexion exceptionnelle.  Cela étant, on considère deux
familles de rubans $(H_i)_{i\in\Z}$ (\og horizontaux\fg) et
$(V_j)_{j\in\Z}$ (\og verticaux\fg), avec $H_{2k}=V_{2k}=(\msfR,\msfk)$ et
$H_{2k+1}=V_{2k+1}=(\msfR,-\msfk)$, assemblés comme sur la
figure~\ref{figu:sf_unif_non_refl}. Les composantes impaires sont
identifiées {\it via} une réflexion générique tandis que les composantes
paires sont identifiées {\it via} {\em la réflexion exceptionnelle}.  La
surface $Y$ ainsi définie est L-complète et porte un champ de Killing
(figure~\ref{figu:sf_unif_non_refl}, axes des réflexions exceptionnelles en
pointillés).  Les réflexions génériques des composantes impaires s'étendent
à $Y$, mais pas celles des composantes paires (c'est la réflexion
exceptionnelle qui s'étend). Le revêtement universel $(E,K)$ est clairement
R-homogène mais pas réflexif.

\begin{figure}[h!]
\labellist
\small\hair 2pt
\pinlabel $2$ at 19 123
\pinlabel $1$ at 51 123
\pinlabel $1$ at 19 91
\pinlabel $2$ at 51 91
\pinlabel $3$ at 83 91
\pinlabel $3$ at 51 59
\pinlabel $2$ at 83 59
\pinlabel $1$ at 115 59
\pinlabel $1$ at 83 27
\pinlabel $2$ at 115 27

\pinlabel $2$ at 211 123
\pinlabel $1$ at 243 123
\pinlabel $4$ at 275 123
\pinlabel $3$ at 307 123
\pinlabel $1$ at 211 91
\pinlabel $2$ at 243 91
\pinlabel $3$ at 275 91
\pinlabel $4$ at 307 91
\pinlabel $4$ at 211 59
\pinlabel $3$ at 243 59
\pinlabel $2$ at 275 59
\pinlabel $1$ at 307 59
\pinlabel $3$ at 211 27
\pinlabel $4$ at 243 27
\pinlabel $1$ at 275 27
\pinlabel $2$ at 307 27
\endlabellist
\begin{center}
\includegraphics[scale=0.9]{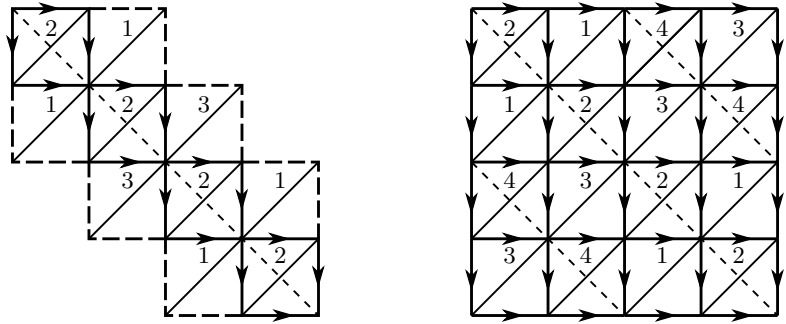}
\caption{Surfaces R-homogènes non réflexives}\label{figu:sf_unif_non_refl}
\end{center}

\end{figure}

Il est facile de construire des variantes possédant des ouverts divisibles.
On part avec un ruban périodique $R$ dont la période de base comprend~4
composantes $1,\ldots,4$. On suppose que les composantes~2 et~4 admettent
une réflexion exceptionnelle, puis on considère deux familles $(H_i)$ et
$(V_j)$ (indexées par $\Z$) de copies de $R$ et $R^\vee$ comme ci-dessus.
Ces familles sont assemblées périodiquement selon la
figure~\ref{figu:sf_unif_non_refl}, en collant les composantes impaires par
des réflexions génériques et les composantes paires par des réflexions
exceptionnelles.  Après passage au revêtement universel on trouve comme
plus haut une surface $(E',K')$ R-homogène non réflexive. Chaque copie de $R$
revêt un tore lorentzien~$T$. L'extension $E'$ de $\widetilde{T}=R$ est
R-homogène puisque les translatés de $\widetilde{T}$ par le
groupe $\Isom(E')$ recouvrent la surface, mais ses réflexions génériques ne
s'étendent pas toutes. Elle n'est donc pas isométrique à l'extension
$E^u_f$ de \S\ref{subs:const_sf_max}, voir lemme~\ref{lemm:Euf_reflexive}. 
\end{exem}

Nous considérons maintenant la surface $E^u_f$ comme espace modèle pour
des structures géométriques. Rappelons qu'une surface est {\em (localement)
  modelée sur $E^u_f$}, ou {\em admet une $E^u_f$-structure}, si elle
possède un atlas à valeur dans $E^u_f$ dont les changements 
de cartes sont des restrictions d'isométries globales de $E^u_f$.

\begin{lemm}
\label{lemm:Euf_reflexive}
Toute réflexion locale générique de la surface
$E^u_f$ définie au~\S\ref{subs:const_sf_max} s'étend à~$E^u_f$. 
Le sous-groupe engendré par ces transformations (appelé sous-groupe
{\em générique}) 
agit transitivement sur les rubans maximaux de $E^u_f$.
En particulier   $E^u_f$ est réflexive et  R-homogène.
\end{lemm}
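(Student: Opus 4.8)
The plan is to prove the three assertions in turn, the pivotal geometric fact being that a generic reflection interchanges the two maximal ribbons of the cross on which it lives; granting this, both the global extension and the transitivity drop out of the simple connectivity of $E^u_f$.

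First I would record what a generic reflection does to the ambient foliations. Let $C$ be a connected component of the complement in $E^u_f$ of the closure of the union of the light orbits of $K^u$, and let $\sigma_\gamma\colon C\to C$ be the generic reflection whose axis is a leaf $\gamma$ of $\mcK^\perp$ (proposition~\ref{prop:refl_loc_generique}). In adapted coordinates its expression \eqref{equa:refl_loc_generique} shows that $\sigma_\gamma$ reverses $K^u$ and sends the leaves $y=\mathrm{const}$ to the leaves $y=2G(x)+\mathrm{const}$; that is, it interchanges the two light foliations of $C$. Now the two maximal ribbons $R,R'$ forming the cross attached to $C$ prolong $C$ across its two complementary pairs of boundary light rays: $R$ is extended across its own $K^u$-light orbits (those boundary rays tangent to the field, forming one of the two foliations) and $R'$ across the other pair, because the gluing reflections used to build $E^u_f$ in proposition~\ref{prop:sf_Euf} interchange the two foliations of each square. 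Consequently any isometric extension of $\sigma_\gamma$ to the cross is forced to exchange $R$ and $R'$.

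Next I would carry out the extension across one cross and across the saddles. By proposition~\ref{prop:class_carres} the reflection $\sigma_\gamma$ first extends to the whole square $\overline{C}$. Since $R$ and $R'$, each isometric to $R_f$ or $R_{f^\vee}$ by (i) of proposition~\ref{prop:sf_Euf}, carry the same transverse data on $\overline{C}$, the classification of ribbons and their isometries (proposition~\ref{prop:class_isom_rubans}) furnishes a unique orientation-reversing ribbon isometry $R\to R'$ restricting to $\sigma_\gamma|_{\overline C}$; taking its inverse on $R'$ and using $\sigma_\gamma^2=\id$ on $\overline C$, these glue into an isometric extension of $\sigma_\gamma$ to the cross $R\cup R'$. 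At a corner of $C$ that is a saddle $p$ I would pass to the exponential chart \eqref{equa:carte_exp} of proposition~\ref{prop:carte_exponentielle}: there $K^u$ is a multiple of $u\partial_u-v\partial_v$, the leaves of $\mcK^\perp$ are the radial lines, and $\sigma_\gamma$ is the linear isometry $(u,v)\mapsto(v,u)$, which is globally defined on the chart, fixes $p$, and exchanges the two quadrants adjacent to $C$ (one lying in $R$, the other in $R'$). Hence the extension crosses every saddle smoothly.

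Finally I would globalize and deduce transitivity. As $E^u_f$ is homeomorphic to $\R^2$ (proposition~\ref{prop:sf_Euf}) it is simply connected, so the incidence graph whose vertices are the maximal ribbons and whose edges are the squares $R\cap R'$ is a tree (connected because $E^u_f$ is, acyclic by simple connectivity); in particular two distinct ribbons meet in at most one square. I would therefore propagate $\sigma_\gamma$ from its initial cross to every ribbon along the unique path of this tree, extending across squares and saddles exactly as above; the absence of cycles makes the successive extensions automatically compatible, so they assemble into a global isometry $\widehat\sigma_\gamma$ of $E^u_f$. This shows that every generic reflection extends to $E^u_f$, a fortiori to its cross, so $E^u_f$ is reflexive (definition~\ref{defi:sf_refl_unif}). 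For R-homogeneity, recall that $\widehat\sigma_\gamma$ swaps the two ribbons of its cross; given two maximal ribbons I join them by a finite chain in which consecutive ribbons share a square, and compose the corresponding extended generic reflections to carry one onto the other, whence the generic subgroup acts transitively on the maximal ribbons. The step I expect to be the main obstacle is precisely the extension across a single square that interchanges $R$ and $R'$: one must match the two adapted charts of $R$ and $R'$, which differ by the foliation-swapping gluing reflection, so that $\sigma_\gamma|_{\overline C}$ genuinely is the restriction of a ribbon isometry $R\to R'$, and then check smoothness at the saddles through the null coordinates of \eqref{equa:carte_exp}; once this local step is secured, simple connectivity renders the global assembly and the transitivity essentially formal.
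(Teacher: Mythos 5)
Your local analysis is sound and close in spirit to what is needed: a generic reflection swaps the two light foliations and hence must exchange the two maximal ribbons of its cross; the extension across one square is pinned down by the adapted charts; and the saddles are crossed via the exponential chart \eqref{equa:carte_exp}, where the reflection becomes a flow-conjugate of $(u,v)\mapsto(v,u)$. The genuine gap is in the globalization step. The incidence graph of maximal ribbons is \emph{not} a tree as soon as $f$ has a simple zero: around every saddle point the four adjacent ribbons form a simple cycle of length four --- this is exactly the content of the lemme~\ref{lemm:cycl_rubans} and of the construction in the preuve de la proposition~\ref{prop:sf_Euf} (figure~\ref{figu:sf_Y}). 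Such a cycle bounds a disc containing the saddle, so it is invisible to the simple connectivity of $E^u_f$; the claim \og acyclic by simple connectivity\fg\ is therefore false, and the compatibility of the extensions propagated along two different chains joining the same pair of ribbons is not automatic --- it is precisely the monodromy around these $4$-cycles that must be killed. Your argument is repairable: the extension you build on a saturated neighbourhood of a saddle already reaches all four adjacent ribbons near that saddle, and since an isometry of a connected Lorentzian surface is determined by its $1$-jet at a point, the extension to each ribbon is determined by its germ there; this forces the propagation around each $4$-cycle to close up, after which one propagates along a spanning tree. But as written this step is missing, and it is the crux.

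For comparison, the paper sidesteps the monodromy issue entirely: it works on the explicit model $Y$ of the proposition~\ref{prop:sf_Euf} (before passing to the universal cover) and writes the extended reflection in closed form, as the map $\ph_\al$ sending $H_i$ to $V_{\al-i}$ and $V_j$ to $H_{\al-j}$ via the identity of the underlying ribbon; the fact that this descends to the quotient $Y=X/\mcR$ is a one-line computation using that the indexing group $A$ is of $2$-torsion, and the extension then passes to $Y^s$ by the proposition~\ref{prop:ext_isom_demi_selle} and lifts to $E^u_f$. Your derivation of transitivity (composing extended reflections along a chain of ribbons) coincides with the paper's and is fine once the extension statement is secured.
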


\begin{proof} 
On peut supposer que $f$ s'annule sans être constante.  Rappelons que
$E^u_f$ est construite comme revêtement universel d'une surface~$Y^s$,
elle-même complétée d'une surface~$Y$ par adjonction de selles. Comme toute
isométrie de~$Y$ s'étend par continuité à $Y^s$, proposition
\ref{prop:ext_isom_demi_selle}, il suffit de prouver la propriété
d'extension pour $Y$.  De plus, il suffit d'étendre une réflexion
particulière pour chaque carré de $Y$, puis de conjuguer par le flot
de~$Y$. On reprend les notations de la page~\pageref{equa:sf_Y}.  Si
$(\al,i_0)\in S\times A$, considérons la réflexion du carré
$C_{\al,i_0}^H=C_{\al,\al-i_0}^V$, notée $\overline{\sig}_\al$, induite par
l'application identique de $C_\al$ sur $C_\al^\vee$.
Les points de $H_i = R$ (resp. de $V_j$) sont notés simplement 
$p_i$, avec $p\in R$
(resp. $q_j$, avec $q\in R$). Soit $\ph_\al$ l'isométrie involutive de 
la surface $X=(\coprod_{i\in A} H_i) \amalg ((\coprod_{j\in A} V_j)$
définie par l'application identique de $H_i$ sur $V_{\al-i}$ et 
de $V_j$ sur $H_{\al-j}$, autrement dit
\begin{equation}
\label{equa:Euf_reflexive}
\left\{
\begin{array}{lll}
\ph_\al(p_i) = p_{\al-i} \in V_{\al-i}  &  \mathrm{si} &
p_i \in H_i~~(i\in A),\\
\ph_\al(q_j) = q_{\al-j} \in H_{\al-j}  &  \mathrm{si} &
 q_j \in V_j~~(j\in A).
\end{array}
\right.
\end{equation}
Cette application $\ph_\al$ induit une isométrie $\widehat{\sig}_\alpha$
qui prolonge $\overline{\sig}_\al$ sur le quotient $Y=X/\mcR$,
voir~\eqref{equa:sf_Y}. En effet, supposons que $p_i\mcR q_j$, c.-à-d.
$\beta = i+j \in S$ et $q=\sigma_\beta(p)$ ; on a alors $(\al-j) + (\al-i)=
\beta$ (le groupe $A$ étant de 2-torsion) et $p = \sigma_\beta(q)$ puisque
$\sig_\be$ est involutive, d'où $\ph_\al(q_j) \mcR \ph_\al(p_i)$.

\par
 Les extrémités d'une chaîne $(R_i)_{1\leq i\leq N}$ de rubans maximaux
de $E^u_f$ ($R_i\cap R_{i+1}\neq \emptyset$, $1\leq i \leq N-1$) sont
toujours échangées par un produit de réflexions génériques, d'où
la deuxième assertion.
\end{proof}

\begin{rema}[invariance des indices par transformation générique]
\label{rema:preservation_indices}
Fixons un système de coordonnées adaptées $(x,y)\in I\times \R$ pour le ruban
$R_f$, donc pour les $H_i$ et les $V_j$. Par construction de $E^u_f$, voir
notamment~\eqref{equa:sf_Y}, les coordonnées $x$ des rubans se recollent en une
fonction encore notée $x\in \mcC^\infty(E^u,\R)$ ; les carrés des rubans sont
ainsi indexés de façon cohérente par un ensemble fixé $S$. Si $\nu^u$ est une
forme volume lorentzienne sur $E^u_f$, on~a $dx = i_{K^u}\nu^u$, voir la
remarque~\ref{rema:esp_feuilles_etale}. 
Une réflexion générique préserve $dx$ et fixe une feuille de $K$,
elle préserve donc la fonction $x$; en particulier elle préserve les
indices des carrés.
\end{rema}

\begin{rema}
\label{rema:refl_commutent}
La composée $\widehat{\sig}_\alpha \widehat{\sig}_\beta$ ($\al,\be\in
S$) de deux réflexions de $Y^s$ induites par \eqref{equa:Euf_reflexive}
préserve l'ensemble des rubans horizontaux (resp. verticaux) de $Y^s$ en
décalant leurs indices de $\al-\be$. Comme $A$ est de 2-torsion, on a que
$\widehat{\sig}_\alpha$ et $\widehat{\sig}_\beta$ commutent
(c.-à-d. $\widehat{\sig}_\alpha \widehat{\sig}_\beta$ est
d'ordre~2). Le groupe d'isométrie de $Y^s$ contient donc un sous-groupe
isomorphe à $(\Z/2\Z)^{(S)}$.
\end{rema}

Voici encore quelques propriétés préliminaires utiles pour l'étude des
surfaces réflexives. Soit $(X,K)$ une surface lorentzienne saturée.  Nous
appellerons {\em cycle de rubans} toute suite finie $(R_i)_{1\leq i\leq
  N+1}$ de rubans de $(X,K)$ avec $N\geq 2$, $R_i\cap R_{i+1}\neq
\emptyset$ ($1\leq i \leq N$), $R_{N+1}=R_1$ et $R_i \neq R_j$ (comme
ouverts de $X$) si $|i-j|=1$. Un cycle de rubans $(R_i)_{1\leq i \leq N+1}$
sera dit {\em simple} s'il existe une courbe polygonale fermée simple
constituée de segments géodésiques de lumière $\ell_1, \ldots,\ell_N$,
transverses au flot avec $\ell_i \subset R_i$ ($i=1,\ldots,N)$.

\begin{lemm}
\label{lemm:cycl_rubans}
Soit $(E,K)$ une surface lorentzienne saturée et simplement connexe. Alors
tout cycle simple de rubans de $(E,K)$ est constitué de 4 rubans autour
d'un point selle.
\end{lemm}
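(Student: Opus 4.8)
Le plan est de se ramener à un disque topologique, d'y localiser un point selle par un argument d'indice, puis d'exploiter la simplicité de la courbe et la structure des rubans maximaux pour forcer $N=4$.

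\emph{Réduction topologique et alternance des feuilletages.} Comme $(E,K)$ est simplement connexe et porte un champ de droites (les directions de lumière), $E$ est difféomorphe au plan $\R^2$ (la sphère ne porte aucun champ de droites). La courbe polygonale fermée simple $\gamma=\ell_1\cup\cdots\cup\ell_N$ borde donc un disque fermé $\overline{D}$. En chaque sommet $v_i=\ell_i\cap\ell_{i+1}$, les deux segments $\ell_i,\ell_{i+1}$ ne peuvent appartenir au même feuilletage de lumière : sinon ils seraient portés par une même feuille (une même géodésique de lumière) et $v_i$ ne serait pas un vrai coin. Les deux feuilletages $\mcL$ et $\mcL'$ alternent donc le long de $\gamma$, $N$ est pair, et $\gamma$ est un polygone \og rectiligne\fg\ pour le $2$-tissu de lumière. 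Au voisinage de chaque sommet, $\overline{D}$ occupe ainsi un seul des quatre secteurs découpés par les deux directions de lumière (coin convexe) ou trois (coin rentrant), avec l'identité classique $c_+-c_-=4$, où $c_\pm$ compte les coins convexes et rentrants ; d'où $N=4+2c_-$.

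\emph{Existence et unicité d'un point selle.} Je montrerais ensuite que $K$ s'annule dans $\overline{D}$. Puisque $E\simeq\R^2$ est parallélisable, la direction de $K$ le long de $\gamma$ a un degré bien défini ; le long des arêtes $K$ est non nul par transversalité, et il est continu aux sommets, de sorte que ce degré égale la somme des indices des zéros de $K$ enfermés par $\gamma$. Or ces zéros sont isolés et sont des points selles hyperboliques d'indice $-1$, leur partie linéaire étant conjuguée à $C(u\partial_u-v\partial_v)$ (propositions~\ref{prop:class_composantes} et~\ref{prop:carte_exponentielle}). Pour évaluer le degré, j'utiliserais la forme normale $K=\alpha(u)\partial_u+\beta(w)\partial_w$ d'un champ de Killing préservant les feuilletages de lumière en coordonnées nulles (voir la preuve de la proposition~\ref{prop:max_champ=max_usuel}) : le long d'une arête, la composante de $K$ transverse à celle-ci garde un signe constant, donc la direction de $K$ reste dans un demi-plan fixe et sa variation se lit sur les valeurs aux sommets. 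Le décompte correspondant, combiné à $c_+-c_-=4$, donne un degré total égal à $-1$ ; comme chaque selle contribue pour $-1$, le disque $\overline{D}$ contient exactement un point selle $p$.

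\emph{Réduction à quatre rubans.} Autour de $p$, la carte exponentielle (proposition~\ref{prop:carte_exponentielle}) fournit deux géodésiques de lumière complètes passant par $p$, proprement plongées, qui découpent $E$ en quatre secteurs, chacun contenu dans un ruban maximal, deux secteurs adjacents partageant une séparatrice et appartenant à des rubans maximaux distincts qui se chevauchent (comparer avec la proposition~\ref{prop:class_carres}). Il reste à voir que $\gamma$ épouse exactement cette configuration. L'idée est que, les rubans consécutifs $R_i$ étant des rubans maximaux distincts, ils sont séparés par une orbite de lumière du champ, donc le passage de $R_i$ à $R_{i+1}$ s'effectue au voisinage d'une des séparatrices de $p$ ; la simplicité de $\gamma$ interdit alors de faire plus d'un tour et force l'usage des quatre secteurs, soit $N=4$ et quatre rubans autour de $p$.

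\emph{Principal obstacle.} La difficulté centrale sera la dernière étape, c'est-à-dire l'exclusion des cycles plus longs ($N\ge 6$, avec coins rentrants). Le degré de $K$ et la présence d'une unique selle ne suffisent pas à eux seuls à borner $N$ ; il faudra véritablement coupler la simplicité de $\gamma$, la condition de rubans consécutifs \emph{distincts} et la propreté des séparatrices pour montrer que la courbe ne tourne qu'une fois autour de $p$ et ne rencontre la configuration des quatre secteurs que de la manière convexe minimale. Le contrôle fin du décalage du flot autour de $p$ (holonomie de la structure projective transverse, proposition~\ref{prop:struct_proj_orth}) devrait ici fournir l'ingrédient manquant.
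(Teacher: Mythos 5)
Votre texte est un plan de preuve plutôt qu'une preuve : l'étape décisive, à savoir l'exclusion des cycles de longueur $N\geq 6$, est explicitement laissée ouverte dans votre dernier paragraphe, et l'ingrédient que vous espérez tirer de la structure projective transverse (proposition~\ref{prop:struct_proj_orth}) n'est pas mis en œuvre — cette structure mesure le décalage du flot autour d'une selle, pas le nombre de côtés d'un polygone simple. L'étape intermédiaire est elle aussi fragile : le long d'une arête $\ell_i$, dans des coordonnées nulles où $K=\alpha(u)\partial_u+\beta(w)\partial_w$, la direction de $K$ est seulement confinée dans un demi-plan, donc sa rotation $r_i$ vérifie $|r_i|<\pi$ sans plus ; combinée au retournement total $2\pi$ de la tangente et aux sauts de $\mp\pi/2$ aux coins, cette estimation donne $|\deg K|<N/2$ et non $\deg K=-1$. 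Pour $N=6$ ou $8$, rien dans votre décompte n'exclut donc la présence de deux ou trois selles dans $D$, et même en admettant une selle unique, vous ne disposez d'aucun mécanisme forçant $\gamma$ à n'emprunter que les quatre secteurs adjacents à cette selle.

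L'idée qui manque à votre plan est l'analyse des orbites de lumière de $K$ piégées dans le disque $D$ : c'est elle qui traduit géométriquement l'hypothèse $R_i\neq R_{i+1}$, que votre argument d'indice n'exploite jamais. La preuve du texte observe que toute orbite de lumière de $K$ qui rencontre $D$ s'accumule sur un point selle intérieur à $D$ ; la trace de $D$ sur l'adhérence de chaque bande est alors un quadrant bordé par deux segments transverses à $K$ et deux orbites de lumière tangentes à $K$ se coupant en une selle. Une selle n'ayant que quatre séparatrices, ces quadrants se recollent autour d'un unique point selle, $D$ est contenu dans une selle et le cycle comporte exactement quatre rubans. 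Aucun argument de degré n'intervient ; inversement, sans cette étude des orbites de lumière et de la structure en bandes et carrés (proposition~\ref{prop:class_composantes}), la borne $N=4$ reste hors d'atteinte.
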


\begin{proof}
Soit $(R_i)_{1\leq i\leq N+1}$ un cycle simple de rubans et soit $\ell =
\ell_1 \ldots \ell_N$ une courbe polygonale fermée simple comme ci-dessus.
Par le théorème de Jordan, $\ell$ borde un disque~$D$. 
Toute orbite de lumière de $K$ qui coupe $D$ s'accumule sur un point selle,
intérieur à~$D$. 
Si elle est non
vide, l'intersection de $D$ avec l'adhérence d'une bande est donc
homéomorphe à un disque dont le bord est composé de quatre segments
géodésiques de lumière: deux transverses à $K$ et deux tangents à
$K$ s'intersectant en un point selle.  Par conséquent, $D$ est contenu
dans une selle.
\end{proof}

\begin{lemm}[relèvement des géodésiques de lumière brisées]
\label{lemm:relevement_chemins}
Soient $X$ et $Y$ des surfaces lorentziennes connexes et soit $\mcD:X\to Y$ une
isométrie locale. 
\begin{enumerate}
 \item Si toute  géodésique de lumière brisée incluse dans $\mcD(X)$ se relève
par~$\mcD$, alors $\mcD$ induit un revêtement de $X$ sur $\mcD(X)$.
 \item Si toute  géodésique de lumière brisée de $Y$ 
se relève
par~$\mcD$, alors $\mcD(X)= Y$ et $\mcD$ est un revêtement.
\end{enumerate}
\end{lemm}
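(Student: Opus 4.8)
Le plan est de montrer que $\mcD$ est un revêtement au-dessus de $\mcD(X)$ en établissant que cet ouvert est uniformément recouvert au voisinage de chacun de ses points. Comme $\mcD$ est une isométrie locale entre surfaces, c'est un difféomorphisme local, donc une application ouverte ; en particulier $Y'=\mcD(X)$ est un ouvert de $Y$. Le point de départ est l'unicité du relèvement des géodésiques de lumière : en chaque $x\in X$, la différentielle $d\mcD_x$ est une isométrie linéaire, donc à tout vecteur de lumière $w$ en $\mcD(x)$ correspond un unique vecteur de lumière $\widetilde w$ en $x$ avec $d\mcD_x(\widetilde w)=w$ ; une géodésique de lumière de $Y$ issue de $\mcD(x)$ n'admet ainsi qu'un seul relèvement géodésique issu de $x$ (là où il existe). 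On rappelle de plus qu'en dimension~2 les feuilles des feuilletages de lumière sont des géodésiques de lumière à reparamétrage près : une courbe de lumière $\ga$ vérifie $\langle \nabla_{\ga'}\ga',\ga'\rangle=0$, et comme l'orthogonal d'une direction de lumière est elle-même, $\nabla_{\ga'}\ga'$ est colinéaire à $\ga'$. Les géodésiques de lumière brisées relient donc effectivement les sommets des \og rectangles nuls\fg\ ci-dessous.

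Pour (1), je fixe $y_0\in Y'$ et un carré de coordonnées nulles $Q\subset Y'$ centré en $y_0$, dont les deux feuilletages de lumière sont les lignes de coordonnées. Tout point $q$ de $Q$ se joint à $y_0$ par une géodésique de lumière brisée en \og L\fg\ (un segment le long d'une feuille, puis un segment le long de l'autre), contenue dans $Q\subset \mcD(X)$. Pour chaque $x\in \mcD^{-1}(y_0)$, on relève ce chemin à partir de $x$ — ce qui est possible par hypothèse — et l'on pose $s_x(q)$ égal à son extrémité ; l'unicité du relèvement des segments de lumière assure que $s_x:Q\to X$ est bien définie, avec $\mcD\circ s_x=\id_Q$. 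Sa continuité (et sa lissité) provient de la dépendance continue des relèvements par rapport aux extrémités, jointe au fait que $\mcD$ est un difféomorphisme local (au voisinage de chaque point $s_x$ coïncide avec un inverse local de $\mcD$). Les $s_x(Q)$ sont alors des ouverts deux à deux disjoints : si $z\in s_x(Q)\cap s_{x'}(Q)$, l'application de $\mcD$ donne un même $q\in Q$, puis le relèvement unique du chemin en \og L\fg\ inverse issu de $z$ aboutit à la fois à $x$ et à $x'$, d'où $x=x'$. Enfin tout $z\in \mcD^{-1}(Q)$ appartient à l'un des $s_x(Q)$ : en relevant à partir de $z$ le chemin en \og L\fg\ inverse joignant $\mcD(z)$ à $y_0$ (lui aussi dans $Q$), on atteint un $x\in \mcD^{-1}(y_0)$ avec $s_x(\mcD(z))=z$. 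Donc $\mcD^{-1}(Q)=\coprod_x s_x(Q)$ et chaque $s_x$ est un difféomorphisme sur son image : le carré $Q$ est uniformément recouvert, ce qui prouve (1).

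Pour (2), il suffit d'établir la surjectivité, puis d'appliquer (1) (les géodésiques de lumière brisées de $\mcD(X)=Y$ sont en particulier des géodésiques de lumière brisées de $Y$). L'ouvert $Y'=\mcD(X)$ est non vide ; fixons $y_1\in Y'$ et $x_1\in \mcD^{-1}(y_1)$. Pour $y\in Y$ quelconque, deux points arbitraires d'une surface lorentzienne connexe étant joints par une géodésique de lumière brisée (voir la preuve de la proposition~\ref{prop:max_champ=max_usuel}), on choisit un tel chemin $\ga$ de $y_1$ à $y$ ; par hypothèse il se relève à partir de $x_1$, et l'extrémité du relèvement s'envoie sur $y$, d'où $y\in \mcD(X)$. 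Ainsi $\mcD(X)=Y$. Le point technique principal est la vérification de la propriété de recouvrement uniforme du paragraphe précédent — à savoir que les relèvements locaux $s_x$ sont partout définis, continus, d'images disjointes et épuisant la fibre —, le tout reposant sur l'unicité du relèvement des géodésiques de lumière, conséquence du caractère isométrique local de $\mcD$ ; le reste (ouverture, surjectivité) est comparativement plus souple.
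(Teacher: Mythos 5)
Votre preuve est correcte, mais elle suit pour l'assertion (1) un chemin réellement différent de celui de l'article. L'article se ramène au critère \og relèvement des chemins $\Rightarrow$ revêtement\fg\ : un chemin continu arbitraire de $\mcD(X)$ y est approché au sens $\mcC^0$, à extrémités fixées, par des géodésiques de lumière brisées, et les relevés de celles-ci convergent vers un relevé du chemin. Vous construisez au contraire explicitement des voisinages uniformément recouverts, à savoir des carrés de coordonnées nulles dont chaque point est relié au centre par un chemin en \og L\fg\ ; c'est plus élémentaire et plus auto-contenu (ni passage à la limite, ni critère abstrait de revêtement), au prix de la vérification que les sections $s_x$ sont continues et ouvertes. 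Ce point, que vous ne faites qu'esquisser, se justifie proprement en observant que $s_x(u,v)$ s'obtient en composant les flots des deux champs coordonnés de lumière relevés par le difféomorphisme local $\mcD$ : ces flots dépendent continûment (et même de façon lisse) du temps et du point initial sur leur domaine de définition, et l'hypothèse de relèvement garantit précisément que ce domaine contient tout ce qu'il faut pour parcourir $Q$. En revanche, la justification parenthétique \og $s_x$ coïncide au voisinage de chaque point avec un inverse local de $\mcD$\fg\ est, telle quelle, circulaire (c'est une conséquence de la continuité, non sa preuve), et un argument naïf de prolongement par inverses locaux se heurterait à une question de monodromie entre deux chemins brisés distincts de mêmes extrémités ; l'argument par les flots l'évite en ne faisant intervenir que la famille continue des chemins en \og L\fg. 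Pour l'assertion (2), votre démarche (joindre deux points par une géodésique de lumière brisée, puis appliquer (1)) est identique à celle de l'article.
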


\begin{proof} Pour établir (1), il suffit de vérifier que tout chemin
$c:[0,1]\to \mcD(X)$ se relève par $\mcD$. On peut approximer $c$ (au sens
$\mcC^0$) dans $\mcD(X)$ par une suite $(\ga_n)_n$ de géodésiques de lumière
brisées avec $\ga_n(0)=c(0)$ et $\ga_n(1)= c(1)$.  Comme $\mcD$ est un
difféomorphisme local, les relevés $(\tilde{\ga}_n)$ des $(\ga_n)$ convergent au
sens $\mcC^0$ vers un chemin $\tilde{c}$ qui relève $c$. 
\par 
Fixons $p_0\in X$. Par connexité, il existe pour tout $q\in Y$ une géodésique de
 lumière brisée $\ga:[0,1]\to Y$ telle que $\ga(0)= \mcD(p_0)$ et $\ga(1)=q$.
Sous l'hypothèse de (2), on peut relever $\ga$ en $\tilde{\ga}$, en particulier
$q=\mcD(\tilde{\ga}(1))$ ; par suite $\mcD$ est surjective. Pour conclure, il
suffit d'appliquer (1).
\end{proof}

\begin{lemm}%
\label{lemm:refl_modelee}
Si  $(E,K)$ est  une surface lorentzienne saturée, simplement connexe et de
classe $[f]$, alors la surface $E$ admet  une $E^u_f$-structure.
\end{lemm}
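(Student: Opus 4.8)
The plan is to construct a developing map. As $E$ is simply connected, endowing it with an $E^u_f$-structure amounts to producing a local isometry $\mcD:E\to E^u_f$ with $\mcD_*K=\pm K^u$: such a map gives an atlas with identity transition maps (which lie in $\Isom(E^u_f)$), and conversely the charts of any $E^u_f$-structure glue, by simple connectedness, into such a $\mcD$. Two degenerate cases are immediate. If $K$ is everywhere lightlike then $f\equiv 0$, $E$ is a flat ribbon and $E^u_f=R_f$, so $E$ embeds. If $f$ never vanishes then $K$ has no zero, both light foliations are transverse to $K$, and $E$, being simply connected with a light foliation transverse to $K$, is a ribbon (Proposition~\ref{prop:lumiere_transv}); being of class $[f]$ it is a sub-ribbon of $R_f$ or $R_{f^\vee}\subset E^u_f$. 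So assume henceforth that $f$ vanishes without being constant, and let $\Sigma\subset E$ be the discrete (Proposition~\ref{prop:class_composantes}) zero set of $K$.

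Over $E\setminus\Sigma$ the maximal ribbons cover everything. By the class $[f]$ hypothesis each maximal ribbon $R$ has adapted coordinates with $\langle K,K\rangle=f\circ x$, where $x$ is a restriction of the global transverse coordinate (Proposition~\ref{prop:struct_transv_killing}); hence $R$ is isometric to a sub-ribbon $R_g$ with $g$ a restriction of $f$ or $f^\vee$, and so embeds isometrically in $R_f$ or $R_{f^\vee}$, thus in $E^u_f$ (Proposition~\ref{prop:sf_Euf}). Two distinct maximal ribbons meet in a single croix component (a square), which lies in two maximal ribbons of $E^u_f$ as well --- here I use that $E^u_f$ is reflexive and R-homogeneous (Lemma~\ref{lemm:Euf_reflexive}) --- and an isometric embedding of one ribbon into $E^u_f$ extends, uniquely, across such a square to the adjacent ribbon, by the rigidity of ribbons and their isometries (Proposition~\ref{prop:class_isom_rubans}). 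This lets me continue $\mcD$ from any ribbon to all those adjacent to it.

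Near a zero $p\in\Sigma$ I use the exponential chart of Proposition~\ref{prop:carte_exponentielle}, with normal coordinates $(u,v)$ and $K=C(u\partial_u-v\partial_v)$. This is where the class $[f]$ hypothesis does the essential work: for a Lorentzian volume form $\nu$ one has $i_K\nu$ proportional to $d(uv)$, so the transverse coordinate, and with it $\langle K,K\rangle=f\circ x$, depends only on the product $uv$. Two things follow. First, $p$ lies over a \emph{simple} zero of $f$: the throat orbit is only semi-complete, which by Lemma~\ref{lemm:semi_complet} forces $f'\neq 0$ there, and this is exactly where $E^u_f$ carries a genuine saddle by construction (Proposition~\ref{prop:sf_Euf}). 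Second, since $\langle K,K\rangle$ depends only on $uv$, the coefficient $\alpha$ of Proposition~\ref{prop:carte_exponentielle} does too, so the saddle of $E$ at $p$ is symmetric and determined by $f$; by uniqueness of the symmetric saddle (Proposition~\ref{prop:unicite_selle_sym}) it is isometric to the saddle neighborhood of $E^u_f$ over the same simple zero. This yields an embedding $\psi_p$ of a neighborhood of $p$ into $E^u_f$, compatible with the ribbon charts on the overlapping quadrants by Proposition~\ref{prop:ext_isom_demi_selle}.

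Finally I assemble $\mcD$ by continuation along paths and check single-valuedness. Continuation is possible at every point --- within a ribbon, across an overlap square, or through $\psi_p$ near a zero --- so only monodromy is at stake. Since $E$ is simply connected, $\pi_1(E\setminus\Sigma)$ is generated by small loops around the points of $\Sigma$; equivalently, by Lemma~\ref{lemm:cycl_rubans}, the only simple ribbon cycles are the four ribbons around a saddle point. Around each such point $\psi_p$ already provides a single-valued $\mcD$ on the punctured neighborhood, so every generating loop has trivial monodromy. Hence $\mcD$ is well defined on $E\setminus\Sigma$ and extends over $\Sigma$ by $p\mapsto\psi_p(p)$, giving the desired local isometry $E\to E^u_f$. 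The main obstacle is exactly the matching at the zeros in the smooth category: smooth saddle germs are a priori not rigid --- they depend on four jets sharing a common infinite jet --- and it is the class $[f]$ condition, through the dependence of $\langle K,K\rangle$ on $uv$ alone, that collapses these to the single symmetric germ realized in $E^u_f$.
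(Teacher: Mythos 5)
Your overall route is genuinely different from the paper's: you build the developing map $\mcD$ directly by continuation of ribbon charts across overlap components and saddles, and kill the monodromy of $\pi_1(E\smallsetminus\Sigma)$ using Lemma~\ref{lemm:cycl_rubans}, whereas the paper only builds an atlas $\{(U_R,\psi_R)\}$ whose charts all land in a \emph{single} $U_{R_0}\subset E^u_f$ and observes that the transition maps are generic reflections or flow elements, hence extend to $E^u_f$ by Lemma~\ref{lemm:Euf_reflexive} (the developing map is extracted only afterwards, in Remark~\ref{rema:developpante}). Your degenerate cases, the embedding of ribbons, the continuation across squares and the monodromy reduction are all sound. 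The gap is at the step you yourself flag as the crux: the claim that, near a zero $p$ of $K$, the transverse coordinate --- and with it $\langle K,K\rangle$ and the coefficient $\alpha$ --- ``depends only on the product $uv$'' because $i_K\nu$ is proportional to $d(uv)$.

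That inference does not hold in the smooth category. Writing $i_K\nu=h\,d(uv)$, the factor $h=C\sqrt{|1-2\alpha uv|}$ is only invariant under the flow of $u\partial_u-v\partial_v$, and such a function is a smooth function of $uv$ on each half-plane $\pm v>0$ \emph{separately}, the two functions being constrained only to share their infinite jet at $0$ (this is precisely the proof of Proposition~\ref{prop:met_inv_st}, and the reason Remark~\ref{rema:germes_de_selles} parametrizes smooth saddle germs by quadruples of germs rather than a single one). Since $\{uv=t\}$, $t\neq 0$, has two components, $x$ is a function of $uv$ on each quadrant but a priori a different one on opposite quadrants; asserting that $h$ depends on $uv$ alone is equivalent to the symmetry you are trying to prove, so the argument is circular. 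The conclusion is nevertheless correct, but it must be extracted from the class-$[f]$ relation itself: since $\langle K,K\rangle=-2C^2uv\,(1-2\alpha uv)$, the identity $dx=C\sqrt{|1-2\alpha uv|}\,d(uv)$ becomes, on each quadrant, the separable equation $dx/\sqrt{|f(x)|}=\pm\,d\sqrt{2|uv|}$ with $x(p)=x_0$, whose solution is unique because $f$ has a simple zero at $x_0$ (so $\int dx/\sqrt{|f|}$ converges there); equivalently, the norm of $K$ as a function of arc length on each of the four rays of Remark~\ref{rema:germes_de_selles} is determined by $f$, which pins down the germ and shows it is the symmetric one of Proposition~\ref{prop:unicite_selle_sym}. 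With that step repaired, the rest of your construction goes through.
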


\begin{proof} 
On peut écarter le cas évident où $(E,K)$ est réduite à un ruban. Soit
$\mcR$ l'ensemble des rubans maximaux de $(E,K)$. Pour tout $R\in \mcR$,
notons $U_R$ la réunion de $R$ avec toutes les selles maximales qui
rencontrent $R$ (et de même dans toute surface) ; nous allons construire
des cartes définies sur les $U_R$ à valeurs dans la surface $E^u_f$. Fixons
un ruban maximal $R_0$ de $E^u_f$ ainsi qu'une coordonnée transverse $x$
(resp. $x^u$) sur $(E,K)$ (resp.  sur $E^u_f$) induisant $f$.  Ce choix
permet de définir, {\it via} les coordonnées adaptées, un plongement
isom\'etrique $\varphi_R$ de $(R,K)$ dans $(R_0,K^u)$, unique modulo action
du flot de $K^u$ et vérifiant $x^u\circ \varphi_R=x$ sur $R$.  De plus, si
$R'\in \mcR$ est tel que $R\cap R'\neq \emptyset$ et $R\neq R'$ alors
$\varphi_R$ et $\varphi_{R'}$ diffèrent par une réflexion générique.
Les selles de $(E,K)$ ne sont pas forcément symétriques mais elles
contiennent toujours une selle symétrique. C'est suffisant pour pouvoir
déduire de la proposition~\ref{prop:ext_isom_demi_selle} que le plongement
$\ph_R$ se prolonge de façon unique en un plongement isom\'etrique $\psi_R$
de $U_R$ dans $U_{R_0}\subset E^u_f$.  D'après la discussion ci-dessus, les
changements de cartes de l'atlas $\{(U_R,\psi_R); R\in \mcR\}$ sont des
réflexions locales génériques ou des éléments du flot.  Toutes ces
transformations s'étendent à $E^u_f$ (lemme~\ref{lemm:Euf_reflexive}).
\end{proof}

\begin{rema}
\label{rema:developpante}
Soit $\Isom^\pm(E^u_f,K^u)$ le groupe des isométries de $E^u$ qui conserve
le champ $K^u$ au signe près (ce groupe coïncide avec $\Isom(E^u)$ sauf en
courbure constante). L'atlas ci-dessus définit une
$(\Isom^\pm(E^u_f,K^u),E^u_f)$-structure. Quitte à composer par une
réflexion du modèle, on voit que cette structure possède toujours une
développante (voir par exemple \cite[p.176]{Goldman} pour cette notion) qui
est une isométrie locale de $(\widetilde{X},\widetilde{K})$ dans
$(E^u_f,K^u)$).
\end{rema}

\begin{lemm}
\label{lemm:ext_isom_rub}
 Soit $E^u_f$ la surface associée à une fonction $f$ 
 (voir~\S\ref{subs:const_sf_max}). Si $R_0$ est  un
ruban maximal de $E^u_f$, alors toute isométrie (au sens usuel) de 
$R_0$ est la restriction d'une isométrie de $E^u_f$.
\end{lemm}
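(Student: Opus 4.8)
The plan is to extend $\Phi$ to a global local isometry $\widehat\Phi\colon E^u_f\to E^u_f$ by analytic continuation of the germ of $\Phi$, and then to upgrade this local isometry to a genuine isometry by an inverse argument. Throughout I would work in the case of non-constant curvature, where Lemma~\ref{lemm:isom_isomK} guarantees that the relevant local isometries preserve $K^u$ up to sign; the constant-curvature case is treated separately, since there $E^u_f$ is one of the standard complete models (the Minkowski plane, or a multiple of the universal cover of de Sitter space) whose local isometries of open ribbons visibly extend.

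First I would set up the continuation. Recall that $E^u_f$ is simply connected (homeomorphic to $\R^2$), of class $[f]$, covered away from the discrete set of saddle points of $K^u$ by its maximal ribbons, and both reflexive and R-homogeneous (Lemma~\ref{lemm:Euf_reflexive}). I view $\Phi\colon R_0\to R_0\subset E^u_f$ as a germ of local isometry into the model and continue it along paths. Continuation is \emph{possible along every path}: any point of $E^u_f$ lies in some set $U_R$ (a maximal ribbon together with the saddles adjacent to it) joined to $R_0$ by a finite chain of ribbons and saddles, and at each step the germ continues because the transition maps of the $E^u_f$-structure extend globally --- across a cross by reflexivity (the generic reflection relating two adjacent maximal ribbons extends to all of $E^u_f$, Lemma~\ref{lemm:Euf_reflexive}), and across a saddle by Proposition~\ref{prop:ext_isom_demi_selle} (an isometry of an open half-saddle extends uniquely to the whole saddle). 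This is exactly the developing process of the $E^u_f$-structure (Remark~\ref{rema:developpante}), now started from the chart $\Phi$ rather than from the inclusion.

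Next I would rule out monodromy. Since continuation is unrestricted along every path and $E^u_f$ is simply connected, the monodromy theorem yields a single-valued local isometry $\widehat\Phi\colon E^u_f\to E^u_f$ with $\widehat\Phi|_{R_0}=\Phi$; concretely, the only nontrivial loops one must check close up are the simple cycles of ribbons, which by Lemma~\ref{lemm:cycl_rubans} each encircle a single saddle point, and there the continuation closes precisely because $E^u_f$ carries genuine saddles rather than quasi-saddles with non-trivial holonomy (compare Proposition~\ref{prop:compl_quasi_selle}).

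Finally I would prove bijectivity. Applying the same construction to $\Phi^{-1}$ produces a local isometry $\widehat{\Phi^{-1}}$, and the composites $\widehat{\Phi^{-1}}\circ\widehat\Phi$ and $\widehat\Phi\circ\widehat{\Phi^{-1}}$ are local isometries of the connected surface $E^u_f$ agreeing with the identity (together with their $1$-jet) on the open set $R_0$; by uniqueness of continuation of the germ of the identity, both are the identity. Hence $\widehat\Phi$ is a diffeomorphism, that is, a global isometry of $E^u_f$ restricting to $\Phi$ on $R_0$. The main obstacle is the middle step: ensuring that the continuation is genuinely unobstructed and single-valued, which is where simple connectedness must be combined with the two global extension inputs, namely reflexivity (Lemma~\ref{lemm:Euf_reflexive}) and the saddle-extension statement (Proposition~\ref{prop:ext_isom_demi_selle}).
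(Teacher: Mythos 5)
Your proposal is correct and follows essentially the same route as the paper: the paper extends $\varphi$ to $U_{R_0}$ via Proposition~\ref{prop:ext_isom_demi_selle}, composes the standard atlas $\{(U_R,\psi_R)\}$ with this extension, and takes the developing map of the resulting $E^u_f$-structure --- which is exactly your analytic continuation of the germ, unobstructed for the same two reasons you give (global extension of generic reflections, Lemma~\ref{lemm:Euf_reflexive}, and extension across symmetric saddles, Proposition~\ref{prop:ext_isom_demi_selle}), and single-valued by simple connectedness. The only real divergence is the final bijectivity step: the paper invokes L-completeness and the lifting of broken light geodesics (Lemma~\ref{lemm:relevement_chemins}-(2)) to see that the developing map is onto, whereas you continue the germ of $\Phi^{-1}$ and use rigidity of local isometries to identify the two composites with the identity; both work, and your variant has the mild advantage of not relying on L-completeness, which holds only when $I=\R$.
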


\begin{proof} On utilise les notations et les arguments développés dans la
preuve du lemme~\ref{lemm:refl_modelee}. Soit $\ph\in \Isom(R_0)$ et soit
$U_0=U_{R_0}$. D'après la proposition~\ref{prop:ext_isom_demi_selle} toute
isométrie entre demi-selles s'étend en une isométrie entre selles symétriques,
par conséquent~$\ph$ s'étend en une isométrie $ \ph_0$ de $U_0$. On choisit
ensuite comme en~\ref{lemm:refl_modelee} un atlas $\{(U_R,\psi_R); R\in \mcR\}$
sur $E=E^u_f$, à valeurs dans $U_0$ et avec $\psi_{U_0}=\id_{|U_0}$. La
composition de chaque carte par $ \ph_0$ définit un nouvel atlas de
$E^u_f$-structure sur $E=E^u_f$. En effet, les changements de cartes
$\psi_{R'} \circ \smash{\psi_R^{-1}}$ 
sont des restrictions d'éléments du flot ou des
réflexions locales génériques ; même si $\ph_0$ n'est {\it a priori} que locale,
leur conjugués par $\ph_0$ sont de même nature, donc 
se prolongent à $E^u_f$.
Comme $E=E^u_f$ est L-complète,
la développante de cette structure à partir de la carte $\ph_0 : U_0\to U_0$
satisfait l'hypothèse de l'assertion (2) du lemme~\ref{lemm:relevement_chemins}
: c'est donc une isométrie de $E^u_f$ qui par construction prolonge~$\ph$.
\end{proof}

\begin{lemm}[extension des isométries locales, cas analytique]
\label{lemm:ext_isom_analytique}
Si $f$ est analytique et inextensible, alors toute isométrie locale de $E^u_f$ s'étend
en une isométrie de $E^u_f$.
\end{lemm}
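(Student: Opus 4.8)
The plan is to follow the strategy of lemma~\ref{lemm:ext_isom_rub}, turning the given local isometry into a globally defined developing map, but to replace the appeal to L-completeness (unavailable when $I\neq\R$) by analyticity together with maximality.

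First I would reduce to a ribbon isometry. Let $\ph$ be a local isometry defined on a connected open set $U\subset E^u_f$; one may assume $f$ has non-constant curvature (the constant curvature case being classical, see lemma~\ref{lemm:isom_isomK}). Then $\ph$ sends $K^u$ to $\pm K^u$ by lemma~\ref{lemm:isom_isomK}, hence preserves the light foliations and the transverse coordinate $x$ up to a euclidean isometry of $I$ (remark~\ref{rema:esp_feuilles_etale}), and it maps maximal ribbons to maximal ribbons. Choosing a point of $U$ lying in a maximal ribbon $R$ and not on a saddle, analyticity of $f$ and the classification of ribbons (proposition~\ref{prop:class_isom_rubans}) let me continue $\ph$ inside $R$ into an isometry $\ph_R\colon R\to R'$ between two maximal ribbons of $E^u_f$.

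Next I would build a global developing map, exactly as in the proofs of lemmas~\ref{lemm:refl_modelee} and~\ref{lemm:ext_isom_rub}: using the $R$-homogeneity and reflexivity of $E^u_f$ (lemma~\ref{lemm:Euf_reflexive}) to normalise $\ph_R$, and proposition~\ref{prop:ext_isom_demi_selle} to cross the saddles, I assemble an $E^u_f$-structure on $E^u_f$ whose chart changes are flow elements and generic reflections, all of which extend globally. Here analyticity is the crucial ingredient: it makes the germ of a saddle unique (remark~\ref{rema:germes_de_selles}), so the continuation of $\ph_R$ across each saddle is forced, and the resulting structure is consistent on the whole simply connected surface. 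Its developing map (remark~\ref{rema:developpante}) is then a local isometry $\mcD\colon E^u_f\to E^u_f$ extending $\ph_R$, and agreeing with $\ph$ on $U$ by the analytic identity principle.

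The main obstacle is to promote $\mcD$ to a global isometry without L-completeness. I would exploit that $\mcD$ preserves the transverse coordinate, $x^u\circ\mcD=\pm x+c$ with $\pm I+c=I$, and that the flow of $K^u$ is complete. Given a broken light geodesic $\ga\colon[0,1]\to E^u_f$ issued from a point of the base chart, its image has $x$-values in a compact subinterval of $I$; since $\mcD$ respects $x$ and the orbit direction is complete, the lift of $\ga$ can escape neither towards $\partial I$ nor along the orbits, hence exists on all of $[0,1]$. Thus every broken light geodesic lifts, and lemma~\ref{lemm:relevement_chemins}(2) shows that $\mcD$ is a surjective covering; being a covering of the simply connected surface $E^u_f$ by itself, it is a global isometry, whose restriction to $U$ is $\ph$. (In the merely smooth category this scheme breaks down exactly at the saddle-crossing step, where the germ is no longer unique, which is why the analytic hypothesis is needed; the connectivity of $E^u_f$ by broken light geodesics is the one used in the proof of proposition~\ref{prop:max_champ=max_usuel}.)
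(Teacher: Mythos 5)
Your proof is correct and takes essentially the same route as the paper's: reduce, via the fact that an analytic inextensible $f$ has only global symmetries, to an isometry defined on a whole maximal ribbon, then extend from the ribbon to all of $E^u_f$. The paper simply invokes lemma~\ref{lemm:ext_isom_rub} for that last step, whereas you re-derive it with a completeness argument adapted to $I\neq\R$; this extra unpacking is not needed since the lemma applies as stated.
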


\begin{proof} 	Soit $\Phi$ une isom\'etrie locale de $E^u_f$ d\'efinie sur un ouvert $U$. On peut supposer que $U$ est satur\'e et, quitte \`a restreindre $\Phi$ et \`a composer par une isom\'etrie de $E^u_f$, que $U$ et $\Phi(U)$ sont contenus dans un ruban $R$. Comme $f$ est analytique et inextensible, toute sym\'etrie locale de $f$ est globale. Par suite $\Phi$ s'\'etend \`a $R$, puis \`a $E^u_f$ d'apr\`es la proposition \ref{lemm:ext_isom_rub}.
\end{proof}

\begin{prop}\label{prop:refl_modelee} Si $(X,K)$ est une surface lorentzienne
connexe, saturée et de classe $[f]$, alors la surface $X$ admet une
$E^u_f$-structure. 
\end{prop}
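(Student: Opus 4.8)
The plan is to reduce to the simply connected case already settled in Lemma~\ref{lemm:refl_modelee} and then to descend the resulting structure along the universal covering. By the extension clause in the definition of class $[f]$, the universal cover $(\widetilde{X},\widetilde{K})$ of $(X,K)$ is simply connected, saturated and of class $[f]$; hence Lemma~\ref{lemm:refl_modelee} furnishes an $E^u_f$-structure on $\widetilde{X}$, and by Remark~\ref{rema:developpante} this structure carries a developing map $\mathcal{D}\colon\widetilde{X}\to E^u_f$ which is a local isometry sending $\widetilde{K}$ to $K^u$. Writing $X=\widetilde{X}/\Gamma$ with $\Gamma=\aut_X\widetilde{X}$, it remains to check that this structure is invariant under the deck group, so that it passes to the quotient.

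The central point is that each deck transformation is an automorphism of the $E^u_f$-structure. Indeed, every $\gamma\in\Gamma$ is a lift of the identity, hence an isometry of $\widetilde{X}$ preserving $\widetilde{K}$. I would fix a maximal ribbon $\widetilde{R}_0$ of $\widetilde{X}$; since $\widetilde{X}$ is of class $[f]$, the developing map restricts to an isometry of $\widetilde{R}_0$ onto a maximal ribbon $R_0$ of $E^u_f$ (both are copies of $R_f$ or $R_{f^\vee}$ by the class-$[f]$ hypothesis and Proposition~\ref{prop:sf_Euf}), and likewise $\mathcal{D}\circ\gamma$ restricts to an isometry of $\widetilde{R}_0$ onto some maximal ribbon $R_1$. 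The composite $g=(\mathcal{D}|_{\gamma\widetilde{R}_0}\circ\gamma)\circ(\mathcal{D}|_{\widetilde{R}_0})^{-1}\colon R_0\to R_1$ is thus an isometry between two maximal ribbons of $E^u_f$. Using the R-homogeneity of $E^u_f$ (Lemma~\ref{lemm:Euf_reflexive}) I would choose a global isometry $\Psi$ of $E^u_f$ with $\Psi(R_0)=R_1$; then $\Psi^{-1}\circ g$ is a self-isometry of $R_0$, which extends to a global isometry of $E^u_f$ by Lemma~\ref{lemm:ext_isom_rub}. Letting $h(\gamma)$ be $\Psi$ composed with that extension, I obtain a global isometry of $E^u_f$ with $h(\gamma)|_{R_0}=g$, so that $h(\gamma)\circ\mathcal{D}$ and $\mathcal{D}\circ\gamma$ agree on the open set $\widetilde{R}_0$.

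Both $h(\gamma)\circ\mathcal{D}$ and $\mathcal{D}\circ\gamma$ are local isometries of the connected surface $\widetilde{X}$ into $E^u_f$, and they coincide on a nonempty open set; by rigidity of local isometries they coincide on all of $\widetilde{X}$, giving $\mathcal{D}\circ\gamma=h(\gamma)\circ\mathcal{D}$. This exhibits $\gamma$ as an automorphism of the $E^u_f$-structure with holonomy $h(\gamma)$, and uniqueness of $h(\gamma)$ forces $\gamma\mapsto h(\gamma)$ to be a homomorphism $\Gamma\to\Isom(E^u_f)$. The structure on $\widetilde{X}$ is therefore $\Gamma$-invariant, and it descends: reading the charts of $\widetilde{X}$ through local sections of the covering $\widetilde{X}\to X$ produces an atlas on $X$ whose transition maps are either transition maps of the structure on $\widetilde{X}$ or their compositions with the holonomy elements $h(\gamma)$, all of which are restrictions of global isometries of $E^u_f$. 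Hence $X$ admits an $E^u_f$-structure.

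The step I expect to be the main obstacle is the construction of the holonomy, namely guaranteeing that the ribbon-to-ribbon isometry $g$ is the restriction of a globally defined isometry of $E^u_f$. A deck transformation is only given as an intrinsic isometry of $\widetilde{X}$, so a priori $\mathcal{D}\gamma\mathcal{D}^{-1}$ is merely a local isometry of $E^u_f$; it is precisely R-homogeneity (Lemma~\ref{lemm:Euf_reflexive}) together with the extension lemma for ribbon isometries (Lemma~\ref{lemm:ext_isom_rub}) that upgrades this local datum to a global isometry and thereby makes the descent possible.
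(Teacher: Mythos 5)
Your overall strategy is exactly the paper's: push the $E^u_f$-structure of Lemma~\ref{lemm:refl_modelee} down along the universal covering by showing that, for each deck transformation $\gamma$, the two developing maps $\mcD$ and $\mcD\circ\gamma$ differ by a global isometry of $E^u_f$. However, there is a genuine gap at the central step. You assert that a maximal ribbon $\widetilde{R}_0$ of $\widetilde{X}$ is a copy of $R_f$ or $R_{f^\vee}$ and hence that $\mcD$ sends it \emph{onto} a maximal ribbon $R_0$ of $E^u_f$, citing the class-$[f]$ hypothesis and Proposition~\ref{prop:sf_Euf}. That proposition describes the maximal ribbons of $E^u_f$ only; being of class $[f]$ is a global condition on the transverse coordinate of $\widetilde{X}$ (namely $x(\widetilde X)=I$) and says nothing about individual ribbons. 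Already $R_f$ itself, when $f$ vanishes without being constant, is a simply connected surface of class $[f]$ whose maximal ribbons include proper sub-ribbons $R_{g^\vee}$ with $g$ a nontrivial restriction of $f$ (this is observed at the start of the proof of Proposition~\ref{prop:sf_Euf}); and one can build class-$[f]$ surfaces, by chaining partial ribbons whose transverse images only jointly cover $I$, in which \emph{no} maximal ribbon is a full copy of $R_f$. Consequently your map $g=\mcD\circ\gamma\circ\mcD^{-1}$ is in general only an isometry between sub-ribbons $\mcD(\widetilde{R}_0)\subsetneq R_0$ and $\mcD(\gamma\widetilde{R}_0)\subsetneq R_1$, and Lemma~\ref{lemm:ext_isom_rub} — which extends isometries of a \emph{whole} maximal ribbon $R_0$ — does not apply to $\Psi^{-1}\circ g$.

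The missing ingredient, which is where the class-$[f]$ hypothesis actually does its work, is the action of $\gamma$ on the transverse coordinate: with $x=x^u\circ\mcD$, the deck transformation induces $\varphi\in\Isom(I,dx^2)$ with $\varphi\circ x=x\circ\gamma$, and since $x(\widetilde X)=I$ and $\langle K,K\rangle=f\circ x$ is $\gamma$-invariant, $\varphi$ is a \emph{globally defined} symmetry of $f$, i.e.\ $\varphi\in\Isom(f)$. This is what lets one extend the partial isometry $g$ from $\mcD(\widetilde R_0)$ to an isometry of the full maximal ribbons $R_0\to R_1$ (its behaviour on the transverse coordinate is prescribed on all of $I$, and on the flow direction it is pinned down by the image of one internal light geodesic); only then do R-homogeneity and Lemma~\ref{lemm:ext_isom_rub} produce the global isometry $h(\gamma)$. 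Once $g$ is so extended, the rest of your argument (rigidity of local isometries agreeing on an open set, uniqueness giving the homomorphism property, descent of the atlas) is correct and matches the paper.
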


\begin{proof} Le revêtement universel $\widetilde X$ de $X$ possède 
une $E^u_f$-structure (lemme~\ref{lemm:refl_modelee}) qui permet de le
munir d'une isométrie locale  $\mcD : \smash{(\widetilde{X},\widetilde{K})}
 \to (E^u_f,K^u)$
(remarque~\ref{rema:developpante}). On définit des cartes sur $X$ en composant
des sections locales du revêtement $\smash{(\widetilde{X},\widetilde{K})} \to
(X,K)$ avec $\mcD$. De plus, on suppose que les domaines de cartes sont deux à
deux d'intersection connexe. L'analyse des changements de cartes revient alors
à comparer $\mcD$ et $\mcD\circ \ga$, où $\ga$ est un automorphisme du
revêtement universel. 
Il s'agit de montrer que
ces deux applications diffèrent par une isométrie globale de $E^u_f$.

En utilisant les coordonnées adaptées, on voit que $\mcD$ est injective sur tout
ruban maximal de $\widetilde{X}$.  Soit $R$ un tel ruban et soit $R'=\ga(R)$.
Les images $\mcD(R)$ et $\mcD(R')$ sont incluses dans des rubans maximaux $R_0$
et $R'_0$ de $E^u_f$. Fixons une coordonnée transverse $x^u$ sur $E^u_f$.
Alors $x=x^u\circ \mcD$ est une coordonnée transverse sur 
$\smash{\widetilde{X}}$ et il
existe $\ph\in \Isom (f)$ tel que $\varphi \circ x=x\circ \gamma$ (action sur la
coordonnée transverse, voir proposition~\ref{prop:isom_gen}). Ainsi, on a
$\mcD\circ \ga_{|R} = \Phi\circ \mcD_{|R}$, où $\Phi$ est une isométrie de
$\mcD(R)$ sur $\mcD(R')$ déterminée par l'image d'une géodésique interne à
$\mcD(R)$ et par l'action de $\ph$ sur la coordonnée $x^u$. Comme $\ph \in \Isom
(f)$, l'isométrie $\Phi$ s'étend en une isométrie de $R_0$ sur $R'_0$ ; sachant
que $E^u_f$ est R-homogène (lemme~\ref{lemm:Euf_reflexive}), on conclut grâce
au lemme~\ref{lemm:ext_isom_rub} que $\Phi$ s'étend en une isométrie de
$E^u_f$.
\end{proof}

\subsection{Uniformisation}
\label{subs:uniformisation}
Nous dirons qu'une surface $X$ modelée sur $E^u_f$ {\em est uniformisée par
un ouvert de $E^u_f$} si son revêtement universel est plongé dans le modèle
$E^u_f$ par une développante de la $E^u_f$-structure. La représentation
d'holonomie associée réalise alors $X$ comme quotient d'un ouvert de $E^u_f$
par l'action d'un sous-groupe d'isométries de $E^u_f$. Nous établissons ici
l'uniformisation de deux classes de surfaces : les surfaces $(X,K)$ à
géométrie locale uniforme et sans selles à l'infini d'une part et les
surfaces $(X,K)$ compactes (et plus généralement dont le flot de $K$ est
périodique) d'autre part.

\begin{lemm}[injectivité de la développante]
\label{lemm:inj_dev}
Soit $f$ une fonction lisse et soit $(X,K)$ une surface lorentzienne lisse
connexe et saturée. Dans chacun des cas suivants, toute isométrie locale 
 $\mcD : \smash{(\widetilde{X},\widetilde{K})}\to(E^u_f,K^u)$ est injective :
\begin{enumerate}
 \item $(X,K)$ n'a pas de selles à l'infini,
 \item le flot de $K$ est périodique.
\end{enumerate}
\end{lemm}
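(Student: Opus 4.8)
Le plan est de démontrer l'injectivité de $\mcD$ par deux arguments de natures distinctes selon l'hypothèse. \emph{Dans le cas (1)}, l'idée est de vérifier que $\mcD$ relève toute géodésique de lumière brisée de $E^u_f$ issue d'un point de la forme $\mcD(p_0)$, puis d'appliquer l'assertion~(2) du lemme~\ref{lemm:relevement_chemins} : on obtiendrait alors $\mcD(\widetilde X)=E^u_f$ et le fait que $\mcD$ est un revêtement. Comme $E^u_f$ est homéomorphe à $\R^2$ (proposition~\ref{prop:sf_Euf}), donc simplement connexe, et que $\widetilde X$ est connexe, $\mcD$ serait un homéomorphisme, en particulier injective. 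Le problème se ramène donc au relèvement d'un \emph{unique} segment géodésique de lumière, les segments successifs se recollant aux points de brisure grâce au caractère de difféomorphisme local de $\mcD$.

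Pour relever un tel segment, je distinguerais deux types. Si le segment est interne (transverse à $K^u$), son relevé est une géodésique de lumière transverse à $\widetilde K$ le long de laquelle la coordonnée transverse $x=x^u\circ \mcD$ coïncide, au signe près, avec celle du segment ; un tel segment étant contenu dans un unique ruban maximal (et $x$ y décrivant $I$), le relevé parcourt toute l'amplitude voulue sans s'interrompre. Si au contraire le segment est porté par une orbite de lumière du champ (correspondant à un zéro de $f$), c'est ici qu'intervient l'hypothèse d'absence de selles à l'infini : celle-ci portant par définition (définition~\ref{defi:selle_a_l_infini}) sur le revêtement universel, l'orbite de lumière de $\widetilde K$ est incluse dans une géodésique de lumière \emph{complète} de $\widetilde X$ traversant un zéro de $\widetilde K$ ; le relevé se prolonge donc à travers ce point selle, tout comme le segment de départ traverse le point selle correspondant de $E^u_f$ (qui est lui aussi sans selles à l'infini). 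Tout segment, donc toute géodésique brisée, se relève ainsi.

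\emph{Dans le cas (2)}, cet argument tombe en défaut car un flot périodique autorise les selles à l'infini ; je procéderais par l'espace des feuilles. J'observerais d'abord que la périodicité interdit les zéros de $K$ : un zéro serait un point selle (preuve de la proposition~\ref{prop:class_composantes}), de flot local hyperbolique donc non périodique. Le champ $\widetilde K$ ne s'annulant pas sur $\widetilde X\simeq\R^2$, ses orbites sont des droites à action libre de $\R$, et l'espace des feuilles $\mcE_{\widetilde X}$ est séparé, donc un intervalle (preuve de la proposition~\ref{prop:analy_refl} et proposition~\ref{prop:struct_transv_killing}). L'injectivité se scinde alors en deux. \emph{Transversalement} : l'application induite $\bar{\mcD}:\mcE_{\widetilde X}\to\mcE_{E^u_f}$ est une isométrie locale de variétés de dimension~$1$ définie sur un intervalle ; comme une telle application est automatiquement injective (elle s'écrit $t\mapsto \pm t+c$ via la coordonnée étale $x^u$), $\bar{\mcD}$ est injective. \emph{Le long des orbites} : la restriction de $\mcD$ à une orbite de $\widetilde K$ est équivariante entre le flot libre de $\widetilde K$ et le flot de $K^u$ sur son image ; or $E^u_f\simeq\R^2$ ne contient aucune orbite compacte (les zéros éventuels de $K^u$ sont des selles, non des centres), si bien que l'orbite image est une droite à action libre et que l'équivariance impose l'injectivité. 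Réunissant les deux : si $\mcD(p)=\mcD(p')$, l'injectivité transverse place $p$ et $p'$ sur une même orbite, puis l'injectivité le long des orbites donne $p=p'$.

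La principale difficulté sera, dans le cas (1), de justifier rigoureusement que le relevé d'un segment de lumière ne quitte pas $\widetilde X$ avant d'avoir épuisé tout le segment. C'est exactement là qu'intervient de façon essentielle l'absence de selles à l'infini, et il faudra examiner avec soin le comportement du relevé au voisinage des points selles — au moyen des cartes exponentielles de la proposition~\ref{prop:carte_exponentielle} — pour garantir la continuation de la géodésique relevée à travers ces points.
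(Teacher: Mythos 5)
Votre cas (2) est correct et revient en substance à l'argument du texte : la périodicité interdit les zéros de $K$ et rend séparé l'espace des orbites de $\widetilde K$, qui est donc un intervalle (proposition~\ref{prop:struct_transv_killing}) ; l'injectivité transverse provient de la monotonie de $x^u$ et l'injectivité le long des orbites de l'absence d'orbites périodiques de $K^u$ dans $E^u_f\simeq\R^2$. Le texte condense cela en une seule phrase (une courbe fermée lisse transverse à $K^u$ sur laquelle $x^u$ serait strictement monotone), mais c'est le même mécanisme.

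En revanche, votre cas (1) repose sur une stratégie qui ne peut pas aboutir : vous cherchez à montrer que $\mcD$ est un revêtement \emph{surjectif} sur $E^u_f$ pour conclure par simple connexité, or la surjectivité est fausse sous les hypothèses du lemme. Celui-ci ne suppose ni que $(X,K)$ est de classe $[f]$ ni qu'elle est réflexive ; par exemple l'inclusion d'un ruban $R_g\subset E^u_f$ (avec $g$ restriction de $f$ à un sous-intervalle où $f$ ne s'annule pas) est une isométrie locale injective, sans selles à l'infini et non surjective. C'est précisément pourquoi le théorème~\ref{theo:uniformisation} n'établit la surjectivité de la développante qu'en ajoutant « simplement connexe et réflexive » : sans ces hypothèses, un segment de lumière interne de $E^u_f$ ne se relève pas en général, le ruban de $\widetilde X$ contenant le point de départ pouvant s'envoyer sur un sous-ruban strict (votre affirmation « $x$ y décrivant $I$ » suppose implicitement la classe $[f]$). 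Et même un revêtement sur l'image, obtenu via le lemme~\ref{lemm:relevement_chemins}-(1), ne suffirait pas : l'image est un ouvert de $\R^2$ qui n'a aucune raison d'être simplement connexe — c'est exactement ce qui se produit pour $F^u_f\to E^u_f\smallsetminus\{\text{selles}\}$, revêtement non injectif issu d'une surface \emph{avec} selles à l'infini. L'idée clé qui vous manque est celle du texte : $\mcD$ est injective sur chaque ruban maximal (coordonnées adaptées), donc une non-injectivité globale produit une chaîne de rubans de $\widetilde X$ dont l'image est un \emph{cycle simple de rubans} de $E^u_f$ ; le lemme~\ref{lemm:cycl_rubans} force ce cycle à entourer un point selle, lequel n'a pas d'antécédent dans $\widetilde X$ — c'est une selle à l'infini, contredisant l'hypothèse (1). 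Aucun relèvement global n'est nécessaire.
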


\begin{proof} Il suffit d'établir le résultat pour le revêtement universel
$(\widetilde{X},\widetilde{K})$ de 
$(X,K)$. Soit $\mcD : (\widetilde{X},\widetilde{K}) \to (E^u_f,K^u)$ une
isométrie locale.

(1) Grâce aux coordonnées adaptées, on sait que $\mcD$ est injective sur tout
ruban (maximal)~$R$ de $\widetilde{X}$.  Par cons\'equent si~$\mathcal D$ n'est
pas injective il existe deux bandes distinctes de $\widetilde{X}$ dont l'image
par $\mathcal D$ est contenue dans un m\^eme ruban de $E^u_f$.  Il existe donc
une suite finie $(R_i)_{1\leq i\leq N+1}$ de rubans de 
$\smash{\widetilde{X}}$ telle que
$R_1\neq R_{N+1}$ et telle que la suite $(\mathcal D(R_i))_{1\leq i\leq N+1}$
est un cycle simple de rubans.  D'apr\`es le lemme~\ref{lemm:cycl_rubans}, les
$\mathcal D(R_i)$ sont les 4 rubans qui bordent un point selle. Ce point selle
ne peut pas avoir d'antécédent : on en d\'eduit que $\smash{\widetilde{X}}$
 a une \og selle \`a l'infini \fg.

(2) L'hypothèse de périodicité entraîne la séparation de l'espace des orbites du
champ~$\widetilde{K}$ (voir la preuve de la proposition~\ref{prop:analy_refl}),
en particulier deux orbites arbitraires peuvent être reliées par une
transversale lisse. Si $\mcD$ n'était pas injective, il existerait une courbe
fermée lisse transverse à $K^u$. Sur une telle courbe, les coordonnées
transverses de $E^u$ seraient strictement monotones, ce qui est absurde.
\end{proof}

\begin{theo}[uniformisation, surfaces sans selles à l'infini]
\label{theo:uniformisation}
Soit $f$ une fonction lisse. Alors toute surface lorentzienne $(X,K)$ lisse,
connexe,  saturée, de classe $[f]$ et {\em sans selles à l'infini} est
uniformisée par un ouvert de $E^u_f$. Si de plus $X$ est simplement connexe et
réflexive (par exemple maximale), alors $(X,K)$ est isométrique à
$(E^u_f,K^u)$.
\end{theo}

\begin{proof}
On sait que $X$ est modelée sur $E^u_f$ et que l'on peut choisir une
développante $\mcD$ qui respecte les champs (voir remarque
\ref{rema:developpante}).  D'après le lemme~\ref{lemm:inj_dev}, $\mcD$ est
injective, d'où la première assertion.

Si $X$ est simplement connexe et réflexive, alors toute géodésique de lumière
brisée de $E^u_f$ se relève par $\mcD : X\to E^u$.  En effet, d'une part les
géodésiques de lumière internes aux rubans se relèvent car les rubans de
$E^u_f$ et ceux de $X$ sont isométriques ; d'autre part, tous les autres
segments de lumière de $X$ sont inclus dans une géodésique de lumière complète
car $K$ n'a pas de selles à l'infini.  Par suite,
lemme~\ref{lemm:relevement_chemins}-(2), $\mcD$ est surjective, ce qui prouve
la deuxième assertion.
\end{proof}

\begin{coro}[classification des extensions universelles]
\label{coro:class_Eu}
Soient $f$ et $g$  deux fonctions lisses et inextensibles d\'efinies sur des
intervalles r\'eels.
\begin{enumerate} 
\item Les surfaces $(E^u_f,K^u_f)$ et $(E^u_g,K^u_g)$ sont isométriques 
si et seulement si $[f]=[g]$.
\item  Les surfaces $E^u_f$ et $E^u_g$ sont isométriques (au sens usuel) 
si et seulement si : ou bien  $E^u_f$ et $E^u_g$ sont à même courbure constante
($f''=g''$ et $f'''=g'''=0$), ou bien  $[[f]]=[[g]]$.
\end{enumerate} 
\end{coro}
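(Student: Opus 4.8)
The plan is to derive both statements from three ingredients already available: the uniqueness half of Theorem~\ref{theo:uniformisation}, the intrinsic character of the transverse coordinate (Proposition~\ref{prop:struct_transv_killing} and Remark~\ref{rema:esp_feuilles_etale}, where $dx=i_K\nu$), and the dichotomy of Lemma~\ref{lemm:isom_isomK} on whether the Killing field is unique up to homothety. Throughout I use that each $E^u_f$ is simply connected, maximal (since $f$ is inextensible), reflexive (Lemma~\ref{lemm:Euf_reflexive}), of class $[f]$ and without saddles at infinity (Proposition~\ref{prop:sf_Euf}).

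For (1) I would treat the two directions separately. For the reverse implication, assume $[f]=[g]$, say $g=f\circ\varphi$ with $\varphi(t)=\pm t+b$. Then $(E^u_g,K^u_g)$, which is of class $[g]$, is also of class $[f]$: starting from a transverse coordinate $x$ realizing $g$ one checks that $\varphi\circ x$ is again a transverse coordinate and realizes $f$. Since $(E^u_g,K^u_g)$ is moreover simply connected, reflexive and without saddles at infinity, Theorem~\ref{theo:uniformisation} yields an isometry of couples $(E^u_g,K^u_g)\cong(E^u_f,K^u_f)$. For the direct implication, let $\Phi$ be an isometry with $\Phi_*K^u_f=K^u_g$. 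Because $\Phi^*\nu_g=\pm\nu_f$ and $\Phi^*K^u_g=K^u_f$, we get $d(x_g\circ\Phi)=\pm\, i_{K^u_f}\nu_f=\pm\, dx_f$, so $x_g\circ\Phi=\pm x_f+b$. Comparing norms, $f\circ x_f=\langle K^u_f,K^u_f\rangle=\langle K^u_g,K^u_g\rangle\circ\Phi=g\circ(x_g\circ\Phi)$, whence $f(t)=g(\pm t+b)$, i.e. $[f]=[g]$.

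For (2) the new ingredient is the scaling freedom on the Killing field. First I would record a scaling lemma: for $\mu\in\R^*$ the couple $(E^u_f,\mu K^u_f)$ is of class $[\hat f]$ with $\hat f(s)=\mu^2 f(s/\mu)$, since $dx=i_K\nu$ is linear in $K$, so $x_f$ becomes $\mu x_f$ while $\langle \mu K^u_f,\mu K^u_f\rangle=\mu^2 f\circ x_f$. This couple is again simply connected, maximal, reflexive and without saddles at infinity, so Theorem~\ref{theo:uniformisation} identifies it with $(E^u_{\hat f},K^u_{\hat f})$; in particular the surfaces $E^u_f$ and $E^u_{\hat f}$ are isometric. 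Combining this with (1): if $[[f]]=[[g]]$, write $g(x)=a^{-2}f(ax+b)$ and take $\mu=1/a$, so $\hat f(s)=a^{-2}f(as)$ and $\hat f(x+b/a)=g(x)$, i.e. $[\hat f]=[g]$; part (1) gives $E^u_{\hat f}\cong E^u_g$, hence $E^u_f\cong E^u_g$. If instead $f''=g''$ and $f'''=g'''=0$, then $f$ and $g$ are polynomials of degree $\le 2$, necessarily defined on $\R$ by inextensibility, and the Examples after Proposition~\ref{prop:sf_Euf} identify $E^u_f$ and $E^u_g$ with the same complete constant-curvature model (the Minkowski plane, or a multiple of the universal cover of $\text{dS}_2$ fixed by the common curvature $f''/2$); they are therefore isometric.

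For the converse in (2), assume $E^u_f\cong E^u_g$ and split on curvature (constant curvature of $E^u_f$ being equivalent to $f'''=0$, as the curvature equals $f''/2$). If $E^u_f$ has non-constant curvature, so does $E^u_g$, and the Killing field is unique up to homothety (Lemma~\ref{lemm:isom_isomK}), so any isometry $\Phi$ satisfies $\Phi_*K^u_f=\lambda K^u_g$. Repeating the computation of (1) while tracking $\lambda$ gives $x_g\circ\Phi=\pm\lambda^{-1}x_f+c$ and $f\circ x_f=\lambda^2\,g\circ(x_g\circ\Phi)$, hence $f(t)=\lambda^2 g(\pm\lambda^{-1}t+c)=a^{-2}g(at+c)$ with $a=\pm\lambda^{-1}$, which is exactly $[[f]]=[[g]]$. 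If $E^u_f$ has constant curvature, then $E^u_g$ has the same constant curvature, giving $f''=g''$ and $f'''=g'''=0$. The main obstacle, and the reason constant curvature must sit as a separate alternative, is precisely the breakdown of Killing-field uniqueness there: the Minkowski plane carries non-homothetic Killing fields (it is simultaneously of class $[[0]]$, $[[\pm1]]$ and $[[x]]$), so an isometry need not preserve $[[\cdot]]$ and one can only recover the curvature.
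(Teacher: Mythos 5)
Your proof is correct and follows essentially the route the paper intends: the paper's own proof is a one-line citation of Théorème~\ref{theo:uniformisation} (together with Proposition~\ref{prop:refl_modelee}), and your argument supplies exactly the missing details — the uniformization theorem for the sufficiency of $[f]=[g]$, the invariance of the transverse coordinate $dx=i_K\nu$ and of $\langle K,K\rangle$ for the necessity, the rescaling $K\mapsto\mu K$ to pass from $[f]$ to $[[f]]$, and Lemme~\ref{lemm:isom_isomK} to reduce to the case where the Killing field is unique up to homothety, with the constant-curvature models handled separately via the Exemples following la proposition~\ref{prop:sf_Euf}. No gaps.
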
 

\begin{proof}
Elle résulte du théorème~\ref{theo:uniformisation} et de la proposition
\ref{prop:refl_modelee}.
\end{proof}

\begin{rema}
\label{rema:Eug_Euf}
Si $(I,f)$ est un prolongement de $(J,g)$, la preuve du
lemme~\ref{lemm:refl_modelee} montre que toute surface $(E,K)$ simplement
connexe et de classe $[g]$ admet une $E^u_f$-structure (l'hypothèse de
simple connexité est ici essentielle).  On déduit du
lemme~\ref{lemm:inj_dev} que $(E^u_g,K_g^u)$ se plonge isométriquement dans
$(E^u_f,K_f^u)$.
\end{rema}

\begin{theo}[uniformisation, flot périodique]
\label{theo:uniformisation_cas_periodique}
Soit $(X,K)$ une surface lorentzienne  connexe. Si le flot de $K$ est
périodique, alors $X$ est uniformisée par un ouvert d'une surface $E^u_f$ pour
une certaine fonction $f$.
\end{theo}

\begin{proof}
Le résultat découle des propositions~\ref{prop:analy_refl} (géométrie locale
uniforme),~\ref{prop:refl_modelee} (existence de $E^u_f$-structure) et du
lemme~\ref{lemm:inj_dev} (injectivité de la développante).
\end{proof}

\begin{theo}\label{theo:uni_tore}
Soit $(T,K)$ un  tore lorentzien muni d'un champ de Killing $K$ non trivial.
Le revêtement universel $(\widetilde{T},\widetilde{K})$ de $(T,K)$ 
admet une extension $(E,K_E)$ simplement connexe, L-complète et réflexive.
Une telle extension est unique à isométrie (usuelle) près.
\end{theo}

\begin{proof} %
On peut supposer que $T$ n'est pas plat (sinon le le
th\'eor\`eme~\ref{theo:uni_tore} est classique). 
Observons que $K$ ne s'annule pas d'après
le théorème de Poincaré-Hopf, car les seuls zéros possibles sont des points
selles. De plus, puisque $T$ n'est pas plat, le champ~$K$ admet une orbite
fermée : il existe $p\in X$ et $t_0\neq 0$ tels que $\Phi_K^{t_0}(p)=p$. Cette
relation implique $d\Phi_K^{t_0}(p)=\Id$, puis  $\Phi_K^{t_0}=\Id$ : le flot de
$K$ est donc périodique. Par suite
(théorème~\ref{theo:uniformisation_cas_periodique} et
remarque~\ref{rema:developpante}), $(\widetilde{T},\widetilde{K})$ se plonge
isométriquement dans $(E^u_f,K^u)$, où $[f]$ est la classe de $(T,K)$. De plus,
le champ $K$ admet une transversale lisse, fermée et simple. En relevant une
telle courbe à $\widetilde{T}$, on voit que $f$ est périodique. Ainsi $E^u_f$
est L-complète, d'où l'existence de l'extension cherchée. Si $(E,K)$ est une
extension simplement connexe, L-complète et réflexive de
$(\widetilde{T},\widetilde{K})$ alors $(E,K)$ est forc\'ement de classe $[f]$
(puisque~$f$ est définie sur $\R$),  donc isom\'etrique \`a $(E^u_f,K^u)$
d'apr\`es le théorème~\ref{theo:uniformisation}.
\end{proof}

\begin{coro}
\label{coro:ext_unvi_analytique}
Soit $T$ un  tore lorentzien analytique muni d'un champ de Killing non
trivial. À isométrie près, le revêtement universel de $T$ 
admet une unique extension analytique,  simplement connexe et 
L-complète.
\end{coro}

\begin{proof}
Le seul point à vérifier est l'unicité. Soit $E$ une extension 
comme dans l'énoncé. Par le théorème d'extension
des champs de Killing de Nomizu (voir~\cite{Nomizu}), le relevé du
champ de~$T$ au revêtement universel 
se prolonge en un champ de Killing analytique $K_E$ sur $E$.
D'après la proposition~\ref{prop:analy_refl}-(2), la surface $(E,K_E)$ est
réflexive. 
On conclut grâce au théorème~\ref{theo:uni_tore}.
\end{proof}

\begin{exem}
 Si $(T,K)$ est un tore de Clifton-Pohl dont le 
champ de Killing $K$ est normalisé par $\max \langle K,K\rangle=1$,
la fonction $f$  associée à $(T,K)$ est  $\sin(2x)$. En effet,
 dans ce cas la courbure est égale à $-2f$ (voir \cite{BM} page 472) et
 donc $f$ vérifie $f''+4f=0$. L'espace $E^u_f$ est isométrique au
 rev\^etement universel de la surface $\widehat \Sigma$ 
apparaissant  dans~\cite{BM}.
\end{exem}

L'absence de  \og selles à l'infini\fg\   dans l'énoncé du 
théorème~\ref{theo:uniformisation} signifie que la surface $(E,K)$ contient 
\og le maximum\fg\ de selles possible. À l'opposé, on trouve la classe
des surfaces  $(E,K)$ dont le champ $K$ ne s'annule pas : on dira
que $(E,K)$ est {\em sans selles} (toutes les selles potentielles
sont à l'infini). Parmi ces surfaces, le revêtement universel 
de $E^u_f$ privé de ses points selles,  noté 
$(F^u_f,K^u_F)$,  joue un rôle particulier.
Bien que n'étant généralement
pas \mbox{L-complète}, cette surface
 satisfait une caractérisation analogue à celle de
$(E^u_f,K^u)$ et permet d'uniformiser les surfaces sans selles de classe $[f]$.

\begin{theo}[uniformisation, surfaces sans selles]
\label{theo:uniformisation_Fu}
Soit $f$ une fonction lisse. Alors toute surface lorentzienne $(X,K)$ lisse,
connexe, saturée, de classe $[f]$ et {\em sans selles} est uniformisée par un
ouvert de $F^u_f$. Si de plus $X$ est simplement connexe et réflexive (par
exemple maximale), alors $(X,K)$ est isométrique à $(F^u_f,K^u_F)$.
\end{theo}

\begin{proof} 
La preuve est analogue à celle du théorème~\ref{theo:uniformisation}.
L'existence d'une $F^u_f$-structure s'obtient par une adaptation immédiate des
arguments développés dans le cas des surfaces sans selles à l'infini.
L'injectivité de la développante découle du même argument que dans le
lemme~\ref{lemm:inj_dev}, en concluant par le fait que $F^u_f$ ne contient pas
de points selles, donc pas de cycles de rubans. Lorsque $X$ est simplement
connexe et réflexive, l'argument de relèvement des géodésiques de lumière
brisées, pour les arcs non internes aux rubans, est légèrement différent.  Comme
les surfaces sont sans selles, les orbites de lumière des champs de Killing
correspondent simultanément, dans $X$ et dans $F^u_f$, à des géodésiques de
lumière complètes ou semi-complètes. On peut donc relever les arcs
correspondants, d'où la surjectivité de la développante.
\end{proof}

\begin{rema} 
Le théorème~\ref{theo:uniformisation_Fu} s'applique notamment aux surfaces
compactes. Il donne alors une variante du théorème~\ref{theo:uni_tore} : {\em le
revêtement universel d'un tore lorentzien $(T,K)$ admet une extension $(F,K_F)$
simplement connexe, maximale, réflexive et sans selles ; une telle extension est
unique à isométrie (usuelle) près.}
\end{rema}

\begin{exem}\label{exem:ext}
On a $E^u_f=F^u_f$ dans les cas suivants : 
$f$ ne s'annule pas ($E^u_f$ est un ruban maximal), 
  $f$ n'admet que des zéros non simples. Si $f$ n'admet
qu'un seul zéro, simple, alors $E^u_f$ est une selle maximale et $F^u_f$
le revêtement universel de cette selle épointée.
\end{exem}

En dehors des cas particuliers de l'exemple précédent, il existe de
nombreuses surfaces maximales simplement
connexes et réflexives de classe fixée $[f]$. 
Elles sont toutes étalées
au-dessus d'un ouvert de $E^u_f$ {\it via} une développante, qui
généralement n'est pas un revêtement sur son image à cause de la présence
de selles à la fois dans la surface et à l'infini.  Ces surfaces sont \og
intermédiaires\fg\ entre $E^u_f$ et $F^u_f$, lesquelles apparaissent
respectivement comme {\em la plus petite} et {\em la plus grande} surface
simplement connexe réflexive de classe $[f]$. 
Il peut exister une infinité non dénombrable d'extensions maximales
intermédiaires.
Supposons par exemple que $f$ admette une suite de zéros $(x_i)$ telle que
$f'(x_i)\neq f'(x_j)\neq 0$ pour tout $i\neq j$. Soit~$R$ un ruban de
$E^u_f$ et soit $p_i\in \overline R$ le point selle correspondant à
$x_i$. Chacune de ces selles (munie du champ) est caractérisée par le réel
$f'(x_i)$. \smash{À} toute partie $P$ de $\N$ on associe $E_P$ le rev\^etement
universel de $E^u_f\smallsetminus \{p_i,i\in P\}$. Ces surfaces sont
clairement maximales et  $E_P$ et $E_{P'}$  sont  isométriques si et
seulement si $P=P'$.

\section{Isométries et quotients}
\label{sect:G-structures}

\subsection{Espace des feuilles. Groupe d'isométrie}

Dans toute la suite, $f\in \mcC^\infty(I,\R)$ désigne une fonction
inextensible. Pour alléger les notations, la surface lorentzienne maximale
$E^u_f$ associée à $f$ sera notée simplement $E^u$. De plus, nous supposerons
que $f^{(3)} \neq 0$ afin d'écarter le cas où $E^u$ est à courbure constante. 
Dans le cas particulier où $f$ ne s'annule pas, nous dirons que $E^u$ (ou  $f$)
est {\em élémentaire}. 

\par

Rappelons que $E^u$ est munie d'un champ de Killing complet $K^u$. L'espace des
orbites non triviales de $K^u$ est une variété riemannienne de dimension~$1$,
notée $\mcE^u$  ; elle n'est pas séparée sauf si $E^u$ est élémentaire. Il
existe une coordonnée transverse $x\in \mcC^\infty(E^u,I)$ vérifiant $\langle
K^u, K^u \rangle = f\circ x$ et induisant une isométrie locale $\overline{x}\in
\mcC^\infty(\mcE^u,I)$, proposition
\ref{prop:struct_transv_killing}-\eqref{prop:esp_feuilles_etale}. La norme du
champ comme fonction sur $\mcE^u$ est donnée par $\overline{f}=f\circ
\overline{x}$. Les points de branchement de $\mcE^u$ correspondent aux bords des
composantes de $\{\overline{f}\neq 0\}$, c'est à dire aux orbites de lumière de
$K^u$ qui bordent les carrés. Métriquement, $\mcE^u$ se décompose en segments à
bord double (associés aux carrés, voir proposition~\ref{prop:class_carres}), en
intervalles à bord double de la forme $[0,\msfm[^\wedge$ ($\msfm\in
]0,+\infty]$, associés aux bandes voisines de l'infini) et en segments
ordinaires (associés aux composantes de l'intérieur de $\{\overline{f} = 0\}$),
raccordés en leurs extrémités.  Les bandes de $E^u$ se projettent dans~$\mcE^u$
sur des ouverts sans points de branchement, maximaux pour cette propriété. De
même que toute variété riemannienne, $\mcE^u$ possède des géodésiques (non
uniquement déterminées par leur vitesse initiale) ; au paramétrage près, il
s'agit des courbes lisses injectives. Les segments (à bord simple ou double)
peuvent s'accumuler. Toutefois, sur une géodésique donnée, l'ensemble des points
de branchement est discret.

\vspace{8pt}
\begin{figure}[h]
\labellist
\small\hair 2pt
\pinlabel $(A)$ at 15 55
\pinlabel $(B)$ at 15 0
\pinlabel $(C)$ at 385 55
\pinlabel $(A1)$ at 90 80
\pinlabel $(A2)$ at 240 80
\endlabellist
\begin{center}
\includegraphics[scale=0.6]{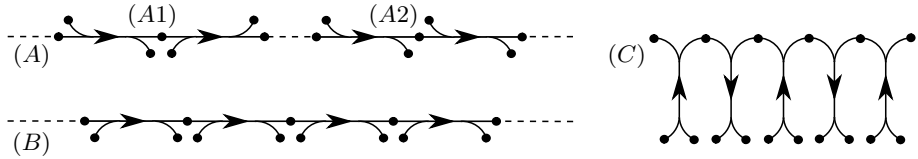}
\caption{Raccordement de carrés dans l'espace des  feuilles}
\label{figu:rac_carres}
\end{center}
\end{figure}

L'espace des feuilles $\mcE^u$ est orienté par $dx$ et 
également transversalement orienté par $K^u$.  Le
raccordement de deux carrés adjacents (zéro isolé de $f$) est indiqué sur
la figure~\ref{figu:rac_carres} : (A1) si~$f$ change de signe, (A2) sinon.
Une géodésique orientée associée à un ruban est caractérisée par la façon
dont elle traverse un carré : si elle entre rive droite, elle doit sortir
rive gauche et vice-versa (gauche et droite étant définis par une
orientation transverse), voir figure~\ref{figu:rac_carres}-(A). D'autres
situations sont illustrées sur cette figure : source/puits à l'infini
(zéros isolés avec changements de signe) en~(B), revêtement infini cyclique
d'une pseudo-selle en~(C).  \par

\begin{figure}[h!]
\begin{center}
\includegraphics[scale=0.6]{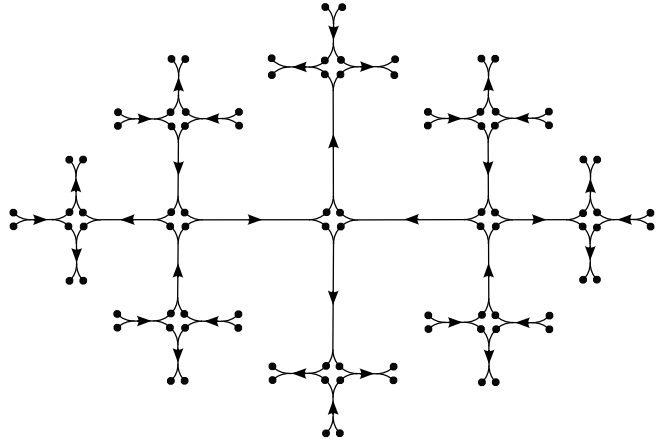}
\caption{Espace des feuilles  de $K^u$ --- cas générique}\label{figu:esp_feu}
\end{center}
\end{figure}

Aux selles près (cycles de 4 points de branchement), l'espace $\mcE^u$ est
un arbre réel (voir figure~\ref{figu:esp_feu}, cas où tous les zéros de
$f$ sont simples). Il est localement quasi-compact si et seulement si les segments à bord
double ne s'accumulent pas.
\par

Soit $\Isom(E^u)$ le groupe d'isométrie de $E^u$. On note $\Isom^0(E^u)$ la
composante neutre (paramétrée par le flot du champ $K^u$), $\Isom^+(E^u)$ (resp.
$\Isom^K(E^u)$) le sous-groupe constitué des isométries {\em directes} (resp. 
préservant le champ) et on pose $\Isom^{+,K}(E^u)= \Isom^+(E^u)\cap
\Isom^K(E^u)$.  On rappelle que $\Isom(f)$ désigne le groupe de symétrie de $f$,
voir~\eqref{equa:isom_f}.  Grâce au lemme~\ref{lemm:Euf_reflexive} et au lemme 
\ref{lemm:ext_isom_rub}, on connaît déjà un certain nombre d'isométries
de~$E^u$. Soit $\Isomgen(E^u)$ le sous-groupe (distingué) de $\Isom(E^u)$
engendré par les réflexions génériques. En considérant les réflexions génériques
associées à un même carré, on voit que $\Isomgen(E^u)$ contient la composante
neutre $\Isom^0(E^u)$.

\begin{prop}[action sur la coordonnée transverse]
\label{prop:isom_gen}
L'action de $\Isom(E^u)$ sur une coordonnée transverse de $E^u$ induisant
$f$ définit une suite exacte scindée 
\begin{equation}
\label{equa:se_isom_gen}
\xymatrix{1 \ar[r]& \Isomgen(E^u) \ar[r] & \Isom(E^u) \ar[r] &
  \Isom(f) \ar[r] & 1},
\end{equation}
le groupe $\Isom(f)$ étant trivial sauf si la fonction $f$ possède des symétries
additionnelles, auquel cas il est isomorphe à $\Z/2\Z,\Z$ ou $D_\infty$.  Le
groupe $\Isomgen(E^u)$ agit simplement et transitivement sur l'ensemble des
géodésiques de lumière internes aux rubans maximaux ; les stabilisateurs de
cette action définissent des sections de \eqref{equa:se_isom_gen}. De plus, si
$R$ est un ruban maximal de $E^u$, le groupe $\Isomgen(E^u)$ est engendré par
$\Isom^0(E^u)$ et par la donnée d'une réflexion par carré de~$R$.
\end{prop}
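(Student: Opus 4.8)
The plan is to produce the split exact sequence by first recording how each isometry acts on the transverse coordinate, and then to read off simple transitivity and the generation statement from the geometry of the ribbons. First I would build the map $\Isom(E^u)\to\Isom(f)$. Since $f^{(3)}\neq 0$ the curvature is non-constant, so by lemme~\ref{lemm:isom_isomK} every $\Phi\in\Isom(E^u)$ satisfies $\Phi_*K^u=\pm K^u$; hence $\Phi$ preserves the foliation $\mcK$ and, by proposition~\ref{prop:struct_transv_killing}, its transverse metric $dx^2$. Consequently $x\circ\Phi$ is again a global first integral inducing a local isometry $\mcE^u\to\R$, and since $\mcE^u$ is connected one has $d(x\circ\Phi)=\pm\,dx$ with globally constant sign, so $x\circ\Phi=\ph\circ x$ for a unique $\ph\in\Isom(\R,dx^2)$. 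As $x(E^u)=I$ and $\langle K^u,K^u\rangle=f\circ x$ is $\Phi$-invariant, $\ph$ preserves $I$ and satisfies $f\circ\ph=f$, i.e. $\ph\in\Isom(f)$; the assignment $\Phi\mapsto\ph$ is clearly a homomorphism. Generic reflections fix $x$ (remarque~\ref{rema:preservation_indices}) and the flow preserves $x$, so $\Isomgen(E^u)$ sits inside the kernel.

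Next I would split the sequence. Fix a maximal ribbon $R_0$ of $E^u$ (isometric to $R_f$ by proposition~\ref{prop:sf_Euf}) and an internal light geodesic $\ell_0\subset R_0$. By proposition~\ref{prop:class_isom_rubans} the stabiliser of $\ell_0$ in $\Isom(R_0)$ is a copy of $\Isom(f)$ meeting the generic subgroup trivially, and by lemme~\ref{lemm:ext_isom_rub} each such isometry extends uniquely to $E^u$, still fixing $\ell_0$. Uniqueness of the extension makes the resulting map $s:\Isom(f)\to\Isom(E^u)$ a homomorphism, and since $x\circ s(\ph)=\ph\circ x$ holds on $R_0$ it holds on all of $E^u$; thus $s$ splits the sequence and the projection onto $\Isom(f)$ is surjective, the possible isomorphism types $\Z/2\Z$, $\Z$, $D_\infty$ being exactly those already recorded for $\Isom(f)$.

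The key step is then simple transitivity. Transitivity of $\Isomgen(E^u)$ on internal light geodesics follows by combining transitivity on maximal ribbons (lemme~\ref{lemm:Euf_reflexive}) with the fact that $\Isom^0(E^u)\subset\Isomgen(E^u)$ permutes the internal light geodesics of a single ribbon transitively. For the stabiliser I would show $\mathrm{Stab}(\ell_0)\cap\ker=\{\Id\}$: if $g$ preserves $x$ and fixes $\ell_0$ as a set, then because $x$ restricts to an injective parametrisation of $\ell_0$ onto $I$, $g$ fixes $\ell_0$ pointwise; hence $dg_p$ fixes the null velocity of $\ell_0$ at each point $p$, and the only Lorentzian isometry of a tangent plane fixing a null vector is the identity, so $g=\Id$ by $1$-jet rigidity on the connected surface. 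This simultaneously yields simple transitivity (the stabiliser inside $\Isomgen(E^u)$ is trivial, so $\mathrm{Stab}(\ell_0)\cong\Isom(f)$ is exactly the section image) and the identification $\ker=\Isomgen(E^u)$: any $\Phi$ in the kernel can be post-composed with some $g\in\Isomgen(E^u)$ sending $\Phi(\ell_0)$ back to $\ell_0$, whereupon $g\Phi\in\mathrm{Stab}(\ell_0)\cap\ker=\{\Id\}$ forces $\Phi\in\Isomgen(E^u)$. The hard part will be precisely this rigidity, namely ruling out a non-trivial generic isometry fixing an internal light geodesic.

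Finally I would prove the generation statement. Let $R$ be a maximal ribbon and $H$ the subgroup generated by $\Isom^0(E^u)$ together with one generic reflection per square of $R$. Inside each square the generic reflections differ by the flow, and their axes (leaves of $\mcK^\perp$) never cross the bounding light orbits where $f$ vanishes, so $H$ contains every generic reflection with axis in $R$. Each such reflection $\sig_C$ maps $R$ onto an adjacent ribbon $\sig_C(R)$, and conjugation by $\sig_C$ carries the generic reflections of $R$ to those of $\sig_C(R)$, which therefore also lie in $H$; iterating along the adjacency graph of maximal ribbons, connected by lemme~\ref{lemm:Euf_reflexive}, shows that $H$ contains every generic reflection of $E^u$, hence $H=\Isomgen(E^u)$ (the reverse inclusion being clear). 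Equivalently, via simple transitivity, $H\cdot\ell_0$ already exhausts all internal light geodesics.
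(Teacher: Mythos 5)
Your proposal is correct and follows essentially the same route as the paper: the morphism to $\Isom(f)$ via the action on the transverse coordinate, the section given by the stabiliser of an internal light geodesic of a maximal ribbon (via proposition~\ref{prop:class_isom_rubans} and lemme~\ref{lemm:ext_isom_rub}), transitivity from reflections plus the flow, freeness from the kernel condition, and the conjugation argument along adjacent ribbons for the generation statement. The only cosmetic difference is that you justify freeness directly by pointwise fixing of $\ell_0$ and $1$-jet rigidity, where the paper invokes the fact (already noted in proposition~\ref{prop:class_isom_rubans}) that the stabiliser of an internal light geodesic contains no non-trivial generic isometry.
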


\begin{proof}
Soit $\nu^u$ la forme volume lorentzienne de $E^u$ vérifiant $dx=i_{K^u}\nu^u$.
Toute isométrie $\ga\in\Isom(E^u)$ préserve $\nu^u$ et $K^u$ au signe près (on
rappelle que la courbure n'est pas constante), donc induit un élément
$\overline{\ga}\in \Isom(I,dx^2)$ tel que $x\circ \ga = \overline{\ga}\circ x$.
Comme la norme de $K^u$ est invariante par isométrie, on a $f\circ
\overline{\ga}=f$ ; ainsi $\overline{\ga}\in\Isom(f)$ et l'action sur la
coordonnée transverse $x$ définit un morphisme $\theta :\Isom(E^u)\to
\Isom(f)$. Le noyau de $\theta$ contient $\Isomgen(E^u)$ car les réflexions
génériques préservent  $dx$ et fixent une feuille de $K^u$, donc préservent $x$.
Soit~$R$ un ruban maximal et soit $\ell$ une géodésique de lumière interne à
$R$. Le groupe $\Isomgen(E^u)$ agit transitivement sur les géodésiques de
lumière internes aux rubans maximaux (action des réflexions génériques sur les
rubans, puis action de $\Isom^0(E^u)$) ; comme $\Isomgen(E^u) \subset \ker
\theta$, cette action est libre. Le stabilisateur de $\ell$ dans $\Isom(R)$ est
isomorphe à $\Isom(f)$, proposition~\ref{prop:class_isom_rubans}. Puisque toute
isométrie de $R$ s'étend à $E^u$ (lemme~\ref{lemm:ext_isom_rub}), le
stabilisateur $H= \Isom(E^u)_\ell$ est isomorphe à $\Isom(f)$. Par conséquent 
on a une décomposition en produit semi-direct \mbox{$\Isom(E^u) = \Isomgen(E^u)H$}:
la suite \eqref{equa:se_isom_gen} est exacte et scindée par $H$.

\par 
Soient $R_1$ et $R_2$ deux rubans maximaux s'intersectant en un carré et
soit $\sigma_0$ une réflexion générique associée à ce carré. Il est clair
que $\sigma_0$ conjugue les réflexions génériques dont l'axe coupe $R_1$
avec les réflexions génériques dont l'axe coupe $R_2$. Par conséquent le
groupe engendré par la donnée d'une réflexion générique dans chaque carré
de $R$ contient au moins une réflexion générique par carré de $E^u$.
\end{proof}

\par L'action isométrique de $\Isom(E^u)$ sur l'espace des feuilles
$\mcE^u$ définit un morphisme $\pi$ de $\Isom(E^u)$ dans le groupe
d'isométrie $\Isom(\mcE^u)$. Notons $\Pi$, $\Pi_\gen$ et $\Pi^{+,K}$
les projections par $\pi$ des groupes $\Isom(E^u)$, $\Isomgen(E^u)$ et
$\Isom^{+,K}(E^u)$.  Soit $H\simeq \Isom(f)$ un sous-groupe de $\Isom(E^u)$
comme dans la preuve de la proposition~\ref{prop:isom_gen}. Le groupe $\Pi$
est alors engendré par le sous-groupe distingué  $\Pi_\gen$ 
et par 
$\pi(H)\simeq \Isom(f)$. On a donc une suite exacte scindée
\begin{equation}
\label{equa:se_pi}
\xymatrix{1 \ar[r]& \Pi_\gen  \ar[r] & \Pi  \ar[r] &
  \Isom(f)  \ar[r] & 1}.
\end{equation}

\begin{rema}
Si $E^u$ n'est pas élémentaire, le groupe $\Isom(\mcE^u)$ est beaucoup plus gros
que~$\Pi$ car il contient des éléments qui échangent les deux branches associées
à un couple arbitraire de points non séparés.
\end{rema}

\begin{prop}
\label{prop:isom_Eu_elem} 
Soit $E^u$ élémentaire à courbure non constante.
\begin{enumerate}
 \item\label{ass:kerpi} On a $\ker \pi= \Isomgen(E^u)$ et $\Pi$ est
   isomorphe à $\Isom(f)$.
 \item Le quotient $\Isom(E^u)/\Isom^0(E^u)$ est isomorphe à $\Z/2\Z \times
   \Isom(f)$
 et la projection de $\Isom(E^u)$ sur  $\Isom(E^u)/\Isom^0(E^u)$ est
 scindée.
\end{enumerate}
\end{prop}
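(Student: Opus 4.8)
The plan is to treat the two assertions separately, obtaining (1) almost for free and concentrating the real work on the splitting in (2).

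For (1), the key observation is that when $E^u$ is elementary the leaf space $\mcE^u$ is separated, hence isometric to the interval $I$ via the transverse coordinate $\overline{x}$ (Proposition~\ref{prop:struct_transv_killing}). Under this identification the action $\pi$ on $\mcE^u$ is nothing but the action on the transverse coordinate, i.e. the morphism $\theta:\Isom(E^u)\to\Isom(f)$ of Proposition~\ref{prop:isom_gen}: every isometry $\ga$ satisfies $x\circ\ga=\theta(\ga)\circ x$ with $\theta(\ga)\in\Isom(f)\subset\Isom(I,dx^2)$. Therefore $\ker\pi=\ker\theta=\Isomgen(E^u)$ by the exact sequence~\eqref{equa:se_isom_gen}, and $\Pi=\pi(\Isom(E^u))\cong\Isom(E^u)/\Isomgen(E^u)\cong\Isom(f)$.

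For (2), I would first describe $\Isomgen(E^u)/\Isom^0(E^u)$. In the elementary case the generic reflections form a single one-parameter family (Proposition~\ref{prop:refl_loc_generique}), each an involution reversing $K^u$, and a direct computation in adapted coordinates shows that the product of two of them is a flow element; hence $\Isomgen(E^u)=\Isom^0(E^u)\rtimes\langle\sigma\rangle$ for any fixed generic reflection $\sigma$, so $\Isomgen(E^u)/\Isom^0(E^u)\cong\Z/2\Z$, the nontrivial class consisting exactly of the indirect ($K^u$-reversing) elements. Combining this with the split decomposition $\Isom(E^u)=\Isomgen(E^u)\rtimes H$, $H\cong\Isom(f)$, of Proposition~\ref{prop:isom_gen} yields $\Isom(E^u)/\Isom^0(E^u)\cong(\Z/2\Z)\rtimes\Isom(f)$. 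To see that the semidirect product is direct, note that for $h\in H$ the element $h\sigma h^{-1}$ is again indirect (conjugation preserves orientation type on the orientable surface $E^u$) and lies in $\Isomgen(E^u)$ by normality, hence belongs to the nontrivial $\Z/2\Z$-class; thus $H$ acts trivially on $\Isomgen(E^u)/\Isom^0(E^u)$ and the quotient is $\Z/2\Z\times\Isom(f)$.

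The crux is the splitting, and I expect it to be the main obstacle, because the section $H$ furnished by Proposition~\ref{prop:isom_gen} does \emph{not} commute with $\sigma$ in general. Writing $h\sigma h^{-1}=\Phi^{t(h)}\sigma$ (with $\Phi$ the flow of $K^u$) defines a function $t:H\to\R$; expanding $(h_1h_2)\sigma(h_1h_2)^{-1}$ and using $h\Phi^s h^{-1}=\Phi^{\epsilon(h)s}$, where $\epsilon(h)=\pm1$ records whether $h$ preserves or reverses $K^u$, one checks the cocycle identity $t(h_1h_2)=t(h_1)+\epsilon(h_1)\,t(h_2)$. I then replace each $h$ by $\widetilde h=\Phi^{-t(h)/2}h$. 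Since $\Isom^0(E^u)\cong\R$ is uniquely $2$-divisible, $s=-t/2$ satisfies the same cocycle relation, so $h\mapsto\widetilde h$ is a homomorphism onto a subgroup isomorphic to $\Isom(f)$; and the choice $2s(h)=-t(h)$ forces $\widetilde h\,\sigma=\sigma\,\widetilde h$ (using $\sigma\Phi^u\sigma=\Phi^{-u}$). Because $H\cap\Isomgen(E^u)=\{1\}$ (Proposition~\ref{prop:class_isom_rubans}), one gets $\langle\sigma\rangle\cap\{\widetilde h:h\in H\}=\{1\}$, so $\langle\sigma\rangle\times\{\widetilde h:h\in H\}\cong\Z/2\Z\times\Isom(f)$ is a subgroup mapping isomorphically onto $\Isom(E^u)/\Isom^0(E^u)$ — the desired section. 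The only points needing care are that $\pi$ genuinely coincides with $\theta$ (separatedness of $\mcE^u$ in the elementary case), that products of two generic reflections exhaust $\Isom^0(E^u)$, and the cocycle verification; the remainder is bookkeeping with the already-established semidirect structure.
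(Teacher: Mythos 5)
Your argument is correct, and part (1) together with the identification $\Isom(E^u)/\Isom^0(E^u)\simeq\Z/2\Z\times\Isom(f)$ follows essentially the paper's own route (the paper likewise notes that the generic transformations act trivially on the leaves of $K^u$ and that the lifts of $\Isom(f)$ commute with $\sigma$ modulo $\Isom^0(E^u)$). Where you genuinely diverge is on the splitting, which you rightly identify as the crux. The paper disposes of it in one geometric line: $\Isom^0(E^u)$ acts simply transitively on the leaves of the foliation orthogonal to $K^u$, so the stabilizer of one such leaf is a complement to $\Isom^0(E^u)$ and hence a section. You instead run an explicit cocycle computation, writing $h\sigma h^{-1}=\Phi^{t(h)}\sigma$, checking $t(h_1h_2)=t(h_1)+\epsilon(h_1)t(h_2)$, and using the unique $2$-divisibility of $\R$ to correct each lift to $\widetilde h=\Phi^{-t(h)/2}h$. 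Your computation is right (in particular $\widetilde h\sigma\widetilde h^{-1}=\Phi^{t(h)+2s(h)}\sigma=\sigma$ via $\sigma\Phi^u\sigma=\Phi^{-u}$), and it is worth noticing that the two constructions produce the same subgroup: since $\widetilde h$ commutes with $\sigma=\sigma_{\gamma_0}$ it preserves $\mathrm{Fix}(\sigma)=\gamma_0$, so $\langle\sigma\rangle\cdot\{\widetilde h\}$ is exactly the paper's stabilizer of the leaf $\gamma_0$. The paper's argument is shorter and makes the geometric reason for the splitting transparent; yours is more mechanical but has the merit of exhibiting the section by an averaging formula that would survive in any setting where the kernel is a uniquely $2$-divisible central-up-to-sign one-parameter group, without needing the simply transitive action on $\mcK^\perp$-leaves.
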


\begin{proof}
Ici $E^u$ est un ruban associé à une fonction $f$ qui ne s'annule pas. Les
transformations génériques agissent trivialement sur les feuilles de $K^u$, d'où
l'assertion ({\ref{ass:kerpi}}). D'après la
proposition~\ref{prop:class_isom_rubans}-(2), le groupe $\Isom(E^u)$ est
isomorphe à un produit semi-direct $\Isomgen(E^u)\rtimes\Isom(f)$, où
$\Isomgen(E^u)$ est engendré par $\Isom^0(E^u)$ et une réflexion générique
$\sig$ globale. Les symétries éventuelles de $f$ (relevées par une section de
$\Isom(E^u) \to \Isom(f)$) commutent avec $\sig$ modulo $\Isom^0(E^u)$, d'où
$\Isom(E^u)/\Isom^0(E^u)\simeq \Z/2\Z\times \Isom(f)$. Enfin, $\Isom^0(E^u)$
agit simplement et transitivement sur les feuilles du feuilletage orthogonal au
champ $K^u$. Par suite, le stabilisateur d'une de ces feuilles définit une
section de la projection modulo $\Isom^0(E^u)$.
\end{proof}

\begin{theo}[action sur l'espace des feuilles]
\label{theo:groupe_isom_Eu} 
Soit $E^u$ non élémentaire et à courbure non constante.
Alors on a $\ker \pi= \Isom^0(E^u)$ et la suite exacte 
\begin{equation}
\label{equa:pi0_isom}
\xymatrix{0 \ar[r]& \Isom^0(E^u) \ar[r] & \Isom(E^u) \ar[r]^{~\pi} &
  \Pi \ar[r] & 1}
\end{equation}
est scindée. De plus,   $\Isom^{+,K}(E^u)$ est isomorphe au produit direct
$ \Isom^0(E^u)\times \Pi^{+,K}$.
\end{theo}

\begin{proof} Fixons un ruban maximal $R$
de $E^u$. On peut choisir un ensemble de réflexions génériques 
$\Sigma=\{\sig_\al; \al\in S\}$, bijectivement associé aux carrés de $R$, de sorte
que deux réflexions contiguës aient des axes orthogonaux (procéder par
récurrence dans chaque composante du graphe de contiguïté
$\mcG_f$). Notons $G_\gen$ le sous-groupe engendré par
$\Sigma$ et montrons que $ G_\gen\cap\Isom^0(E^u)$ est trivial. 
Soit $\ga=\sig_{\al_k}\ldots\sig_{\al_1} \in  G_\gen\cap\Isom^0(E^u)$
avec $\al_i\neq \al_{i+1}$ ($1\leq i \leq k-1$) et $k$ minimal.
Posons $R_0=R$, $R_i=\sig_{\al_i} (R_{i-1})$ ($1\leq i \leq k$). Le cycle 
de rubans $(R_i)_{i=0}^k$ est simple par minimalité de $k$.  
D'après le 
lemme~\ref{lemm:cycl_rubans}, on a $k=4$ et $\ga=(\sig_\al\sig_\be)^2$
avec $\sig_\al$ et $\sig_\be$ contiguës (donc d'axes orthogonaux),
d'où $\ga = 1$. 
D'après la fin de la preuve de la proposition~\ref{prop:isom_gen}, tout
carré de $E^u$ est stabilisé par une réflexion appartenant à
$G_\gen$, unique par ce qui précède.  Ainsi $\pi$ induit
un isomorphisme de $G_\gen$ sur $\Pi_\gen$.

\par Soit $H\simeq \Isom(f)$ l'image d'une section de
\eqref{equa:se_isom_gen}, $H$ stabilisant le ruban $R$ (preuve de la
proposition~\ref{prop:isom_gen}). Notons $G$ le sous-groupe de $\Isom(E^u)$
engendré par $G_\gen$ et $H$, de sorte que $\pi(G)=\Pi$. La restriction
$\pi_{|G}$ sera injective si et seulement si $H$ normalise $G_\gen$, ce qui équivaut à
dire que $H$ stabilise la réunion $\mcA_S$ des axes des réflexions
$(\sig_\al)_{\al\in S}$.  On rappelle que deux axes contigus  se
coupent orthogonalement en un point selle adhérent au ruban~$R$.  Si $H$
est trivial, alors $G=G_\gen$ définit une section de~$\pi$. Supposons que
$H$ n'est pas trivial ; nous allons modifier $H$ et $\Sigma$ afin que
$H(\mcA_S)=\mcA_S$.  Suivant les cas, $H$ est engendré par une involution,
un élément d'ordre infini, ou deux involutions, les involutions ayant un
seul point fixe. Noter qu'en composant indépendamment ces générateurs par
des éléments de $\Isom^0(E^u)$, on obtient encore un relevé de $\Isom(f)$
dans le stabilisateur de $R$.  Le groupe $\Isom(f)$ agit sur l'ensemble
$\pi_0(\mcG_f)$ des composantes connexes du graphe de
contiguïté~$\mathcal G_f$. À toute composante $T\in \pi_0(\mcG_f)$
correspond une composante $\mcA_T$ de $\mcA_S$, formée par la réunion des
axes des réflexions $(\sig_s)$ associées aux sommets de $T$.  On se donne
une tranche $\mathcal T$ de l'action de $\Isom(f)$ sur $\pi_0(\mcG_f)$,
c.-à-d. une composante par orbite, puis on note $S_0\subset S$ l'ensemble
des sommets des éléments de~$\mathcal T$ et on pose $\Sigma_0 =\{\sig_\al;
\al\in S_0\}$.  Si l'action est libre, il suffit de propager les éléments
de $\Sigma_0$ à toutes les bandes de $R$ en conjuguant par~$H$.  S'il
existe un stabilisateur infini, le graphe $\mcG_f$ est alors connexe ainsi
que l'ensemble $\mcA_S$, et $H$ est isomorphe à~$\Z$ ou à $D_\infty$. On
modifie des générateurs de $H$ comme indiqué plus haut pour avoir
$H(\mcA_S)=\mcA_S$.  Reste le cas où tous les stabilisateurs non triviaux
sont d'ordre 2. Ils se répartissent alors suivant une orbite (si
$H\simeq\Z/2\Z$) ou deux orbites (si $H\simeq D_\infty$), et les
involutions fixant les éléments $T \in \mathcal T$ associés engendrent
$H$. On modifie ces involutions par des éléments de $\Isom^0(E^u)$ pour
qu'elles fixent $\mcA_T$, puis on conclut comme dans le cas où l'action est
libre en propageant $\Sigma_0$ par les conjugaisons de $H$. Enfin, les
éléments de $\Isom^{+,K}(E^u)$ commutent avec $\Isom^0(E^u)$ (ils
préservent le champ), d'où la dernière assertion.
\end{proof}

\begin{rema}
Les symétries centrales autour des points selles
seront appelées {\em éléments elliptiques génériques}.  Le sous-groupe
$\Isomsel(E^u)$ qu'elles engendrent est distingué dans $\Isom(E^u)$, inclus
dans $\Isomgen(E^u)$ et trivial si $E^u$ n'admet aucune selle ; à
l'inverse, si $f$ n'a que des zéros simples, $\Isomsel(E^u)$ est d'indice 2
dans l'image de toute section au-dessus de $\Pi_\gen$.
\end{rema}

\subsection{Sous-groupes discontinus}
\label{subs:sous_gr_discontinus} 

Nous supposons ici que $E^u$ n'est ni à courbure constante, ni élémentaire.
Nous poursuivons l'étude dynamique de $\Isom(E^u)$ en négligeant, dans un
premier temps, l'action du flot de $K^u$.  Soit $G$ l'image d'une section
de \eqref{equa:pi0_isom}, fixée dans tout
le~\S\ref{subs:sous_gr_discontinus}. Pour tout sous-groupe $J$ de~$G$, on
pose 
\begin{equation}
\label{equa:gen_plus_K}
J_\gen= J\cap \Isom_\gen(E^u), ~ J^+=J \cap \Isom^+(E^u), ~
J^K=J \cap \Isom^K(E^u)~
\mathrm{et}~ J^{+,K}=J^+\cap J^K.
\end{equation} 
L'objectif de cette partie est de
mettre en évidence l'existence de quotients lisses et séparés de $E^u$,
associés à certains sous-groupes de $G^K$. 
Soit $R$ un ruban maximal de $E^u$, fixé dans toute la suite.
Soit
$\mcG_f=(S,\mfE)$ le graphe de contiguïté de~$f$ ; rappelons que $S$ est
l'ensemble des composantes connexes de $\{f\neq 0\}$ et que 
$\mfE$ est l'ensemble des paires $(\al,\be)\in S^2$ séparées par un zéro simple
de $f$, voir page~\pageref{graphe_contiguite}. 
On note comme plus haut $\Sig=\{\sig_\al
; \al\in S\}$ l'ensemble des réflexions de $G_\gen$ dont l'axe coupe $R$ et
on rappelle que $\Sig$ engendre $G_\gen$. Commençons par préciser la
structure algébrique de $G_\gen$.

\begin{prop}
\label{prop:G_gen_Coxeter} L'ensemble générateur 
$\Sig=\{\sig_\al ; \al\in S\}$ et les relations $\sig_\al^2=1$ ($\al\in
S$), $(\sig_\al\sig_\be)^2 = 1$
($\{\al,\be\} \in \mfE$) forment une présentation de $G_\gen$. Autrement
dit, $(G_\gen,\Sig)$ est un système de Coxeter.
\end{prop}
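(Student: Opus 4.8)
Le plan est de montrer que l'homomorphisme naturel $\Phi : W \to G_\gen$, où $W$ désigne le groupe de Coxeter abstrait de système générateur $\{s_\al ; \al \in S\}$ et de relations $s_\al^2 = 1$ ($\al\in S$) et $(s_\al s_\be)^2 = 1$ ($\{\al,\be\}\in\mfE$), est un isomorphisme. La bonne définition et la surjectivité de $\Phi$ sont immédiates : chaque $\sig_\al$ est une involution, et pour une paire contiguë $\{\al,\be\}\in\mfE$ les axes des réflexions $\sig_\al$ et $\sig_\be$ se coupent orthogonalement en un point selle, de sorte que $(\sig_\al\sig_\be)^2 = 1$ (voir la preuve du théorème~\ref{theo:groupe_isom_Eu}). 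Comme $\Sig$ engendre $G_\gen$, on obtient la surjection voulue, et toute la difficulté réside dans l'injectivité de $\Phi$.

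Pour l'injectivité, je ferais agir $G_\gen$ sur l'ensemble des rubans maximaux de $E^u$, vus comme \emph{chambres}. Cette action est transitive (lemme~\ref{lemm:Euf_reflexive}) et même simplement transitive : d'après la proposition~\ref{prop:isom_gen}, $\Isomgen(E^u)$ agit simplement transitivement sur les géodésiques de lumière internes aux rubans, et comme chaque ruban correspond à une orbite du flot $\Isom^0(E^u)$ de telles géodésiques, la section $G_\gen \cong \Isomgen(E^u)/\Isom^0(E^u)$ agit simplement transitivement sur les rubans maximaux (on utilise ici $\ker\pi = \Isom^0(E^u)$, théorème~\ref{theo:groupe_isom_Eu}). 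Dans cette description, $\sig_\al$ envoie le ruban de base $R$ sur le ruban adjacent partageant le carré $C_\al$, et un mot $\sig_{\al_k}\cdots\sig_{\al_1}$ correspond à une galerie de rubans issue de $R$. Une relation dans $G_\gen$ équivaut alors à une galerie fermée, et il s'agit de montrer que toute galerie fermée se ramène, par les relations élémentaires (retours en arrière $\sig_\al^2$ et cycles $(\sig_\al\sig_\be)^2$ autour des selles), à la galerie triviale.

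C'est ici qu'intervient l'ingrédient géométrique clé, le lemme~\ref{lemm:cycl_rubans} : tout cycle simple de rubans est formé de $4$ rubans autour d'un point selle. Comme l'espace des feuilles $\mcE^u$ est, aux selles près, un arbre réel, les seules boucles combinatoires non triviales du graphe des chambres sont exactement les $4$-cycles associés aux selles, qui correspondent terme à terme aux relations $(\sig_\al\sig_\be)^2=1$ avec $\{\al,\be\}\in\mfE$. Pour exploiter cela proprement, j'introduirais les \emph{murs} de $E^u$ -- les feuilles du milieu des carrés (remarque~\ref{rema:feuille_milieu}) --, chacun séparant $\mcE^u$ en deux demi-espaces échangés par la réflexion correspondante. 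Le nombre de murs séparant une chambre de $R$ fournit une fonction longueur sur $G_\gen$, et l'étude de sa variation le long d'une galerie donne la condition d'échange : les galeries minimales sont précisément les mots réduits, d'où le critère standard assurant que $(G_\gen,\Sig)$ est un système de Coxeter.

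Le principal obstacle sera le passage de l'énoncé local du lemme~\ref{lemm:cycl_rubans} à la simple connexité combinatoire du graphe des chambres modulo les relations élémentaires, c'est-à-dire la vérification effective de la condition d'échange via le décompte des murs traversés. Il faudra également contrôler les paires \emph{non} contiguës $\{\al,\be\}\notin\mfE$ : les murs $m_\al$ et $m_\be$ ne se coupant alors pas dans $\mcE^u$, le produit $\sig_\al\sig_\be$ agit comme une translation hyperbolique le long de la géodésique joignant $m_\al$ à $m_\be$ dans l'arbre, donc est d'ordre infini. Cela confirme l'absence de toute relation supplémentaire pour ces paires et la cohérence de la présentation annoncée.
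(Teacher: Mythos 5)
Your setup is sound, and you have correctly isolated the two ingredients the paper itself relies on: the simply transitive action of $G_\gen$ on the maximal ribbons (so that words in $\Sig$ correspond to galleries and relations to closed galleries), and Lemma~\ref{lemm:cycl_rubans}, which says that the only simple cycles of ribbons are the $4$-cycles around saddles, i.e.\ exactly the elementary relations $(\sig_\al\sig_\be)^2=1$ with $\{\al,\be\}\in\mfE$. The problem is that your argument stops precisely where the real work begins: you defer the verification of the exchange condition (equivalently, the reduction of an arbitrary closed gallery to elementary ones) as ``the main obstacle'', and that verification is the entire content of the proposition. The wall route would also need more care than you give it: the walls should be the reflection axes inside $E^u\cong\R^2$ (properly embedded lines, hence genuinely separating), not points of the non-separated leaf space $\mcE^u$; and you cannot lean on the hyperbolic reflection-group picture of Proposition~\ref{prop:refl_hyp}, which comes later, holds only for locally finite $S'$, and whose proof depends on consequences of the present proposition (via Remark~\ref{rema:abelianise_Ggen} and Proposition~\ref{prop:action_Gprime}). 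Finally, the remark that $\sig_\al\sig_\be$ has infinite order for $\{\al,\be\}\notin\mfE$ only pins down the Coxeter matrix; it does not by itself rule out relations that are not consequences of the listed ones.

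The paper closes exactly this gap by a direct induction on the length $N$ of a reduced relation $w=\sig_1\cdots\sig_N$: the associated cycle of ribbons carries a closed immersed polygonal curve of light segments transverse to the flow; one extracts a simple closed subcurve, which by Lemma~\ref{lemm:cycl_rubans} must come from four consecutive ribbons forming a saddle $4$-cycle of the form \eqref{equa:cycle_4_rubans}; this forces $\{\al_{k-1},\al_k\}\in\mfE$ and $\sig_{k+1}=\sig_{k-1}$, and lets one rewrite $w$ as a conjugate of $(\sig_{k-1}\sig_k)^2$ times a relation of length $N-2$. To complete your proof, either carry out this induction (it is elementary and avoids the wall machinery entirely), or actually establish the exchange condition, which requires proving -- not assuming -- that the word length of an element of $G_\gen$ equals the number of reflection axes separating $R$ from its image.
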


\begin{proof}
Si $\{\al,\be\} \in \mfE$, la relation $(\sig_\al\sig_\be)^2 = 1$ est
évidemment vérifiée puisque les axes de $\sig_\al$ et $\sig_\be$ sont
orthogonaux.  Soit $w$ une relation réduite entre des éléments de
$\Sig$. En utilisant des conjugués des $\sig_\al^{2k}$ ($\al\in S,k\in \Z$),
on se ramène au cas où $w= \sig_1\sig_2\ldots\sig_N$ avec $\sig_i \in \Sig$
$(i=1,\ldots, N)$ et $\sig_{i+1}\neq \sig_i$.  Considérons la suite
$R_k=(\prod_{i=1}^k \sig_i) (R)$, $k=0,\ldots,N$. On a $R_N=R_0$ et comme
$R_{k}$ et $R_{k+1}$ sont les images de $R$ et $\sig_{k+1}(R)$ par une même
isométrie, $(R_0,\ldots,R_N)$ forme un cycle de rubans. Il existe une
courbe polygonale fermée $c=\ell_0\ldots\ell_{N-1}$ constituée de segments
de lumière $\ell_i\subset R_i$ ($i=0,\ldots,N-1$) transverses au
flot. L'entier $N$ est nécessairement pair et supérieur à 4. Nous allons
prouver par récurrence sur $N/2\geq 2$ que $w$ est produit de conjugués de
relations associées à $\mfE$.

Rappelons que deux rubans $R'$ et $R''$ qui se coupent sont échangés par
l'unique réflexion de $G_\gen$ associé au carré $R'\cap R''$. De plus, tout
cycle de 4 rubans est nécessairement de la forme
\begin{equation}
\label{equa:cycle_4_rubans}
(R',\sig(R'),\sig\sig'(R'),\sig'(R'))
\esp
\mathrm{avec}\esp(\sig\sig')^2=1,
\end{equation}
pour une unique paire $\{\sig,\sig'\}$ de réflexions de $G_\gen$
(lemme~\ref{lemm:cycl_rubans}). Posons $\sig_i=\sig_{\al_i}$ 
($\al_i\in S$). Si $N=4$, la
courbe polygonale $c$ est
simple et le cycle $(R_i)_{i=0,\ldots,3}$ est de la
forme~\eqref{equa:cycle_4_rubans} ; par suite $\sig_1$ et $\sig_2$
commutent,
$\sig_3= \sig_1$ et $w=(\sig_1\sig_2)^2$ avec $\{\al_1,\al_2\}\in \mfE$. Si
$N\geq 6$, la courbe $c$ n'est
plus simple (lemme~\ref{lemm:cycl_rubans}), mais elle reste immergée : on
peut en extraire une courbe simple $c'$ paramétrée par une restriction de
$c$. Nécessairement (lemme~\ref{lemm:cycl_rubans}, à nouveau), la courbe
$c'$ est associée à~4 rubans successifs $(R_{k-2},\ldots,R_{k+1})$ du cycle
$(R_i)$, de la forme~\eqref{equa:cycle_4_rubans}. En posant
$\ga=\sig_1\ldots \sig_{k-2}$ et $\ga'=\sig_{k+2}\ldots\sig_N$, on a
$R'=\ga(R)$, les réflexions $\sig, \sig\sig'$ et $\sig'$ étant les
conjuguées par $\ga$ de $\sig_{k-1}$, $\sig_{k-1}\sig_k$ et
$\sig_{k-1}\sig_k\sig_{k+1}$ respectivement. D'où $\{\al_{k-1},\al_k\}\in
\mfE$, $\sig_{k+1}=\sig_{k-1}$ et enfin $w=(\ga\sig_{k}^{-1})
(\sig_{k-1}\sig_{k})^2 (\sig_{k}\ga^{-1}) w'$ avec
$w'=\ga\sig_{k}^{-1}\ga'$. Le mot $w'$ est une relation de longueur $N-2$,
ce qui achève la récurrence.
\end{proof}

\begin{rema}
\label{rema:abelianise_Ggen}
Il résulte de la proposition~\ref{prop:G_gen_Coxeter} que la famille 
des degrés modulo~2 par rapport aux générateurs $(\sig_\al)_{\al \in S}$
est bien définie et induit un isomorphisme de  l'abélianisé
$G_\gen^{ab}$ sur $\Z/2\Z^{(S)}$.
\end{rema}

\begin{defi}
\label{defi:type_fini}
Une partie $S'$ de l'ensemble $S$ des composantes connexes de $\{f\neq 0\}$
est {\em localement finie} si les éléments de $S'$ ne s'accumulent pas dans
$\R$. On dira que la fonction~$f$ (ou que la surface $E^u$) est {\em de
  type fini} si $S$ est localement fini.
\end{defi}

On rappelle  que la suite exacte~\eqref{equa:se_isom_gen} 
se restreint en une suite exacte scindée 
\begin{equation}
\label{equa:se_G}
\xymatrix{1 \ar[r]& G_\gen  \ar[r] & G  \ar[r] &
  \Isom(f)  \ar[r] & 1},
\end{equation}
isomorphe à~\eqref{equa:se_pi}. Comme $E^u$ n'est pas élémentaire, le
stabilisateur de $R$ dans $G_\gen$ est trivial,
proposition~\ref{prop:class_isom_rubans}.  Par suite $G_\gen$ agit
simplement et transitivement sur les rubans maximaux de $E^u$ et le
stabilisateur~$H$ de~$R$ dans $G$ définit une section de~\eqref{equa:se_G},
en particulier $H$ est isomorphe à $\Isom(f)$. Soit $S'$ une partie de $S$
invariante par $\Isom(f)$. On pose $\Sig'=\{\sig_\al;\al\in S'\}$ 
et on note  $G'$ %
le sous-groupe %
de $G$ engendré par $\Sig'$ et $H$.

\begin{prop}
\label{prop:action_Gprime}
Soit $E^u$ non élémentaire et soit $S'\subset S$ invariante
par $\Isom(f)$.  
\begin{enumerate}
\item L'action de $G'$ sur $E^u$ est propre et discontinue\footnote{Pour tout
compact $K$ de $E^u$, l'ensemble $\{\ga\in G';\ga(K)\cap K \neq \emptyset \}$
est fini.} si et seulement si $S'$ est  localement finie.
\item L'action de $G^+$ sur $E^u$ est discontinue\footnote{Tout point
de $E^u$ admet un voisinage $U$ tel que 
$\{\ga\in G^+;\ga(U)\cap U \neq \emptyset \}$ soit fini.}.
\end{enumerate}
En particulier quand $f$ est de type fini (par exemple analytique),
l'action de $G$ sur $E^u$ est propre et discontinue.
\end{prop}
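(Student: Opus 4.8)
The plan is to reduce the whole statement to a properness question on the one–dimensional leaf space $\mcE^u$, where the local injectivity of the transverse coordinate makes local finiteness of walls transparent. First I would record the algebraic structure: since $S'$ is $\Isom(f)$–invariant, the proof of theorem~\ref{theo:groupe_isom_Eu} lets me choose the section $H\simeq\Isom(f)$ so that it permutes $\{\sig_\al;\al\in S'\}$; hence $H$ normalizes $G'_\gen:=\langle\sig_\al;\al\in S'\rangle$ and $G'=G'_\gen\rtimes H$. Next, because $G$ is a section of~\eqref{equa:pi0_isom}, the map $\pi|_{G}$ is injective, and the leaf projection $P:E^u\to\mcE^u$ is continuous and $G$–equivariant away from the isolated zeros of $K^u$. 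Therefore, for any compact $Q\subset E^u$ one has $\{\ga\in G';\ \ga Q\cap Q\neq\emptyset\}\subset\{\ga\in G';\ \pi(\ga)P(Q)\cap P(Q)\neq\emptyset\}$, so it suffices to prove that $\pi(G')$ acts properly discontinuously on $\mcE^u$ (the finitely many zero–orbits being added back by hand).

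For the implication ``$S'$ locally finite $\Rightarrow$ proper'' I would first dispose of $H$: generic reflections preserve the transverse coordinate (remarque~\ref{rema:preservation_indices}, proposition~\ref{prop:isom_gen}) while $H$ acts on it through $\Isom(f)\subset\Isom(I,dx^2)$, and $\Isom(f)$ (trivial, $\Z/2\Z$, $\Z$ or $D_\infty$) acts properly discontinuously on $I$; writing $\ga=nh$ with $n\in G'_\gen$, $h\in H$ and using $\bar x\circ\pi(h)=\phi_h\circ\bar x$, only finitely many $h$ satisfy $\phi_h(\bar x P(Q))\cap\bar x P(Q)\neq\emptyset$, which reduces matters to $G'_\gen$. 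The walls of $G'_\gen$ in $\mcE^u$ are the segments over the $\overline{J_\al}$ ($\al\in S'$) and their $G'_\gen$–translates. Since $\bar x:\mcE^u\to I$ is a local isometry (proposition~\ref{prop:struct_transv_killing}), hence locally injective, a compact subset of $\mcE^u$ is covered by finitely many charts on which $\bar x$ is injective; on each such chart at most one wall lies over each $J_\al$, and only finitely many $\al\in S'$ are relevant by local finiteness. Thus the walls form a locally finite family, and the standard reflection–group count — chambers being the maximal ribbons, on which $G_\gen$ acts simply transitively with Coxeter presentation by proposition~\ref{prop:G_gen_Coxeter}, gallery length equal to word length, and point–stabilizers finite — gives proper discontinuity of $G'_\gen$ on $\mcE^u$.

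The converse is cleaner to run directly on $E^u$: if $S'$ is not locally finite, choose an accumulation point $z$ of the corresponding zeros; in a single adapted chart~\eqref{equa:refl_loc_generique} the reflections $\sig_\al$ (for $\al\in S'$ with $J_\al\to z$) fix axes $y=G(x)+\be_\al$ that all cross a fixed compact neighborhood of a point of the light leaf $\{x=z\}$, so $\sig_\al Q\cap Q\neq\emptyset$ for infinitely many $\al$ and the action is not proper. For part~(2) the same leaf–space reduction applies, but discontinuity now holds unconditionally precisely because the \emph{indirect} generic reflections — the only source of accumulation — are excluded from $G^+$. Reducing again to $G^+_\gen$ via properness of $\Isom(f)$ on $I$, I would split its elements into the central symmetries at saddles (each fixing a single, isolated saddle, hence locally finite) and the elements acting freely on leaves (those fixing no leaf, by $1$–jet rigidity: a direct $K$–preserving isometry fixing a leaf is the identity). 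The decisive point is that any such nontrivial element displaces every point of $E^u$ by a definite amount, since realizing it as an even product of reflections forces crossing a light leaf, along which the flow parameter $G=\int(-1/f)$ is unbounded; this uniform displacement, rather than the possibly shrinking square widths, is what precludes accumulation and yields discontinuity of $G^+$.

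Finally, the ``in particular'' is the case $S'=S$ of part~(1): $f$ of finite type means exactly that $S$ is locally finite (définition~\ref{defi:type_fini}), and then $G'=G$. The hard part will be the forward direction of~(1), namely converting local finiteness of $S'$ into local finiteness of the full \emph{conjugated} wall family; the device that makes this manageable is descending to the one–dimensional $\mcE^u$, where local injectivity of $\bar x$ removes the fibre–direction ambiguity that obstructs a naive count upstairs in $E^u$. The two residual technical checks I would take care over are that the equivariant projection $P$ genuinely transports properness back across the isolated zeros of $K^u$, and that the chamber/gallery argument remains valid on the non–Hausdorff tree $\mcE^u$ — for which proposition~\ref{prop:G_gen_Coxeter} together with the simple transitivity of $G_\gen$ on ribbons is exactly what is required.
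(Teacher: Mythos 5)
Your overall strategy for (1) — peel off $H$ via its action on the transverse coordinate, then reduce to the generic subgroup generated by $\Sig'$ — is the same reduction the paper performs. But the heart of your forward implication, the ``standard reflection--group count'' on $\mcE^u$, is asserted where it actually needs proof, and the needed facts are not available off the shelf here. The maximal ribbons are not chambers: two of them share each square, so they neither are disjoint nor tile $\mcE^u$; and since only $S'$ (not $S$) is assumed locally finite, a compact subset of $E^u$ can meet infinitely many maximal ribbons and infinitely many branches of $\mcE^u$ (the paper notes that $\mcE^u$ is locally quasi-compact \emph{only} when the doubled segments do not accumulate). So ``$Q$ meets finitely many chambers'', ``gallery length equals word length'', and the transport of properness back from the non-Hausdorff $\mcE^u$ to $E^u$ are each genuine obstacles, not residual checks. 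You also cannot import the hyperbolic reflection-group picture of proposition~\ref{prop:refl_hyp}, since its proof uses the present proposition as input. The paper avoids all of this with a sequence argument: if properness fails, after killing $H$ one gets $\ga_n(q_n)\to q'$, $q_n\to q$ with $\ga_n\in G'_\gen$ distinct; eventually the $q_n$ and the $\ga_n(q_n)$ each lie in a single maximal ribbon, simple transitivity of $G_\gen$ on ribbons forces $\ga_n=w\sig_{\al_0}\sig_{\al_n}w^{-1}$, and the mod-$2$ degrees coming from the Coxeter presentation (remarque~\ref{rema:abelianise_Ggen}) force $\al_n\in S'$, so $S'$ accumulates. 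Some such device is what your count is missing.

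Part (2) is where the proposal actually goes wrong. Your diagnosis — that the indirect generic reflections are ``the only source of accumulation'' and are excluded from $G^+$ — does not hold up: when $S$ is not locally finite, $G^+$ still contains the \emph{direct} elements $\sig_{\al_0}\sig_{\al_n}$ along which properness fails (this is exactly why (2) claims only discontinuity, not properness). Your proposed mechanism, a ``uniform displacement'' derived from the unboundedness of $\int(-1/f)$ along a light leaf, is not a proof: there is no invariant metric in which to measure displacement (producing one is essentially the goal), and even a pointwise displacement bound would not by itself prevent infinitely many returns to a fixed neighborhood. The correct mechanism is different and simpler: an element of $G^+$ is a direct isometry, hence preserves each of the two light foliations separately, so if $\ga(U_0)\cap U_0\neq\emptyset$ for a neighborhood $U_0$ contained in a maximal ribbon $R_0$, then $\ga$ must stabilize $R_0$; the stabilizer of $R_0$ in $G$ is isomorphic to $\Isom(f)$ acting through the transverse coordinate, and this action on the ribbon is properly discontinuous, which yields the required neighborhood $U$. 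Without this ribbon-stabilization step your part (2) does not close. The converse of (1) and the ``in particular'' are fine as you state them.
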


\begin{proof} Quitte à raisonner sur un sous-groupe d'indice fini de  $G'$, on
peut supposer que $\Isom(f)$ est trivial ou isomorphe à $\Z$.
Si l'action de $G'$ n'est pas proprement discontinue, alors il existe une
suite convergente $(p_n)$ de points de $E^u$ et une suite injective $(g_n)$
d'éléments de~$G'$ telle que la suite $(g_n(p_n))$ converge.  Comme $S'$
est invariante par $\Isom(f)$, on a $h\Sig' h^{-1}= \Sig'$ pour tout $h\in
H$.  Pour tout $n\in \N$, il existe donc $\gamma_n\in
G'_{\text{gen}}=G'\cap G_{\text{gen}}$ et $h_n\in H$ tels que
$g_n=\gamma_n\,h_n$. Puisque $H$ définit une section de \eqref{equa:se_G},
une telle écriture est unique dans $G=G_\gen H$; par suite le groupe
$G'_\gen$ est engendré par $\Sig'$.  Soit $x$ une coordonnée transverse à
$K^u$ comme dans la proposition~\ref{prop:struct_transv_killing}.  Pour
tout $g\in \Isom(E^u)$, on désigne par $\overline g$ l'élément de
$\Isom(f)\subset \Isom(I,dx^2)$ vérifiant $x\circ g=\overline g \circ
x$. On désigne par $x_n$ la coordonnée transverse de~$p_n$.  Pour tout
$n\in \N$, il existe $k_n\in \Z$ tels que ${\overline h}_n(x_n)={\overline
  g}_n(x_n)=x_n+ k_nT$, où $T$ est soit nul, soit une période minimale de
$f$.  Les suites $(x_n)$ et $({\overline g}_n(x_n))$ étant convergentes, on
voit que la suite $(h_n)$ est forcément stationnaire.  En posant
$q_n=h_n(p_n)$, on obtient une suite convergente $(q_n)$ dans $E^u$ telle
que $(\gamma_n(q_n))$ converge. Comme les points selles forment un ensemble
discret et fermé et sont de stabilisateur fini, on peut supposer que les
suites $(q_n)$ et $(\gamma_n(q_n))$ sont chacune contenues dans un
ruban. Ces rubans étant forcément distincts, il existe un élément $w\in
G_\gen$ et des réflexions génériques $\sigma_{\alpha_n}$ avec $\alpha_n\in
S$ tels que $\gamma_n=w\sigma_{\alpha_0}\sigma_{\alpha_n}w^{-1}$.  Le degré
modulo $2$ de $\gamma_n$ en $\sigma_{\alpha_n}$ est donc impair, voir
remarque~\ref{rema:abelianise_Ggen}, et $\alpha_n\in S'$. Par conséquent
$S'$ n'est pas localement finie. La réciproque est évidente, ce qui prouve
l'assertion (1).

Soit $p$ un point de $E^u$. On veut lui trouver un voisinage $U$ tel que
$\{\ga\in G^+;\ga(U)\cap U \neq \emptyset \}$ soit fini. Si $p$ est un point
selle, on prend pour $U$ la selle maximale qui le contient.  Sinon $p$ possède
un voisinage $U_0$ contenu dans un ruban $R_0$, comme l'action de $G^+$
préserve les feuilletages de lumière, tout $ \ga\in G^+$ tel que $\ga(U_0)\cap
U_0 \neq \emptyset$ appartient au stabilisateur de $R_0$. On trouve donc
facilement un voisinage $U$ convenable.
\end{proof}

\begin{coro}\label{coro:certain}
 Soit  $X$ une surface lorentzienne analytique, simplement connexe et munie d'un
champ de Killing non trivial et complet. Si $X$ n'est pas à courbure constante
alors $\Isom (X)$ s'\'ecrit comme le produit semi-direct de $\Isom^0(X)\simeq
\R$ et d'un sous-groupe discret $G_X$ agissant proprement sur $X$.
\end{coro}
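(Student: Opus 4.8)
Le plan est de transporter à $X$, au moyen d'une développante, la structure du groupe d'isométrie de l'extension universelle, en contrôlant soigneusement le défaut d'injectivité de cette développante. Comme $X$ est analytique, simplement connexe et saturée, la proposition~\ref{prop:analy_refl}-(1b) assure que $(X,K)$ est de classe $[f]$, où $f$ est analytique, restriction d'une fonction analytique inextensible $\hat f$. La courbure valant $\hat f''/2$ et $X$ n'étant pas à courbure constante, on a $\hat f^{(3)}\neq 0$ ; l'extension $E^u:=E^u_{\hat f}$ est donc analytique, à courbure non constante, et de type fini. On peut supposer $E^u$ non élémentaire, le cas élémentaire résultant directement de la proposition~\ref{prop:isom_Eu_elem}. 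Le lemme~\ref{lemm:refl_modelee}, la remarque~\ref{rema:developpante} et la remarque~\ref{rema:Eug_Euf} fournissent une développante $\mcD:(X,K)\to(E^u,K^u)$, isométrie locale vérifiant $\mcD_*K=K^u$. Le théorème~\ref{theo:groupe_isom_Eu} donne $\Isom(E^u)=\Isom^0(E^u)\rtimes G$ avec $G$ discret, et la proposition~\ref{prop:action_Gprime}, $\hat f$ étant analytique, garantit que $G$ agit proprement et discontinûment sur $E^u$.

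Comme $X$ n'est pas à courbure constante, on a $\dim\mfK_X=1$ (preuve du lemme~\ref{lemm:isom_isomK}), de sorte que $\Isom^0(X)$ est le groupe à un paramètre engendré par le flot de $K$. Ce flot est injectif : $\mcD$ conjugue le flot de $K$ à celui de $K^u$, et une relation $\Phi_K^{t}=\Id$ forcerait $\Phi_{K^u}^{t}=\Id$ sur $\mcD(X)$, donc sur $E^u$, ce qui est exclu puisque $\Isom^0(E^u)\simeq\R$ (les orbites de lumière de $K^u$ sont des droites, interdisant toute périodicité). Ainsi $\Isom^0(X)\simeq\R$. Je construis ensuite un morphisme $\Theta:\Isom(X)\to\Isom(E^u)$ : pour $\phi\in\Isom(X)$, l'isométrie locale $\mcD\circ\phi\circ(\mcD|_U)^{-1}$ se prolonge, par analyticité, en une isométrie globale $\Psi=\Theta(\phi)$ (lemme~\ref{lemm:ext_isom_analytique}), l'égalité $\mcD\circ\phi=\Psi\circ\mcD$ se propageant à $X$ tout entier par connexité et analyticité. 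On vérifie que $\Theta$ est un morphisme continu induisant un isomorphisme $\Isom^0(X)\xrightarrow{\sim}\Isom^0(E^u)$.

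L'étape décisive est de montrer que $\mcD$ est un revêtement sur son image. Par le lemme~\ref{lemm:relevement_chemins}-(1), il suffit de relever les géodésiques de lumière brisées contenues dans $\mcD(X)$ : les arcs internes aux rubans se relèvent (les rubans de $X$ et de $E^u$ sont isométriques), les orbites de lumière de $K^u$ se relèvent le long du flot de $K$, et un éventuel passage par un point selle $s\in\mcD(X)$ se relève car $\mcD^{-1}(s)$ contient un zéro de $K$. Comme $X$ est simplement connexe, $\mcD:X\to\mcD(X)$ est donc le revêtement universel : $\Gamma:=\ker\Theta$ en est le groupe de revêtement, agissant librement et proprement discontinûment, et $\Lambda:=\Theta(\Isom(X))$ coïncide avec le stabilisateur de l'ouvert $\mcD(X)$ dans $\Isom(E^u)$ (toute isométrie de $E^u$ préservant $\mcD(X)$ se relève à $X$ par simple connexité).

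Il reste à assembler la décomposition et à établir la propreté. Comme $\Lambda\supseteq\Isom^0(E^u)$, on a $\Lambda=\Isom^0(E^u)\rtimes G_\Lambda$ avec $G_\Lambda:=G\cap\Lambda$ discret, agissant proprement et discontinûment sur $E^u$ (sous-groupe de $G$). Je pose $G_X:=\Theta^{-1}(G_\Lambda)$ ; de $G_\Lambda\cap\Isom^0(E^u)=\{1\}$ et $\Gamma\cap\Isom^0(X)=\{1\}$ on tire $G_X\cap\Isom^0(X)=\{1\}$, donc $G_X$ rencontre trivialement la composante neutre et est discret, et l'égalité $\Isom(E^u)=\Isom^0(E^u)\,G$ jointe à la surjectivité de $\Theta$ sur les composantes neutres donne $\Isom(X)=\Isom^0(X)\rtimes G_X$. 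Enfin, pour un compact $C\subset X$, l'image $\mcD(C)$ est compacte et $\mcD(\phi(C))=\Theta(\phi)(\mcD(C))$ : la propreté discontinue de $G$ sur $E^u$ borne $\Theta(\phi)$ à une partie finie de $G_\Lambda$, et celle de $\Gamma$ sur $X$ borne chaque fibre, d'où la propreté de l'action de $G_X$. Le principal obstacle est précisément le défaut d'injectivité de la développante — $X$ ne se plonge pas nécessairement dans $E^u$ (songer au revêtement universel de $E^u$ privée d'un point selle) — et c'est l'argument de revêtement (lemme~\ref{lemm:relevement_chemins}) joint au suivi du noyau $\Gamma$ à travers le produit semi-direct et l'argument de propreté qui constitue le cœur de la preuve.
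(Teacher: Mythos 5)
Votre architecture générale (morphisme $\Theta:\Isom(X)\to\Isom(E^u)$ obtenu par prolongement analytique via le lemme~\ref{lemm:ext_isom_analytique}, section $G_X=\Theta^{-1}(G)$, réduction de la propreté à celle de $G$ sur $E^u$ et de $\ker\Theta$ sur $X$) est exactement celle de la preuve du texte. Mais l'étape que vous qualifiez de décisive contient une erreur : l'affirmation que $\mcD$ est un revêtement sur son image est fausse en général. Le texte le dit explicitement à la fin du \S\ref{subs:uniformisation} : les surfaces maximales simplement connexes réflexives de classe $[f]$ sont étalées au-dessus d'un ouvert de $E^u_f$ {\it via} une développante \emph{qui généralement n'est pas un revêtement sur son image, à cause de la présence de selles à la fois dans la surface et à l'infini}. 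Votre argument de relèvement échoue précisément au passage d'un point selle $s\in\mcD(X)$ : le fait que $\mcD^{-1}(s)$ soit non vide ne suffit pas, car le relevé du chemin construit jusqu'à l'approche de $s$ vit dans un \og feuillet\fg\ déterminé de $X$, et dans ce feuillet la selle peut manquer (selle à l'infini) alors qu'un autre feuillet la contient. On fabrique de tels exemples en itérant la construction $E_P$ : en retirant \emph{certains} antécédents d'une selle $p_j$ de $E^u_f$ puis en passant au revêtement universel, on obtient une surface analytique simplement connexe dont l'image de la développante contient encore $p_j$, mais au-dessus de laquelle certains feuillets sont des selles et d'autres des bouts incomplets. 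Tout ce qui en dépend dans votre rédaction — $\Gamma=\ker\Theta$ comme groupe de revêtement agissant proprement, l'identification de $\Lambda$ au stabilisateur de $\mcD(X)$ — reste donc sans justification.

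La réparation est celle du texte, et elle n'utilise que le caractère \emph{étale} de $\mcD$ : si $s_1,s_2$ sont deux sections locales de $\mcD$ sur un ouvert connexe, elles sont égales ou d'images disjointes ; donc pour $U$ connexe avec $\mcD_{|U}$ injective, les ouverts $\Phi(U)$ ($\Phi\in\ker\Theta$) sont deux à deux disjoints, puisque $\Phi\circ(\mcD_{|U})^{-1}$ est encore une section de $\mcD$ au-dessus de $\mcD(U)$. Un argument de suite (si $(x_n,\Phi_n(x_n))$ converge, alors $(\Phi_n)$ est stationnaire) donne alors la propreté de l'action de $\ker\Theta$ sans jamais invoquer de propriété de revêtement. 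Le reste de votre assemblage (décomposition $\Isom(X)=\Isom^0(X)\rtimes G_X$ par transport de $\Isom(E^u)=\Isom^0(E^u)\rtimes G$, propreté de $G_X$ combinant celle de $G$ sur $E^u$ et celle de $\ker\Theta$ sur $X$) est correct une fois ce point rétabli.
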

\begin{proof}
D'apr\`es les propositions \ref{prop:analy_refl}, \ref{prop:refl_modelee} et
la remarque~\ref{rema:developpante}, il existe une application analytique $f$ et
une développante $\mcD:X\rightarrow E^u_f$ préservant les champs de Killing.
Soit $R$ un ruban maximal de $X$. Rappelons que $\mcD$ est injective sur les
rubans de $X$. 
 Pour toute isométrie $\Phi\in \Isom(X)$, on a donc une isométrie locale 
 $\mcD(R) \to \mcD\circ \Phi(R)$ définie par $\mcD\circ \Phi\circ \mcD^{-1}$.
Comme~$f$ est analytique, cette isométrie se prolonge en
$\theta(\Phi) \in \Isom(E^u_f)$ (lemme~\ref{lemm:ext_isom_analytique}) et
$\theta(\Phi)$
est l'unique élément $\Psi\in \Isom(E^u_f)$ vérifiant  $\mcD\circ
\Phi=\Psi \circ \mcD$. Par unicité, on obtient un morphisme de groupe 
$\theta: \Isom(X)\to \Isom(E^u_f)$ qui envoie le flot $\Phi^t$ de $X$ sur
le flot $\Psi^t$  de $E^u_f$. Si $G$ est l'image d'une section de
\eqref{equa:pi0_isom}, le sous-groupe
$G_X= \theta^{-1}(G)$ est clairement transverse à $\Isom^0(X)$. 
De plus, si $\Phi\in \Isom(X)$, il existe $t\in \R$ tel que 
$\Psi^t \circ \theta(\Phi)= \theta(\Phi^t \circ \Phi) \in G$. Par suite
$G_X$ scinde $\Isom(X)$.

Comme l'action de $G$ sur $E^u_f$ est propre et discontinue, il suffit pour
établir la dernière assertion de
vérifier que l'action de $\Ga=\ker \theta$ est propre et
discontinue.  Si \mbox{$s_1, s_2 : \Omega \to X$} sont deux sections
locales de $D$ définies sur un ouvert connexe $\Omega$, alors
 ($D$ étant localement injective) on a $s_1=s_2$ ou $s_1(\Omega)\cap
s_2(\Omega)=\emptyset$. En particulier, si $U$ est un ouvert connexe
avec $D_{|U}$ injective, les ouverts $\Phi(U)$ ($\Phi \in \Ga$) sont
mutuellement disjoints. Soit $(x_n,\Phi_n)_n $ une suite de $X\times \Ga$
telle que $(x_n,\Phi_n(x_n))_n$ 
converge vers un point $(x,y)\in X^2$, et soit $U$, $V$ des voisinages ouverts
connexes de $x,y$ avec  $D_{|U}$, $D_{|V}$ injectives et $D(U)=D(V)$. Pour $n$
grand,
$\Phi_n(U)$ coupe $V$ : on a $\Phi_n(U) = V$ d'après ce qui précède et 
la suite $(\Phi_n)$ est stationnaire.
\end{proof}

\begin{rema}
Il existe deux autres familles de sous-groupes de $G_\gen$ agissant proprement
discontinûment sur $E^u$: le sous-groupe normal engendré par $\Sigma'$ et le
sous-groupe dérivé de $G_\gen$. Dans le premier cas la preuve est similaire à
celle ci-dessus. La deuxième affirmation s'obtient en remarquant que  le groupe
fondamental de la surface $Y^s$ construite dans la preuve de la proposition
\ref{prop:sf_Euf}  contient un groupe égal à $[G_\gen,G_\gen]$ modulo
$\Isom^0(E^u)$.
\end{rema}

\begin{rema}\label{rema:cas_bonus}
Il découle de la proposition~\ref{prop:action_Gprime} que si $f$ n'est pas de
type fini, alors le groupe d'isométrie de la  surface $Y^s$ ci-dessus contient
un sous-groupe de $2$-torsion, voir remarque~\ref{rema:refl_commutent}, qui agit
non proprement. Il est de plus facile de perturber la métrique afin que ce
sous-groupe soit en fait tout le groupe d'isométrie. On obtient alors une
surface lorentzienne dont le groupe d'isométrie est de torsion mais dont la
dynamique n'est pas triviale, au sens où son action n'est pas propre.
\end{rema}

Notons $\Pi'$ et ${\Pi'}_\gen$ les projections dans $\Pi\simeq G$ des groupes
$G'$ et $G'_\gen$.  Si $\mcG_f=(S,\mfE)$ désigne le graphe de contiguïté de~$f$,
on pose $\mfE' = \mfE \cap S'\times S'$
et $\mcG' = (S',\mfE')$ (sous-graphe de contiguïté associé à $S'$). Voici
d'abord une description de l'action du groupe $G'$ sur $E^u$. Quand $f$ est de
type fini (par exemple analytique), l'énoncé suivant vaut pour $S'=S$,
c'est-à-dire pour $G$ et $\Pi$.

\begin{prop}
\label{prop:refl_hyp}
Soit $E^u$ non élémentaire et soit $S'$ une partie de $S$ invariante par
$\Isom(f)$ et localement finie. L'action de $G'_\gen$ est différentiablement
conjuguée à celle d'un groupe de réflexions du plan hyperbolique, associé à un
polygone $\mcP_0$ dont les côtés sont en bijection avec~$S'$. En particulier, si
$(x_\al)_{\al\in S'}$ désignent les réflexions associées à $\mcP_0$, le groupe
${\Pi'}_\gen\simeq G'_\gen$ admet la présentation
\begin{equation}
\label{equa:pres_Gprime_gen}
\langle x_\al ~(\al \in S') ~|~ x_\al ^2=1 ~(\al \in S')~;~ (x_\al
x_\be)^2= 1~(\{\al,\be\}\in \mfE') \rangle.
\end{equation}
Quand $\Isom(f)$ n'est pas
trivial, le groupe ${G'}\simeq {\Pi'}$ est également différentiablement
conjugué à un groupe discret d'isométries du plan hyperbolique, engendré
par les réflexions génériques $(x_\al)_{\al\in S'}$ et par les éléments non
génériques suivants, fixant $\mcP_0$ :
\begin{enumerate}
\item\label{ass:1}  si $\Isom(f) \simeq \Z/2\Z$, un élément elliptique 
d'ordre 2  (\theenumi a) ou  une réflexion  (\theenumi b),
\item\label{ass:2} si $\Isom(f) \simeq \Z$,  un élément d'ordre infini, 
direct (hyperbolique ou parabolique) ou indirect,
\item\label{ass:3} si $\Isom(f) \simeq D_\infty$, deux  elliptiques  d'ordre 2
 (\theenumi a), ou un  elliptique  d'ordre 2 et une réflexion  (\theenumi b), 
ou  deux réflexions (\theenumi c),
\end{enumerate}
où les elliptiques (resp. les réflexions) correspondent aux involutions 
de $\Isom(f)$ sans point fixe (resp. avec point fixe) sur $S'$.
\end{prop}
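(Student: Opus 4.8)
The plan is to turn the proper action of $G'_\gen$ on $E^u$ into a reflection-group action on the hyperbolic plane $\HH^2$ via an equivariant diffeomorphism, and then to read off the contribution of $H\simeq\Isom(f)$. First I would record the combinatorics: the components of $\{f\neq 0\}$ are linearly ordered by the transverse coordinate $x$, and two of them are joined by an edge of $\mcG_f$ exactly when separated by a simple zero, so $\mcG_f$ — hence its induced subgraph $\mcG'=(S',\mfE')$ — is a disjoint union of finite, half-infinite or bi-infinite paths together with isolated vertices. By Proposition~\ref{prop:G_gen_Coxeter}, $(G_\gen,\Sig)$ is a Coxeter system, and $G'_\gen=\langle\Sig'\rangle$ is the standard parabolic subgroup attached to $S'\subset S$; it is therefore itself Coxeter with the induced diagram (by the parabolic subgroup theorem, or by rerunning the cycle argument of Proposition~\ref{prop:G_gen_Coxeter} inside $S'$). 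All exponents being $2$ across an edge and $\infty$ otherwise, this is a right-angled Coxeter group with presentation~\eqref{equa:pres_Gprime_gen}. Since $G'_\gen\cap\Isom^0(E^u)=1$ (as in the proof of Theorem~\ref{theo:groupe_isom_Eu}), $\pi$ restricts to an isomorphism $G'_\gen\to\Pi'_\gen$, so it suffices to work with $G'_\gen$ acting on $E^u$.

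Next I would build the hyperbolic model. The axes of the $(\sig_\al)_{\al\in S'}$ are leaves of $\mcK^\perp$; two contiguous axes cross orthogonally at the saddle point lying on the simple zero between them — in the exponential chart of Proposition~\ref{prop:carte_exponentielle} the product $\sig_\al\sig_\be$ is the central symmetry, of order $2$ — while two non-contiguous axes are disjoint. I would mirror this in $\HH^2$: using hyperbolic trigonometry one constructs, along each path of $\mcG'$, a chain of complete geodesics $(A_\al)_{\al\in S'}$ with $A_\al\perp A_\be$ when $\{\al,\be\}\in\mfE'$ and $A_\al,A_\be$ ultraparallel otherwise, all bounding a common right-angled polygon $\mcP_0$ with one side per element of $S'$ and free (ideal) ends wherever a path of $\mcG'$ terminates. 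Poincaré's polygon theorem then yields a discrete reflection group $\Gamma_0$ with $\mcP_0$ as fundamental domain and presentation~\eqref{equa:pres_Gprime_gen}; composing with the Coxeter isomorphism gives $\rho:G'_\gen\to\Gamma_0$, $\sig_\al\mapsto x_\al$.

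I would then upgrade $\rho$ to a differentiable conjugacy. Because $S'$ is locally finite, $G'_\gen$ acts properly discontinuously (Proposition~\ref{prop:action_Gprime}), so the locally finite arrangement of all $G'_\gen$-translates of the axes cuts $E^u$ into chambers on which $G'_\gen$ acts simply transitively; let $\mcP_0^{E}$ be the closure of the chamber containing $R$ (the union of ribbons not separated from $R$ by an $S'$-axis), whose boundary sides are the $A_\al$, $\al\in S'$. Choosing a diffeomorphism $\mcP_0^{E}\to\mcP_0$ matching side $A_\al$ to side $A_\al$, arranged near each boundary axis to commute with the respective reflections and near each saddle/vertex to intertwine the two central symmetries (via the normal form of Proposition~\ref{prop:carte_exponentielle}), the prescription $\Phi(g\cdot p)=\rho(g)\cdot\Phi(p)$ extends to a well-defined smooth $G'_\gen$-equivariant diffeomorphism $\Phi:E^u\to\HH^2$. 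This proves the first assertion and~\eqref{equa:pres_Gprime_gen}.

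Finally, for the non-generic elements I would use the splitting $G'=G'_\gen\rtimes H$ with $H\simeq\Isom(f)$ from~\eqref{equa:se_G}, chosen to stabilise $R$ and preserve $\Sig'$ (legitimate since $S'$ is $\Isom(f)$-invariant). Each $h\in H$ normalises $G'_\gen$ and fixes $\mcP_0^{E}$, so $\Phi h\Phi^{-1}$ is an isometry of $\HH^2$ normalising $\Gamma_0$ and preserving $\mcP_0$, whose type is governed by the induced action $\overline h$ on the ordered set $S'$: a symmetry of $f$ fixing a component of $S'$ fixes the midpoint of the corresponding side and acts as a reflection or an order-$2$ elliptic, one swapping two components fixes the intervening vertex and is the corresponding order-$2$ elliptic, and a period translates a chain, giving an infinite-order element (direct: hyperbolic or parabolic; indirect: glide-type). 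Running through $\Isom(f)\simeq\Z/2\Z,\Z,D_\infty$ yields exactly the cases~(\ref{ass:1})–(\ref{ass:3}). The main obstacle will be the explicit construction of $\mcP_0$ with the prescribed right-angled combinatorics in the infinite and periodic cases, and the verification that $\Gamma_0$ (together with the $H$-action) is discrete — that is, applying Poincaré's theorem to a possibly non-compact, infinitely-sided polygon — along with the smooth matching of $\Phi$ across the saddle vertices.
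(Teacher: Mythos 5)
Your overall architecture — conjugate $G'_\gen$ to a hyperbolic reflection group, exhibit a polygon with sides indexed by $S'$, then read off the contribution of $\Isom(f)$ from its action on $S'$ — is the right one, and obtaining the abstract presentation \eqref{equa:pres_Gprime_gen} from Proposition~\ref{prop:G_gen_Coxeter} via the standard parabolic subgroup $\langle\Sig'\rangle$ is a legitimate shortcut. But there are two genuine gaps. The first is the identification of the fundamental chamber. The axis of $\sig_\al$ is the leaf $y=G(x)+\beta$ of $\mcK^\perp$ in the square $C_\al$, with $G'=-1/f$ monotonic and unbounded at both ends of the component; it is therefore a properly embedded line in the plane $R$ and separates $R$. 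So $R$ is cut by every axis of $\Sig'$, there is no ``chamber containing $R$'', and — crucially — the walls of a chamber $\mcD_0$ of the arrangement are axes of $G'_\gen$-\emph{conjugates} of the $\sig_\al$, not of the $\sig_\al$ themselves. Establishing that these walls are nonetheless in bijection with $S'$ (each wall reflection conjugate to a unique $\sig_\al$, with incidences matching $\mcG'$) is precisely the content of the map $\xi:\Sig''\to\Sig'$ in the paper's proof, which uses transitivity of $G'_\gen$ on chambers, non-accumulation of chambers, and the degree-mod-$2$ invariant of Remarque~\ref{rema:abelianise_Ggen} to see that distinct elements of $\Sig'$ are never conjugate in $G_\gen$. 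Without this step you cannot assert that $\mcP_0$ has one side per element of $S'$, and the statement's generators $x_\al$ are the side reflections of $\mcP_0$, not the $\sig_\al$. The same confusion affects your last paragraph: the relevant copy of $\Isom(f)$ is the stabilizer $H_0$ of $\mcD_0$, not your $H$ stabilizing $R$ (which fixes no chamber).

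The second gap is that the analytic heart of the statement — the existence of the differentiable conjugacy onto $\HH$ — is exactly what you defer to ``the main obstacle'': a synthetic construction of a possibly infinite-sided right-angled polygon, a Poincaré theorem for non-compact infinitely-sided polygons, and a hand-built equivariant diffeomorphism that is smooth across the saddle vertices. The paper sidesteps all of this: properness of the $G'$-action yields an invariant Riemannian metric (Palais), isothermal coordinates give a $G'$-invariant conformal structure, uniformization identifies $E^u$ with $\C$ or $\HH$, the Euclidean case is excluded (infinite stabilizers, non-compactness of the quotient), and then $\mcP_0=\ph(\mcD_0)$ is \emph{automatically} a right-angled geodesic polygon which is a genuine fundamental domain for a group already known to be discrete, so Poincaré's theorem applies directly. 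You should either adopt this uniformization route or actually carry out the construction you only sketch; as written, the proposal asserts the conclusion of the hardest step rather than proving it.
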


\begin{proof} 
La propreté de l'action de ${G'}$ implique l'existence d'une métrique
riemannienne sur $E^u$ invariante par ${G'}$ (\cite[thm. 4.3.1]{Palais1961}). On
munit $E^u$ de la structure de surface de Riemann définie par des coordonnées
locales isothermes. Il existe alors un biholomorphisme de $E^u$ sur~$X$, $X=\C$
ou $\HH$ (demi-plan de Poincaré), biholomorphisme qui conjugue ${G'}$ en un
groupe $G'_X$ de transformations holomorphes ou anti-holomorphes de $X$.
Supposons que  $X=\C$. Le groupe $G'_X$ doit alors  préserver la  métrique
euclidienne usuelle, sans quoi il admettrait des stabilisateurs infinis. De plus
$X/G'_X$ ne peut être compact (voir le début de la preuve du théorème
\ref{theo:uni_tore}). Par suite, ou bien $G'_X$ est fini et $E^u$ est une selle,
ou bien  $X/G'_X$ est revêtue par une cylindre ; dans les deux cas, l'orbifolde
$X/G'_X$ admet aussi une structure hyperbolique. On peut donc supposer que
$X=\HH$ dans tous les cas. Par suite  ${G'}$ est conjugué par un difféomorphisme
$\ph$ à un sous-groupe discret ${G'}^\ph$ d'isométries de  $\HH$.

\par 
On a vu dans la preuve de la proposition~\ref{prop:action_Gprime} que $G'_\gen
=G'\cap G_\gen$ est engendré par $\Sig'$.  De plus $G'/G'_\gen$ est isomorphe à
$H\simeq \Isom(f)$. Soit $\Om'$ l'ensemble des réflexions $\ga' \sig_\al
\ga'\null^{-1}$ pour $\sig_\al \in \Sig'$ et $\ga'\in G'_\gen$, et soit $\mcA'$
la réunion des axes des éléments de $\Om'$.  L'ensemble~${\mcA'}$ est stable
par~$G'$, ainsi que son complémentaire dans $E^u$, noté ${\mcA'}^c$. D'après la
remarque~\ref{rema:abelianise_Ggen}, deux réflexions distinctes dont les axes
coupent $R$ ne peuvent pas être conjuguées dans $G_\gen$ ; les seuls éléments de
$\Om'$ dont l'axe coupe $R$ sont donc ceux de $\Sig'$. Par suite, les
composantes de ${\mcA'}^c$ ne s'accumulent pas ($S'$ est localement finie) et
tout élément de $\mcA'$ est adhérent à l'une d'elle.  Si $\mcD_0$ et $\mcD_1$
sont des composantes de ${\mcA'}^c$, il existe un chemin $c$ joignant $\mcD_0$
et $\mcD_1$, évitant les points selles et tel que $c([0,1])\cap {\mcA'}$ soit
fini. Le produit des réflexions par rapport aux axes correspondants échange
$\mcD_0$ et $\mcD_1$ ; le groupe $G'_\gen$ agit donc transitivement sur les
composantes de ${\mcA'}^c$. Soit $\Sig''$ l'ensemble des réflexions dont les
axes bordent $\mcD_0$.  Si $\sig\in \Sig''$, les réflexions associées au domaine
voisin $\sig(\mcD_0)$ sont les conjuguées des éléments de $\Sig''$ par~$\sig$ ;
par suite, tout élément de $\Om'$ est produit d'éléments de $\Sig''$ et
finalement $G'_\gen$ est engendré par~$\Sig''$ (ou par les réflexions associées
à n'importe quelle composante de ${\mcA'}^c$).

\par Notons $\mcP_0$ l'adhérence de $\ph(\mcD_0)$ dans $\HH$. Il s'agit d'un
polygone géodésique, dont les sommets ont des angles droits et
correspondent à des points selles.  Le conjugué $\ph G'_\gen\ph^{-1}$
coïncide avec le groupe de réflexions associé à $\mcP_0$ : la présentation
cherchée résulte du théorème de Poincaré, à l'indexation des générateurs
près. De plus, on voit que l'action de $G'_\gen$ sur les composantes de
${\mcA'}^c$ est libre. D'après ce qui précède, toute réflexion $\sig\in
\Sig''$ est conjuguée dans $G'_\gen$ à une unique réflexion $\sig_\al\in
\Sig'$, que nous notons $\xi(\sig)$.  Pour tout $\al\in S'$, chaque
composante de ${\mcA'}^c$ bordée par l'axe de $\sig_\al$ est congruente à
$\mcD_0$ par un élément de $G'_\gen$. L'application $\xi : \Sig''\to \Sig'$
est donc surjective.  Elle est également injective ; en effet si $\ga_i
\sig_i \ga_i^{-1}= \sig_\al\in \Sig'$ ($\ga_i\in G'_\gen$, $i=1,2$), on
peut supposer que $\ga_1(\mcD_0)=\ga_2(\mcD_0)$, d'où $\ga_1=\ga_2$ et
$\sig_1=\sig_2$.  De plus, si les axes de $\sig , \tau \in \Sig''$ se
coupent, les indices de $\xi(\sig)$ et de $\xi(\tau)$ doivent être
contigus, et inversement.  Ainsi, le graphe d'incidence des côtés de $\mcD_0$
(ou de $\mcP_0$) est simplicialement isomorphe au graphe $\mcG'$ et on a
bien la présentation annoncée \eqref{equa:pres_Gprime_gen} pour
$G'_\gen\simeq \Pi'_\gen$.

\par Considérons le stabilisateur $H_0$ de $\mcD_0$ dans $G'$. Comme l'action de
$G'_\gen$ sur les composantes de ${\mcA'}^c$ est libre et transitive, le
stabilisateur $H_0$ est isomorphe à $G'/G'_\gen\simeq \Isom(f)$, supposé non
trivial.  Si $H_0$ est infini cyclique engendré par $h$, alors $h$ n'a pas de
point fixe, c'est le cas ({\ref{ass:2}}) de l'énoncé. Sinon, $\Isom(f)$ est
engendré par des involutions et on trouve les cas ({\ref{ass:1}}) et
({\ref{ass:3}}) de l'énoncé. Toute involution de $\Isom(f)$ se relève dans $H$
(stabilisateur de $R$ dans $G$) en un elliptique non générique $\tau$ centré
dans $R$. En conjuguant par $G'_\gen$, on obtient un elliptique non générique
$\tau_0\in \Isom(E^u)$ dont le centre appartient à $\mcD_0$ si $\tau$ ne fixe
aucune composante $s\in S'$ ou à $\partial \mcD_0$ sinon.  Dans le premier cas
$\tau_0\in H_0$, dans le second c'est une réflexion non générique $\tau_0\sig$
(pour un $\sig\in \Sig''$) qui fixe $\mcD_0$.
\end{proof}

\begin{rema}
D'après la proposition~\ref{prop:G_gen_Coxeter}, on sait que
$(G'_\gen,\Sig')$ est un système de Coxeter, \cite[Thm.~2,
  p.~20]{Bourbaki_LieIV}. On remarquera que la présentation de la
proposition~\ref{prop:refl_hyp} est analogue, mais avec un système
générateur différent.
\end{rema}

\begin{rema}
Si $f$ est de type fini, les domaines fondamentaux explicites donnés plus
bas pour $S'=S$ fournissent, {\it via} le théorème de Poincaré, une
présentation du groupe $\Pi$ dans tous les cas.
\end{rema}

\subsection{Quotients minimaux}
\label{subs:quotients_mini} 
Nous nous intéressons maintenant aux quotients lisses de $E^u$. Par
exemple, quand $f$ est de type fini, les sous-groupes sans torsion $\Gamma$
de $\Pi$ (dont on connaît maintenant la structure algébrique) définissent
des quotients lisses {\it via} une section de~\eqref{equa:pi0_isom}.  Ces
surfaces portent automatiquement un \og feuilletage de Killing
\fg\ (lemme~\ref{lemm:isom_isomK}) et même un champ de Killing quand
$\Gamma$ est inclus dans $\Pi^K$. Nous chercherons $\Ga$ d'indice aussi
petit que possible ; dans cette étude, les morphismes $\carac : \Pi^K \to
\mu_2= \{\pm 1\}$ (caractères) et $\om : \Pi \to \mu_2^2$ vont jouer un
rôle clé. On identifie comme dans la
présentation~\eqref{equa:pres_Gprime_gen} le graphe $\mcG_f$ avec le graphe
d'incidence des côtés de $\mcD_0$ (ou de~$\mcP_0$) ; les réflexions associées à ces côtés
seront notées simplement $\ga$ ($\ga \in S$) dans la suite.  Si $\ker \carac$ est sans torsion, la condition
suivante est évidemment vérifiée:
\begin{equation}
\label{equa:ker_chi_st}
\carac(\al \be)= -1 \esp 
(\{\al,\be\}\in \mfE).
\end{equation}
De même, si $\ker\om$ est sans torsion, les valeurs  $\om(\ga)$ pour $\ga \in
S$ et $\om(\al\be)$ pour $\{\al,\be\}\in \mfE$ sont non  triviales, et 
les valeurs de $\om$ sur 
$\al$, $\be$ et $\al\be$ ($\{\al,\be\}\in \mfE$) doivent être 
mutuellement distinctes, c'est-à-dire
\begin{equation}
\label{equa:ker_om_st}
\left\{
\begin{array}{ll}
\om(\ga) \neq  (1,1) &  (\ga \in S)\\
\om(\al) \neq \om(\be) & (\{\al,\be\}\in \mfE).
\end{array}
\right.
\end{equation}
L'apparition de morphismes $\om : \Pi \to \mu_2^2$ dans notre problème
 provient du phénomène  algébrique suivant.

\begin{lemm}
\label{lemm:ind4_normal}
Soit $\Pi''$ un groupe contenant un sous-groupe $K''$ isomorphe à
$\mu_2^2$ et soit $\Ga''$ un sous-groupe de $\Pi''$. On suppose
que
\begin{itemize}
\item[(i)] $\Ga''$ est d'indice 4  et n'a pas de 2-torsion,
\item[(ii)] $\Ga''$ est engendré par des produits de la forme $x\delta'$ avec
$x\in K''$ et ${\delta'}^2=1$. 
\end{itemize}
Alors $\Ga''$ est distingué dans $\Pi''$ et le quotient 
$\Pi''/\Ga''$ est isomorphe à $\mu_2^2$. De plus, si $\Pi''$ est engendré 
par des involutions, alors (ii) est une conséquence de (i).
\end{lemm}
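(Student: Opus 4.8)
The plan is to study the permutation action of $\Pi''$ on the four left cosets of $\Ga''$ and to prove that this action is trivial on $\Ga''$ itself. First I would extract the combinatorial content of hypothesis (i). Every nontrivial element of $K''\cong\mu_2^2$ is an involution, while $\Ga''$ has no $2$-torsion; hence $K''\cap\Ga''=\{1\}$. Since $|K''|=4=[\Pi'':\Ga'']$, the cosets $k\Ga''$ ($k\in K''$) are pairwise distinct and exhaust $\Pi''/\Ga''$, so that $K''$ is simultaneously a left and a right transversal and $\Pi''=K''\Ga''$.

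Next I would introduce the morphism $\rho:\Pi''\to\mathrm{Sym}(\Pi''/\Ga'')\cong S_4$ given by left translation on cosets, and identify the four cosets with $K''=\{1,a,b,c\}$ via $k\leftrightarrow k\Ga''$. On $K''$ the map $\rho$ is the left regular representation, so $\rho(K'')$ is the Klein four-group $V=\{1,(1\,a)(b\,c),(1\,b)(a\,c),(1\,c)(a\,b)\}$ of double transpositions; in particular, for each involution $x\in K''$, the element $\rho(x)$ is the \emph{unique} member of $V$ sending $1$ to $x$. Since $\ker\rho$ is the normal core of $\Ga''$ and is contained in $\Ga''$, the subgroup $\Ga''$ is normal in $\Pi''$ if and only if $\rho(\Ga'')=\{1\}$. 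It therefore suffices to prove $\rho(g)=1$ for the generators $g=x\delta'$ provided by (ii).

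This is where the torsion hypothesis does the work. For a nontrivial such generator one has $x\neq1$ and $\delta'\neq1$, for otherwise $g$ would be a nontrivial involution of $\Ga''$ or trivial. From $\delta'=xg$ with $g\in\Ga''$ we get $\delta'\Ga''=x\Ga''$, so $\rho(\delta')$ sends $1$ to $x$; as $\delta'^2=1$, it is an involution exchanging $1$ and $x$. I would then show that $\rho(\delta')$ can fix neither of the remaining two points: were $\rho(\delta')$ to fix some $k\in K''\setminus\{1,x\}$, then $k\delta'k^{-1}\in\Ga''$ would be a nontrivial involution, contradicting the absence of $2$-torsion. An involution of $S_4$ exchanging $1$ and $x$ while fixing nothing else is forced to be the double transposition $\rho(x)$, whence $\rho(g)=\rho(x)\rho(\delta')=\rho(x)^2=1$. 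This yields $\rho(\Ga'')=\{1\}$ and the normality of $\Ga''$. Once $\Ga''$ is normal, the restriction to $K''$ of the quotient map is a bijective homomorphism $K''\to\Pi''/\Ga''$, hence an isomorphism, so $\Pi''/\Ga''\cong\mu_2^2$.

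For the final assertion I would assume only (i) together with the hypothesis that $\Pi''$ is generated by involutions, and deduce (ii). The first step again gives $\Pi''=K''\Ga''$ with $K''$ a right transversal, so Reidemeister--Schreier applies: $\Ga''$ is generated by the Schreier elements $\kappa\,t\,\overline{\kappa t}^{\,-1}$, where $\kappa$ runs over $K''$, $t$ over the involutive generators of $\Pi''$, and $\overline{g}\in K''$ denotes the representative of the right coset $\Ga''g$. Writing $m=\overline{\kappa t}\in K''$ and inserting $mm=1$ gives $\kappa t m^{-1}=(\kappa m)\,(m t m^{-1})$, the product of the element $\kappa m\in K''$ with the involution $m t m^{-1}$, which is exactly the normal form demanded by (ii). The main obstacle is the step eliminating the possibility that $\rho(\delta')$ is a single transposition rather than a double one: the whole argument turns on converting a fixed point of $\rho(\delta')$ into a genuine $2$-torsion element of $\Ga''$, so the identification of $\rho(K'')$ with the group $V$ of double transpositions and the bookkeeping of which involutions of $S_4$ move $1$ to $x$ are the points requiring care.
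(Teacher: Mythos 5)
Your proof is correct. It is at bottom the same mechanism as the paper's --- the decomposition $\Pi''=\amalg_{x\in K''}x\Ga''$ forced by $K''\cap\Ga''=\{1\}$, and the observation that a conjugate $k\delta'k^{-1}$ landing in $\Ga''$ would be a nontrivial $2$-torsion element --- but you package it differently. The paper fixes a generator $\al\de\in\Ga''$ and checks by hand, case by case, that each conjugate $y(\al\de)y$ ($y\in K''$) falls in the coset $1\cdot\Ga''$; your version encodes exactly this information in the permutation representation $\rho:\Pi''\to\mathrm{Sym}(\Pi''/\Ga'')$, where the statement becomes ``$\rho(\delta')$ is a fixed-point-free involution moving $1$ to $x$, hence the double transposition $\rho(x)$, hence $\rho(x\delta')=1$.'' This replaces three ad hoc exclusions by a single structural fact about involutions in $S_4$, at the cost of setting up the identification of $\rho(K'')$ with the Klein group of double transpositions. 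For the final assertion, your Reidemeister--Schreier rewriting with the transversal $K''$ is precisely the paper's induction on the length of a word in involutive generators, made systematic: the Schreier generator $\kappa t\,\overline{\kappa t}^{\,-1}=(\kappa m)(mtm^{-1})$ is the paper's $x_i\sigma_i$. One small point worth making explicit in your write-up: $K''$ is a right transversal because $\Pi''=K''\Ga''$ implies $\Pi''=\Ga''K''$ by taking inverses (using that $K''$ consists of involutions), and the distinctness of the right cosets again uses $K''\cap\Ga''=\{1\}$.
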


\begin{proof} 
Comme $\Ga''$ n'a pas de 2-torsion, les classes $x\Ga''$ ($x\in K''$) sont
mutuellement distinctes et on a $\Pi'' = \amalg_{x\in K''} \, x \Ga''$.
Posons $K''=\{1,\al,\be,\al\be\}$.  Soit $\de\in \Pi''$ tel que ${\de}^2=1$
et $\al\de \in \Ga''$. Il existe $(x,\ga)\in K''\times \Ga''$ tel que
$\be(\al\de)\be = x\ga$.  L'éventualité $x=\be$ est exclue car
$\be\notin \Ga''$ ; $x=\al$ contredit (i) ; $x= \al\be$ implique
$\be\de=(\de\be)^{-1}\in \Ga''$ qui est absurde. On a donc $x=1$. De
même, on vérifie que $(\al\be)(\al\de)(\al\be) \in \Ga''$. De plus
$\al(\al\de)\al=(\al\de)^{-1} \in \Ga''$. Finalement tous les conjugués
$\al\de$ appartiennent à $\Ga''$.  Grâce à la condition (ii), il en résulte
que $\Ga''$ est normal.  Enfin, le quotient $\Pi''/\Ga''$ est un groupe de
2-torsion d'ordre 4.

\par Supposons que $\Pi''$ est engendré par des involutions et que $\Ga''$
vérifie (i). Tout élément $\ga\in \Ga''$ s'écrit $\ga =
\de_1\de_2\ldots\de_N$, où les $\de_i$ sont des involutions et $N\geq 2$.
Par (i), il existe $x_N \in K''$, $x_N\neq 1$, tel que $x_N\de_N \in
\Ga''$.  On a $\ga(x_N\de_N)^{-1} = x_N (x_N\de_1 x_N) \ldots
(x_N\de_{N-1} x_N)$. Par récurrence, on voit qu'il existe $N$ involutions
$\sig_1,\ldots,\sig_N \in \Ga''$ et $x_1,\ldots,x_N \in K''$ tels que
$\ga(x_N\sig_N)^{-1}(x_{N-1}\sig_{N-1})^{-1}\ldots (x_1\sig_1)^{-1}\in
\Ga''\cap K''$, d'où le résultat.
\end{proof}

\begin{rema} 
\label{rema:indice4}
Si un groupe $\Ga''$ contient un sous-groupe $K''$ isomorphe à $\mu_2^2$,
alors le début de la preuve précédente montre que tout sous-groupe 
$\Ga''$ sans 2-torsion est d'indice au moins~4.
\end{rema}

Nous verrons plus bas que la topologie des quotients de $E^u$ par des
sous-groupes sans torsion maximaux (si $f$ est de type fini) dépend du
groupe $\Isom(f)$. Dans certains cas, il convient en outre de distinguer
plusieurs types de fonctions $f$.  Pour tout $s\in S$, on note $(-1)^s$ le
signe de $f$ sur la composante $s$.

\begin{defi}
\label{defi:types_al_be}
On suppose que  $\Isom(f)$ est infini. Soit $S'$ une partie de  $S$ localement
finie, invariante par $\Isom(f)$ et munie de l’ordre total induit par $\R$. 
\begin{enumerate}
\item (types pair et impair)  
On dit que la partie $S'$ est {\em de type pair} si le nombre d'éléments de $S'$
par période minimale de $f$ est pair, {\em de type impair} sinon. 
\item 
(types pairs unilatère et bilatère) Lorsque la partie $S'$ est de type pair, on
dit que $S'$  est {\em unilatère}  si $(-1)^{s_1} (-1)^{s_2} =-1$ pour tout
couple de composantes successives $(s_1,s_2)\in {S'}\null^2$, {\em bilatère}
sinon. 
\end{enumerate}
Quand $f$ est de type fini, on dit que $f$ est {\em de type pair unilatère}
(resp. {\em pair bilatère, impair}) si $S$ est de type pair unilatère (resp. 
pair bilatère, impair).
\end{defi}

On spécifiera le type quand il n'est pas déterminé par l'action de $\Isom(f)$
sur $S'$. Le cas (3a) est de type pair bilatère, le cas (3b) impair et le cas
(3c) de type pair unilatère ou bilatère, noté (3c$^{+u}$) ou (3c$^{+b}$). Le
cas (2) peut \^etre d'un type arbitraire : (2$^{+u}$), (2$^{+b}$) ou  (2$^-$).

\begin{lemm}[description géométrique des types]
\label{lemm:interpretation_types}
On suppose que $\Isom(f)$ est infini.
Soit $G'\subset \Isom(E^u)$ associé à une partie $S'\subset S$ localement finie
et invariante par $\Isom(f)$, et soit $\mcD_0\subset E^u$ un pavé fondamental
pour le groupe \og de réflexions \fg\ $G'_\gen$. On note $h_0 \in G'$ un 
générateur du sous-groupe cyclique d'indice minimal (c.-à-d.~1 ou~2) du
stabilisateur de $\mcD_0$ dans $G'$. 
\begin{enumerate} 
\item\label{ass:kr} Si $S'$ est de type pair, alors le champ de Killing $K$ 
est partout
rentrant (ou partout sortant) sur  $\partial \mcD'_0$ si et seulement si $S'$
est unilatère.
\item
\label{ass:direct} Si~$S'$ est de type pair, alors
$h_0\in {G'}^{+,K}$. 
Si~$S'$ est de type impair, alors $h_0$ est indirect et inverse le champ $K$. 
\end{enumerate}
\end{lemm}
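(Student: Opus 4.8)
The plan is to read everything off a single sign relation coming from the identity $dx=i_{K^u}\nu^u$ (Proposition~\ref{prop:struct_transv_killing}, Remark~\ref{rema:esp_feuilles_etale}). For $g\in\Isom(E^u)$ write $\ep_K(g)=\pm1$ according as $g_*K^u=\pm K^u$, $\ep_\nu(g)=\pm1$ according as $g$ is direct or indirect, and let $\overline g\in\Isom(f)$ be the induced Euclidean map on the transverse coordinate $x$, so that $\overline g(s)=\pm s+c$. Both $\ep_K$ and $\ep_\nu$ are homomorphisms onto $\mu_2$, with $\ep_K(\sig)=-1$ for every generic reflection (Proposition~\ref{prop:refl_loc_generique}) and $\ep_K=\ep_\nu=+1$ on the flow. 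Pulling $dx=i_{K^u}\nu^u$ back by $g$ gives on one hand $g^*dx=\pm dx$, the sign being the linear part of $\overline g$, and on the other $g^*(i_{K^u}\nu^u)=\ep_K(g)\,\ep_\nu(g)\,i_{K^u}\nu^u$; comparing, one gets the \emph{master relation}: the linear part of $\overline g$ equals $\ep_K(g)\,\ep_\nu(g)$.

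For assertion~(\ref{ass:direct}) I would first note that $h_0$ generates the cyclic (index $1$ or $2$) part of the stabiliser $H_0\cong\Isom(f)$ of $\mcD_0$ in $G'$ (Proposition~\ref{prop:refl_hyp}); since the only torsion in $\Isom(f)\in\{\Z,D_\infty\}$ is carried by reflections, $\overline{h_0}$ is the generating \emph{translation} $x\mapsto x+T$, so its linear part is $+1$. The master relation then forces $\ep_K(h_0)=\ep_\nu(h_0)$, i.e. $h_0$ is either direct and $K$-preserving, or indirect and $K$-reversing — exactly the announced dichotomy. It remains to compute $\ep_K(h_0)=(-1)^m$, with $m$ the number of elements of $S'$ per minimal period. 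Writing $h_0=w\eta$, where $\eta\in H$ is the natural period-translation of a fixed ribbon (case~(a) of Proposition~\ref{prop:class_isom_rubans}, hence $\ep_K(\eta)=+1$) and $w=h_0\eta^{-1}\in G'_\gen$, one has $\ep_K(h_0)=\ep_K(w)=(-1)^{\ell(w)}$, the reflection-length parity (Remark~\ref{rema:abelianise_Ggen}). I would obtain $\ell(w)\equiv m$ from a transversality count over one period, using that periodicity of $f$ forces an \emph{even} number of sign changes of $f$ per period. Thus $S'$ of even type $\Leftrightarrow\ep_K(h_0)=+1\Leftrightarrow h_0\in{G'}^{+,K}$, and $S'$ of odd type $\Leftrightarrow h_0$ indirect and $K$-reversing.

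For assertion~(\ref{ass:kr}) I would encode the boundary behaviour by $\rho(\al)\in\{\pm1\}$, equal to $+1$ when $K^u$ points into $\mcD_0$ along the wall $\ga_\al$ and $-1$ otherwise (well defined, since $x$ is monotone along each wall); thus being \og partout rentrant ou sortant\fg\ means exactly that $\rho$ is constant. Two facts drive the proof. First, equivariance: since $h_0$ carries $(\ga_\al,\mcD_0,K^u)$ to $(\ga_{h_0\cdot\al},\mcD_0,\ep_K(h_0)K^u)$, one gets $\rho(h_0\cdot\al)=\ep_K(h_0)\,\rho(\al)$; by assertion~(\ref{ass:direct}) this makes $\rho$ periodic under $h_0$ when $S'$ is even, and \emph{anti}-periodic when $S'$ is odd, confirming that no domain can be \og partout rentrant ou sortant\fg\ in the odd case excluded here. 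Second, a local flip rule: across a zero of $f$ between two consecutive squares, $\rho$ is \emph{preserved} when $f$ changes sign and \emph{reversed} when it does not. This is checked on the two local models: at a simple zero the walls meet at a saddle vertex and the coordinates \eqref{equa:carte_exp} of Proposition~\ref{prop:carte_exponentielle} give preservation, whereas at a zero with $f'=0$ the adjacent light orbit is complete (Lemma~\ref{lemm:semi_complet}) and the two walls run off to \emph{opposite} ends of it, giving a reversal. Propagating $\rho$ along $\partial\mcD_0$ with this rule, $\rho$ is constant if and only if the sign of $f$ alternates along the successive components of $S'$, i.e. if and only if $S'$ is unilatère.

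The main obstacle is purely combinatorial: describing $\partial\mcD_0$ precisely when $S'\subsetneq S$. The squares $C_t$ with $t\notin S'$ lie in the \emph{interior} of $\mcD_0$ (no wall of $\mcA'$ separates them from it), so one must track how $\rho$ is transported past these interior squares and match the resulting flip count with the sign pattern defining unilatère versus bilatère, and likewise verify $\ell(w)\equiv m$. The case $S'=S$ is transparent — each period contributes $m$ walls and $m$ zeros, periodicity forces an even number of sign changes, and the flip rule yields simultaneously $\ep_K(h_0)=(-1)^m$ and the equivalence with unilatère — and I expect the general case to follow by combining the local models of Propositions~\ref{prop:carte_exponentielle} and~\ref{prop:struct_proj_orth} with this periodicity and parity bookkeeping.
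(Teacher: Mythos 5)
Your treatment of assertion~(2) is essentially the paper's: you write $h_0$ modulo $G'_\gen$ as a ribbon translation times a word in the reflections of $\Sig'$ and read off the parity (the paper writes $h=\sig_1\ldots\sig_N h_0^{\pm1}$ with $h\in{G'}^{+,K}$ the stabiliser of the base ribbon and $N$ of the parity of $S'$); your ``master relation'' is a pleasant repackaging of the fact that generic reflections are indirect, reverse $K$ and preserve $x$. That part is sound.

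Assertion~(1) has a genuine gap, and it is not just bookkeeping. Your local flip rule does not compose correctly across the interior squares $C_t$, $t\notin S'$, because the quantity $\ro$ is intrinsically two-sided: whether $K$ enters or leaves along a wall depends on which side of that wall $\mcD_0$ lies, and an interior component carries no such datum. Concretely, take $S'=\{\al,\be\}$ consecutive in $S'$ with one intermediate component $t\notin S'$ and $f>0$ on $C_\al$, $f<0$ on $C_t$, $f>0$ on $C_\be$: your rule (two sign changes, hence two preservations) predicts $\ro(\be)=\ro(\al)$, whereas the direct computation in the adapted chart of the common ribbon gives $\ro(\al)=+1$ and $\ro(\be)=-1$ (the left wall plunges to $y=-\infty$ at its inner end when $f>0$, the right wall climbs to $y=+\infty$, so $\partial_y$ enters through one and exits through the other). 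In general your propagation introduces a spurious factor $(-1)^k$, $k$ the number of gaps between consecutive elements of $S'$, so it fails exactly when $k$ is even. A second, internal inconsistency: you verify the rule by distinguishing simple zeros from zeros with $f'=0$, but the rule is stated in terms of sign change versus no sign change, and these dichotomies differ (e.g.\ $f(x)=x^3$ changes sign with $f'(0)=0$; your two local models then predict opposite outcomes). The paper's argument sidesteps all of this: the two neighbouring walls lie in a single ribbon, where the leaves of $\mcK^\perp$ are graphs $y=G(x)+\beta$ with $G'=-1/f$, and the asymptotic direction of each wall at its inner end is governed solely by the sign of $f$ on its own component of $S'$ — whatever lies in between is irrelevant. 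That one-chart computation is what you should substitute for the flip rule.
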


\begin{proof} 
On suppose $S'$ de type pair. Soit $R_1$ un ruban maximal de $E^u$ qui coupe
$\mathcal A'$. D'après la preuve de la proposition~\ref{prop:refl_hyp} (dont on
reprend les notations), $\mcA' \cap R_1$ est l'image de $\mcA'\cap R$ par un
élément de $G'_\gen$, où~$R$ désigne le ruban \og de base \fg. Ainsi le
caractère unilatère ou bilatère peut se lire dans un ruban arbitraire coupant
$\mcA'$. Appelons {\em voisins} deux côtés de $\partial \mcD_0$ inclus dans
un même ruban ; chaque côté de $\mcD_0$ admet deux voisins.  Ces
côtés (et plus généralement les axes des réflexions génériques) ont un signe :
temps ou espace, égal au signe de $-f$. On constate que le champ $K$ est
rentrant (ou sortant) sur deux côtés voisins si et seulement si leurs signes sont
opposés, d'où l'assertion ({\ref{ass:kr}}).

\par 
On peut toujours choisir $\mcD_0$
tel que $\mcD_0\cap R \neq \emptyset$. Soit $h \in H$ tel que $h_0h^{-1}
\in G'_\gen$. On rappelle que $H$ et $H_0$ (stabilisateurs $H$ et $H_0$ 
de $R$ et de $\mcD_0$) définissent des sections de $G'\to G'/G'_\gen$.  Si~$N$
désigne le nombre d'axes d'éléments de $\Sig'$ séparant $\mcD_0\cap R$ et
$h(\mcD_0\cap R)$ dans $R$, on a $h(\mcD_0)= \sig_1\ldots\sig_N (\mcD_0)$ avec
$\sig_i\in \Sig'$ ($i=1, \ldots, N$). De plus, la parité de $N$ vaut celle de
$S'$. Par suite $h = \sig_1\ldots\sig_N h_0^{\pm 1}$.  Comme $h\in {G'}^{+,K}$,
on en déduit l'assertion~(\ref{ass:direct}).
\end{proof}

À partir d'ici nous supposons pour simplifier que {\em $f$ est de type
  fini} et nous prenons $S'=S$. Cependant, on gardera à l'esprit que tous
les énoncés suivants valent pour toute partie $S'\subset S$ localement
finie et invariante, voir la remarque~\ref{rema:quotients_Sprime}.  

\par Soit $\nu$
(resp. $\nu^K$) {\em l'indice minimal dans $\Pi$ d'un sous-groupe sans
  torsion (resp. et inclus dans $\Pi^K$).}  D'après la remarque
\ref{rema:indice4}, deux éventualités se présentent, suivant que
$\Isom(E^u)$ contient un sous-groupe isomorphe à $\mu_2^2$ ou non.  Dans ce
dernier cas, on dira que $\Isom(E^u)$, ou $E^u$, {\em n'a pas de produits
  elliptiques}.  Géométriquement, cela revient à dire que les axes de deux
réflexions (génériques ou non) distinctes de $E^u$ sont toujours disjoints,
ou encore que $E^u$ n'admet pas de selles ni de réflexions non génériques :
cas (0), (1a), (2) et (3a) avec $\mfE=\emptyset$ ; nous commençons par
régler ce cas particulier plus simple.

\begin{prop}[indice minimal sans produits elliptiques]
\label{prop:ind_mini_sans_pe} 
Soit $E^u$ non élémentaire, de type fini et 
 sans produits elliptiques. Alors le groupe
$\Pi^K$ est sans torsion et les indices minimaux valent $\nu=\nu^K = 2$.
Si l'on note $\Sigma\smallsetminus F$ le type topologique de $E^u/G^K$,
avec $\Sigma$ surface fermée et $F$ sous-ensemble discret non vide de $\Sigma$, on a
de plus 
\begin{enumerate}
\item si $\Isom(f) \simeq \{1\}$, $\Z/2\Z$ ou $D_\infty$, l'indice $\nu$
  est réalisé par $\Pi^K$ uniquement et la surface $\Sig$ est
  respectivement la sphère, le plan projectif ou la bouteille de Klein,
\item si $\Isom(f) \simeq \Z$, l'indice
$\nu$ est réalisé  par deux sous-groupes  $\Pi^K$ et $\Gamma$.
Si $f$ est de type pair unilatère (resp. pair bilatère ou impair) alors la
surface $\Sig$
est une sphère (resp. un tore) et la surface associée à $\Gamma$ est un plan 
projectif (resp. une bouteille de Klein). 
\end{enumerate}
\end{prop}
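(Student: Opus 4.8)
The plan is to reduce everything to the algebraic structure of $\Pi$, which under the standing hypotheses is completely explicit. The hypothesis that $E^u$ has no elliptic products means that distinct reflection axes are disjoint, i.e. $\mfE=\emptyset$ (no saddle) and $\Isom(f)$ contains no non-generic reflection; these are the cases (0), (1a), (2), (3a). Since there are no edge relations, Proposition~\ref{prop:G_gen_Coxeter} identifies $\Pi_\gen$ with the free product $\ast_{\al\in S}\Z/2\Z$ of the generic reflections $\sig_\al$, and Proposition~\ref{prop:refl_hyp} together with Theorem~\ref{theo:groupe_isom_Eu} writes $\Pi=\Pi_\gen\rtimes\Isom(f)$ with $\Isom(f)\in\{1,\Z/2\Z,\Z,D_\infty\}$. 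I record the two characters $k,o:\Pi\to\mu_2$ measuring whether an element reverses $K^u$, resp. reverses orientation, so that $\om=(o,k)$, $\Pi^K=\ker k$ and $\carac=o|_{\Pi^K}$. On generators: each $\sig_\al$ is an indirect, $K^u$-reversing reflection, so $\om(\sig_\al)=(-1,-1)$; each non-generic involution of $\Isom(f)$ occurring in (1a) and (3a) lifts to an order-two elliptic, hence direct, which reverses $K^u$, so $\om(\tau)=(+1,-1)$; and in case (2) the generator $h_0$ of $\Isom(f)\simeq\Z$ is direct and $K^u$-preserving when $S$ is of even type, indirect and $K^u$-reversing when $S$ is of odd type (Lemma~\ref{lemm:interpretation_types}).

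The first key step is that $\Pi^K$ is torsion-free. A finite-order element of $\Pi$ fixes a point of the hyperbolic model of Proposition~\ref{prop:refl_hyp}, hence is a reflection or an elliptic; because the reflection axes are disjoint the only reflections are the $\sig_\al$, the only non-generic elliptics are the $\tau$ of (1a), (3a), and a product $\tau\sig_\al$ satisfies $(\tau\sig_\al)^2=\sig_{\tau(\al)}\sig_\al$, which has infinite order unless $\tau$ fixes the component $\al$ — excluded in (1a), (3a). Thus every torsion element of $\Pi$ is conjugate to some $\sig_\al$ or some $\tau$, and all of these reverse $K^u$; consequently $\ker k=\Pi^K$ contains no nontrivial element of finite order. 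Since $k(\sig_\al)=-1$, the subgroup $\Pi^K$ has index $2$, whence $\nu\le\nu^K\le 2$; as $\Pi$ contains the order-two element $\sig_\al$ one has $\nu\ge 2$, and the absence of a $\mu_2^2$ subgroup means the lower bound of Remark~\ref{rema:indice4} does not apply. Therefore $\nu=\nu^K=2$.

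Next I enumerate the index-two torsion-free subgroups, equivalently the surjections $\phi:\Pi\to\mu_2$ whose kernel avoids all torsion, i.e. with $\phi(\sig_\al)=-1$ for every $\al$ and $\phi=-1$ on every non-generic involution. Because $\Pi_\gen^{ab}\simeq(\Z/2\Z)^{(S)}$ is freely generated by the classes of the $\sig_\al$ (Remark~\ref{rema:abelianise_Ggen}), the first condition forces $\phi|_{\Pi_\gen}=k|_{\Pi_\gen}$. In cases (0), (1a), (3a) the values of $\phi$ on the (torsion) generators of $\Isom(f)$ are likewise pinned to $-1$, so $\phi=k$ is the unique admissible character and $\Pi^K$ is the only subgroup realizing $\nu$. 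In case (2) the generator $h_0$ has infinite order, so $\phi(h_0)$ is unconstrained: the two choices produce exactly two subgroups, namely $\Pi^K$ and a second one $\Gamma$.

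Finally I read off the topology of the smooth quotient $E^u/G^K$, which is proper and free since $f$ is of finite type (Proposition~\ref{prop:action_Gprime}). Orientability is decided by $o$ restricted to the subgroup: evaluating $\om$ on generators, $o$ is trivial on $\Pi^K$ precisely in cases (0) and (2), whereas $\Pi^K$ contains the orientation-reversing elements $\tau\sig_\al$ in (1a), (3a), and $\Gamma$ always contains orientation-reversing elements in case (2). This separates the orientable candidates (sphere, torus) from the non-orientable ones (projective plane, Klein bottle), matching the asserted list. To pin down the genus I would compute the Euler characteristic of the compactified surface $\Sigma$ together with its finite, non-empty set $F$ of ends, from an explicit fundamental domain for $G^K$ built from the polygon $\mcP_0$ of Proposition~\ref{prop:refl_hyp} and its $\Isom(f)$-symmetry; here the even/odd and unilatère/bilatère types of Definition~\ref{defi:types_al_be} govern how the $K^u$-flow closes up along $\partial\mcP_0$ (according as $h_0\in G^{+,K}$ or not) and hence the number of handles. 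The main obstacle is exactly this last identification: the minimal index and orientability are soft, but distinguishing the sphere from the torus and the projective plane from the Klein bottle requires the careful fundamental-domain and Euler-characteristic bookkeeping in which the type conditions finally intervene.
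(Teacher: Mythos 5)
Your proof follows essentially the same route as the paper's: realize $\Pi$ as a discrete group of isometries of $\HH$ (proposition~\ref{prop:refl_hyp}), note that in the absence of elliptic products every torsion element is conjugate to a generic reflection $\sig_\al$ or to a non-generic elliptic, check that all of these reverse $K^u$, deduce that $\Pi^K$ is torsion-free of index~$2$ (hence $\nu=\nu^K=2$), and then classify the index-$2$ torsion-free subgroups. Your character-theoretic packaging --- such a subgroup is the kernel of a morphism $\Pi\to\mu_2$ forced to equal $-1$ on every $\sig_\al$ and every non-generic involution, leaving only the value on $h_0$ free in case~(2) --- is a clean equivalent of the paper's observation that a torsion-free index-$2$ subgroup must contain all products of two involutions; the even/odd dichotomy of the lemme~\ref{lemm:interpretation_types} then correctly decides which of the two subgroups is $\Pi^K$, and your orientability computation via the orientation character agrees with the statement. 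The one piece you do not actually carry out is the identification of the closed surface $\Sig$ itself (sphere versus torus, projective plane versus Klein bottle), which is a genuine component of the statement --- in particular the distinction between the types pair unilatère and pair bilatère in case~(2). You reduce it to the right computation, namely reading off the side identifications of the doubled fundamental polygon $\mcP_\Ga=\mcP_\Pi\cup\al_0(\mcP_\Pi)$ by the elements $\al_0\al$, $\al_0 s_i$, and $h_0,\al_0h_0\al_0$ or $\al_0h_0,h_0\al_0$, which is exactly how the paper concludes (it defers the details to a figure); so nothing in your approach would fail, but the final bookkeeping in which the unilatère/bilatère distinction actually intervenes (interlacing or not of the generic and non-generic identifications) is described rather than performed.
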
 

\begin{proof}
On rappelle que $\Pi$ s'identifie à un groupe discret d'isométries de $\HH$ et
que $\Pi_\gen$ est engendré par les réflexions $(\al)_{\al \in S}$ par rapport
aux côtés d'un polygone~$\mcP_0$. À partir de~$\mcP_0$, on construit facilement
un polygone fondamental $\mcP_\Pi$ pour $\Pi$ à l'aide des générateurs du
stabilisateur de $\mcP_0$ dans $\Pi$ (proposition~\ref{prop:refl_hyp}) : des
elliptiques non génériques $s_i$ ($i=1$ pour (1a), $i=1,2$ pour (3a)) et dans le
cas (2) un élément d'ordre infini~$h_0$, direct ou non (cas pair ou impair). 
Si~$\al_0\in S$, le sous-groupe $\Pi^K$ est engendré par les $\al_0\al$, les
$\al_0 s_i$ ($\al \in S$), $h_0$  et $\al_0h_0\al_0$ ou $\alpha_0 h_0$ et
$h_0\al_0$ dans le cas (2).  De plus, en l'absence de produits elliptiques, tout
élément de torsion est conjugué à une réflexion ou à un elliptique non
générique, donc $\Pi^K$ est sans torsion.  Tout sous-groupe sans torsion
d'indice~2 doit contenir les produits de deux involutions arbitraires, donc
coïncide avec $\Pi^K$ dans les cas (0), (1a) et (3a). Dans le cas~(2), on trouve
deux possibilités pour $\Ga$ selon que $h_0\in \Ga$ ou $\al_0h_0 \in \Ga$.
D'après le lemme~\ref{lemm:interpretation_types}, $h_0 \in \Pi^K$ pour le type
pair, tandis que $\al_0h_0 \in \Pi^K$ pour le type impair
(figure~\ref{figu:ss_2beta_m}).  Enfin, il suffit de doubler le polygone
$\mcP_\Pi$ par rapport à un côté $\al_0\in S$ pour obtenir un domaine
fondamental $\mcP_\Ga$ de $\Ga$ dans tous les cas. Les côtés de $\mcP_\Ga$ sont
recollés par des éléments de la forme $\al_0\al$ ($\al\in S$), $\al_0 s_i$ (cas
(1a) et (3a)), $h_0$  et $\al_0h_0\al_0$ ou $\alpha_0 h_0$ et $h_0\al_0$ (cas
(2)), ce qui permet de déterminer facilement la topologie de la surface $\Sig$
(voir figure~\ref{figu:ss_2beta_m}).  L'ensemble $F$, quand $\HH/\Ga$ est de
type topologique fini, sera précisé à la proposition~\ref{prop:top_PiK}.
\end{proof}

\begin{rema} 
Les paires de côtés du  polygone fondamental \smash{$\mcP_\Pi$} sont identifiées
par des générateurs génériques ($\al_0\al$) ou non génériques ($\al_0s_i$,
$h_0$, ...). Plusieurs cas se présentent, suivant qu'il existe ou non des
identifications génériques entrelacées avec les identifications non génériques.
Cela n'influence pas la topologie du quotient, à l'exception des cas (2$^{+u}$)
et (2$^{+b}$). Cette remarque vaut également quand $E^u$ admet des produits
elliptiques, voir plus bas.
\end{rema}

\begin{figure}[h]
\begin{center}
\labellist
\small\hair 2pt
\pinlabel $h_0\notin  \Pi^K$ at 117 205
\pinlabel $\alpha_0$ at 180 140
\pinlabel $\alpha_s$ at 223 170
\pinlabel $\mcP_\Pi$ at 50 155
\pinlabel $\alpha_0h_0$ at 170 105
\pinlabel $h_0\alpha_0$ at 60 105
\endlabellist
\includegraphics[scale=0.5]{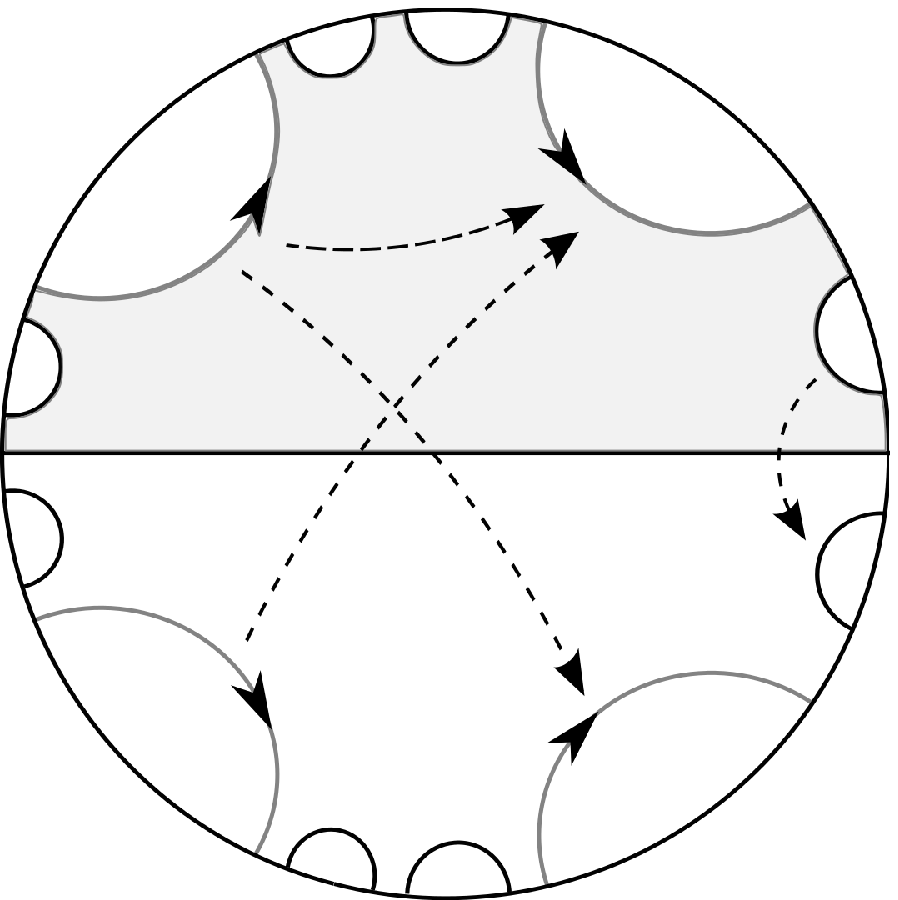}
\hspace{25pt}
\labellist
\small\hair 2pt
\pinlabel $\alpha_0$ at 140 140
\pinlabel $\alpha_s$ at 223 170
\pinlabel $s_0$ at 90 235
\pinlabel $s_1$ at 205 220
\pinlabel $\mcP_\Pi$ at 60 160
\pinlabel $\alpha_0s_0$ at 70 100
\pinlabel $\alpha_0s_1$ at 190 90
\endlabellist
\includegraphics[scale=0.5]{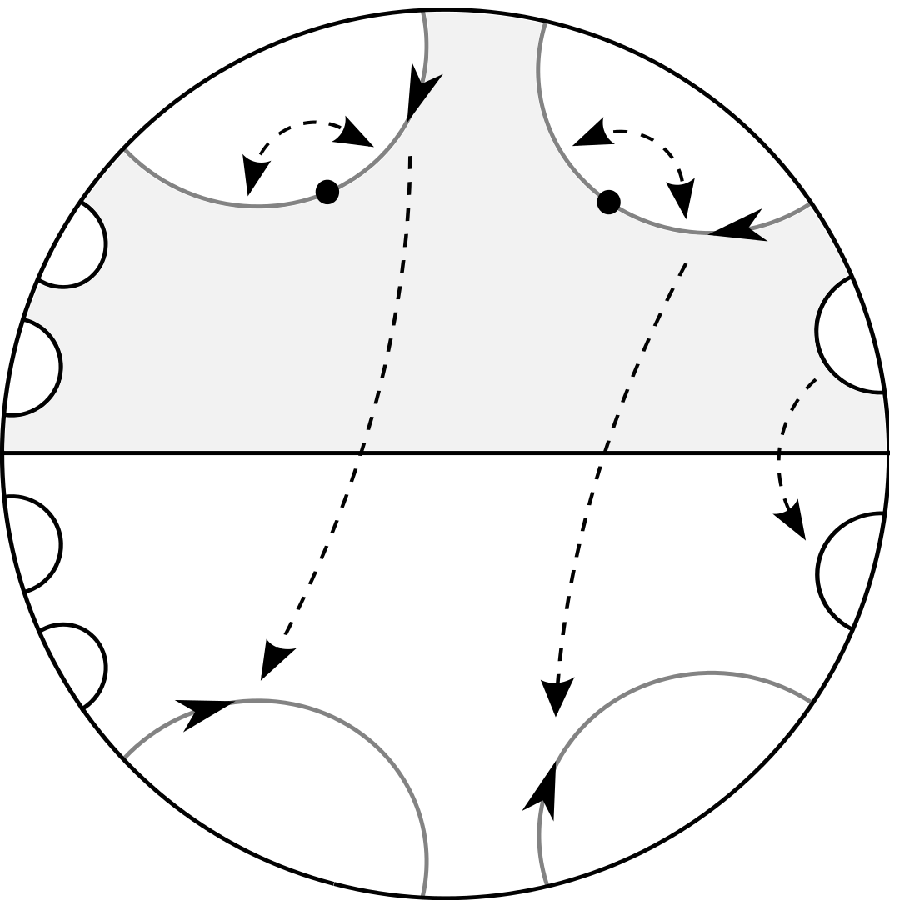}
\caption{$E^u$ sans produits elliptiques, cas (2$^-$)  et (3a)}
\label{figu:ss_2beta_m}
\end{center}
\end{figure}

Quand la surface $E^u$ admet des produits elliptiques, nous allons montrer que
$\nu=\nu^K=4$ dans tous les cas (voir la remarque~\ref{rema:indice4}); à
l'exception du cas (2), le groupe $\Pi$ est engendré par des involutions et les
sous-groupes sans torsion d'indice~4 seront tous distingués,
lemme~\ref{lemm:ind4_normal}. Soit $\mcP_\Pi$ un polygone fondamental pour
l'action de $\Pi$, et soit $(\al_0,\be)$ un couple de réflexions d'axes
orthogonaux bordant $\mcP_\Pi$, avec $\al_0$ générique et $\be$ générique ou
non. Si~$\Ga$ est un sous-groupe sans torsion d'indice~4 de $\Pi$, on a 
$\Pi=\Ga \amalg \Ga\al_0 \amalg \Ga\be \amalg \Ga\al_0\be$. Les polygones 
\begin{equation}
\label{equa:polygoneQ}
\mcP_{\Pi^K} =  \mcP_\Pi\cup \al_0(\mcP_\Pi) 
\esp \mathrm{et} \esp
\mcQ=\mcP_\Pi\cup \al_0(\mcP_\Pi) \cup \be(\mcP_\Pi) \cup \al_0
\be(\mcP_\Pi)
\end{equation}
forment des domaines fondamentaux pour l'action de $\Pi^K$ et de $\Ga$,
respectivement (figure~\ref{figu:as_3bc}).  
Noter que {\em $\mcQ$ est indépendant de $\Ga$}.

\par
Pour commencer, nous étudions la torsion du  groupe générique
$\Pi_\gen$ (qui coïncide avec $\Pi$ si $\Isom(f)$ est trivial). En
remplaçant $\Pi$ par $\Pi_\gen$, on définit des indices $\nu_\gen$ et
$\nu^K_\gen$ analogues à $\nu$ et $\nu_K$. Remarquer que $\Pi_\gen^+ =
\Pi_\gen^K$.  Soit $\pi_0(\mcG_f)$ l'ensemble des composantes connexes du
graphe $\mcG_f= (S,\mfE)$. Quand $\mcG_f$ est fini, on pose $\ell_\gen =
|\pi_0(\mcG_f)|$ et $k_\gen =|\mfE|$. Le résultat suivant donne en
particulier l'indice minimal quand $\Isom(f)$ est trivial.

\begin{prop}[indice minimal générique] 
\label{prop:ind_mini_gen}
Soit $E^u$ non élémentaire, de type fini et admettant des selles ($\mfE
\neq \emptyset$). Alors on a $\nu_\gen = \nu_\gen^K=4$. De plus, les
sous-groupes réalisant $\nu_\gen$ (resp.  $\nu_\gen^K$) sont exactement les
noyaux des morphismes $\om :\Pi_\gen \to \mu_2^2$ (resp. $\carac :
\Pi_\gen^K \to \mu_2$) satisfaisant~\eqref{equa:ker_om_st}
(resp.~\eqref{equa:ker_chi_st}); ces morphismes $\carac$ correspondent
bijectivement à
 $\mu_2 ^{\mcX}$ avec $\mcX=\pi_0(\mcG_f) \smallsetminus \{*\}$.
  Quand $\mcG_f$ est fini,
le groupe $\Pi_\gen$ admet $2^{k_\gen -1} 3^{\ell_\gen -1}$ sous-groupes sans
torsion d'indice~4, tous distingués, dont $2^{\ell_\gen -1}$ sont inclus dans
$\Pi_\gen^K$.
\end{prop}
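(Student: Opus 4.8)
Le plan est d'exploiter la structure de groupe de Coxeter (à angles droits) de $\Pi_\gen$ fournie par la proposition~\ref{prop:G_gen_Coxeter}, et de traduire l'absence de torsion d'un sous-groupe en une condition de coloriage du graphe de contiguïté $\mcG_f$. On commence par observer que $\mcG_f$ est \emph{sans triangle} : trois réflexions génériques d'axes deux à deux orthogonaux produiraient un triangle hyperbolique à trois angles droits, ce qui est impossible (voir la discussion sur les angles droits dans la preuve de la proposition~\ref{prop:refl_hyp}) ; de façon équivalente, d'après le lemme~\ref{lemm:cycl_rubans}, les seules commutations entre réflexions proviennent des $4$-cycles de rubans autour d'un point selle. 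Par conséquent les sous-groupes spéciaux finis de $\Pi_\gen$ sont, à conjugaison près, exactement $\{1\}$, les $\langle\al\rangle\simeq\mu_2$ ($\al\in S$) et les $\langle\al,\be\rangle\simeq\mu_2^2$ ($\{\al,\be\}\in\mfE$). Tout élément de torsion d'un groupe de Coxeter étant conjugué dans un sous-groupe spécial fini (\cite{Bourbaki_LieIV}), un sous-groupe est sans torsion si et seulement s'il ne rencontre trivialement aucun conjugué de ces sous-groupes. Comme $\mfE\neq\emptyset$, $\Pi_\gen$ contient une copie de $\mu_2^2$, et la remarque~\ref{rema:indice4} donne déjà $\nu_\gen\geq 4$, ainsi que $\nu_\gen^K\geq 4$ (un sous-groupe sans torsion contenu dans le sous-groupe d'indice~$2$ $\Pi_\gen^K$ reste d'indice $\geq 4$ dans $\Pi_\gen$).

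Pour la description par $\om$, on applique le lemme~\ref{lemm:ind4_normal} : puisque $\Pi_\gen$ est engendré par des involutions, tout sous-groupe sans torsion $\Ga$ d'indice~$4$ est automatiquement distingué, de quotient $\Pi_\gen/\Ga\simeq\mu_2^2$, donc $\Ga=\ker\om$ pour le morphisme quotient $\om:\Pi_\gen\to\mu_2^2$. Le groupe $\mu_2^2$ étant abélien, $\om$ est constant sur les classes de conjugaison ; par la classification de la torsion ci-dessus, $\ker\om$ est sans torsion exactement lorsque $\om$ est non trivial sur chaque $\al$ et sur chaque élément d'arête $\al\be$. Comme $\om(\al\be)=\om(\al)\om(\be)$ et que $xy=1\iff x=y$ dans $\mu_2^2$, cela équivaut à la condition~\eqref{equa:ker_om_st}, laquelle force la surjectivité de $\om$ (deux éléments non triviaux distincts engendrent $\mu_2^2$) ; d'où $\nu_\gen=4$ et le fait que les sous-groupes réalisant $\nu_\gen$ sont précisément les $\ker\om$ vérifiant~\eqref{equa:ker_om_st}. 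Tout morphisme vers un $2$-groupe abélien se factorisant par l'abélianisé $G_\gen^{ab}\simeq\mu_2^{(S)}$ (remarque~\ref{rema:abelianise_Ggen}), la donnée d'un tel $\om$ équivaut à celle d'un coloriage propre de $\mcG_f$ par les trois éléments non triviaux de $\mu_2^2$.

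Pour la version préservant le champ, on utilise que toute réflexion générique renverse $K^u$, de sorte que $\Pi_\gen^K=\Pi_\gen^+$ est le sous-groupe de parité (longueur paire), d'indice~$2$, dont la seule torsion provient des éléments d'arête $\al\be$ (les symétries centrales autour des selles). Un caractère $\carac:\Pi_\gen^K\to\mu_2$ a donc un noyau sans torsion si et seulement si $\carac(\al\be)=-1$ pour toute arête, c'est-à-dire la condition~\eqref{equa:ker_chi_st} ; réciproquement, tout sous-groupe d'indice~$2$ de $\Pi_\gen^K$ est le noyau d'un tel caractère. En écrivant $\carac(\al\be)=\carac(\al_0\al)\carac(\al_0\be)$ pour un sommet de base $\al_0$ fixé, ceci signifie que $\al\mapsto\carac(\al_0\al)$ est un \emph{coloriage propre à deux couleurs} de $\mcG_f$. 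L'ingrédient géométrique décisif est qu'un zéro \emph{simple} de $f$ est un changement de signe (lemme~\ref{lemm:semi_complet}), donc deux composantes adjacentes portent des signes opposés : le coloriage par le signe $s\mapsto(-1)^s$ est déjà propre et $\mcG_f$ est une réunion disjointe de chemins (éventuellement réduits à un sommet), donc biparti, avec exactement deux coloriages par composante connexe. La normalisation $\carac(\al_0\al_0)=1$ rigidifie la composante de $\al_0$ et laisse un choix binaire par composante restante, d'où la bijection avec $\mu_2^{\mcX}$, $\mcX=\pi_0(\mcG_f)\smallsetminus\{*\}$, et $\nu_\gen^K=4$.

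Il reste à compter. Lorsque $\mcG_f$ est fini, réunion de $\ell_\gen$ chemins totalisant $k_\gen$ arêtes, le polynôme chromatique donne $3^{\ell_\gen}2^{k_\gen}$ coloriages propres à trois couleurs ; l'action par post-composition de $\aut(\mu_2^2)$ (d'ordre~$6$) sur ces $\om$ étant libre et de fibres égales aux classes de même noyau, le nombre de sous-groupes $\ker\om$ vaut $3^{\ell_\gen}2^{k_\gen}/6=2^{k_\gen-1}3^{\ell_\gen-1}$, tous distingués. Comme $\aut(\mu_2)$ est trivial, les caractères $\carac$ sont en bijection directe avec leurs noyaux, soit $2^{\ell_\gen-1}$ sous-groupes sans torsion d'indice~$2$ dans $\Pi_\gen^K$. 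Le point le plus délicat sera de justifier que les conditions de coloriage~\eqref{equa:ker_om_st} et~\eqref{equa:ker_chi_st} sont \emph{suffisantes} (et pas seulement nécessaires) pour l'absence de torsion — ce qui repose entièrement sur la classification des paraboliques finis via le caractère sans triangle de $\mcG_f$ — puis de passer proprement des morphismes aux sous-groupes en divisant par $|\aut(\mu_2^2)|$.
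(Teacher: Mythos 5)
Votre démonstration est correcte et suit le même schéma que celle de l'article : classification de la torsion de $\Pi_\gen$, minoration $\nu_\gen\geq 4$ par la remarque~\ref{rema:indice4}, réduction aux noyaux de morphismes par le lemme~\ref{lemm:ind4_normal}, traduction des conditions~\eqref{equa:ker_om_st} et~\eqref{equa:ker_chi_st} en coloriages propres du graphe $\mcG_f$, puis décompte. Vous vous en écartez sur deux points localisés, tous deux valides. Pour la torsion, vous raisonnez algébriquement (système de Coxeter de la proposition~\ref{prop:G_gen_Coxeter}, conjugaison des éléments de torsion dans les sous-groupes paraboliques finis, et absence de triangle dans $\mcG_f$ --- laquelle découle d'ailleurs immédiatement du fait que $\mcG_f$ est une réunion de chemins, ses sommets étant linéairement ordonnés dans $I$), là où l'article lit la torsion directement sur l'action hyperbolique fournie par la proposition~\ref{prop:refl_hyp} ; votre variante évite la réalisation dans $\HH$ au prix d'un théorème général sur les groupes de Coxeter que l'article n'a pas à invoquer. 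Pour le décompte, vous divisez le nombre $3^{\ell_\gen}2^{k_\gen}$ de coloriages propres (polynôme chromatique d'une réunion de chemins) par $|\Aut(\mu_2^2)|=6$, l'action étant libre puisque les $\om$ admissibles sont surjectifs dès que $\mfE\neq\emptyset$, tandis que l'article normalise $\om$ sur une arête fixée puis propage par récurrence composante par composante : les deux calculs coïncident. Le seul point que vous laissez implicite est que tout coloriage propre à deux couleurs normalisé en $\al_0$ provient effectivement d'un caractère de $\Pi_\gen^K=\Pi_\gen^+$ ; cela se vérifie en posant $\om(\ga)=(c(\ga),-1)$ sur la présentation de Coxeter et en restreignant à $\Pi_\gen^+$, ce qui revient à la construction de l'article.
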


\begin{proof} On  identifie comme plus haut $\Pi_\gen$ à un groupe de réflexions
du plan hyperbolique $\HH$ (proposition~\ref{prop:refl_hyp}), de polygone
fondamental $\mcP_0$. En considérant l'action sur $\HH$, on voit que tout
élément de torsion de $\Pi_\gen$ est conjugué à un générateur $\ga$
($\ga\in S$) ou a un elliptique $\al\be \in \Pi_\gen^+$ avec
$\{\al,\be\}\in \mfE$.  Par suite, si un morphisme $\om : \Pi_\gen \to
\mu_2^2$ satisfait~\eqref{equa:ker_om_st}, alors $\ker\om$ est sans torsion
et d'indice~4. Inversement, comme $\Pi_\gen$ est engendré par des
involutions, tout sous-groupe sans torsion d'indice~4 est de cette forme,
lemme~\ref{lemm:ind4_normal}.  On construit aisément de tels morphismes
$\om$ (voir ci-dessous). Sachant que $\nu_\gen\geq 4$
(remarque~\ref{rema:indice4}), on conclut que $\nu_\gen = 4$. Enfin, la
correspondance entre sous-groupes réalisant $\nu_{\rm gen}^K$ et caractères
$\carac : \Pi_\gen^+ \to \mu_2$ vérifiant~\eqref{equa:ker_chi_st} est
immédiate puisque \eqref{equa:ker_chi_st} équivaut ici à l'absence de
torsion dans le noyau $\ker\carac$.  

\par
Il reste à donner  une description de tous les morphismes $\om$ et
$\carac$ qui satisfont respectivement \eqref{equa:ker_om_st} et
\eqref{equa:ker_chi_st}.  Fixons une paire $\{\al_0,\be_0\}\in \mfE$. Les
noyaux $\ker\om$ correspondent aux morphismes $\om$ modulo composition par
un automorphisme du groupe $\mu_2^2$. On normalise~$\om$ en fixant les
valeurs (mutuellement distinctes) de $\om(\al_0)$ , $\om(\be_0)$ et
$\om(\al_0\be_0)$.  Si $\Pi_\gen^2$ désigne le sous-groupe engendré par les
carrés, les morphismes $\Pi_\gen \to \mu_2^2$ sont en bijection avec les
morphismes de $\Pi_\gen/ \Pi_\gen ^2 \simeq \Z/2\Z^{(S)}$ sur
$\mu_2^2$. Soit $C_0=(S_0,\mfE_0) \in \pi_0(\mcG_f)$ 
la composante de $\al_0$. Pour tout $C = (S_C,\mfE_C) \in
\pi_0(\mcG_f)\smallsetminus\{C_0\}$, on fixe (arbitrairement) un sommet
$\ga_C \in S_C$. Les morphismes $\om$ sont alors déterminés, par
composante, comme suit : d'abord $\om(\ga_C)$ (3~choix), ensuite $\om(\ga)$
pour $\ga\in S_C$ par récurrence à partir de $\om(\ga_C)$ dans la
composante~$C$ (deux choix pour le sommet voisin, donc $2^{\mfE_C}$ choix),
et de même pour $C_0$ avec seulement $2^{\mfE_0 \smallsetminus\{*\}}$ choix
puisque $\om(\al_0)$ et $\om(\be_0)$ sont déjà fixés.  Il reste à dénombrer
les sous-groupes réalisant l'indice $\nu_\gen^K$.  L'inclusion $\ker \om
\subset \Pi_\gen^K = \Pi_\gen^+$ équivaut à la condition
$$\om(\ga)\in\{\om(\al_0),\om(\be_0)\}\esp (\ga\in S).$$
Dans ce cas la restriction de $\om$ à $\Pi_\gen^+$ est à valeurs dans le
sous-groupe engendré par $\om(\al_0 \be_0)$ et définit un caractère
$\carac: \Pi_\gen^+\to \mu_2$, vérifiant~\eqref{equa:ker_chi_st} si $\om$
vérifie~\eqref{equa:ker_om_st}. Par la condition additionnelle ci-dessus,
les $\om(\ga)$ pour $\ga \in S_C$ ($C\in
\pi_0(\mcG_f)\smallsetminus\{C_0\}$) sont entièrement déterminés par
$\om(\ga_C)$ (avec deux choix pour $\om(\ga_C)$), et les $\om(\ga)$ sont
déterminés pour tout $\ga\in C_0$.  La description topologique des
quotients par $\ker\om \subset \Pi_\gen^+$ sera donnée dans la preuve de la
proposition~\ref{prop:top_quo_Eu}.
\end{proof}

Dans la suite, nous supposerons que le groupe $\Isom(f)$ n'est pas trivial.
Les quotients provenant de l'action simpliciale de $\Isom(f)$ sur le
graphe de contiguïté $\mcG_f=(S,\mcE)$ sont notés $[\mcG_f]_f$, $[S]_f$,
$[\mcE]_f$, $[\pi_0(\mcG_f)]_f$ ; quand $[\mcG_f]_f$ est fini, on pose
\begin{equation}
\label{equa:lk}
\ell=\chi ([\mcG_f]_f) \esp \mathrm{et} \esp k=|[\mcE]_f|,
\end{equation}
qui désignent respectivement la caractéristique d'Euler du quotient
$[\mcG_f]_f$ et le nombre de selles comptées modulo $\Isom(f)$.
Par définition, on a la relation $k+\ell = |[S]_f|$ (nombre 
de composantes de $\{f\neq 0\}$ modulo $\Isom(f)$).
Topologiquement,  $[\mcG_f]_f$ est une union disjointe de
segments, ou un cercle si $\mcG_f$ est connexe avec $\Isom(f)\simeq \Z$.
Hormis ce dernier cas particulier (où $\ell=0$), l'entier $\ell$ 
est aussi le nombre de composantes  de $\mcG_f$ modulo $\Isom(f)$.
\par

On traite maintenant le cas où $\Isom(f)\simeq\Z/2\Z$.  D'après la
proposition~\ref{prop:refl_hyp}, le groupe $\Pi$ est engendré par
$\Pi_\gen$ et un elliptique non générique $s$ pour le cas (1a) ou une
réflexion non générique $\sig$ pour le cas (1b). Cette involution non
générique fixe $\mcP_0$ (polygone fondamental de $\Pi_\gen$).  De plus,
dans le cas (1b), $\Isom(f)$ fixe un unique sommet de $\mcG_f$ noté
$\al_0$.

\begin{prop}[indice minimal, $\Isom(f)\simeq\Z/2\Z$]
\label{prop:ind_mini_sym}
Soit $E^u$ non élémentaire, de type fini et telle que
$\Isom(f)\simeq\Z/2\Z$. Dans le cas (1a), on suppose de plus que $E^u$
admet des selles.  Alors on~a $\nu=\nu^K=4$.  Les sous-groupes réalisant
$\nu$ (resp.  $\nu^K$) sont les noyaux des morphismes $\om :\Pi \to
\mu_2^2$ (resp. $\carac : \Pi^K \to \mu_2$) satisfaisant les
conditions~\eqref{equa:ker_om_st} (resp.~\eqref{equa:ker_chi_st}),
auxquelles il faut ajouter, suivant les cas :
\begin{itemize}
\item[(1a)] $\om(s)\in \De = \mu_2^2\setminus \{ (1,1)\}$,
\item[(1b)] $\om(\{\al_0,\sig,\al_0\sig\}) = \De$
(resp. $\carac(\sig)=-1$).
\end{itemize}
Les morphismes $\carac$ correspondent bijectivement à $\mu_2 ^ \mfX$ avec
$\mfX=[\pi_0(\mcG_f)]_f$, $[\pi_0(\mcG_f)]_f \smallsetminus\{*\}$ dans les cas
  (1a), (1b).  Si $\mcG_f$ est fini, le groupe $\Pi$ comprend $2^{k-1}
  3^{\ell}$ (resp.  $2^{k} 3^{\ell-1}$) sous-groupes sans torsion d'indice
  4, tous distingués, dont $2^{\ell}$ (resp. $2^{\ell-1}$) sont inclus dans
  $\Pi^K$.
\end{prop}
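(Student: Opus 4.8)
The proposition to prove is about the minimal torsion-free index in $\Pi$ (and in $\Pi^K$) when $\Isom(f)\simeq \Z/2\Z$, and a count of such subgroups. Let me sketch how I would prove this.
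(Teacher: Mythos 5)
Your proposal stops at the announcement of a plan: no argument is actually given. As it stands there is nothing to check — you have not established that $\nu=\nu^K=4$, not characterized the index-$4$ torsion-free subgroups, and not carried out any count. This is not a partial proof with a fixable gap; it is the absence of a proof.

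To give you a concrete target: the paper's argument (i) builds a fundamental domain $\mcP_\Pi$ for $\Pi$ by cutting the polygon $\mcP_0$ of $\Pi_\gen$ along a geodesic $\xi$ — through the centre of the elliptic $s$ in case (1a), or equal to the axis of the non-generic reflection $\sig$ in case (1b); (ii) invokes the algebraic lemma~\ref{lemm:ind4_normal} (via the polygon $\mcQ$ of~\eqref{equa:polygoneQ}) to show every torsion-free subgroup of index $4$ is normal and arises as $\ker\om$ for a morphism $\om:\Pi\to\mu_2^2$; (iii) identifies the conditions \eqref{equa:ker_om_st}, \eqref{equa:ker_chi_st} plus the case-specific conditions on $\om(s)$ or $\om(\{\al_0,\sig,\al_0\sig\})$ as exactly what kills the torsion (all torsion being conjugate to generators $\ga\in S$, to elliptics $\al\be$ with $\{\al,\be\}\in\mfE$, or to $s$, $\sig$, $\al_0\sig$); and (iv) counts the admissible $\om$ and $\carac$ componentwise on the contiguity graph modulo $\Isom(f)$, which is where the exponents $k$ and $\ell$ of \eqref{equa:lk} enter. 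Any complete proof must address each of these four points; your text addresses none of them.
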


\begin{proof} Dans le cas (1a), on choisit  une géodésique (arbitraire)
$\xi$ passant par le centre de~$s$ et ne rencontrant pas le bord $\partial
  \mcP_0$. Dans le cas (1b), on note $\xi$ l'axe de $\sig$.  La
  géodésique~$\xi$ partage $\mcP_0$ en deux moitiés isométriques formant un
  domaine fondamental pour~$\Pi$ ; on choisit l'une d'elle, notée
  $\mcP_\Pi$. Pour (1a), on fixe une paire $\{\al_0,\be_0\}\in \mfE$ qui
  représente un sommet de $\partial \mcP_\Pi$ (comme graphe, $\partial
  \mcP_0$ est dual à $\mcG_f$). Pour (1b), on rappelle que $\al_0\sig$ est
  elliptique. Soit $\mcQ$ définit par~\eqref{equa:polygoneQ} avec
  $\al=\al_0$ et $\be=\be_0$ pour (1a), $\be=\sig$ pour (1b).
La fin de la preuve est une adaptation immédiate de celle de la
proposition~\ref{prop:ind_mini_gen}. Le polygone $\mcQ$ fournit un domaine
fondamental pour tout sous-groupe sans torsion d'indice~4 et le
lemme~\ref{lemm:ind4_normal} s'applique. Inversement, la torsion de 
$\Pi$ étant connue, les noyaux des morphismes $\om$ et~$\ro$ qui 
satisfont aux conditions de l'énoncé définissent des sous-groupes sans
torsion d'indice~4.  Pour le dénombrement des sous-groupes~$\Gamma$, le
bord $\partial \mcP_\Pi$ ne fait intervenir que les éléments de $\mcG_f$
modulo l'action de $\Isom(f)$ ; dans le cas (1a), il y a une composante de
bord additionnelle $\xi$ (sans sommet), et dans le cas (1b) la paire
$\{\al_0,\sig\}$ joue le rôle de $\{\al_0,\be_0\}$. Noter cependant que si
$\Gamma$ est inclus dans $\Pi^K$, la surface $\HH/\Gamma$ n'est pas
orientable (si $\mfE\neq \emptyset$) car les éléments $\al_0 s$ et
$\al_0\sig\ga$ ($\ga \in S$) sont indirects (voir
proposition~\ref{prop:top_quo_Eu}).
\end{proof}
\begin{figure}[h]
\begin{center}
\labellist
\small\hair 2pt
\pinlabel $\alpha_0$ at 200 140
\pinlabel $\alpha_s$ at 232 157
\pinlabel $\mcP_\Pi$ at 160 170
\pinlabel $\sig_0$ at 115 215
\pinlabel $s_0$ at 209 217
\endlabellist
\includegraphics[scale=0.5]{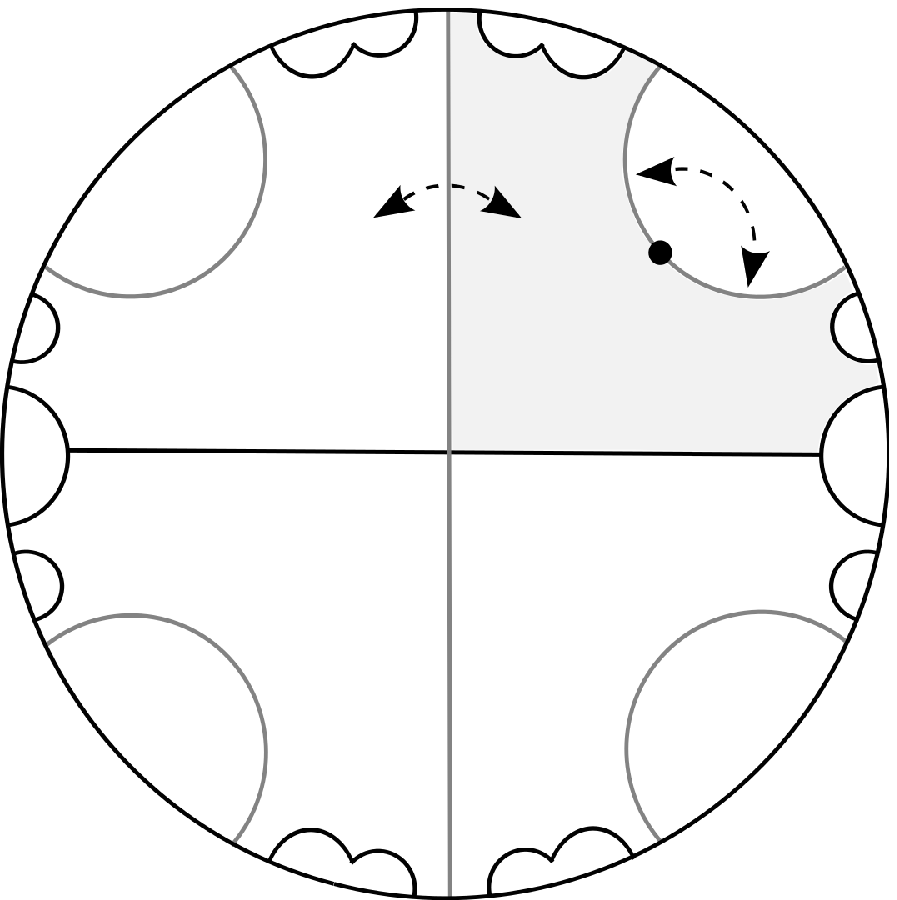}
\hspace{25pt}
\labellist
\small\hair 2pt
\pinlabel $\alpha_0$ at 205 140
\pinlabel $\alpha_1$ at 190 184
\pinlabel $\mcP_\Pi$ at 160 155
\pinlabel $\sig_0$ at 115 203
\pinlabel $\sig_1$ at 177 222
\endlabellist
\includegraphics[scale=0.5]{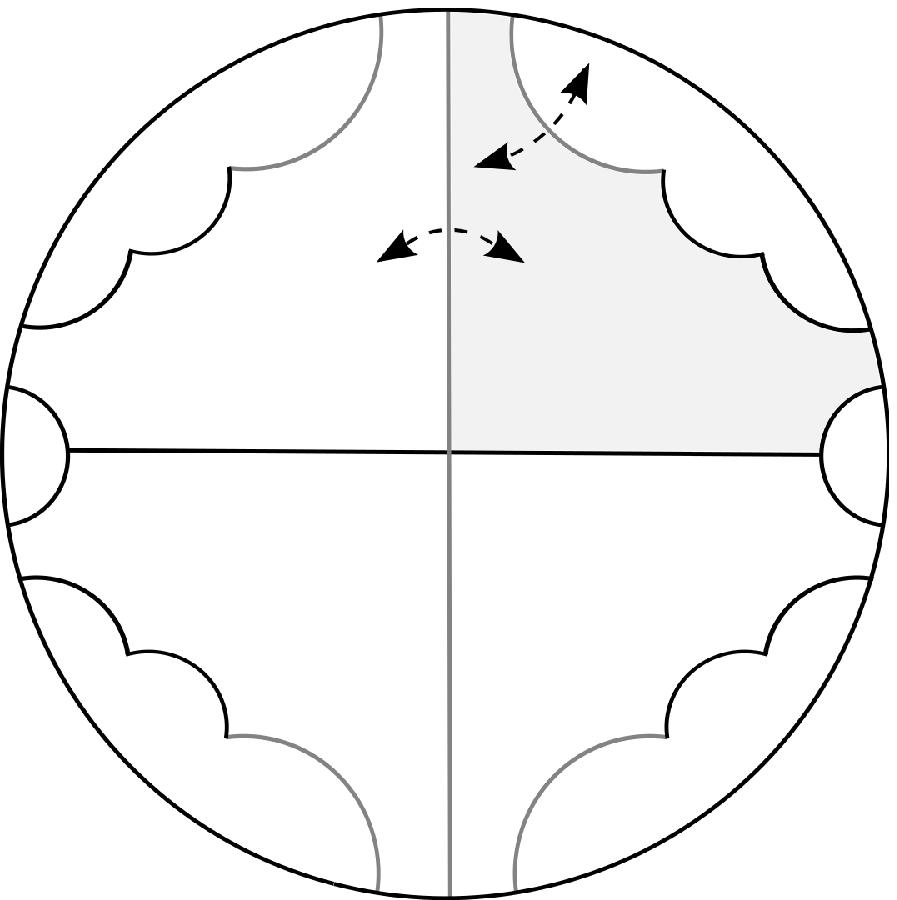}
\caption{$E^u$ avec produits elliptiques, cas (3b) et (3c$^{+u}$)}
\label{figu:as_3bc}
\end{center}
\end{figure}

Quand $\Isom(f)$ est isomorphe à $D_\infty$, les générateurs non génériques
de $\Pi$ fixant $\mcP_0$ seront notés respectivement $s_i$ ($i=0,1$), $s_0$
et $\sig_0$, $\sig_i$ ($i=0,1$) dans les cas (3a), (3b), (3c). Chaque
réflexion non générique $\sig_i$ fixe un unique sommet $\al_i$ de $\mcG_f$.
\begin{prop}[indice minimal, $\Isom(f)\simeq D_\infty$]
\label{prop:ind_mini_diedral}
Soit $E^u$ non élémentaire, de type fini et telle que $\Isom(f)\simeq
D_\infty$. Dans le cas (3a), on suppose de plus que $E^u$ admet des selles.
Alors on~a $\nu=\nu^K=4$.  Les sous-groupes réalisant $\nu$ (resp.
$\nu^K$) sont les noyaux des morphismes $\om :\Pi \to \mu_2^2$
(resp. $\carac : \Pi^K \to \mu_2$) satisfaisant les
conditions~\eqref{equa:ker_om_st} (resp.~\eqref{equa:ker_chi_st}),
auxquelles il faut ajouter, suivant les cas :
\begin{itemize}
\item[(3a)] $\om(s_i)\in \De = \mu_2^2\setminus \{ (1,1)\} $  
pour $i=0,1$,
\item[(3b)] $\om(s_0)\in \De$ et $\om(\{\al_0,\sig_0,\al_0\sig_0\}) = \De$
(resp. $\carac(\sig_0)=-1$),
\item[(3c)] $\om(\{\al_i,\sig_i,\al_i\sig_i\}) = \De$
  (resp. $\carac(\sig_i)=-1$) pour $i=0,1$.
\end{itemize}
L'ensemble $[\mcG_f]_f$ est fini et, dans le cas (3a) (resp. (3b),(3c)),
le groupe $\Pi$ admet $2^{k-1} 3^{\ell+1}$
(resp. $2^{k} 3^{\ell}$, $2^{k+1} 3^{\ell-1}$) sous-groupes sans torsion
d'indice~4, tous distingués, dont $2^{\ell+1}$ (resp. $2^\ell$,
$2^{\ell-1}$) sont inclus dans $\Pi^K$.
\end{prop}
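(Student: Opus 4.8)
The plan is to run the argument of Propositions~\ref{prop:ind_mini_gen} and~\ref{prop:ind_mini_sym}, now with the two non-generic involutions of $\Isom(f)\simeq D_\infty$ treated in parallel. First I would note that $[\mcG_f]_f$ is finite: as $f$ is of finite type $S$ is locally finite, and since $\Isom(f)\simeq D_\infty$ forces $f$ to be periodic, the graph $\mcG_f$ has finite quotient modulo $\Isom(f)$ (a disjoint union of segments). By Proposition~\ref{prop:refl_hyp} I realise $\Pi$ as a discrete subgroup of $\Isom(\HH)$ whose generic part $\Pi_\gen$ is the reflection group of a polygon $\mcP_0$, the stabiliser $H_0\simeq D_\infty$ of $\mcP_0$ being generated by the two non-generic involutions: $(s_0,s_1)$ in case~(3a), $(s_0,\sig_0)$ in~(3b), $(\sig_0,\sig_1)$ in~(3c). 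Cutting $\mcP_0$ along the two geodesics attached to these involutions (the axis of $\sig_i$, or an auxiliary geodesic through the fixed point of $s_i$, exactly as in Proposition~\ref{prop:ind_mini_sym}) yields a finite fundamental polygon $\mcP_\Pi$ for $\Pi$. Fixing a generic side $\al_0$ of $\mcP_\Pi$ and an orthogonal side $\be$ ($\be$ generic in~(3a), $\be=\sig_0$ in~(3b),~(3c)), the polygon $\mcQ$ of~\eqref{equa:polygoneQ} is a fundamental domain for every torsion-free subgroup of index~$4$, independently of that subgroup.

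Next I would establish $\nu=\nu^K=4$. Because $E^u$ carries saddles or non-generic reflections, $\Isom(E^u)$ contains a copy of $\mu_2^2$ (two reflections with orthogonal axes), so by Remark~\ref{rema:indice4} every torsion-free subgroup has index at least~$4$; the converse follows once such subgroups of index~$4$ are produced. The heart of the matter is the torsion: acting on $\HH$, each torsion element of $\Pi$ is conjugate to a generic reflection $\ga$ ($\ga\in S$), to a generic elliptic $\al\be$ with $\{\al,\be\}\in\mfE$, to a non-generic involution $s_i$ or $\sig_i$, or to a non-generic elliptic $\al_i\sig_i$. Translating ``$\ker\om$ is torsion-free'' across these conjugacy classes gives precisely~\eqref{equa:ker_om_st} together with the stated case-specific conditions ($\om(s_i)\in\De$ for each non-generic elliptic, $\om(\{\al_i,\sig_i,\al_i\sig_i\})=\De$ for each non-generic reflection). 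Since $\Pi$ is generated by involutions, Lemma~\ref{lemm:ind4_normal} then shows every such $\ker\om$ is normal with quotient $\mu_2^2$; the parallel discussion for $\carac:\Pi^K\to\mu_2$ subject to~\eqref{equa:ker_chi_st} and $\carac(\sig_i)=-1$ yields the subgroups lying in $\Pi^K$.

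The remaining work is the enumeration, organised as in Proposition~\ref{prop:ind_mini_gen}. A morphism $\om:\Pi\to\mu_2^2$ satisfying~\eqref{equa:ker_om_st} is the same datum as a proper $3$-colouring of the finite forest $[\mcG_f]_f$ by the three non-trivial elements of $\De$, since $\om$ is constant on $\Isom(f)$-orbits and the edge relations force adjacent colours apart; counting kernels rather than morphisms amounts to normalising $\om$ on one distinguished pair of orthogonal sides, which removes the $\Aut(\mu_2^2)\simeq S_3$ ambiguity. I would take this pair to be a generic edge $\{\al_0,\be_0\}\in\mfE$ in~(3a), and $\{\al_0,\sig_0\}$ in~(3b),~(3c). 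With $\ell=\chi([\mcG_f]_f)$ and $k=|[\mcE]_f|$ as in~\eqref{equa:lk}, the normalised generic colourings contribute $2^{k-1}3^{\ell-1}$ (fixed edge) or $2^{k}3^{\ell-1}$ (fixed vertex); each remaining non-generic elliptic then contributes a free factor $3$ and each remaining non-generic reflection a factor $2$ (its value forced into $\De\setminus\{\om(\al_i)\}$), producing $2^{k-1}3^{\ell+1}$, $2^{k}3^{\ell}$ and $2^{k+1}3^{\ell-1}$ in cases~(3a),~(3b),~(3c). For subgroups inside $\Pi^K$ the $3$-colourings become $2$-colourings with values in the normalising pair, giving $2^{\ell-1}$ from the generics, after which each remaining elliptic contributes a factor~$2$ and each remaining non-generic reflection a forced factor~$1$, whence $2^{\ell+1}$, $2^{\ell}$ and $2^{\ell-1}$.

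I expect the main obstacle to lie in this torsion bookkeeping, and in particular in tracking how the two non-generic involutions act on $K$. One must verify that both $s_i$ and $\sig_i$ reverse $K$ — a central symmetry sends $K$ to $-K$, and a non-generic reflection behaves like a generic one, or one may invoke Lemma~\ref{lemm:interpretation_types}. Requiring $\ker\om\subset\Pi^K$ then forces the $K$-reversing character $\Pi\to\mu_2$ to factor through $\om$, so its kernel is the order-$2$ subgroup of $\mu_2^2$ containing the images of the $K$-preserving elements, and every $K$-reversing generator must take its $\om$-value in the complementary two-element coset; for an elliptic $s_i$ this leaves two admissible values, whereas for a reflection $\sig_i$ the adjacency requirement $\om(\sig_i)\neq\om(\al_i)$ removes one and pins the value down. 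Getting these parities and the induced factors right is exactly what separates the three $\Pi^K$-counts; once they are settled, the construction of $\mcP_\Pi$, of $\mcQ$, and the normality statement via Lemma~\ref{lemm:ind4_normal} are routine adaptations of the two preceding propositions.
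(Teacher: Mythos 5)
Your overall strategy — cutting $\mcP_0$ along the two geodesics attached to the non-generic involutions, classifying the torsion of $\Pi$ via the vertex and edge stabilisers of $\mcP_\Pi$, invoking Lemma~\ref{lemm:ind4_normal} for normality, and counting the morphisms $\om$ and the characters $\carac$ component by component in $[\mcG_f]_f$ — is exactly the paper's, and your counts of the morphisms $\om$ (hence of all torsion-free index-$4$ subgroups: $2^{k-1}3^{\ell+1}$, $2^k3^\ell$, $2^{k+1}3^{\ell-1}$) are correct. The error sits in the step you yourself single out as decisive: the claim that the non-generic reflections $\sig_i$ reverse $K$. They do not. A non-generic (exceptional) reflection is a reflection about a leaf \emph{of $K$}, not of $\mcK^\perp$; equivalently, as in the proof of Proposition~\ref{prop:refl_hyp}, $\sig_i=\tau_0\sig$ with $\tau_0$ a non-generic elliptic and $\sig$ a generic reflection, both of which reverse $K$, so $\sig_i$ preserves $K$. (In adapted coordinates the two lifts of an involution $x\mapsto b-x$ of $\Isom(f)$ are $(x,y)\mapsto(b-x,-y+c)$, direct and $K$-reversing — the elliptic $s_i$ — and $(x,y)\mapsto(b-x,\,y+G(x)+c)$ with $G'=2/f$, indirect and $K$-preserving — the reflection $\sig_i$.) This is in fact forced by the statement you are proving: the condition $\carac(\sig_i)=-1$ on a character $\carac:\Pi^K\to\mu_2$ only makes sense if $\sig_i\in\Pi^K$, and the paper's proof lists $\sig_0,\sig_1$ among the generators of $\Pi^K$ in case (3c). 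Lemma~\ref{lemm:interpretation_types}, which you propose to invoke, is of no help here: it only concerns the infinite-order generator $h_0$.

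The consequence is that your criterion for $\ker\om\subset\Pi^K$ selects the wrong morphisms. The index-$2$ subgroup $H\leq\mu_2^2$ through which the $K$-reversing character factors is $\{(1,1),\om(\sig_0)\}$, not $\{(1,1),\om(\al_0\sig_0)\}$ as your premise would give; generic reflections must then take values in $\De\smallsetminus\{\om(\sig_0)\}$, and $\om(\sig_1)$ is pinned to $\om(\sig_0)$ rather than to the element of the other coset distinct from $\om(\al_1)$. The cardinalities $2^{\ell+1},2^{\ell},2^{\ell-1}$ happen to come out the same either way — in both versions the value at a non-generic reflection is pinned to a single element and the generic part contributes $2^{\ell-1}$ — but the set of kernels you designate as contained in $\Pi^K$ is not the right one, so the characterization half of the statement (the conditions on $\carac$, and which $\ker\om$ lie in $\Pi^K$) would come out false. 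Once the $K$-behaviour of $s_i$ (reversing, hence $\carac(\al_0 s_i)$ free) and $\sig_i$ (preserving, hence $\carac(\sig_i)=-1$ forced) is corrected, your bookkeeping goes through and reproduces the paper's argument.
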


\begin{proof}  
On obtient un domaine fondamental $\mcP_\Pi$ en découpant $\mcP_0$ par deux
géodésiques $\xi,\xi'$ associées aux générateurs non génériques
(involutions elliptiques ou réflexions, suivant les cas, voir
figure~\ref{figu:as_3bc}).  Comme précédemment, les sous-groupes réalisant
$\nu$ et $\nu^K$ sont d'indice~4, tous distingués
(lemme~\ref{lemm:ind4_normal}) et décrits par les morphismes $\om$ et $\ro$
vérifiant les conditions de l'énoncé qui éliminent la torsion dans les
noyaux. La construction de ces morphismes et leur décompte quand
$[\mcG_f]_f$ est fini s'effectuent comme plus haut (preuves des
propositions~\ref{prop:ind_mini_gen} et~\ref{prop:ind_mini_sym}), en
explicitant des générateurs de $\Pi$ (pour $\om$) et de $\Pi^K$
(pour~$\carac$) grâce au théorème de Poincaré.  Soit $S_1$ l'ensemble des
$\al\in S$ dont les axes bordent $\mcP_\Pi$.  Dans le cas (3a), on fixe une
paire contiguë $\{\al_0,\be_0\}$ avec $\al_0, \be_0\in S_1$ ; dans les cas
(3b) et (3c), on remplace $\be_0$ par  $\be=\sig_0$.  Le groupe $\Pi^K$ est
engendré par les $(\al_0\al)_{\al \in S_1}$ et $\de_0,\de_1$ avec
$(\de_0,\de_1) =(\al_0 s_0 , \al_0 s_1)$, $(\sig_0,\al_0 s_1$),
$(\sig_0,\sig_1)$ pour (3a), (3b), (3c).  Si $\al$, $\al'\in S_1$ sont
contiguës, on doit avoir $\ro(\al_0\al)\ro(\al_0\al')=\ro(\al\al')=
-1$. Ainsi, $\ro(\al_0\al)$ est imposé si $\al\in S_1$ appartient à
la composante de $\al_0$, et $\ro$ est déterminé par une valeur pour chacune
des autres composantes génériques de $\partial\mcP_\Pi$. De plus $\ro(\al_0
s_i)$ est arbitraire, d'où l'assertion sur les morphismes~$\ro$. Pour
compter les morphismes $\om$ (normalisés sur $\{\al_0,\be,\al_0\be\}$,
$\be= \be_0$ ou $\sig_0$ suivant les cas), on procède comme pour la preuve
du cas générique (proposition~\ref{prop:ind_mini_gen}). Dans le cas (3c),
noter que $\Pi$ est aussi un groupe de réflexions, les paires
$\{\al_i,\sig_i\}$ ($i=0,1$) jouant le rôle de selles ; il se comporte
(pour $\om$) comme un groupe générique avec $\ell$ composantes et $k+2$
selles, voir proposition~\ref{prop:ind_mini_gen}.
\end{proof}

\begin{prop}[indice minimal, $\Isom(f)\simeq \Z$]
\label{prop:ind_mini_trans}
Soit $E^u$ non élémentaire, de type fini, telle que $\Isom(f)\simeq
\Z$ et admettant des selles. On a alors $\nu= \nu^K = 4$.  Les sous-groupes
réalisant $\nu^K$ correspondent bijectivement aux caractères $\carac :
\Pi^K \to \mu_2$ vérifiant~\eqref{equa:ker_chi_st}.  Plus précisément,
$[\mcG_f]_f$ étant fini,
\begin{enumerate}
\item \label{asse:conn}
si $\mcG_f$ n'est pas connexe,  il y a  $2^{\ell+1}$
sous-groupes $\ker \ro$, dont $2^\ell$ distingués dans $\Pi$ et 
 $2^\ell$ échangés par réflexion générique, 
\item
si $\mcG_f$ est connexe, l'indice $\nu^K$ est réalisé par 2
  sous-groupes, distingués dans $\Pi$. 
\end{enumerate}
\end{prop}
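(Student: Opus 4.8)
The plan is to reuse the hyperbolic dictionary of Proposition~\ref{prop:refl_hyp} and the algebraic criterion of Lemma~\ref{lemm:ind4_normal}, adapting the counting of Proposition~\ref{prop:ind_mini_gen} to the presence of the infinite cyclic factor. Since $\Isom(f)\simeq \Z$, the split sequence \eqref{equa:se_G} realises $\Pi$ as a semi-direct product $\Pi_\gen \rtimes \langle h_0\rangle$, where $h_0$ has infinite order and conjugation by $h_0$ shifts the indices of $S$ by the minimal period. In particular every torsion element of $\Pi$ already lies in $\Pi_\gen$ (its image in $\Z$ must vanish), hence is conjugate either to a generic reflection $\ga$ ($\ga\in S$) or to an elliptic product $\al\be$ with $\{\al,\be\}\in\mfE$ (Proposition~\ref{prop:G_gen_Coxeter}). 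As $E^u$ admits saddles, a saddle supplies two orthogonal generic reflections, so $\Pi$ contains a copy of $\mu_2^2$ and Remark~\ref{rema:indice4} gives $\nu\geq 4$.

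I would next reduce to characters. The sign of the isometries on $K^u$ defines a surjection $\Pi\to\mu_2$ with kernel $\Pi^K$, so a torsion-free subgroup $\Ga\subset\Pi^K$ of index $4$ in $\Pi$ is exactly the kernel of a character $\carac:\Pi^K\to\mu_2$. The generic reflections invert $K^u$ and thus leave $\Pi^K$, whence the only torsion of $\Pi^K$ is conjugate to the elliptics $\al\be$; torsion-freeness of $\ker\carac$ is therefore equivalent to condition~\eqref{equa:ker_chi_st}. Since the components of $\{f\neq 0\}$ are linearly ordered, $\mcG_f$ is a disjoint union of (finite or bi-infinite) paths, hence bipartite, so such $\carac$ exist; this yields $\nu^K\leq 4$ and, with the lower bound, $\nu=\nu^K=4$, and installs the announced bijection.

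For the count I would parametrise $\carac$ by the values $c(\al):=\carac(\al_0\al)$ ($\al\in S$, with $\al_0$ a fixed base index, so $c(\al_0)=1$) together with its value on $h_0$ (type pair) or on $\al_0h_0$ (type impair). Using $[\al\be]=[\al_0\al]+[\al_0\be]$ in the abelianisation (Remark~\ref{rema:abelianise_Ggen}), condition~\eqref{equa:ker_chi_st} says exactly that $c$ is a proper $2$-colouring of $\mcG_f$, while the $h_0$-conjugation relation forces the quasi-periodicity $c(\,\cdot+T)=\la\,c(\,\cdot\,)$ with $\la:=c(\al_0+T)$; the value on $h_0$ (resp. $\al_0h_0$) is then unconstrained, contributing a free factor $\mu_2$. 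When $\mcG_f$ is connected it is a single bi-infinite path, the colouring is forced to alternate, and since $f$ returns to its sign after one period the number of simple zeros per period is even, so $\la=1$ is automatic and there are exactly $2$ characters. When $\mcG_f$ is disconnected there are $\ell=\chi([\mcG_f]_f)$ orbits of finite paths; the colouring is determined by $\la\in\mu_2$ and by one colour on each of the $\ell-1$ orbits distinct from that of $\al_0$, i.e. $2^{\ell}$ colourings, whence $2^{\ell+1}$ characters after the free choice on $h_0$.

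Finally I would settle normality. As $\ker\carac$ is automatically normal in $\Pi^K$, it is normal in $\Pi$ iff $\carac$ is invariant under conjugation by the generic reflection $\al_0$; this conjugation fixes $\carac$ on $\Pi_\gen^+$ but multiplies its value on $h_0$ (resp. $\al_0h_0$) by $\la$, so normality holds precisely when $\la=1$. Hence among the $2^{\ell+1}$ subgroups of the disconnected case the $2^\ell$ with $\la=1$ are distingués and the $2^\ell$ with $\la=-1$ are interchanged in pairs by $\al_0$, while in the connected case $\la=1$ always, so both subgroups are distingués. The delicate points I expect are the bookkeeping that converts the $h_0$-conjugation relation into the multiplier $\la$, and above all the parity argument showing that a connected $\mcG_f$ has an even number of sign changes per period, which is exactly what pins $\la=1$ and forces normality there.
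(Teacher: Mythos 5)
Your proof is correct and follows essentially the same route as the paper: reduction to characters $\carac:\Pi^K\to\mu_2$ via the index-$2$ position of $\Pi^K$ in $\Pi$, torsion-freeness equivalent to~\eqref{equa:ker_chi_st}, the normality criterion $\carac(h_0)=\carac(\al_0h_0\al_0)$ (resp. $\carac(\al_0h_0)=\carac(h_0\al_0)$), and the parity argument (even number of simple zeros per period) forcing normality when $\mcG_f$ is connected. Your repackaging of the count as quasi-periodic proper $2$-colourings with multiplier $\la$ is only a notational variant of the paper's bookkeeping on the generators read off the fundamental polygon $\mcP_{\Pi^K}$, and it yields the same totals $2^{\ell+1}$ and $2$.
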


\begin{proof}  Le groupe $\Pi$ est engendré par $\Pi_\gen$ et un
élément $h_0$ d'ordre infini, direct ou non, qui laisse invariant le polygone
fondamental $\mcP_0$ de $\Pi_\gen$ (proposition~\ref{prop:refl_hyp}).  Si
$\mcD$ est un domaine fondamental de $h_0$ bordé par deux géodésiques, alors
$\mcP_\Pi=\mcD\cap\mcP_0$ est un domaine pour $\Pi$ qui pave $\mcP_0$.  On
choisit $\partial \mcD$ disjoint de $\partial \mcP_0$ si $\mcG_f$ n'est pas
connexe et orthogonal à $\partial \mcP_0$ sinon, de sorte que les
composantes de $\mcD \cap \partial \mcP_0$ représentent
$[\pi_0(\mcG_f)]_f$.  Soit $\{\al_0,\be_0\}$ une paire de réflexions
contiguës dont les axes bordent $\mcP_\Pi$. On considère les polygones
$\mcP_{\Pi^K}$ et $\mcQ$ définis par~\eqref{equa:polygoneQ} avec $\be=
\be_0$.  Contrairement aux cas précédents, le lemme~\ref{lemm:ind4_normal}
ne s'applique pas car (ii) n'est pas {\it a priori} vérifiée : $\mcQ$ est
un domaine fondamental pour tout sous-groupe~$\Ga$ sans torsion d'indice~4
et on trouve $h_0$,
$\al_0h_0$, $\be_0h_0$, $\al_0\be_0 h_0$ parmi les générateurs
potentiels de $\Ga$.  Pour simplifier, nous cherchons seulement les
$\Ga\subset \Pi^K= \Pi^+$ {\it via} les caractères $\carac : \Pi^K\to
\mu_2$. D'après le théorème de Poincaré, $\Pi^K$ est engendré par les
identifications des côtés de $\mcP_{\Pi^K}$, à savoir les $\al_0\al$ ($\al \in
S$ bordant $\mcP_\Pi$) ainsi que  $h_0$ et $\al_0 h_0 \al_0$ dans le cas pair, 
ou  $\al_0h_0$ et $h_0\al_0 $ dans le cas impair.
Comme les éléments de
torsion de $\Pi$ sont ceux de $\Pi_\gen$, l'absence de torsion dans $\ker
\carac$ équivaut à~\eqref{equa:ker_chi_st}. De plus
$\carac(\al\al_0)=\carac(\al_0\al$), donc $\ker \carac$ est distingué dans
$\Pi$ si et seulement si $\carac(h_0) = \carac(\al_0h_0\al_0$) pour le cas pair, 
$\carac(\al_0 h_0) = \carac(h_0\al_0)$ pour le cas impair.

\par La construction des caractères $\carac$ est analogue à celle du cas
générique, mais il convient de distinguer deux cas. Si $\mcG_f$ n'est pas
connexe, les valeurs de $\carac(h_0)$ et $\carac(\al_0h_0\al_0)$ (resp.
$\carac(\al_0 h_0)$ et $\carac(h_0\al_0)$) pour le cas pair (resp.
impair) peuvent être arbitraires, d'où l'assertion~(\ref{asse:conn}).
Si $\mcG_f$ est connexe, alors $\ell=0$ et $k$ est pair (les zéros de $f$ sont
isolés, en nombre pair dans chaque période). Le côtés de $\mcP_0$ sont
alternativement de type espace et temps : le nombre de sommets entre les axes de
$\al_0$ et de $h_0\al_0 h_0^{-1}$ est pair. Par suite $\al_0 h_0\al_0 h_0^{-1}$
est le produit d'un nombre pair d'elliptiques génériques et on a $\ro(h_0) = \ro
(\al_0 h_0\al_0)$. Le noyau $\Ga=\ker\ro$ est distingué dans $\Pi$ et on trouve
deux possibilités, suivant que $h_0\in \Ga$ ou que $\al_0\be_0 h_0 \in \Ga$.
\end{proof}

Nous précisons maintenant le nombre et la topologie des quotients lisses
minimaux de $E^u$ portant un champ de Killing, dans le cas où le graphe $\mcG_f$
est fini modulo l'action de $\Isom(f)$. Voici d'abord la description du quotient
$\HH/\Pi^K$ comme surface \og orbifolde\fg. Il s'agit d'une surface compacte
$\Sig$ éventuellement à bord, avec un ensemble $Ell$ de points elliptiques
d'ordre~2, et privée d'un ensemble fini $P$ de points, désignés par \og
pointes\fg\ ; on pose $p_{\rm int} = |P\cap (\Sig\smallsetminus \partial\Sig)|$
et $p_{\partial} = |P\cap \partial\Sig|$ (pointes internes ou pointes au bord).
Géométriquement, les points elliptiques de $E^u/G^K$ sont des \og
demi-selles\fg. On rappelle que les entiers $\ell$ et $k$ sont définis par
\eqref{equa:lk}.

\begin{prop}[structure orbifolde de  $E^u/G^K$]
\label{prop:top_PiK}
Soit $E^u$ non élémentaire. On suppose que $\mcG_f/\Isom(f)$ est fini. La
topologie orbifolde du quotient $E^u/G^K$ est donnée par la 
table~\ref{tabl:top_PiK},
où les surfaces compactes $\SSS$, $\PP$, $\D$, $\T$, $\K$, $\M$ et $\A$
désignent
respectivement la sphère, le plan projectif, le disque, le tore, la bouteille de
Klein, le ruban de Möbius et l'anneau.
\begin{table}[h]
\centering
\renewcommand{\arraystretch}{1.2}
\renewcommand{\tabcolsep}{0.4 em}
\begin{tabular}{llcll|llcll}
\hline
cas  & $\Sigma$ & $|Ell|$  & $p_{\rm int}$ & $p_\partial$ &
cas  & $\Sigma$ & $|Ell|$  & $p_{\rm int}$ & $p_\partial$ \\
\hline
(0)  & $\SSS$ & $k$ & $\ell$ & & %
(2$^-$) & $\T$  & $k$ & $\ell$ \\
(1a)  & $\PP$ & $k$ & $\ell$ & & %
(3a) & $\K$  & $k$ & $\ell$ &  \\
(1b)  & $\D$ & $k$ & $\ell-1$ & 1 & %
(3b) & $\M$  & $k$ & $\ell - 1$ & 1 \\
(2$^{+u}$)   & $\SSS$ & $k$ & $\ell+2$ & &%
(3c$^{+u}$) &  $\D$ & $k$ & $\ell-1$ & 2 \\
(2$^{+b}$)& $\T$  & $k$ & $\ell$ & &
(3c$^{+b}$) & $\A$ & $k$ & $\ell-2$ & 2 \\
\hline 
\end{tabular}
\caption{Structure  orbifolde de $E^u/G^K$}
\label{tabl:top_PiK}
\end{table}
\end{prop}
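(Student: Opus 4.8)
Le plan consiste à calculer $E^u/G^K$ via son identification avec $\HH/\Pi^K$, en exploitant les polygones fondamentaux hyperboliques déjà construits dans les calculs d'indice. D'après la proposition~\ref{prop:refl_hyp}, le groupe générique $\Pi_\gen$ est un groupe de réflexions de $\HH$ de polygone fondamental $\mcP_0$, dont les côtés sont duaux à $\mcG_f$ : deux côtés se rencontrent en un sommet à angle droit exactement lorsque les générateurs correspondants sont contigus, c'est-à-dire au-dessus d'une selle. Dans chaque cas, les propositions~\ref{prop:ind_mini_gen} à~\ref{prop:ind_mini_trans} fournissent un domaine fondamental $\mcP_{\Pi^K}=\mcP_\Pi\cup\al_0(\mcP_\Pi)$ pour $\Pi^K$ (voir~\eqref{equa:polygoneQ}), ainsi que ses recollements de côtés, et la topologie du quotient se lit sur ces recollements. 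Trois données sont à extraire : le nombre de points coniques d'ordre~2 $|Ell|$, la surface compacte $\Sig$ obtenue en bouchant les sommets idéaux, et la répartition des bouts en pointes internes $p_{\rm int}$ et pointes au bord $p_\partial$.

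Je commencerais par les points coniques, qui se comportent uniformément. La seule torsion de $\Pi^K$ provient des éléments elliptiques d'ordre~2 $\al\be$ avec $\{\al,\be\}\in\mfE$ (les rotations d'angle $\pi$ autour des sommets à angle droit, c'est-à-dire des selles) ; géométriquement ce sont les \og demi-selles \fg. Deux selles distinctes de $\mcP_0$ restent distinctes modulo $\Pi_\gen^K$, et le passage au groupe complet les identifie modulo $\Isom(f)$ ; on obtient donc $|Ell|=|[\mfE]_f|=k$ dans tous les cas. Pour le cas générique~(0), un calcul direct donne la conclusion : $\Pi^K=\Pi_\gen^+$ est formé d'isométries directes, les recollements $\al_0\al$ sont des rotations, et un décompte de la caractéristique d'Euler utilisant la relation $k+\ell=|[S]_f|$ produit la sphère $\SSS$ munie de $\ell$ pointes internes.

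Je traiterais ensuite les générateurs non génériques, qui perturbent le tableau générique de deux manières. Un elliptique non générique $s_i$ intervient dans $\Pi^K$ via l'élément indirect $\al_0 s_i$, ce qui rend la surface fermée non orientable : on obtient $\PP$ dans le cas (1a) et $\K$ dans le cas (3a), le décompte des pointes restant inchangé ($p_{\rm int}=\ell$, $p_\partial=0$). Une réflexion non générique $\sig_i$ appartient à $\Pi^K$ et son axe se projette sur une courbe de bord ; le sommet idéal fixé sur cet axe devient une pointe au bord. Cela explique l'apparition d'un bord ($\D,\M,\D,\A$) et le transfert d'une pointe de l'intérieur vers le bord dans les cas (1b), (3b), (3c) : en (1b) l'unique réflexion donne $\D$ avec $p_\partial=1$ et $p_{\rm int}=\ell-1$ ; en (3b) la réflexion, combinée à l'élément indirect $\al_0 s_1$, donne le ruban de Möbius $\M$ (non orientable, à bord) ; et les deux réflexions du cas (3c) fournissent deux pointes au bord.

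Le cas délicat, que j'anticipe comme le principal obstacle, est le cas (2) (et le dédoublement de (3c)), où $\Isom(f)\simeq\Z$ apporte un générateur $h_0$ d'ordre infini. Ici l'orientabilité est gouvernée par le caractère direct ou indirect de $h_0$ (lemme~\ref{lemm:interpretation_types}-(\ref{ass:direct})) : dans le type impair $h_0$ est indirect, mais les générateurs $\al_0 h_0$ et $h_0\al_0$ de $\Pi^K$ sont directs et se recombinent en un tore orientable $\T$ (cas $2^-$) ; dans le type pair $h_0\in\Pi^{+,K}$ et c'est le comportement du champ de Killing sur $\partial\mcP_\Pi$ qui tranche. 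D'après le lemme~\ref{lemm:interpretation_types}-(\ref{ass:kr}), $S$ unilatère ($K$ partout rentrant ou partout sortant) force les deux côtés recollés par $h_0$ à engendrer deux sommets idéaux supplémentaires, contractant le quotient en la sphère $\SSS$ avec $p_{\rm int}=\ell+2$ (cas $2^{+u}$), tandis que $S$ bilatère conserve le tore $\T$ avec $p_{\rm int}=\ell$ (cas $2^{+b}$). La même analyse unilatère/bilatère sépare $3c^{+u}$ (disque, $p_\partial=2$, $p_{\rm int}=\ell-1$) de $3c^{+b}$ (anneau, $p_\partial=2$, $p_{\rm int}=\ell-2$). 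Le cœur de l'argument réside donc dans le suivi précis, au moyen de la signature rentrante/sortante de $K$ sur le bord du polygone, de la façon dont les générateurs d'ordre infini et réflexifs fusionnent ou séparent les bouts idéaux, et de l'orientabilité qui en résulte.
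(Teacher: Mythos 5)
Your proposal follows essentially the same route as the paper, whose proof consists precisely of the remark that one proceeds case by case from the fundamental domains for $\Pi^K$ on $\HH$ constructed in propositions~\ref{prop:ind_mini_sans_pe} through~\ref{prop:ind_mini_trans} (noting only that (2$^{+b}$) and (2$^{-}$) behave identically here); reading the orbifold data off the side identifications of $\mcP_{\Pi^K}$, with the cone points as half-saddles counted modulo $\Isom(f)$, non-orientability from the indirect elements $\al_0 s_i$, boundary from the non-generic reflections, and the unilateral/bilateral dichotomy handled via lemme~\ref{lemm:interpretation_types}, is exactly what the paper intends. Your write-up is in fact more explicit than the paper's, and your conclusions agree with table~\ref{tabl:top_PiK} (one can also cross-check each line against the Euler characteristics listed in table~\ref{tabl:petits_quotients}).
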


\begin{proof}
On procède au cas par cas à partir des domaines fondamentaux pour l'action
de $\Pi^K$ sur $\HH$ décrits dans les preuves ci-dessus.  On remarquera que
les cas (2$^{+b}$) et (2$^-$) ont ici le même comportement.
\end{proof}

\begin{rema}  La surface $E^u/G_\gen^K$, privée de ses points
elliptiques, est isométrique à la surface $X'$
construite à la remarque~\ref{rema:variante_Y} (pour un choix convenable
des $\sigma_\alpha$).
\end{rema}

Passons à la description topologique des quotients lisses minimaux de
$E^u$, c'est-à-dire des quotients par les sous-groupes sans torsion
d'indice minimal $\nu^K$ dans $G^K$, à isométrie près.  Il revient au même
de décrire les $\Gamma$ réalisant l'indice minimal $\nu^K$, à conjugaison
près dans le groupe $\Pi$. En effet, comme $\Isom^K(E^u)$ centralise
$\Isom^0(E^u)$, deux sous-groupes de $G^K$ conjugués dans $\Isom(E^u)$ le
sont aussi dans $G^K$.  La topologie de la surface $\HH/\Gamma$ sera codée
par la signature $(g;p)^\pm$ de $\Gamma$, où~$g$ désigne le genre,~$p$ le
nombre de bouts (pointes) et avec $\pm$ pour orientable ou non.

\begin{prop}[topologie des petits quotients de $E^u$]
\label{prop:top_quo_Eu}
Soit $E^u$ non élémentaire.  On suppose que
$\mcG_f/\Isom(f)$ est fini.  Si $\nu^K=2$ ($E^u$ n'a pas de produits
elliptiques), la topologie de $E^u/G^K$ est décrite à la
table~\ref{tabl:top_PiK}, cas (0),(1a),(2) et (3a) avec $k=0$.  Si
$\nu^K=4$, la topologie des sous-groupes sans torsion d'indice~2 dans
$\Pi^K$ est donnée par la table~\ref{tabl:petits_quotients} suivante.  Leur
nombre, à conjugaison près dans $\Pi$ et à topologie fixée, est également
déterminé (colonne~$\#$). Pour le cas (2), on pose $N_j^\ell =
\sum_{i=0}^2\binom{\ell}{j-i}$ ; le cas (2$^{+b}$) est explicité à la
proposition complémentaire~\ref{prop:isomZ_top_fixee}.
\begin{table}[h]
\centering
\renewcommand{\arraystretch}{1.5}
\renewcommand{\tabcolsep}{0.3 em}
{\small 
\begin{tabular}{llllllll}
\hline
$\Isom(f)$ &   &     & $\chi(\Gamma$) & paramètre &  signature 
& \# & total \\
\hline
   trivial      &  (0)& $k>0$ & $4-k-2\ell$ & 
\renewcommand{\arraystretch}{1}
$
\begin{array}[t]{l}
0\leq j \leq \ell \\
k+j~\mathrm{pair}~(*)
\end{array}
$
\renewcommand{\arraystretch}{1.5}
 &  $(\frac{k+j}{2} -1 ; 2\ell-j)^+$ & 
$\binom{\ell}{j}$& $ 2^{\ell-1}$ \\
\hline
 $\Z/2\Z$ & (1a) &  $k>0$  & $2-k-2\ell$ & 
$0\leq j \leq \ell$ et $(*)$  &  $(k+j; 2\ell-j)^-$ & 
$2 \binom{\ell}{j}$& $ 2^{\ell}$ \\

 & (1b) &  $k+j=0$  & $3-2\ell$ & $k=0$ et $j=0$
 & $(0;2\ell - 1)^+$ &  1 &  \\
& & $k+j > 0$ &  $3 - 2\ell -k$
&$0\leq j \leq \ell -1$ & $(k+j ; 2\ell-j-1)^-$ & 
$\binom{\ell-1}{j} $ &   $2^{\ell-1}$\\
\hline

 $\Z, k>0$ & (2$^{+u}$)    & $\ell=0$  & $-k$ & $j=0,2$ 
 &  $(\frac{k+j}{2}-1; 4-j)^+ $ & 
$2 $& $ 2$ \\
  & & $\ell\geq 1$ & $-k-2\ell$ &  $0\leq j \leq \ell+2$   et $(*)$
 &  $(\frac{k+j}{2}-1; 2\ell +4-j)^+ $ & 
$N_j^\ell$ &   $3 \cdot 2^{\ell-1}$ \\

        & (2$^{+b}$)&   & $-k-2\ell$ & $0\leq j \leq \ell$   et $(*)$
 &  $(\frac{k+j}{2}+1; 2\ell -j)^+ $ & 
\ref{prop:isomZ_top_fixee}   &   $3 \cdot 2^{\ell-1}$\\

   & (2$^-$)&   & $-k-2\ell$ & $0\leq j \leq \ell$   et $(*)$
 &  $(\frac{k+j}{2}+1; 2\ell -j)^+ $ & 
$ 3\binom{\ell}{j}$
  &   $3 \cdot 2^{\ell-1}$\\
\hline

  $D_\infty$ & (3a)     & $k>0$   & $-k-2\ell$ & 
$0\leq j \leq \ell$   et $(*)$  &  $(k+j+2; 2\ell -j)^- $ & 
$4 \binom{\ell}{j} $ &   $2^{\ell+1}$ \\

& (3b) &  & $1- 2\ell -k$
&$0\leq j \leq \ell -1$ & $(k+j+2 ; 2\ell-j-1)^-$ & 
$2 \binom{\ell-1}{j} $ &   $2^{\ell}$\\

& (3c$^{+u}$) &   & $2- 2\ell -k$
&$0\leq j \leq \ell -1$ & $(k+j ; 2\ell-j)^-$ & 
$\binom{\ell-1}{j} $ &   $2^{\ell-1}$\\
& (3c$^{+b}$) &   & $2- 2\ell -k$
&$0\leq j \leq \ell -2$ & $(k+j+2 ; 2\ell-j-2)^-$ & 
$2 \binom{\ell-2}{j} $ &   $2^{\ell-1}$\\
\hline
\end{tabular}
}
\caption{Topologie des petits quotients de $E^u$ ($\nu^K = 4$)}
\label{tabl:petits_quotients}
\end{table}
\end{prop}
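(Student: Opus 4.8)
Le plan est de traiter les dix cas séparément à partir de la réalisation hyperbolique de $\Pi$ (proposition~\ref{prop:refl_hyp}) et des domaines fondamentaux explicites $\mcP_\Pi$, $\mcP_{\Pi^K}$ et $\mcQ$, définis par~\eqref{equa:polygoneQ} dans les propositions d'indice minimal~\ref{prop:ind_mini_gen}, \ref{prop:ind_mini_sym}, \ref{prop:ind_mini_diedral} et~\ref{prop:ind_mini_trans}. On y a établi que, lorsque $E^u$ admet des produits elliptiques, tout sous-groupe sans torsion $\Ga\subset\Pi^K$ d'indice~2 dans $\Pi^K$ (donc d'indice $\nu^K=4$ dans $\Pi$) est le noyau d'un caractère $\carac:\Pi^K\to\mu_2$ vérifiant~\eqref{equa:ker_chi_st}. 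Cette condition signifie exactement que $\carac$ est non triviale sur chaque elliptique $\al\be$ ($\{\al,\be\}\in\mfE$); autrement dit, les $k$ points elliptiques d'ordre~2 (demi-selles) de $\HH/\Pi^K$ se résolvent tous dans le revêtement double $\HH/\Ga\to\HH/\Pi^K$.

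On commencera par calculer les invariants topologiques de $\HH/\Ga$. La caractéristique d'Euler s'obtient par multiplicativité orbifolde, $\chi(\HH/\Ga)=2\,\chi^{\mathrm{orb}}(\HH/\Pi^K)$, la structure orbifolde ($\Sig$, $|Ell|=k$, $p_{\rm int}=\ell$, $p_\partial$) de $E^u/G^K=\HH/\Pi^K$ étant lue sur la table~\ref{tabl:top_PiK} (proposition~\ref{prop:top_PiK}); on retrouve ainsi la colonne $\chi(\Ga)$. Le nombre de pointes se détermine par la monodromie de $\carac$ autour de chaque bout: autour d'une pointe interne, le lacet associé prend une valeur dans $\mu_2$ et le revêtement double y est connexe (une pointe) ou trivial (deux pointes) selon que cette valeur vaut $-1$ ou $+1$. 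En notant $j$ le nombre de pointes internes de monodromie non triviale, on obtient $2\ell-j$ pointes internes, et l'on traite de même les pointes au bord des cas~(1b),~(3b),~(3c). L'orientabilité de $\HH/\Ga$ équivaut à l'inclusion de $\Ga$ dans le sous-groupe des isométries directes de $\HH$. Ici intervient un point essentiel: contrairement aux réflexions génériques (qui renversent simultanément $K$ et l'orientation de $\HH$), certains éléments non génériques de $\Pi^K$ — tels $\al_0 s$ ou $\al_0\sig\ga$ ($\ga\in S$) — préservent $K$ tout en étant indirects sur $\HH$; c'est leur éventuelle présence dans $\Ga$ qui est responsable de la non-orientabilité observée aux cas~(1a),~(1b),~(3a),~(3b) et~(3c). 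Le genre se déduit alors de $\chi(\Ga)$, du nombre de pointes et du type d'orientation, d'où la signature $(g;p)^\pm$; la contrainte de parité $(*)$ (``$k+j$ pair'') exprime simplement l'intégralité du genre.

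On dénombrera ensuite, à topologie fixée, les sous-groupes $\Ga$. À $j$ fixé, ils correspondent aux caractères $\carac$ vérifiant~\eqref{equa:ker_chi_st} (et aux morphismes $\om:\Pi\to\mu_2^2$ vérifiant~\eqref{equa:ker_om_st} pour l'indice $\nu$) qui réalisent exactement $j$ pointes internes de monodromie non triviale. Leur description explicite, déjà obtenue dans les propositions d'indice minimal, fournit un comptage de type $\binom{\ell}{j}$ une fois effectuées les identifications par conjugaison dans $\Pi$. Les préfacteurs ($2\binom{\ell}{j}$, etc.) et les totaux ($2^{\ell-1}$, $2^{\ell}$, $2^{\ell+1}$, \ldots) reflètent l'action de $\Isom(f)$ et le choix de l'orientation, la sommation sur $j$ soumis à $(*)$ redonnant ces totaux. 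Le cas $\nu^K=2$ (absence de produits elliptiques, $k=0$) est immédiat: $\Pi^K$ est alors sans torsion, $\HH/\Pi^K$ est une surface lisse sans point de branchement, et sa topologie est donnée par la table~\ref{tabl:top_PiK} avec $k=0$ (cas~(0),~(1a),~(2) et~(3a)).

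Le point délicat sera double. D'une part, le cas~(2) ($\Isom(f)\simeq\Z$) échappe au lemme~\ref{lemm:ind4_normal}, le groupe n'étant pas engendré par des involutions: il faudra y suivre directement la conjugaison dans $\Pi$ afin de séparer les sous-types $(2^{+u})$, $(2^{+b})$ et $(2^-)$ — d'où le comptage raffiné $N_j^\ell=\sum_{i=0}^2\binom{\ell}{j-i}$ et le renvoi à la proposition complémentaire~\ref{prop:isomZ_top_fixee} pour $(2^{+b})$. D'autre part, les cas à bord~(1b),~(3b) et~(3c), où une réflexion non générique engendre un bord (miroir) dans $\HH/\Pi^K$, demanderont une analyse soignée du comportement des pointes \emph{au bord} sous le revêtement double et du passage à l'orientabilité, puisque ces réflexions renversent $K$ sans être directes sur $\HH$. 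C'est là que se concentrera l'essentiel de la vérification cas par cas, le reste n'étant qu'application des calculs ci-dessus aux domaines fondamentaux déjà décrits.
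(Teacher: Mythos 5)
Votre plan suit essentiellement la démonstration du texte : identification des sous-groupes $\Ga$ avec les revêtements doubles orbifoldes de $\HH/\Pi^K$ ramifiés au-dessus de tous les points elliptiques, paramètre $j$ comptant les pointes internes ramifiées (d'où $2\ell-j$ pointes et la parité de $k+j$), caractéristique d'Euler par multiplicativité, non-orientabilité due aux éléments indirects de $\Pi^K$ du type $\al_0 s$, dénombrement via les caractères des propositions d'indice minimal, et traitement à part du cas (2) et des cas à bord. La seule différence est de présentation : le texte isole le calcul topologique dans un lemme général sur les revêtements doubles orbifoldes (qui traite aussi le repliement du bord), là où vous faites le calcul de monodromie directement.
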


\begin{proof} Les sous-groupes $\Gamma$ cherchés correspondent évidemment aux 
revêtements doubles orbifoldes $Y$ de $X=\HH/\Pi^K$, dont la structure est
explicitée à la proposition~\ref{prop:top_PiK}. Le paramètre $j$ représente
le nombre de pointes internes à $X$ au-dessus desquelles $Y\to X$ est
ramifié. Quand $j$ croît, le genre de $Y$ croît tandis que le nombre de
pointes de $Y$ décroît, la caractéristique d'Euler restant
constante. Le lemme ci-dessous permet de déterminer le nombre et 
la topologie des sous-groupes $\Ga$ quand $j$ est fixé. On retrouve le
nombre total de sous-groupes, déjà connu grâce aux caractères
 $\carac : \Pi^K\to \mu_2$ 
(propositions~\ref{prop:ind_mini_gen}-\ref{prop:ind_mini_trans}).
Dans le cas (2) avec $\ell \geq 1$, les sous-groupes $\Ga$ réalisant $\nu^K$ ne
sont pas tous distingués. Pour (2$^{+u}$), $E^u/G^K$ comprend $2$ pointes
spéciales associées à $h_0$ et $\al_0 h_0 \al_0$, et $\Ga$ est distingué si et seulement si
elles sont simultanément ramifiées ou non. À topologie fixée, c'est-à-dire $j$ 
fixé ($0\leq j \leq \ell +2$, $k+j$ pair), le nombre de sous-groupes $\Ga$ à
conjugaison près est donc $N_j^\ell= \binom{\ell}{j} + \binom{\ell}{j-1} +
\binom{\ell}{j-2}$, avec la convention $\binom{\ell}{m} = 0$ si $m (\ell-m) <0$;
noter que la somme sur $j$ de chacun des trois termes de $N_j^\ell$ vaut
$2^{\ell-1}$. Dans le cas (2$^{+b}$), la condition pour que $\Ga$ soit distingué
s'écrit $\ro(h_0\al_0) = \ro(\al_0 h_0)$. Soit $\Sig$ la surface de la
table~\ref{tabl:top_PiK}. Le couple $(h_0\al_0, \al_0 h_0)$ induit une base de
l'homologie $H_1(\Sig,\Z)$. On peut fixer arbitrairement $\ro(h_0\al_0)$ et
$\ro(\al_0 h_0)$ (4 choix), et indépendamment $j$ pointes ramifiées avec $k+j$
pair ($\binom{\ell}{j}$ choix).  La proportion de sous-groupes $\Ga$ distingués,
c'est-à-dire 1/2, est donc indépendante de $j$. 
\end{proof}

\begin{lemm}[revêtements doubles orbifoldes]
Soit $X$ une surface orbifolde compacte, de type topologique fini, avec ou
sans bord et possédant $N\geq 1 $ points elliptiques d'ordre~2.  On note
$g_X$ le genre de $X$ et $m\geq 0$ le nombre de composantes de $\partial
X$.  Si $m=0$, on suppose de plus que $N$ est pair.  Soit $q=\kappa g_X +
\max(m-1,0)$ avec $\kappa=2$ si $X$ est orientable, $\kappa=1$ sinon.
Alors il existe exactement $2^q$ revêtements doubles orbifoldes de $X$ par
des surfaces lisses $Y$, fermées et connexes. De plus, les surfaces $Y$
sont toutes homéomorphes et
\begin{enumerate}
\item $Y$ est orientable, de genre $2g_X +N/2 -1$, si $X$ est orientable et
  sans bord,
\item $Y$ est non orientable, de genre $2\kappa g_X +N + 2 m -2$, sinon.
\end{enumerate}
\end{lemm}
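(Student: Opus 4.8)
The plan is to identify the smooth closed connected double orbifold covers $Y \to X$ with suitable homomorphisms $\pi_1^{\mathrm{orb}}(X) \to \Z/2\Z$ and then to count them. First I would recall that a degree-two orbifold cover is classified by an index-two subgroup of $\pi_1^{\mathrm{orb}}(X)$, equivalently (index two being normal) by a surjection $\phi : \pi_1^{\mathrm{orb}}(X) \to \Z/2\Z$. The cover $Y$ is a genuine manifold, closed and connected, if and only if $\phi$ is \emph{admissible} in the following sense: $\phi$ must be nontrivial on the loop $x_i$ around each order-two cone point (so that the cover ramifies there and the cone point is smoothed out) and nontrivial on the reflection $r_j$ attached to each silvered boundary circle (so that the cover folds across the mirror and $Y$ acquires no boundary). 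Connectedness, i.e. surjectivity of $\phi$, is then automatic, since $N\geq 1$ forces $\phi(x_1)=1\neq 0$.

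Next I would use the standard presentation of $\pi_1^{\mathrm{orb}}(X)$. Writing $\kappa=2$ or $1$ according as the underlying surface is orientable or not, the generators are $\kappa g_X$ genus generators, the $N$ cone loops $x_i$ (with $x_i^2=1$), and, for each of the $m$ mirror circles, a core loop $c_j$ and a reflection $r_j$ subject to $r_j^2=1$ and $[c_j,r_j]=1$ (the local group of a mirror collar without corner reflectors being $\Z\times\Z/2\Z$), together with the single global surface relation, whose abelianization reads $\sum_i x_i+\sum_j c_j=0$. I would then count admissible $\phi$ by linear algebra over $\Z/2\Z$. Imposing $\phi(x_i)=1$ and $\phi(r_j)=1$, the genus generators stay free ($\kappa g_X$ of them, since even the relation $a_k^2$ of a crosscap contributes nothing mod $2$), and the global relation becomes $N+\sum_j\phi(c_j)=0$. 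This is the crux of the bookkeeping: when $m=0$ it is a pure consistency condition, solvable exactly when $N$ is even (the stated hypothesis), leaving $\kappa g_X$ free parameters; when $m\geq 1$ it pins down one of the otherwise free values $\phi(c_j)$, leaving $\kappa g_X+(m-1)$ free parameters. In both cases the number of admissible $\phi$ is $2^{\kappa g_X+\max(m-1,0)}=2^{q}$, as claimed.

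Finally I would determine the topology of $Y$. The Euler characteristic follows from the degree-two orbifold covering: $\chi(Y)=2\chi^{\mathrm{orb}}(X)=2\bigl(\chi(|X|)-N/2\bigr)$ with $\chi(|X|)=2-\kappa g_X-m$, whence $\chi(Y)=4-2\kappa g_X-2m-N$. This gives genus $2g_X+N/2-1$ in the orientable case and crosscap number $2\kappa g_X+N+2m-2$ in the nonorientable case, matching the two formulas. Orientability I would settle by comparing $\phi$ with the orientation homomorphism $w:\pi_1^{\mathrm{orb}}(X)\to\Z/2\Z$: the cover $Y$ is orientable if and only if $\ker\phi\subseteq\ker w$, i.e. $w\in\{0,\phi\}$. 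Since each $x_i$ is an order-two rotation, $w(x_i)=0$ while $\phi(x_i)=1$, so $w\neq\phi$; thus $Y$ is orientable precisely when $w=0$, which happens exactly when $X$ is orientable and without boundary (case~1), and $Y$ is nonorientable otherwise (case~2). As closed surfaces are classified up to homeomorphism by orientability type and Euler characteristic, all the $2^{q}$ covers are homeomorphic.

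The main obstacle is the correct treatment of the silvered boundary. One must use that a mirror circle without corner reflectors has local orbifold group $\Z\times\Z/2\Z$ (the reflection $r_j$ together with the commuting core loop $c_j$), identify the ``folding'' that closes up $Y$ with the condition $\phi(r_j)=1$, and track how the single global relation either forces the parity of $N$ (when $m=0$) or absorbs exactly one boundary degree of freedom (when $m\geq 1$). This single mechanism is what simultaneously produces the parity hypothesis and the term $\max(m-1,0)$ in $q$, and getting its interaction with orientability right is the delicate point.
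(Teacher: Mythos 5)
Your proof is correct and follows essentially the same route as the paper's: both classify the covers by $\Z/2\Z$-characters of the (orbifold) fundamental group that are forced to be nontrivial on the torsion, count them by linear algebra on the single surface relation (which yields the parity condition when $m=0$ and absorbs one boundary parameter when $m\geq 1$), and read off the topology from the orbifold Euler characteristic together with the orientation character. The only cosmetic difference is that you encode the boundary as silvered, with reflection generators $r_j$ inside $\pi_1^{\mathrm{orb}}(X)$, whereas the paper first constructs the double cover $Z$ of the bordered surface branched at the cone points and then folds $\partial Z$ by the deck involution --- two equivalent descriptions of the same unfolding.
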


\begin{proof}
Soient $x_1,\ldots,x_N$ les points elliptiques.  À partir de la
présentation usuelle du groupe fondamental de $X\smallsetminus
\{x_1,\ldots,x_N\}$, on trouve facilement les revêtements doubles connexes
$Z$ de $X$ ramifiés en tous les points $x_i$. Noter que $Z$ est orientable
si et seulement si~$X$ l'est. Si $\partial X = \emptyset$, on prend $Y=Z$. Sinon, on
définit $Y$ en identifiant les points de $\partial Z$ par l'involution du
revêtement ; vu la nature de celle-ci, cette opération produit
automatiquement une surface non orientable, même si $X$ est
orientable. Enfin, le genre de $Y$ est déterminé par la caractéristique
d'Euler orbifolde de~$X$.
\end{proof} 

\par Nous nous concentrons maintenant sur le cas (2$^{+b}$) (donc $\ell \geq
2$). La surface $\Sig$ de la proposition~\ref{prop:top_PiK} est homéomorphe
au tore. On rappelle que si $\ro :\Pi^K\to \mu_2$ est non trivial, le
caractère distingué dans $\Pi$ de $\Ga=\ker \ro$ se lit sur une paire
d'éléments de $\Pi^K$ (preuve de la proposition~\ref{prop:ind_mini_trans}),
nécessairement hyperboliques.  Plus précisément, pour (2$^{+b}$), il
s'agit de $h_0$ et $\al_0 h_0 \al_0$, dont les géodésiques fermées
associées découpent le tore $\Sig$ en deux cylindres. Ainsi, les pointes et
les points elliptiques de l'orbifolde $\HH/\Pi^K$ se scindent  en
deux sous-ensembles de cardinaux respectifs $\ell_1\geq 1$, $\ell_2 \geq 1$
pour les pointes ($\ell_1+\ell_2= \ell$) et $k_1$, $k_2$ ($k_1+k_2=k)$ pour
les elliptiques. Ces entiers se déduisent de $f$. Par exemple, $k_1$ et
$k_2$ correspondent aux selles sur le bord d'un domaine fondamental $\mcP$
pour $\Pi$, réparties suivant que le champ est rentrant dans $\mcP$ ou
sortant.

\begin{prop}[complément à~\ref{prop:top_quo_Eu}]
\label{prop:isomZ_top_fixee}
Soit $E^u$ non élémentaire, de type fini et telle que $\Isom(f)\simeq
\Z$. On suppose que $E^u$ est de type pair bilatère et admet des selles.  Soit
$j=0,\ldots,\ell$ tel que $k+j$ soit pair. À conjugaison près dans $\Pi$,
le nombre $n_j^\ell$ de sous-groupes d'indice~2 de $\Pi^K$ avec $\HH/\Ga$ homéomorphe
à $(\frac{k+j}{2}+1; 2\ell -j)^+$ est donné par 
$$
n_j^\ell = 2 \binom{\ell}{j} +  2 \sum\raisebox{0.7ex}{**}
\binom{\ell_1}{j_1}\binom{\ell-
  \ell_1}{j-j_1}  
$$ où la somme $\sum^{**}$ est indexée par $0\leq j_1\leq \ell_1$
avec $k_1+ j_1$ pair.
\end{prop}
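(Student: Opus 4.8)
The plan is to count the torsion-free subgroups of index~$2$ in $\Pi^K$ through their associated characters and then to organize them via the cylinder decomposition of $\Sig$. Recall from Proposition~\ref{prop:ind_mini_trans} that these subgroups are exactly the kernels $\Ga=\ker\ro$ of the characters $\ro:\Pi^K\to\mu_2$ satisfying~\eqref{equa:ker_chi_st}; since $\Pi^K$ is the orbifold fundamental group of $X=\HH/\Pi^K$, a torus $\T$ carrying $k$ cone points of order~$2$ and $\ell$ interior cusps (Proposition~\ref{prop:top_PiK}, case~(2$^{+b}$)), such a $\ro$ is the monodromy of a connected smooth double cover $Y=\HH/\Ga\to X$. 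Condition~\eqref{equa:ker_chi_st} forces $\ro=-1$ on each elliptic generator $x_i$, i.e.\ $Y\to X$ ramifies over all $k$ cone points, and I write $j$ for the number of cusps over which it ramifies; this $j$ fixes the topology $(\tfrac{k+j}{2}+1;2\ell-j)^+$, the constraint $k+j$ even coming from the global orbifold relation $\prod_i\ro(x_i)\prod_m\ro(c_m)=1$ (with $c_m$ the cusp loops). A first count then gives, for each admissible $j$, exactly $A_j=4\binom{\ell}{j}$ such characters: $\binom{\ell}{j}$ choices of the ramified cusp set $T$, and a factor $4$ for the free values of $\ro$ on $H_1(\Sig;\mu_2)\cong\mu_2^2$.

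Next I would pass to conjugacy classes in $\Pi$. As $\Pi^K=\Pi^+$ has index $2$ in $\Pi$ (proof of Proposition~\ref{prop:ind_mini_trans}), with the generic reflection $\al_0$ as coset representative, two subgroups $\ker\ro$ and $\ker\ro'$ are $\Pi$-conjugate iff $\ro'\in\{\ro,\ro^{\al_0}\}$, where $\ro^{\al_0}(g)=\ro(\al_0 g\al_0)$. Conjugation by $\al_0$ fixes $\ro$ on every generic generator $\al_0\al$ (since $\al_0(\al_0\al)\al_0=(\al_0\al)^{-1}$) and exchanges $h_0$ with $h_0':=\al_0 h_0\al_0$; hence $\ro^{\al_0}=\ro$ precisely when $\ro(h_0)=\ro(h_0')$, which is exactly the normality criterion of Proposition~\ref{prop:ind_mini_trans}. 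Thus normal subgroups form singleton orbits and non-normal ones pairs, so, writing $M_j$ for the number of \emph{normal} characters with parameter $j$,
\[
n_j^\ell=M_j+\tfrac12\bigl(A_j-M_j\bigr)=2\binom{\ell}{j}+\tfrac12 M_j .
\]

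The heart of the argument — and the step I expect to be the main obstacle — is the computation of $M_j$, for which I would use the cylinder decomposition recalled before the statement: the axes of $h_0$ and $h_0'$ project to the two curves cutting $\Sig$ into cylinders $C_1,C_2$, carrying $(k_1,\ell_1)$ and $(k_2,\ell_2)$ cone points and cusps. The key point is the boundary relation of $C_1$, whose two boundary circles are the geodesics of $h_0$ and $h_0'$, namely $h_0(h_0')^{-1}=\prod_{i\in C_1}x_i\prod_{m\in C_1}c_m$ in $\Pi^K$; applying the homomorphism $\ro$ (for which ordering and conjugacy are immaterial) and using $\ro(x_i)=-1$ gives $\ro(h_0)\ro(h_0')=(-1)^{k_1+j_1}$, where $j_1=|T\cap C_1|$. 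Consequently $\Ga$ is normal iff $k_1+j_1$ is even, a condition depending only on the cusp set $T$ and not on the two homology bits, so all four homology choices attached to an admissible $T$ are simultaneously normal or non-normal, whence
\[
M_j=4\sum_{\substack{j_1+j_2=j\\ k_1+j_1\ \mathrm{even}}}\binom{\ell_1}{j_1}\binom{\ell-\ell_1}{j-j_1}.
\]
Substituting into the formula for $n_j^\ell$ yields the claimed expression. The delicate points to check are that $h_0(h_0')^{-1}$ is genuinely supported on the interior features of $C_1$ (equivalently that $\ro(h_0)\ro(h_0')$ is insensitive to the two free homology coordinates, which holds because $[h_0]=[h_0']$ in $H_1(\Sig)$), and that $\al_0$-conjugation preserves both $T$ and the parameter $j$, so that each conjugacy orbit stays within a single topological type.
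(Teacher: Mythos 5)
Your proof is correct and follows essentially the same route as the paper's: both arguments count the admissible characters $\ro$ by the choice of the ramified cusp set together with the two free homology values, detect normality through the decomposition of $\Sig$ into the two cylinders cut out by the curves of $h_0$ and $\al_0 h_0\al_0$ via the parity of $k_1+j_1$, and then weight normal classes by $1$ and non-normal ones by $\tfrac{1}{2}$ (your $M_j+\tfrac{1}{2}(A_j-M_j)$ is exactly the paper's $4\sum^{**}+2\sum^{*}$). The only addition is your explicit derivation of the criterion $\ro(h_0)\ro(\al_0h_0\al_0)=(-1)^{k_1+j_1}$ from the boundary relation of the cylinder $C_1$, a step the paper asserts without detail; your justification of it is correct.
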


\begin{proof}
Comme nous l'avons observé plus haut, les courbes simples associées à $h_0$ et
$\al_0h_0\al_0$ découpent le tore $\Sig$ en deux cylindres $C_1$ et $C_2$.
On peut choisir $h'_0\in \Pi^K$ tel que $(h_0,h'_0)$
forme une base de l'homologie de $\Sig$. Avec les notations précédentes, on peut
choisir arbitrairement les valeurs du caractère $\ro$ sur $h_0$ et $h'_0$ (4
choix).  La topologie de $\HH/\Ga$ ne dépend que du nombre de pointes ramifiées
$j$, que l'on répartit en $j=j_1+j_2$ dans $C_1$ et $C_2$. La condition
$\ro(h_0)= \ro(\al_0h_0\al_0)$ équivaut à $k_1+j_1$ est pair, d'où 
$$n_j^\ell = 4 \sum \raisebox{0.7ex}{**}  
\binom{\ell_1}{j_1}\binom{\ell- \ell_1}{j-j_1} + 2  \sum\raisebox{0.7ex}{*}  
\binom{\ell_1}{j_1}\binom{\ell- \ell_1}{j-j_1},$$ 
où la somme $\sum^{**}$ (resp.
 $\sum^*$) est indexée par $0\leq j_1\leq \ell_1$ avec $k_1+ j_1$ pair (resp.
impair). On en déduit facilement l'expression donnée dans l'énoncé.
\end{proof}

\begin{rema}
\label{rema:quotients_Sprime}
Les propositions~\ref{prop:ind_mini_sans_pe}, \ref{prop:ind_mini_gen},
\ref{prop:ind_mini_sym}, \ref{prop:ind_mini_diedral},
\ref{prop:ind_mini_trans}, \ref{prop:top_PiK},
\ref{prop:top_quo_Eu} et \ref{prop:isomZ_top_fixee} sont
valables, avec une preuve identique, pour toute partie $S'\subset S$
localement finie et invariante.  Il suffit de remplacer $\nu$, $\nu^K$ par
des indices analogues relativement à $G'$, et $k$, $\ell$ par $k'$, $\ell'$
définis comme dans~\eqref{equa:lk} par l'action de $\Isom(f)$ sur le graphe
de contiguïté $\mcG'$ associé à $S'$ ($\ell'= 0$ n'est possible que si $f$
est de type fini et avec $S'=S$). On dira que $E^u$ n'a pas de produits
elliptiques {\em relativement à $S'$} si $G'$ ne contient pas de
sous-groupe isomorphe à $\mu_2^2$. Enfin, les types (pair, impair, etc)
sont déjà définis relativement à $S'$
(définition~\ref{defi:types_al_be}). Ainsi on pourra construire de
nombreuses familles de quotients lisses, que $f$ soit de type fini ou
non; on pourra prendre des parties strictes $S'$ localement finies, arbitrairement grande si $f$ n'est de type fini. 
\end{rema}

\begin{prop}[types topologiques à géométrie locale fixée]
\label{prop:top_classe_fixee}
Soit $f$ une fonction $\mcC^\infty$. On suppose que l'ensemble
$\{f\neq 0\}$ possède au moins~4 composantes connexes. Alors pour toute surface
$\Sigma$ non compacte, orientable et de type topologique fini, la surface $E^u$
admet un quotient difféomorphe à $\Sigma$.
\end{prop}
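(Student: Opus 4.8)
The plan is to realize each target surface as a quotient $E^u/\Gamma$ by a torsion‑free subgroup $\Gamma$ of the generic reflection group, using the differentiable conjugacy of that group with a hyperbolic reflection group provided by proposition~\ref{prop:refl_hyp}. Recall that a non‑compact, orientable surface of finite topological type is exactly some $\Sigma_{g,p}$ with $g\geq 0$ and $p\geq 1$ ends; we must realize each of these.

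First I would place myself in the reflection‑group model. Since $\{f\neq 0\}$ has at least four components, $|S|\geq 4$. Using remark~\ref{rema:quotients_Sprime} I choose a locally finite, $\Isom(f)$‑invariant part $S'\subseteq S$ with enough elements (finite when $\Isom(f)$ is finite, periodic otherwise) so that the generic group $G'_\gen=\langle \sigma_\al;\al\in S'\rangle$ is non‑elementary; four components always suffice, since for the linear contiguity structure the associated right‑angled Coxeter group contains a free group of rank $\geq 2$ regardless of $\mfE'$. By proposition~\ref{prop:action_Gprime} the group $G'_\gen$ acts properly discontinuously on $E^u$, and by proposition~\ref{prop:refl_hyp} there is a diffeomorphism $\ph:E^u\to\HH$ conjugating it to a hyperbolic reflection group $W$ in a polygon $\mcP_0$ whose sides correspond to $S'$ and whose finite vertices are right angles at saddle points. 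As $E^u$ is homeomorphic to the plane (proposition~\ref{prop:sf_Euf}), for any torsion‑free $\Gamma\leq W$ the quotient $E^u/\ph^{-1}\Gamma\ph$ is diffeomorphic to $\HH/\Gamma$; moreover an even product of generic reflections is orientation‑preserving and preserves $K^u$ (proposition~\ref{prop:refl_loc_generique}), so taking $\Gamma\leq W^+$ yields orientable quotients.

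The problem thus reduces to finding, for each $(g,p)$ with $p\geq 1$, a free geometrically finite subgroup $\Gamma\leq W^+$ with $\HH/\Gamma$ homeomorphic to $\Sigma_{g,p}$. The cases $\chi(\Sigma_{g,p})\geq 0$ (the plane and the cylinder) are immediate: take $\Gamma$ trivial, respectively generated by a single hyperbolic element of $W^+$. For $\chi<0$ I would build $\Gamma$ by a Poincaré polygon construction inside the $W$‑tiling: select a finite connected union $P$ of tiles $w\mcP_0$ forming a topological disk, with an even number of geodesic boundary sides and exactly $p$ vertex cycles sent to the limit set ("ideal" cycles), then pair the sides by orientation‑preserving elements of $W$ following the standard word realizing $\Sigma_{g,p}$, with handle relators $[a_i,b_i]$ producing the $g$ handles and the remaining cycles producing the ends. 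Poincaré's theorem then gives that $\Gamma$, generated by these side‑pairings, is discrete with fundamental domain $P$, whence $\HH/\Gamma\cong P/{\sim}\cong\Sigma_{g,p}$. Torsion‑freeness holds because the only finite vertex cycles are right‑angled saddle vertices, which combine in fours to total angle $2\pi$ — exactly as in the completion of a quasi‑selle into a selle (proposition~\ref{prop:compl_quasi_selle}) — so no cone points arise, while the $p$ ideal cycles become the $p$ ends.

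The main obstacle is precisely this realization step: one must guarantee that the rigid $W$‑tiling contains a disk $P$ whose boundary side‑pairings realize simultaneously the prescribed genus \emph{and} exactly $p$ ends by an orientation‑preserving torsion‑free group. Equivalently, $W^+$ must contain free geometrically finite subgroups of every type $\Sigma_{g,p}$. This is where the hypothesis of four components is essential: it forces $W^+$ to be a non‑elementary Fuchsian group, whose limit set is perfect, so that it contains hyperbolic elements with axis configurations of both kinds — "unlinked" pieces that add ends in Schottky fashion, and "linked" pairs that add handles. Combining them by the Klein--Maskit combination theorem produces, for every $g\geq 0$ and $p\geq 1$ with $\chi<0$, a subgroup whose quotient is homeomorphic to $\Sigma_{g,p}$. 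The delicate point, beyond merely fixing the rank of $\Gamma$, is to control the number of ends $p$ independently of the genus, which is handled by tracking the peripheral structure (the ideal‑vertex/boundary cycles) throughout the combination rather than only the isomorphism type of $\Gamma$.
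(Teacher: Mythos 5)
Your setup (pass to a sub-family $S'$ of four components, conjugate $G'_\gen$ to a hyperbolic reflection group via proposition~\ref{prop:refl_hyp}, take torsion-free subgroups of the orientation-preserving part, handle $\chi\geq 0$ by hand) coincides with the paper's, and your observations on orientability and torsion are correct. The divergence — and the problem — is in the step you yourself single out as ``the main obstacle''. For $\chi<0$ you must produce, inside the rigid $W$-tiling, a fundamental disk whose side-pairings realize a prescribed genus $g$ \emph{and} a prescribed number of ends $p$; equivalently, you must show that $W^+$ contains a free geometrically finite subgroup of every type $(g,p)^+$ with $p\geq 1$. Your resolution of this is a single sentence invoking the Klein--Maskit combination theorem and the promise of ``tracking the peripheral structure''. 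But that assertion \emph{is} the proposition, merely transported to $\HH$: nothing in your text exhibits the required hyperbolic elements in ping-pong position (crossed-axis pairs for handles, nested configurations for ends), nor verifies that the combination produces exactly $p$ peripheral conjugacy classes rather than only the right rank $2g+p-1$. As written, the crucial existence statement is asserted, not proved, so the argument has a genuine gap — even though the route is in principle viable for an expert willing to carry out the combination argument in detail.

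The paper avoids this entirely by a covering-space reduction: it realizes only the three types $(0,2)^+$, $(0,3)^+$ and $(1,1)^+$ by explicit quadrilateral configurations, checked case by case against the five possible angle patterns $(2,2,2,\infty),\ldots,(\infty,\infty,\infty,\infty)$ of the quadrilateral group on four components; all remaining types then follow from unbranched cyclic covers together with the Edmonds--Kulkarni--Stong realization of branched covers of the torus with one ramified fibre of cardinality $p$ (\cite[proposition~3.3]{EKS1984}), which yields $(g,p)^+$ as a cover of $(1,1)^+$ for every $g\geq 2$. If you want to salvage your approach without combination theorems, the cleanest fix is exactly this: prove your Poincaré-polygon construction only for the two small types $(0,3)^+$ and $(1,1)^+$ (where the five quadrilateral cases can be inspected directly) and delegate the general $(g,p)$ to finite covers, where the combinatorial realization problem is a known result rather than something you must re-derive.
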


\begin{proof}
Il suffit de réaliser les types topologiques $(0,2)^+$, $(0,3)^+$ et $(1,1)^+$.
En effet, on obtient évidemment les types $(0,d+2)^+$ et $(1,d)^+$ à partir de
$(0,3)^+$ et $(1,1)^+$, grâce à des revêtements cycliques de degré $d\geq 1$
de $(0,2)^+$ et $(1,0)^+$, respectivement. De plus, pour tout $(g,p)\in
\N\times\N^*$ avec $g \geq 2$,  il existe un revêtement du tore par la surface
fermée de genre $g$, de degré $d=2g+p-2 \geq 1$, ayant une unique fibre ramifiée
de cardinal $p$,  voir \cite[proposition~3.3]{EKS1984} ; on obtient donc le type
$(g,p)^+$ comme revêtement de $(1,1)^+$.

\par
Soit $S'\subset S$ un sous-ensemble formé de 4 composantes connexes. D'après la
proposition~\ref{prop:refl_hyp}, l'action de $G'_\gen \subset G_\gen$ sur $E^u$
est conjuguée à celle d'un groupe de réflexions associé à un quadrilatère
hyperbolique (l'invariance par $\Isom(f)$ n'est pas nécessaire pour ce point).
Ce groupe est engendré par 4 réflexions $x_i$ (numérotées cycliquement), avec~5
possibilités pour les ordres de $x_ix_{i+1}$, en fonction du nombre et de la
position des selles : $(2,2,2,\infty)$,   $(2,2,\infty,\infty)$,
$(2,\infty,2,\infty)$, $(2,\infty,\infty,\infty)$ et
$(\infty,\infty,\infty,\infty)$. Pour réaliser $(0,2)^+$, il suffit d'avoir deux
axes de réflexions disjoints, ce qui évidemment toujours le cas. Ensuite, on
pourra réaliser $(1,1)^+$ (resp. $(0,3)^+$) si l'on trouve un quadrilatère bordé
par 4 axes disjoints (aucun ne séparant l'ensemble des 3 autres) $uvu'v'$ (resp.
$uu'vv'$) de sorte que les paires $\{u,u'\}$ et $\{v,v'\}$ soient identifiées
par des éléments directs sans points fixes de $G'_\gen$. On vérifie aisément
l'existence de tels quadrilatères dans chaque cas (illustration pour
$(2,\infty,2,\infty)$ sur la figure~\ref{figu:2i2i}).
\end{proof}

\begin{figure}[h]
\begin{center}
\labellist
\small\hair 2pt
\pinlabel $u$ at 175 160
\pinlabel $v$ at 75 160
\pinlabel $u'$ at 42 60
\pinlabel $v'$ at 200 60
\pinlabel $u$ at 555 180
\pinlabel $u'$ at 395 180
\pinlabel $v$ at 555 75
\pinlabel $v'$ at 395 75
\endlabellist
\includegraphics[scale=0.4]{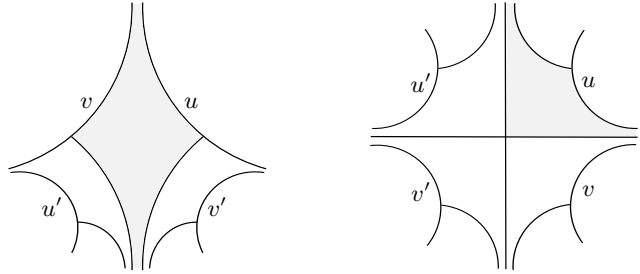}
\caption{Réalisation de $(1,1)^+$ et de $(0,3)^+$ pour $(2,\infty,2,\infty)$}
\label{figu:2i2i}
\end{center}
\end{figure}

\begin{rema}
Soit $f$ comme dans la proposition~\ref{prop:top_classe_fixee}. Si de plus $f$
possède
une symétrie, alors on vérifie que $E^u$ admet un quotient de type topologique
$(2,1)^-$ (bouteille de Klein épointé). On prend cette fois $S'$ invariante par
$\Isom(f)$, dont le cardinal dépend du type de symétrie : $\card S' = 4$ pour
(1a), $5$ pour (1b). Toujours d'après \cite[proposition~3.3]{EKS1984}, la
surface $E^u$ admet un quotient non orientable de type $(g,p)^-$ pour tout
$g\geq 2$ {\em pair} et tout $p\geq 1$. C'est le cas par exemple pour la
géométrie de Clifton-Pohl.
\end{rema}

\subsection{Ouverts de discontinuité}
\label{subs:ouv_dis} 
Soit $\Ga$ un sous-groupe de $\Isom(E^u)$. Un ouvert $U$ de $E^u$ stable par
$\Ga$ est {\em un ouvert de discontinuité pour $\Ga$} si l'action de $\Ga$ sur
$U$ est propre et discontinue. On dira que $\Ga$ est transverse au flot si
$\Ga\cap \Isom^0(E^u)$ est trivial ; il est clair que $\Gamma$ est alors un
sous-groupe discret de $\Isom(E^u)$. Rappelons que $E^u=E^u_f$ est de type fini
(définition~\ref{defi:type_fini}) si l'ensemble des composantes connexes de
$\{f\neq 0\}$ est localement fini.

\begin{prop}[actions propres et discontinues sur $E^u$]
\label{prop:action_propre_et_disc}
Soit $E^u$ de type fini (par exemple analytique) à courbure non constante et
soit $\Gamma$ un sous-groupe de $\Isom(E^u)$. Si $\Gamma$ est transverse au
flot, alors $\Gamma$ agit proprement et discontinûment sur $E^u$. De plus,
l'action de $\Gamma$ est libre si et seulement si $\Gamma$ est sans torsion.
\end{prop}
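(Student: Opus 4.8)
The plan is to transfer the proper discontinuity of the section group $G$ (Proposition~\ref{prop:action_Gprime}, applicable because $E^u$ is of finite type) to an arbitrary transverse subgroup $\Gamma$ by passing to the leaf space $\mcE^u$. First I would record two elementary consequences of transversality. Since $\ker\pi=\Isom^0(E^u)$ (Theorem~\ref{theo:groupe_isom_Eu}) and $\Gamma\cap\Isom^0(E^u)=\{1\}$, the restriction $\pi|_\Gamma$ is injective. Writing each $\ga\in\Gamma$ as $\ga=h\,g$ with $h\in\Isom^0(E^u)$ and $g\in G$ (the semidirect decomposition furnished by the split sequence~\eqref{equa:pi0_isom}), the projection $\ga\mapsto g$ is a homomorphism, and two elements of $\Gamma$ with the same $G$-component differ by an element of $\Gamma\cap\Isom^0(E^u)=\{1\}$, hence coincide. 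Consequently, for a compact $K\subset E^u$, the set $\{\ga\in\Gamma:\ga K\cap K\neq\emptyset\}$ is finite as soon as its image $\{\pi(\ga):\ga K\cap K\neq\emptyset\}$ is finite. As the flow acts trivially on the leaf space, the action of $\Gamma$ on $\mcE^u$ factors through $\pi$, and if $\ga p=q$ with $p,q\in K$ then $\pi(\ga)\,\overline{p}=\overline{q}$ with $\overline{p},\overline{q}$ in the compact image $\overline{K}\subset\mcE^u$. Thus proper discontinuity of $\Gamma$ on $E^u$ reduces to proper discontinuity of $\Pi$ on the leaf space $\mcE^u$.

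The core step is therefore to prove that $\Pi$ acts properly discontinuously on $\mcE^u$ when $f$ is of finite type. I would argue by contradiction from a sequence of distinct $\ga_n$ with $\overline{p}_n\to\overline{p}$ and $\pi(\ga_n)\overline{p}_n=\overline{q}_n\to\overline{q}$ in $\mcE^u$. Applying the morphism $\theta:\Isom(E^u)\to\Isom(f)$ of~\eqref{equa:se_isom_gen}: the induced isometries $\theta(\ga_n)\in\Isom(f)\subset\Isom(I,dx^2)$ carry $\overline{x}(\overline{p}_n)$ to $\overline{x}(\overline{q}_n)$, both lying in a fixed compact subset of $I$; since $\Isom(f)$ is discrete (trivial, $\Z/2\Z$, $\Z$ or $D_\infty$), only finitely many values $\theta(\ga_n)$ occur, and after composing with a fixed lift I may assume $\pi(\ga_n)\in\Pi_\gen$. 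Now $\Pi_\gen$ is the reflection group of Propositions~\ref{prop:G_gen_Coxeter} and~\ref{prop:refl_hyp}, whose generators $\pi(\sig_\al)$ fix the regular middle leaves of the carrés (Remark~\ref{rema:feuille_milieu}); finite type means precisely that the segments with double boundary of $\mcE^u$ do not accumulate, so that $\mcE^u$ is locally compact and its branch points (the light orbits bounding carrés) form a discrete $\Pi$-invariant set. When $\overline{p},\overline{q}$ are regular leaves, $\mcE^u$ is Hausdorff and locally compact there, and the reflection action is locally finite, whence only finitely many $\pi(\ga_n)$ — a contradiction. The case where $\overline{p}$ or $\overline{q}$ is a branch point is settled by the discreteness of these points together with the finiteness of their $\Pi$-stabilizers.

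I expect the genuine content, and the main obstacle, to be exactly the local finiteness just invoked: proper discontinuity on the non-Hausdorff space $\mcE^u$ does not follow formally from properness on $E^u$, because the displacement along the flow is a priori uncontrolled and a far-away leaf can in principle be brought back over $\overline{q}$. It must be extracted from the tree-like description of $\mcE^u$ (Figure~\ref{figu:esp_feu}) and the identification of $\Pi_\gen$ with a hyperbolic reflection group (Proposition~\ref{prop:refl_hyp}): finite type guarantees that only finitely many carré walls meet a neighbourhood of a fixed regular leaf, and it is here that the hypothesis on $f$ is indispensable.

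Finally, for the freeness statement I would proceed as follows. The action being proper and discontinuous by the above, $E^u$ carries a $\Gamma$-invariant Riemannian metric (\cite{Palais1961}, as used in Proposition~\ref{prop:refl_hyp}). If $\Gamma$ contains a nontrivial torsion element $\ga$, the finite cyclic group $\langle\ga\rangle$ acts by isometries of this metric on the plane $E^u\approx\R^2$ and therefore has a fixed point, so the action is not free. Conversely, suppose $\ga\neq1$ fixes a point $p$; then $\ga$ lies in the stabilizer of $p$, which I claim is finite in $\Gamma$. Indeed, if $p$ is regular, an isometry fixing $p$ sends $K^u(p)\neq0$ to $\pm K^u(p)$ and is thereby determined up to the reflection across the orbit through $p$, so the full stabilizer of $p$ is finite; if $p$ is a saddle, then the flow fixes $p$ and $\Isom(E^u)_p=\Isom^0(E^u)\rtimes(\text{finite})$, so intersecting with the transverse group $\Gamma$ leaves a finite group. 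In either case $\ga$ has finite order, and hence the action fails to be free exactly when $\Gamma$ contains torsion, which completes the plan.
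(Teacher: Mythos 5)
Votre architecture générale (tuer la direction du flot grâce à la transversalité, utiliser la discrétude de $\Isom(f)$ sur la coordonnée transverse, puis conclure par une finitude locale liée au type fini) est la bonne, et votre traitement de la liberté de l'action (métrique riemannienne invariante par \cite{Palais1961}, point fixe d'un groupe fini d'isométries du plan, finitude des stabilisateurs) est correct et même plus détaillé que celui du texte. Mais l'étape centrale — la discontinuité propre de $\Pi$ sur $\mcE^u$ — est précisément celle que vous laissez en chantier, et la justification que vous esquissez est circulaire : vous invoquez la proposition~\ref{prop:refl_hyp} (réalisation de $\Pi_\gen$ comme groupe de réflexions hyperboliques) pour obtenir la finitude locale, or la preuve de~\ref{prop:refl_hyp} commence par utiliser la propreté de l'action (c'est elle qui fournit, via Palais, la métrique riemannienne invariante puis l'uniformisation par $\HH$). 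On ne peut donc pas s'appuyer sur le modèle hyperbolique pour établir la propreté. L'ingrédient non circulaire est l'action {\em simplement transitive} de $G_\gen$ sur les rubans maximaux (conséquence de la proposition~\ref{prop:class_isom_rubans} et de la construction de $E^u$) : si une suite injective $(\ga_n)$ ramène des points d'un compact dans ce compact, les rubans $\ga_n(R)$ sont deux à deux distincts et s'accumulent, donc les axes des réflexions génériques (les carrés) s'accumulent, ce qui contredit le type fini. C'est exactement le mécanisme de la preuve de la proposition~\ref{prop:action_Gprime}, auquel la preuve du texte renvoie directement, sans passer par l'espace des feuilles : on écrit $g_n=\Phi^{t_n}\gamma_n h_n$, on stabilise $h_n$ par l'argument de coordonnée transverse, on conclut que $\gamma_n$ est stationnaire par non-accumulation des rubans, et la transversalité élimine enfin la partie $\Phi^{t_n}$.

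Deux points secondaires à régler si vous maintenez la réduction à $\mcE^u$. D'une part, l'injectivité de $\pi|_\Gamma$ repose sur $\ker\pi=\Isom^0(E^u)$, qui n'est valable que dans le cas non élémentaire (théorème~\ref{theo:groupe_isom_Eu}) ; le cas élémentaire, inclus dans l'énoncé, doit être traité à part (il est facile, mais votre réduction ne s'y applique pas telle quelle). D'autre part, les points selles n'appartiennent pas à $\mcE^u$ : un élément $\ga$ avec $\ga K\cap K$ réduit à un point selle échappe à votre critère via $\overline{K}$, et il faut les écarter séparément (ils forment un ensemble discret fermé à stabilisateurs finis modulo le flot). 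Enfin, la compacité de l'image $\overline{K}$ dans l'espace non séparé $\mcE^u$ et le sens exact de \og action localement finie\fg\ (il faut la finitude de $\{g : g\overline{K}\cap\overline{K}\neq\emptyset\}$ pour tout compact, pas seulement la finitude des stabilisateurs) mériteraient d'être explicités : c'est là que se cache tout le contenu, comme vous le relevez vous-même.
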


\begin{proof}
D'après la la preuve de la proposition~\ref{prop:action_Gprime}, lorsque
l'action d'un groupe transverse au flot n'est pas propre, alors les axes des
réflexions génériques de $E^u$ s'accumulent quelque part. La fonction $f$ étant
supposée de type fini, un tel comportement n'est pas possible. De plus, tout
point a un stabilisateur fini, donc trivial si $\Gamma$ est sans torsion.
\end{proof}

\begin{prop}[ouverts maximaux de discontinuité]
\label{prop:ouv_disc}
Soit $E^u$ de type fini, soit  $\Gamma$ un sous-groupe de $\Isom(E^u)$ et
soit~$U$ un ouvert de discontinuité pour $\Gamma$, connexe et  maximal.
Si $E^u$ n'est pas à courbure constante et si $U\neq \emptyset$, on a
l'alternative suivante : 
\begin{enumerate}
 \item ou bien $\Ga$ est transverse au flot et  $U=E^u$,
 \item ou bien $\Ga\cap\Isom^0(E^u)\simeq \Z$ et $U/\Gamma$ est 
 homéomorphe au cylindre, au ruban de Möbius, au tore ou à la bouteille de
 Klein.
 De plus $U$ est simplement connexe (en particulier $\Ga\simeq \pi_1(U/\Ga)$)
 et l'espace de ses feuilles est une géodésique maximale de $\mcE^u$. En particulier, 
 si $E^u$ n'est pas élémentaire, alors $U\neq E^u$.
\end{enumerate}
\end{prop}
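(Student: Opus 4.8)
Pour cette dernière proposition, le plan est d'analyser $\Ga$ au travers du sous-groupe $\Ga_0 = \Ga \cap \Isom^0(E^u)$, qui isole la part du flot de $K^u$ contenue dans $\Ga$ (on rappelle que $\Isom^0(E^u)\simeq \R$ est paramétré par ce flot $\Phi^t$). Si $\Ga_0$ est trivial, $\Ga$ est transverse au flot et la proposition~\ref{prop:action_propre_et_disc} — applicable puisque $E^u$ est de type fini et à courbure non constante — assure que $\Ga$ agit proprement et discontinûment sur $E^u$ tout entier ; comme $E^u$ est alors lui-même un ouvert connexe de discontinuité, la maximalité de $U$ donne $U = E^u$, c'est l'assertion~(1). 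Sinon, $\Ga_0$ est un sous-groupe non trivial de $\R$, donc discret ou dense. Le cas dense se récuse aussitôt : en prenant $p\in U$ hors de l'ensemble discret $\Sig$ des zéros de $K^u$, l'orbite $\Ga_0\cdot p$ serait dense dans la feuille de $\mcK$ issue de $p$ et s'accumulerait en $p$, ce qui contredit la discontinuité. On a donc $\Ga_0\simeq\Z$, engendré par un $\Phi^{t_0}$ avec $t_0\neq 0$, et l'on se trouve dans la situation de l'assertion~(2).

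Je traiterais alors ce second cas en dégageant d'abord la structure de $U$. Le générateur $\Phi^{t_0}$ fixant tout zéro de $K^u$, un point selle de $U$ aurait un stabilisateur infini ; ainsi $U$ évite $\Sig$. La première étape, et le cœur technique de la preuve, est de montrer que $U$ est saturé par le flot et que son image $\overline U=\pi(U)$ dans l'espace des feuilles est une géodésique maximale de $\mcE^u$. L'idée est la suivante : sur $E^u\smallsetminus\Sig$ toutes les feuilles de $\mcK$ sont difféomorphes à $\R$, et $\Ga_0\simeq\Z$ y agit par translation de pas $t_0$ ; le flot étant ainsi « compactifié » par $\Ga_0$ (chaque feuille devenant un cercle), la propreté de l'action de $\Ga$ sur $U$ impose celle du groupe quotient $\overline\Ga=\Ga/\Ga_0$ — discret car plongé dans $\Pi$ (théorème~\ref{theo:groupe_isom_Eu}) — sur $\overline U$, laquelle assure en retour la propreté de $\Ga$ sur tout le tube saturé $\widehat U=\pi^{-1}(\overline U)$. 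Comme une composante connexe d'un ouvert saturé est saturée, la maximalité de $U$ entraîne $U=\widehat U$. Le point délicat sera de vérifier que $\overline U$ est \emph{séparée}, c'est-à-dire une véritable géodésique (courbe lisse injective) de $\mcE^u$ et non une partie branchée : c'est précisément là qu'interviennent la maximalité et la dynamique source/puits du flot dans les carrés, qui interdisent d'élargir $\overline U$ en traversant un point de branchement tout en conservant la discontinuité.

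Une fois acquis que $\overline U$ est une géodésique maximale, tout le reste suit. L'espace $\mcE^u$ étant un arbre réel aux selles près, ses géodésiques sont homéomorphes à $\R$ ; le tube saturé $U=\pi^{-1}(\overline U)$ est donc difféomorphe à $\overline U\times\R\cong\R^2$, en particulier simplement connexe, d'où $\Ga\simeq\pi_1(U/\Ga)$, et la maximalité de $U$ force $\overline U$ à être maximale dans $\mcE^u$. Pour le type topologique, j'observerais que le quotient intermédiaire $U/\Ga_0$ est un cylindre ouvert, les feuilles de $\mcK$ devenant des cercles sous la translation $\Phi^{t_0}$, et que le groupe résiduel $\overline\Ga$ agit sur $\overline U\cong\R$ comme un sous-groupe discret de $\Isom(\R)$, donc isomorphe à $\{1\}$, $\Z/2\Z$, $\Z$ ou $D_\infty$. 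Comme les isométries envoient feuilles sur feuilles, le feuilletage $\mcK$ descend sur $U/\Ga$ en un feuilletage par cercles au-dessus d'une base de dimension~$1$ (un intervalle ou un cercle) : une telle surface est nécessairement le cylindre, le ruban de Möbius, le tore ou la bouteille de Klein, selon que la base est ouverte ou compacte et que le feuilletage est trivial ou tordu. Enfin, si $E^u$ n'est pas élémentaire, $\mcE^u$ n'est pas séparé, donc n'est pas une géodésique ; puisque $\overline U$ en est une, on a $\overline U\subsetneq\mcE^u$, et par conséquent $U\neq E^u$, ce qui achève l'alternative.
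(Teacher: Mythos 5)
Votre découpage est le bon et plusieurs points sont traités correctement, parfois plus explicitement que dans l'article : l'exclusion du cas où $\Ga\cap\Isom^0(E^u)$ serait dense dans $\R$, le fait que $U$ évite les zéros de $K^u$, la classification topologique finale (fibration en cercles au-dessus d'une base de dimension~1) et l'argument donnant $U\neq E^u$ via la non-séparation de $\mcE^u$ ; le cas~(1) est identique à celui de l'article. En revanche, le c\oe ur de la preuve --- montrer qu'aucune paire de feuilles non séparées de $\mcE^u$ ne rencontre simultanément $U$, de sorte que $\overline{U}=\pi(U)$ soit une véritable géodésique --- n'est pas démontré : vous l'identifiez vous-même comme « le point délicat » puis vous vous contentez d'annoncer que « la maximalité et la dynamique source/puits » l'interdisent. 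C'est précisément l'énoncé à prouver, et sa formulation (« interdisent d'élargir $\overline{U}$ en traversant un point de branchement ») est d'ailleurs imprécise : une géodésique de $\mcE^u$ traverse sans difficulté les points de branchement en choisissant une rive ; ce qu'il faut exclure, c'est que $\overline{U}$ contienne \emph{les deux} points d'une même paire non séparée. S'y ajoute une légère circularité : vous invoquez la propreté de l'action de $\overline{\Ga}$ sur $\overline{U}$ pour saturer $U$, alors que $\overline{U}$ n'est pas encore connu pour être séparé, si bien que la notion de compact y est encore mal définie.

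L'article contourne entièrement cette difficulté par un mécanisme différent : puisque $\Ga\cap\Isom^0(E^u)\simeq\Z$, toutes les feuilles du feuilletage de Killing induit sur $U/\Ga$ sont fermées ; grâce à la structure riemannienne transverse (proposition~\ref{prop:struct_transv_killing}), un revêtement d'ordre au plus~2 de $U/\Ga$ est un fibré en cercles, donc l'espace des feuilles en bas est séparé, et celui de $K^u|_U$, qui le revêt, l'est aussi : c'est un intervalle étalé dans $I$, donc une géodésique de $\mcE^u$. Votre piste dynamique directe peut être menée à bien lorsque le point de branchement correspond à une selle de $E^u$ : les deux feuilles non séparées sont les séparatrices stable et instable adjacentes à la selle, et si $q\in U$ est sur la première et $q'\in U$ sur la seconde, on fabrique (en coordonnées exponentielles) une suite $q_n\to q$ avec $\Phi^{nt_0}(q_n)\to q'$, d'où un compact de $U$ rencontrant son image par une infinité d'éléments de $\Ga_0$, contre la propreté. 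Mais lorsque le zéro de $f$ n'est pas simple, le coin n'est pas une selle de $E^u$ et l'argument demande un contrôle supplémentaire des temps de transit modulo $t_0\Z$, que vous ne donnez pas. Telle quelle, la preuve est donc incomplète sur son point central.
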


\begin{proof}
Le groupe $\Gamma\cap \Isom^0$ est un groupe cyclique distingué dans $\Ga$, par
conséquent s'il n'est pas trivial, alors les feuilles du champ $K$ (induit par
$K^u$ sur $U/\Gamma$) sont toutes fermées.  La surface $U/\Gamma$ admet donc un
revêtement d'ordre au plus $2$ qui est un fibré en cercle et l'espace des
feuilles de $K$ est séparé. Or cet espace est revêtu par l'espace des feuilles
de la restriction de $K^u$ à $U$ et donc $U$ est contenu dans la préimage d'une
géodésique de $\mcE^u$. Si $\Gamma\cap \Isom^0$ est trivial, alors $\Gamma$
est transverse au flot et $U=E^u$ d'après la 
proposition~\ref{prop:action_propre_et_disc}.
\end{proof}

\begin{coro}%
\label{coro:cas_analytique}
Soit $(X,K)$ une surface lorentzienne connexe, analytique, satur\'ee, maximale
et sans selles \`a l'infini. Si flot de $K$ n'est pas périodique alors $X$ est
un quotient d'une surface $E^u$ associ\'ee \`a une certaine fonction analytique
inextensible.
\end{coro}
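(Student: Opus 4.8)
The plan is to reduce the statement to the uniformization machinery of \S\ref{subs:uniformisation} together with the properness results of \S\ref{subs:ouv_dis}. I may assume throughout that $X$ is not of constant curvature (otherwise the conclusion follows from the explicit description of the constant-curvature models $E^u_f$, with $f$ affine or quadratic, and of their discrete groups of isometries). First, since $X$ is analytic, Proposition~\ref{prop:analy_refl}-(1b) shows that $(X,K)$ has uniform local geometry; choosing the germ of the norm function to be analytically inextensible yields an analytic inextensible $f$ such that $(X,K)$ is of class $[f]$. Because $X$ is saturated, maximal and has no saddles at infinity, Theorem~\ref{theo:uniformisation} (via the injectivity statement of Lemma~\ref{lemm:inj_dev}-(1) and Remark~\ref{rema:developpante}) provides a developing map $\mcD\colon(\widetilde X,\widetilde K)\to(E^u_f,K^u)$ which is an \emph{injective} local isometry intertwining the Killing fields. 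Writing $U=\mcD(\widetilde X)$, a connected open subset of $E^u_f$, and letting $\Gamma$ be the holonomy group, I obtain $X=U/\Gamma$ with $\Gamma$ acting freely and properly discontinuously on $U$. Since $\mcD_*\widetilde K=K^u$ and the deck transformations preserve $\widetilde K$, every element of $\Gamma$ preserves $K^u$; in particular $\Gamma\subset\Isom^K(E^u_f)$, so $K^u$ will descend to any quotient by $\Gamma$.

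The decisive use of the hypothesis is the following dichotomy. The intersection $\Gamma\cap\Isom^0(E^u_f)$ is a discrete subgroup of the one-parameter flow group, hence trivial or infinite cyclic. If it were generated by a nontrivial $\Phi_{K^u}^{t_0}$, then on $X=U/\Gamma$ one would have $\Phi_K^{t_0}=\id$, so the flow of $K$ would be periodic. As the flow is assumed \emph{not} periodic, $\Gamma$ must be transverse to the flow, i.e. $\Gamma\cap\Isom^0(E^u_f)=\{1\}$.

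Now $f$ is analytic, so $E^u_f$ is of finite type (Definition~\ref{defi:type_fini}), and Proposition~\ref{prop:action_propre_et_disc} applies: a subgroup transverse to the flow acts properly and discontinuously on \emph{all} of $E^u_f$. Moreover $\Gamma$ is torsion-free, for a nontrivial finite-order element would act freely on $U\cong\widetilde X\cong\R^2$, which is impossible; hence, by the same proposition, the action of $\Gamma$ on $E^u_f$ is free. Consequently $E^u_f/\Gamma$ is a Hausdorff, connected, saturated Lorentzian surface carrying the Killing field induced by $K^u$, and $X=U/\Gamma$ sits inside it as a saturated open subset. Since $X$ is maximal (Definition~\ref{defi:extension}), this extension is trivial: as $U$ is $\Gamma$-invariant, $U/\Gamma=E^u_f/\Gamma$ forces $U=E^u_f$, whence $X=E^u_f/\Gamma$ is a quotient of $E^u_f$.

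The routine ingredients are the existence, injectivity and field-preserving property of the developing map, together with the descent of $K^u$, all of which are packaged in the cited statements. The real content, and the step I expect to require the most care, is excluding that $U$ is a proper subset of $E^u_f$: this is precisely alternative~(2) of Proposition~\ref{prop:ouv_disc}, where $U$ covers a single geodesic of the leaf space $\mcE^u$ and the quotient is a cylinder, a Möbius band, a torus or a Klein bottle. Non-periodicity of the flow is exactly what kills that branch, and maximality of $X$ then upgrades ``properly discontinuous on $E^u_f$'' to the equality $U=E^u_f$.
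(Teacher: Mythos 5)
Your proof is correct and follows essentially the same route as the paper's: uniformize $X$ by an open subset $U$ of $E^u_f$ via théorème~\ref{theo:uniformisation}, use the non-periodicity of the flow to see that the holonomy group is transverse to the flow, and combine maximality with the properness results of \S\ref{subs:ouv_dis} to force $U=E^u_f$. The only (cosmetic) difference is that the paper invokes the dichotomy of the proposition~\ref{prop:ouv_disc} on maximal open sets of discontinuity, whereas you inline the relevant half of its proof directly via the proposition~\ref{prop:action_propre_et_disc}.
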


\begin{proof} Une surface $(X,K)$ analytique sans selles à l'infini est
uniformisée par un ouvert~$U$ d'une surface $E^u$ de type fini. Comme $X$ est
maximale, $U$ est un ouvert maximal de discontinuité pour l'action du groupe
d'holonomie.
\end{proof}

Quand $E^u$ n'est pas de type fini, il peut exister des groupes
d'isométries transverses au flot ayant des ouverts de discontinuité
connexes et maximaux différents de~$E^u$. 

\begin{exem}
\label{exam:type_infini}
Soit $f\in\mcC^\infty(\R,\R)$. On suppose que les composantes de $\{f\neq
0\}\cap \R^+$ s'accumulent uniquement sur $0$, de sorte que $g=f_{|]0,+\infty[}$
est de type fini.  Si $\Gamma$ est l'image d'une section de $\Isom(E^u_g)\to
\Isom(E^u_g)/\Isom^0(E^u_g)$, alors d'après la proposition
\ref{prop:action_Gprime} (ou \ref{prop:action_propre_et_disc}), $\Gamma$ agit
proprement et discontinûment sur $E^u_g$. Cependant, $E^u_g$ s'identifie à un
ouvert de $E^u_f$ et $\Isom(E^u_g)=\Isom_\gen(E^u_g)$ s'identifie à un
sous-groupe de $\Isom(E^u_f)$, lemme~\ref{lemm:Euf_reflexive}. Le groupe
$\Gamma$ contient alors des réflexions génériques dont les axes s'accumulent
dans $E^u_f$ ; il n'agit donc pas proprement sur $E^u_f$. En fait, $\Gamma$
n'agit proprement en aucun point du bord de $E^u_g$ dans $E^u_f$ : $E^u_g$ est
un ouvert de discontinuité maximal de $E^u_f$. On remarquera toutefois que la
vari\'et\'e quotient est de classe $[g]$ et non de classe $[f]$.
\end{exem}

\begin{prop}\label{prop:tores_loc_model}
Les tores localement modelés sur $E^u$ sont uniformisables par des ouverts
saturés simplement connexes de $E^u$ dont la projection sur $\mcE^u$ est
une géodésique. Ils possèdent un champ de Killing.  Il existe des tores
localement modelés sur $E^u$ si et seulement si $f$ est périodique.
\end{prop}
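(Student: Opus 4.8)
The plan is to exploit the developing map of the $E^u$‑structure. Since $E^u$ is simply connected, a torus $T$ modeled on $E^u$ comes equipped with a developing map $\mcD\colon \widetilde T\to E^u$ and a holonomy morphism $\rho\colon \pi_1(T)\cong\Z^2\to\Isom(E^u)$ satisfying $\mcD\circ\gamma=\rho(\gamma)\circ\mcD$. Pulling back a transverse coordinate $x^u$ of $E^u$ (with $dx^u=i_{K^u}\nu^u$, cf. remark~\ref{rema:esp_feuilles_etale}), the function $x=x^u\circ\mcD$ is a transverse coordinate on $\widetilde T$ on which $\pi_1(T)$ acts through a morphism $\overline\rho\colon\Z^2\to\Isom(f)$ as in proposition~\ref{prop:isom_gen}. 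I would split the statement into three parts: necessity of periodicity, the construction when $f$ is periodic, and the uniformization together with the existence of a Killing field.

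For necessity I would first show that $\overline\rho(\pi_1(T))$ is infinite. If it were finite, its kernel $H$ would have finite index in $\Z^2$, the cover $T'=\widetilde T/H$ would again be a compact torus, and the $H$‑invariance of $x$ would let it descend to a smooth function $x\colon T'\to\R$. But the critical points of $x$ are exactly the images under $\mcD$ of the zeros of $K^u$, where in the saddle normal coordinates of the proof of proposition~\ref{prop:extension_selle} one has $x^u=uv$, i.e. non‑degenerate \emph{saddle} critical points; a continuous function on the compact surface $T'$ must however attain a maximum, which is not a saddle, and if $\mcD$ avoids the zeros then $x$ is a submersion of the compact $T'$ into $\R$, equally impossible. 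This contradiction forces $\overline\rho(\pi_1(T))$, hence $\Isom(f)$, to be infinite, so $f$ admits a period and $I=\R$; conversely, when $f$ is periodic I would build the required torus by taking a maximal ruban $R_f\subset E^u$ (proposition~\ref{prop:sf_Euf}), which is saturated, simply connected and projects onto an interval of $\mcE^u$, and quotienting by the lattice generated by a flow element $\Phi^{s_0}$ and by the global isometry $\tau$ lifting the period translation and stabilizing $R_f$ (the section in proposition~\ref{prop:isom_gen}).

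For the uniformization I would produce the Killing field first. Let $\eta,\theta,\epsilon\colon\pi_1(T)\to\{\pm1\}$ record the action of $\rho(\gamma)$ on orientation, on the orientation of $x$, and on $K^u$; evaluating $\gamma^*(i_{K^u}\nu^u)$ gives the relation $\epsilon=\eta\theta$. Deck transformations of the orientable surface $\widetilde T$ preserve orientation, so $\eta\equiv1$ and therefore $\epsilon=\theta$. By the previous step $\overline\rho(\pi_1(T))$ is an infinite abelian subgroup of $\Isom(f)\cong\Z$ or $D_\infty$, hence infinite cyclic and made of translations (two distinct reflections of $D_\infty$ never commute), so $\theta\equiv1$ and thus $\epsilon\equiv1$; consequently $\widetilde K=\mcD^*K^u$ descends to a Killing field $K$ on $T$. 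Since $T$ has non‑constant curvature it is not flat, so, exactly as in the proof of theorem~\ref{theo:uni_tore}, $K$ has a closed orbit and its flow is periodic. Lemma~\ref{lemm:inj_dev}(2) then shows $\mcD$ is injective, so $U:=\mcD(\widetilde T)$ is a simply connected saturated open set with $T\cong U/\rho(\pi_1(T))$; moreover the periodicity of the flow makes the leaf space $\mcE_{\widetilde T}$ separated (as in proposition~\ref{prop:analy_refl}), i.e. an interval, which maps injectively and locally isometrically into $\mcE^u$, its image being a geodesic.

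The main obstacle is the control of the transverse holonomy $\overline\rho$ and the passage to an honest Killing field: everything hinges on proving that $\overline\rho(\pi_1(T))$ is an infinite group of \emph{translations}. The Morse‑type argument on the compact cover $T'$ (forbidding a globally defined transverse coordinate) is the delicate point of necessity, while the orientation bookkeeping $\epsilon=\eta\theta$ combined with the classification of abelian subgroups of $\Isom(f)$ is what upgrades the Killing foliation to a genuine Killing field. Once these are in place, injectivity of the developing map and the geodesic nature of the projection follow mechanically from lemma~\ref{lemm:inj_dev} and from the description of geodesics of $\mcE^u$ as the injective smooth curves.
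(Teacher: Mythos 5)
Your proof is correct, but it takes a genuinely different route from the paper's at the two crucial points. The paper first makes the Killing field descend: the a priori singular foliation induced by $\widetilde K$ is isometry-invariant and so passes to $T$; Poincaré--Hopf excludes singularities (only saddles, of index $-1$, are possible while $\chi(T)=0$); the foliation is transversally riemannienne and saturates the curvature levels, so all its leaves are closed, hence it is orientable and $\widetilde K$ descends. Periodicity of $f$, injectivity of $\mcD$ and the geodesic description are then quoted from the théorème~\ref{theo:uni_tore} and the proposition~\ref{prop:ouv_disc}. You reverse the order: you first prove periodicity by a Morse-type argument (if the transverse holonomy $\overline{\rho}$ had finite image, the transverse coordinate would descend to a compact finite cover, where its only critical points are nondegenerate saddles of the form $uv$ --- impossible on a closed surface), and you then deduce that $\overline{\rho}(\Z^2)$ is an infinite abelian, hence translation, subgroup of $\Isom(f)$; combined with the identity $\epsilon=\eta\theta$ and $\eta\equiv 1$ this gives $\epsilon\equiv 1$ and the descent of $\widetilde K$. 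Your version buys an algebraic, holonomy-theoretic argument that bypasses the discussion of closed leaves; the paper's is shorter because Poincaré--Hopf does the work in one stroke. Two small points are worth writing down explicitly, though neither is a gap: (i) your assertion that $T$ is not flat deserves a word --- it follows because $x(\widetilde T)$ is an interval invariant under a nontrivial translation, hence equal to $\R$, and $f^{(3)}\not\equiv 0$; (ii) the injectivity of the induced map on leaf spaces uses that $\mcD(\widetilde T)$ is saturated (completeness of $\widetilde K$), so that every orbit of $K^u$ meets it in a connected set.
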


\begin{proof} 
Soit $T$ un tore localement modelé sur $E^u$ et soit $\mathcal D$ une
développante associée.  Si~$T$ est plat le r\'esultat est clair, on suppose donc
$T$ non plat.  Le champ de Killing $K^u$ se relève par $\mathcal D$ en un champ
de Killing $\widetilde K$.  Le feuilletage {\it a priori} singulier associé à
$\widetilde K$ est invariant par isom\'etrie, il induit donc un feuilletage
$\mathcal K$ sur $T$.  Par le théorème de Poincaré-Hopf, on voit que ce
feuilletage ne peut pas être singulier puisque ses seules singularités possibles
sont des selles. D'après la proposition~\ref{prop:struct_transv_killing}, ce
feuilletage est transversalement riemannien. Les niveaux de la courbure étant
saturés par ce feuilletage, on en déduit que toutes ses feuilles sont fermées.
Il est donc orientable et $\widetilde K$ passe donc au quotient. Par
cons\'equent, $f$ est p\'eriodique, $\mathcal D$ est injective (th\'eor\`eme
\ref{theo:uni_tore}) et son image se projette sur une g\'eod\'esique de
$\mcE^u$, proposition~\ref{prop:ouv_disc}. Inversement, si $f$ est
périodique, le ruban $R_f$ rev\^et un tore.
\end{proof}

Nous verrons (\S\ref{subs:class_tores}) que la géodésique de $\mcE^u$ associée à
l'uniformisation d'un tore $T$ modelé sur $E^u$ contient un invariant de nature
combinatoire : la suite périodique des bandes de type~II de $T$. Soit $f$
périodique de type fini et soit $G$ l'image d'une section de la suite
exacte~\eqref{equa:pi0_isom}. Le groupe engendré par $G^{K,+}$ et $\Phi_K^1$
agit discontin\^ument et librement sur $E^u$ privé de ses points selles. La
variété quotient $X$, non séparée sauf si $E^u$ est élémentaire, possède une
propriété remarquable : {\em  pour toute suite finie de bandes de type II
extraite de $R_f$, il existe un tore $T$ immergé dans $X$ ({\it via}
l'uniformisation) et admettant cette combinatoire prescrite.}

Nous étudions enfin l'espace des déformations des surfaces obtenues comme
quotient d'un ouvert de discontinuité de $E^u$.  Pour tout groupe $\Gamma$,
notons $\Hom(\Ga,\Isom(E^u))$ l'ensemble des représentations de $\Ga$ dans
$\Isom(E^u)$, muni de la topologie induite par $\Isom(E^u)^\Ga$. Comme
$\Isom(E^u)$ est localement connexe, le groupe quotient $\pizeroIsom(E^u) =
\Isom(E^u)/\Isom^0(E^u)$ est discret. On rappelle que $\pizeroIsom(E^u)$ est
isomorphe à $\Pi$ dans le cas non élémentaire et à $\Z/2\Z \times \Isom(f)$ dans
le cas élémentaire. Par ailleurs, la projection de $\Isom(E^u)$ sur
$\Isom(E^u)/\Isom^K(E^u)\simeq \mu_2$, où $\mu_2=\{\pm 1\}$, induit un caractère
$\overline{\chi} : \pizeroIsom(E^u) \to \mu_2$. Soit $p:\Isom(E^u) \to
\pizeroIsom(E^u)$. Pour tout $\ro\in \Hom(\Ga,\Isom(E^u))$, on pose 
$$
\overline{\ro} = p\circ \ro\in \Hom(\Ga,\pizeroIsom(E^u))
\esp \mathrm{et}\esp
\chi_\ro=\overline{\chi}\circ  \overline{\ro}\in \Hom(\Ga,\mu_2).
$$
Par continuité des projections de $\Hom(\Ga,\Isom(E^u))\subset \Isom(E^u)^\Ga$ 
sur les facteurs $\Isom(E^u)$, les morphismes $\overline{\ro}$ 
et $\chi_\ro$ ne dépendent que de la composante connexe $C_\ro$ de $\ro$
dans $\Hom(\Ga,\Isom(E^u))$. En particulier, le caractère $\chi_\ro$ définit une 
structure de $\Ga$-module sur $\R$ associée à $C_\ro$, que l'on notera simplement
$\R_\ro$.

\begin{prop}[composantes et espaces de déformations]\label{prop:composantes_deformations}
Soit $E^u$ à courbure non constante et soit~$\Ga$ un groupe. Soit $\ro\in \Hom(\Ga,\Isom(E^u))$
et soit $C_\ro$ la composante connexe de~$\ro$ dans $\Hom(\Ga,\Isom(E^u))$.
\begin{enumerate}
          \item\label{ass:bij} La composition par la projection de $\Isom(E^u)$ sur $\pizeroIsom(E^u)$
          induit une bijection de l'ensemble des composantes connexes de 
          $\Hom(\Ga,\Isom(E^u))$ sur $\Hom(\Ga,\pizeroIsom(E^u))$.
          \item\label{ass:homeo} L'espace des dérivations
          $\Der(\Ga,\R_\ro)$ agit simplement et transitivement sur la composante
          $C_\ro$. Celle-ci est homéomorphe à $\Der(\Ga,\R_\ro)$.
          \item \label{ass:h1} L'action des conjugaisons de $\Isom^0(E^u)$ sur $C_\ro$ 
          (déformations triviales) correspond à l'espace des dérivations principales
          $\Der^0(\Ga,\R_\ro)$. L'espace des déformations de $\ro$ (modulo conjugaison par
          $\Isom^0(E^u)$) est donc homéomorphe au $\R$-espace vectoriel $H^1(\Ga,\R_\ro)$.
          \item\label{ass:fidel} On suppose que $E^u$ est de type fini (par exemple analytique). 
          Si $\overline{\ro}$ est fidèle, alors toute représentation
          $\ro'\in C_\ro$ est fidèle et l'action de $\ro'(\Ga)$ sur $E^u$ est propre et discontinue ;
          de plus si $\overline{\ro}(\Ga)$ est sans torsion, alors $\ro'(\Ga)$ agit librement 
          sur $E^u$. 
          \end{enumerate}
\end{prop}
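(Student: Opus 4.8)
Le plan est de ramener l'énoncé à la théorie classique des extensions de groupes et de la cohomologie de degré~$1$. Le point de départ, que j'établirais en premier, est que $\Isom(E^u)$ est une extension \emph{scindée} du groupe discret $\pizeroIsom(E^u)$ par la composante neutre $\Isom^0(E^u)\simeq \R$ : cela résulte du théorème~\ref{theo:groupe_isom_Eu} dans le cas non élémentaire et de la proposition~\ref{prop:isom_Eu_elem} dans le cas élémentaire. J'observerais de plus que la conjugaison par $g\in\Isom(E^u)$ agit sur le flot $\Phi^t$ de $K^u$ suivant $g\Phi^t g^{-1}=\Phi^{\overline\chi(p(g))\,t}$, de sorte que l'action sur $\Isom^0(E^u)\simeq \R$ induite par un relèvement $\ro$ se fait précisément via le caractère $\chi_\ro$ ; c'est l'origine du module $\R_\ro$.

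Pour (\ref{ass:bij}) et (\ref{ass:homeo}), je commencerais par remarquer que $\overline{(\cdot)}=p_*$ est à valeurs dans un espace totalement discontinu, donc est constante sur chaque composante connexe, tandis que la section fournie par le scindage relève tout $\overline\ro$, d'où la surjectivité. Puis, pour $\ro$ fixé, j'écrirais tout autre relèvement sous la forme $\ro'(\ga)=\Phi^{u(\ga)}\ro(\ga)$ ; la condition d'homomorphisme équivaut alors à la relation de cocycle tordu
\begin{equation*}
u(\ga\de)=u(\ga)+\chi_\ro(\ga)\,u(\de),
\end{equation*}
c'est-à-dire à $u\in\Der(\Ga,\R_\ro)$. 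On obtient ainsi une bijection $\Der(\Ga,\R_\ro)\to p_*^{-1}(\overline\ro)$, visiblement une action simple et transitive, et un homéomorphisme puisque les opérations en jeu (flot, multiplication, projection sur $\Isom^0(E^u)$) sont continues. Comme $\Der(\Ga,\R_\ro)$ est un espace vectoriel, donc connexe, les fibres de $p_*$ sont connexes et coïncident avec les composantes : cela donne simultanément (\ref{ass:bij}) et (\ref{ass:homeo}), avec $C_\ro=p_*^{-1}(\overline\ro)$.

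Pour (\ref{ass:h1}), je calculerais la dérivation attachée à la conjugaison par $\Phi^t\in\Isom^0(E^u)$ : en utilisant $\ro(\ga)\Phi^{-t}\ro(\ga)^{-1}=\Phi^{-\chi_\ro(\ga)\,t}$, cette dérivation vaut $\ga\mapsto t-\chi_\ro(\ga)\,t=t-\ga\cdot t$, qui est exactement la dérivation principale de paramètre~$t$. Les orbites de $\Isom^0(E^u)$ correspondent donc aux classes modulo $\Der^0(\Ga,\R_\ro)$, et l'espace des déformations $C_\ro/\Isom^0(E^u)$ s'identifie homéomorphiquement à $H^1(\Ga,\R_\ro)$.

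Pour (\ref{ass:fidel}), je tirerais parti de la propreté déjà disponible. Si $\overline\ro$ est fidèle et $\ro'\in C_\ro$, alors $\overline{\ro'}=\overline\ro$ par connexité, donc $\ker\ro'\subset\ker\overline\ro=\{1\}$ ($\ro'$ est fidèle) et, surtout, $\ro'(\Ga)\cap\Isom^0(E^u)=\{1\}$ puisque $\overline\ro(\ga)=1$ force $\ga=1$ : le groupe $\ro'(\Ga)$ est transverse au flot. La proposition~\ref{prop:action_propre_et_disc} — où l'hypothèse de type fini est cruciale — livre alors directement la propreté et la discontinuité de l'action, et, la restriction de $p$ identifiant $\ro'(\Ga)$ à $\overline\ro(\Ga)$, sa liberté lorsque $\overline\ro(\Ga)$ est sans torsion. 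Le point demandant le plus de soin n'est pas tant un obstacle conceptuel que de vérifier que le cocycle tordu fait bien intervenir $\chi_\ro$ (et non l'action triviale) et que la bijection cocycles/relèvements respecte les topologies produit ; le scindage de l'extension est en revanche ce qui rend la surjectivité et toute la description des composantes immédiates.
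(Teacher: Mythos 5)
Votre démonstration est correcte et suit essentiellement la même démarche que celle de l'article : surjectivité via le scindage de $\Isom(E^u)\to\pizeroIsom(E^u)$, identification des relèvements d'un même $\overline\ro$ avec $\Der(\Ga,\R_\ro)$ par la formule $\ro'(\ga)=\Phi^{u(\ga)}\ro(\ga)$, calcul des dérivations principales {\it via} la relation de conjugaison par le flot, et application de la proposition~\ref{prop:action_propre_et_disc} pour le point~(4). Votre remarque explicite sur la connexité des fibres de $p_*$ (comme espaces vectoriels) rend simplement plus visible un point que l'article traite implicitement.
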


\begin{proof}
Nous avons vu que la composition par $p$ induit une application bien définie $\theta$ de
l'ensemble des composantes de $\Hom(\Ga,\Isom(E^u))$ dans $\Hom(\Ga,\pizeroIsom(E^u))$.
Comme $p$ est scindée (théorème~\ref{theo:groupe_isom_Eu} et proposition
\ref{prop:isom_Eu_elem}), l'application $\theta$ est surjective. Notons $\Phi^t$
le flot du champ~$K^u$, qui identifie $\Isom^0(E^u)$ avec $\R$.
Soit $\ro\in \Hom(\Ga,\Isom(E^u))$ une représentation. Tout élément 
$d\in \Der(\Ga,\R_\ro)$, c'est-à-dire vérifiant
$d(\ga \ga')= d(\ga) + \chi_\ro(\ga) d(\ga')$ pour $\ga,\ga' \in \Ga$, 
définit une représentation $\ro'\in C_\ro$ par 
\begin{equation}
\label{equa:derivation}
\ro'(\ga)=\Phi^{d(\ga)}\ro(\ga) \esp (\ga\in \Ga).
\end{equation}
Inversement, si $p\circ \ro'= p\circ \ro$, alors l'application $d:\Ga\to \R$ 
donnée par \eqref{equa:derivation} est l'unique dérivation qui envoie $\ro$ sur $\ro'$
(en particulier $\ro'\in C_\ro$ et $\theta$ est injective), d'où ({\ref{ass:bij}}) et ({\ref{ass:homeo}}). 
Le choix de $\ro\in C_\ro$ comme point base définit un homéomorphisme entre 
$C_\ro$ et $\Der(\Ga,\R_\ro)$. L'assertion ({\ref{ass:h1}}) résulte de la relation
$$\Phi^s \ro(\ga)\Phi^{-s}\ro(\ga)^{-1} = \Phi^{s-\chi_\ro(\ga) s}
\esp
(s\in \R, \ga\in \Ga).$$
Supposons que $\overline{\ro}$ est fidèle. Si $\ro'\in C_\ro$, on a $\overline{\ro}'=
\overline{\ro}$ : il est clair que $\ro'$ est fidèle et que $\ro'(\Ga)$ est transverse
au flot. Par suite (proposition~\ref{prop:action_propre_et_disc}), l'action
de $\ro'(\Ga)$ sur $E^u$ est propre et discontinue. Si $\overline{\ro}(\Ga)  =
\overline{\ro}'(\Ga)$ est sans torsion, alors $\ro'(\Ga)$ est également sans torsion,
son action sur $E^u$ est donc libre  (proposition~\ref{prop:action_propre_et_disc}).
\end{proof}

\begin{exem}
\label{exem:quot_Eu}
L'assertion ({\ref{ass:fidel}}) s'applique à tous les sous-groupes propres et
discontinus mis en évidence au \S\ref{subs:quotients_mini}. Notamment, comme
sous-produit de la proposition~\ref{prop:top_quo_Eu}, on obtient la
classification des composantes des sous-groupes $\Ga\subset \Isom^K(E^u)$
transverses au flot, sans torsion et maximal pour ces propriétés. Pour chaque
composante, la détermination de l'espace des déformations, c'est-à-dire ici
$H^1(\Ga,\R)$, est immédiate puisque la topologie de $E^u/\Ga$ est connue
par~\ref{prop:top_quo_Eu}. De plus, celle-ci est fixée dans chaque composante.
On voit donc qu'il existe de nombreuses surfaces portant un champ de Killing et
revêtues par $E^u$. Bien sûr, pour décrire les classes d'isométries, il faudrait
encore prendre en compte l'action à droite du \og groupe modulaire\fg\
$\Aut(\Ga)$. 
\end{exem}

\begin{exem}
 Soit $U$ un ouvert de discontinuité pour un sous-groupe $\Ga$ de $\Isom(E^u)$ comme
 dans la proposition~\ref{prop:ouv_disc}-(2). Si $U/\Ga$ est un tore, alors
 l'espace de déformations est $H^1(\Z^2,\R)=\R^2$ ; noter qu'il existe un 
 unipotent de $\GL_2(\Z) = \Aut(\Z^2)$ dont l'action stabilise cet espace.
 Contrairement à l'exemple~\ref{exem:quot_Eu}, la topologie des quotients n'est pas fixe 
 par déformations (on trouve aussi des anneaux). Dans le cas où $U/\Ga$ est une bouteille de Klein,
 on a $\Der(\Ga,\R_\ro)\simeq \R^2$ et $H^1(\Ga,\R_\ro)\simeq \R$ si $\Ga$ ne préserve
 pas le champ et  $\Der(\Ga,\R_\ro)=H^1((\Ga,\R_\ro)\simeq \R$ sinon ; 
 à nouveau, la topologie n'est pas fixe par déformation.
\end{exem}

\section{Surfaces compactes : classifications, points conjugués}
\label{sect:tores}
\subsection{Tores possédant un champ de Killing}
\label{subs:class_tores}

Soit $(T,K)$ un tore lorentzien non plat muni d'un champ de Killing $K$ non
trivial. On a vu dans la preuve du théorème~\ref{theo:uni_tore} que le flot de
$K$ est périodique, ce qui entraîne (existence d'une transversale fermée) que
l'espace des feuilles du champ $K$ est isométrique à $(\R/\msfm \Z, dx^2)$ pour
un certain $\msfm >0$, appel\'e masse de $K$. Afin d'identifier l'espace des
feuilles à $\R/\msfm \Z$, on choisit  un point  $\zeta$ et une  orientation
$\nu$ de cet espace. Le triplet  $(T,\nu,\zeta)$ sera désigné simplement par
\og{\em tore marqué}\fg\ (il est sous-entendu que $T$ possède un champ de
Killing). La fonction $\langle K, K\rangle$ se factorise par l'espace des
feuilles et le marquage définit des applications $f:\R\to \R$ et $\bar f :
\R/\msfm \Z\to \R$. On a vu aussi qu'il existe une développante $\mathcal D:
\widetilde T\rightarrow E^u_f$ (la proposition~\ref{prop:tores_loc_model} donne
la réciproque), que l'on peut construire grâce au marquage. \emph{Par abus de
langage nous appellerons ruban ou bande les parties de~$T$ dont un relevé au
revêtement universel est un ruban ou une bande.}

\par
Dans le cas riemannien, la mesure  du paramètre de twist (ou décalage) repose
sur l'existence de transversales globales lisses et naturelles : les géodésiques
orthogonales
au champ~$K$. Ici, il n'existe pas en général de telles transversales
\emph{lisses}. Notamment, si la fonction $\langle K, K\rangle$ s'annule,
aucune géodésique perpendiculaire à $K$ ne coupe toutes les feuilles de $K$ ; de
plus, si $T$ contient  des bandes de type II, aucune feuille de lumière maximale
n'est partout transverse à $K$. Pour pallier ce manque, nous commençons par
construire un feuilletage transverse $\mcK^\pitchfork$ naturel (du point de vue géométrique)
dont les feuilles ne sont que lisses par morceaux dès que  $T$ contient des
bandes de type II. 
\par

Si $\langle K, K\rangle$ ne s'annule pas, on choisit pour feuilletage
transverse $\mcK^\pitchfork$ le feuilletage orthogonal à $K$ ; noter que les deux
feuilletages de lumière sont transverses au champ, mais aucun n'est
privilégié. Si $\langle K, K\rangle$ s'annule et si $T$ ne contient que 
des bandes de type I, alors $\widetilde T$ est un ruban et on prend pour
feuilletage transverse $\mcK^\pitchfork$ l'unique feuilletage de lumière partout
transverse à~$K$.  
\par

Supposons maintenant que $T$ n'est pas un ruban. Le champ $K$ possédant une
transversale ferm\'ee, on voit que $T$ ne peut pas contenir de bande de
type~III. Il contient donc forc\'ement des bandes de type~II qui ne peuvent
s'accumuler dans $T$ car leurs  bords appartiennent à des feuilletages de
lumière différents.  Elles sont donc en nombre fini, nécessairement pair. On
note $\zeta_i$, $i$ modulo $2k$, les feuilles du champ $K$ situées au milieu des
bandes de type II (voir remarque~\ref{rema:feuille_milieu}) et $C_i$ les
cylindres fermés obtenus par découpage de $T$ suivant les $\zeta_i$.  Chaque
$C_i$ est un ruban à bord contenant au moins un zéro de $\langle K, K\rangle$,
donc admet un unique feuilletage de lumière transverse à $K$. Ces feuilletages
se recollent en un feuilletage de $T$ noté $\mcK^\pitchfork$, transverse à $K$ et dont les
feuilles sont des géodésiques de lumière brisées. Par construction, le lieu de
bifurcation des feuilles de $\mcK^\pitchfork$ est la réunion des $\zeta_i$, chaque feuille
étant formée de segments appartenant alternativement à l'un des deux
feuilletages de lumière de~$T$.

Si $T$ n'est plat, {\em le feuilletage transverse $\mcK^\pitchfork$ est
invariant par isométrie}. C'est immédiat si $T$ n'a pas de bandes de type II.
Sinon, cela résulte de l'invariance par isométrie de la décomposition en
cylindres par les milieux de ces bandes. Pour tout $p\in T$, on note $\ell_p$
l'unique feuille de $\mcK^\pitchfork$ passant par le point~$p$.  On paramètre
$\ell_p$ de fa\c con lisse par morceaux, directe (c.-à-d. compatible avec
l’orientation $\nu$ de l'espace des feuilles de $K$) et de sorte que
$\ell_p(0)=p$.

\begin{defi}
Soit $K$ un champ de Killing de $T$ de période $t_0>0$. On définit le
twist de $(T,\nu,\zeta)$ relativement à $K$ comme étant l'élément
$\tau$ de $\R/t_0\Z$ tel que $\Phi_K^\tau(p')=p$, où $p$ est un point de
$\zeta$ et $p'$ est le point de premier retour $\ell_p$ sur $\zeta$. Ce
nombre ne dépend pas du choix de $\zeta$, ni de celui de $p\in \zeta$.
\end{defi}

Le paramètre de twist $\tau$ est nul si et seulement si toute feuille de $\mcK^\pitchfork$ est fermée
et coupe chaque feuille de $K$ en un seul point. Plus généralement, le
rapport $\tau/t_0$ est rationnel si et seulement si toutes les feuilles de $\mcK^\pitchfork$ sont
fermées.

On dira qu'un tore muni d'un champ de Killing est \emph{élémentaire} s'il est
plat ou si son champ de Killing ne possède pas de trajectoires de lumière. Les
tores élémentaires sont en tout point semblables à des tores riemanniens, par
exemple dans le cas non plat ils sont déterminés par la fonction $\langle
K,K\rangle$, la période du flot et un paramètre de twist. {\em Dans la suite du
paragraphe \S\ref{subs:class_tores} nous n'étudierons  que les tores non
élémentaires}.
 
Sur tout tore marqué
$(T,\nu,\zeta)$ non élémentaire il existe un unique champ de Killing
$K_0$ de masse $1$ vérifiant $\langle K_0(p),\dot \ell_p(0)\rangle>0$ avec
$p\in\zeta$ (si $p$ correspond à un point singulier de $\ell_p$, alors
$\dot\ell_p(0)$ désigne la dérivée à droite).  Ce champ sera appelé
\emph{champ de Killing privilégié} de $(T,\nu,\zeta)$. Il dépend de
la position de $\zeta$ par rapport aux feuilles $\zeta_i$ (voir preuve du
lemme~\ref{prop:bouge_marques}).
 
 Soit $\bar f$ une fonction (lisse) de $\R/\Z$ dans $\R$. On appelle
 \emph{marquage pair} de $\bar f$ la donnée d'un sous-ensemble fini
 (éventuellement vide) de cardinal pair de l'ensemble des milieux des
 intervalles de $\R/\Z\smallsetminus \bar f^{-1}(0)$. On note $\mathcal
 C^{P}_0$ l'ensemble des fonctions lisses de $\R/\Z$ dans $\R$ non
 constantes, s'annulant et munies d'un marquage pair.  Deux triplets
 $(T,\nu,\zeta)$, $(T',\nu',\zeta')$ sont équivalents s'il existe une
 isométrie entre $T$ et $T'$ qui envoie $(\nu,\zeta)$ sur
 $(\nu',\zeta')$. On note $[T,\nu,\zeta]$ la classe d'équivalence 
(ou d'isométrie) du tore marqué $(T,\nu,\zeta)$.

\begin{prop}\label{prop:class_pointee} 
Il existe une bijection $\Theta$ entre les classes d'isométrie  de tores 
marqués $[T,\nu,\zeta]$ non élémentaires 
et $\R_+^*\times \R/\Z\times \mathcal  C^{P}_0$. 
 Plus précisément, $\Theta$ est définie par 
$\Theta([T,\nu,p])= \big(t_0,\tau/t_0,(\bar  f,\{x_1,\dots x_{2k}\})\big)$ où:
\begin{itemize}
\item $t_0$ est  la période (positive) du flot du champ de Killing
  privilégié $K_0$, 
\item $\tau$ est le paramètre de twist de $(T,\nu,p)$ relativement à
  $K_0$,
\item $\bar f$ est la fonction induite par $\langle K_0,K_0\rangle$ sur $\R/\Z$ via le marquage $(\nu,\zeta)$  et 
 $\{x_1,\dots, x_{2k}\}$ est l'ensemble des coordonnées dans $\R/\Z$ des milieux des bandes de type II.
\end{itemize}
\end{prop}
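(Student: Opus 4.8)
The plan is to establish that $\Theta$ is well defined on isometry classes, injective and surjective, all three through the uniformization of $(T,K)$ by the model $E^u_f$. Throughout one works with the privileged Killing field $K_0$ and the $1$-periodic function $f$ lifting $\bar f$: since $T$ is non elementary it is non flat, so its Killing field is unique up to homothety (proof of Lemma~\ref{lemm:isom_isomK}) and the flow of $K_0$ is periodic (proof of Theorem~\ref{theo:uni_tore}); hence $\widetilde T$ embeds isometrically as a saturated, simply connected open set $U\subset E^u_f$ whose projection to the leaf space $\mcE^u$ is a geodesic (Theorem~\ref{theo:uni_tore} and Proposition~\ref{prop:tores_loc_model}).

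First I would verify that each coordinate of $\Theta$ is an isometry invariant of the marked torus. The field $K_0$ is singled out among Killing fields by the mass-one normalization together with the sign condition $\langle K_0(p),\dot\ell_p(0)\rangle>0$ imposed by $(\nu,\zeta)$, so its period $t_0\in\R_+^*$ is intrinsic; the leaf space with its transverse Riemannian structure, hence the induced function $\bar f$ on $\R/\Z$, is invariant by Proposition~\ref{prop:struct_transv_killing}. The decomposition of $T$ into type~II bands is canonical (Lemma~\ref{lemm:types_bandes}), their middle leaves are isometry invariant (Remark~\ref{rema:feuille_milieu}), and they are finite and even in number because their two boundary leaves lie in distinct light foliations; this produces the marked function $(\bar f,\{x_1,\dots,x_{2k}\})\in\mathcal C^P_0$. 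Finally, as $T$ is non flat the transverse foliation $\mcK^\pitchfork$ is isometry invariant, so its first return map to $\zeta$ and the resulting twist $\tau/t_0\in\R/\Z$ are well defined and independent of the auxiliary choices. Thus $\Theta$ descends to isometry classes and lands in $\R_+^*\times\R/\Z\times\mathcal C^P_0$.

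For injectivity and surjectivity I would reconstruct $(T,\nu,\zeta)$ from its image. The datum $\bar f$ determines $f$, hence $(E^u_f,K^u)$ and the identification of the leaf space of $E^u_f$ with $\R$ covering $\R/\Z$. The even set $\{x_i\}$ prescribes the branch points of $\mcE^u$ at which the developing geodesic turns (the middle leaves of the squares carrying the type~II bands); repeating it $\Z$-periodically selects a geodesic of $\mcE^u$, and thereby the saturated simply connected open set $U\subset E^u_f$ lying above it, which is forced to equal $\widetilde T$. The holonomy group $\Gamma\cong\pi_1(T)=\Z^2$ is then generated by $g_1=\Phi_{K_0}^{t_0}$ and an element $g_2$ inducing the unit translation of the leaf space composed with the twist $\Phi_{K_0}^{\tau}$; the triple $(t_0,\tau/t_0,(\bar f,\{x_i\}))$ pins down both $U$ and $\Gamma$, so two marked tori with equal image are related by a marking-preserving isometry, while conversely every such triple is realized by $T=U/\Gamma$, which is non elementary since $\bar f$ vanishes without being constant. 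Freeness and proper discontinuity of $\Gamma$ follow from the arguments of Propositions~\ref{prop:ouv_disc} and~\ref{prop:action_propre_et_disc}, after which a direct check recovers the prescribed data.

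The main obstacle will be the precise bookkeeping hidden in $g_2$ and in the geodesic–turning correspondence: one must show that the even-cardinality subset of square middle leaves is in exact bijection with the admissible ways the developing geodesic crosses the branch points of $\mcE^u$ (equivalently, with the type~II band pattern of $T$), and that once $U$ is fixed the twist datum determines the transverse generator $g_2$ uniquely modulo the flow, so that no hidden modulus survives. Care is also needed because $K_0$ itself depends on the position of $\zeta$ relative to the band middle leaves $\zeta_i$ (see Lemma~\ref{prop:bouge_marques}), and this dependence must be tracked to guarantee that the normalization is consistent and that the marking is faithfully recovered; this is where the bulk of the verification lies, the freeness and proper-discontinuity statements being comparatively routine.
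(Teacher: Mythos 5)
Your plan follows essentially the same route as the paper: uniformize $\widetilde T$ into $E^u_f$ via Théorème~\ref{theo:uni_tore} and Proposition~\ref{prop:tores_loc_model}, reconstruct the saturated open set $U$ from the geodesic of $\mcE^u$ determined by $(\bar f,\{x_i\})$, and identify the holonomy as generated by $\Phi_{K^u}^{t_0}$ and a transverse element carrying the twist. The invariance discussion and the non-elementarity check are as in the paper.

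The one step you defer --- existence and uniqueness, modulo the flow, of the transverse generator $g_2$ realizing the prescribed turning pattern --- is precisely where the paper does real work, and it is worth knowing how it closes. The paper parametrizes the broken light geodesic $c:[0,1]\to E^u$ bifurcating at the times $x_1,\dots,x_{2k}$, takes the generic reflections $\sigma_i$ fixing the bifurcation points $c(x_i)$, and observes that the straightened path $\hat c=c_0.\sigma_1(c_1).(\sigma_2\circ\sigma_1)(c_2)\cdots(\sigma_{2k}\circ\cdots\circ\sigma_1)(c_{2k})$ lies inside a single maximal ruban; since $\hat c$ has length $1$ in the leaf space, Proposition~\ref{prop:class_isom_rubans} then yields a unique $\Phi\in\Isomgen(E^u)\cap\Isom^{+,K}(E^u)$ with $\Phi(q)=c(1)$, and $g_2=\Phi_{K^u}^{\tau}\circ\Phi$. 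Uniqueness of $\Phi$ is what guarantees ``no hidden modulus'' beyond $\tau$; injectivity then follows because any developing map normalized by the marking must send the lifted transversal $\widetilde\ell_p$ onto $\gamma$, forcing its image to be $U$ and its holonomy to be $\Lambda_\tau$. So your skeleton is right, but the proof is not complete until this straightening-by-reflections argument (or an equivalent) is supplied; the evenness of $2k$ is exactly what makes the final reflection product preserve $K^u$, which is the compatibility you flagged between the marking and the admissible turning patterns.
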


\begin{proof} 
 L'application $\Theta$ est bien définie car $(t_0,\tau/t_0,(\bar
 f,\{x_1,\dots x_{2k}\}))$ ne dépend que de la classe d'isométrie du
 triplet $(T,\nu,p)$ (une isométrie entre triplets envoie transversale sur
 transversale).
On commence par montrer que $\Theta$ est surjective.
On se donne donc
$\big(t_0,\tau/t_0,(\bar f,\{x_1,\dots x_{2k}\})\big)\in \R_+^*\times
\R/\Z\times\mathcal C^{0}_{P}$.  Soit $f$ le relevé à $\R$
de $\bar f$ et soit $E^u$ la surface associée. 
On oriente $\mathcal
E^u$, l'espace des feuilles non triviales du champ de Killing, et on le
pointe en $\zeta^u$ de coordonnée transverse $0$.
On choisit $q\in \zeta^u$ et un vecteur $v$ de type lumière en $q$ se
projetant sur un vecteur unitaire de $\mathcal
E^u$ orienté positivement. Il existe alors un unique champ de Killing $K^u$ induisant $f$ sur
$\mathcal E^u$ et tel que $\langle K^u,v\rangle =1$.

Soit $c:[0,1]\to E^u$ la géodésique de lumière brisée issue de $q$
vérifiant $\dot c(0)=v$,  se projetant sur une géodésique  unitaire et
directe de $\mathcal E^u$
et bifurquant aux temps $x_1, \dots, x_{2k}$.  On note $c_0$ le segment
$c([0,x_1])$, $c_i$ le segment $c([x_i,x_{i+1}])$ et $c_{2k}$ le segment
$c([x_{2k},1])$. Soient $\sigma_1,\dots, \sigma_{2k}$ les réflexions
génériques vérifiant $\sigma_i(c(x_i))=c(x_i)$ pour tout $i\in
\{1,\dots,2k\}$. On voit que $\hat
c=c_0.\sigma_1(c_1).(\sigma_2\circ\sigma_1)(c_2).\,\ldots\,
. (\sigma_{2k}\circ\cdots \circ \sigma_1)(c_{2k})$ est contenu dans un
ruban $R$. Le chemin $\hat c$ étant de longueur $1$, il existe donc une
isométrie directe $\varrho_R$ qui vérifie $\varrho_{R}(q)=\hat
c(1)$. Par conséquent, il existe $\Phi\in \Isom^{+,K}(E^u)$ tel que
$\Phi(q)=c(1)$. Cet élément est unique d'après la proposition
\ref{prop:class_isom_rubans}.

Soit $\gamma$ le chemin obtenu en mettant bout à bout les translatés de
$c$ sous l'action du groupe engendré par $\Phi$. Ce chemin est un relevé
d'une géodésique maximale de $\mcE^u$. 
On note~$U$ le saturé de $\gamma$
sous l'action du flot de $K^u$. L'ouvert $U$ est invariant sous l'action du
groupe engendré par le flot de $K^u$ et $\Phi$. On désigne par
$\Lambda_\tau$ le groupe engendré par $\Phi_{K^u}^{t_0}$ et
$\Phi_{K^u}^\tau\circ \Phi$.  Le quotient de $(U,\nu^u, \zeta^u)$ par
$\Lambda_\tau$ est bien un tore. Le chemin $\gamma$ se projette sur une
transversale $\ell_p$, grâce à quoi on vérifie que 
$\Theta([(U, \nu^u,  \zeta^u)/\Lambda_\tau])=(t_0,\tau/t_0,(\bar f,\{x_1,\dots
x_{2k}\}))$.

Il nous reste à voir que si $(t_0,\tau/t_0,(\bar f,\{x_1,\dots
x_{2k}\}))=\Theta([T,\nu,\zeta])$, alors le tore construit ci-dessus est
isométrique $(T,\nu,\zeta)$.  Soit $\mathcal D : \smash{\widetilde{T}}\to E^u_f$
une développante envoyant un relevé d'un point $p\in \zeta$ sur $q$, $K_0$
sur $K^u$ et préservant les orientations respectives des espaces des
feuilles. L'image par $\mathcal D$ de $\widetilde \ell_p$, le relevé au
revêtement universel de $\ell_p$, est forcément $\gamma$ et donc l'image
de $\mathcal D$ est $U$. Le groupe d'holonomie de $T$ est contenu dans
$\Isom^0(E^u)\times G$, il est engendré par $\Phi_K^{t_0}$ et un élément
qui laisse $U$ invariant et envoie $q$ sur $\Phi_{K^u}^{\tau}
\circ\gamma(1)$. Cet élément ne peut être que $\Phi_{K^u}^\tau\circ
\Phi$. Ce qui montre que le tore $(T,\nu, \zeta)$ est isométrique à $(U,
\nu^u,\zeta^u)/\Lambda_\tau$ et termine la preuve.
\end{proof}

\begin{rema}\label{rema:masse=1}
On peut choisir n'importe quel réel positif $\msfm$ comme masse du champ
privilégié. La proposition~\ref{prop:class_pointee} s'adapte alors sans
difficultés.
\end{rema}

Pour obtenir la classification des tores non élémentaires il suffit de voir ce
que deviennent les paramètres associés à $T$ lorsque $\nu$ est changé en $-\nu$
et lorsque $\zeta$ varie.

\begin{lemm}\label{prop:bouge_marques}
Soit $(T,\zeta,\nu)$ un tore marqué non élémentaire.  Si on pose
$\Theta ([T,\zeta,\nu]) = 
\big(t_0,\tau/t_0,( \bar f, \{x_1,\dots,x_{2k}\}))$,
alors $t_0$ ne dépend pas du choix de $(\zeta,\nu)$ et
\begin{enumerate}
 \item lorsque $\nu$ est remplacé par $-\nu$,   
  $\tau$ devient $-\tau$,
  $\bar f$ devient $\bar f^\vee$ et 
   $\{x_1,\dots,x_{2k}\}$ devient $\{1 -x_{2k},\dots, 1-x_1\}$,
 \item lorsque $\zeta$ est remplacé par $\zeta'$ de coordonnée $y$ pour
  le marquage $(\zeta,\nu)$,
  la fonction $\bar f$ devient $\bar f(\cdot+y)$,
  $\{x_1,\dots,x_{2k}\}$ devient $\{x_1-y,\dots,x_{2k}-y\}$ et 
  $\tau$ devient $(-1)^i\tau$ 
  où $i\in \{1,\dots,2k\}$ vérifie $y\in [x_i,x_{i+1}[$
($\R/\Z$ étant cycliquement orienté).
\end{enumerate}
 \end{lemm}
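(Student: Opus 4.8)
The plan is to reduce every assertion to the behaviour of two objects attached to the marking: the privileged field $K_0$ and the first–return map of the transverse foliation $\mcK^\pitchfork$ on the marked leaf $\zeta$.

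\emph{Invariance of $t_0$.} Since $T$ is non–flat and non–elementary, its Killing field is unique up to a homothety (see the proof of lemme~\ref{lemm:isom_isomK}); the mass–$1$ normalisation then determines it up to sign, so for every marking the privileged field is one of the two fields $\pm K_0$. As $\Phi_{-K_0}^{t}=\Phi_{K_0}^{-t}$, these share the same positive period, whence $t_0$ depends neither on $\nu$ nor on $\zeta$.

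\emph{The functions and the marking sets.} Both $\bar f$ and $\{x_1,\dots,x_{2k}\}$ are read off on the space of leaves of $K_0$, identified with $\R/\Z$ through the marking, once an origin (the leaf $\zeta$) and an orientation ($\nu$) have been fixed; moreover the type~II bands — hence their middle leaves $\zeta_i$ — are determined by the geometry of $(T,K)$ alone. Changing the marking is therefore an affine change of the transverse coordinate $x$. Replacing $\nu$ by $-\nu$ reverses the orientation of the leaf space, i.e. sends $x$ to $-x$: this gives $\bar f\mapsto\bar f^\vee$ and sends $x_1<\dots<x_{2k}$ to $\{1-x_{2k},\dots,1-x_1\}$ in $\R/\Z$. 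Replacing $\zeta$ by the leaf $\zeta'$ of coordinate $y$ moves the origin, i.e. sends $x$ to $x-y$, whence $\bar f\mapsto\bar f(\cdot+y)$ and $\{x_i\}\mapsto\{x_i-y\}$.

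\emph{The twist.} The key point is that $\mcK^\pitchfork$ is invariant under the flow of $K_0$: this flow preserves $K$, each of the two light foliations, and the middle leaves $\zeta_i$, hence preserves the foliation obtained by following, in each cylinder $C_i$, the unique light foliation transverse to $K$. Consequently the first–return map of $\mcK^\pitchfork$ on $\zeta$ commutes with $\Phi_{K_0}^{t}$; read in the flow parameter it is a translation, which re-proves that $\tau$ is independent of $p\in\zeta$. Reversing $\nu$ reverses the direct parameterisation of the leaves of $\mcK^\pitchfork$, so the relevant first return becomes the inverse map and the sign of the translation changes, giving $\tau\mapsto-\tau$. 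Moving $\zeta$ to $\zeta'$ instead conjugates the return map by the holonomy of $\mcK^\pitchfork$ from $\zeta$ to $\zeta'$; since this holonomy also commutes with the flow, the translation amount is unchanged and the only effect is the change of privileged field. In an adapted chart \eqref{equa:carte_adaptee} one computes that, taken in the direct sense, $\langle K_0,\cdot\rangle$ has opposite signs along the two light foliations $L$ and $L'$; hence the sign of $\langle K_0,\dot\ell(0)\rangle$ flips each time one crosses a middle leaf $\zeta_i$. Crossing the $i$ leaves separating $\zeta$ from $\zeta'$ therefore replaces $K_0$ by $(-1)^i K_0$, and measuring the unchanged return displacement with $(-1)^i K_0$ turns $\tau$ into $(-1)^i\tau$.

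\emph{Main difficulty.} The delicate part is precisely the sign bookkeeping for the twist. One must track simultaneously the reversal of the direct parameterisation, the induced change of sign of the privileged field $K_0$ (which enters the very definition of $\tau$ through the flow $\Phi_{K_0}^\tau$), and the conjugation of the return map, and then verify that these effects combine into exactly the announced signs $-\tau$ and $(-1)^i\tau$ rather than cancelling. This requires working out the exact convention relating $\dot\ell(0)$, the orientation $\nu$, and the normalisation of $K_0$ in an adapted chart; once this is pinned down, all the remaining verifications are routine computations in such a chart.
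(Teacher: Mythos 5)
Your overall strategy is the right one and is essentially the paper's (whose own proof is a two-line remark treating everything but the twist under a change of $\zeta$ as evident): all the assertions reduce to an affine change of the transverse coordinate together with the behaviour of the privileged field $K_0$ and of the first-return map of $\mcK^\pitchfork$ on the marked leaf. Your treatment of the invariance of $t_0$, of the transformations of $\bar f$ and of $\{x_1,\dots,x_{2k}\}$, and of assertion (2) (one sign flip of $K_0$ per bifurcation leaf crossed, the return map being merely conjugated by a flow-equivariant holonomy) is correct and agrees with the paper's mechanism.

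The gap is in assertion (1), and it sits exactly at the point you yourself flag as the ``main difficulty'' and then defer to a ``routine computation''. You obtain $\tau\mapsto-\tau$ from the sole fact that the first return becomes the inverse map; but reversing $\nu$ also reverses $\dot\ell_p(0)$, hence (at least when $\zeta$ is not one of the $\zeta_i$) replaces the privileged field $K_0$ by $-K_0$. Writing the direct first-return map as $h=\Phi_{K_0}^{-\tau}$, each of these two effects taken alone sends $\tau$ to $-\tau$, so taken together they give $h^{-1}=\Phi_{K_0}^{\tau}=\Phi_{-K_0}^{-\tau}$, i.e.\ an \emph{unchanged} twist. Concretely, for a torus covered by a ruban ($k=0$) with holonomy generated by $(x,y)\mapsto(x,y+t_0)$ and $(x,y)\mapsto(x+1,y+\tau)$ in an adapted chart, the map $(x,y)\mapsto(-x,-y)$ realises $\nu\mapsto-\nu$, changes $f$ into $f^\vee$, turns the privileged field into $-\partial_y$ and the first return into $(0,\tau)$, and the twist read off from the definition comes out equal to $+\tau$, not $-\tau$. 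So the argument as written does not establish the announced sign, and the bookkeeping you postpone is precisely where the content of (1) lies: one must first pin down the convention exactly --- in particular the role of the one-sided derivative $\dot\ell_p(0)$ when $\zeta$ is itself a bifurcation leaf, which is the one situation in which $K_0$ does \emph{not} flip under $\nu\mapsto-\nu$ and the two effects no longer cancel --- and then reconcile the result with the definition of $\Theta$. Until that is carried out, assertion (1) is not proved.
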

 
 \begin{proof}  
Le seul point non évident est le comportement du twist lorsque $\zeta$ est
déplacé. Il s'explique simplement par le fait que lorsque $\zeta$
franchit un point de bifurcation de $\ell$ alors $K_0$ est transformé
en $-K_0$, ce qui inverse le twist.
 \end{proof}

\begin{coro}\label{coro:class_surf}
Les classes d'isométrie de tores lisses possédant un champ de Killing et
non élémentaires sont en bijection avec le quotient de $\R_+^*\times
\R/\Z\times \mathcal C^{P}_0$ par la relation d'équivalence définie au
lemme~\ref{prop:bouge_marques}.
\end{coro}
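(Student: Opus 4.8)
Le plan est de voir cet énoncé comme une conséquence formelle de la classification des tores \emph{marqués} (proposition~\ref{prop:class_pointee}) et de la description de l'effet d'un changement de marquage (lemme~\ref{prop:bouge_marques}), au moyen d'un argument de descente \og marqué $\to$ non marqué \fg. D'abord, je remarquerais que tout tore non élémentaire $T$ admet un marquage : comme $T$ n'est pas élémentaire, il n'est pas plat, donc le flot de $K$ est périodique (preuve du théorème~\ref{theo:uni_tore}) et l'espace des feuilles de $K$ est isométrique à un cercle $\R/\msfm\Z$ ; le choix d'une orientation $\nu$ et d'un point $\zeta$ de ce cercle fournit un triplet $(T,\nu,\zeta)$. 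L'ensemble des marquages de $T$ est ainsi paramétré par $\{\pm\}\times(\R/\msfm\Z)$, et le passage d'un marquage à un autre s'obtient en composant les deux opérations élémentaires du lemme~\ref{prop:bouge_marques} : renverser $\nu$ et déplacer $\zeta$. Autrement dit, le groupe des changements de marquage agit sur les triplets, et le lemme~\ref{prop:bouge_marques} calcule explicitement l'action induite sur les paramètres $\Theta(T,\nu,\zeta)\in \R_+^*\times\R/\Z\times\mathcal C^P_0$.

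Le point crucial est ensuite que deux tores non marqués possédant un champ de Killing sont isométriques si et seulement si, pour des marquages arbitraires, leurs triplets associés sont équivalents au sens du lemme~\ref{prop:bouge_marques}. Pour le sens direct, j'utiliserais que $T$ étant non plat sa courbure est non constante (Gauss--Bonnet sur le tore), de sorte que le champ de Killing est unique à homothétie près et que toute isométrie $\phi:T\to T'$ préserve $K$ au signe près (lemme~\ref{lemm:isom_isomK}). Une telle $\phi$ préserve alors l'espace des feuilles de $K$ ainsi que le feuilletage transverse $\mcK^\pitchfork$, invariant par isométrie (voir \S\ref{subs:class_tores}) ; elle transporte donc tout marquage de $T$ sur un marquage de $T'$. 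Par suite $(T,\nu,\zeta)$ est isométrique, comme tore marqué, à $(T',\phi_*\nu,\phi(\zeta))$, lequel diffère du marquage choisi $(\nu',\zeta')$ de $T'$ par un changement de marquage : le lemme~\ref{prop:bouge_marques} fournit l'équivalence des paramètres. Réciproquement, si les paramètres $P_1$ et $P_2$ sont équivalents, alors $\Theta^{-1}(P_1)$ et $\Theta^{-1}(P_2)$ sont deux marquages d'un même tore (construit par $\Theta^{-1}$, proposition~\ref{prop:class_pointee}), donc isométriques comme tores non marqués.

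Je conclurais en combinant ces faits : l'application \og oubli du marquage \fg, transportée par la bijection $\Theta$, définit une surjection de $\R_+^*\times\R/\Z\times\mathcal C^P_0$ sur l'ensemble des classes d'isométrie de tores non élémentaires, dont les fibres sont exactement les classes de la relation d'équivalence du lemme~\ref{prop:bouge_marques} ; elle induit donc la bijection annoncée sur le quotient. La principale vérification, et le seul endroit demandant un peu de soin, est de s'assurer que la relation engendrée par les deux opérations du lemme~\ref{prop:bouge_marques} coïncide avec la relation \og provenir du même tore \fg : il faut pour cela constater que tout changement de marquage se décompose en ces deux opérations — immédiat puisque les marquages forment $\{\pm\}\times(\R/\msfm\Z)$ — et que ces opérations sont inversibles, donc qu'elles engendrent bien une relation d'équivalence.
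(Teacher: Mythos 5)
Votre démonstration est correcte et suit essentiellement la voie que l'article laisse implicite : le corollaire y est présenté comme conséquence immédiate de la proposition~\ref{prop:class_pointee} et du lemme~\ref{prop:bouge_marques}, et votre argument de descente (tout tore non élémentaire admet un marquage, les marquages forment un torseur sous $\{\pm\}\times(\R/\msfm\Z)$, toute isométrie transporte les marquages car elle préserve $K$ au signe près et le feuilletage $\mcK^\pitchfork$) est exactement la vérification attendue. Les points que vous explicitez — courbure non constante via Gauss--Bonnet pour invoquer le lemme~\ref{lemm:isom_isomK}, et le fait que les deux opérations du lemme~\ref{prop:bouge_marques} engendrent bien tous les changements de marquage — sont les bons.
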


\begin{rema}
On voit que contrairement au cas riemannien, le sens du twist importe.
Soit $\Gamma_{\epsilon}$ ($\epsilon=\pm 1$) le sous-groupe des isométries 
de $R= (\R^2, 2dxdy+\cos(x)dy^2)$ engendré par 
$\gamma_0(x,y)=(x,y+1)$ et $\gamma_\epsilon(x,y)=(x+2\pi, y+\epsilon/4)$.
Le lecteur pourra vérifier que $\Gamma_1$ et $\Gamma_{-1}$ sont 
distingués dans $\Isom(R)$ (voir proposition~\ref{prop:class_isom_rubans});
les tores associés ne sont pas isométriques.
\end{rema}

\subsection{Bouteilles de Klein  possédant un feuilletage de Killing}
\label{subs:bout_killing}

On s’intéresse maintenant aux bouteilles de Klein lorentziennes $B$ non plates
dont le revêtement universel possède un champ de Killing invariant $\widetilde
K$. Celui-ci est  préservé ou retourné par isométrie.  Par conséquent,
$\widetilde K$ ne passe pas forcément au quotient, mais il induit toujours un
\og feuilletage de Killing\fg\ $\mathcal K$.  On est donc en présence de deux
types de bouteilles : celles n'admettant pas de  champ de Killing mais seulement
un feuilletage $\mcK$  (type~1) et celles admettant un champ de Killing (type 2,
$\mcK$ orientable). De plus, dans les deux cas, la fonction $\langle \widetilde
K, \widetilde K\rangle$ passe au quotient et induit une fonction constante le
long des feuilles de $\mcK$. 

Le revêtement d'orientation d'une bouteille $B$ de type~1 ou~2 est un tore
lorentzien $T$ admettant un champ de Killing (voir preuve de la proposition
\ref{prop:tores_loc_model}) et une isométrie indirecte involutive~$\delta$ telle
que $T/\langle \delta\rangle=B$. Ainsi toutes les feuilles de $\mathcal K$ sont
fermées.  On reprend les conventions et les notations du~\S
\ref{subs:class_tores} concernant les tores marqués. On dira que $B$ est non
élémentaire si~$T$ l'est.

\begin{prop}
\label{prop:isom_indirecte}
Soit $(T,\nu,\zeta)$ un tore marqué non plat,  muni d'un champ de Killing~$K$
(non trivial) de masse $2\msfm$ et possédant une isométrie involutive indirecte
$\delta$ sans points fixes. Alors le paramètre de twist de $(T,\nu,\zeta)$ est
nul. De plus 
\begin{enumerate}
\item si $\delta_*K=-K$ (type~1), alors $\delta$ induit une rotation de longueur
$\msfm$ de l'espace des feuilles de~$K$ et, si $T$ n'est pas élémentaire, le
nombre  de bandes de type II de $T$ est congru à 2 modulo 4,
\item si $\delta_*K=K$ (type 2), alors $\delta$ induit une réflexion sur
  l'espace des feuilles de $K$, dont les deux points fixes  $\zeta_0$ et
  $\zeta_1$ sont au  milieu d'une bande de type II dès que 
$T$ est non élémentaire.
\end{enumerate}
\end{prop}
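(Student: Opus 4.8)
Il s'agit d'étudier une isométrie involutive indirecte $\delta$ sans point fixe d'un tore marqué non plat $(T,\nu,\zeta)$ muni d'un champ de Killing $K$. Le point de départ est que $\delta$ agit sur l'espace des feuilles $\mcE_T\simeq \R/2\msfm\Z$ de $K$ (cet espace est bien un cercle puisque le flot est périodique, voir la preuve du théorème~\ref{theo:uni_tore}). Comme $\delta$ préserve la structure riemannienne transverse au feuilletage de Killing (proposition~\ref{prop:struct_transv_killing}-(1)) et envoie $K$ sur $\pm K$ (lemme~\ref{lemm:isom_isomK}, $T$ étant non plat), l'involution induite $\bar\delta$ sur le cercle $\R/2\msfm\Z$ est une isométrie. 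Une involution isométrique d'un cercle est soit une rotation d'ordre~2 (translation de $\msfm$, sans point fixe), soit une réflexion (deux points fixes). Le premier plan est de montrer que ces deux cas correspondent exactement aux types $\delta_*K=-K$ et $\delta_*K=K$ respectivement.

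\textbf{Plan des étapes.} Je procéderais ainsi. D'abord, j'établis que $\bar\delta$ est bien une isométrie involutive du cercle $\mcE_T$. Ensuite je relie le signe $\delta_*K=\pm K$ au type de $\bar\delta$ : si $\delta_*K=K$, alors $\delta$ préserve l'orientation transverse définie par $K$ (via $i_K\nu$, remarque~\ref{rema:esp_feuilles_etale}) mais, étant \emph{indirecte}, elle renverse l'orientation \emph{longitudinale} de $\mcE_T$ donnée par $dx$ — donc $\bar\delta$ renverse $dx$ et c'est une réflexion ; symétriquement, si $\delta_*K=-K$, le raisonnement sur les deux orientations donne que $\bar\delta$ préserve $dx$ et est donc la rotation d'ordre~2. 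La nullité du twist s'obtient en remarquant que $\delta$ préserve le feuilletage transverse invariant $\mcK^\pitchfork$ construit au~\S\ref{subs:class_tores} (invariant par isométrie quand $T$ n'est pas plat) : une involution indirecte sans point fixe conjuguée à l'action sur $\mcE_T$ contraint le paramètre de premier retour à vérifier $\tau=-\tau$ dans $\R/t_0\Z$ via le lemme~\ref{prop:bouge_marques}-(1), combiné à l'absence de point fixe qui exclut $\tau = t_0/2$ ; donc $\tau=0$.

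\textbf{Le cœur combinatoire.} Les assertions les plus délicates sont celles portant sur les bandes de type~II. Dans le cas~(2), $\bar\delta$ est une réflexion du cercle, de points fixes $\zeta_0,\zeta_1$ ; il faut voir que ces points sont nécessairement \emph{au milieu} d'une bande de type~II. L'idée est que $\delta_*K=K$ force $\bar\delta$ à fixer non seulement les deux points mais aussi à préserver le type des feuilles autour d'eux ; or une feuille fixe par une réflexion préservant $K$ et située hors d'un milieu de bande de type~II conduirait, par symétrie locale, à une réflexion exceptionnelle incompatible avec la géométrie non plate générique — je montrerais que la seule position compatible pour un point fixe de $\bar\delta$ est un point de branchement symétrique de $\mcE_T$, c'est-à-dire précisément un milieu de bande de type~II (remarque~\ref{rema:feuille_milieu}), les milieux étant les feuilles privilégiées invariantes. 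Dans le cas~(1), $\bar\delta$ est la rotation de longueur $\msfm$ ; cette rotation d'ordre~2 envoie l'ensemble (fini, de cardinal pair $2k$) des milieux de bandes de type~II sur lui-même sans point fixe, les appariant par paires décalées de $\msfm$. La congruence $2k\equiv 2\pmod 4$ résulterait d'une analyse de parité : la rotation découpe le cercle en deux demi-cercles échangés, et le renversement $\delta_*K=-K$ impose une contrainte de signe sur le nombre de bandes de type~II dans chaque demi-cercle qui force leur nombre total à être congru à~2 modulo~4. \textbf{C'est cette dernière congruence qui constitue l'obstacle principal} : il faut suivre finement l'effet de $\delta$ sur l'alternance des deux feuilletages de lumière le long de $\mcK^\pitchfork$ (les bords de bandes de type~II appartenant à des feuilletages de lumière distincts), car c'est le retournement du champ combiné à l'indirection de $\delta$ qui transforme une parité naïve ($2k\equiv 0$) en la parité effective $2k\equiv 2\pmod 4$.
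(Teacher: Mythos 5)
Votre mise en place --- l'involution induite sur le cercle des feuilles est une rotation de longueur $\msfm$ si $\delta_*K=-K$ et une réflexion si $\delta_*K=K$, selon l'action de $\delta$ sur $i_K\nu$ --- est correcte et coïncide avec celle du texte. Mais trois étapes essentielles ne tiennent pas. D'abord la nullité du twist : votre dérivation de $\tau=-\tau$ via le lemme~\ref{prop:bouge_marques}-(1) ne s'applique pas au cas $\delta_*K=-K$ (l'orientation $\nu$ y est préservée), et dans les deux cas le déplacement du point base introduit des facteurs $(-1)^i$ que vous ne contrôlez pas ; surtout, l'\og absence de point fixe\fg\ ne peut pas exclure $\tau=t_0/2$, car dès que $\bar\delta$ agit sans feuille fixe sur l'espace des feuilles, $\delta$ est sans point fixe quel que soit le twist. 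La preuve du texte exhibe au contraire une feuille fermée $\delta$-invariante de $\mcK^\pitchfork$ : dans le cas (1) un argument de valeurs intermédiaires ($p$ et $\delta(p)$ tournent en sens opposés le long des feuilles de $K$) fournit $p\in\zeta$ avec $\delta(p)\in\ell_p$, d'où $\delta(\ell_p)=\ell_p$ et $\ell_p$ fermée coupant $\zeta$ une seule fois ; dans le cas (2), $\Phi_K^{t_0/2}\circ\delta$ est une réflexion fixant $\zeta_0$ et $\zeta_1$ point par point, et chaque feuille de $\mcK^\pitchfork$ est un arc de $\zeta_0$ à $\zeta_1$ recollé à son image.

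Ensuite, dans le cas (2), votre raison pour placer les feuilles fixes au milieu des bandes de type II (\og réflexion exceptionnelle incompatible avec la géométrie non plate générique\fg) est fausse telle quelle : des réflexions exceptionnelles existent dès que $f$ a une symétrie et l'énoncé vaut pour tout tore non plat ; de plus l'espace des feuilles est ici un cercle séparé, sans points de branchement. Le bon argument est que $\delta$, indirecte, échange les deux feuilletages de lumière tout en préservant $\mcK^\pitchfork$ : une feuille fixe située hors du lieu de bifurcation de $\mcK^\pitchfork$ aurait un voisinage où $\mcK^\pitchfork$ est porté par un seul feuilletage de lumière, qui devrait alors être préservé, contradiction (la même observation montre d'emblée qu'un tore non élémentaire admettant une telle $\delta$ possède des bandes de type II). Enfin la congruence modulo 4, que vous reconnaissez vous-même ne pas avoir établie, s'obtient par un comptage direct que votre découpage en demi-cercles n'atteint pas : le champ de Killing privilégié change de signe à chaque franchissement d'un milieu de bande de type II, donc celui associé au point base $\delta(\zeta)$ vaut $(-1)^kK_0$ si $k$ milieux séparent $\zeta$ de $\delta(\zeta)$, et il vaut aussi $-K_0$ puisque $\delta$ renverse $K$ en préservant $\nu$ ; d'où $k$ impair et $2k\equiv 2\pmod 4$.
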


Autrement dit, pour le type~1, la fonction $\bar f$ définie par le marquage
est $\msfm$-périodique;  si $\bar f$ s'annule, il existe $h\in N$
tel que   les milieux des bandes de
type II ont pour
coordonnées $x_1,\dots,x_{2h+1},\msfm+x_1,\dots, \msfm+x_{2h+1}$.
Pour le type~2, en choisissant $\zeta= \zeta_0$, on a $\bar f=\bar f^\vee$,
$\zeta_0$ et $\zeta_1$ ont pour coordonnées $0$ et $\msfm$, et si $\bar f$
s'annule, les milieux des bandes de type II ont pour coordonnées
$0,x_2,\dots, x_{k-1},\msfm,2\msfm-x_{k-1},\dots,2\msfm-x_2$ ($f$ est de
type (3b) ou (3c) dans la terminologie du~\S\ref{subs:quotients_mini}).

\begin{proof} Observons d'abord  que $\delta$ doit permuter les feuilletages de
lumière de $T$ tout en laissant invariant le feuilletage transverse
$\mcK^\pitchfork$. Par
suite, ou bien $T$ est élémentaire, ou bien~$T$ admet des bandes de type II. 
Dans ce dernier cas, on note $K_0$ le champ privilégié de $(T,\nu,\zeta)$ et
$K_1$ celui de $(T,\delta^*\nu,\delta(\zeta))$ ; comme $\delta^*\nu=\nu$ si et
seulement si $\delta_*K=-K$, on a donc toujours $K_0=-K_1$.

Si $\delta_*K=-K$, alors $\delta^*\nu=\nu$ et $\delta$ induit une isométrie
directe sur l'espace des feuilles, c'est-à-dire une rotation. Comme
$\delta^2=\Id$, il s'agit d'une rotation de longueur $d=0$ ou $\msfm$.  Si
$d=0$, alors $\delta$ induit sur chaque feuille de $K$ une isométrie
indirecte et donc fixe des points.  D'autre part, il existe $p\in\zeta$ tel
que $\delta(p)\in \ell_p$ (car $p$ et $\delta(p)$ varient en sens inverse
par rapport à $K$). On en déduit que $\ell_p$ est invariante par $\delta$,
fermée et qu'elle ne coupe qu'une fois $\zeta$ : le twist est nul. La
fonction $\bar f$ définie par le marquage est $\msfm$-périodique. Si $\bar
f$ s'annule, les coordonnées des milieux des bandes de type II s'écrivent
$x_1,\dots,x_{k},\msfm+x_1,\dots, \msfm+x_{k}$. L'invariance de
$\mcK^\pitchfork$ par $\delta$ (ou $K_1=(-1)^k K_0$) impose $k$ impair.

Si $\delta_*K=K$, alors $\delta$ ne préserve pas $\nu$ et induit une
isométrie indirecte sur l'espace des feuilles, c'est-à-dire une réflexion
laissant fixe deux feuilles $\zeta_0$ et $\zeta_1$ à distance $\msfm$.
Dans le cas non élémentaire, ces feuilles sont nécessairement au milieu de
bandes de types II car $\delta(\mcK^\pitchfork)=\mcK^\pitchfork$ (inversion
des feuilletages de
lumière par $\delta$).  Le long de $\zeta_0$ et $\zeta_1$, $\delta$ induit
une rotation de longueur $t_0/2$ et donc $\sig= \phi_K^{t_0/2}\circ \delta$
est une réflexion de $T$ fixant point par point $\zeta_0$ et $\zeta_1$.
Chaque feuille de $\mcK^\pitchfork$ est donc fermée, constituée d'un arc de $\zeta_0$ à
$\zeta_1$ et de son image par~$\sig$, ce qui force le twist à être nul.
\end{proof}

Il est maintenant très facile de décrire les bouteilles de Klein
élémentaires. On peut supposer que la masse de l'espace des feuilles
est égale à $1$.  Leur revêtement universel est donc isométrique
à $(\R^2, \pm dx^2 +f(x) dy^2)$ où $f$ est $2$-périodique et ne
s'annule pas. On note $\gamma_1$ et $\gamma_2$ les générateurs
usuels du groupe fondamental.  Si la bouteille est de type~1 alors $f$
est $1$-périodique et il existe $t_0>0$ tel que $\gamma_1(x,y)=(x+1,-y)$
et $\gamma_2(x,y)=(x,y+t_0)$. Si la bouteille et de type $2$ alors on peut
supposer que $f$ est paire et il existe $t_0>0$ tel que
$\gamma_1(x,y)=(-x,y+t_0/2)$ et $\gamma_2(x,y)=(x+2,y)$.

Soit $B$ une bouteille de type~1 non élémentaire et soit $\mcK$ son \og
feuilletage de Killing\fg. On déduit de la proposition~\ref{prop:isom_indirecte}
que l'espace des feuilles de $\mathcal K$ muni de la métrique riemannienne
habituelle est isométrique $\R/\msfm\Z$.  Pour réaliser cette identification, on
munit cet espace d'une orientation et d'une origine, notées encore $\nu$ et
$\zeta$, et on étudie les {\em bouteilles marquées} $(B,\nu,\zeta)$.  On dira
que le tore marqué $(T,\nu',\zeta')$ est un revêtement d'orientation marqué de
$(B,\nu,\zeta)$ si $T$ est un revêtement à $2$ feuillets de $B$ et que ce
revêtement envoie $(\nu',\zeta')$ sur $(\nu,\zeta)$.

Soit $\bar f$ une fonction lisse de $\R/\Z$ dans $\R$ qui s'annule. On appelle
\emph{marquage impair} de $\bar f$ la donnée d'un sous-ensemble fini de cardinal
impair de l'ensemble des milieux des intervalles de $\R/\Z\smallsetminus \bar
f^{-1}(0)$. On note $\mathcal C^{I}_0$ l'ensemble des fonctions lisses non
constantes de $\R/\Z$ dans~$\R$, s'annulant et munies d'un marquage impair.

\begin{prop}\label{prop:bout1_marquees} 
Il existe une bijection $\Xi_1$ entre les classes d'isométrie de bouteilles de
Klein de type~1, non élémentaires et marquées $(B,\nu,\zeta)$ et $\R_+^*\times
\mathcal C^{I}_0 $. Plus précisément $\Xi_1([B, \nu, \zeta])=(t_0,(\bar f,
\{x_1,\dots, x_{2h+1}\}))$  où
\begin{itemize}
\item
le réel $t_0$ est  la période du  champ privilégié d'un revêtement d'orientation
marqué de~$B$, 
 \item la fonction  $\bar f$ est induite par  $\langle \pm K_0,\pm K_0\rangle$
 sur $\R/\Z$ via le marquage et $\{x_1,\dots, x_{2h+1}\}$ est l'ensemble
 des coordonnées des milieux des bandes de type II. 
 \end{itemize}
\vspace{.2cm}
\end{prop}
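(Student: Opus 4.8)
The plan is to reduce the statement to the torus case (Proposition~\ref{prop:class_pointee}) by passing to the orientation double cover and exploiting the rigidity of the deck involution established in Proposition~\ref{prop:isom_indirecte}. Given a non-elementary type-1 marked bottle $(B,\nu,\zeta)$, its orientation double cover $\pi\colon T\to B$ is a non-elementary torus equipped with a Killing field $K$ and an involutive, fixed-point-free, indirect isometry $\delta$ with $\delta_*K=-K$ and $T/\langle\delta\rangle=B$. I would first lift $(\nu,\zeta)$ to a marking $(\nu',\zeta')$ of $T$ compatible with $\pi$. By Proposition~\ref{prop:isom_indirecte}-(1) the induced map $\bar\delta$ on the leaf space is the rotation of length $\msfm$, the twist of $(T,\nu',\zeta')$ vanishes, the function $\bar f_T$ is $\msfm$-periodic, and the $2k=2(2h+1)$ midpoints of the type~II bands split into $k=2h+1$ pairs $\{x_i,\msfm+x_i\}$ exchanged by $\bar\delta$ (the parity $k$ odd being forced by $\delta(\mcK^\pitchfork)=\mcK^\pitchfork$). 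Reading the data on the half-circle $\R/\msfm\Z\cong\R/\Z$, normalised so that the privileged field $K_0$ has mass~$1$ and period~$t_0$, then produces an element $(\bar f,\{x_1,\dots,x_{2h+1}\})\in\mathcal C^{I}_0$ with odd marking, and I set $\Xi_1([B,\nu,\zeta])=(t_0,(\bar f,\{x_i\}))$. Well-definedness follows because any isometry of $B$ preserving $(\nu,\zeta)$ lifts to an isometry of $T$ commuting with $\delta$ and preserving $(\nu',\zeta')$, leaving the torus invariants of Proposition~\ref{prop:class_pointee} unchanged.

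For surjectivity, given $(t_0,(\bar f,\{x_1,\dots,x_{2h+1}\}))\in\R_+^*\times\mathcal C^{I}_0$, I would first build the orientation-cover datum: a mass-one marked torus whose transverse function is the half-turn-invariant extension of $\bar f$ and whose $2(2h+1)$ type~II midpoints are placed symmetrically about the half-turn, giving an even marking in $\mathcal C^{P}_0$. By the surjectivity part of Proposition~\ref{prop:class_pointee}, applied with vanishing twist, this doubled datum is realised by a marked torus $(T,\nu',\zeta')$, uniformised by a saturated open set $U\subset E^u_f$ with holonomy lattice $\Lambda_0=\langle\Phi_{K^u}^{t_0},\Phi\rangle$, $\Phi\in\Isom^{+,K}(E^u)$. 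It then remains to manufacture the involution. Since $f$ is $\msfm$-periodic, the translation by $\msfm$ lies in $\Isom(f)$, and I would lift it to an element $\delta_0\in\Isom(E^u_f)$ that inverts $K^u$ and shifts the transverse coordinate by $\msfm$ (Theorem~\ref{theo:groupe_isom_Eu}). Correcting $\delta_0$ by the flow of $K^u$, I would arrange that $\delta_0$ normalises $\Lambda_0$, preserves $U$, satisfies $\delta_0^2\in\Lambda_0$, and acts without fixed points; the induced map $\delta$ on $T$ is then an indirect involution with $\delta_*K_0=-K_0$, and $B=T/\langle\delta\rangle$ is a type-1 bottle whose image under $\Xi_1$ is the prescribed datum.

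Injectivity would be obtained by rigidity. If two type-1 bottles have the same image under $\Xi_1$, then their doubled data coincide, so by the injectivity of $\Theta$ their marked orientation covers are isometric via some $\Psi$ respecting $(\nu',\zeta')$. Both deck involutions $\delta,\delta''$ induce the length-$\msfm$ rotation of the leaf space, invert the Killing field, and have vanishing twist; by Proposition~\ref{prop:isom_indirecte} together with the description of $\Isom(E^u_f)$ furnished by Theorem~\ref{theo:groupe_isom_Eu} and Proposition~\ref{prop:class_isom_rubans}, such an involution is unique modulo the flow and conjugation. Hence $\Psi$ can be adjusted by an element of $\Isom^0(E^u_f)$ so as to conjugate $\delta$ to $\delta''$, whence $\Psi$ descends to an isometry $B\to B''$ matching the markings.

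The hard part will be the construction and rigidity of the deck involution $\delta$ on the double cover: producing an explicit fixed-point-free involutive indirect isometry that induces the prescribed $\msfm$-rotation, inverts $K$, and preserves the broken transverse foliation $\mcK^\pitchfork$, and then proving it unique up to conjugacy inside the normaliser of the holonomy lattice $\Lambda_0$. The parity bookkeeping, namely $k=2h+1$ odd as dictated by the invariance of $\mcK^\pitchfork$ in Proposition~\ref{prop:isom_indirecte}, must be tracked carefully so that $\delta$ acts freely and the odd marking of $\mathcal C^{I}_0$ corresponds exactly to the symmetric even marking of the cover in $\mathcal C^{P}_0$.
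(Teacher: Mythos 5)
Your proposal is correct and follows essentially the same route as the paper: reduce to the marked orientation double cover via Proposition~\ref{prop:isom_indirecte}, realise the doubled torus datum in $E^u_f$ as in Proposition~\ref{prop:class_pointee}, construct the deck involution as a lift of the half-period translation of $\Isom(f)$ that inverts $K^u$ (the paper makes this concrete as the unique indirect, field-reversing isometry $\Psi$ carrying the first half of the broken geodesic $c:[0,2]\to E^u$ onto the second, with $\Psi^2$ a holonomy generator), and conclude uniqueness because two such involutions differ by an element $\Phi_K^t$ of the flow and are therefore conjugate by $\Phi_K^{t/2}$. The points you flag as needing arrangement (freeness, normalisation of $\Lambda_0$, $\delta_0^2\in\Lambda_0$) come for free in that construction, since the induced action on the leaf space is the fixed-point-free rotation of length $\msfm$.
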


\begin{proof} 
D'après la proposition~\ref{prop:isom_indirecte}, si
$\Xi_1[B,\nu,\zeta]=(t_0,(\bar f, \{x_1,\dots, x_{2h+1}\}))$, alors tout 
revêtement d'orientation marqué de $(B,\nu,\zeta)$ appartient à
$\Theta^{-1}(t_0,0,(\hat f, A))$ où où $\hat f$ est la fonction sur $\R/2\Z$
induite par $f$ et $A= \{x_1,\dots, x_{2h+1},1+x_1, \dots,1+x_{2h+1} \}$ (on
prend $\Theta$ définie à partir des champs privilégiés de masse $2$, voir
remarque \ref{rema:masse=1}).  Pour montrer la proposition, il suffit d'établir
qu'un tel tore est le revêtement d'une bouteille de Klein de type~1, unique à
isométrie près. On se place à nouveau dans  $E^u_f$ transversalement orienté et
pointé en $\zeta^u$ de coordonnées $0$ dans les rubans qui le contiennent et on
choisit un champ de Killing $K^u$. 

Soit $c:[0,2]\to E^u$ une géodésique de lumière brisée issue de $q\in \zeta^u$,
se projetant sur une géodésique unitaire directe de $\mathcal E^u$ et bifurquant
aux temps $x_1, \dots, x_{2h+1},1+x_1, \dots,1+x_{2h+1}$. La fonction $f$ étant
$1$-périodique, on voit, par un raisonnement proche de celui utilisé pour
montrer la proposition~\ref{prop:class_pointee}, qu'il existe une unique
isométrie $\Psi$ ne préservant ni $K^u$, ni l'orientation qui  envoie $c([0,1])$
sur $c([1,2])$.  Soit $\gamma$ le chemin  obtenu en mettant bout à bout les
translatés de $c$ sous l'action du groupe engendré par $\Psi^2$ et soit $U$ le
saturé de $\gamma$ sous l'action du flot de $K^u$. Le quotient de
$(U,\nu^u,\zeta^u)$ par le groupe engendré par $\Phi^{t_0}_{K^u}$ et $\Psi^2$
est un tore appartenant à  $\Theta^{-1}(t_0,0,(\smash{\hat f},A))$. 
L'application $\Psi$ induit sur $T$ une involution isométrique indirecte 
$\delta$ sans points fixes et  ne préservant pas le champ. 

Soient $\delta$ et $\delta'$  deux isométries involutives indirectes sans
points fixes  de $(T,\nu',\zeta')$ envoyant~$K$ sur $-K$. Il est clair que
$\delta=\delta'$ si et seulement si elles coïncident en un point. Par conséquent
il existe $t\in \R$ tel que $\delta'=\Phi_K^t\circ \delta$ et donc  $\delta$ et
$\delta'$ sont conjuguées par \smash{$\Phi^{t/2}_K$}. Ainsi
$(T,\nu',\zeta')/\langle \delta\rangle$ et $(T,\nu',\zeta')/\langle
\delta'\rangle$ sont isométriques.
\end{proof}

La classification est déduite de la proposition~\ref{prop:bout1_marquees} par le
même raisonnement que pour les tores.

\begin{lemm}\label{prop:bouge_marques_bout1}
 Soient $(t_0, (\bar f,  \{x_1,\dots,x_{2h+1}\}))$ les données associées à la bouteille de type~1 transversalement orientée et pointée $(B, \nu, \zeta)$. 
\begin{enumerate}
\item Lorsque $\nu$ est remplacé par $-\nu$ alors  
  $\bar f$ est remplacé par $\bar f^\vee$ et  
  $\{x_1,\dots,x_{2h+1}\}$ devient $\{1 -x_{2h+1},\dots,1-x_1\}$.
\item Lorsque $\zeta$ est remplacé par $\zeta'$ de coordonnée $y$, 
  $\bar f$ est remplacé par $x\mapsto \bar f(x+y)$ et
  $\{x_1,\dots,x_{2h+1}\}$ devient $\{x_1-y,\dots,x_{2h+1}-y\}$.
\end{enumerate}
 \end{lemm}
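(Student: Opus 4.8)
The plan is to deduce this statement from the analogous toral result, lemme~\ref{prop:bouge_marques}, since both invariants $\bar f$ and $\{x_1,\dots,x_{2h+1}\}$ are read off purely from the leaf space of the Killing foliation $\mcK$ equipped with its marking $(\nu,\zeta)$, exactly as in the toral case. Recall that for a type~1 bottle this leaf space is isometric to a circle and that, by la proposition~\ref{prop:isom_indirecte}, the twist of such a bottle (or of any of its marked orientation double covers) is identically zero. Consequently there is no twist parameter to follow, and the $\pm K_0$ ambiguity is harmless because $\bar f$ is induced by $\langle \pm K_0,\pm K_0\rangle$ and is therefore even in $K_0$. Thus the whole content reduces to tracking how $\bar f$ and the set of midpoints of the type~II bands transform under the two elementary operations on the oriented pointed leaf space, identified via the marking with $\R/\Z$; moreover $t_0$, being the period of the privileged field of a marked orientation double cover, is determined independently of $(\nu,\zeta)$ by la proposition~\ref{prop:bout1_marquees}, hence is invariant.

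First I would treat the change $\nu\mapsto-\nu$. Reversing the transverse orientation while keeping the origin $\zeta$ fixed replaces the coordinate $x$ on $\R/\Z$ by $-x$. Hence the function $\bar f(x)$ becomes $\bar f(-x)=\bar f^\vee(x)$, and the midpoints at (cyclically ordered) coordinates $x_1,\dots,x_{2h+1}$ are sent to $-x_1,\dots,-x_{2h+1}$, that is, to the set $\{1-x_{2h+1},\dots,1-x_1\}$ after reordering. The cardinality $2h+1$ is preserved, so the marking remains odd and stays in $\mathcal C^I_0$, which confirms assertion~(1).

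Next I would treat the change $\zeta\mapsto\zeta'$, where $\zeta'$ has coordinate $y$ for the marking $(\nu,\zeta)$. Taking $\zeta'$ as the new origin amounts to shifting the coordinate by $-y$, so $\bar f$ is replaced by $x\mapsto\bar f(x+y)$ and each midpoint $x_i$ by $x_i-y$, giving assertion~(2). Here lies the only place where the argument genuinely differs from lemme~\ref{prop:bouge_marques}: for a torus, crossing a bifurcation point of the transverse leaf $\ell$ flips the privileged field $K_0$ into $-K_0$ and thereby inverts the twist $\tau$, producing the sign $(-1)^i$; on a type~1 Klein bottle this sign never appears because $\tau=0$ throughout. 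The main (and essentially only) substantive input is thus the vanishing of the twist provided by la proposition~\ref{prop:isom_indirecte}, after which everything reduces to the change-of-coordinate computation already carried out for tori, and the verification that $\bar f$ is insensitive to the flip $K_0\mapsto-K_0$.
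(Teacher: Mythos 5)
Your proof is correct and follows exactly the route the paper intends: the paper omits the proof of this lemma entirely, remarking only that the classification is obtained \og par le même raisonnement que pour les tores\fg, i.e.\ by the coordinate computation on the marked leaf space that you carry out. Your observation that the only genuine difference from lemme~\ref{prop:bouge_marques} is the absence of a twist parameter (which vanishes by la proposition~\ref{prop:isom_indirecte}, so no sign $(-1)^i$ appears when $\zeta$ crosses a bifurcation) is precisely the point the paper leaves implicit.
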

 \begin{coro}\label{coro:class_bout1}
  Les classes d'isométrie de bouteilles de type~1 non élémentaires 
  sont en bijection avec le quotient de $\R_+^*\times \mathcal C^{I}_0$ 
  par l'action de $\R/\Z\rtimes \Z/2\Z$ définie à la
proposition~\ref{prop:bouge_marques_bout1}.
 \end{coro}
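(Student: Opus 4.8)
The plan is to deduce the classification (Corollary~\ref{coro:class_bout1}) from the marked classification of Proposition~\ref{prop:bout1_marquees} by quotienting out the choices involved in the marking, exactly as was done for tori in passing from Proposition~\ref{prop:class_pointee} to Corollary~\ref{coro:class_surf}. The bijection $\Xi_1$ identifies classes of marked bottles $[B,\nu,\zeta]$ with $\R_+^*\times\mcC^I_0$. Two unmarked bottles are isometric if and only if some choice of markings makes the marked triples equivalent. So the isometry classes of type~1 bottles correspond to orbits of the marking data under all admissible changes of $(\nu,\zeta)$, and the content of the corollary is precisely that these changes generate the stated action of $\R/\Z\rtimes\Z/2\Z$.

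First I would spell out the two generators of the action. Changing the origin $\zeta$ to $\zeta'$ of coordinate $y\in\R/\Z$ acts by $\bar f\mapsto \bar f(\cdot+y)$ and $\{x_1,\dots,x_{2h+1}\}\mapsto\{x_i-y\}$ (Lemma~\ref{prop:bouge_marques_bout1}-(2)); this is the $\R/\Z$-translation. Reversing the transverse orientation $\nu\mapsto-\nu$ acts by $\bar f\mapsto\bar f^\vee$ and $\{x_i\}\mapsto\{1-x_{2h+1},\dots,1-x_1\}$ (Lemma~\ref{prop:bouge_marques_bout1}-(1)); this is the $\Z/2\Z$-factor. I would check that these two operations satisfy the relations of the semidirect product $\R/\Z\rtimes\Z/2\Z$ (the involution conjugates translation by $y$ to translation by $-y$, which is immediate since $(\bar f^\vee)(\cdot+y)=(\bar f(\cdot-y))^\vee$ after the coordinate flip $x\mapsto 1-x$), so that the group acting is indeed $\R/\Z\rtimes\Z/2\Z$ and not something larger.

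The crux is to verify that these two moves generate \emph{all} the freedom: that two marked bottles $(B,\nu,\zeta)$ and $(B',\nu',\zeta')$ represent isometric unmarked bottles if and only if their $\Xi_1$-data lie in the same orbit. One direction is clear: an isometry $B\to B'$ transports $(\nu,\zeta)$ to some marking $(\nu'',\zeta'')$ of $B'$, and by Proposition~\ref{prop:bout1_marquees} the data of $(B',\nu'',\zeta'')$ equals that of $(B,\nu,\zeta)$; then $(\nu'',\zeta'')$ and $(\nu',\zeta')$ differ by a change of origin and possibly an orientation reversal, whose effect on the data is exactly the group action. Conversely, if the data agree up to the action, I apply the appropriate generator to bring $(\nu',\zeta')$ into a marking with the \emph{same} $\Xi_1$-data as $(B,\nu,\zeta)$, and then injectivity of $\Xi_1$ gives a marked isometry, in particular an unmarked one. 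The main obstacle is bookkeeping the marking parity: $\mcC^I_0$ carries an \emph{odd} marking, and I must confirm that the orientation-reversal formula $\{x_i\}\mapsto\{1-x_{2h+1},\dots,1-x_1\}$ preserves oddness of the cardinal (it does, since it is a bijection of the marked set) and that no further identification is forced by the structure of type~1 bottles — this is guaranteed because, by Proposition~\ref{prop:isom_indirecte}-(1), every orientation-reversing involution $\delta$ realizing a type~1 bottle is unique up to $\Phi_K^{t/2}$, so it contributes no extra symmetry beyond the $(\nu,\zeta)$-changes already accounted for. Assembling these observations yields the claimed bijection.
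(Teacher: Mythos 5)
Your proposal is correct and follows essentially the same route as the paper, which deduces the corollary from the marked classification $\Xi_1$ of Proposition~\ref{prop:bout1_marquees} by quotienting out the changes of marking $(\nu,\zeta)$ recorded in Lemme~\ref{prop:bouge_marques_bout1}, exactly as in the passage from Proposition~\ref{prop:class_pointee} to Corollaire~\ref{coro:class_surf} for tori. The only slight imprecision is attributing the uniqueness of the involution $\delta$ up to conjugation by the flow to Proposition~\ref{prop:isom_indirecte}; that fact is actually established at the end of the proof of Proposition~\ref{prop:bout1_marquees}, but this does not affect the argument.
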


 On regarde maintenant les bouteilles de type 2 non élémentaires.   D'après la
proposition~\ref{prop:isom_indirecte}, l'espace des feuilles de $\mathcal K$ 
d'une  telle bouteille est isométrique à un intervalle $[0,\msfm]$. Pour
réaliser cette identification, il suffit de choisir une des deux feuilles
courtes de $K$ que l'on note  $\zeta_0$. On travaille donc avec les paires  $(B,
\zeta_0)$. Il existe à nouveau un champ de Killing privilégié $K_0$ tel
que $\msfm=1$ et dont les orbites au bord de  la bande associée à
$\zeta_0$ sont dirigées vers un sommet de type selle. On dira que
$(T,\nu,\zeta)$ est un revêtement d'orientation marqué de $(B,\zeta_0)$ si c'est
un revêtement à deux feuillets envoyant~$\zeta$ sur $\zeta_0$.
 
 Soit $\bar f$ une fonction (lisse) \emph{paire} de $\R/2\Z$ dans $\R$
s'annulant. On appelle \emph{marquage symétrique} de $\bar f$ la donnée d'un
sous-ensemble fini de l'ensemble des milieux des intervalles de
$\R/2\Z\smallsetminus \bar f^{-1}(0)$ qui est invariant par la symétrie
$x\mapsto -x$ et qui contient $0$ et $1$. On note $\mathcal C^{S}_0$ l'ensemble
des fonctions lisses de $\R/2\Z$ dans $\R$ qui sont paires, s'annulent,
vérifient $\bar f(\bar 0)\bar f(\bar 1)\neq 0$  et sont munies d'un marquage
symétrique. 
 
 \begin{prop}\label{prop:bout2_marquees}
 Il existe une bijection $\Xi_2$ entre les classes d'isométrie de bouteilles de
Klein de type 2 marquées, non élémentaires  et $\R_+^*\times  C^{S}_0$.

Plus précisément $\Xi_2(B,  \zeta_0)=(t_0,(\bar f, \{0,x_2, \dots, x_{j-1},1
\}))$ où
 \begin{itemize}
  \item $t_0>0$ est la période du flot de $K_0$, 
  \item   $\bar f$ est la fonction (paire) induite par  $\langle  K_0,
K_0\rangle$ sur le revêtement d'orientation de $B$ et 
$\{0,x_2,\dots,x_{j-1},1\}$ sont les coordonnées des  milieux des bandes de type
II de $B$.
 \end{itemize}
\end{prop}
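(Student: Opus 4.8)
The plan is to follow the scheme of Proposition~\ref{prop:bout1_marquees}, the evenness of $\bar f$ now playing the role that the periodicity of $f$ played for type~1 bottles. Throughout I write $f$ for the even $2$-periodic lift of $\bar f$ to $\R$ and, as allowed by Remark~\ref{rema:masse=1}, I take $\Theta$ defined from privileged fields of mass~$2$. The first step is to reduce the statement to the double-cover picture: by Proposition~\ref{prop:isom_indirecte}-(2), if $\Xi_2([B,\zeta_0])$ is the prescribed triple, then any marked orientation cover $(T,\nu,\zeta_0)$ of $B$ has vanishing twist and lies in $\Theta^{-1}(t_0,0,(\hat f,A))$, where $\hat f$ is induced by $f$ on $\R/2\Z$ and $A=\{0,x_2,\dots,x_{j-1},1,2-x_{j-1},\dots,2-x_2\}$ is the symmetric marking unfolded over a full period. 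It then suffices to prove that such a torus is the orientation cover of a unique type~2 Klein bottle, and conversely that every such cover arises; well-definedness of $\Xi_2$ is immediate, since an isometry of marked bottles transports the privileged field, the distinguished leaf $\zeta_0$ and the whole combinatorial data.

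For surjectivity I would first realize $E^u_f$ transversally oriented and pointed at a leaf $\zeta^u$ of transverse coordinate~$0$, together with a Killing field $K^u$ inducing $f$ normalized so that its flow has period $t_0$ on the cover to be constructed. The key geometric input is the exceptional reflection coming from the symmetry of $f$: since $f$ is even, the reflection $x\mapsto -x$ belongs to $\Isom(f)$, and by the splitting of~\eqref{equa:se_isom_gen} (Proposition~\ref{prop:isom_gen}) it lifts to an isometry $\sig$ of $E^u_f$, which I normalize to fix $\zeta^u$ pointwise. In adapted coordinates $\sig$ has the form $(x,y)\mapsto(-x,\,y+\int_0^x 2/f)$ — an indirect involution that \emph{preserves} $K^u$ (this is precisely what distinguishes type~2 from type~1) and fixes, besides $\zeta^u$, the leaf $\zeta^u_1$ of coordinate~$1$ about which $f$ is also symmetric. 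Here the point $0$ being in the marking guarantees $f(0)\neq 0$, so that $\zeta^u$ is the middle of a band of type~II (Remark~\ref{rema:feuille_milieu}) and $\sig$ is genuinely defined there.

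Next I would assemble the holonomy. The product $g=\sig_1\sig$ of the two exceptional reflections is a transverse translation by~$2$ in the leaf space, lying in $\Isom^{+,K}(E^u_f)$; saturating by the flow a maximal broken lightlike geodesic of $\mcE^u$ issued from $\zeta^u$ and bifurcating at the points of $A$, exactly as in the proof of Proposition~\ref{prop:class_pointee}, produces a torus $T$ in $\Theta^{-1}(t_0,0,(\hat f,A))$ as the quotient by $\langle g,\Phi_{K^u}^{t_0}\rangle$. I then set $\de=\Phi_{K^u}^{t_0/2}\circ\sig$ and check that $\de$ preserves $K^u$, is indirect, satisfies $\de^2=\Phi_{K^u}^{t_0}$ and $\de\,g\,\de^{-1}=g^{-1}$, and is fixed-point free (on $\zeta^u$ it acts by the nontrivial translation $\Phi_{K^u}^{t_0/2}$, and elsewhere it swaps distinct leaves). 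Thus $\de$ descends to a fixed-point-free indirect involution of $T$ preserving the field, and $B=T/\langle\de\rangle$ is a type~2 Klein bottle whose invariants, read off from the developing geodesic, are the prescribed data.

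For injectivity, given two marked bottles with the same $\Xi_2$-image I would pass to orientation covers $T,T'$, which share the same $\Theta$-image and are thus isometric by Proposition~\ref{prop:class_pointee}; assuming $T=T'$, the remaining task is to show the deck involution is unique. Any fixed-point-free indirect $K$-preserving involution inducing the leaf-space reflection fixing $\zeta_0$ lifts to $\Phi_{K^u}^{s}\circ\sig$; the involution relation forces $2s\in t_0\Z$, while $s\in t_0\Z$ is excluded because $\sig$ fixes $\zeta^u$ pointwise and would create fixed points on $T$, leaving only $s\equiv t_0/2$, hence $\de$. I expect the main obstacle to be the careful verification that $\sig$ is \emph{globally} defined on $E^u_f$ and preserves $K^u$ there, together with checking that the descended involution yields precisely the marked invariants of the statement (the analogue, for the symmetric marking, of the bookkeeping carried out in Proposition~\ref{prop:class_pointee}); the uniqueness of $\de$ and the reduction to the torus classification are by comparison routine, being formal consequences of Propositions~\ref{prop:class_pointee} and~\ref{prop:class_isom_rubans}.
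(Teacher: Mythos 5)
Your proof is correct and follows essentially the same route as the paper: reduction to the orientation cover via Proposition~\ref{prop:isom_indirecte}, construction of the torus in $E^u_f$ from a broken lightlike geodesic invariant under the exceptional reflection, with holonomy generated by $\Phi^{t_0}_{K^u}$ and a transverse translation (your $g=\sig_1\sig$ coincides with the paper's unique element $\Phi\in\Isom^{+,K}(E^u)$ sending $c(-1)$ to $c(1)$), and the deck involution $\Phi^{t_0/2}_{K^u}\circ\sig$ together with its uniqueness. The only slip is the claim that $\sig$ fixes the leaf $\zeta^u_1$ of coordinate $1$ in $E^u_f$ — it actually sends it to a leaf of coordinate $-1$, and only the induced reflection on $\R/2\Z$ fixes $\bar 1$ — but this is harmless since where it matters you use the separate reflection $\sig_1$ about the leaf through $c(1)$.
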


\begin{proof} 
D'après la proposition~\ref{prop:isom_indirecte}, si
$\Xi_2[B,\zeta_0]=(t_0,(\bar f, \{0,x_2,\dots,x_{j-1}, 1\}))$, alors tout
revêtement d'orientation marqué de $(B,\zeta_0)$ appartient à
$\Theta^{-1}(t_0,0,(\bar f, A))$ avec
$A= \{0, x_2,\dots,1,\dots, 2-x_2\}.$
Pour montrer la proposition, il suffit de montrer qu'un tel tore  est le
revêtement d'orientation marqué  d'une unique bouteille de Klein de type 2
marquée. On se place dans l'espace $E^u$ transversalement pointé en $\zeta^u$ de
coordonnées $0$ dans les rubans qui le contiennent (on rappelle que $\zeta$ est
au milieu d'une bande). 

On choisit $q\in \zeta^u$ et un champ de Killing $K^u$.  On marque les deux
côtés du carré contenant $\zeta^u$ qui contiennent le sommet de type
source.  Soit $\Psi$ la réflexion non générique fixant $\zeta^u$. On
remarque que $\Psi$ permute les 2 côtés marqués précédemment.  Il existe
une unique géodésique de lumière brisée $c:[-1,1]\to E^u$ se projetant
sur une courbe unitaire directe de $\mathcal E^u$, vérifiant $c(0)=q$,
intersectant les côtés marqués et bifurquant aux temps $-x_{j-1}, \dots,
0,\dots,x_{j-1}$.  Elle est invariante par $\Psi$. Soit $\Phi$ l'unique
élément de $\Isom^{+,K}(E^u)$ envoyant $c(-1)$ sur $c(1)$.

On note $\gamma$ le chemin obtenu en mettant bout à bout les translatés de $c$
sous l'action de~$\Phi$, il bifurque forcément en $c(\pm 1)$. Soit $U$ le saturé
de $\gamma$ par $\Phi_{K^u}$. Le quotient de $U$ par le groupe engendré par
$\Phi^{t_0}_{K^u}$ et $\Phi$ est un tore appartenant à 
$\Theta^{-1}(t_0,0,(\bar f, A))$. L'application
$\Phi^{t_0/2}_{K^u}\circ \Psi$ induit sur ce tore une isométrie involutive (son
carré vaut car $\Phi^{t_0}_{K^u}$), indirecte, sans points fixes et préservant
$K$.

Soit $(T,\nu',\zeta')$ un revêtement d'orientation marqué d'une bouteille
$(B,\zeta_0)$ de type 2  et soit~$\delta$ telle que $B=T/\langle \delta\rangle$.
L'application $\delta$ est l'unique isométrie vérifiant $\delta(\zeta')=\zeta'$,
$\delta_*K=K$ et induisant une symétrie sur l'espace des feuilles de $K$. Par
conséquent, $(B,\zeta_0)$ est uniquement déterminée.
\end{proof}

La classification est déduite de la proposition~\ref{prop:bout2_marquees} 
comme  précédemment.

\begin{lemm}\label{prop:bouge_marques_bout2}
Soient $(t_0, (\bar f, \{0,\dots,1\}))$ les données associées à la bouteille de
type 2 marquée $(B, \zeta_0)$.
Lorsque $ \zeta_0$ est remplacé par la feuille $\zeta_1$ de coordonnée $1$, 
$\bar f$ est remplacée par $x\mapsto \bar f(1-x)$ et 
$\{0,x_2,\dots,x_{j-1},1\}$ devient $\{0,1 - x_{j-1},\dots,1 - x_{2},1\}$.
 \end{lemm}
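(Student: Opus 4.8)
Le plan est de se ramener au revêtement d'orientation et d'exploiter le fait que, pour une bouteille de type~2, la fonction $\bar f$ est paire et le marquage symétrique, ce qui fait s'effondrer toute la comptabilité des signes. Je commencerais par rappeler la structure fournie par la proposition~\ref{prop:isom_indirecte} (cas de type~2) et par la définition de $\Xi_2$ (proposition~\ref{prop:bout2_marquees}) : les données $(t_0,(\bar f,\{0,x_2,\dots,x_{j-1},1\}))$ se lisent sur un revêtement d'orientation marqué $(T,\nu,\zeta)$, d'involution de revêtement $\delta$ vérifiant $\delta_*K=K$, qui induit sur l'espace des feuilles $\mcE\simeq \R/2\Z$ la réflexion $x\mapsto -x$ fixant les feuilles courtes $\zeta_0$ et $\zeta_1$, de coordonnées respectives $0$ et $1$. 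Le champ privilégié $K_0$ — donc $\bar f$ et le marquage complet $A=\{0,x_2,\dots,1,\dots,2-x_2\}$ — est déterminé par la condition de selle au bord de la bande associée à $\zeta_0$.

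Le point clé est d'identifier le changement de marquage $\zeta_0\rightsquigarrow\zeta_1$ à la réflexion de coordonnée $x\mapsto 1-x$ de $\R/2\Z$. Cette réflexion envoie $\zeta_1$ (coordonnée $1$) sur l'origine et $\zeta_0$ sur la coordonnée $1$, et renverse l'orientation de l'intervalle $[0,1]$ : ce renversement est imposé par le fait que l'extrémité servant d'origine a changé. De plus, la symétrie de $B$ réalisée par $\delta$ échange les deux bandes associées à $\zeta_0$ et $\zeta_1$ en préservant les sommets de type selle ; la condition de selle en $\zeta_0$ est donc transportée sur la condition de selle en $\zeta_1$, de sorte que cette réflexion calcule bien les données associées à $(B,\zeta_1)$. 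Je soulignerais que le signe résiduel de $K_0$ et le choix d'orientation sont ici sans importance : $\bar f$ étant paire et le marquage symétrique, les renversements $x\mapsto -x$ et $K_0\mapsto -K_0$ laissent invariantes les données extraites (comparer au lemme~\ref{prop:bouge_marques}-(1)).

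Il resterait alors un calcul direct. Sous $x\mapsto 1-x$, la fonction devient $\bar f'(x)=\bar f(1-x)$ ; les milieux des bandes de type~II situés en $x_i$ sont envoyés en $1-x_i$, tandis que $0$ et $1$ sont échangés, si bien que $\{0,x_2,\dots,x_{j-1},1\}$ devient $\{0,1-x_{j-1},\dots,1-x_2,1\}$ ; enfin $t_0$, invariant d'isométrie, est inchangé. C'est la transformation annoncée. De façon équivalente, on peut appliquer le lemme~\ref{prop:bouge_marques}-(2) au revêtement $(T,\nu,\zeta)$ avec $y=1$ (le twist étant nul), puis simplifier au moyen de la parité $\bar f(\cdot+1)=\bar f(1-\cdot)$ et de la symétrie du marquage complet $A$ : les deux voies donnent la même réponse, précisément parce que tout y est symétrique.

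Le principal obstacle sera de justifier proprement le renversement d'orientation et le fait que l'on calcule bien les données relatives à $\zeta_1$ — et non une simple translation du marquage —, c'est-à-dire l'articulation entre la convention de point-base sur l'intervalle $[0,1]$ et la condition de selle définissant le champ privilégié. Heureusement, la parité de $\bar f$ et la symétrie du marquage font coïncider translation et réflexion au niveau des données extraites, ce qui explique la forme simple de l'énoncé et ramène la preuve à la vérification formelle ci-dessus.
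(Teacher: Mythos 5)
Votre preuve est correcte et détaille ce que l'article laisse implicite : le lemme~\ref{prop:bouge_marques_bout2} y est énoncé sans démonstration, la classification étant \og déduite comme précédemment\fg\ par analogie avec le lemme~\ref{prop:bouge_marques}. Votre réduction au revêtement d'orientation marqué et l'identification du changement de point-base avec la réflexion $x\mapsto 1-x$ de $[0,1]$ (qui, par parité et $2$-périodicité de $\bar f$, coïncide avec la translation $x\mapsto x+1$) est exactement l'argument attendu, et l'absence de twist (proposition~\ref{prop:isom_indirecte}) fait qu'il n'y a effectivement rien d'autre à suivre que $\bar f$, le marquage et $t_0$.

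Une seule imprécision, sans conséquence : l'involution $\delta$ induit la réflexion $x\mapsto -x$ sur l'espace des feuilles du revêtement et fixe donc chacune des deux feuilles courtes $\zeta_0$ et $\zeta_1$ ; elle n'échange pas leurs bandes. S'il existait toujours une isométrie les échangeant, les données seraient invariantes par le changement de marquage et le quotient par $\Z/2\Z$ du corollaire~\ref{coro:class_bout2} serait superflu. Votre conclusion n'en dépend toutefois pas, puisque vous observez correctement que le remplacement éventuel de $K_0$ par $-K_0$ (imposé par la condition de selle relative à $\zeta_1$) ne change ni $\langle K_0,K_0\rangle$ ni la période $t_0$.
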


\begin{coro}\label{coro:class_bout2}
  Les classes d'isométrie de bouteilles de type 2  non élémentaires 
   sont en bijection avec le quotient de $\R_+^*\times\mathcal C^{S}_0$ 
   par l'action de $\Z/2\Z$ définie à la proposition
   \ref{prop:bouge_marques_bout2}.
\end{coro}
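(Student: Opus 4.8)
Le plan est de reprendre l'argument déjà utilisé pour les tores (corollaire~\ref{coro:class_surf}) et pour les bouteilles de type~1 (corollaire~\ref{coro:class_bout1}) : passer des objets \emph{marqués} aux objets non marqués en quotientant par l'ambiguïté du marquage. Rappelons qu'un marquage d'une bouteille de type~2 non élémentaire $B$ est le choix de l'une des deux feuilles courtes $\zeta_0,\zeta_1$ de $K$. D'abord je vérifie que ces deux feuilles sont intrinsèquement attachées à $B$ : la bouteille $B$ détermine son revêtement d'orientation, muni du champ $K$, ainsi que l'involution de revêtement $\delta$ (unique transformation non triviale de ce revêtement). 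D'après la proposition~\ref{prop:isom_indirecte}-(2), comme $\delta_*K=K$, l'involution $\delta$ induit une réflexion de l'espace des feuilles de $K$ dont les deux points fixes $\zeta_0,\zeta_1$ sont des milieux de bandes de type~II, donc des feuilles privilégiées au sens de la remarque~\ref{rema:feuille_milieu}. La paire $\{\zeta_0,\zeta_1\}$ est ainsi canonique, et une bouteille non élémentaire de type~2 admet exactement deux marquages, échangés par la transposition $\zeta_0\leftrightarrow\zeta_1$.

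Ensuite je traduis cette ambiguïté sur les invariants. La proposition~\ref{prop:bout2_marquees} fournit une bijection $\Xi_2$ entre classes d'isométrie de bouteilles de type~2 marquées (non élémentaires) et $\R_+^*\times\mathcal C^S_0$. Le lemme~\ref{prop:bouge_marques_bout2} décrit l'effet, sur l'image par $\Xi_2$, du remplacement de $\zeta_0$ par $\zeta_1$ : le réel $t_0$ est inchangé, $\bar f$ devient $x\mapsto\bar f(1-x)$ et le marquage symétrique $\{0,x_2,\dots,x_{j-1},1\}$ devient $\{0,1-x_{j-1},\dots,1-x_2,1\}$. Comme appliquer deux fois cette opération redonne la donnée initiale, puisque $\bar f(1-(1-x))=\bar f(x)$, elle définit bien une involution de $\R_+^*\times\mathcal C^S_0$, c'est-à-dire une action de $\Z/2\Z$.

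Le cœur de la preuve sera l'équivalence : deux bouteilles marquées $(B,\zeta_0)$ et $(B',\zeta_0')$ ont des surfaces sous-jacentes isométriques si et seulement si leurs images par $\Xi_2$ sont égales ou échangées par l'involution. Pour le sens direct, toute isométrie $B\to B'$ envoie la paire canonique $\{\zeta_0,\zeta_1\}$ sur $\{\zeta_0',\zeta_1'\}$ ; selon qu'elle envoie $\zeta_0$ sur $\zeta_0'$ ou sur $\zeta_1'$, elle réalise une isométrie de bouteilles marquées $(B,\zeta_0)\simeq(B',\zeta_0')$ ou $(B,\zeta_0)\simeq(B',\zeta_1')$, d'où l'égalité de $\Xi_2(B,\zeta_0)$ avec $\Xi_2(B',\zeta_0')$, respectivement avec son image par l'involution (lemme~\ref{prop:bouge_marques_bout2}). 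Réciproquement, si les données sont égales ou échangées par l'involution, on obtient via $\Xi_2^{-1}$ une isométrie entre les bouteilles marquées correspondantes, donc a fortiori entre les bouteilles non marquées. On conclut que la classe d'isométrie de $B$ est exactement déterminée par l'orbite sous $\Z/2\Z$ de $\Xi_2(B,\zeta_0)$ et que cette correspondance est bijective, ce qui est l'énoncé.

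La difficulté principale est le caractère intrinsèque de la paire $\{\zeta_0,\zeta_1\}$ : il faut s'assurer qu'aucune isométrie de $B$ ne peut envoyer une feuille courte sur une feuille qui ne soit pas courte. Cela résulte de l'invariance par isométrie du feuilletage transverse $\mcK^\pitchfork$ et de la décomposition en cylindres par les milieux des bandes de type~II, combinée au fait que $\delta$ — et par suite ses feuilles fixes — est canoniquement déterminé par $B$ au moyen de son revêtement d'orientation.
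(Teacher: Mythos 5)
Votre preuve est correcte et suit exactement la démarche que le papier laisse implicite (« La classification est déduite de la proposition~\ref{prop:bout2_marquees} comme précédemment ») : quotienter la classification des bouteilles marquées par l'ambiguïté du marquage, celle-ci se traduisant par l'involution du lemme~\ref{prop:bouge_marques_bout2}. Le point que vous prenez soin de vérifier — le caractère canonique de la paire $\{\zeta_0,\zeta_1\}$ — est bien le seul ingrédient non trivial, et il découle en effet de ce que ces feuilles sont les extrémités de l'espace des feuilles de $\mcK$ (un segment), préservé par toute isométrie puisque la courbure n'est pas constante.
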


 \subsection{Composantes de l'espace des métriques}\label{sub:compos}
On s’intéresse ici aux composantes connexes de l'espace des tores et des
bouteilles de Klein contenant des surfaces modelées sur un espace $E^u_f$, avec
$f$ donnée. De façon générale, on sait que les composantes de l'espace des
métriques lorentziennes sur une variété correspondent bijectivement, {\it via}
le choix d'un champ de droites de type temps par m\'etrique, aux classes
d'homotopie de sections du fibré projectif tangent, 
\cite[\textsection~40]{Steenrod1951}.
\par 

Pour les surfaces compactes (tores et bouteilles de Klein), les composantes
de l'espace des métriques sont caractérisées par le comportement des cônes
(positifs ou négatifs) le long de certaines courbes.  Il s'agira ici de
feuilles de $\mcK$ (feuilletage associé au champ de Killing) ou de feuilles
du feuilletage transverse $\mcK^\pitchfork$. Au-dessus d'une courbe d'un
type donné (temps, espace ou lumière, par exemple une feuille de $\mcK$),
le comportement des cônes est trivial puisqu'ils ne peuvent pas changer de
côté par rapport à la courbe. Il n'en est pas de même au-dessus d'une
géodésique de lumière brisée $\ell$, par exemple une feuille de
$\mcK^\pitchfork$ en présence de bandes de type II. On convient qu'une
telle géodésique $\ell$ est toujours localement injective, ce qui signifie
qu'en chaque point de bifurcation $p\in \ell$, le feuilletage de lumière
local qui porte~$\ell$ change (pour éviter les aller-retours).  Cela permet
d'attribuer un signe à $p$ : on dira que {\em le point de bifurcation $p\in
  \ell$ est positif (resp. négatif) si $\ell$ traverse une direction 
  positive (resp. négative) en $p$}; les directions de signe opposé ne sont
pas traversées par $\ell$.

\begin{defi}
\label{defi:suite_reduite}
  Soit $\sig=((-1)^{s_1},\ldots,(-1)^{s_N})$  une suite
finie de signes ($s_j=0$ ou~1). On  appelle {\em suite réduite associée à
  $\sig$} la suite de signes alternés obtenue en effaçant le premier 
couple de signes successifs égaux, puis en itérant cette opération.
\end{defi}

\begin{lemm}[enroulement des cônes au-dessus d'une courbe de lumière 
brisée]
\label{lemm:nb_enroul_transv}
Soit $X$ une surface lorentzienne et soit $\ga:[0,1]\to X$ une géodésique
de lumière brisée, localement injective. On suppose que $\ga([0,1])$ admet
$N\geq 1$ points de bifurcation, appartenant à $\ga(]0,1[)$.  Soient enfin
      deux sections continues $\xi^+$ et $\xi^-$ du fibré trivial (orienté)
      $\ga^*(TX)$, respectivement positive et négative pour la
      métrique. Alors, au signe près, le nombre d'enroulement (compté en
      tours) du repère $(\xi^+,\xi^-)$ le long de $\ga$ vaut
\begin{equation}
\label{equa:nb_enroul_transv}
\frac{1}{4}\sum_{j=1}^N (-1)^{j+s_j} = \pm \frac{M}{4},
\end{equation}
où $(-1)^{s_1},\ldots,(-1)^{s_N}$ est la suite des signes des points de
bifurcation, dans l'ordre du paramétrage, et $M$ désigne la longueur de la
suite réduite associée.
\end{lemm}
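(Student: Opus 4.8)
The plan is to compute the number of turns as a sum of local contributions, one per bifurcation point, and then to evaluate that sum by a short combinatorial reduction.

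First I would set up the framing and reduce everything to the motion of the null cone. Over the contractible interval $[0,1]$ the bundle $\ga^*(TX)$ is trivial, so I fix a continuous background frame and measure directions as angles in $S^1$. Since $\xi^+$ and $\xi^-$ are everywhere of definite (and opposite) sign, they lie in the two alternating families of open sectors cut out along $\ga$ by the two null line fields of the metric, these sectors being positive and negative in alternation around the circle. Consequently the rotation of the frame $(\xi^+,\xi^-)$ is governed entirely by the rotation of the null cone $\{n_1,n_2\}\subset\mathbb{RP}^1$; I would first prove that the number of turns of $(\xi^+,\xi^-)$ equals that of the cone, up to an error that is irrelevant to the final \emph{net} count.

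Second, localize. Away from the $N$ bifurcation points $\ga$ is a smooth light geodesic lying in a single null foliation, so by Lemme~\ref{lemm:carte_adaptee} it sits in adapted coordinates $(x,y)$ with metric $2dx\,dy+f(x)dy^2$, where the two null directions are $(1,0)$ and $(-f(x)/2,\,1)$. On such a smooth piece the cone rotates continuously, and I would normalize the background framing so that these contributions telescope and carry nothing to the net count; the net winding is then carried by the jumps of $\dot\ga$ between the two null lines at the bifurcation points. At the $j$-th bifurcation $\ga$ turns from one null line to the other, sweeping across exactly one of the two adjacent sectors, a quarter turn of $\dot\ga$ relative to the cone whose orientation is recorded by the sign $(-1)^{s_j}$ of the sector crossed. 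Because $\ga$ alternates the two foliations, the side from which the cone is approached alternates as well, producing the factor $(-1)^{j}$; hence the $j$-th point contributes $\pm\tfrac14(-1)^{j+s_j}$ turn, with a single global sign fixed once and for all. Summing yields
\begin{equation*}
\pm\frac14\sum_{j=1}^N (-1)^{j+s_j}.
\end{equation*}

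Finally I would prove the identity $\sum_{j=1}^N(-1)^{j+s_j}=\pm M$. Writing $a_j=(-1)^{s_j}$, a pair of consecutive equal signs $a_j=a_{j+1}$ contributes $(-1)^ja_j+(-1)^{j+1}a_j=0$, and deleting those two positions shifts every later index by $2$, hence preserves all factors $(-1)^{(\cdot)}$. Thus the sum is invariant under the reduction of Définition~\ref{defi:suite_reduite}, and on the resulting alternating sequence of length $M$ it equals $\varepsilon M$ for some $\varepsilon\in\{\pm1\}$, giving \eqref{equa:nb_enroul_transv}. The main obstacle is the second step: establishing, with one consistent convention, that each bifurcation contributes exactly $\pm\tfrac14(-1)^{j+s_j}$. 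This demands (i) a choice of background framing for which the cone-rotation along the smooth pieces does not affect the net count, and (ii) careful orientation bookkeeping at each corner, tracking simultaneously which sector $\ga$ sweeps (the $s_j$) and the alternation of foliations (the $(-1)^j$), so that the signs assemble into $(-1)^{j+s_j}$ and not some other combination. The passage to sectors in the first step and the explicit null directions furnished by Lemme~\ref{lemm:carte_adaptee} are precisely what make this bookkeeping tractable.
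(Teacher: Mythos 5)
Your proof is correct and follows essentially the same route as the paper's: the winding is localized at the bifurcation points as quarter turns, and the sign $(-1)^{j+s_j}$ comes from the same local bookkeeping (the paper phrases it as the inductive rule that the sense of consecutive quarter turns flips exactly when consecutive bifurcation signs agree, which is equivalent to your direct attribution of $(-1)^{s_j}$ to the sector crossed and $(-1)^{j}$ to the alternation of foliations). Your explicit check that $\sum_{j}(-1)^{j+s_j}$ is unchanged when a consecutive equal pair is deleted, hence equals $\pm M$ on the reduced alternating sequence, usefully spells out a step the paper only asserts.
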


\begin{figure}[h!]
\labellist
\small\hair 2pt
\pinlabel $1$ at 105 147
\pinlabel $2$ at 105 247
\pinlabel $3$ at 365 247
\pinlabel $4$ at 365 147
\pinlabel $5$ at 235 147
\pinlabel $6$ at 235 277
\pinlabel $7$ at 490 277
\pinlabel $8$ at 490 182
\pinlabel $1$ at 64 25
\pinlabel $-$ at 64 54
\pinlabel $2$ at 128 25
\pinlabel $-$ at 128 54
\pinlabel $3$ at 192 25
\pinlabel $+$ at 192 54
\pinlabel $4$ at 256 25
\pinlabel $-$ at 256 54
\pinlabel $5$ at 320 25
\pinlabel $+$ at 320 54
\pinlabel $6$ at 384 25
\pinlabel $-$ at 384 54
\pinlabel $7$ at 448 25
\pinlabel $+$ at 448 54
\pinlabel $8$ at 512 25
\pinlabel $+$ at 512 54
\pinlabel $-$ at 20 163
\pinlabel $+$ at 48 163
\pinlabel $-$ at 20 68
\endlabellist
\begin{center}
\includegraphics[scale=0.62]{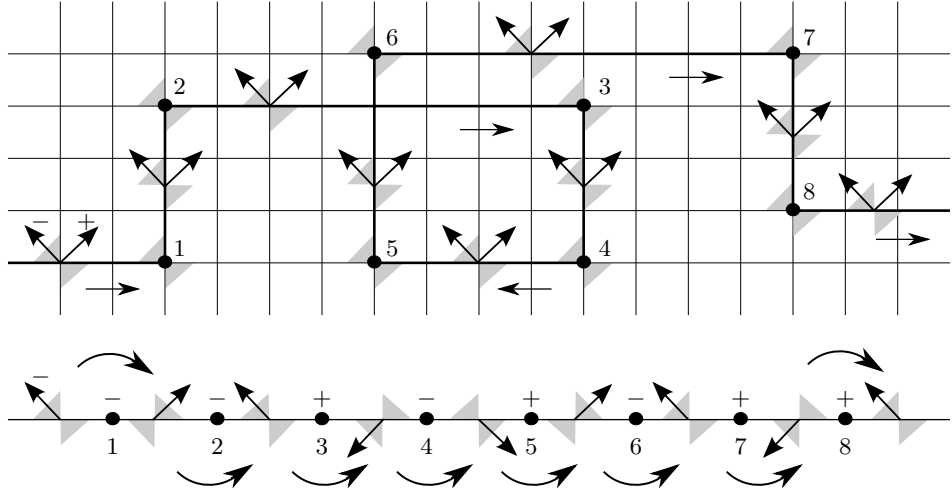}
\caption{Enroulement des cônes}
\label{figu:enroul_transv}
\end{center}
\end{figure} 

\begin{proof}
Notons $p_1,\ldots,p_N$ la suite ordonnée des points de bifurcations le
long de $\ga$. À chaque passage d'un point $p_j$, le cône du signe opposé à
$p_j$ change de côté par rapport à la feuille (plus précisément la section
du signe opposé à $p_j$ change de côté et l'autre non), le repère
$(\xi^+,\xi^-)$ fait donc un quart de tour. De plus, entre $p_j$ et
$p_{j+1}$, le sens de ce quart de tour change si $p_j$ et $p_{j+1}$ sont de
même signe, et ne change pas sinon. Ce comportement est illustré à la
figure~\ref{figu:enroul_transv} (cône négatif en gris).  Par une récurrence
immédiate, on voit que le sens $\eps_j$ du quart de tour au passage de $p_j$,
en orientant $\ga^*(TX)$ pour que $\eps_1=1$, vaut $\eps_j =
(-1)^{s_1+s_j+j-1}$. Par suite, le nombre d'enroulement cherché est donné
par $\frac{1}{4}\sum_{j=1}^N\eps_j = \frac{1}{4} (-1)^{s_1+1} \sum_{j=1}^N
(-1)^{j+s_j}$. Enfin, l'expression de l'enroulement en fonction de~$M$
découle directement du comportement des cônes décrit ci-dessus.
\end{proof}

Sur le tore, tout champ de droite de type temps est homotope à chacun des champs
de lumière.  L'appartenance à une composante connexe est donc déterminée par la
classe d'homotopie des feuilletages de lumière, c'est-à-dire par le nombre et le
sens de leurs composantes de Reeb ainsi que par la classe d'homotopie de leurs
feuilles compactes (voir par exemple \cite{BM} pour plus de détails). Soit
$\mcL$ un feuilletage de lumière d'un tore $(T,K)$. Toute orientation de la
classe d'isotopie des feuilles compactes de $\mcL$ induit une orientation de
$\mcL$ au voisinage de ces feuilles, ce qui permet de distinguer les composantes
de Reeb positives ou négatives suivant que leurs feuilles, parcourues dans le
sens direct donné par l'orientation locale, s'accumulent au bord ou non. Ces
composantes sont stables par le flot de $K$, donc bordées par des orbites de
celui-ci; s'il en existe, on convient d'orienter la classe des feuilles
compactes de $\mcL$ dans le sens du champ $K$.  Si~$r^+(\mcL)$ (resp. 
$r^-(\mcL)$) désigne le nombre de composantes de Reeb positives (resp.
négatives) de $\mcL$, l'entier naturel $r(T)=|r^+(\mcL) - r^-(\mcL)|$
caractérise la composante de la métrique modulo l'action des difféomorphismes. 
\par

À chaque composante de Reeb d'un feuilletage de lumière de $(T,K)$ correspond un
ruban maximal $R$ de $(T,K)$, nécessairement bordé par des bandes de type II,
mais la réciproque n'est pas vraie. Cela dépend de la position relative de $K$
et des bandes bordant $R$. Les termes {\em ruban}  et {\em bande}  de $(T,K)$
sont ici utilisés avec la même convention qu'au~\S~\ref{subs:class_tores}. On
rappelle qu'une bande de type II est une composante de Reeb du feuilletage
orthogonal à $K$, voir lemme ~\ref{lemm:types_bandes}.

\begin{defi}
\label{defi:signe_bandeII}
On dira qu'une bande de type II munie d'un champ $K$ est {\em positive} si c'est
une composante de Reeb positive du feuilletage orthogonal à $K$ (au sens
précédent, le bord étant orienté par $K$), et qu'elle est {\em négative} sinon.
\end{defi}

Les orbites du champ qui bordent une bande de type II positive (resp. négative)
sont dirigées vers un sommet de type \og selle \fg\ (resp. de type puits). Sur
la figure~\ref{figu:comp_Reeb_lum}, les bandes $S_1$, $S_2$ et $S_4$ sont
positives tandis que $S_3$ est négative (représentation du revêtement universel,
feuilletage orthogonal à $K$  en pointillés).

\begin{lemm}
\label{lemm:comp_reeb}
Soit $R$ un ruban maximal de $(T,K)$ bordé par deux bandes $S_1$ et $S_2$
de type II, et soit $\overline{R}$ l'adhérence  de $R$. 
Les propriétés suivantes sont équivalentes :
\begin{enumerate}
\item $\overline{R}$ est une composante de Reeb de l'un des
  feuilletages de lumière de~$T$,
\item $S_1$ et $S_2$ ont le même signe (qui est alors celui de 
$\overline{R}$, avec les conventions ci-dessus),
\item la fonction $\langle K,K\rangle$ est de signe différent sur $S_1$ et
  $S_2$.
\end{enumerate}
\end{lemm}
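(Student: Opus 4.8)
Le plan est d'analyser la géométrie locale d'un ruban maximal $R$ bordé par deux bandes de type II, en m'appuyant sur les coordonnées adaptées \eqref{equa:carte_adaptee} et sur la classification des bandes (lemme~\ref{lemm:types_bandes}). D'abord je fixerais une carte adaptée $2dxdy+f(x)dy^2$ sur $R$, dans laquelle $K=\partial_y$ et où le feuilletage $\mcK^\perp$ est donné par $y=G(x)+\beta$ avec $G$ primitive de $-1/f$. Le point de départ est l'observation clef, déjà utilisée dans la preuve du lemme~\ref{lemm:types_bandes} et du corollaire~\ref{coro:extension_reflexion}, qu'une bande de type II est précisément une composante de Reeb du feuilletage orthogonal $\mcK^\perp$, dont l'unique feuille de lumière au bord appartient à l'un des deux feuilletages de lumière de $T$.

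**L'équivalence $(1)\Leftrightarrow(2)$.**
Pour relier le signe des deux bandes $S_1,S_2$ à la nature de $\overline R$ comme composante de Reeb d'un feuilletage de lumière, j'examinerais la direction du champ $K$ au bord de chaque bande. D'après la définition~\ref{defi:signe_bandeII}, le signe de $S_i$ est déterminé par l'orientation (par $K$) de ses feuilles de Reeb : positive si les orbites au bord pointent vers un sommet de type selle, négative vers un puits. Le feuilletage de lumière qui borde $S_1$ et celui qui borde $S_2$ sont nécessairement les deux feuilletages distincts de $T$ (sinon ils s'accumuleraient, ce qui est exclu par la discussion suivant la proposition~\ref{prop:struct_affine_orth}). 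L'argument central est alors que $\overline R$ se referme en composante de Reeb d'un \emph{même} feuilletage de lumière exactement lorsque ses deux bords de lumière sont portés par ce feuilletage, parcourus dans le même sens par le champ $K$ : c'est précisément la condition que $S_1$ et $S_2$ aient le même signe. La difficulté principale sera de gérer soigneusement les orientations : il faut vérifier que la convention d'orientation du bord par $K$ (définition~\ref{defi:signe_bandeII}) est compatible avec la convention d'orientation des composantes de Reeb positives/négatives introduite avant l'énoncé, de sorte que le signe commun de $S_1,S_2$ coïncide avec le signe attribué à $\overline R$.

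**L'équivalence avec $(3)$ et conclusion.**
Enfin, pour l'équivalence $(2)\Leftrightarrow(3)$, je relierais le signe d'une bande de type II au signe de $\langle K,K\rangle=f$ sur cette bande. Le signe de $f$ détermine le type causal (temps ou espace) du champ $K$ à l'intérieur de la bande, et donc l'orientation du cône isotrope relativement aux orbites de $K$ : c'est exactement ce qui distingue un bord dirigé vers une selle d'un bord dirigé vers un puits. Concrètement, au voisinage d'un zéro simple $x_0$ de $f$ (bord de la bande), le lemme~\ref{lemm:semi_complet} montre que le comportement géodésique est gouverné par le signe de $f'(x_0)$, tandis que le signe de $f$ de part et d'autre fixe la position des cônes. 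En traversant le ruban $R$ de $S_1$ à $S_2$, $\overline R$ est une composante de Reeb d'un feuilletage de lumière si et seulement si les cônes ne basculent pas, ce qui se traduit par un changement de signe de $f$ entre $S_1$ et $S_2$ — d'où $(3)$ équivaut à $(2)$. Il restera à recoller ces trois observations et à préciser, dans chaque cas, la valeur du signe de $\overline R$ comme annoncé dans l'énoncé.
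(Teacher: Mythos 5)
Votre première équivalence $(1)\Leftrightarrow(2)$ suit grosso modo la ligne du texte, mais le point géométrique décisif n'y est qu'affirmé : il faut dire \emph{pourquoi} le sens d'enroulement des feuilles intérieures sur chaque cercle du bord est le sens « vers la selle » (les géodésiques de lumière d'une bande de type II s'accumulent du côté de la selle, autrement dit l'holonomie du bord est contractante dans cette direction), de sorte que les deux enroulements sont cohérents — donc $\overline R$ est de Reeb — si et seulement si $S_1$ et $S_2$ ont le même signe. Par ailleurs, votre affirmation préliminaire selon laquelle « le feuilletage de lumière qui borde $S_1$ et celui qui borde $S_2$ sont les deux feuilletages distincts de $T$ » est mal posée : si les deux orbites de lumière bordant $\overline R$ appartenaient à des feuilletages différents, $\overline R$ ne pourrait être composante de Reeb d'aucun des deux ; en réalité elles sont automatiquement feuilles du même feuilletage (celui qui n'est pas transverse à $K$ sur ces orbites), et la discussion que vous citez concerne les bandes de type III, pas cette situation.

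Le vrai problème est dans $(2)\Leftrightarrow(3)$. Vous affirmez que le signe de $f$ sur une bande de type II détermine si ses bords pointent vers une selle ou vers un puits, c'est-à-dire détermine le signe de la bande. C'est faux : le signe d'une bande de type II dépend aussi de sa position combinatoire le long de la direction transverse (c'est précisément ce qu'exprime la formule $r(T)=\frac{1}{2}|\sum_j(-1)^{j+s_j}|$ de la proposition qui suit le lemme : si toutes les bandes portaient le même signe de $f$, la somme serait nulle et il y aurait autant de bandes positives que négatives). C'est d'ailleurs tout le contenu du lemme que deux bandes consécutives de \emph{même} signe portent des signes \emph{opposés} de $\langle K,K\rangle$. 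Votre critère « les cônes ne basculent pas » est de plus énoncé à l'envers : comme les champs de lumière sont homotopes aux champs de droites positif ou négatif, $\overline R$ est une composante de Reeb si et seulement si les cônes font un \emph{demi-tour} en traversant $\overline R$ le long d'une feuille de $\mcK^\pitchfork$ ; le lemme~\ref{lemm:nb_enroul_transv} appliqué aux $N=2$ bifurcations donne alors un enroulement de $\pm 1/2$ exactement quand les signes $(-1)^{s_1}$ et $(-1)^{s_2}$ de $\langle K,K\rangle$ sur $S_1$ et $S_2$ sont opposés, et $0$ sinon. C'est cet argument quantitatif (que le texte utilise pour établir $(1)\Leftrightarrow(3)$, l'équivalence avec $(2)$ s'en déduisant) qui manque à votre proposition ; sans lui, le passage du signe des bandes au signe de $f$ reste une pétition de principe.
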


\begin{proof}
Les géodésiques de lumière d'une bande de type II d'un tore s'accumulent
du côté de la selle, ou si l'on préfère une feuille de lumière
bordant une bande de type II a une holonomie (en tant que feuille d'un des
feuilletages de lumière) \og contractante\fg\ lorsqu'elle est parcourue
en se dirigeant vers la selle.
 Par conséquent $\overline{R}$ est une composante
de Reeb de l'un des feuilletages de lumière de $T$ si et seulement si
$S_1$ et $S_2$ sont de même signe.  Le signe de cette composante
$\overline{R}$ (le bord étant orienté par $K$) est alors celui des $S_i$.
\par

\begin{figure}[h!]
\labellist
\small\hair 2pt
\pinlabel $\widetilde{S}_1$ at -15 220
\pinlabel $\widetilde{S}_2$ at 350 220
\pinlabel $\widetilde{S}_3$ at 210 50
\pinlabel $\widetilde{S}_4$ at 575 50
\endlabellist
\begin{center}
\includegraphics[scale=0.55]{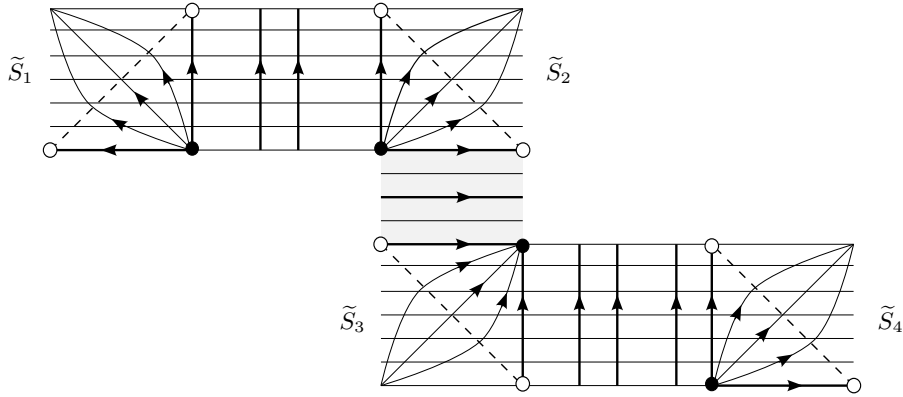}
\caption{Composante de Reeb et de suspension de $\mathcal L$}
\label{figu:comp_Reeb_lum}
\end{center}
\end{figure} 

Comme les champs de lumière sont homotopes aux champs de droites négatif
(ou positifs), la condition (1) est vérifiée
 si et seulement si les cônes font un demi-tour en traversant
$\overline{R}$ le long d'une feuille de $\mcK^\pitchfork$. D'après le
lemme~\ref{lemm:nb_enroul_transv}, cela revient à dire que les signes des 2
bifurcations, c'est-à-dire les signes de $\langle K,K\rangle$ sur $S_1$ et
$S_2$, sont opposés.
\end{proof}

\begin{prop}
 Soit $(T,K)$ un tore lorentzien muni d'un
champ de Killing. On note $k^+$ (resp. $k^-$) le nombre de bandes de type
II positives (resp. négatives) au sens de la définition
\ref{defi:signe_bandeII}. Par ailleurs, soit
$((-1)^{s_1},\ldots,(-1)^{s_{2k}})$ la suite cycliquement ordonnée des
signes de $\langle K,K\rangle$ sur les milieux des bandes de type II et
soit $L$ la longueur de la suite réduite associée. L'entier 
$r(T) \in \N$ caractérisant la composante
de la métrique modulo l'action des difféomorphismes est alors 
donné par 
$$r(T) =
\frac{L}{2} =
\frac{1}{2} \left| \sum_{j=1}^{2k} (-1)^{j+s_j} \right|
= \frac{1}{2} |k^+ - k^-|.$$
\end{prop}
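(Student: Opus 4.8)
The plan is to read off the invariant $r(T)=|r^+(\mcL)-r^-(\mcL)|$ from the winding of the null cone frame along a transverse loop, and then to reduce the three claimed equalities to combinatorics of the sign sequence $a_j:=(-1)^{s_j}\in\{\pm1\}$ by means of the enroulement lemma~\ref{lemm:nb_enroul_transv} and of lemma~\ref{lemm:comp_reeb}. I first dispatch the purely combinatorial identity $\big|\sum_{j=1}^{2k}(-1)^{j+s_j}\big|=L$: the summand is $(-1)^j a_j$, and deleting an adjacent pair of equal signs $a_j=a_{j+1}$ removes the two terms $(-1)^j a_j+(-1)^{j+1}a_{j+1}=0$ while shifting the remaining indices by $2$, so it leaves every parity and the whole sum unchanged; iterating down to the reduced alternating sequence (definition~\ref{defi:suite_reduite}) of length $L$, all surviving terms share a common value $\pm1$ and the sum has absolute value $L$. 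Hence $\tfrac12\big|\sum(-1)^{j+s_j}\big|=L/2$, which already identifies the second and third expressions.

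For $r(T)=L/2$ I would traverse once around $T$ a leaf $\gamma$ of the transverse foliation $\mcK^\pitchfork$ of \S\ref{subs:class_tores}. Its bifurcation points are exactly the $2k$ middles of the type~II bands, and the bifurcation at the middle of $S_j$ carries the sign of $\langle K,K\rangle$ on $S_j$, i.e. $(-1)^{s_j}$ (the identification already used inside the proof of lemma~\ref{lemm:comp_reeb}); note that non-simple zeros of $f$, carrying complete null orbits, produce no bifurcation of $\mcK^\pitchfork$. By the enroulement lemma~\ref{lemm:nb_enroul_transv} the frame $(\xi^+,\xi^-)$ therefore winds by $\tfrac14\big|\sum_{j=1}^{2k}(-1)^{j+s_j}\big|=L/4$ turns along $\gamma$. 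On the other hand, decomposing $\gamma$ into its successive passages through the maximal rubans, each passage contributes one signed half turn when the ruban is a Reeb component of a null foliation and no net rotation otherwise (this is precisely the mechanism of lemma~\ref{lemm:comp_reeb} and its proof), the sign being that of the Reeb component; so the total winding equals $\tfrac12\big(r^+(\mcL)-r^-(\mcL)\big)$ turns. Comparing the two counts gives $\tfrac12\,r(T)=L/4$, that is $r(T)=L/2$.

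For the last equality $L=|k^+-k^-|$, let $\epsilon_j\in\{\pm1\}$ record whether $S_j$ is positive or negative in the sense of definition~\ref{defi:signe_bandeII}, so that $k^+-k^-=\sum_j\epsilon_j$. Cyclically consecutive type~II bands $S_j,S_{j+1}$ bound a maximal ruban $R_j$, and lemma~\ref{lemm:comp_reeb} states that $R_j$ is a Reeb component iff $S_j$ and $S_{j+1}$ have the same band-sign iff $\langle K,K\rangle$ changes sign across $R_j$; translated into signs this reads $\epsilon_j\epsilon_{j+1}=-a_j a_{j+1}$ for every $j$ modulo $2k$. The ansatz $\epsilon_j=(-1)^j a_j\,\epsilon_0$ satisfies this relation and its cyclic closure (here $2k$ even is what makes the loop consistent), so $\epsilon_j=(-1)^j a_j\,\epsilon_0$ for a single global sign $\epsilon_0$. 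Consequently $|k^+-k^-|=\big|\sum_j\epsilon_j\big|=\big|\sum_j(-1)^j a_j\big|=\big|\sum(-1)^{j+s_j}\big|=L$, closing the chain of equalities.

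The main obstacle is the middle step: fixing the normalization in the passage from the winding number to the Reeb count, i.e. verifying that a Reeb component contributes exactly one signed half turn and the suspension parts nothing, with orientation conventions kept consistent between lemma~\ref{lemm:nb_enroul_transv} and the definition of $r^\pm(\mcL)$. A secondary point is to confirm that the sign relation $\epsilon_j\epsilon_{j+1}=-a_j a_{j+1}$ of lemma~\ref{lemm:comp_reeb} genuinely applies to each pair of cyclically consecutive type~II bands; since the complete null orbits arising from non-simple zeros of $f$ lie in the interior of the rubans $R_j$ and create neither a bifurcation of $\mcK^\pitchfork$ nor an extra sign change, they affect neither the winding tally nor the relation, and the argument goes through uniformly.
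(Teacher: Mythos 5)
Your proposal is correct and follows essentially the same route as the paper: the winding of the cone field along a leaf of $\mcK^\pitchfork$ is computed via le lemme~\ref{lemm:nb_enroul_transv}, giving $r(T)=L/2=\frac12\bigl|\sum_j(-1)^{j+s_j}\bigr|$, and le lemme~\ref{lemm:comp_reeb} converts the signs of the type~II bands into the signs of $\langle K,K\rangle$. The only (harmless) divergence is in the last equality, where you solve $\epsilon_j\epsilon_{j+1}=-a_ja_{j+1}$ explicitly as $\epsilon_j=(-1)^ja_j\epsilon_0$ and sum, whereas the paper counts the $(+,+)$ and $(-,-)$ pairs bounding the $\mcL$-rubans to get $|k^+-k^-|=2\,|r^+(\mcL)-r^-(\mcL)|$; both rest on the same equivalence in le lemme~\ref{lemm:comp_reeb}.
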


\begin{proof} L'entier $r(T)$ est égal au 
nombre de demi-tours effectués par les cônes de la métrique le long d'une
feuille de $\mcK^\pitchfork$ qui fait un tour dans l'espace des feuilles de
$\mcK$. Si $(T,K)$ n'admet  pas de bandes de type II, les feuilles de
$\mcK^\pitchfork$ sont d'un type donné et $r(T) = 0$. Sinon, le nombre
de demi-tours est donné par le lemme~\ref{lemm:nb_enroul_transv}.
\par

Notons $S_1,\ldots,S_{2k}$ les bandes de type II de $T$, 
dans l'ordre cyclique, en supposant que \mbox{$k\geq 1$.} Les couples
successifs $(S_1,S_2), \ldots, (S_{2k-1},S_{2k})$ bordent $k$ rubans maximaux
$R_1,\ldots,R_k$, feuilletés par l'un des feuilletages de lumière
$\mcL$. Le complémentaire de ces rubans est constitué de composantes de
suspension de $\mcL$ (en gris sur la figure~\ref{figu:comp_Reeb_lum}), 
éventuellement réduites à une feuille fermée.
D'après le
lemme~\ref{lemm:comp_reeb}, le nombre $r^+(\mcL)$ de composantes de Reeb
positives de $\mcL$ est égal au nombre de couples $(S_{2i-1},S_{2i})$ de
signe $(+,+)$, et de même pour $r^-(\mcL)$.  Par suite, on a $|k^+ - k^-| =
2 |r^+(\mcL) - r^-(\mcL)|$.
\end{proof}

\begin{rema} 
Soit $\mcK^\perp$ le feuilletage orthogonal à $K$. Par
  définition du signe des bandes de type II, on a évidemment
$k^+=r^+(\mcK^\perp)$ et $k^-=r^-(\mcK^\perp)$.
\end{rema}

\begin{prop}\label{prop:espace_des_metriques_tore}
 Soit $f$ une fonction périodique. Si $f$ change de signe alors il existe des
tores modelés sur $E^u_f$ dans toutes les composantes connexes de l'espace des
métriques lorentziennes du tore. Sinon les tores modelés sur $E^u_f$
appartiennent tous à la composante connexe des métriques plates.
\end{prop}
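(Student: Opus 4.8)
The goal is Proposition~\ref{prop:espace_des_metriques_tore}, which concerns the connected components of the space of Lorentzian metrics on the torus realized by tori modeled on $E^u_f$ for a fixed periodic function $f$.

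The plan is to reduce everything to the combinatorial invariant $r(T)=\tfrac12|k^+-k^-|\in\N$ computed just above, which (as recalled at the start of \S\ref{sub:compos}) determines the connected component of the space of Lorentzian metrics on the torus modulo the action of $\mathrm{Diff}(T^2)$. Since being modeled on $E^u_f$ is preserved under isometry, realizing one component of a given value of $r(T)$ automatically realizes every component with that value by pulling back along a diffeomorphism; so it suffices to determine which values $r(T)$ can take over all tori modeled on $E^u_f$. By Proposition~\ref{prop:class_pointee} (together with Remarque~\ref{rema:masse=1}, which lets the mass be any positive multiple of the minimal period of $f$), such tori correspond to the choice of an even marking, i.e.\ an even-cardinality set of midpoints of the intervals of $\R/\msfm\Z\smallsetminus\bar f^{-1}(0)$; these midpoints are exactly the centers of the type~II bands, and the sign $(-1)^{s_j}$ attached to the $j$-th marked band is the sign of $f$ there.

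First I would treat the case where $f$ does not change sign. Then every interval of $\{f\neq0\}$ carries the same sign, so for any marking the signs $(-1)^{s_j}$ are all equal; writing $2k$ for the (necessarily even) number of marked bands,
\[
\sum_{j=1}^{2k}(-1)^{j+s_j}=(-1)^{s_1}\sum_{j=1}^{2k}(-1)^{j}=0,
\]
whence $r(T)=0$ by the preceding proposition (the case of no type~II bands also giving $r(T)=0$ directly). Thus every torus modeled on $E^u_f$ lies in the component $r=0$, the one containing the flat metrics, whose light foliations have no Reeb components.

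The substantive case is $f$ changing sign, where I would realize every $r\in\N$. For $r=0$ take the empty marking. For $r=n\geq1$ the goal is a marking whose cyclic sign sequence is the alternating word $(+,-,+,-,\dots)$ of length $2n$: then $(-1)^{j+s_j}\equiv-1$ and the formula gives $r(T)=\tfrac12|{-2n}|=n$. The crucial point to verify is realizability of this (and of any) sign pattern. Because $f$ is periodic and changes sign, each period of $\R/\msfm\Z$ contains at least one interval on which $f>0$ and one on which $f<0$; taking $\msfm=Np$ with $N$ large (say $N\geq2n$), both signs occur cofinally among the intervals, and a greedy left-to-right selection produces midpoints realizing the prescribed alternating word in cyclic order.

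This realizability is the main obstacle: one must check that the marking data of Proposition~\ref{prop:class_pointee} is flexible enough, namely that any finite sign sequence, and in particular the alternating one maximizing $\big|\sum_j(-1)^{j+s_j}\big|$, appears as an ordered subfamily of interval-signs once enough periods are taken. Granting this, the values of $r(T)$ exhaust $\N$ when $f$ changes sign, and by the diffeomorphism-equivariance noted at the outset, tori modeled on $E^u_f$ then meet every connected component of the space of Lorentzian metrics on the torus, which is the assertion.
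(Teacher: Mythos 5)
Your proof is correct, and its realization half takes a genuinely different route from the paper's. For the case where $f$ does not change sign the two arguments coincide in substance: all type~II bands carry the same sign, so (by the lemme~\ref{lemm:comp_reeb}, or equivalently by the vanishing of $\sum_j(-1)^{j+s_j}$ over an even number of terms of constant sign) the light foliations have no Reeb components and $r(T)=0$. For the case where $f$ changes sign, the paper constructs only two examples --- the empty marking giving $r=0$, and a marking by two midpoints of opposite sign in a single period giving $r=1$ --- and delegates the passage to all the remaining components to the external reference \cite[\S 2.3]{BM}. You instead realize every value $r=n$ directly from the formula $r(T)=\frac12\bigl|\sum_j(-1)^{j+s_j}\bigr|$ of the preceding proposition, by taking the mass to be a large multiple of the period and choosing a marking whose cyclic sign word is alternating of length $2n$. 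This buys self-containedness (no appeal to \cite{BM}) at the price of the realizability check, which you flag but leave slightly informal; it is easily pinned down: fixing midpoints $x_+<x_-$ of a positive and a negative component of $\{f\neq 0\}$ in one period $[0,p[$ and marking $\{x_++jp,\;x_-+jp\;:\;0\le j\le n-1\}$ in $\R/np\Z$ produces $2n$ distinct midpoints in alternating cyclic order, since no component of $\{f\neq0\}$ can have length $\ge p$ when $f$ is $p$-periodic and changes sign. Your preliminary reduction --- that being modeled on $E^u_f$ is preserved under pullback by diffeomorphisms, so it suffices to realize every value of the invariant $r(T)$ classifying components modulo $\Diff(T^2)$ --- is the step the paper uses implicitly, and it is needed since the statement asserts that \emph{every} connected component is reached.
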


\begin{proof}
On déduit du lemme~\ref{lemm:comp_reeb} que si $f$ ne change pas de signe, alors
les feuilletages de lumières de $T$ n'ont pas de composantes de Reeb et donc $T$
appartient à la composante contenant les métriques plates. D'après \cite[\S
2.3]{BM}, pour terminer la preuve il suffit de montrer que lorsque $f$ s'annule
il existe un tore modelé sur $E^u$ dont les feuilletages de lumière ont une
seule composante de Reeb et qu'il existe un tore modelé sur $E^u$ dont l'un des
feuilletages de lumière n'a aucune composante de Reeb.
\par

La fonction $f$ changeant de signe, il existe $ x_1<x_2 $ milieux d'intervalles
de $\R\smallsetminus f^{-1}(0)$ appartenant à une même période et tels que
$f(x_1)f(x_2)<0$. Soit $T$ le tore modelé sur $E^u_f$ associé aux invariants
$(1,0,(\bar f, \{\bar{x}_1,\bar{x}_2\}))$. Ce tore ne contient que deux rubans
maximaux (un pour chaque feuilletage de lumière), bordés par des bandes de
type~II de même signe (lemme~\ref{lemm:comp_reeb}). Chaque feuilletage de
lumière de $T$ a donc une unique composante de Reeb.  Le quotient d'un ruban
maximal de $E^u_f$ par une période de $f$ et un élément du flot (tore associé
aux invariants $(1,0,(\bar f,\emptyset))$) fournit l'autre exemple.
\end{proof}

Pour la bouteille de Klein, les composantes de l'espace des métriques
lorentziennes correspondent bijectivement à $\Z\times \Z/2\Z$. Rappelons
rapidement comment réaliser une telle bijection (voir \cite{BM} pour plus
de détails).  La bouteille de Klein étant considérée comme une fibration en
cercles, on se donne un méridien $\beta$ (c.-à-d. une fibre de la
fibration) et une âme $\alpha$ (c.-à-d. une section de la
fibration). Soit $\xi$ un champ de droites de type temps, vu comme section
du fibré projectif tangent $P$.  Les restrictions $P_{|\beta}$ et
$P_{|\alpha}$ sont des fibrations en cercles sur le cercle, respectivement
tore et bouteille de Klein ; à homotopie près, cette dernière fibration
n'admet que deux sections. Les classes d'homotopie de $\xi_{|\beta}$ et de
$\xi_{|\alpha}$ sont donc déterminées par un degré $n \in \Z$ ($n=0$ pour
$\xi$ tangent à $\beta$) et un élément $\overline{m}\in \Z/2\Z$, et on peut
vérifier que le couple $(n,\overline{m})$ caractérise la classe d'homotopie
de~$\xi$ (\cite[Appendice]{BM}). Plus précisément, on convient de prendre
$\overline{m}=\overline{0}$ si $\xi{|_\alpha}$ est homotope à la section
tautologique (champ tangent à $\alpha$), c.-à-d. $\xi{|_\alpha}$
orientable, et $\overline{m}=\overline{1}$ dans l'autre cas, c'est-à-dire
si $\xi{|_\alpha}$ est homotopiquement transverse à la section
tautologique. Si $n$ est impair, la métrique n'est orientable ni en temps,
ni en espace. Si $n$ est pair avec $\overline{m}=\overline{0}$
(resp. $\overline{m}=\overline{1}$), la métrique est orientable
temporellement mais pas spatialement (resp. spatialement mais pas
temporellement).

\begin{rema}
\label{rema:isotopie}
Les méridiens sont tous isotopes: on peut à tout moment choisir $\beta$
sans modifier $|n|$ et $\overline{m}$.
Il existe exactement deux classes d'isotopie d'âmes; on peut changer le
représentant de $\alpha$ sans modifier $n$ et $\overline{m}$, mais changer
de classe d'isotopie transforme $\overline{m}$ en
$\overline{m}+\overline{n}$.
\end{rema}

Modulo l'action des difféomorphismes, les composantes sont caractérisées
par $n_{\rm abs}=|n|$ pour~$n$ impair et par  $(n_{\rm abs},\overline{m})$ pour 
$n$ pair ; dans ce dernier cas, les difféomorphismes agissent trivialement
sur $\overline{m}$. 
 En particulier, il existe deux composantes contenant les métriques 
plates, indexées avec nos conventions par  $(0,\overline{0})$ et 
$(0,\overline{1})$ ; elles sont fixes par l'action des difféomorphismes
mais permutées par le changement de signe de la métrique.  
\par

Soit $B$ une bouteille de Klein dont le revêtement d'orientation $B^{\rm
  or}$ admet un champ de Killing invariant~$K$. On rappelle que $K$ induit un
feuilletage $\mcK$ sur $B$ ainsi qu'une fonction notée abusivement $\langle
K,K\rangle$, constante sur les feuilles de $\mcK$. En particulier chaque
feuille de~$\mcK$ hérite du signe (-1, 0 ou 1) de $\langle K,K\rangle$.  Vu que
$B$ n'admet pas de feuilletages de lumière (mais seulement un 2-tissu de
lumière, voir~\cite{BM}), la fonction $\langle K,K\rangle$ ne peut être
identiquement nulle. Par ailleurs, le feuilletage transverse de $B^{\rm
  or}$ induit un feuilletage de $B$, notée $\mcK^\pitchfork$, dont toutes
les feuilles sont également fermées, voir~\S\ref{subs:bout_killing}.  On
rappelle que dans le cas non élémentaire, les feuilles de $\mcK^\pitchfork$
sont des géodésiques de lumière, nécessairement brisées puisque $B$ n'a pas
de feuilletage de lumière lisse; le lieu de bifurcation de
$\mcK^\pitchfork$ est une réunion finie de feuilles de~$\mcK$ (milieux des
bandes de type~II), de signe $\pm 1$.  \par

On distinguera comme plus haut deux types de bouteilles $B$. Quand $B$ est
de type~1, le feuilletage $\mcK$ est un feuilletage par méridiens (donc
non orientable) et l'espace de ses feuilles  est un cercle
; le feuilletage transverse $\mcK^\pitchfork$ est orientable, l'espace de
ses feuilles est un segment dont les extrémités sont les âmes de $B$.
Pour le type~2, les rôles de $\mcK$ et $\mcK^\pitchfork$ sont inversés : 
$\mcK$ est orientable, l'espace de ses feuilles est un segment dont les
extrémités sont les âmes de $B$ et $\mcK^\pitchfork$ est un feuilletage
par méridiens, non orientable, dont l'espace des feuilles est un cercle.

\begin{prop}
\label{prop:comp_bout_elem}
Toute bouteille de Klein $(B,\mcK)$ élémentaire appartient à l'une des deux
composantes connexes des métriques plates.  De plus, $(B,\mcK)$ est
temporellement orientable si et seulement si $\mcK$ est du signe de $(-1)^{\tau+1}$ pour la
métrique (c.-à-d. \mbox{$(-1)^\tau \langle K,K\rangle < 0$)}, où
$\tau\in\{1,2\}$ désigne le type de $B$.
\end{prop}

\begin{proof}
Le feuilletage $\mcK^\pitchfork$ est ici lisse, formé des géodésiques
orthogonales à $\mcK$.  Soit $\xi$ un champ de droites de type temps de $B$. On
choisit comme méridien $\beta$ une feuille de $\mathcal{K}$ si $\tau=1$ ou une
feuille de $\mcK^\pitchfork=\mcK^\perp$ si $\tau=2$ (voir la remarque
\ref{rema:isotopie}). Ce méridien est d'un type donné (espace ou temps), donc
$\xi|_\beta$ est de degré $0$ et $B$ appartient à une composante contenant des
métriques plates. Les signes de $\mcK$ et $\mcK^\pitchfork$ sont évidemment
opposés et $B$ est temporellement orientable si et seulement si $\mcK^\pitchfork$ (resp.
$\mcK$) est négatif pour  $\tau =1$ (resp.  pour $\tau =2$).
\end{proof}

\begin{prop}
\label{prop:comp_bout_NE}
Soit $(B,\mcK)$ une bouteille de Klein non élémentaire de type~$\tau\in \{1,2\}$
et soit $\zeta_1\cup \ldots \cup \zeta_k$ le lieu de bifurcation de
$\mcK^\pitchfork$. Si $\tau=2$, on suppose que les $\zeta_i$ sont indexées
dans l'ordre croissant, l'espace des feuilles de $\mcK$ étant orienté (en
particulier $\zeta_1$ et $\zeta_k$ sont les âmes de $B$).
\begin{enumerate}
\item Si $\tau = 1$, alors  $B$ appartient à l'une des deux
composantes connexes des métriques plates. De plus,   $B$ est temporellement
orientable si et seulement si le nombre de  $\zeta_i$ négatives 
est pair.
\item Si $\tau=2$, on note  $(-1)^{s_1},\ldots,(-1)^{s_k}$ 
la suite des signes des $(\zeta_j)$. 
L'invariant entier $n_{\rm abs}(B) \in \N$ de la composante
de $B$ modulo les difféomorphismes est donné par
$$
n_{\rm abs}(B) =  \left\{ 
\begin{array}{lll}
  L_{]1,k[} & \mathrm{si}  &  s_1+s_k+ k ~\mathrm{est ~pair}, \\
  L_{[1,k[} & \mathrm{si}  &  s_1+s_k+ k ~\mathrm{est ~impair}, 
\end{array}
\right.
$$ où $L_{]1,k[}$ (resp. $L_{[1,k[}$) désigne la longueur de la suite
    réduite associée à $(-1^{s_j})_{1<j<k}$ (resp. $(-1^{s_j})_{1\leq
      j<k}$).  De plus, $B$ est temporellement (resp. spatialement)
    orientable si et seulement si $n_{\rm abs}(B)$ est pair et si les âmes
    $\zeta_1$ et
    $\zeta_k$ sont négatives (resp. positives).
\end{enumerate}
\end{prop}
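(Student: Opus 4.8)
The plan is to compute, for each type $\tau$, the homotopy invariant $(n,\overline m)\in\Z\times\Z/2\Z$ of the timelike line field $\xi$ attached to the metric, using the two foliations $\mcK$ and $\mcK^\pitchfork$ as preferred meridian/soul foliations. Recall $n=\deg(\xi_{|\beta})$ along a meridian $\beta$ and $\overline m$ records the orientability class of $\xi_{|\alpha}$ along a soul $\alpha$; modulo diffeomorphism the component is $n_{\mathrm{abs}}=|n|$, refined by $\overline m$ when $n$ is even (with $\overline m=\overline 0$ the temporally orientable case). By the description preceding the statement, for $\tau=1$ the foliation $\mcK$ is by meridians and $\mcK^\pitchfork$ has souls as boundary leaves, while for $\tau=2$ the roles are exchanged. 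First I would reduce everything to two winding computations: the winding of the cones (equivalently of $\xi$, since timelike and lightlike line fields are homotopic) along a meridian, giving $n$, and along a soul, giving $\overline m$. The essential tool is the cone–enroulement Lemma~\ref{lemm:nb_enroul_transv}, applied to the relevant leaf of $\mcK^\pitchfork$, which is a locally injective broken lightlike geodesic passing through the bifurcation leaves $\zeta_1,\dots,\zeta_k$ (the middles of type~II bands, by Proposition~\ref{prop:isom_indirecte}).

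For $\tau=1$ I would take $\beta$ to be a leaf of $\mcK$: it is a meridian of constant causal type (the sign of $\langle K,K\rangle$ is constant on each $\mcK$-leaf), so $\xi_{|\beta}$ has degree $0$ and $n=0$. Hence $B$ lies in one of the two flat components $(0,\overline 0)$, $(0,\overline 1)$, exactly as claimed. Temporal orientability then amounts to $\overline m=\overline 0$, i.e.\ to $w_1$ vanishing on the soul class, and I would compute $\overline m=\deg(\xi_{|\alpha})\bmod 2$ for the soul $\alpha$, a closed leaf of $\mcK^\pitchfork$ that meets each $\zeta_i$. Tracking the quarter-turns of the cone frame through Lemma~\ref{lemm:nb_enroul_transv}, the parity of this degree is governed by the holonomy of $\mcK^\pitchfork$ around $\alpha$, and I would identify it with the parity of the number of negative leaves among the $\zeta_i$: each crossing of a timelike (negative) $\zeta_i$ contributes the relevant sign reversal to the cone frame, while the cyclic closure constrains the total, yielding orientability precisely when that number is even.

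For $\tau=2$ the soul $\alpha$ is a boundary leaf of $\mcK$, namely $\zeta_1$ or $\zeta_k$; being of constant type it gives $\overline m=\overline 0$ when the soul is timelike (negative, $\xi$ homotopic to the tangent field) and $\overline m=\overline 1$ when spacelike (positive). The integer $n$ comes from a meridian $\beta$, now a leaf of $\mcK^\pitchfork$: lifting to the orientation double cover and using $\delta_*K=K$ with the reflection of the leaf space (Proposition~\ref{prop:isom_indirecte}), the meridian traverses the souls $\zeta_1,\zeta_k$ once (as turning points) and each interior $\zeta_i$ twice, so its sequence of bifurcation signs is the palindrome $(-1)^{s_1},(-1)^{s_2},\dots,(-1)^{s_{k-1}},(-1)^{s_k},(-1)^{s_{k-1}},\dots,(-1)^{s_2}$ of length $2k-2$. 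Feeding this into Lemma~\ref{lemm:nb_enroul_transv} gives $n_{\mathrm{abs}}=\tfrac12\bigl|\sum(-1)^{j+s_j}\bigr|$, and I would reduce this symmetric sum to the length of a one-way reduced sequence: the contributions of the second half mirror those of the first, and whether the endpoint sign $(-1)^{s_1}$ survives the reduction is dictated by the parity of $s_1+s_k+k$, producing $L_{]1,k[}$ in the even case and $L_{[1,k[}$ in the odd case. Combining with the soul sign yields the stated orientability criterion, and well-definedness of $\overline m$ independent of the chosen soul class follows from Remark~\ref{rema:isotopie} since $\overline m\mapsto\overline m+\overline n$ with $\overline n=0$ when $n_{\mathrm{abs}}$ is even (which also forces $\zeta_1,\zeta_k$ to share sign in that case).

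The hard part will be Step~3's combinatorial bookkeeping for $\tau=2$: correctly accounting for the reflection of the broken geodesic at the two souls (so that the souls are counted once and the interior leaves twice), and carrying out the reduction of the palindromic sign sequence so that the net winding collapses exactly to $L_{]1,k[}$ or $L_{[1,k[}$ according to the parity of $s_1+s_k+k$. The analogous parity argument closing the $\tau=1$ orientability count — matching the cone holonomy to the number of negative $\zeta_i$ under the cyclic closure constraint — is the second delicate point; both are pure sign computations once the geometric pattern of crossings is pinned down by Proposition~\ref{prop:isom_indirecte}.
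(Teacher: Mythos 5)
Your proposal follows essentially the same route as the paper: for each type you pick the meridian as a leaf of $\mcK$ (resp. $\mcK^\pitchfork$) and the soul as a boundary leaf of the other foliation, compute $n$ and $\overline m$ via the cone-winding lemma~\ref{lemm:nb_enroul_transv}, and for $\tau=2$ exploit exactly the paper's observation that the meridian crosses the souls once and each interior $\zeta_i$ twice, yielding the palindromic sign sequence of length $2k-2$ whose reduction gives $L_{]1,k[}$ or $L_{[1,k[}$ according to the parity of $s_1+s_k+k$. The two delicate sign computations you flag (the parity count along the soul for $\tau=1$ and the collapse of the symmetric sum for $\tau=2$) are precisely the ones the paper carries out, so the plan is sound.
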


\begin{proof}
Pour le type~1, la vérification de la première assertion est identique à
celle de la proposition~\ref{prop:comp_bout_elem}, à ceci près que le
méridien $\be$ peut ici être de lumière. Il reste à étudier l'orientabilité
temporelle de $B$.  Deux des feuilles de $\mcK^\pitchfork$ sont des âmes
de~$B$. Soit $\al$ l'une d'entre elles et soit $\xi^-$ un champ négatif
au-dessus d'un paramétrage de $\al$, comme dans le
lemme~\ref{lemm:nb_enroul_transv}. D'après la preuve de celui-ci, $\xi^-$
ne change de côté de $\al$ qu'au passage de chaque bifurcation positive,
d'où le résultat (on rappelle que $k$ est impair).  \par

Supposons que $\tau=2$. On prend comme méridien $\beta$ une feuille de 
$\mcK^\pitchfork$.  L'entier $n_{\rm abs}(B)$ est le nombre (non
orienté) de demi-tours que font les cônes au-dessus de $\beta$.
Quand on parcourt~$\beta$, on rencontre une fois les âmes $\zeta_1$ et 
$\zeta_k$ et deux fois les autres $(\zeta_i)$, dans l'ordre inverse. 
Par le lemme~\ref{lemm:nb_enroul_transv}, on a donc
\begin{eqnarray}\nonumber
 n_{\rm abs}(B) &  = & \frac{1}{2} \left| (-1)^{1+s_1} +
 \sum_{1<j<k} (-1)^{j+s_j} + (-1)^{k+s_k} + 
\sum_{1<j<k} (-1)^{2k- j+s_j} \right| \\ \nonumber
 & = & \left| \frac{1}{2} \left[
  (-1)^{k+s_k}-(-1)^{s_1}\right] 
+ \sum_{1<j<k} (-1)^{j+s_j} \right|,
\end{eqnarray}
d'où l'on déduit aisément la formule cherchée.  La dernière assertion est
claire. 
\end{proof}

\begin{rema}
\label{rema:n_s_1_s_2}
Les longueurs $L_{]1,k[}$ et $L_{[1,k[}$ ayant respectivement la parité de
    $k$ et $k+1$, on voit que $n_{\rm abs}(B)$ est congru à $s_1+s_2$
    modulo~2. On retrouve le fait que les âmes ont le même signe quand
    $n_{\rm abs}(B)$ est pair.
\end{rema}

\begin{coro}
\label{coro:comp_type1_f}
 Soit $f$ une fonction périodique et soit 
$\mathfrak{C}_1(f)\subset \Z\times \Z/2\Z$ l'ensemble indexant
les composantes des bouteilles de Klein de type 1 modelées sur $E^u_f$.
\begin{enumerate} 
\item Si $f\geq 0$ (resp. $f\leq 0$), alors $\mathfrak{C}_1(f) = 
\{(0,\overline{0})\}$ (resp. $\mathfrak{C}_1(f) =  \{(0,\overline{1})\})$.
\item Si $f$ change de signe, alors 
$\mathfrak{C}_1(f) = \{(0,\overline{0}),(0,\overline{1})\}$.
\end{enumerate}
\end{coro}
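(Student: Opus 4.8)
The plan is to reduce the computation of $\mathfrak{C}_1(f)$ to a question of temporal orientability, which is controlled by a parity of signs, and then to read off (and realize) the possible values from the classification of Proposition~\ref{prop:bout1_marquees}. First I would note that every type-1 Klein bottle $B$ modeled on $E^u_f$ satisfies $n_{\rm abs}(B)=0$: this is Proposition~\ref{prop:comp_bout_elem} in the elementary case and the first assertion of Proposition~\ref{prop:comp_bout_NE}-(1) in the non-elementary case. Hence $\mathfrak{C}_1(f)\subseteq\{(0,\overline{0}),(0,\overline{1})\}$, and by the conventions recalled just before the statement (for $n$ even, $\overline{m}=\overline{0}$ means temporally but not spatially orientable, $\overline{m}=\overline{1}$ the reverse), the component of $B$ is $(0,\overline{0})$ if and only if $B$ is temporally orientable. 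Everything therefore reduces to deciding temporal orientability.

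For assertion (1) with $f\geq 0$: if $f$ never vanishes then $B$ is elementary of type $\tau=1$, and Proposition~\ref{prop:comp_bout_elem} gives temporal orientability exactly when $(-1)^{1}\langle K,K\rangle<0$, i.e. $f>0$, which holds; if $f$ vanishes then $B$ is non-elementary and each bifurcation feuille $\zeta_i$ carries the sign of $f$, hence is positive, so the number of negative $\zeta_i$ is $0$ and Proposition~\ref{prop:comp_bout_NE}-(1) again yields temporal orientability. In both subcases $B\in(0,\overline{0})$, so $\mathfrak{C}_1(f)=\{(0,\overline{0})\}$. The case $f\leq 0$ is the mirror image: in the elementary case $f<0$ breaks temporal orientability, and in the non-elementary case every $\zeta_i$ is negative, their number being the number $k$ of type-II bands of $B$, which is \emph{odd} for type~1 (Proposition~\ref{prop:isom_indirecte}-(1); equivalently the marking has odd cardinality by definition of $\mathcal{C}^I_0$). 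An odd number of negative $\zeta_i$ forbids temporal orientability, so $B\in(0,\overline{1})$ and $\mathfrak{C}_1(f)=\{(0,\overline{1})\}$.

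For assertion (2), when $f$ changes sign I would produce Klein bottles in both components through the bijection $\Xi_1$ of Proposition~\ref{prop:bout1_marquees} between marked type-1 Klein bottles and $\R_+^*\times\mathcal{C}^I_0$, the function datum being $f$ itself (so all constructed surfaces are indeed modeled on $E^u_f$). Since $f$ changes sign, $\R/\Z\smallsetminus f^{-1}(0)$ contains an interval on which $f>0$ and one on which $f<0$. Marking a single positive midpoint gives a valid odd marking with no negative $\zeta_i$, hence a Klein bottle in $(0,\overline{0})$; marking a single negative midpoint gives an odd marking with exactly one negative $\zeta_i$, hence a Klein bottle in $(0,\overline{1})$. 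Thus both values occur and $\mathfrak{C}_1(f)=\{(0,\overline{0}),(0,\overline{1})\}$.

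The main obstacle is the $f\leq 0$ case of (1): one might naively expect all-negative signs to yield an even count and hence $(0,\overline{0})$, but it is precisely the oddness of the type-1 marking (the defining feature of $\mathcal{C}^I_0$, matching the count $\equiv 2\pmod 4$ of type-II bands upstairs in Proposition~\ref{prop:isom_indirecte}-(1)) that forces an odd number of negatives and selects $(0,\overline{1})$. Beyond this, the care lies in keeping the parity bookkeeping coherent between the elementary and non-elementary subcases and in correctly matching temporal orientability to $\overline{m}$ through the stated sign conventions; the remainder is a direct assembly of the cited propositions.
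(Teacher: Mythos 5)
Your proof is correct and follows essentially the same route as the paper: reduce everything to temporal orientability (since $n_{\rm abs}=0$ for type-1 bottles) and, when $f$ changes sign, realize both components by marking a single midpoint of prescribed sign via $\Xi_1$ and applying Proposition~\ref{prop:comp_bout_NE}. The only cosmetic difference is in assertion~(1), where the paper argues in one line from the causal type of the non-orientable Killing foliation (spacelike when $f\ge 0$, timelike when $f\le 0$), whereas you derive the same conclusion from the parity criterion of Proposition~\ref{prop:comp_bout_NE}-(1) combined with the oddness of the type-1 marking --- a correct, if slightly more computational, bookkeeping.
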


\begin{proof} Si $f\geq 0$, alors le champ $\mcK$ (non orientable) 
est de type espace ou lumière : la métrique est nécessairement
temporellement orientable. À l'inverse, si $f\leq 0$ alors $B$ est
spatialement orientable.  Si $f$ change de signe, on choisit $t_0>0$ et
$x_1$ au milieu d'un intervalle de $\R\smallsetminus f^{-1}(0)$.  D'après
la proposition~\ref{prop:comp_bout_NE}, la bouteille associée à
$(t_0,(\bar f,\{\bar{x}_1\}))$ est temporellement (resp. spatialement)
orientable si $f(x_1)>0$ (resp. $f(x_1)<0$).
\end{proof}

\begin{coro}
\label{coro:comp_type2_f}
 Soit $f$ une fonction paire, périodique de plus petite période $2\msfm>0$
 et vérifiant $f(0)f(\msfm)\neq 0$, et soit 
$\mathfrak{C}_2(f)\subset \Z\times \Z/2\Z$ l'ensemble indexant
les composantes des bouteilles de Klein de type 2 modelées sur $E^u_f$.
\begin{enumerate} 
\item Si $f\geq 0$ (resp. $f\leq 0$), alors $\mathfrak{C}_2(f) = 
\{(0,\overline{1})\}$ (resp. $\mathfrak{C}_2(f) =  \{(0,\overline{0})\}$.
\item Si $f$ change de signe et vérifie $f(0)f(\msfm)>0$, alors 
$\mathfrak{C}_2(f) = 2\Z\times \{\overline{0}\}$ si $f(0)<0$ et 
$\mathfrak{C}_2(f) = 2\Z\times \{\overline{1}\}$ si $f(0)>0$.
\item Si  $f(0)f(\msfm)<0$, alors 
$\mathfrak{C}_2(f)= \Z\times \Z/2\Z$ (toute
  composante de l'espace des métriques contient des bouteilles de
  type 2 modelées sur $E^u_f$).
\end{enumerate}
\end{coro}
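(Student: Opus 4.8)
The plan is to merge the classification of type~2 Klein bottles (Corollary~\ref{coro:class_bout2}, built on Proposition~\ref{prop:bout2_marquees}) with the explicit component invariant $(n_{\rm abs},\overline m)$ delivered by Proposition~\ref{prop:comp_bout_NE}-(2). A type~2 bottle modeled on $E^u_f$ is pinned down by three pieces of data: the number $M\geq 1$ of times the function $f$ (of minimal period $2\msfm$) wraps around the leaf-space circle of the orientation double cover — equivalently, the mass of the privileged field, which is a positive multiple of $2\msfm$ —, the choice of the axis of the involution $\delta$ among the two available fixed loci (the leaves of transverse coordinate $0$ and $\msfm$, since $f$ is even), and a finite symmetric marking of band-midpoints containing the two souls. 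First I would record, for each such bottle, its invariant: $\overline m$ through the orientability criterion of Proposition~\ref{prop:comp_bout_NE}-(2), and $n_{\rm abs}$ through the length of the reduced sign-sequence of the $\zeta_i$.

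The decisive elementary point is the behaviour of the two souls. They are the fixed points of $\delta$ on the circle $\R/2\msfm M\Z$, hence at leaf-space distance $\msfm M$; their signs are $f(a)$ and $f(a+\msfm M)$, with $a\in\{0,\msfm\}$ the axis. Since $f$ has period $2\msfm$, one has $f(a+\msfm M)=f(a)$ when $M$ is even and $f(a+\msfm M)=f(a+\msfm)$ when $M$ is odd. Thus, according to the parity of $M$ and the choice of $a$, the two souls carry signs either $\{\operatorname{sgn}f(0),\operatorname{sgn}f(\msfm)\}$ ($M$ odd), or both $\operatorname{sgn}f(0)$ or both $\operatorname{sgn}f(\msfm)$ ($M$ even, axis $0$ or $\msfm$). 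By Remark~\ref{rema:n_s_1_s_2} the parity of $n_{\rm abs}$ is that of the sum of the soul signs, and when $n_{\rm abs}$ is even the orientability, hence $\overline m$, is governed by their common sign.

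I would then carry out the case analysis. If $f$ keeps a constant sign (case~(1)), all marked signs agree, the reduced sequence collapses and $n_{\rm abs}=0$ for every marking; the souls are positive (resp. negative), whence $\overline m=\overline 1$ (resp. $\overline 0$) and the announced singleton. If $f$ changes sign with $f(0)f(\msfm)>0$ (case~(2)), the souls always carry the common sign $\operatorname{sgn}f(0)$, independently of $M$ and of the axis, so $n_{\rm abs}$ is even and $\overline m$ is fixed by $\operatorname{sgn}f(0)$; increasing $M$ and alternating the marked signs realizes every even reduced length, giving $2\Z\times\{\overline m\}$. If $f(0)f(\msfm)<0$ (case~(3)), taking $M$ odd yields souls of opposite signs, hence $n_{\rm abs}$ odd and a non-orientable metric, while taking $M$ even with axis $0$ or $\msfm$ yields souls of a common sign equal to $\operatorname{sgn}f(0)$ or $\operatorname{sgn}f(\msfm)$, that is $n_{\rm abs}$ even realizing both values of $\overline m$; one thus reaches every component.

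The hard part will be the realization (surjectivity) step: proving that, for fixed $f$, every magnitude of $n_{\rm abs}$ of the prescribed parity is actually attained. This amounts to choosing the wrap number $M$ large enough and a symmetric marking whose soul-sign sequence $(-1)^{s_1},\dots,(-1)^{s_k}$ has reduced length exactly the target value — using that $f$ takes both signs (in cases~(2) and~(3)) to manufacture arbitrarily long alternating marked runs, and minimal markings (souls only, $k=2$) to obtain length $0$. For the odd-$n$ components of case~(3) one must moreover invoke that the two labels $\overline m$ are there interchanged by a diffeomorphism (Remark~\ref{rema:isotopie}, since $\overline n=\overline 1$), so that each of the two components genuinely contains such a bottle. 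Care will also be needed with the orientability dictionary of Proposition~\ref{prop:comp_bout_NE} linking the common soul-sign to the value of $\overline m$, and with checking that every triple (wrap number, axis, marking) does produce a smooth bottle modeled on $E^u_f$.
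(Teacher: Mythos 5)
Your plan is essentially the paper's: combine the marked classification of type~2 bottles (proposition~\ref{prop:bout2_marquees}) with the invariant $(n_{\rm abs},\overline{m})$ of proposition~\ref{prop:comp_bout_NE}-(2), and read everything off the signs of the two souls, which must sit at points of $\msfm\Z$ and therefore carry the signs $f(0)$ and $f(\msfm)$, or a common one of these, according to the parity of the wrap number and the choice of axis. Your three-case analysis (constant sign $\Rightarrow n_{\rm abs}=0$ with $\overline{m}$ forced; $f(0)f(\msfm)>0$ $\Rightarrow$ souls of common sign $\operatorname{sgn}f(0)$, hence $n_{\rm abs}$ even and $\overline{m}$ frozen; $f(0)f(\msfm)<0$ $\Rightarrow$ odd $n_{\rm abs}$ for odd wrap, both values of $\overline{m}$ for even wrap via the two axes) matches the paper's, as does the use of a diffeomorphism exchanging the souls to reach both $\overline{m}$'s when $n$ is odd. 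The only genuine divergence is the realization step you rightly flag as the hard part: you propose, for each target value, to build a symmetric marking whose sign sequence has the prescribed reduced length, which works but drags you through the $L_{]1,k[}$ versus $L_{[1,k[}$ bookkeeping of proposition~\ref{prop:comp_bout_NE}-(2) and requires taking the wrap number large enough to find long alternating runs of component midpoints. The paper short-circuits this: it exhibits one bottle with $n_{\rm abs}=2$ (resp.\ $1$) using at most one interior marked point, then takes degree-$d$ cyclic covers in the meridian direction, which multiply $n_{\rm abs}$ by $d$; and for the even-$n$ part of case~(3) it re-reads $f$ as a $4\msfm$-periodic even function (shifted by $\msfm$ to choose the sign of $f(0)$), reducing to case~(2). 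Either closure is legitimate; the cyclic-cover trick is the cheaper one and you may want to adopt it to avoid verifying by hand that every even reduced length is attainable by an admissible symmetric marking.
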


\begin{proof} Noter que les âmes des bouteilles de type 2  modelées sur
  $E^u_f$ doivent correspondre à des points fixes des involutions de
  $\Isom(f)$, c'est-à-dire à des éléments de $\msfm\Z$.  Par ailleurs, si
  l'on peut réaliser une valeur de $n_{\rm abs}$, on pourra toujours
  réaliser les valeurs $n=\pm n_{\rm abs}$ (sans changer $\overline{m}$)
  grâce à l'action d'un difféomorphisme homotope à une symétrie par
  rapport à un méridien, \cite[p.\,489]{BM}.  \par

Si  $f$ est de signe constant, alors la suite des signes des $(\zeta_j)$ est
  constante et les suites réduites de $(-1^{s_j})_{1<j<k}$ (pour $k$ pair)
  et de $(-1^{s_j})_{1\leq j<k}$ (pour $k$ impair) sont vides. On a donc
  $n_{\rm abs}(B)=0$ et le signe des âmes est celui de $f$, d'où l'assertion
  (1).  \par

Si $f$ change de signe et vérifie $f(0)f(\msfm)>0$, les âmes ont à nouveau
le même signe. Par conséquent  $n_{\rm abs}(B)$ est pair
(remarque~\ref{rema:n_s_1_s_2}) et le champ de cônes du signe de $f(0)$ est
orientable. Soit $t_0>0$ et soit  $x_1\in ]0,\msfm[$ tel que
$f(0)f(x_1) <0$, situé au
milieu d'un intervalle de $\R\smallsetminus f^{-1}(0)$. La bouteille 
associée à $(t_0,(\bar f,\{\bar{0},\bar{\msfm}\}))$
satisfait
$n_{\rm abs} = 0$ et la bouteille~$B$ associée à $(t_0,(\bar
f,\{\bar{0},\bar{x_1},\bar{\msfm}\}))$ satisfait $n_{\rm abs}(B) = 2$,
proposition~\ref{prop:comp_bout_NE}-(2).  Par suite, le revêtement cyclique
$B_d$ de degré $d\in \N^*$ de $B$ dans le sens du méridien vérifie $n_{\rm
  abs}(B_d) = 2d$.    Toutes les composantes possibles sont donc atteintes.
\par

Si $f(0)f(\msfm)<0$, alors la bouteille $B'$ de paramètre $(t_0,(\bar
f,\{\bar{0},\bar{\msfm}\}))$ avec $t_0>0$ vérifie $n_{\rm abs}(B')=1$,
proposition~\ref{prop:comp_bout_NE}-(2). Pour un revêtement cyclique $B'_d$
comme ci-dessus, on~a $n_{\rm abs}(B'_d)=d$ ($d\in \N^*$). Soit $\Phi$ un
difféomorphisme qui permute les âmes à isotopie près.  Si $d$ est impair,
l'action de $\Phi$ sur $B'_d$ modifie $\overline{m}$ par changement de
l'âme de référence, \mbox{\cite[remarque~A.1]{BM}}. On atteint ainsi toutes
les composantes avec $n_{\rm abs}$ impair. 
Considérons maintenant $f$ 
comme une fonction $4\msfm$-périodique v\'erifiant $f(0)f(2\msfm)>0$. 
Quitte \`a remplacer $f(x)$ par $f(x+\msfm)$, on peut choisir le signe
de $f(0)$ : l'assertion (2) montre donc 
que toutes les composantes avec $n_{\rm abs}$ pair sont atteintes.
\end{proof}

\subsection{Points conjugués}\label{sect:points_conj}

\begin{lemm}
\label{lemm:pts_conj}
Soit $I$ un intervalle ouvert de $\R$ et soit $f\in \mcC^\infty(I,\R)$.  On
suppose qu'il existe $C>0$ et $\eps=\pm 1$ tels que $\{\epsilon f<C^2\}$
disconnecte $I$ (c'est le cas par exemple si $f$ 
change de signe au moins deux fois).
Alors la métrique lorentzienne $2dxdy + f(x)dy^2$
définie sur $I\times \R$ a 
des points conjugués. En particulier, tout ruban ouvert contenant deux 
demi-bandes (voir remarque~\ref{rema:ouv_sat_carre})
de même signe, ou une bande et deux demi-bandes, admet des points conjugués.
\end{lemm}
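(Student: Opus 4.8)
La preuve repose sur une intégrale première d'\emph{énergie} pour les géodésiques de $R_f$. Soit $\gamma(t)=(x(t),y(t))$ une géodésique. Le champ de Killing $K=\partial_y$ fournit la quantité conservée $E=\langle K,\dot\gamma\rangle=\dot x+f(x)\dot y$ (relation de Clairaut), tandis que $\kappa=\langle\dot\gamma,\dot\gamma\rangle=2\dot x\dot y+f(x)\dot y^2$ est également constant. En éliminant $\dot y$ entre ces deux relations (ou en dérivant et en utilisant l'équation des géodésiques), on obtient l'identité
\[
\dot x^2+\kappa\,f(x)=E^2 .
\]
La projection transverse $x(t)$ se comporte donc comme une particule d'énergie $E^2$ dans le potentiel $V=\kappa f$. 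L'idée est de choisir $\kappa=\eps$ de sorte que l'hypothèse de déconnexion place $x(t)$ dans un \emph{puits de potentiel}, forçant une oscillation.

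Prenons $\kappa=\eps$. Par hypothèse $\{\eps f<C^2\}$ déconnecte $I$ : il possède une composante bornée $]a,b[$ avec $\eps f(a)=\eps f(b)=C^2$ et $\eps f<C^2$ sur $]a,b[$. Posons $m=\inf_{]a,b[}\eps f<C^2$, atteint à l'intérieur. D'après le théorème de Sard, on choisit une valeur régulière non nulle $c$ de $\eps f$ avec $m<c<C^2$ ; la composante de $\{\eps f<c\}$ contenant un point de minimum est alors un intervalle borné $]a',b'[\subset\,]a,b[$ vérifiant $\eps f(a')=\eps f(b')=c$ et $(\eps f)'(a')\,,(\eps f)'(b')\neq0$ (points tournants \emph{simples}). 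La géodésique de paramètres $\kappa=\eps$ et $E^2=c$ issue d'un point de $]a',b'[$ satisfait $\dot x^2=c-\eps f(x)\geq0$ exactement sur $[a',b']$, avec annulation aux seuls bords. Grâce à la simplicité des points tournants, $x(t)$ oscille entre $a'$ et $b'$ qu'elle atteint \emph{en temps fini}, d'où deux instants $t_1<t_2$ avec $\dot x(t_1)=\dot x(t_2)=0$ ; de plus $E=\pm\sqrt c\neq0$. J'anticipe que le principal obstacle technique est précisément ce point : sans passer à une valeur régulière, un point tournant dégénéré serait atteint en temps infini et l'argument s'effondrerait ; le recours à Sard est donc essentiel.

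Pour conclure, je produis un champ de Jacobi à partir de la symétrie. Le flot de $K$ envoyant géodésiques sur géodésiques, le champ $J(t)=K(\gamma(t))=\partial_y$ le long de $\gamma$ est un champ de Jacobi. Sa composante normale $J^\perp=J-\tfrac{E}{\kappa}\dot\gamma$ en est un aussi (différence de $J$ et du champ de Jacobi tangentiel $\tfrac{E}{\kappa}\dot\gamma$), et elle est non triviale car $\dot x\neq0$ à l'intérieur du puits. Un calcul immédiat montre que $J^\perp(t)=0$ si et seulement si $\dot\gamma(t)$ est colinéaire à $K$, c'est-à-dire si et seulement si $\dot x(t)=0$ : on utilise $E\neq0$, et aux points tournants la relation $\kappa=f\dot y^2=\eps$ force $f,\dot y\neq0$, d'où $\tfrac{E}{\kappa}\dot y=1$. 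Ainsi $J^\perp$ s'annule en $t_1$ et $t_2$, et $\gamma(t_1)$, $\gamma(t_2)$ sont conjugués : c'est la première assertion. Le c\oe{}ur de la preuve est donc cette identification des \emph{zéros du champ de Jacobi de Killing normal} avec les points tournants de l'oscillation.

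Reste la condition suffisante et l'assertion \og En particulier \fg. Si $f$ change de signe au moins deux fois, le motif $+,-,+$ fournit, pour $\eps=-1$ et $C>0$ assez petit, une région centrale où $\eps f\geq C^2$ séparant deux régions extrêmes contenues dans $\{\eps f<C^2\}$, d'où la déconnexion ; le motif $-,+,-$ est symétrique avec $\eps=+1$. Enfin, une demi-bande est adjacente à un zéro \emph{simple} de $f$ (lemme~\ref{lemm:semi_complet}), à la traversée duquel $f$ change de signe. Je vérifierais alors, via la classification des bandes (proposition~\ref{prop:class_bandes}) et la description des ouverts saturés (remarque~\ref{rema:ouv_sat_carre}), que deux demi-bandes de même signe — ou une bande et deux demi-bandes — imposent à $f$ au moins deux changements de signe (le passage par deux zéros simples ramenant à la même valeur de signe oblige un nombre pair, donc au moins deux, de traversées), ce qui place $f$ dans la situation précédente et permet d'appliquer la première partie.
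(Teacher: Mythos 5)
Votre mise en place coïncide pour l'essentiel avec celle du texte : mêmes intégrales premières, même identité $\dot x^2+\kappa f(x)=E^2$, même recours à une valeur régulière pour rendre les points tournants simples, et même identification des zéros de la partie normale du champ de Jacobi $\partial_y$ avec les instants où $\dot x=0$. Le trou se situe dans l'affirmation que la géodésique \og oscille entre $a'$ et $b'$\fg\ et atteint les deux points tournants. La variable réduite $x(t)$ satisfait bien l'équation de Newton $\ddot x=-\frac{\eps}{2}f'(x)$, dont les solutions oscillent indéfiniment ; mais la géodésique elle-même peut cesser d'exister avant d'avoir accompli l'aller-retour. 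En effet, la relation $\dot x+f\dot y=E$ impose $\dot x=E$ en tout point où $f$ s'annule : la géodésique ne peut franchir un zéro de $f$ qu'en se déplaçant dans le sens du signe de $E$. Si elle aborde un zéro $x_*$ de $f$ avec $\dot x\to-E$, alors $\dot y=(E-\dot x)/f\sim 2E/f$ diverge, $y(t)$ tend vers l'infini en paramètre affine fini et la géodésique sort de tout compact de $I\times\R$ avant d'atteindre $x_*$, donc avant d'atteindre le second point tournant. (Exemple explicite : $f(x)=x$, $\kappa=E=1$, $x(t)=1-t^2/4$, $\dot y=2/(2-t)$, explosion en $t=2$.) Or le puits $]a',b'[$ contient des zéros de $f$ précisément dans les cas qui motivent le lemme (dès que $f$ change de signe, par exemple) ; votre géodésique issue d'un point intérieur n'enregistre alors qu'un seul point tournant, et $J^\perp$ n'a qu'un zéro.

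La correction est celle adoptée par le texte : lancer la géodésique depuis le point tournant $a$ lui-même, avec vitesse initiale $\eps\partial_y/C$. Elle traverse alors le puits de façon monotone, $\dot x=+\sqrt{C^2-\eps f}\geq 0$, de sorte que $\dot y=\frac{1}{f}\bigl(C-\sqrt{C^2-\eps f}\bigr)$ reste borné (il tend vers $\frac{\eps}{2C}$ aux zéros de $f$) et la géodésique atteint le second point tournant $b$ en temps fini : les deux tangences à $\partial_y$ sont obtenues sans jamais revenir en arrière. Signalons enfin que votre réduction du cas \og deux demi-bandes de même signe\fg\ à \og $f$ change de signe au moins deux fois\fg\ est inexacte (prendre $f\geq 0$ s'annulant en deux points : deux demi-bandes de même signe sans aucun changement de signe) ; l'argument correct est que $\{\sigma f\geq C^2\}$ n'est pas connexe pour $C$ assez petit, $\sigma$ désignant le signe commun des demi-bandes, ce qui redonne directement l'hypothèse du lemme.
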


\begin{proof}
Le champ de Killing $\partial_y$ est un champ de Jacobi le long de toute
géodésique.  Nous allons montrer l'existence d'une géodésique tangente à
$\partial_y$ en deux points distincts et distincte d'une orbite de
$\partial_y$. On peut supposer que $\epsilon C^2$ est une valeur régulière de $f$.
Soit $]a,b[$ une composante connexe relativement compacte
    de $\{\epsilon f<C^2\}$. On a forcément $f(a)=f(b)=\epsilon C^2$ et
    $f'(a)f'(b)\neq 0$.  Soit $\gamma$ la géodésique telle que
    $\gamma(0)=(a,0)$ et $\gamma'(0)=\epsilon \partial_y/C$.  L'écriture
    des deux intégrales premières (l'énergie et Clairaut) donne :
$$\left\{
\begin{array}{ll}
f(x)y'+x'=C,\\
f(x){y'}^2+2x'y'=\epsilon.
\end{array}
\right.$$ On en déduit que ${x'}^2=C^2-\epsilon f(x)$ d'où
$\epsilon f(x)\leq C^2$. Comme $f'(a)\neq 0$, $x'$ n'est pas identiquement
nulle.  Pour $t>0$ et tant que $x(t)$ reste inférieur à $b$, on a
$x'(t)>0$ et donc $y'=\frac{1}{f}(C- \sqrt{C^2-\epsilon f})$.  On en
déduit que $y'$ reste borné (même si $f$ s'annule) tant que $x$ ne
prend pas la valeur $b$, puis  que $x(t)$ tend vers $b$. Pour voir
que la valeur $b$ est atteinte, on considère la géodésique
$\gamma_1=(x_1,y_1)$ vérifiant $\gamma_1(0)=(b,0)$ et $\gamma'_1(0)=
\epsilon\partial_y /C$. Il existe $t_0>0$ et $t_1<0$ tels que
$x(t_0)>x_1(t_1)$. Par conséquent il existe $s\in \R$ tel que $\gamma$ et
$\Phi_{\partial_y}^s\circ \gamma_1$ se croisent. Par suite les géodésiques
$\gamma$ et $\gamma_1$ coïncident au paramétrage près et $\gamma$
est bien deux fois tangente à $\partial_y$.
\end{proof}

\begin{prop}
\label{prop:pts_conjugues_Eu}
 Soit $(T,K)$ un tore lorentzien muni d'un champ de Killing. Si $T$ n'est pas
plat, alors l'extension réflexive $E^u_f$ associée à $(T,K)$ a des points
conjugués.
\end{prop}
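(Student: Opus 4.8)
The plan is to reduce the statement to the one‑variable criterion of Lemma~\ref{lemm:pts_conj}, applied to a maximal ruban sitting inside $E^u_f$. First I would record what the hypotheses force on $f$. Since $(T,K)$ is a non‑flat Lorentzian torus, the argument in the proof of Theorem~\ref{theo:uni_tore} shows that the flow of $K$ is periodic and that the class $[f]$ of $(T,K)$ is represented by a \emph{periodic} function $f\in\mcC^\infty(\R,\R)$ (so here $I=\R$). On the other hand the curvature of the metric \eqref{equa:carte_adaptee} equals $f''/2$, so non‑flatness means $f''\not\equiv 0$; in particular $f$ is not affine, hence—being periodic—it is non‑constant. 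Thus $f$ is a periodic, non‑constant smooth function on $\R$, attaining over a period its minimum $m$ and its maximum $M$, with $m<M$.

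Second I would produce the disconnecting sublevel set required by Lemma~\ref{lemm:pts_conj}. Since $f$ is non‑constant, at least one of $M>0$ or $m<0$ holds (otherwise $f\equiv 0$). If $M>0$, choose—regular values being dense—a regular value $c$ of $f$ with $\max(0,m)<c<M$; this interval is nonempty because $M>m$ and $M>0$. Put $C=\sqrt{c}>0$ and $\epsilon=1$. By periodicity the set $\{f\ge c\}$ is nonempty and recurs on both sides of $\R$, so $\{f<C^2\}$ is a proper nonempty open subset all of whose connected components are bounded; in particular it disconnects $\R$ and has a relatively compact component. If instead $m<0$, the same reasoning applied to $-f$ (that is, $\epsilon=-1$) yields $C>0$ with $\{-f<C^2\}$ disconnecting $\R$.

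Finally, Lemma~\ref{lemm:pts_conj} now applies to the ruban $R_f=(\R\times\R,\,2dxdy+f(x)dy^2)$ of \eqref{equa:rub_f} and produces a geodesic carrying a pair of conjugate points. By Proposition~\ref{prop:sf_Euf}-(i) the extension $E^u_f$ contains a maximal ruban isometric to $R_f$ (or to $R_{f^\vee}$, to which the argument applies verbatim, $f^\vee$ being again periodic and non‑constant), embedded isometrically as an open subset; the geodesic and its conjugate points therefore persist in $E^u_f$, which consequently has conjugate points. The only point needing care is that Lemma~\ref{lemm:pts_conj} is fed a \emph{relatively compact} component of $\{\epsilon f<C^2\}$, but the periodicity of $f$ makes the existence of such a component automatic, so there is no genuine obstacle here; essentially all the analytic content is already packaged in Lemma~\ref{lemm:pts_conj}.
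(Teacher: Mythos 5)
Your proof is correct and follows essentially the same route as the paper's: reduce to a periodic non-constant $f$ via Théorème~\ref{theo:uni_tore}, exhibit a level $C^2$ for which $\{\epsilon f<C^2\}$ disconnects $\R$, and apply the lemme~\ref{lemm:pts_conj} inside a maximal ruban of $E^u_f$. The only difference is cosmetic: the paper splits into the cases \og $f$ change une infinité de fois de signe\fg\ et \og $f$ ou $-f$ admet un minimum positif ou nul distinct de son maximum\fg, whereas you merge both by choosing a regular value $c$ with $\max(0,m)<c<M$, which is a slightly cleaner packaging of the same argument.
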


\begin{proof}
 L'extension $E^u_f$ contient des rubans isométriques à  $(\R^2,2dxdy +
f(x)dy^2)$
avec~$f$ périodique. Si $f$ n'est pas constante (c.-à-d. si $T$ n'est pas plat),
alors ou bien $f$ change une infinité de fois de signe, ou bien $f$ ou $-f$
admet un minimum positif ou nul, distinct de son maximum. Dans les deux cas, le
lemme~\ref{lemm:pts_conj} permet de conclure que $E^u$ possède des points
conjugués.
 \end{proof}

\begin{theo}
\label{theo:comme_CP}
 Soit $f\in \mcC^\infty(\R,\R)$ une fonction
périodique non constante et soit $T$ un tore modelé sur $E^u_f$. Si $T$
est sans points conjugués alors
 \begin{enumerate}
  \item l'ensemble  de composantes connexes de $\{f\neq 0\}$
    est localement fini,
  \item toute composante définit une bande de type II de~$T$,
  \item $f$ est de signe différent sur deux composantes consécutives,
  \item $f'$ ne change de signe qu'une fois sur chaque composante.
 \end{enumerate}
En particulier, $T$ n'est pas homotope à un tore  plat.
\end{theo}

\begin{proof}
D'après le lemme~\ref{lemm:pts_conj}, la fonction $f$ doit changer de signe,
sans quoi $f$ ou $-f$ admettrait un minimum positif ou nul ($f$ étant
périodique). Rappelons que les composantes de $\{f\neq 0\}$ (modulo une certaine
période de $f$) correspondent aux bandes de~$T$. Soit $R$ un ruban maximal de
$T$. Toujours grâce au lemme~\ref{lemm:pts_conj}, on voit que $R$ ne peut
contenir plus de deux demi-bandes, d'où (1). En particulier $R$ ne peut contenir
de bandes de type I, d'où~(2). Finalement, l'adhérence de tout ruban maximal de
$T$ doit être formée de deux bandes de types II séparées par une composante
connexe de $\{f=0\}$ et vérifiant (3) et (4) (encore le
lemme~\ref{lemm:pts_conj}). Par conséquent, la suite des signes de $f$ sur les
bandes de type~II est forcément réduite (définition~\ref{defi:suite_reduite}) et
$T$ n'est pas homotope à un tore plat.
\end{proof}

\begin{rema}
La fonction sinus est l'une des fonctions les plus naturelles satisfaisant
les assertions (1), (3) et (4) du théorème~\ref{theo:comme_CP}.  Les tores
model\'es sur $E^u_{\sin}$ satisfaisant (2) sont (à homothétie près) les
tores de Clifton-Pohl.  Nous avons montré qu'un tore possédant un champ de
Killing, sans points conjugués et non plat doit ressembler à un tore de
Clifton-Pohl.  Rappelons que ceux-ci n'ont pas de points
conjugués~\cite{BM}.
\end{rema}

\begin{prop}\label{coro_adherence}
 Soit $T$ un tore  lorentzien sans points conjugués et ayant un champ de Killing
non trivial. Alors il existe une suite de tores lorentziens possédant des points
conjugués et convergeant vers $T$ pour la topologie $C^\infty$. 
\end{prop}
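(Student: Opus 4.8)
The plan is to treat the flat and non-flat cases separately, the first being elementary and the second resting on the rigidity furnished by Theorem~\ref{theo:comme_CP}.

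If $T$ is flat, I would choose coordinates in which the universal cover is $\text{Mink}_2=(\R^2,2dxdy)$, with $K=\partial_y$ and with deck lattice $\Lambda=\langle(0,t_0),(\ell,s)\rangle$. Fix a smooth $\ell$-periodic function $\psi>0$ having, on each period, at least two local maxima separated by a local minimum, and set $g_\varepsilon=2dxdy+\varepsilon\,\psi(x)\,dy^2$. Each $g_\varepsilon$ is $\Lambda$-invariant, hence descends to $\R^2/\Lambda$, and is elementary (its Killing field has no light orbit), so its universal cover is exactly the ribbon $R_{\varepsilon\psi}$. For $C$ with $\varepsilon\min\psi<C^2<\varepsilon\max\psi$ the set $\{\varepsilon\psi<C^2\}$ has a relatively compact component, so $R_{\varepsilon\psi}$, and therefore $(T,g_\varepsilon)$, has conjugate points by Lemma~\ref{lemm:pts_conj}. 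Since $\varepsilon\psi\to0$ in $\mcC^\infty$ the metrics $g_\varepsilon$ converge to the flat metric, settling this case.

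Now suppose $T$ is not flat. Then the flow of $K$ is periodic, $T$ is modelled on an extension $E^u_f$ with $f$ periodic (Theorem~\ref{theo:uni_tore}), and by Theorem~\ref{theo:comme_CP} the function $f$ is sine-like: its zeros are locally finite, $f$ alternates sign on consecutive components of $\{f\neq0\}$, each component is a band of type~II, and $f'$ changes sign exactly once per component. Meanwhile $E^u_f$ itself \emph{does} contain conjugate points (Proposition~\ref{prop:pts_conjugues_Eu}): there is a geodesic of $E^u_f$ tangent to $K^u$ at two points, realising the configuration of Lemma~\ref{lemm:pts_conj} in some ribbon of $E^u_f$. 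The obstruction to $T$ inheriting such conjugate points is essentially combinatorial: the developing image of $\widetilde T$ is the saturation of a single geodesic of the leaf space $\mcE^u$, and at each zero of $f$ this geodesic selects the branch of the non-separated tree $\mcE^u$ corresponding to a type~II (Reeb) crossing, whereas the refocusing geodesic crosses onto the opposite branch and so leaves $\widetilde T$. I would note here that one \emph{cannot} produce the conjugate configuration inside a single band by a small perturbation, since the zeros and the unique extremum of $f$ on each band are nondegenerate, and creating a localised dip (an interior relatively compact component of $\{\pm f<C^2\}$) would require a perturbation bounded below in $\mcC^2$; the perturbation must therefore act across several bands.

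Accordingly, my plan is to perturb $f$ to a periodic $f_n\to f$ in $\mcC^\infty$, keeping the same zero/critical combinatorics so that by Proposition~\ref{prop:class_pointee} the associated marked tori $T_n$ are well defined and converge to $T$, in such a way that a refocusing geodesic of $E^u_{f_n}$ comes to lie inside the developing image $\widetilde{T_n}$. Concretely, I would follow a geodesic crossing several consecutive type~II bands by means of the two first integrals used in the proof of Lemma~\ref{lemm:pts_conj}, namely the energy and the Clairaut constant $\langle\gamma',K\rangle$, recording at each band middle the generic reflection that glues adjacent adapted charts (Propositions~\ref{prop:refl_loc_generique} and~\ref{prop:struct_proj_orth}). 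Whether such a geodesic becomes tangent to $K$ a second time \emph{inside} $\widetilde T$ is governed by the accumulated effect of these reflections, equivalently by the transverse projective holonomy of Proposition~\ref{prop:struct_proj_orth}; the no-conjugate-point function $f$ is precisely a boundary value for this condition, and an arbitrarily small perturbation of $f$ should push the holonomy across the threshold, yielding inside $\widetilde{T_n}$ a geodesic tangent to $K$ at two distinct points, hence conjugate points on $T_n$ by Lemma~\ref{lemm:pts_conj}.

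The hard part will be exactly this last, quantitative, step: establishing that ``no conjugate points'' is a non-open condition at $f$, i.e. that the multi-band refocusing can be switched on by an arbitrarily $\mcC^\infty$-small, combinatorics-preserving perturbation. The delicate point is to control the geodesic across the Reeb crossings—where the adapted chart changes by a reflection and the two light foliations exchange their roles—and to exhibit the threshold explicitly in terms of the first integrals and the transverse projective structure, so that one sees that a generic small deformation of $f$ leaves the monotone band-to-band geometry intact while making a second tangency to $K$ fall inside, rather than just outside, the developing image.
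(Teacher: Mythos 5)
Your overall architecture (flat/non-flat split, Theorem~\ref{theo:comme_CP} to pin down $f$, Lemma~\ref{lemm:pts_conj} as the source of conjugate points) matches the paper's, but the non-flat case contains a genuine gap, and it is exactly the step you yourself flag as ``the hard part'': you never prove that the refocusing configuration can be pushed inside the developing image by an arbitrarily small perturbation. Asserting that $f$ ``is precisely a boundary value'' for a transverse-holonomy threshold and that a small deformation ``should push the holonomy across the threshold'' is a heuristic, not an argument; moreover your remark that the perturbation ``must act across several bands'' points away from the mechanism that actually works. The paper's route is local and does not chase an interior tangency of $K$ at all: after an arbitrarily small perturbation of $\langle K,K\rangle$ one may assume that some type~II band of $T$ is bounded by two orbits along which the derivative of $\langle K,K\rangle$ does not vanish (simple zeros of $f$). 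Such a band completes into a \emph{square} with two genuine saddle points (corollaire~\ref{coro:extension_reflexion}, proposition~\ref{prop:extension_selle}), and \emph{every} geodesic orthogonal to $K$ in that square passes through both saddles: $K$ is a Jacobi field vanishing at the two saddle points, so they are conjugate along a whole pencil of geodesics. These conjugate points sit on the boundary of $\widetilde{T}$ in $E^u_f$, and the remaining quantitative step is delegated to the perturbation of \cite[prop.~2.3]{BM}, which deforms the metric near those boundary orbits so as to pull the conjugate pair into the interior. Without either carrying out your holonomy computation in full or substituting this saddle-point argument, the non-flat case is not proved.

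The flat case also needs repair. Your normalisation assumes the deck lattice contains the null vector $(0,t_0)$; since boosts preserve the null cone, this cannot be arranged for a generic flat Lorentzian torus, whose lattice contains no null vector. Worse, for $\psi(x)$ to descend you need the $x$-projection of $\Lambda$ to be discrete; when it is dense, $\psi$ is forced to be constant and $g_\varepsilon$ remains flat. The fix is to perturb along a \emph{non-null} primitive lattice direction, e.g.\ conformally as the paper does: take $e^{\varepsilon u}(2dxdy)$ with $u$ invariant under the corresponding periodic translation field $K$; then $\langle K,K\rangle$ becomes a non-constant periodic function of constant sign and Lemma~\ref{lemm:pts_conj} (or Theorem~\ref{theo:comme_CP}) yields conjugate points.
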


\begin{proof} Si $T$ n'est pas plat et n'a pas de points conjugués, alors,
d'après le lemme~\ref{lemm:pts_conj}, la fonction $\langle K,K\rangle$ change de
signe à chaque fois qu'elle s'annule. Si la fonction $\langle K,K \rangle$
s'annule sur un ouvert, alors on peut la perturber de fa\c con à faire
apparaître un minimum local positif et donc des points conjugués. Sinon tous les
zéros sont isolés et, quitte à perturber la fonction, on peut supposer qu'il
existe une bande bordée par deux orbites de $K$ le long desquelles la dérivée de
$\langle K,K \rangle$ ne s'annule pas. Cette bande  se complète alors en un
carré ayant deux vrais points selles. Toutes les géodésiques perpendiculaires à
$K$ de ce carré passent alors  par ces points selles. On obtient ainsi deux
points conjugués sur le bord de $\widetilde T$. On répète alors la preuve de la
proposition 2.3 de \cite{BM} qui consiste à perturber la métrique de façon à
rapprocher ces points conjugués. 

Enfin, si $T$ est plat, il possède un champ de Killing périodique $K$. On
construit par perturbation conforme une suite de tores non plats invariants par
le flot de $K$ qui converge vers $T$.  Par le théorème~\ref{theo:comme_CP}, ces
tores ont des points conjugués.
\end{proof}

\bibliographystyle{plain}
\bibliography{extension_2016}

\begin{thebibliography}{10}

\bibitem{BM}
Christophe Bavard and Pierre Mounoud.
\newblock Sur les surfaces lorentziennes compactes sans points conjugu\'es.
\newblock {\em Geom. Topol.}, 17(1):469--492, 2013.

\bibitem{Bourbaki_LieIV}
N.~Bourbaki.
\newblock {\em \'{E}l\'ements de math\'ematique. {F}asc. {XXXIV}. {G}roupes et
  alg\`ebres de {L}ie. {C}hapitre {IV}: {G}roupes de {C}oxeter et syst\`emes de
  {T}its.}
\newblock Actualit\'es Scientifiques et Industrielles, No. 1337. Hermann,
  Paris, 1968.

\bibitem{EKS1984}
Allan~L. Edmonds, Ravi~S. Kulkarni, and Robert~E. Stong.
\newblock Realizability of branched coverings of surfaces.
\newblock {\em Trans. Amer. Math. Soc.}, 282(2):773--790, 1984.

\bibitem{Ferrand}
Jacqueline Ferrand.
\newblock The action of conformal transformations on a {R}iemannian manifold.
\newblock {\em Math. Ann.}, 304(2):277--291, 1996.

\bibitem{Goldman}
William~M. Goldman.
\newblock Geometric structures on manifolds and varieties of representations.
\newblock In {\em Geometry of group representations ({B}oulder, {CO}, 1987)},
  volume~74 of {\em Contemp. Math.}, pages 169--198. Amer. Math. Soc.,
  Providence, RI, 1988.

\bibitem{GPR}
Manuel Guti{\'e}rrez, Francisco~J. Palomo, and Alfonso Romero.
\newblock Lorentzian manifolds with no null conjugate points.
\newblock {\em Math. Proc. Cambridge Philos. Soc.}, 137(2):363--375, 2004.

\bibitem{HR1957}
Andr{\'e} Haefliger and Georges Reeb.
\newblock Vari\'et\'es (non s\'epar\'ees) \`a une dimension et structures
  feuillet\'ees du plan.
\newblock {\em Enseignement Math. (2)}, 3:107--125, 1957.

\bibitem{Matveev}
Vladimir~S. Matveev.
\newblock Pseudo-{R}iemannian metrics on closed surfaces whose geodesics flows
  admit nontrivial integrals quadratic in momenta, and proof of the projective
  {O}bata conjecture for two-dimensional pseudo-{R}iemannian metrics.
\newblock {\em J. Math. Soc. Japan}, 64(1):107--152, 2012.

\bibitem{Monclair}
Daniel Monclair.
\newblock Isometries of {L}orentz surfaces and convergence groups.
\newblock {\em Math. Ann.}, 363(1-2):101--141, 2015.

\bibitem{MS}
Pierre Mounoud and Stefan Suhr.
\newblock Spacelike {Z}oll surfaces with symmetries.
\newblock J. Diff. Geom, à paraître (arXiv: 1402.5377).

\bibitem{Nomizu}
Katsumi Nomizu.
\newblock On local and global existence of {K}illing vector fields.
\newblock {\em Ann. of Math. (2)}, 72:105--120, 1960.

\bibitem{Oneill}
Barrett O'Neill.
\newblock {\em Semi-{R}iemannian geometry. With applications to relativity},
  volume 103 of {\em Pure and Applied Mathematics}.
\newblock Academic Press, Inc. [Harcourt Brace Jovanovich, Publishers], New
  York, 1983.

\bibitem{Palais1961}
Richard~S. Palais.
\newblock On the existence of slices for actions of non-compact {L}ie groups.
\newblock {\em Ann. of Math. (2)}, 73:295--323, 1961.

\bibitem{Penrose}
Roger Penrose.
\newblock Gravitational collapse and space-time singularities.
\newblock {\em Phys. Rev. Lett.}, 14:57--59, 1965.

\bibitem{Piccione-Zeghib}
Paolo Piccione and Abdelghani Zeghib.
\newblock Actions of discrete groups on stationary {L}orentz manifolds.
\newblock {\em Ergodic Theory Dynam. Systems}, 34(5):1640--1673, 2014.

\bibitem{Sanchez}
Miguel S{\'a}nchez.
\newblock Structure of {L}orentzian tori with a {K}illing vector field.
\newblock {\em Trans. Amer. Math. Soc.}, 349(3):1063--1080, 1997.

\bibitem{Steenrod1951}
N.~Steenrod.
\newblock {\em The {T}opology of {F}ibre {B}undles}.
\newblock Princeton Mathematical Series, vol. 14. Princeton University Press,
  Princeton, N. J., 1951.

\bibitem{wolf}
Joseph~A. Wolf.
\newblock {\em Spaces of constant curvature}.
\newblock AMS Chelsea Publishing, Providence, RI, sixth edition, 2011.

\end{thebibliography}

\bigskip
\begin{tabular}{ll}
 Adresse: & Univ. Bordeaux, IMB, UMR 5251, F-33400 Talence, France\\
& CNRS, IMB, UMR 5251, F-33400 Talence, France\\

E-mails:& {\tt christophe.bavard@math.u-bordeaux1.fr}\\
&{\tt pierre.mounoud@math.u-bordeaux1.fr}
\end{tabular}

\end{document}